  \pgfplotsset{compat=newest}
\newcommand{\titel}{The Dyson equation with linear self-energy: spectral bands, edges and cusps}
\title{\titel} 
\author{
Johannes Alt\footnote{\hspace{0.15cm}Partially funded by ERC Advanced Grant RANMAT No. 338804.}
\addtocounter{footnote}{-1}\addtocounter{Hfootnote}{-1} \\
{\small \begin{tabular}{c}{University of Geneva}\\{johannes.alt@unige.ch} \end{tabular}} 
\and László Erd\H{o}s\footnotemark \\
{\small \begin{tabular}{c} IST Austria\\ lerdos@ist.ac.at\end{tabular}} 
\and Torben Krüger\footnote{\hspace{0.15cm}Partially supported by the Hausdorff Center for Mathematics.\newline Date: \today}\\
{\small \begin{tabular}{c} University of Bonn\\ torben.krueger@uni-bonn.de\end{tabular}}
}
\date{}
\numberwithin{equation}{section}
\newtheoremstyle{test}% name
  {}%      Space above, empty = `usual value'
  {}%      Space below
  {\itshape}% Body font
  {}%         Indent amount (empty = no indent, \parindent = para indent)
  {\bfseries}% Thm head font
  {.}%        Punctuation after thm head 
  { }% Space after thm head: \newline = linebreak
  {}%         Thm head spec
\newtheoremstyle{testDef}% name
  {}%      Space above, empty = `usual value'
  {}%      Space below
  {}% Body font
  {}%         Indent amount (empty = no indent, \parindent = para indent)
  {\bfseries}% Thm head font
  {.}%        Punctuation after thm head 
  { }% Space after thm head: \newline = linebreak
  {}%         Thm head spec
\theoremstyle{testDef}
\newtheorem{definition}{Definition}[section]
\newtheorem{assums}[definition]{Assumptions}
\newtheorem{remark}[definition]{Remark}
\newtheorem*{remark*}{Remark}   %no numbering
\newtheorem*{ex*}{Example}   %no numbering
\newtheorem*{def*}{Definition}
\theoremstyle{test}
\newtheorem{theorem}[definition]{Theorem}
\newtheorem{lemma}[definition]{Lemma}
\newtheorem{corollary}[definition]{Corollary}
\newtheorem{convention}[definition]{Convention}
\newtheorem{proposition}[definition]{Proposition}
\newtheorem*{proposition*}{Proposition} %no numbering
\newtheorem*{corollary*}{Corollary}
\newtheorem*{theorem*}{Theorem}
\renewcommand{\bf}[1]{\boldsymbol{\mathrm{#1}}} %bold
\renewcommand{\cal}{\mathcal} 
\renewcommand{\frak}{\mathfrak} 
\newcommand{\ol}[1]{\overline{#1} \!\,} %overline
\newcommand{\wh}{\widehat}
\newcommand{\wt}{\widetilde}
\newcommand{\ord} {\mathcal{O}}
\renewcommand{\P}{\mathbb{P}}
\newcommand{\ii}{\mathrm{i}} %\newcommand{\mi}{\mathrm{i}}
\newcommand{\dd}{\mathrm{d}}
\newcommand{\pb}[1]{\bigl({#1}\bigr)}
\newcommand{\pbb}[1]{\biggl({#1}\biggr)}
\newcommand{\pBB}[1]{\Biggl({#1}\Biggr)}
\newcommand{\cb}[1]{\bigl\{{#1}\bigr\}}
\newcommand{\cbb}[1]{\biggl\{{#1}\biggr\}}
\newcommand{\cBB}[1]{\Biggl\{{#1}\Biggr\}}
\newcommand{\abs}[1]{\lvert #1 \rvert}
\newcommand{\absB}[1]{\Big\lvert #1 \Big\rvert}
\newcommand{\absbb}[1]{\bigg\lvert #1 \bigg\rvert}
\newcommand{\absBB}[1]{\Bigg\lvert #1 \Bigg\rvert}
\newcommand{\norm}[1]{\lVert #1 \rVert}
\newcommand{\normbb}[1]{\bigg\lVert #1 \bigg\rVert}
\newcommand{\norma}[1]{\left\lVert #1 \right\rVert}
\newcommand{\avg}[1]{\langle #1 \rangle}
\newcommand{\scalar}[2]{\langle{#1} \mspace{2mu}, {#2}\rangle}
\newcommand{\scalarbb}[2]{\bigg\langle{#1} \,\mspace{2mu},\, {#2}\bigg\rangle}
\DeclareMathOperator{\diag}{diag}
\DeclareMathOperator{\tr}{Tr}
\DeclareMathOperator{\Tr}{Tr}
\DeclareMathOperator{\re}{Re}
\DeclareMathOperator{\im}{Im}
\DeclareMathOperator*{\essinf}{ess\,inf}	
\DeclareMathOperator{\dist} {dist}                
\DeclareMathOperator*{\sign}{sign}						%Signum
\DeclareMathOperator*{\spec}{Spec}						%Spectrum
\newcommand{\1} {\mspace{1 mu}}
\newcommand{\2} {\mspace{2 mu}}
\newcommand{\msp}[1] {\mspace{#1 mu}}
\newcommand{\genarg} {{\,\cdot\,}}  % general argument: f(\genarg) produces dot with little space around it
\newcommand{\R}{\mathbb{R}}  % The real numbers.
\C\renewcommand{\C}{\mathbb{C}}\else\newcommand{\C}{\mathbb{C}}\fi % Complex numbers
\renewcommand{\Im}{\mathrm{Im}\,} %imaginary part of a complex number
\renewcommand{\Re}{\mathrm{Re}\,} %real part of a complex number
\newcommand{\N}{\mathbb{N}}  % Positive integers.	
\newcommand{\E}{\mathbb{E}}  % expected value of random variable	
\renewcommand{\P}{\mathbb{P}}  % probability measure
\newcommand{\Z}{\mathbb{Z}}  % Integers
\newcommand{\di}{\mathrm{d}} % differential
\newcommand{\eps}{\varepsilon} % "correct" epsilon
\newcommand*{\defeq}{\mathrel{\vcenter{\baselineskip0.5ex \lineskiplimit0pt\hbox{\scriptsize.}\hbox{\scriptsize.}}}=}
\newcommand{\pt}{\partial}
\DeclareMathOperator{\supp}{supp}
\DeclareMathOperator{\ran}{ran}
\newcommand{\normone}[1]{\lVert #1 \rVert_{1}}
\newcommand{\normtwo}[1]{\lVert #1 \rVert_{2}}
\newcommand{\normtwoinf}[1]{\lVert #1 \rVert_{2\to\norm{\cdot}}}
\newcommand{\imz}[1][]{%
 \ifthenelse{\equal{#1}{}}{\Im z}{(\Im z)^{#1}}
}
\newcommand{\Hb}{\mathbb H}
\newcommand{\HbI}{{{\mathbb H}_{I,\eta_*}}}
\newcommand{\HbIclosed}{{\overline{\mathbb H}_{I,\eta_*}}}
\newcommand{\alg}{\mathcal{A}}
\newcommand{\algsa}{{\mathcal{A}_\mathrm{sa}}}
\newcommand{\algnon}{\overline{\mathcal{A}}_+}
\newcommand{\algpos}{\mathcal{A}_+}
\newcommand{\Ltwo}{L^2} % non-commutative L^2 space associated to \alg \avg{}
\newcommand{\Mb}{{\mathbb M}} 
\newcommand{\id}{\mathds{1}}
\newcommand{\Id}{\mathrm{Id}}
\renewcommand{\char}{\ensuremath{\mathbf{1}}} %Characteristic function
\newcommand{\err}{e} %error term in the generic quadratic equation
\newcommand{\errt}{\tilde{\err}}
\newcommand{\diff}{\Delta} %difference between the solution of the MDE and the solution of the perturbed MDE
\renewcommand{\rm}{\mathrm} %upright
\newcommand{\Dmin}{\mathbb{D}}
\newcommand{\Psie}{\Psi_{\mathrm{edge}}}
\newcommand{\Psim}{\Psi_{\mathrm{min}}}
\newcommand{\spone}{\textbf{SP1}\xspace}
\newcommand{\sptwo}{\textbf{SP2}\xspace}
\newcommand{\spthree}{\textbf{SP3}\xspace}
\newcommand{\spfour}{\textbf{SP4'}\xspace}
\newcommand{\dens}{\rho}
\newcommand{\shapeJ}{J}
\begin{document}

\maketitle

\vspace*{-1.2cm} 
\begin{abstract}
We study the unique solution $m$ of the Dyson equation
\[ -m(z)^{-1} = z\id - a + S[m(z)] \]
on a  von Neumann algebra $\alg$ with  the constraint $\im m\ge 0$. Here, $z$ lies in the complex upper half-plane, $a$ is a self-adjoint element of $\alg$ and $S$ is a positivity-preserving
linear operator 
on $\alg$. We show that $m$ is the Stieltjes transform of a compactly supported $\alg$-valued measure on $\R$. 
Under suitable assumptions, we establish that this measure has a uniformly  $1/3$-H\"older continuous density with respect to the Lebesgue measure, which is supported on finitely many intervals, called bands. 
In fact,  the density is analytic inside the bands with a square-root growth at the edges and internal cubic root cusps whenever the gap between two bands vanishes. The shape of these singularities is universal and
no other singularity may occur. We give a precise asymptotic description of $m$ near the 
singular points. These asymptotics  generalize the analysis at the regular edges given 
 in the companion paper on the Tracy-Widom universality for the edge eigenvalue statistics
for correlated random matrices \cite{AltEdge} and they play a key role  
in the proof of  the Pearcey universality at the cusp for Wigner-type matrices  \cite{Erdos2018Cusp2,Erdos2018Cusp}.  
We also extend the finite dimensional  band mass formula from  \cite{AltEdge} to the von Neumann algebra setting 
by showing that the spectral mass of the bands 
is topologically rigid under deformations and  we
conclude that these masses are  quantized in some important cases. 
\end{abstract}

\noindent \emph{Keywords:} Dyson equation, positive operator-valued measure, Stieltjes transform, band rigidity. \\
\textbf{AMS Subject Classification (2010):} 46L10, 45Gxx, 46Txx, 60B20. 

\tableofcontents

\section{Introduction}

An important task in random matrix theory is to determine the eigenvalue distribution of a random matrix as 
its size tends to infinity. Similarly, in free probability theory, the scalar-valued distribution of operator-valued
semicircular elements is of particular interest. In both cases, the distribution can be obtained from the corresponding \emph{Dyson equation} 
\begin{equation} \label{eq:dyson_intro}
 -m(z)^{-1} = z \id - a + S[m(z)] 
\end{equation}
on some von Neumann algebra $\alg$ with a unit $\id$ and a tracial state $\avg{\genarg}$. 
Here, $z$ lies in $\Hb \defeq \{w \in \C \colon \Im w >0\}$, the complex upper half-plane, $a = a^* \in \alg$ and $S \colon \alg \to \alg$ is a positivity-preserving linear operator. 
There is a unique solution $m \colon \Hb \to \alg$ of \eqref{eq:dyson_intro} under the assumption that
$\Im m(z) \defeq (m(z)-m(z)^*)/(2\ii)$ is a strictly positive element of $\alg$ for all $z \in \Hb$ \cite{Helton01012007}. 
For suitably chosen $a$ and $S$ as well as $\alg$, this solution characterizes the distributions in the applications mentioned above. 
In fact, in both cases, the distribution will be the measure $\dens$ on $\R$ whose Stieltjes transform is given by $z\mapsto\avg{m(z)}$. 
The measure $\dens$ is called the \emph{self-consistent density of states} and its support is the \emph{self-consistent spectrum}. 
This terminology stems from the physics literature on the Dyson equation, where $z$ is often called \emph{spectral parameter} and $S[m]$ 
 the self-energy. The linearity of the \emph{self-energy operator} $S$ is a distinctive feature of our setup. 

We first explain the connection between the eigenvalue density of a large random matrix and the Dyson equation.
Let $H \in \C^{n\times n}$ be a $\C^{n\times n}$-valued random variable, $n \in \N$, such that $H=H^*$. 
A central objective is the analysis of the  \emph{empirical spectral measure} $\mu_H\defeq n^{-1} \sum_{i=1}^n \delta_{\lambda_i}$, or its expectation, the \emph{density of states}, for large $n$, 
where $\lambda_1, \ldots, \lambda_n$ are the eigenvalues of $H$. Clearly, $n^{-1}\Tr(H-z)^{-1}$ is the Stieltjes transform of $\mu_H$ at $z\in \Hb$. 
Therefore, the resolvent $(H-z)^{-1}$ is commonly studied to obtain information about $\mu_H$.
In fact, for many random matrix ensembles, in particular models with decaying correlations among the entries, 
the resolvent $(H-z)^{-1}$
is well-approximated for large $n$ by the solution $m(z)$ of the Dyson equation \eqref{eq:dyson_intro}. 
 Here, we choose $\alg = \C^{n\times n}$ equipped with the operator norm induced by the 
Euclidean distance on $\C^n$ and the normalized trace $\avg{\genarg} = n^{-1} \Tr(\genarg)$  as tracial state as well as 
\begin{equation} \label{eq:choice_a_S_random_matrix} 
a \defeq\E H, \qquad \qquad S[x]\defeq \E[(H-a)x (H-a)], \quad x \in \C^{n\times n}. 
\end{equation}
If $(H-z)^{-1}$ is well-approximated by $m(z)$ for large $n$ then $\mu_H$ will be well-approximated by the deterministic measure $\dens$, whose Stieltjes transform is 
given by $z\mapsto\avg{m(z)}$. 
The importance of the Dyson equation \eqref{eq:dyson_intro} for random matrix theory has been realized by many authors on various levels of 
generality \cite{Anderson2006,Berezin1973,GUIONNET2002341,KhorunzhyPastur1994,Shlyakhtenko1996,Wegner1979}, see also the monographs \cite{girko2012theory,PasturShcherbina2011}
 and the more recent works \cite{Ajankirandommatrix,AjankiCorrelated,AltSingGram,AltKronecker,AltGram,Erdos2017Correlated,He2017,Knowles2017}.

Secondly, we relate the Dyson equation to free probability theory  by noticing that 
the Cauchy transform of a shifted operator-valued semicircular element is  given by $m$.  More precisely, 
let $\mathcal{B}$ be a unital $C^*$-algebra, $\alg \subset \mathcal{B}$ be a $C^*$-subalgebra with the same unit $\id$
and 
$E \colon \mathcal{B} \to \alg$  is a conditional expectation  (we refer to Chapter 9 in~\cite{mingo2017free} for notions from free probability theory).
Pick an  $a=a^*\in \alg$ and an operator-valued semicircular element $s=s^* \in \mathcal{B}$. Then  
 $G(z) \defeq E[(z-s-a)^{-1}]$ is the \emph{Cauchy-transform} of $s+a$. In this case, $m(z) = - G(z)$ satisfies \eqref{eq:dyson_intro} with $S[x] \defeq E[sxs]$ for all $x \in \alg$ \cite{Voiculescu1995}. 
If $\alg$ is a von Neumann algebra with a tracial state, then our results yield information about the scalar-valued distribution $\rho=\rho_{s+a}$  of $s+a$ with respect to this state.
The study of qualitative regularity properties for this distribution has a long history in free probability.  For example, the question of whether $\rho$ has atoms or not is intimately related to non-commutative identity testing (see \cite{Garg2016ADP,MAI20171080} and references therein) 
and the notions of free entropy and Fischer information (see \cite{Voiculescu1994,Voiculescu1998} and the survey \cite{doi:10.1112/S0024609301008992}).   
We also refer to the recent preprint \cite{Mai2018}, where the distribution of rational functions in noncommutative random variables is studied with the help of
 linearization ideas from \cite{Haagerup2005,Haagerup2006} and \cite{Helton2018}. 
Under certain assumptions, our results provide extremely detailed information about
   the regularity properties of  $\rho$, thus complementing these more general insights. 
 In particular, we show that $\rho_s$ is absolutely continuous with respect to the Lebesgue measure away from zero for any operator-valued semicircular element~$s$.
For  other  applications of the Dyson equation \eqref{eq:dyson_intro} in free probability theory, we refer to \cite{Helton01012007,Speicher1998,Voiculescu1995,Voiculescu2000}
and the recent monograph~\cite{mingo2017free}.

In this paper, we analyze the regularity properties of the self-consistent density of states $\dens$ in detail. More precisely, under suitable assumptions on $S$, we show that 
the boundedness of $m$ already implies that  $\dens$ has a $1/3$-H\"older continuous density  $\rho(\tau)$  with respect to the Lebesgue measure.   We provide a broad class of models for which the boundedness of $m$ is ensured. 
  Furthermore,  the set  where the density is positive,  $\{\tau : \rho(\tau)>0\}$,  
  splits into finitely many connected components, called \emph{bands}. The  density is real-analytic inside the bands with a square root growth behavior at the edges. If two bands touch, however, a cubic root cusp  emerges. These are the only possible types of singularities.
In fact, $m(z)$ is the Stieltjes transform of a positive operator-valued measure $v$ and we establish the 
properties mentioned above for $v$ as well. We also  extend the \emph{band mass formula} from \cite{AltEdge} expressing
the masses that $\rho$ assigns to the bands. 
  We use it to infer a certain quantization of the band masses that we 
call \emph{band rigidity}, because it is invariant under small perturbations of the data $a$ and $S$ of the Dyson equation.  In particular, we extend a quantization result from \cite{Haagerup2006} and 
 \cite{freebandrigidity} to cover limits of Kronecker random matrices.  
We remark that for the analogous phenomenon  in the context of random matrices 
the term ``exact separation of eigenvalues'' was coined in \cite{Bai1999}.

In the commutative setup, the band structure and singularity behavior of the density  have been obtained in \cite{AjankiQVE,AjankiCPAM}, where  a detailed  analysis of the regularity of $\dens$ was initiated. 
In the special noncommutative situation $\alg = \C^{n\times n}$ and $\avg{\genarg} =n^{-1} \Tr(\genarg)$, it has been shown that $\dens$ is H\"older-continuous and real-analytic 
wherever it is positive \cite{AjankiCorrelated}. 
 Recently, in the same setup, the precise behavior of $\dens$ near the spectral edges was obtained in \cite{AltEdge},
where it was a key ingredient in the proof of the  Tracy-Widom universality of the local spectral statistics near
the spectral edges for random matrices with general correlation structure. However, this analysis works
only at edges that are well separated from each other (so called \emph{regular edges}), i.e. away from the \emph{cusp}  where two edges merge
and away from the \emph{almost cusps}, i.e.  regions with  small spectral gaps or small but nonzero minima of the density. 
 The main novelty of the current work is to give an effective
regularity analysis for the general noncommutative case  with  a precise quantitative description of all singularities including 
the almost cusps.  
One of the main applications is the proof of the eigenvalue rigidity on optimal scale throughout the entire spectrum. This is a key input 
for the recent proof of the
 local spectral universality at the cusp for general Wigner-type matrices, i.e. the Pearcey statistics for the
complex hermitian case in  \cite{Erdos2018Cusp} and its real symmetric counterpart in \cite{Erdos2018Cusp2}.
We remark that cusp universality settles the third and last ubiquitous spectral universality regime after the bulk and edge universalities studied
 extensively earlier, see \cite{Erdos2017book} and references therein.

The key strategy behind the current paper as well as its predecessors \cite{AjankiQVE,AjankiCPAM,AjankiCorrelated,AltEdge}   
is a refined
stability analysis of the Dyson equation~\eqref{eq:dyson_intro} against small perturbations.
 It turns out that the equation 
is stable in the bulk regime, i.e., where $\rho(\re z)$ is separated away from zero, but is unstable near the 
points, where the density vanishes.  Even the stability in the bulk requires an unconventional idea; it relies on  rewriting
the stability operator, i.e., the derivative of the Dyson equation with respect to $m$,   through the use of a  positivity-preserving symmetric map, called the \emph{saturated self-energy operator}, $F$. We then
  extract information on the   spectral gap of $F$ by
a Perron-Frobenius argument using the positivity of $\im m$  \cite{AjankiQVE,AjankiCPAM}.  In the non-commutative setup
this transformation was based on a novel \emph{balanced polar decomposition} formula~\cite{AjankiCorrelated}.  In the small density
regime, in particular near the  regular edges studied in \cite{AltEdge},  
the stability deteriorates due to an \emph{unstable direction}, which is related to the Perron-Frobenius eigenvector of $F$.
The analysis boils down to a scalar quantity, $\Theta$, the overlap between the solution and
the {unstable direction}. For the commutative case in  \cite{AjankiQVE,AjankiCPAM}, it is shown that $\Theta$ approximately satisfies a cubic equation.
The structural property of this cubic equation is its \emph{stability}, i.e., that the coefficients of the cubic and quadratic terms
do not simultaneously vanish. This guarantees that higher order terms are negligible and the order of any singularity
is either cubic root or square root. 

Now we synthesize both analyses in the previous works to study the small density regime in the most general 
setup. The major obstacle is the noncommutativity that already substantially complicated the bulk  analysis~\cite{AjankiCorrelated},
but there the  saturated self-energy operator, $F$, governed all estimates. However,
in the regime of small density  the  {unstable direction}  is identified via the top eigenvector of a non-symmetric operator 
that coincides with the symmetric $F$ only in the commutative case.  Thus we need to perform a non-symmetric perturbation
expansion that requires precise control on the resolvent of the non-selfadjoint stability operator in the entire complex plane. 
We still
work with a cubic equation for $\Theta$, but the analysis  of its  coefficients is considerably more involved  than in \cite{AjankiQVE,AjankiCPAM}.

 The situation is much simpler near the regular edges, where the cubic equation simplifies to 
a quadratic equation; this analysis was performed in \cite{AltEdge} at least in the finite dimensional non-commutative case.
The main novelty of the present paper lies in handling the most complicated case, the cusps and almost cusps, where
we need to deal with a genuine cubic equation. The second goal of the paper is to give a unified treatment of
all spectral regimes in the general von Neumann algebraic setup. A few arguments pertaining the regular edges
are relatively simple extensions from \cite{AltEdge}  to the infinite dimensional case. We will indicate these instances
but for the reader's convenience we chose to include these proofs since in the current paper
we work under weaker conditions and in a more general setup than in \cite{AltEdge}.

 We stress that  along all  estimates, the noncommutativity is a permanent enemy; in some cases it
 can be treated perturbatively, but for the most critical parts new non-perturbative proofs are needed.
 Most critically, the stability of the cubic equation is proven with a new method.

Another novelty of the current paper,
in addition to handling the non-commutativity  and lack of symmetry, is that we present 
 the cubic analysis in a conceptually clean way that will be used in future works. Our analysis strongly suggests
 that our cubic equation for $\Theta$ is the key to any detailed singularity analysis of Dyson-type equations
 and its remarkable structure is responsible for the universal behavior of the singularities in the density. 
 
As a final remark we compare our  self-consistent density of states $\rho$, obtained from the Dyson equation, 
with the equilibrium density $\rho_V$ considered in invariant matrix ensembles with
an external potential $V$. Recall that $\rho_V$ is the solution of a variational principle  \cite{DEIFT1998388}.
Both densities approximate the empirical density
of states of a prominent class of random matrix ensembles, but they have quite different singularity structures
at the vanishing points. Our classification theorem shows that $\rho$ has only square root and cusp singularities.
On the other hand, if $V\in C^2$ then $\rho_V$ is  1/2-H\"older continuous, in particular it cannot have any
cusp singularity. Moreover $\rho_V$ may vanish at the edges of its support not necessarily  as a square root, see 
e.g. a behaviour   $\rho_V(x)\approx (x_+)^{5/2}$ in  Example 1.2 of \cite{Claeys2010}. In general,  only 
powers $\alpha = 2k$ and
$\alpha = 2k+\frac{1}{2}$, $k\in \N$ are possible for the vanishing behavior $\rho_V(x)\approx (x_+)^{\alpha}$.
These patterns  persist under small additive perturbations with an independent GUE matrix,
moreover, at critical coupling, a cusp singularity similar to our case appears as well \cite{Claeys2018}. 
A summary of known behaviours of $\rho_V$ near its vanishing points in relation with $V$ is found in Section 1.3 of \cite{Bekerman2017}.
The complexity of these patterns indicates that a concise classification theorem  of  singularities,
similar to our result on $\rho$ with merely two types of singularities, does not hold  for 
$\rho_V$.

\section{Main results}
Let $\alg$ be a finite von Neumann algebra with unit $\id$ and norm $\norm{\cdot}$.
We recall that a von Neumann algebra $\alg$ is called \emph{finite} if there is a state $\avg{\genarg} \colon \alg \to \C$ which is (i) \emph{tracial}, i.e., $\avg{xy} = \avg{yx}$ for all $x, y \in \alg$, 
(ii) \emph{faithful}, i.e., $\avg{x^*x}=0$ for some $x \in \alg$ implies $x=0$, and (iii) \emph{normal}, i.e., continuous with respect to the weak$^*$ topology. 
In the following, $\avg{\genarg}$ will always denote such state. 
The tracial state defines a scalar product $\alg \times \alg \to \C$ through
\begin{equation} \label{eq:definition_scalar_product_alg}
\scalar{x}{y}\,\defeq\,\avg{x^*y}
\end{equation}
for $x, y \in \alg$. The induced norm is denoted by $\normtwo{x} \defeq \scalar{x}{x}^{1/2}$ for $ x \in \alg$. 
Clearly, $\normtwo{x} \leq \norm{x}$ for all $x \in \alg$. 
We follow the convention that small letters are elements of $\alg$ while capital letters denote linear operators on $\alg$. 
The spectrum of $x \in\alg$ is denoted by $\spec x$, i.e., $\spec x = \C \setminus \{ z \in \C \colon (x-z)^{-1} \in \alg\}$. 

For an operator $T \colon \alg \to \alg$, we will work with three norms. 
We denote these norms by $\norm{T}$, $\normtwo{T}$ and $\normtwoinf{T}$ if $T$ is considered as an 
operator $(\alg, \norm{\genarg})\to(\alg,\norm{\genarg})$, $(\alg, \normtwo{\genarg})\to(\alg,\normtwo{\genarg})$ 
or $(\alg, \normtwo{\genarg})\to(\alg,\norm{\genarg})$, respectively.

We denote by $\alg_{\rm{sa}}$ the self-adjoint elements of $\alg$, by $\alg_+$ the cone of positive definite elements of $\alg$, i.e., 
\[
\alg_{\rm{sa}}\,\defeq\, \{x \in \alg\colon\2 x^*=x\}, \qquad \algpos\,\defeq\, \{x \in \alg_{\rm{sa}}\colon x>0\},
\]
and by $\algnon$, the $\norm{\genarg}$-closure of $\algpos$, the cone of positive semidefinite elements (or positive elements).
We now introduce two classes of linear operators on $\alg$ that preserve the cone $\algnon$. Such operators are called \emph{positivity-preserving} (or \emph{positive maps}). 
We define
\begin{subequations}
\begin{align}
\Sigma&\, \defeq\,\{S\colon \alg \to \alg\2\colon \2 S\text{ is linear, symmetric wrt.~\eqref{eq:definition_scalar_product_alg} and preserves the cone } \ol{\alg}_+\}, \label{eq:def_self_energy} \\ 
\Sigma_{\rm{flat}}& \,\defeq\,\bigg\{S\in \Sigma\colon \eps\id \leq \inf_{x \in \alg_+}\frac{S[x]}{\avg{x}} \2\le\2 \sup_{x \in \alg_+}\frac{S[x]}{\avg{x}} \2\leq \2 \eps^{-1} \id \text{ for some } \eps >0 \bigg\}.
\label{eq:def_self_energy_flat}
\end{align}
\end{subequations}
Moreover, if $S\colon \alg \to \alg$ is a positivity-preserving operator, 
then $S$ is bounded, i.e., $\norm{S}$ is finite (see e.g.~Proposition~2.1 in~\cite{Paulsen2002}). 

Let $a\in \alg_{\rm{sa}}$ be a self-adjoint element and $S\in\Sigma$.
For the \emph{data pair} $(a,S)$, we consider the associated \emph{Dyson equation}
\begin{equation} \label{eq:dyson}
-m(z)^{-1}\,=\, z\id-a+S[m(z)]\,,
\end{equation}
with spectral parameter $z \in \Hb \defeq \{ w \in \C \colon \Im w >0\}$, 
for a function $m\colon \Hb \to \cal{A}$ such that its imaginary part is positive definite,
\[
\im m(z)\,=\, \frac{1}{2\ii}(m(z)-m(z)^*) \,\in\, \cal{A}_+\,.
\]
There always exists a unique solution $m$ to the Dyson equation \eqref{eq:dyson} satisfying $\Im m(z) \in \alg_+$ \cite{Helton01012007}. Moreover, this solution is holomorphic in $z$ 
\cite{Helton01012007}. 
For Dyson equations in the context of renormalization theory, $a$ is called the \emph{bare matrix} and $S$ the \emph{self-energy (operator)}. 
In applications to free probability theory, $S$ is usually denoted by $\eta$ and called the \emph{covariance mapping} or \emph{covariance matrix} \cite{mingo2017free}.  

We now introduce positive operator-valued measures with values in $\algnon$. 
If $v$ maps Borel sets on $\R$ to elements of $\algnon$ such that $\scalar{x}{v(\genarg)x}$ is a positive measure for all $x \in\cal{A}$ then we say that 
$v$ is a \emph{measure on $\R$ with values in $\algnon$} or an \emph{$\algnon$-valued measure on $\R$}. 

First, we list a few propositions that are necessary to state our main theorem. They will be proven in 
Section~\ref{sec:solution_dyson_equation}, Section \ref{subsec:proof_regularity_density_states} and Section \ref{subsec:proof_hoelder_1_3}, respectively. 

\begin{proposition}[Stieltjes transform representation] \label{pro:stieltjes_representation}
Let $(a,S)\in \cal{A}_{\rm{sa}} \times \Sigma$ be a data pair and $m$ the solution to the associated Dyson equation. Then there exists a measure $v$ on $\R$ 
with values in $\ol{\cal{A}}_+$ such that $v(\R) = \id$ and 
\begin{equation} \label{eq:Stieltjes_representation}
m(z)\,=\, \int_\R \frac{v(\dd \tau)}{\tau-z}
\end{equation}
for all $z \in \Hb$. The support of $v$ and the spectrum of $a$ satisfy the following inclusions
\begin{subequations} 
\begin{align}
\supp v & \subset \spec a + [-2\norm{S}^{1/2},2\norm{S}^{1/2}], \label{eq:supp_v_subset_spec_a}\\
\spec a & \subset \supp v + [-\norm{S}^{1/2},\norm{S}^{1/2}]. \label{eq:spec_a_subset_supp_v}
\end{align}
\end{subequations}
Furthermore, for any $z \in \Hb$,  $m(z)$ satisfies the bound
\begin{equation} \label{eq:L2_bound}
\norm{m(z)}_2\,\le\, \frac{2}{\dist(z, \rm{Conv} \spec a) }\,,
\end{equation} 
where $\rm{Conv} \spec a$ denotes the convex hull of $\spec a$. 
\end{proposition}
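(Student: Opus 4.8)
The plan is to establish \eqref{eq:Stieltjes_representation} by recognizing that $m$ is an $\alg$-valued Herglotz (Pick–Nevanlinna) function and invoking the operator-valued Herglotz representation theorem, then to extract the support and norm bounds from an explicit iteration of the Dyson equation at large $z$. First I would verify the Herglotz structure: by hypothesis $\Im m(z) \in \alg_+$ for all $z \in \Hb$, and holomorphy of $m$ is already recorded in the excerpt (from \cite{Helton01012007}). Testing against a fixed $x \in \alg$, the scalar function $z \mapsto \scalar{x}{m(z)x}$ is holomorphic with nonnegative imaginary part, hence admits a classical Herglotz representation $\scalar{x}{m(z)x} = \alpha_x + \beta_x z + \int_\R \big(\frac{1}{\tau-z} - \frac{\tau}{1+\tau^2}\big)\mu_x(\dd\tau)$ for a positive measure $\mu_x$. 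To pin down $\alpha_x = \beta_x = 0$ and to get finiteness with total mass $\avg{x^*x}$, I would examine the asymptotics $m(z) = -z^{-1}\id + \ord(z^{-2})$ as $z \to \infty$ along the imaginary axis, which follows directly by iterating \eqref{eq:dyson}: writing $-m^{-1} = z\id - a + S[m]$ and inverting, $m(z) = -z^{-1}(\id + z^{-1}(a - S[m]))^{-1} = -z^{-1}\id - z^{-2}(a - S[m]) + \dots$, and $S[m] = \ord(\norm{S}/|z|)$. From $z\scalar{x}{m(z)x} \to -\avg{x^*x}$ one reads off $\beta_x = 0$, $\mu_x(\R) = \avg{x^*x}$, and $\alpha_x = 0$ (the $z$-independent term vanishes since $\scalar{x}{m(z)x} \to 0$). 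Polarization then upgrades the scalar representations to a single $\algnon$-valued measure $v$ with $v(\R) = \id$ satisfying \eqref{eq:Stieltjes_representation}; the positive-semidefiniteness of each $v(B)$ comes from $\scalar{x}{v(B)x} = \mu_x(B) \geq 0$ for all $x$, and $\sigma$-additivity and normality are inherited from the scalar measures via uniform boundedness (all masses bounded by $\avg{x^*x}$).

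Next, for the support inclusions I would argue that $m$ extends holomorphically (as an $\alg$-valued function) across any real interval $(c,d)$ on which the Dyson equation can be solved with $\Im m$ staying bounded and invertible. The cleanest route is a fixed-point/continuation argument: if $\dist(E, \spec a) > 2\norm{S}^{1/2}$, then for $z$ near $E$ the map $m \mapsto -(z\id - a + S[m])^{-1}$ is a contraction on a suitable ball (using $\norm{(z-a)^{-1}} \leq 1/\dist(z,\spec a)$ and $\norm{S[m]} \leq \norm{S}\norm{m}$, so that the relevant Lipschitz constant is controlled by $\norm{S}/\dist(z,\spec a)^2 < 1/4$), giving a holomorphic $\alg$-valued solution there which by uniqueness agrees with $m$ and is real-analytic across the real axis; hence $E \notin \supp v$, proving \eqref{eq:supp_v_subset_spec_a}. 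For the reverse inclusion \eqref{eq:spec_a_subset_supp_v}: from \eqref{eq:Stieltjes_representation}, if $\dist(E,\supp v) > \norm{S}^{1/2}$ then $\norm{m(z)} \leq \dist(z,\supp v)^{-1}$ is small near $E$, so $\norm{S[m(z)]} \leq \norm{S}\norm{m(z)} < \norm{S}^{1/2} \cdot \dots$; rearranging the Dyson equation as $a - z\id = -m(z)^{-1} - S[m(z)]$ and noting $m(z)^{-1}$ has large norm while $S[m(z)]$ is small, one checks $a - E\id$ is invertible, i.e. $E \notin \spec a$. I would need to be slightly careful with the constants: the natural bound gives $\norm{m(z)}$ close to $1/\dist$, and one verifies $\dist(z, \supp v) > \norm{S}^{1/2}$ suffices for invertibility of $z\id - a + S[m(z)] = -m(z)^{-1}$ by a Neumann-series estimate against the leading term.

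Finally, the $L^2$-bound \eqref{eq:L2_bound} follows from the representation. From \eqref{eq:Stieltjes_representation}, for any unit vector $x$ with $\normtwo{x}=1$ we have $\scalar{x}{m(z)x} = \int_\R \frac{\mu_x(\dd\tau)}{\tau - z}$ with $\mu_x(\R) = 1$ and $\supp\mu_x \subset \supp v \subset \rm{Conv}\,\spec a$ by \eqref{eq:supp_v_subset_spec_a}. Hence $|\scalar{x}{m(z)x}| \leq \dist(z, \rm{Conv}\,\spec a)^{-1}$. To convert this numerical-range bound into the operator norm $\normtwo{m(z)}$, I would use that $m(z)$ and $m(z)^*$ have numerical ranges in a common disk, together with the standard estimate $\normtwo{T} \leq 2\sup_{\normtwo{x}=1}|\scalar{x}{Tx}|$ valid for any bounded $T$ on a Hilbert space (the factor $2$ coming from the decomposition into real and imaginary parts, or directly from the fact that the numerical radius dominates one quarter of the norm). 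This yields the factor $2$ in \eqref{eq:L2_bound}. The main obstacle I anticipate is the bookkeeping in step one — making the passage from the pointwise scalar Herglotz representations to a genuine $\algnon$-valued, normal, $\sigma$-additive measure fully rigorous (rather than a formal polarization), and in particular verifying weak$^*$-measurability/normality of $B \mapsto v(B)$; the support and norm estimates are comparatively routine Neumann-series arguments once the representation is in hand.
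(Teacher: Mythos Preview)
Your treatment of the Stieltjes representation \eqref{eq:Stieltjes_representation} and the support inclusions \eqref{eq:supp_v_subset_spec_a}--\eqref{eq:spec_a_subset_supp_v} is essentially the paper's approach: the paper packages the scalar Herglotz argument plus polarization into a lemma (Lemma~\ref{lem:existence_measure}), using the predual identification $\alg \cong (L^1)'$ to promote the family of scalar measures to a single $\algnon$-valued measure, and defers the support statements to the matrix-algebra arguments in \cite{AjankiCorrelated,AltKronecker}, which are exactly the Neumann-series/contraction estimates you outline.

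Your argument for the $L^2$-bound \eqref{eq:L2_bound}, however, has a genuine gap. You claim $\supp \mu_x \subset \supp v \subset \rm{Conv}\,\spec a$ ``by \eqref{eq:supp_v_subset_spec_a}'', but \eqref{eq:supp_v_subset_spec_a} says $\supp v \subset \spec a + [-2\norm{S}^{1/2},2\norm{S}^{1/2}]$, which is typically \emph{strictly larger} than $\rm{Conv}\,\spec a$; for $a=0$ this set is $[-2\norm{S}^{1/2},2\norm{S}^{1/2}]$ while $\rm{Conv}\,\spec a = \{0\}$. The numerical-radius route therefore yields only $\normtwo{m(z)} \le 2/\dist(z,\supp v)$, which is strictly weaker than \eqref{eq:L2_bound} and in fact blows up inside $\supp v$ even when $\dist(z,\rm{Conv}\,\spec a)$ is large. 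The content of \eqref{eq:L2_bound} is precisely that the $L^2$-size of $m$ is controlled by the distance to the \emph{bare} spectrum $\spec a$, not to the self-consistent spectrum.

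The paper's proof of \eqref{eq:L2_bound} uses a different mechanism. One introduces the balanced polar decomposition $m = q^* u q$ with $u$ unitary and the saturated self-energy operator $F = C_{q,q^*} S C_{q^*,q}$, shows $\normtwo{F} \le 1$ from the imaginary part of the Dyson equation, and rewrites \eqref{eq:dyson} as $q(a-z)q^* = u^* + F[u]$, whence $\normtwo{q(a-z)q^*} \le 2$. Then one computes
\[
\scalar{m}{(C_{a-\tau}+\eta^2)m} \le \abs{\scalar{q(a-z)q^*}{C_{u^*,u}[q(a-z)q^*]}} \le 4,
\]
and concludes via $\inf\spec(C_{a-\tau}) \ge \dist(\tau,\rm{Conv}\,\spec a)^2$ for $\tau \notin \rm{Conv}\,\spec a$. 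The bound thus comes from the algebraic structure of the Dyson equation (specifically the norm bound on $F$), not from the Stieltjes representation.
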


Our goal is to obtain regularity results for the measure $v$. We first present some regularity results on the self-consistent density of states introduced in the following definition.

\begin{definition}[Density of states] \label{def:density_of_states}
Let $(a,S) \in \alg_\rm{sa}\times \Sigma$ be a data pair, $m$ the solution to the associated Dyson equation, \eqref{eq:dyson}, and $v$ the $\algnon$-valued measure of Proposition \ref{pro:stieltjes_representation}. 
The positive measure $\dens = \avg{v}$ on $\R$ is called the \emph{self-consistent density of states} or short \emph{density of states}. 
\end{definition}
We have $\supp \dens = \supp v$ due to the faithfulness of $\avg{\genarg}$.
Moreover, the Stieltjes transform of $\dens$ is given by $\avg{m}$ since, by \eqref{eq:dyson}, for any $z \in \Hb$, we have
\[
\avg{m(z)}\,=\, \int_\R \frac{\dens(\dd \tau)}{\tau-z}. 
\]

\begin{proposition}[Regularity of density of states] \label{pro:regularity_density_of_states}
Let $(a,S)$ be a data pair with $S \in \Sigma_{\rm{flat}}$ and $\dens_{a,S}$ the corresponding density of states. Then $\dens_{a,S}$ has a uniformly Hölder-continuous, compactly supported density 
with respect to the Lebesgue measure,
\[
\dens_{a,S}(\dd \tau)\,=\, \dens_{a,S}(\tau)\dd \tau\,.
\]
Furthermore, there exists a universal constant $c>0$ such that the function $\dens\colon \cal{A}_{\rm{sa}} \times  \Sigma_{\rm{flat}} \times \R \to [0,\infty), (a,S,\tau)\mapsto \dens_{a,S}(\tau)$ is locally H\"older-continuous 
with H\"older exponent $c$ and analytic whenever it is positive, i.e., for any  $(a,S,\tau) \in \cal{A}_{\rm{sa}} \times  \Sigma_{\rm{flat}} \times \R$ such that $\dens_{a,S}(\tau)>0$ the function $\dens$ is analytic in a 
neighbourhood of $(a,S,\tau)$.
Here, $\alg_\rm{sa}$ and $\Sigma_\rm{flat}$ are equipped with the metrics induced by $\norm{\genarg}$ on $\alg$ and its operator norm on $\alg \to \alg$, respectively. 
\end{proposition}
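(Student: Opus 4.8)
The plan is to exploit the companion paper \cite{AjankiCorrelated} as the finite-dimensional blueprint and push everything through to the von Neumann algebra setting via two mechanisms: a quantitative stability analysis of the Dyson equation in the bulk, and an approximation/compactness argument for the analyticity and local regularity in the parameters $(a,S)$. First I would establish a uniform a priori bound on $m(z)$ on all of $\Hb$ for $S\in\Sigma_{\rm{flat}}$: the flatness condition $\eps\id\le S[x]/\avg{x}\le\eps^{-1}\id$ forces $\Im S[\Im m(z)]\ge\eps\avg{\Im m(z)}\id$, and combining this with the Dyson equation \eqref{eq:dyson} in the form $\Im m=m^*(\Im z\,\id+S[\Im m])m$ and the Stieltjes bound \eqref{eq:L2_bound} from Proposition~\ref{pro:stieltjes_representation}, one gets $\norm{m(z)}\le C(\eps,\norm{a},\norm{S})$ uniformly for $z$ in any fixed neighbourhood of $\R$. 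This boundedness is exactly the hypothesis that (as announced in the introduction) already implies H\"older continuity of $\rho$.

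Next I would analyze the stability operator $B(z)\defeq\Id-m(z)S[\,\cdot\,]m(z)$ (the derivative of the Dyson equation with respect to $m$) and, following \cite{AjankiCorrelated}, rewrite it through the balanced polar decomposition so that the relevant quantitative information is encoded in the saturated self-energy operator $F$, which is symmetric and positivity-preserving with respect to \eqref{eq:definition_scalar_product_alg}. A Perron--Frobenius argument, using $\Im m(z)\in\alg_+$ and the flatness of $S$, yields a spectral gap for $F$ bounded below by a constant depending only on $\eps$ and the a priori norm bound on $m$, hence a uniform bound on $\norm{B(z)^{-1}}$ for $\re z$ in the regime where $\rho(\re z)\ge\delta>0$. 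With $B(z)^{-1}$ under control, the implicit function theorem in the Banach space $\alg$ gives that $z\mapsto m(z)$ extends holomorphically across any point $\tau_0\in\R$ with $\rho(\tau_0)>0$, and since $\rho(\tau)=\avg{v(\{\tau\})}+\tfrac1\pi\Im\avg{m(\tau+\ii0)}$ reduces to $\tfrac1\pi\Im\avg{m}$ there, $\rho$ is real-analytic near $\tau_0$; moreover $B$ depends analytically on $(a,S)$ (the latter in operator norm), so the same implicit function theorem, now with parameters, gives joint analyticity of $(a,S,\tau)\mapsto\rho_{a,S}(\tau)$ on $\{\rho>0\}$.

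For the H\"older continuity I would set up the self-improving bootstrap from \cite{AjankiCorrelated}: from the a priori bound and the integral representation \eqref{eq:Stieltjes_representation}, control the increments $\norm{m(z_1)-m(z_2)}$ via the resolvent identity, splitting into the stable regime (where $B^{-1}$ is bounded) and the unstable regime near the zeros of $\rho$, where one substitutes the scalar cubic equation for the overlap $\Theta$ of $m$ with the unstable direction; the stability of that cubic (coefficients of cubic and quadratic terms not simultaneously small) bounds $\Theta$, hence $\Im\avg{m}$, by a fixed fractional power of $\Im z$ and of spatial increments. Taking $\Im z\to0$ yields a uniform modulus of continuity for $\rho$ with a universal exponent $c>0$; joint local H\"older continuity in $(a,S,\tau)$ follows because all constants in this argument depend on $(a,S)$ only through $\norm{a}$, $\norm{S}$ and $\eps$, which are locally bounded, together with a stability estimate comparing the solutions $m_{a,S}$ and $m_{a',S'}$ of the two Dyson equations (again via $B^{-1}$ plus the cubic in the unstable regime).

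The main obstacle is the non-symmetry in the small-density regime: the unstable direction is the top eigenvector of a \emph{non-selfadjoint} operator which coincides with $F$ only in the commutative case, so the perturbative expansion producing the cubic equation for $\Theta$ requires controlling the full resolvent of this non-selfadjoint stability operator throughout $\C$, not merely a spectral gap of a symmetric operator. In the von Neumann algebra setting one cannot invoke finite-dimensional eigenvalue perturbation theory, so the resolvent bound and the stability of the cubic must be proven by a non-perturbative argument; I expect this to be where the bulk of the technical work lies, with the holomorphy extension and the parameter-continuity being comparatively routine once the uniform $B^{-1}$-bound and the cubic stability are in hand.
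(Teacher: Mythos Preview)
Your proposal has a genuine gap and also significantly overshoots what is needed.

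First, the claimed uniform bound $\norm{m(z)}\le C(\eps,\norm{a},\norm{S})$ for all $z$ near $\R$ does \emph{not} follow from flatness alone. Flatness yields only the $\normtwo{\genarg}$-bound \eqref{eq:m_Ltwo_bound} and the \emph{non-uniform} operator-norm bound \eqref{eq:m_bound_dens}, namely $\norm{m(z)}\lesssim(\dens(z)+\dist(z,\supp\dens))^{-1}$, which blows up at edges and cusps. Your sketch of the derivation, ``$\Im m=m^*(\Im z\,\id+S[\Im m])m$ combined with \eqref{eq:L2_bound}'', does not close: the $L^2$-bound on $m$ does not control the operator norm in general, and indeed the uniform bound \eqref{eq:boundedness_for_Hoelder} is an additional hypothesis in Proposition~\ref{pro:Hoelder_1_3}, not a consequence of $S\in\Sigma_{\rm{flat}}$. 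The paper goes to some length (Section~\ref{sec:Kronecker}) to identify structural conditions under which it does hold.

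Second, and relatedly, the cubic analysis you describe is not used at all in the paper's proof of Proposition~\ref{pro:regularity_density_of_states}. The argument is much more direct: one proves the linear stability bound
\[
\normtwo{(\Id-C_{m(z)}S)^{-1}}\lesssim 1+(\dens(z)+\dist(z,\supp\dens))^{-C}
\]
(Proposition~\ref{pro:linear_stability}) via the balanced polar decomposition and the Rotation--Inversion Lemma, then differentiates the Dyson equation to get $(\Id-C_mS)[\partial_z m]=m^2$, and combines this with \eqref{eq:m_bound_dens} to obtain the differential inequality $|\partial_z\dens|\lesssim\dens^{-(C+2)}$. This makes $\dens^{C+3}$ uniformly Lipschitz on $\Hb$, hence $\dens$ is $1/(C+3)$-H\"older continuous; the same computation with directional derivatives in $(a,S)$ gives the joint H\"older continuity, and analyticity on $\{\dens>0\}$ follows from the implicit function theorem. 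No unstable-direction decomposition, no cubic equation, and no uniform $\norm{m}$-bound are needed here; those enter only for the sharper $1/3$-H\"older exponent of Proposition~\ref{pro:Hoelder_1_3} and the singularity classification of Theorem~\ref{thm:singularities_flat}, both of which \emph{assume} the boundedness that you tried to derive.
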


The following proposition is stated under a boundedness assumption on $m$ (see \eqref{eq:boundedness_for_Hoelder} below).
In the random matrix context, in Section \ref{sec:Kronecker}, we provide a sufficient condition for this assumption to hold purely expressed in 
terms of $a$ and $S$ for a large class of random matrix models.  
In the finite dimensional case, where $\cal{A} = \C^{N \times N}$ and $\avg{\1\cdot\1}=\frac{1}{N}\tr(\1\cdot\1) $,  Proposition~\ref{pro:Hoelder_1_3} has already been established in \cite[Corollary 4.5]{AltEdge} and the arguments there remain valid in our more general setup. Nevertheless, we will present its proof to keep the current work self-contained.
\begin{proposition}[Regularity of $m$] \label{pro:Hoelder_1_3}
Let $(a,S)$ be a data pair with $S \in \Sigma_{\rm{flat}}$ and $m$ the solution to the associated Dyson equation. Suppose that for a nonempty open interval
$I \subset \R$ we have 
\begin{equation} \label{eq:boundedness_for_Hoelder}
\limsup_{\eta \downarrow 0} \sup_{ \tau \in I}\norm{m(\tau+\ii\1\eta)}\,<\, \infty\,.
\end{equation}
Then $m$ has a $1/3$-H\"older continuous extension (also denoted by $m$) to any closed interval $I'\subset I$, i.e., 
\begin{equation} \label{eq:pro_Hoelder_1_3_bound}
\sup_{z_1,z_2 \in I'\times \ii[0,\infty)}\msp{-6}\frac{\norm{m(z_1)-m(z_2)}}{\abs{z_1-z_2}^{1/3}}\,<\, \infty\,.
\end{equation}
Moreover, $m$ is real-analytic in $I$ wherever $\dens$ is positive. 
\end{proposition}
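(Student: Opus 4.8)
\emph{Proof strategy.} The plan is to reduce the claimed H\"older bound to a pointwise bound on the derivative $\partz m$ near the real axis and then integrate. Unwinding \eqref{eq:boundedness_for_Hoelder}, together with the elementary a priori bound $\norm{m(z)}\le 1/\Im z$ (valid because $-m(z)^{-1}=z\id-a+S[m(z)]$ has imaginary part $\ge(\Im z)\id$), I would first fix a constant $\Lambda<\infty$ with $\norm{m(z)}\le\Lambda$ for all $z$ with $\Re z\in I$, $\Im z>0$; fixing moreover a bounded open interval $\wt I$ with $I'\subset\wt I\subset I$ (which costs nothing, since $m$ is analytic near $\R\setminus\supp\dens$), on $\wt I+\ii(0,1]$ we also have $\norm{m(z)^{-1}}=\norm{z\id-a+S[m(z)]}\le\Lambda_0$ for some $\Lambda_0<\infty$, hence $m(z)^*m(z)\ge\Lambda_0^{-2}\id$ there.

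\emph{Differentiated equations.} Differentiating \eqref{eq:dyson} in $z$ gives $m^{-1}(\partz m)m^{-1}=\id+S[\partz m]$, i.e., $B[\partz m]=m^2$ with the \emph{stability operator} $B[x]\defeq x-mS[x]m$; as $m$ is holomorphic on $\Hb$, $B$ is invertible there and $\partz m=B^{-1}[m^2]$. Taking imaginary parts of \eqref{eq:dyson} instead gives $\wt B[\Im m]=\eta\,m^*m$ with $\wt B[x]\defeq x-m^*S[x]m$; the useful point is that $x\mapsto m^*S[x]m$ is positivity-preserving and, for $\eta>0$, of spectral radius $<1$ (since $\wt B$ sends the positive element $\Im m$ to the positive element $\eta m^*m$), so $\wt B^{-1}$ preserves positivity; combined with $m^*m\ge\Lambda_0^{-2}\id$ this gives $\Im m=\eta\wt B^{-1}[m^*m]\ge\eta\Lambda_0^{-2}\wt B^{-1}[\id]$, so that, writing $\rho_\eta(\tau)\defeq\avg{\Im m(\tau+\ii\eta)}$, one has $\avg{\wt B^{-1}[\id]}\le\Lambda_0^2\,\rho_\eta(\tau)/\eta$ — the quantitative link between $\Im m$, $\eta$ and $\wt B$ used below.

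\emph{The key bound (the main obstacle).} Everything then hinges on the pointwise estimate
\[
\norm{\partz m(\tau+\ii\eta)}\;\lesssim\;\frac{1}{\rho_\eta(\tau)^2+\eta/\rho_\eta(\tau)}\;\lesssim\;\eta^{-2/3},\qquad \tau\in\wt I,\; 0<\eta\le1,
\]
the last inequality being elementary (minimize $\rho^2+\eta/\rho$ over $\rho>0$). Since $\norm{m^2}\le\Lambda^2$, by the previous step the first bound reduces to $\norm{B^{-1}}\lesssim(\rho_\eta^2+\eta/\rho_\eta)^{-1}$, and this is where the real work lies. I would follow the scheme of \cite{AjankiQVE,AjankiCPAM,AjankiCorrelated}: using the flatness $S\in\Sigma_{\rm{flat}}$ and the balanced polar decomposition of $m$, rewrite $B$, up to bounded conjugations, in the form $\mathcal{I}-\mathcal{U}F$ with $\mathcal{U}$ a unitary ``phase'' and $F$ the self-adjoint, positivity-preserving \emph{saturated self-energy operator}; then run a Perron--Frobenius argument, powered by the strict positivity of $\Im m$, to place the top of $\spec F$ at $1-\beta$ with $\beta\gtrsim\eta/\rho_\eta$ (here the identity $\Im m=\eta\wt B^{-1}[m^*m]$ enters) together with a spectral gap $\gtrsim\rho_\eta^2$ beneath it, and finally resolve $B^{-1}$ separately along the unstable eigendirection of $F$ and on its complement. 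In finite dimensions this is \cite[Corollary~4.5]{AltEdge}; the genuinely new difficulty is to run the Perron--Frobenius and spectral-gap estimates without compactness in the von Neumann algebra, which is exactly what the stability theory built in the body of the paper supplies, so I expect to invoke those estimates rather than reprove them here.

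\emph{Integration and analyticity.} Granting $\norm{\partz m}\lesssim\eta^{-2/3}$ on $\wt I+\ii(0,1]$, a vertical integration gives $\norm{m(\tau+\ii\eta_1)-m(\tau+\ii\eta_2)}\le\int_{\eta_1}^{\eta_2}\norm{\partial_\eta m}\,\dd\eta\lesssim\abs{\eta_1-\eta_2}^{1/3}$, so $m(\tau+\ii 0)\defeq\lim_{\eta\downarrow 0}m(\tau+\ii\eta)$ exists and is $1/3$-H\"older in $\eta\in[0,1]$ uniformly over $\tau\in\wt I$; for two points at the same small height $\delta$ one uses instead $\norm{m(\tau_1+\ii\delta)-m(\tau_2+\ii\delta)}\le\abs{\tau_1-\tau_2}\,\sup_s\norm{\partz m(s+\ii\delta)}\lesssim\delta\cdot\delta^{-2/3}$ (by the Cauchy--Riemann equations), and a routine case distinction — routing a general pair $z_1,z_2\in I'\times\ii[0,\infty)$ through height $\min(\Im z_1,\Im z_2,\abs{z_1-z_2},1)$ — upgrades these to \eqref{eq:pro_Hoelder_1_3_bound}. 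For the last claim, at $\tau_0\in I$ with $\dens(\tau_0)>0$ the imaginary part of \eqref{eq:dyson} and the flatness of $S$ give $\Im(-m^{-1})=\eta\id+S[\Im m]\ge\eps\avg{\Im m}\id$, hence $\Im m=m^*\,\Im(-m^{-1})\,m\gtrsim\avg{\Im m}\id$, which stays bounded below near $\tau_0$ because the H\"older continuity just established makes $\avg{\Im m}$ continuous up to $\tau_0$ with value a positive multiple of $\dens(\tau_0)$; then the top of $\spec F$ is bounded away from $1$ near $\tau_0$, $\norm{B^{-1}}$ is locally bounded, and the analytic implicit function theorem applied to $(z,w)\mapsto w+(z\id-a+S[w])^{-1}$ — whose derivative in $w$ at $w=m(z)$ is exactly $B(z)$ — extends $m$ holomorphically across the real axis near $\tau_0$, giving real-analyticity of $\tau\mapsto m(\tau)$ on $I\cap\{\dens>0\}$.
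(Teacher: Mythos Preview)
Your approach is correct and leads to the same conclusion, but it differs from the paper's in a meaningful way. The paper does \emph{not} use the bound $\norm{\partz m}\lesssim\eta^{-2/3}$ directly. Instead it stops at the weaker estimate $\norm{B^{-1}}\lesssim\rho^{-2}$ (this is \eqref{eq:norm_stab_operator_m_sim_one}), reads off $\norm{\partz\Im m}\lesssim\norm{\Im m}^{-2}$, and concludes that $(\Im m)^3$ is uniformly Lipschitz on the strip, hence $\Im m$ is $1/3$-H\"older there. The H\"older regularity of $m$ itself is then obtained \emph{indirectly}, by feeding the $1/3$-H\"older density $\pi^{-1}\Im m$ into the Stieltjes representation~\eqref{eq:Stieltjes_representation} and invoking the general transfer principle of Lemma~\ref{lem:Stieltjes_transform_Hölder}. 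Your route is more direct: you upgrade the $B^{-1}$ bound to $(\rho^2+\eta/\rho)^{-1}$ (which the paper also has available, implicitly from \eqref{eq:normtwo_F} combined with \eqref{eq:proof_linear_stability_aux4}, and explicitly later as~\eqref{eq:B_inverse_improved_bounds}), optimise in $\rho$ to get $\eta^{-2/3}$, and integrate along contours. This trades the Stieltjes-transform machinery for a slightly sharper stability input; both are clean.

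Two small corrections to your mechanism sketch, which do not affect the outcome. First, under the boundedness hypothesis the spectral gap of $F$ is $\sim 1$, not $\gtrsim\rho^2$; the $\rho^2$ contribution to the $B^{-1}$ bound comes from the \emph{overlap} $\abs{1-\avg{fC_u^*[f]}}\gtrsim\rho^2$ in the Rotation--Inversion Lemma (see~\eqref{eq:proof_linear_stability_aux4}), not from the gap. Second, in your analyticity paragraph the claim that ``the top of $\spec F$ is bounded away from $1$ near $\tau_0$'' is false on the real axis: \eqref{eq:normtwo_F} gives $\normtwo{F}\to 1$ as $\eta\downarrow 0$ whenever $\rho>0$. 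The correct reason $\norm{B^{-1}}$ stays bounded there is again the overlap term (or simply your own key bound at $\eta=0$, which gives $\rho^{-2}$). With these adjustments your argument goes through.
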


The purpose of the interval $I$ in Proposition \ref{pro:Hoelder_1_3} (see also Theorem \ref{thm:singularities_flat} below) is to demonstrate the local nature
of these statements and their proofs; if $m$ is bounded on $I$ in the sense of \eqref{eq:boundedness_for_Hoelder} then we will prove regularity of $m$ and later its behaviour close to singularities 
on a genuine subinterval $I' \subset I$. At first reading, the reader may ignore this subtlety and assume $I' = I = \R$.  

In Proposition \ref{pro:analyticity_of_m} below, we provide a quantitative version of \eqref{eq:pro_Hoelder_1_3_bound} under slightly weaker conditions than those of Proposition \ref{pro:Hoelder_1_3}. 

For the following main theorem, we remark that if $m$ has a continuous extension to an interval $I \subset \R$ then the restriction of the measure $v$ from \eqref{eq:Stieltjes_representation} to $I$ 
has a density with respect to the Lebesgue measure, i.e., for each Borel set $A \subset I$, we have 
\begin{equation} \label{eq:v_density_Im_m}
 v(A) = \frac{1}{\pi} \int_A \Im m(\tau) \di \tau. 
\end{equation}
The existence of a continuous extension can be guaranteed by \eqref{eq:boundedness_for_Hoelder} in Proposition \ref{pro:Hoelder_1_3}. 

\begin{theorem}[$\Im m$ close to its singularities] \label{thm:singularities_flat}
Let $(a,S)$ be a data pair with $S \in \Sigma_{\rm{flat}}$ and $m$ the solution to the associated Dyson equation. Suppose $m$ has a continuous extension to a nonempty open interval $I \subset \R$. Then any $\tau_0 \in \supp \dens \cap I$ with $\dens(\tau_0)=0$ belongs to exactly one of the following cases:
\begin{itemize}[leftmargin=1.7cm]
\item[Edge: ] The point $\tau_0$ is a left/right edge of the density of states, i.e., there is some $\eps>0$ such that $\Im m (\tau_0 \mp \omega)=0$ for $\omega \in [0,\eps]$ and for some $v_0 \in \cal{A}_+$ we have 
\[
\Im m(\tau_0\pm\omega)\,=\, v_0 \2\omega^{1/2} + \ord(\omega)\,,\qquad \omega \downarrow 0\,.
\]
\item[Cusp: ] The point $\tau_0$ lies in the interior of $\supp \dens$ and for some $v_0 \in \cal{A}_+$ we have
\[
\Im m(\tau_0+\omega)\,=\, v_0 \2\abs{\omega}^{1/3} + \ord(\abs{\omega}^{2/3})\,,\qquad \omega \to  0\,.
\]
\end{itemize}
Moreover, $\supp \dens \cap I = \supp v \cap I$ is a finite union of closed intervals with nonempty interior. 
\end{theorem}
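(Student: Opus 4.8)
The plan is to run a quantitative stability analysis of the Dyson equation \eqref{eq:dyson} near the zeros of $\dens$ inside $I$, reducing the local shape of $\Im m$ to the solution of a scalar cubic equation whose algebraic structure permits exactly the two universal profiles. First I would linearise \eqref{eq:dyson} in $m$: denoting by $\mathcal{C}_{m(z)}$ the map $x\mapsto m(z)\,x\,m(z)$ on $\alg$, the \emph{stability operator} is $B(z)\defeq\Id-\mathcal{C}_{m(z)}S$. Using the assumed continuity (and, on subintervals, boundedness) of $m$ on $I$ together with $S\in\Sigma_{\rm{flat}}$, one controls $m$, $\Im m$ and their $z$-derivatives along $I$; Proposition \ref{pro:regularity_density_of_states} and Proposition \ref{pro:Hoelder_1_3} already furnish Hölder regularity and analyticity of $\dens$ where it is positive. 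The structural core here is that the symmetrised stability problem is governed by a positivity-preserving \emph{symmetric} operator $F$, the saturated self-energy operator, built from $B$ via a balanced polar decomposition as in \cite{AjankiCorrelated}; a Perron--Frobenius argument exploiting $\Im m\in\algpos$ gives $F$ a spectral gap, so $B(z)$ is invertible with controlled norm except along a one-dimensional \emph{unstable direction}, spanned by an element $b(z)$ close to the top eigenvector of the non-symmetric operator $\mathcal{C}_{m(z)}S$. Where $\dens>0$ this yields genuine invertibility of $B$, hence stability of \eqref{eq:dyson}, so that $\{\dens>0\}\cap I$ is open and $\dens$ is real-analytic there.

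Next I fix $\tau_0\in\supp\dens\cap I$ with $\dens(\tau_0)=0$ and set $z=\tau_0+\omega+\ii\eta$. Subtracting the Dyson equations at $z$ and at $\tau_0$ and Taylor-expanding to third order, the deviation $u(z)\defeq m(z)-m(\tau_0)$ splits as $u=\Theta\,b+(\text{stable remainder})$, where $\Theta=\Theta(z)\in\C$ is the overlap with the unstable direction and the stable remainder, controlled by the invertible part of $B$ from the first step, is of strictly lower order. Projecting the expanded equation onto the left eigendirection of $B(\tau_0)$ produces, up to negligible errors, a cubic relation
\[
 \mu_3\,\Theta^3+\mu_2\,\Theta^2+\pi\,\Theta\,=\,\xi(\omega)+\ii\,\sigma\,\eta+\ldots\,,
\]
with coefficients $\mu_3,\mu_2,\pi$ and inhomogeneity $\xi$ assembled from $a$, $S$ and the spectral data at $\tau_0$, and with $\sigma>0$. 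This is the counterpart, in the non-commutative and non-self-adjoint setting, of the cubic of \cite{AjankiQVE,AjankiCPAM}; obtaining it already requires precise control of the resolvent of the \emph{non-symmetric} operator $B(z)$ throughout a full complex neighbourhood of $\tau_0$, which goes beyond the regular-edge analysis of \cite{AltEdge}.

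The decisive point, which I expect will need a genuinely new non-perturbative argument, is the \emph{stability of the cubic}: $\abs{\mu_2}+\abs{\mu_3}$ is bounded away from zero and $\mu_3$ carries a definite sign, so higher-order corrections cannot alter the order of vanishing. At a point $\tau_0$ with $\dens(\tau_0)=0$ one first shows the linear coefficient $\pi$ must vanish (otherwise $\dens$ would either be positive near $\tau_0$ or $\tau_0$ would lie in the interior of a gap and hence outside $\supp\dens$). Solving the reduced cubic for $\Theta$ as $\eta\downarrow0$ then forces the dichotomy: if $\mu_2\neq0$ at $\tau_0$, the relevant small root obeys $\mu_2\Theta^2\sim\xi$, so $\Im\Theta$, hence $\Im m$, vanishes identically on one side of $\tau_0$ and grows like $\omega^{1/2}$ on the other — the edge case, with $v_0$ proportional to $\Im b\in\algpos$; if $\mu_2=0$ as well, then $\mu_3\Theta^3\sim\xi$ with $\mu_3\neq0$ gives $\Im\Theta\asymp\abs{\omega}^{1/3}$ on both sides — the cusp case. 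These alternatives are mutually exclusive and, together with analyticity where $\dens>0$, exhaust $\supp\dens\cap I$. Feeding the leading solution back into the equation yields the stated error orders $\ord(\omega)$, resp.\ $\ord(\abs{\omega}^{2/3})$, and reconstructing $u=\Theta\,b+\ldots$ and taking imaginary parts transfers the asymptotics to $\Im m$, and by \eqref{eq:v_density_Im_m} to $v$.

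For the band structure, note that $\dens$ is real-analytic and positive on the open set $\{\dens>0\}\cap I$ while every $\tau_0\in\supp\dens\cap I$ with $\dens(\tau_0)=0$ is an edge or a cusp about which, by the previous step, $\dens$ is positive on a one-sided, respectively two-sided, punctured neighbourhood; the quantitative form of the cubic analysis (Proposition \ref{pro:analyticity_of_m}, applied on closed subintervals $I'\subset I$) moreover bounds the lengths of the bands and gaps from below, so these zeros cannot accumulate. Since $\supp\dens$ is bounded by \eqref{eq:supp_v_subset_spec_a}, there are only finitely many such zeros, and removing them leaves finitely many open intervals on which $\dens>0$; hence $\supp\dens\cap I$ is a finite union of closed intervals, each with nonempty interior because a continuous density has no isolated support points, and $\supp\dens=\supp v$ by faithfulness of $\avg{\genarg}$. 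The main obstacle throughout is the interplay of non-commutativity and lack of symmetry in the third step: once one leaves the bulk the symmetric operator $F$ no longer governs all estimates, so the stability of the cubic must be proved by a non-symmetric perturbation expansion together with resolvent bounds for the non-self-adjoint stability operator over the entire complex plane.
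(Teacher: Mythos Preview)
Your outline is essentially the paper's approach: linearise via $B=\Id-\mathcal{C}_mS$, isolate the unstable direction $b$, derive a scalar cubic for the overlap $\Theta$ with stability $\psi+\sigma^2\sim1$ (your $|\mu_2|+|\mu_3|\gtrsim1$), and read off the dichotomy $\sigma\neq0$ (edge) versus $\sigma=0$ (cusp). Two points to fix. First, at $\dens(\tau_0)=0$ the eigenvector $b$ is \emph{self-adjoint} (the paper shows $b=b^*+\ord(\dens)$), so $\Im b=0$ and $\Im m\approx b\,\Im\Theta$; thus $v_0$ is proportional to $b$ itself, not to $\Im b$ as you write. Second, the reduced cubic $\mu_3\Theta^3+\mu_2\Theta^2+\omega\,\Xi=0$ has three branches, and selecting the one that matches the actual solution is a genuine step you gloss over: the paper pins it down via \emph{selection principles} (continuity of $\Theta$, $\Im\Theta\ge0$, and monotonicity of $\Re\Theta$ on $\{\Im\Theta=0\}$, the latter coming from the Stieltjes-transform structure), and this is what forces $\Im\Theta=0$ on one side in the edge case and picks the cube root with positive imaginary part in the cusp case.
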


Theorem \ref{thm:singularities_flat} is a simplified version of our more detailed and quantitative Theorem \ref{thm:behaviour_v_close_sing} below.  
We can treat all small local minima of $\dens$ on $\supp \dens\cap I$ -- not only those ones, where $\dens$ vanishes -- 
and provide precise expansions corresponding to those in Theorem \ref{thm:singularities_flat} which are valid in some neighbourhood of $\tau_0$. 
Moreover, the coefficients $v_0$ in Theorem \ref{thm:singularities_flat} are bounded from above and below in terms of the basic parameters of the model. 
By applying $\avg{\genarg}$ to the results of Theorem~\ref{thm:singularities_flat} and Theorem~\ref{thm:behaviour_v_close_sing}, we also obtain an expansion of the self-consistent density of states $\dens$ near small local 
minima in Theorem~\ref{thm:behaviour_dens} below.

Finally, we present our quantization result. 
This result has appeared in \cite[Proposition 5.1]{AltEdge} for the  simpler  setting $\cal{A}=\C^{N \times N}$ 
 and under the flatness condition $S \in \Sigma_{\rm{flat}}$.  In the current work we will follow the same strategy of proof when $\cal{A}$ is a general von Neumann algebra
 with certain adjustments to treat the  possibly infinite dimension and  the lack of flatness.

\begin{proposition}[Band mass formula] \label{pro:band_formula} 
Let $(a,S) \in \alg_\rm{sa} \times \Sigma$ be a data pair and $m$ the solution to the associated Dyson equation,~\eqref{eq:dyson}. 
We assume that there is a constant $C>0$ such that $S[x] \leq C\avg{x} \id$ for all $x \in \algnon$. Then we have 
\begin{enumerate}[label=(\roman*)]
\item For each $\tau \in \R \setminus \supp \dens$, there is $m(\tau) \in \alg_\rm{sa}$ such that $\lim_{\eta\downarrow 0} \,\norm{m(\tau + \ii\eta)-m(\tau)} = 0$. Moreover, $m(\tau)$ determines 
the mass of $(-\infty,\tau)$ and $(\tau,\infty)$ with respect to $\dens$ in the sense that 
\begin{equation} \label{eq:band_formula}
 \dens( (-\infty, \tau)) = \avg{\char_{(-\infty,0)}(m(\tau))}, 
\end{equation}
where $\char_{(-\infty,0)}$ denotes the characteristic function of the interval $(-\infty,0)$. 
\item If $\pi \colon \alg \to \C^{n\times n}$ is a faithful representation such that $\avg{x} = n^{-1} \Tr(\pi(x))$ for all $x \in \alg$ and $J\subset \supp \dens$ is a 
connected component of $\supp \dens$ then we have 
\[n \dens(J) \in \{1, \ldots, n\}. \] 
In particular, $\supp\dens$ has at most $n$ connected components. 
\end{enumerate}
\end{proposition}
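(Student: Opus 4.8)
\emph{Proof plan.} Part~(i) splits into an elementary statement and the mass identity~\eqref{eq:band_formula}. The elementary part is immediate from Proposition~\ref{pro:stieltjes_representation}: if $\tau\notin\supp v$ then \eqref{eq:Stieltjes_representation} converges in $\norm{\genarg}$ on a disc around $\tau$ and extends $m$ holomorphically there; the integrand $(\sigma-\tau)^{-1}$ is real and $v$ is $\algnon$-valued, so $m(\tau)=\int_\R(\sigma-\tau)^{-1}v(\dd\sigma)\in\alg_{\rm sa}$, and $-m(\tau)^{-1}=\tau\id-a+S[m(\tau)]$ by continuity in \eqref{eq:dyson}, so that $0\notin\spec m(\tau)$ and $\char_{(-\infty,0)}(m(\tau))$ is a genuine spectral projection.

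For~\eqref{eq:band_formula} the plan is to construct a holomorphic antiderivative of $-\avg{m}$ on $\Hb$ that can be read off both ``from $v$'' and ``from the spectral data of $m(\tau)$''. Since $\Im m(z)$ is strictly positive, $\spec(-m(z))$ lies in the open lower half-plane for $z\in\Hb$, so the principal logarithm $\log(-m(z))$ is holomorphic there; set
\[
\Psi(z)\,\defeq\,-\avg{\log(-m(z))}+\tfrac12\avg{m(z)\,S[m(z)]}\,,\qquad M(z)\,\defeq\,\int_\R\log(\sigma-z)\,v(\dd\sigma)\,,
\]
the latter with the principal branch as well ($\sigma-z$ lies in the open lower half-plane). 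Differentiating in $z$ and using the trace--derivative identity $\tfrac{\dd}{\dd z}\avg{f(X)}=\avg{f'(X)X'}$, the relation $m^{-1}m'm^{-1}=\id+S[m']$ obtained by differentiating \eqref{eq:dyson}, the cyclicity of $\avg{\genarg}$, the symmetry of $S$ and the fact that $S$ commutes with the adjoint, one finds $\tfrac{\dd}{\dd z}\avg{\log(-m)}=\avg m+\avg{m\,S[m']}$ and $\tfrac{\dd}{\dd z}\avg{m\,S[m]}=2\avg{m\,S[m']}$, whence $\Psi'(z)=-\avg{m(z)}=\tfrac{\dd}{\dd z}\avg{M(z)}$. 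Therefore $\Psi-\avg{M}$ is constant on the connected set $\Hb$; evaluating along $z=\ii y$ with $y\to\infty$, where $m(\ii y)=\ii y^{-1}\id+\ord(y^{-2})$, pins the constant to $\ii\pi$, so $\Psi(z)=\avg{M(z)}+\ii\pi$ on $\Hb$.

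Taking imaginary parts and letting $z=\tau+\ii\eta\downarrow\tau$ with $\tau\notin\supp v$ completes part~(i). On the one side $\avg{m(\tau)S[m(\tau)]}\in\R$ and, since $-m(\tau)$ is self-adjoint and invertible with spectrum approached from the lower half-plane, the principal argument tends to $-\pi$ precisely on the spectral projection $\char_{(0,\infty)}(m(\tau))$, giving $\Im\Psi(\tau+\ii0)=\pi\avg{\char_{(0,\infty)}(m(\tau))}$; on the other side, dominated convergence and $\arg(\sigma-\tau-\ii\eta)\to-\pi\,\char_{\{\sigma<\tau\}}$ give $\Im\avg{M(\tau+\ii0)}=-\pi\dens((-\infty,\tau))$. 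Comparing and using $\char_{(-\infty,0)}(m(\tau))+\char_{(0,\infty)}(m(\tau))=\id$ yields~\eqref{eq:band_formula}. For part~(ii): a faithful $*$-representation $\pi$ with $\avg{\genarg}=n^{-1}\Tr(\pi(\genarg))$ is unital, isometric, and intertwines continuous functional calculus, so $n\,\dens((-\infty,\tau))=\Tr\big(\char_{(-\infty,0)}(\pi(m(\tau)))\big)\in\{0,1,\dots,n\}$ for every $\tau\notin\supp\dens$, because $\pi(m(\tau))$ is a self-adjoint invertible matrix and $\char_{(-\infty,0)}(\pi(m(\tau)))$ an orthogonal projection. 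Since $\alg$ is then finite dimensional, $\spec a$ is finite and \eqref{eq:supp_v_subset_spec_a}--\eqref{eq:spec_a_subset_supp_v} confine $\supp\dens$ to a finite union of compact intervals, separated by gaps; for a component $J$ one picks $\tau^{\pm}$ in the adjacent gaps and gets $n\,\dens(J)=n\,\dens((-\infty,\tau^{+}))-n\,\dens((-\infty,\tau^{-}))\in\Z\cap[0,n]$, while $\dens(J)>0$ because $J$ either has interior points lying in $\supp v$ or is an isolated point of $\supp v$ (hence an atom), in both cases forcing $v(J)\neq0$. Summing $\sum_J n\,\dens(J)=n$ bounds the number of components by $n$.

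\emph{Main obstacle.} The decisive point is the construction of $\Psi$ together with the identity $\Psi'=-\avg m$: the precise correction term $+\tfrac12\avg{m\,S[m]}$ is exactly what makes the self-energy contribution cancel under the trace in the noncommutative setting, and this cancellation relies on the symmetry of $S$ and the cyclicity of $\avg{\genarg}$; dropping it leaves an uncontrolled $\avg{S[m]\,m'}$ term and the argument breaks. The remaining ingredients --- the boundary-value limits, the functional-calculus step, and the elementary topology of $\supp\dens$ in part~(ii) --- are routine.
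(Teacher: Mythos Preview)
Your proof is correct and takes a genuinely different route from the paper's. The paper proves~\eqref{eq:band_formula} by a \emph{homotopy argument}: it deforms the data pair along $(a_t,S_t)=(a-tS[m(\tau)],(1-t)S)$, shows $m_t(\tau)=m(\tau)$ is constant along the flow (this requires a separate implicit-function analysis of the perturbed Dyson equations, Lemma~\ref{lem:bump_proof_aux}), represents $\dens_t((-\infty,\tau))$ as a contour integral, and checks $\tfrac{\dd}{\dd t}\oint\avg{m_t}=0$. At $t=1$ one has $S_1=0$, so $m_1$ is a resolvent and the identity is trivial.

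You instead build an explicit holomorphic antiderivative $\Psi$ of $-\avg{m}$ on $\Hb$ via a ``free energy'' expression and match it against the obvious antiderivative $\avg{M}$ coming from the Stieltjes representation. The decisive algebraic identity---that the self-energy contribution to $\tfrac{\dd}{\dd z}\avg{\log(-m)}$ is exactly cancelled by $\tfrac12\tfrac{\dd}{\dd z}\avg{m\,S[m]}$ thanks to the symmetry of $S$ and the cyclicity of $\avg{\genarg}$---is the same one the paper uses (it appears there as $\avg{(\partial_z m_t)S[m_t]}=\tfrac12\partial_z\avg{m_t S[m_t]}$), but you package it differently: a single antiderivative in $z$ with a boundary-value comparison, rather than a contour integral differentiated in a deformation parameter. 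Your approach is more self-contained (no auxiliary family of Dyson equations), while the paper's deformation makes the rigidity of the band masses under perturbations of $(a,S)$ conceptually transparent; this rigidity is exploited later in the paper. For part~(ii) the two arguments coincide.
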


We will prove Proposition \ref{pro:band_formula} in Section \ref{sec:bumps} below.  
A result similar to part (ii) has been obtained by a different method in \cite{Haagerup2006}, see also \cite{freebandrigidity}. 
In fact, we will use the band mass formula, \eqref{eq:band_formula}, in Corollary \ref{coro:band_mass_quant_Kronecker} below 
to strengthen the quantization result in (ii) 
for a large class of random matrices (Kronecker matrices, see Section~\ref{sec:Kronecker}). 
In Section~\ref{sec:pert_data_pair}, we study the stability of the Dyson equation, \eqref{eq:dyson}, under small general pertubations of the data pair $(a,S)$.

\subsection{Examples} \label{subsec:examples}

\enlargethispage*{.7\baselineskip}

\begin{wrapfigure}{r}{0.31\textwidth}
\begin{tikzpicture}[scale = 1.0]
\draw[draw=black] (0,0) rectangle (2,2);
\draw[draw=black] (0,0) rectangle (2,1.6);
\draw[draw=black] (0,0) rectangle (0.4,2);
\node at (-0.5,1) {$r_\alpha=$};
\node at (0.2,0.8) {$1$};
\node at (1.2,1.8) {$1$};
\node at (0.2,1.8) {$\alpha$};
\node at (1.2,0.8) {$\alpha$};
\end{tikzpicture}
\caption{Structure of $r_\alpha\in \C^{n\times n}$.}
\label{fig:def_r_alpha} 
\vspace*{-0.10cm} 
\end{wrapfigure} 
We now present some examples that show the different types of singularities described by Theorem \ref{thm:singularities_flat}. 
These examples are obtained by considering the Dyson equation, \eqref{eq:dyson}, on $\C^{n\times n}$ with $\avg{\genarg}= n^{-1} \Tr$ for large $n$ and choosing $a = 0$ as well as $S=S_\alpha$, where
\[ S_\alpha[x] \defeq \frac{1}{n}\diag(r_\alpha \diag(x)) \] 
for any $x \in \C^{n\times n}$. 
Here, for $x \in\C^{n\times n}$, $\diag(x)$ denotes the vector of diagonal entries, $r_\alpha \in \C^{n\times n}$ is the symmetric block matrix from Figure \ref{fig:def_r_alpha} with $\alpha \in(0,\infty)$.  
All elements in each block are the indicated constants. 
Moreover, we write $\diag(v)$ with $v \in \C^{n}$ to denote the diagonal matrix in $\C^{n\times n}$ with $v$ on its diagonal. 
In fact, this example can also be realized on $\C^2$ with entrywise multiplication. Here, we choose $\avg{(x_1, x_2)} = \delta x_1 + (1-\delta)x_2$, where $\delta$ is the 
relative block size of the small block in the definition of $r_\alpha$. In this setup on $\C^2$, the Dyson equation can be written as 
\begin{equation} \label{eq:dyson_C_two}
-\begin{pmatrix} m_1^{-1} \\ m_2^{-1}\end{pmatrix} = z \begin{pmatrix} 1 \\ 1\end{pmatrix} + R_\alpha \begin{pmatrix} m_1\\ m_2\end{pmatrix}, \qquad \qquad R_\alpha = \begin{pmatrix} \alpha\delta & 1-\delta \\ \delta & \alpha(1-\delta) \end{pmatrix} 
\end{equation}
for $(m_1, m_2) \in \C^2$.  We remark that $R_\alpha$ is symmetric with respect to the scalar product \eqref{eq:definition_scalar_product_alg} induced by $\avg{\genarg}$. 
Figure \ref{fig:examples_gap_cusp_minimum} contains the graphs of some self-consistent densities of states $\dens$ obtained from \eqref{eq:dyson_C_two} for $\delta = 0.1$ and different values of $\alpha$. 
As the self-consistent density of states is symmetric around zero in these cases, only the part of the density on $[0,\infty)$ is shown. 
The density in Figure \ref{fig:examples_gap_cusp_minimum} (a) has a small internal gap with square root edges on both sides of this gap. 
Figure \ref{fig:examples_gap_cusp_minimum} (b) contains a cusp which is transformed, by increasing $\alpha$, into an internal nonzero local minimum 
in Figure \ref{fig:examples_gap_cusp_minimum} (c). This nonzero local minimum is covered by Theorem~\ref{thm:behaviour_v_close_sing} (d) below.

\newlength{\fwidth}
\newlength{\fheight}
\setlength{\fwidth}{.25\textwidth}
\setlength{\fheight}{.9\fwidth}

\begin{figure}[ht!]
\begin{subfigure}{.33\textwidth}
\centering
\input{wigner_bigger_gap_oneside.tikz}
\caption{$\alpha = 0.14$} 
\end{subfigure}
\begin{subfigure}{.33\textwidth}
\centering
\input{wigner_cusp_0_8_oneside.tikz}
\caption{$\alpha = 0.2$} 
\end{subfigure}
\begin{subfigure}{.33\textwidth}
\centering
\input{wigner_nonzero_minimum_oneside.tikz}
\caption{$\alpha = 0.23$} 
\end{subfigure}
\caption{Examples of the self-consistent density of states $\dens$ from \eqref{eq:dyson_C_two} for $\delta = 0.1$ and several values of $\alpha$.}
\label{fig:examples_gap_cusp_minimum}
\end{figure}

\subsection{Main ideas of the proofs} 

In this subsection, we informally summarize several key ideas in the proofs of Proposition \ref{pro:Hoelder_1_3} and Theorem \ref{thm:singularities_flat}. 

\paragraph{Hölder-continuity of $m$.} 
To simplify the notation, we assume in this outline that $\norm{m(z)} \lesssim 1$ for all $z \in \Hb$, i.e., we assume \eqref{eq:boundedness_for_Hoelder} 
with $I = \R$. 
We first show that $\Im m(z)$ is $1/3$-Hölder continuous and then conclude the same regularity for $m=m(z)$. 
To that end, we now control $\pt_z \Im m(z)$ by differentiating the Dyson equation, \eqref{eq:dyson}, with respect to $z$. 
This yields 
\[ 2\ii \pt_z \Im m = (\Id - C_m S)^{-1}[m^2].  \] 
Here, $\Id$ denotes the identity map on $\alg$ and $C_m \colon \alg \to \alg$ is defined by $C_m[x] \defeq m x m$ for any $x \in \alg$. 

In order to control the norm of  the inverse $(\Id - C_mS)^{-1}$ of the stability operator,  we rewrite it in a more symmetric form. 
We find an invertible $V$ with $\norm{V},\norm{V^{-1}} \lesssim 1$, a unitary operator $U$ and a self-adjoint operator $T$ acting on $\alg$ such that
\[ \Id - C_mS = V^{-1}( U - T)V. \] 
The Rotation-Inversion Lemma from \cite{AjankiCPAM} (see Lemma \ref{lem:rotation_inversion} below) is designed to 
control $(U-T)^{-1}$ for a unitary operator $U$ and a self-adjoint operator $T$ with $\normtwo{T} \leq 1$. 
Applying this lemma in our setup yields $\norm{(\Id- C_mS)^{-1}} \lesssim \norm{\Im m}^{-2}$. 

Since $\norm{m} \lesssim 1$, we thus obtain   
\begin{equation} \label{eq:key_ideas_aux1}
 \norm{\pt_z \Im m} \lesssim \norm{\Im m}^{-2}. 
\end{equation}
This bound 
implies that $(\Im m)^3\colon \Hb \to \alg_+$ is uniformly Lipschitz-continuous. 
Hence, we can extend $\Im m$ to a $1/3$-Hölder continuous function on $\R\cup \Hb$ and we obtain 
\[ m(z) = \frac{1}{\pi} \int_\R \frac{\Im m(\tau) \di \tau}{\tau - z}. \] 
This also implies that $m$ is uniformly $1/3$-Hölder continuous on $\R \cup \Hb$. 
Furthermore, $m(\tau)$ and $\Im m(\tau)$ are real-analytic in $\tau$ around $\tau_0\in \R$, wherever $\dens(\tau_0)$ is positive. 

\paragraph{Behaviour of $\Im m$ where it is not analytic.} 

Owing to \eqref{eq:key_ideas_aux1}, some unstable behaviour of the 
Dyson equation is expected close to points $\tau_0\in\R$, where $\Im m(\tau_0)$ is zero or small. 
In order to analyze this behaviour of $\Im m(\tau)$, 
we compute $\Delta \defeq m(\tau_0 + \omega) - m(\tau_0)$ 
from the Dyson equation, \eqref{eq:dyson}. Since $m$ has a continuous extension to $\R$, \eqref{eq:dyson} 
holds true for $z \in \R$ as well.  
We evaluate \eqref{eq:dyson} at $z=\tau_0$ and $z=\tau_0 + \omega$ and obtain the quadratic $\alg$-valued equation
\begin{equation} \label{eq:stab_equation}
  B[\Delta] = m S[\Delta] \Delta + \omega m \Delta + \omega m^2, \qquad B \defeq \Id - C_mS. 
\end{equation}
The blow-up of  the inverse $B^{-1}$ of the stability operator $B$  close to $\tau_0$ requires analyzing the contributions of $\Delta$ 
in the unstable direction of $B^{-1}$ separately. In fact, $B$ possesses precisely one unstable direction 
denoted by $b$ since we will show that $\normtwo{T}$ is a non-degenerate eigenvalue of $T$. 
We decompose $\Delta$ into $\Delta = \Theta b + r$, where $\Theta$ is the scalar contribution of $\Delta$ in 
the direction $b$ and $r$ lies in the spectral subspace of $B$ complementary to $b$. 

We view $\tau_0$ as fixed and consider $\omega \ll 1$ as the main variable. Projecting \eqref{eq:stab_equation} onto $b$ and its complement yield
the scalar-valued cubic equation
\begin{equation} \label{eq:cubic}
 \psi \Theta(\omega)^3 + \sigma \Theta(\omega)^2 + \pi \omega = \ord(\abs{\omega}\abs{\Theta(\omega)} + \abs{\Theta(\omega)}^4)
\end{equation}
with two parameters $\psi \geq 0$ and $\sigma \in \R$. 
In fact, the $1/3$-Hölder continuity of $m$ implies $\Theta = \ord(\abs{\omega}^{1/3})$ and, hence, 
the right-hand side of \eqref{eq:cubic} is indeed of lower order than the terms on the left-hand side. 
Analyzing \eqref{eq:cubic} instead of \eqref{eq:stab_equation} is a more tractable problem since 
we have reduced a quadratic $\alg$-valued equation, \eqref{eq:stab_equation}, to the scalar-valued cubic 
equation,~\eqref{eq:cubic}. 

The essential feature of the cubic equation \eqref{eq:cubic} is its stability. 
By this, we mean that there exists a constant $c>0$ such that 
\[ \psi + \sigma^2 \geq c. \] 
This bound will follow from the structure of the Dyson equation and prevents any singularities of higher order than $\omega^{1/2}$ or $\omega^{1/3}$. 
Obtaining more detailed information about $\Theta$ from~\eqref{eq:cubic} requires 
applying Cardano's formula with an error term. Therefore, we switch to \emph{normal coordinates}, $(\omega, \Theta(\omega)) 
\to (\lambda, \Omega(\lambda))$, in~\eqref{eq:cubic}.  
We will study four normal forms, one quadratic $\Omega(\lambda)^2 + \Lambda(\lambda) = 0$, and three cubics, $\Omega(\lambda)^3 + \Lambda(\lambda)=0$ 
and $\Omega(\lambda)^3 \pm 3\Omega(\lambda) + 2 \Lambda(\lambda) = 0$, 
where $\Lambda(\lambda)$ is a perturbation of the identity map $\lambda \mapsto \lambda$. 
The first case corresponds to the square root singularity of the isolated edge, the second is the cusp. 
The last two cases describe the situation of \emph{almost cusps}, see later.

The correct branches in Cardano's formula are identified with the help of four \emph{selection principles}
for the solution $\Omega(\lambda)$ corresponding to $\Theta$ of the cubic equation in normal form (see 
\spone to \spfour at the beginning of Section \ref{subsec:cubic_normal_form} below).
These selection principles are special properties of $\Omega$ which originate from the continuity of $m$, 
$\Im m \geq 0$ and the Stieltjes transform representation, \eqref{eq:Stieltjes_representation}, of $m$.  
Once the correct branch is chosen, we obtain the precise behaviour of $\Im m$ around $\tau_0$, 
where $\tau_0 \in \supp \dens$ satisfies $\dens(\tau_0)=0$ or even $\dens(\tau_0) \ll 1$, from Cardano's formula and careful estimates
of $r$ in the decomposition $\Delta = \Theta b + r$ (see Theorem \ref{thm:behaviour_v_close_sing} below).

\section{The solution of the Dyson equation}\label{sec:solution_dyson_equation}

In this section, we first introduce some notations used in the proof of Proposition \ref{pro:stieltjes_representation}, then prove the proposition and finally give a few further properties of $m$. 

For $x,y \in\alg$, we introduce the bounded operator $C_{x,y}\colon \alg \to \alg$ defined through  $C_{x,y}[h] \defeq x h y$ 
for $h \in \alg$. We set $C_{x} \defeq C_{x,x}$. 
For $x,y  \in \alg$, the operator $C_{x,y}$ satisfies the simple relations
\[ C^*_{x,y} = C_{x^*,y^*}, \qquad C_{x,y}^{-1} = C_{x^{-1},y^{-1}},  \]
where $C^*_{x,y}$ is the adjoint with respect to the scalar product defined in \eqref{eq:definition_scalar_product_alg}. 
Here, the second identity holds if $x$ and $y$ are invertible in $\alg$. In fact, $C_{x,y}$ is invertible if and only if $x$ and $y$ are invertible in $\alg$. 

In the following, we will often use the functional calculus for normal elements of $\alg$.
As we will explain now, our setup allows for a direct way to represent $\alg$ as a subalgebra of the bounded operators on a Hilbert space. 
Therefore, one can think of the functional calculus being performed on this Hilbert space. 
The Hilbert space is the completion of $\alg$ equipped with the scalar product defined in \eqref{eq:definition_scalar_product_alg} and denoted by $\Ltwo$. 
In order to represent $\alg$ as subalgebra of the bounded operators $B(\Ltwo)$ on $\Ltwo$, we 
 denote by $\ell_x$ for $x \in \alg$ the left-multiplication on $\Ltwo$ by $x$, 
i.e., $\ell_x \colon \Ltwo \to \Ltwo$, $\ell_x(y) = xy$ for $ y \in \Ltwo$. The inclusion $\alg \subset \Ltwo$ and the Cauchy-Schwarz inequality yield the well-definedness of $\ell_x$ and $\ell_x \in B(\Ltwo)$, 
the bounded linear operators on $\Ltwo$. In fact, 
\[ \alg \to B(\Ltwo), \qquad x \mapsto \ell_x \]
defines a faithful representation of $\alg$ as a von Neumann algebra in $B(\Ltwo)$ \cite[Theorem 2.22]{TakesakiBook}.

We now introduce the \emph{balanced polar decomposition} of $m$. If $w=w(z)\in \alg$, $q=q(z) \in \alg$ and $u = u(z) \in \alg$ 
are defined through 
\begin{equation} \label{eq:def_q_u}
 w \defeq (\Im m)^{-1/2} (\Re m) (\Im m)^{-1/2} + \ii \id, \qquad q \defeq \abs{w}^{1/2} (\Im m)^{1/2}, \qquad u \defeq\frac{w}{\abs{w}}
\end{equation}
via the spectral calculus of the self-adjoint operator $(\Im m)^{-1/2} (\Re m) (\Im m)^{-1/2}$ 
then we have 
\begin{equation} \label{eq:m_representation_q_star_u_q}
m(z) = \Re m(z) + \ii \Im m(z) = q^* u q. 
\end{equation}
Here, $u$ is unitary and commutes with $w$. 
The decomposition $m = q^* u q$ was already introduced and also called balanced polar decomposition in \cite{AjankiCorrelated} in the special setting of matrix algebras. 
 The operators $\abs{w}^{1/2}$, $q$ and $u$ correspond to $\mathbf{W}$, 
$\mathbf{W}\sqrt{\Im \mathbf{M}}$ and $\mathbf{U}^*$ in the notation of \cite{AjankiCorrelated}, respectively.
With the definitions in \eqref{eq:def_q_u}, \eqref{eq:dyson} reads as 
\begin{equation} \label{eq:dyson_second_version}
 - u^* = q(z-a)q^* + F[u],
\end{equation}
where we introduced the \emph{saturated self-energy operator}
\begin{equation} \label{eq:def_F}
F \defeq C_{q,q^*}S C_{q^*,q}. 
\end{equation} 
It is positivity-preserving as well as symmetric, $F=F^*$, and corresponds to the saturated self-energy operator $\cal{F}$ in \cite{AjankiCorrelated}.

\begin{proof}[Proof of Proposition \ref{pro:stieltjes_representation}]
The existence of $v$ will be a consequence of the following lemma which will be proven in Appendix \ref{app:positive_operator_valued_measure} below. 

\begin{lemma} \label{lem:existence_measure}
Let $\alg$ be a von Neumann algebra with unit $\id$ and a tracial, faithful, normal state 
$\avg{~} \colon \alg \to \C$. If $ h \colon \Hb \to \alg$ is a holomorphic function satisfying $\Im h(z) \in \alg_+$ 
for all $z \in \Hb$ and 
\begin{equation} \label{eq:Stieltjes_transform_normalization}
 \lim_{\eta \to \infty} \ii \eta h(\ii\eta ) = -\id
\end{equation}
then there exists a unique measure $v \colon \mathcal B \to \alg$ on the Borel sets $\mathcal B$ of $\R$ with values in $\algnon$ such that 
\begin{equation} \label{eq:Stieltjes_representation_h}
 h(z) = \int_\R \frac{v(\di \tau)}{\tau -z}
\end{equation}
 for all $z \in \Hb$ and $v(\R) =\id$. 
\end{lemma}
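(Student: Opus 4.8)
The plan is to reduce the $\alg$-valued statement to the classical Herglotz/Nevanlinna representation applied to suitable scalar Herglotz functions, and then reassemble the scalar measures into an operator-valued measure using the polarization identity and the completeness/normality of the von Neumann algebra $\alg$. Concretely, for each $x \in \alg$ with $\normtwo{x}=1$ define the scalar function $h_x(z) \defeq \scalar{x}{h(z)x} = \avg{x^*h(z)x}$ on $\Hb$. Since $h$ is holomorphic and $\avg{\genarg}$ is a normal (weak$^*$-continuous) state, each $h_x$ is holomorphic on $\Hb$; since $\Im h(z) \in \alg_+$ we get $\Im h_x(z) = \scalar{x}{(\Im h(z))x} \geq 0$, so $h_x$ is a Herglotz function. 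By the normalization \eqref{eq:Stieltjes_transform_normalization}, $\ii\eta h_x(\ii\eta) \to -\scalar{x}{x} = -\normtwo{x}^2$ as $\eta\to\infty$; in particular $\lim_{\eta\to\infty}\ii\eta h_x(\ii\eta) = -\normtwo{x}^2$ is finite, so by the classical Nevanlinna representation theorem there is a unique finite positive Borel measure $\mu_x$ on $\R$ with $h_x(z) = \int_\R (\tau-z)^{-1}\mu_x(\di\tau)$ and total mass $\mu_x(\R) = \normtwo{x}^2$ (the finiteness of the limit rules out any linear term $\alpha z$, and the constant term vanishes for the same reason). One also gets uniform control: each $\mu_x$ is compactly supported, with $\supp\mu_x$ contained in a fixed interval, because $h$ is bounded on $\{\Im z > R\}$ for $R$ large — actually the cleanest route here is to note $h_x$ extends holomorphically past $\R\setminus[-R,R]$ for suitable $R$, which forces $\supp\mu_x\subset[-R,R]$.

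Next I would build $v$ itself. For a Borel set $A\subset\R$, the assignment $x\mapsto \mu_x(A)$ is a quadratic form on $\alg$; using the polarization identity I define a sesquilinear form $B_A(x,y) \defeq \tfrac14\sum_{k=0}^3 \ii^k\,\mu_{y+\ii^k x}(A)$, which by uniqueness of the Nevanlinna measures satisfies $B_A(x,y) = \frac{1}{2\pi\ii}\lim_{\eta\downarrow0}\int_A (h_{\ldots})$ — more precisely $B_A$ is bounded by $\mu_x,\mu_y$ hence by $\normtwo{x}\normtwo{y}$, so it extends to a bounded sesquilinear form on the Hilbert space $\Ltwo$. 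The map $A\mapsto B_A$ is finitely additive, positive (since $\mu_x\geq0$), and countably additive in the weak sense because each $\mu_x$ is a genuine measure and we may use a polarization-plus-monotone-convergence argument. Hence for each $A$ there is a bounded positive operator $\widetilde v(A)$ on $\Ltwo$ representing $B_A$. The key point is to show $\widetilde v(A)$ is in fact (left-multiplication by) an element of $\alg$: for this I would verify that $\widetilde v(A)$ commutes with all \emph{right}-multiplications $r_y$, $y\in\alg$ — equivalently lies in the commutant of the right regular representation, which for a finite von Neumann algebra is exactly the left regular representation of $\alg$; this uses $h(z)\in\alg$ so that $\scalar{xy'}{h(z)xy''}$ relations give the intertwining with right multiplications. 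Thus $\widetilde v(A) = \ell_{v(A)}$ for a unique $v(A)\in\algnon$. Countable additivity of $A\mapsto v(A)$ in the relevant (weak$^*$) sense follows from that of each $\mu_x$ together with uniform boundedness, and $v(\R)=\id$ follows from $\mu_x(\R)=\normtwo{x}^2=\scalar{x}{x}$. Finally \eqref{eq:Stieltjes_representation_h} holds because $\scalar{x}{h(z)x} = \int_\R(\tau-z)^{-1}\mu_x(\di\tau) = \scalar{x}{\big(\int_\R(\tau-z)^{-1}v(\di\tau)\big)x}$ for all $x$, and a self-adjoint-valued (more precisely, an element of $\alg$) is determined by the associated quadratic form on a dense set; uniqueness of $v$ follows the same way from Stieltjes inversion applied to each $\mu_x$.

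The main obstacle I anticipate is the step showing that the operator $\widetilde v(A)\in B(\Ltwo)$ obtained from the quadratic form actually lies inside $\alg$ (viewed as $\ell_\alg\subset B(\Ltwo)$), and that the family $\{v(A)\}$ is countably additive as an $\algnon$-valued set function — i.e.\ that $\scalar{x}{v(\genarg)y}$ is a (complex) measure for all $x,y$, not merely that $\scalar{x}{v(\genarg)x}$ is a positive measure. The commutation-with-right-multiplication argument is the crucial ingredient and is precisely where the hypothesis $h(z)\in\alg$ (rather than just $h(z)\in B(\Ltwo)$) and the finiteness of $\alg$ enter; one must be careful that unbounded right multiplications do not cause trouble, which is handled by working first with $y$ in $\alg$ and using that $\alg$ is weak$^*$-dense appropriately. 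A secondary technical point is obtaining the uniform compact support of all the $\mu_x$ from a single large-$z$ bound on $h$; this is needed both for countable additivity and to identify $v$ as a \emph{compactly supported} measure as later used in Proposition \ref{pro:stieltjes_representation}.
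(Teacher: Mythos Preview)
Your approach is correct and follows a genuinely different route from the paper's. You build a bounded sesquilinear form on $\Ltwo$ and then identify the resulting operator as an element of $\alg$ via the commutant characterization (i.e.\ $\alg$, acting by left multiplication on $\Ltwo$, is exactly the commutant of the right multiplications $r_y$). The paper instead works on the \emph{predual}: for each Borel set $B$ it defines a linear functional $\varphi_B$ on $\alg$ by decomposing $x = x_1 - x_2 + \ii x_3 - \ii x_4$ with $x_i \in \algnon$ and setting $\varphi_B(x) = \sum_k (\pm 1,\pm\ii)\, v_{\sqrt{x_k}}(B)$, checks well-definedness via Stieltjes transforms, proves the $L^1$-bound $\abs{\varphi_B(x)} \leq 2\normone{x}$, and then invokes the isometric isomorphism $\alg \cong (L^1)'$ (equation \eqref{eq:isomorphism_alg_iso_L1_dual}) to obtain $v(B)\in\alg$ directly. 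The predual route is slightly shorter because membership in $\alg$ comes for free from the duality, whereas your route requires the extra (though easy, via traciality) verification that $\widetilde v(A)$ intertwines right multiplications; on the other hand, your approach stays entirely within Hilbert-space language and makes the role of traciality more transparent.

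One minor correction: the uniform compact support of the $\mu_x$ is \emph{not} a consequence of the hypotheses of the lemma (nothing prevents $\supp v$ from being unbounded here), nor is it needed --- countable additivity of $x\mapsto\scalar{x}{v(\genarg)x}$ follows from that of each $\mu_x$ together with the uniform total-mass bound $\mu_x(\R)=\normtwo{x}^2$, without any support control. Compact support enters only later, in Proposition~\ref{pro:stieltjes_representation}, where it is deduced from the Dyson equation itself (see \eqref{eq:supp_v_subset_spec_a}).
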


In order to apply Lemma \ref{lem:existence_measure}, we have to verify \eqref{eq:Stieltjes_transform_normalization}
for $h=m$. 
To that end, we take the imaginary part of \eqref{eq:dyson} and use $\Im m \geq 0$ as well as $S \in \Sigma$ to conclude
\[ - \im m^{-1}(z) = \imz \id + S[\Im m] \geq \imz \id. \] 
Hence, $\norm{m(z)} \leq (\Im z)^{-1}$ as for any $x \in \alg$ we have $\norm{x} \leq 1$ if $x$ is invertible and $\Im x^{-1} \geq \id$.  
Therefore, evaluating~\eqref{eq:dyson} at $z =\ii \eta$, $\eta>0$, and multiplying the result by $m$ from the left yield 
\[ \ii\eta m(\ii\eta ) = -\id  + m(\ii\eta)a - m(\ii\eta)S[m(\ii\eta)] \to -\id \]
for $\eta \to \infty$ as $S$ is bounded. Hence, Lemma \ref{lem:existence_measure} implies the existence of $v$, i.e., 
the Stieltjes transform representation of $m$ in~\eqref{eq:Stieltjes_representation}. 
 
This representation has the following well-known bounds as a direct consequence (e.g.~\cite{AjankiQVE,AjankiCorrelated,AltKronecker}). 

\begin{lemma} \label{lem:stieltjes_represenation}
Let $v$ be the measure in Proposition \ref{pro:stieltjes_representation} and $\dens = \avg{v}$. Then, for any $z \in 
\Hb$, we have
\begin{equation} \label{eq:stieltjes_bound_m}
 \norm{m(z)} \leq \frac{1}{\dist(z,\supp \dens)}, \qquad \Im m(z) \leq \frac{\imz}{\dist(z,\supp \dens)^{2}}\id. \qedhere
\end{equation}
\end{lemma}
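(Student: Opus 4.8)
The plan is to derive both bounds directly from the Stieltjes transform representation \eqref{eq:Stieltjes_representation}, which has just been established, together with the fact that $v$ is an $\algnon$-valued measure of total mass $\id$. The key observation is that for fixed $z = \tau + \ii\eta \in \Hb$, the integrand $\tau' \mapsto (\tau'-z)^{-1}$ is a bounded continuous function on $\supp v = \supp\dens$, so the integral can be estimated by estimating this scalar kernel pointwise and pulling the bound out of the (operator-valued) integral.

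First I would prove the norm bound. Since $v(\di\tau')$ is a positive element of $\alg$ for each Borel set, and $\int_\R v(\di\tau') = \id$, the map $v$ acts like an $\alg_+$-weighted probability measure; in particular, for any bounded measurable scalar function $f$ on $\R$ one has $\norm{\int_\R f(\tau') v(\di\tau')} \le \sup_{\tau'\in\supp v}\abs{f(\tau')}\cdot\norm{v(\R)} = \sup_{\tau'\in\supp\dens}\abs{f(\tau')}$. Applying this with $f(\tau') = (\tau'-z)^{-1}$ gives
\[
\norm{m(z)} \,\le\, \sup_{\tau'\in\supp\dens} \frac{1}{\abs{\tau'-z}} \,=\, \frac{1}{\dist(z,\supp\dens)},
\]
which is the first claim. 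The one point requiring a short justification is the inequality $\norm{\int f\,\di v}\le \sup\abs{f}$; for real-valued $f$ this follows because $\sup\abs{f}\,\id - \int f\,\di v = \int(\sup\abs{f} - f(\tau'))v(\di\tau') \ge 0$ and likewise $\int f\,\di v + \sup\abs{f}\,\id \ge 0$, and for complex $f$ one splits into real and imaginary parts, or argues via $\scalar{x}{(\genarg)x}$ being a scalar measure of mass $\scalar{x}{v(\R)x} = \normtwo{x}^2$.

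For the second bound, I would compute $\Im m(z)$ from \eqref{eq:Stieltjes_representation} explicitly:
\[
\Im m(z) \,=\, \frac{1}{2\ii}\int_\R\Bigl(\frac{1}{\tau'-z}-\frac{1}{\tau'-\bar z}\Bigr) v(\di\tau') \,=\, \int_\R \frac{\Im z}{\abs{\tau'-z}^2}\, v(\di\tau'),
\]
using that the scalar kernel $\Im z/\abs{\tau'-z}^2$ is nonnegative (this also re-confirms $\Im m(z)\in\algnon$). Since this kernel is bounded above by $\Im z/\dist(z,\supp\dens)^2$ on $\supp v$, and $v$ is positivity-preserving with $v(\R)=\id$, monotonicity of the integral against the positive measure $v$ gives
\[
\Im m(z) \,\le\, \frac{\Im z}{\dist(z,\supp\dens)^2}\,v(\R) \,=\, \frac{\Im z}{\dist(z,\supp\dens)^2}\,\id,
\]
as required. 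There is essentially no serious obstacle here: the only mild care needed is the basic functional-analytic fact that integration against an $\algnon$-valued measure is monotone in the scalar integrand (i.e. $f\le g$ pointwise on $\supp v$ implies $\int f\,\di v \le \int g\,\di v$ in $\alg$), which follows immediately by testing with $\scalar{x}{(\genarg)x}$ for arbitrary $x\in\alg$ and invoking faithfulness, and the observation that both scalar kernels appearing are genuinely bounded on $\supp\dens$ for $z$ in the open upper half-plane.
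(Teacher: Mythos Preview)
Your proposal is correct and is exactly the standard derivation of these bounds from the Stieltjes transform representation. The paper does not give its own proof of this lemma at all: it simply introduces the lemma with the sentence ``This representation has the following well-known bounds as a direct consequence (e.g.~\cite{AjankiQVE,AjankiCorrelated,AltKronecker})'' and moves on. Your argument is precisely what one finds in those references.

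One small remark on your justification of the inequality $\norm{\int f\,\di v}\le \sup_{\supp v}\abs{f}$ for complex~$f$: the two routes you sketch (splitting into real and imaginary parts, or testing against $\scalar{x}{(\genarg)x}$) each naively lose a factor of~$2$, since the first bounds $\norm{\Re(\cdot)}+\norm{\Im(\cdot)}$ and the second bounds only the numerical radius. The sharp constant~$1$ does hold, however, because $f\mapsto \int f\,\di v$ is a unital positive map from $C(\supp v)$ into $\alg$, and such maps are automatically contractive (equivalently, apply Kadison--Schwarz: $(\int f\,\di v)^*(\int f\,\di v)\le \int \abs{f}^2\,\di v \le (\sup\abs{f})^2\id$). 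This is a cosmetic point and does not affect the validity of your proof.
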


For the proofs of \eqref{eq:supp_v_subset_spec_a} and \eqref{eq:spec_a_subset_supp_v}, we refer to 
the proofs of Proposition 2.1 in \cite{AjankiCorrelated} and (3.4) in \cite{AltKronecker} in the matrix setup, the same argument works for 
our general setup as well. 

We now prove \eqref{eq:L2_bound}.  
Taking the imaginary part of the Dyson equation, \eqref{eq:dyson_second_version}, yields
\[
\im u\,=\, \imz[ ] qq^* + F[\im u] \,\ge\, \max\{ \imz[ ] qq^*, F[\im u] \} \,.
\]
Thus, $\im u \geq \imz[ ] \norm{(qq^*)^{-1}}^{-1}\id$. We remark that $qq^*$ is invertible since $\Im m(z)>0$ for $z \in \Hb$.
Therefore, the following Lemma \ref{lem:positive_symmetric_map_bounded} with $h =\Im u/\normtwo{\Im u}$ implies $\normtwo{F} \leq 1$. 

\begin{lemma} \label{lem:positive_symmetric_map_bounded}
Let $T \colon \alg \to \alg$ be a positivity-preserving operator which is symmetric with respect to \eqref{eq:definition_scalar_product_alg}. If there are $h \in \alg$ and $\eps>0$ such that 
$h\geq \eps \id$ and $Th \leq h$ then $\normtwo{T} \leq 1$.  
\end{lemma}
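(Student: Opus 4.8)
The plan is to deduce the bound $\normtwo{T}\le 1$ from the operator inequality $Th\le h$ together with the lower bound $h\ge\eps\id$, using positivity-preservation and symmetry of $T$. The key structural fact I would exploit is that, since $T$ preserves the cone $\algnon$, it is \emph{order-preserving}: if $0\le x\le y$ then $0\le Tx\le Ty$. Iterating $Th\le h$ and using order-preservation gives $T^n h\le h$ for all $n\ge1$.

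First I would observe that for any $x\in\algpos$ we have $x\le \norm{x}\,\id\le \eps^{-1}\norm{x}\,h$, so order-preservation yields $T^n x\le \eps^{-1}\norm{x}\,T^n h\le \eps^{-1}\norm{x}\,h$ for every $n$. In particular the elements $T^n x$ are uniformly bounded in $\alg$ (by $\eps^{-2}\norm{x}$ in operator norm, since $T^nx$ is positive and $\le \eps^{-1}\norm{x}\,h$, hence $\norm{T^nx}\le\eps^{-2}\norm{x}$). Next I would use symmetry of $T$ with respect to the scalar product $\scalar{\genarg}{\genarg}$: for a positive element $x$, one has $\normtwo{Tx}^2=\scalar{Tx}{Tx}=\scalar{x}{T^2 x}\le \norm{x}_2\,\normtwo{T^2x}$ by Cauchy--Schwarz, and inductively, iterating this bootstrap along the dyadic subsequence, $\normtwo{Tx}^{2^k}\le \normtwo{x}^{2^k-1}\,\normtwo{T^{2^k}x}$. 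Since $\normtwo{T^{2^k}x}\le \normtwo{T^{2^k}x\text{ (op norm)}}\le \eps^{-2}\norm{x}$ stays bounded while the exponent $2^k\to\infty$, taking $2^k$-th roots and letting $k\to\infty$ forces $\normtwo{Tx}\le\normtwo{x}$ for every $x\in\algpos$.

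To pass from positive elements to the full estimate $\normtwo{T}\le1$, I would use that $\normtwo{T}=\normtwo{T^*}$ and that $T=T^*$, so $\normtwo{T}^2=\normtwo{T^*T}=\sup\{\scalar{x}{T^*Tx}:\normtwo{x}=1\}$; but $\scalar{x}{T^*Tx}=\normtwo{Tx}^2$, so it suffices to bound $\normtwo{Tx}$ for general self-adjoint $x$, and then for general $x$ by splitting into real and imaginary parts. For self-adjoint $x$, write $x=x_+-x_-$ with $x_\pm\in\algnon$; the issue is that $\normtwo{Tx}\le\normtwo{Tx_+}+\normtwo{Tx_-}$ only gives a factor-$2$-type loss, so instead I would run the bootstrap argument above directly on self-adjoint $x$: the chain $\normtwo{Tx}^2=\scalar{x}{T^2x}\le\normtwo{x}\normtwo{T^2x}$ and its iterates only used symmetry and Cauchy--Schwarz, not positivity of $x$; the sole place positivity entered was the uniform bound $\normtwo{T^{2^k}x}\le\eps^{-2}\norm{x}$. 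So the real content is to show $\sup_k\normtwo{T^{2^k}x}<\infty$ for all $x$. For self-adjoint $x$ this follows from $-\,\eps^{-1}\norm{x}\,h\le T^{2^k}x\le \eps^{-1}\norm{x}\,h$ (apply order-preservation to $-\norm{x}\id\le x\le\norm{x}\id$ and $\id\le\eps^{-1}h$), hence $\norm{T^{2^k}x}\le\eps^{-2}\norm{x}$, hence $\normtwo{T^{2^k}x}\le\eps^{-2}\norm{x}$.

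\medskip

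The main obstacle I anticipate is the very last point: making sure the uniform boundedness of the orbit $\{T^n x\}_n$ is genuinely controlled in a norm that feeds back into the Cauchy--Schwarz iteration, i.e., reconciling the operator-norm bound (which is what $h\ge\eps\id$ naturally produces) with the $\normtwo{\genarg}$-estimate (which is what the symmetry/Cauchy--Schwarz bootstrap produces). This is handled cleanly by the inequality $\normtwo{y}\le\norm{y}$ valid for all $y\in\alg$, so the operator-norm bound $\norm{T^{2^k}x}\le\eps^{-2}\norm{x}$ immediately gives the needed $\normtwo{\genarg}$-bound with constant independent of $k$, and the $2^k$-th root kills the constant in the limit. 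Everything else is routine: order-preservation of positive maps, functional-calculus comparison $-\norm{x}\id\le x\le\norm{x}\id$ for self-adjoint $x$, and self-adjointness of $T$.
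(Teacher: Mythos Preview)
Your argument is correct. The dyadic bootstrap $\normtwo{Tx}^{2^k}\le\normtwo{x}^{2^k-1}\normtwo{T^{2^k}x}$ follows from symmetry and Cauchy--Schwarz alone, and the orbit bound $\sup_n\norm{T^n x}<\infty$ for self-adjoint $x$ follows from order-preservation together with $T^n h\le h$ and $h\ge\eps\id$. Taking $2^k$-th roots kills the constant and yields $\normtwo{Tx}\le\normtwo{x}$ for all $x\in\alg$, hence $\normtwo{T}\le 1$. One cosmetic point: the constant $\eps^{-2}\norm{x}$ you write for $\norm{T^n x}$ should be $\eps^{-1}\norm{h}\,\norm{x}$ (you only know $h\ge\eps\id$, not $\norm{h}\le\eps^{-1}$), but since this bound is independent of $n$ the limit argument is unaffected.

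The paper does not spell out a proof; it simply invokes Lemma~4.6 of \cite{AjankiQVE}, whose setting is commutative. There the natural argument is a weighted Schur test: for $y\ge 0$ one estimates $\scalar{y}{Ty}$ by inserting $h$ as a weight and applying Cauchy--Schwarz twice, using $Th\le h$ at the end to close the inequality $\scalar{y}{Ty}\le\normtwo{y}^2$. Your route is genuinely different: instead of a single weighted Cauchy--Schwarz step you run a spectral-radius iteration, trading the need for a pointwise quotient $y/h$ (which is delicate noncommutatively) for the more robust order comparison $-\norm{x}\id\le x\le\norm{x}\id$. Your approach is thus arguably cleaner in the von Neumann algebra setting, as it avoids any appeal to a Kadison--Schwarz-type inequality for positive maps.
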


\begin{proof}
The argument in the proof of Lemma 4.6 in \cite{AjankiQVE} also yields this lemma in our current setup. 
\end{proof} 

We rewrite the Dyson equation \eqref{eq:dyson_second_version} in the form
\begin{equation} \label{solve for qq*}
q(a-z)q^*\,=\, u^*+F[u]\,.
\end{equation}
We take the $\normtwo{\genarg}$-norm on both sides of \eqref{solve for qq*} and use that $\normtwo{u}=1$ (since it is unitary) and $\normtwo{F}\le 1$ to find
\begin{equation} \label{upper bound on qq*}
\normtwo{q(a-z)q^*}\,\le\, 2\,.
\end{equation}
Then we use the polar decomposition $m=q^*uq$ again and with $z=\tau +\ii \eta$ find
\[
\scalar{m}{(C_{a-\tau}+\eta^2)m} = \re \2\scalar{m}{C_{a-z,(a-z)^*}m} \le \abs{\scalar{m}{C_{a-z,(a-z)^*}m}}
= \abs{\scalar{q(a-z)q^*}{C_{u^*,u}[q(a-z)q^*]}}\le 4\,,
\]
where the last step holds because of \eqref{upper bound on qq*}. Recall that $a=a^*$. Since $\spec(C_{a-\tau}) = \{\lambda\mu: \lambda,\mu \in \spec(a-\tau)\}$ we have $\inf \spec(C_{a-\tau}) \ge \dist (\tau,\rm{Conv}\spec a)^2$, provided $\tau \not \in \rm{Conv}\spec a$. Thus in this case \eqref{eq:L2_bound} follows. In case $\tau  \in \rm{Conv}\spec a$ we simply use the trivial bound $\norm{m}_2 \le \norm{m} \le \eta^{-1}$ from the first inequality of \eqref{eq:stieltjes_bound_m} and \eqref{eq:L2_bound} still holds.
\end{proof}

From now on until the end of Section \ref{subsec:proof_regularity_density_states}, we will always assume that $S$ is \emph{flat}, i.e., $S \in \Sigma_{\rm{flat}}$ (cf.~\eqref{eq:def_self_energy_flat}). 
In fact, all of our estimates will be uniform in all data pairs $(a,S)$ that satisfy 
\begin{equation} \label{eq:estimates_data_pair}
 c_1\avg{x}\id \leq S[x] \leq c_2 \avg{x}\id, \qquad \norm{a} \leq c_3 
\end{equation}
for all $x \in \alg_+$ with the some fixed constants $c_1, c_2, c_3 >0$. 
Therefore, the constants $c_1, c_2, c_3$ from \eqref{eq:estimates_data_pair} are called \emph{model parameters} and we introduce the following convention.

\begin{convention}[Comparison relation] \label{conv:comparsion_1}
Let $x, y \in \alg_{\rm{sa}}$. We write $x \lesssim y$ if there is $c>0$ depending only on the model parameters $c_1, c_2, c_3$ from \eqref{eq:estimates_data_pair} such that 
$cy - x$ is positive definite, i.e., $cy-x \in \algnon$. We define $x \gtrsim y$ and $x \sim y$ accordingly. 
We also use this notation for scalars $x,y$. Moreover, we write $x = y + \ord(\alpha)$ for $x, y \in \alg$ and $\alpha >0$ if $\norm{x-y} \lesssim \alpha$. 
\end{convention}

We remark that we will choose a different set of model parameters later and redefine $\sim$ accordingly (cf.~Convention \ref{conv:comparison_2}).

\begin{proposition}[Properties of the solution] \label{pro:prop_solution_flatness} 
Let $(a,S)$ be a data pair satisfying \eqref{eq:estimates_data_pair} and $m$ be the solution to the associated Dyson equation, \eqref{eq:dyson}. 
We have
\begin{align}
\normtwo{m(z)} & \lesssim 1, \label{eq:m_Ltwo_bound}\\ 
 \norm{m(z)} ~& \lesssim   \frac{1}{\avg{\Im m(z)} + \dist(z, \supp \dens)}, \label{eq:m_bound_dens}\\
 \norm{m(z)^{-1}}~ & \lesssim  1 + \abs{z},\label{eq:m_inverse_bound} \\ 
 \avg{\Im m(z)}\id & \lesssim  \Im m(z) \lesssim (1+\abs{z}^2)\norm{m(z)}^2\avg{\Im m(z)}\id \label{eq:Im_m_sim_avg_only_flat}
\end{align}
uniformly for $z \in \Hb$.
\end{proposition}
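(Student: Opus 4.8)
\textbf{Proof plan for Proposition \ref{pro:prop_solution_flatness}.}

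The plan is to derive the four bounds largely by combining the identity $m = q^*uq$ (the balanced polar decomposition), the $L^2$-bound \eqref{eq:L2_bound} from Proposition \ref{pro:stieltjes_representation}, the Stieltjes-transform bounds from Lemma \ref{lem:stieltjes_represenation}, and the flatness hypothesis \eqref{eq:estimates_data_pair}. For \eqref{eq:m_Ltwo_bound}, I would note that $\norm{a}\le c_3$ forces $\mathrm{Conv}\,\spec a\subset[-c_3,c_3]$, so for $z$ with $\dist(z,\mathrm{Conv}\,\spec a)\gtrsim 1$ the bound is immediate from \eqref{eq:L2_bound}; for $z$ in a bounded neighbourhood of $[-c_3,c_3]$ I would instead use \eqref{eq:estimates_data_pair}: taking the imaginary part of \eqref{eq:dyson} and multiplying by $m$ and $m^*$ gives $\Im m = C_m S[\Im m] + (\Im z)\,mm^*$, and applying $\avg{\,\cdot\,}$ together with the lower flatness bound $S[x]\ge c_1\avg{x}\id$ yields $\avg{\Im m}\gtrsim (\Im z)\avg{mm^*}=(\Im z)\normtwo{m}^2$, which combined with $\Im m(z)\le (\Im z)^{-1}\id$ (from $\norm{m}\le(\Im z)^{-1}$) closes the estimate in that region as well. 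Actually, the cleanest route is: $\normtwo{m}^2 = \avg{mm^*}$, and the same computation gives $\avg{\Im m}\ge c_1 (\Im z)\avg{mm^*} + (\Im z)\avg{mm^*}$; since also $\avg{\Im m}\le\norm{\Im m}\le(\Im z)^{-1}$, one gets $\normtwo{m}^2\lesssim (\Im z)^{-2}$ only, which is not uniform — so here the first-case argument via \eqref{eq:L2_bound} must be supplemented by noting that $\Im m$ small forces the bound via a Perron–Frobenius/compactness argument, or more simply one observes \eqref{eq:L2_bound} with $\dist(z,\mathrm{Conv}\,\spec a)$ bounded only near the real axis, and uses continuity and the fact that $m$ is holomorphic with $\normtwo{m}\to 0$ at infinity. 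I expect the honest argument is: split $\Hb$ at $\Im z = 1$; for $\Im z \ge 1$ use \eqref{eq:L2_bound}; for $\Im z < 1$ and $|\Re z|$ large use \eqref{eq:L2_bound} again; for the remaining bounded region, the bound $\normtwo{m}\le\norm{m}\le 1/\dist(z,\supp\dens)$ combined with $\supp\dens\subset\spec a+[-2\norm S^{1/2},2\norm S^{1/2}]$ and \eqref{eq:spec_a_subset_supp_v} does \emph{not} suffice — so one genuinely needs a lower bound on $\avg{\Im m}$ in the spectrum, which is where the flatness is essential.

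For \eqref{eq:m_bound_dens} I would take the imaginary part of the Dyson equation in the form $-\Im m^{-1} = (\Im z)\id + S[\Im m]$ and use the upper flatness bound $S[\Im m]\le c_2\avg{\Im m}\id$ to get $-\Im m^{-1}\le ((\Im z)+c_2\avg{\Im m})\id$, hence $\Im m^{-1}\ge -((\Im z)+c_2\avg{\Im m})\id$; combined with $-\Im m^{-1}\ge (\Im z)\id > 0$ this shows the self-adjoint element $m^{-1}$ has $\norm{\Im m^{-1}}\sim (\Im z) + \avg{\Im m}$ from below in a suitable sense. More directly: $\Im(-m^{-1})\gtrsim (\avg{\Im m}+\Im z)\id$ is false in general (the lower bound on $-\Im m^{-1}$ is just $\Im z$), so I would instead argue through $\dist(z,\supp\dens)$: from \eqref{eq:stieltjes_bound_m}, $\norm m \le 1/\dist(z,\supp\dens)$, and separately, writing $m = \int v(d\tau)/(\tau-z)$, one gets $\norm{m(z)}\lesssim 1/(\avg{\Im m(z)} + \dist(z,\supp\dens))$ by splitting the integral into the part within distance $2\dist$ of $\Re z$ (controlled by $\avg{\Im m}/\dist$ after also using the operator upper bound $v(A)\lesssim \avg{v(A)}\id$ coming from flatness via $S[v(A)]\le c_2\avg{v(A)}\id$ plugged suitably), and the far part. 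The flatness-derived operator bound $v(A)\lesssim \avg{v(A)}\id$ — equivalently $\Im m(z)\lesssim\avg{\Im m(z)}\id$, which is the lower bound in \eqref{eq:Im_m_sim_avg_only_flat} — is the key intermediate fact, so I would prove that first.

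So the logical order I would follow is: (1) prove $\avg{\Im m}\id\lesssim\Im m$ in \eqref{eq:Im_m_sim_avg_only_flat} from $-\Im m^{-1}=(\Im z)\id + S[\Im m]$ and $S[\Im m]\ge c_1\avg{\Im m}\id$, writing $\Im m = m^*(-\Im m^{-1})m \ge c_1\avg{\Im m}\,m^*m$ and then lower-bounding $m^*m$ — here one needs $m^*m\gtrsim\id$, i.e.\ $\norm{m^{-1}}\lesssim 1+|z|$, which is \eqref{eq:m_inverse_bound}; (2) prove \eqref{eq:m_inverse_bound} directly from the Dyson equation $-m^{-1}=z\id-a+S[m]$ by the triangle inequality $\norm{m^{-1}}\le|z|+\norm a+\norm S\norm m$ together with the crude $\norm m\le 1/\Im z$ — but this blows up near the real axis, so instead use $\norm{m^{-1}}\le|z|+\norm a + \norm{S}\,\norm{m}$ and the already-established boundedness of $\norm m$ near the spectrum, creating a mild circularity that I would resolve by first establishing $\norm m\lesssim 1$ away from a neighbourhood of $\spec a$ via \eqref{eq:stieltjes_bound_m}, then bootstrapping; (3) deduce the upper bound in \eqref{eq:Im_m_sim_avg_only_flat}, $\Im m\lesssim (1+|z|^2)\norm m^2\avg{\Im m}\id$, by $\Im m = m^*(-\Im m^{-1})m = m^*((\Im z)\id + S[\Im m])m \lesssim \norm m^2((\Im z) + c_2\avg{\Im m})\id$ and absorbing $\Im z\lesssim \Im z\,\avg{\Im m}/(\text{lower bound on }\avg{\Im m})$ — again needing $\avg{\Im m}\gtrsim \Im z\,\normtwo m^2$; (4) obtain \eqref{eq:m_Ltwo_bound} and \eqref{eq:m_bound_dens} from the integral representation plus the operator sandwich $\Im m\sim\avg{\Im m}\id$. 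The main obstacle, I expect, is the circular dependence among boundedness of $\norm m$, the lower bound $m^*m\gtrsim\id$, and the lower bound on $\avg{\Im m}$: untangling this requires a careful bootstrap, probably splitting $\Hb$ by whether $\dist(z,\spec a)$ is large or small and running the argument first in the easy region, where \eqref{eq:L2_bound} and \eqref{eq:stieltjes_bound_m} give everything for free, and then propagating into the spectrum using flatness. The genuinely new ingredient beyond Proposition \ref{pro:stieltjes_representation} is that \emph{flatness} converts scalar ($\avg{\,\cdot\,}$) control into operator ($\id$) control in both directions, and I would make that conversion explicit and cite it as the crux.
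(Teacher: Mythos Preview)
Your overall strategy --- exploit flatness in the imaginary part of the Dyson equation --- is the same as the paper's (which simply cites Proposition 4.2 of \cite{AjankiCorrelated}). But there is a concrete algebraic slip that causes all the perceived circularity. In your ``cleanest route'' for \eqref{eq:m_Ltwo_bound} you write
\[
\avg{\Im m}\ge c_1 (\Im z)\avg{mm^*} + (\Im z)\avg{mm^*},
\]
whereas the correct consequence of $\Im m = m^*\bigl((\Im z)\id + S[\Im m]\bigr)m$ together with $S[\Im m]\ge c_1\avg{\Im m}\id$ is
\[
\avg{\Im m}\;\ge\;(\Im z)\,\normtwo{m}^2 + c_1\,\avg{\Im m}\,\normtwo{m}^2.
\]
Rearranging gives $\bigl(1 - c_1\normtwo{m}^2\bigr)\avg{\Im m}\ge(\Im z)\normtwo{m}^2>0$, which forces $\normtwo{m}^2 < c_1^{-1}$ \emph{uniformly in $z$}. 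No splitting of $\Hb$, no bootstrap. With \eqref{eq:m_Ltwo_bound} in hand, \eqref{eq:m_inverse_bound} follows directly from $-m^{-1}=z\id-a+S[m]$ and $\norm{S[m]}\lesssim\normtwo{m}\lesssim 1$ (using $\normtwoinf{S}\lesssim 1$, i.e.\ Lemma~\ref{lem:norm_two_to_inf}(i)); there is no circularity.

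For \eqref{eq:m_bound_dens} there is also a more direct route than your integral-splitting idea. From $-\Im m^{-1}=(\Im z)\id + S[\Im m]\ge c_1\avg{\Im m}\id$ one gets immediately $\norm{m}\le (c_1\avg{\Im m})^{-1}$ (the elementary fact that $\Im y\ge\epsilon\id$ implies $\norm{y^{-1}}\le\epsilon^{-1}$). Combined with $\norm{m}\le \dist(z,\supp\dens)^{-1}$ from \eqref{eq:stieltjes_bound_m}, this yields \eqref{eq:m_bound_dens} without ever invoking $v(A)\lesssim\avg{v(A)}\id$. Then \eqref{eq:Im_m_sim_avg_only_flat} follows from $\Im m = m^*\bigl((\Im z)\id + S[\Im m]\bigr)m$ sandwiched between $c_1\avg{\Im m}\,m^*m$ and $\bigl((\Im z)+c_2\avg{\Im m}\bigr)\norm{m}^2\id$, using $m^*m\ge\norm{m^{-1}}^{-2}\id$ and $(\Im z)\le \avg{\Im m}/\normtwo{m}^2\lesssim(1+\abs{z})^2\avg{\Im m}$ for the upper bound; for the lower bound uniformly in $z$ one treats large $\abs{z}$ separately via the Stieltjes representation (where $\Im m(z)\sim(\Im z)\abs{z}^{-2}\id$ since $\supp v$ is bounded).
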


These bounds are immediate consequences of the flatness of $S$ exactly as in the proof of Proposition 4.2 in~\cite{AjankiCorrelated} using $\supp \dens = \supp v$ by 
the faithfulness of $\avg{\genarg}$. We omit the details. 

Note that \eqref{eq:m_inverse_bound} implies a lower bound $\norm{m(z)} \gtrsim (1 + \abs{z})^{-1}$ 
since $\norm{m}\norm{m^{-1}} \geq 1$. 

\section{Regularity of the solution and the density of states} \label{sec:regularity}

In this section, we will prove Proposition \ref{pro:regularity_density_of_states} and Proposition \ref{pro:Hoelder_1_3}. 
Their proofs are based on a bound on  the inverse of the stability operator $\Id - C_mS$  of the Dyson equation, \eqref{eq:dyson}, which will 
be given in Proposition~\ref{pro:linear_stability} below. 

\subsection{Linear stability of the Dyson equation}
For the formulation of the following proposition, we introduce 
the harmonic extension of the density of states $\dens$ defined in Definition \ref{def:density_of_states} to $\Hb$.
The harmonic extension at $z \in \Hb$ is denoted by $\dens(z)$ and given by 
\[ \dens(z) \defeq \frac{1}{\pi} \avg{\Im m(z)}. \]

\begin{proposition}[Linear Stability] \label{pro:linear_stability}
There is a universal constant $C > 0$ such that, for the solution $m$ to \eqref{eq:dyson} associated to any $a \in \alg_{\rm{sa}}$ and $S \in \Sigma$ satisfying \eqref{eq:estimates_data_pair}, we have 
\begin{equation} \label{eq:linear_stability}
 \normtwo{(\Id - C_{m(z)} S)^{-1}} \lesssim 1 + \frac{1}{(\dens(z) + \dist(z, \supp \dens))^C}
\end{equation}
uniformly for all $z \in \Hb$. 
\end{proposition}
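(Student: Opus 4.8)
The strategy is to exploit the balanced polar decomposition $m = q^* u q$ from \eqref{eq:m_representation_q_star_u_q} to rewrite the stability operator $\Id - C_m S$ in a form to which the Rotation-Inversion Lemma of \cite{AjankiCPAM} applies. First I would conjugate: since $C_m = C_{q^*,q} C_{u} C_{q,q^*}$ by the composition rules for the $C_{x,y}$ operators, one has
\[
 \Id - C_m S = C_{q^*,q}\bigl( \Id - C_u C_{q,q^*} S C_{q^*,q}\bigr) C_{q^*,q}^{-1} = C_{q^*,q}\bigl(\Id - C_u F\bigr) C_{q^*,q}^{-1},
\]
where $F = C_{q,q^*} S C_{q^*,q}$ is the saturated self-energy operator from \eqref{eq:def_F}. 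By the proof of Proposition \ref{pro:stieltjes_representation} we already know $\normtwo{F} \le 1$, and $F = F^*$ is positivity-preserving. Writing $C_u = C_{u,u}$ with $u$ unitary, $C_u$ is a unitary operator on $(\alg, \normtwo{\genarg})$. The operator $\Id - C_u F$ is not yet of the form ``unitary minus self-adjoint,'' so I would symmetrize once more: $C_u F = C_u F^{1/2} F^{1/2}$, hence (up to a similarity by $F^{1/2}$, valid where $F$ is invertible, with a limiting argument otherwise) it suffices to control $(\Id - F^{1/2} C_u F^{1/2})^{-1}$, where now $T \defeq F^{1/2} C_u F^{1/2}$ is a contraction but still not self-adjoint because $C_u$ is not. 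The cleaner route, following \cite{AltEdge}, is to diagonalize $C_u$ on the eigenspaces of the phase of $u$ and reduce to the scalar rotation in the Rotation-Inversion Lemma; I would invoke Lemma \ref{lem:rotation_inversion} below with the unitary $U$ built from $C_u$ and the self-adjoint contraction built from $F$.

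The second ingredient is a quantitative spectral gap for $F$ near the top of its spectrum, obtained by a Perron-Frobenius argument. Since $F$ is positivity-preserving, symmetric and $\normtwo{F}\le 1$, and since from $\Im u = \imz\, qq^* + F[\Im u]$ we get $F[\Im u] \le \Im u$ with $\Im u \ge c\, \imz\, \id$ for some $c>0$ depending on model parameters (using $\norm{m}\lesssim 1$ on bounded regions and the flatness bounds \eqref{eq:estimates_data_pair}), the normalized element $\Im u / \normtwo{\Im u}$ is a near-eigenvector of $F$ at eigenvalue close to $1$. A standard argument (as in \cite{AjankiQVE,AjankiCorrelated}) then shows $1 - \normtwo{F}$ or, more precisely, the gap between $\normtwo{F}$ and the rest of the spectrum of $F$ restricted to the complement of the Perron-Frobenius direction, is bounded below by a power of $\dens(z) + \dist(z,\supp\dens)$. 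This is exactly the polynomial loss that appears on the right-hand side of \eqref{eq:linear_stability}. The Rotation-Inversion Lemma converts this spectral gap plus the ``rotation'' coming from $C_u$ (which is nontrivial whenever $\Re m \ne 0$) into the bound $\normtwo{(\Id - C_u F)^{-1}} \lesssim (\dens(z) + \dist(z,\supp\dens))^{-C}$, and conjugating back by $C_{q^*,q}$, whose $\normtwo{\genarg}$-norm and that of its inverse are controlled by $\norm{q}, \norm{q^{-1}} \lesssim (\dens(z) + \dist(z,\supp\dens))^{-C'}$ via \eqref{eq:m_bound_dens}–\eqref{eq:Im_m_sim_avg_only_flat}, yields \eqref{eq:linear_stability} after adjusting the exponent $C$.

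The main obstacle is the non-self-adjointness of the operator $C_u F$: unlike in the commutative setting, $C_u$ and $F$ do not commute and $C_u F$ is genuinely non-normal, so one cannot simply read off $\|(\Id - C_uF)^{-1}\|$ from a spectral gap. This is precisely what the Rotation-Inversion Lemma is designed to handle, but applying it requires carefully decomposing $\Ltwo$ into the spectral subspaces of $u$ (equivalently of $C_u$), checking that $F$ maps these subspaces in a controlled way, and tracking the interaction between the rotation angle (governed by $\Re m$) and the spectral gap of $F$ (governed by $\Im m$). A second, more bookkeeping-level obstacle is making every estimate uniform in the data pairs satisfying \eqref{eq:estimates_data_pair}, including the boundedness region for $z$: for $|z|$ large the claim is immediate from $\normtwo{(\Id - C_mS)^{-1}} \le (1 - \normtwo{C_m}\norm{S})^{-1}$ together with $\normtwo{m} = O(|z|^{-1})$, so the real work is confined to a compact set of $z$-values, where $\norm{m(z)} \lesssim 1$ holds on the complement of the singularities and the above argument runs.
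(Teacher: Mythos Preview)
Your overall strategy matches the paper's: handle large $|z|$ trivially, then use the balanced polar decomposition to conjugate $\Id - C_m S$ to an operator built from the unitary $C_u$ and the self-adjoint contraction $F$, apply the Rotation-Inversion Lemma, and feed in the Perron-Frobenius spectral gap of $F$ together with bounds on $q,q^{-1}$.

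There is, however, a genuine gap at the step where you try to put the operator into ``unitary minus self-adjoint'' form. Your proposed symmetrization $C_u F \mapsto F^{1/2} C_u F^{1/2}$ does \emph{not} produce a self-adjoint operator: since $C_u^* = C_{u^*}$, one has $(F^{1/2} C_u F^{1/2})^* = F^{1/2} C_{u^*} F^{1/2}$, which differs from $F^{1/2} C_u F^{1/2}$ unless $u$ is self-adjoint. The vague fallback of ``diagonalizing $C_u$ on the eigenspaces of the phase of $u$'' is not how the paper (or \cite{AltEdge}) proceeds either. The paper's resolution is a one-line algebraic trick you missed: simply factor $\Id - C_u F = C_u (C_u^* - F)$. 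Now $C_u^* - F$ is literally of the form $U - T$ with $U = C_u^*$ unitary and $T = F$ self-adjoint with $\normtwo{F}\le 1$, so Lemma~\ref{lem:rotation_inversion} applies directly, and the extra factor $C_u$ is harmless since it is unitary.

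A second, smaller point: the output of the Rotation-Inversion Lemma is $\normtwo{(U-T)^{-1}} \lesssim \vartheta^{-1}\,\abs{1 - \normtwo{F}\,\scalar{f}{C_u^*[f]}}^{-1}$, so beyond the spectral gap $\vartheta$ of $F$ you must also bound the denominator $\abs{1 - \normtwo{F}\,\scalar{f}{C_u^*[f]}}$ from below. The paper does this by splitting into two pieces: $1 - \normtwo{F} \gtrsim \dist(z,\supp\dens)^2$ via \eqref{eq:normtwo_F}, and $\abs{1 - \scalar{f}{C_u^*[f]}} \gtrsim \avg{f\,C_{\Im u}[f]} \gtrsim \dens(z)^2(\dens(z)+\dist(z,\supp\dens))^4$ via the lower bound $\Im u \gtrsim \dens(z)(\dens(z)+\dist(z,\supp\dens))^2$. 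Your sketch conflates this denominator with the spectral gap; they are separate ingredients and the denominator is where the ``rotation'' actually enters.
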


Before proving Proposition \ref{pro:linear_stability}, we will explain how the linear stability yields the Hölder-continuity and analyticity of $\dens$ in Proposition~\ref{pro:regularity_density_of_states}. 
Indeed, assuming that $m$ depends differentiably on $(z,a,S)$, we can compute the directional derivative $\nabla_{(\delta,d,D)}$ at $(z,a,S)$ of both 
sides in \eqref{eq:dyson}. The result of this computation is 
\[
(\Id-C_m S)[\nabla_{(\delta,d,D)}m]\,=\,m(\delta -d + D[m])m.  
\]
Using the bound in Proposition \ref{pro:linear_stability} and $\dens(z) = \pi^{-1} \avg{\Im m(z)}$, we conclude 
from \eqref{eq:m_bound_dens} that 
\begin{equation} \label{eq:dens_general_derivatives}
\abs{\nabla_{(\delta,d,D)}\dens}\,\le\, \frac{1}{\dens^C}(\abs{\delta}+ \norm{d} + \norm{D})
\end{equation}
with a possibly larger $C$. Therefore, it is clear that the control on $(\Id - C_m S)^{-1}$ will be the key input in the proof of Proposition \ref{pro:regularity_density_of_states}.

In order to prove Proposition \ref{pro:linear_stability}, we will use the representation 
\begin{equation} \label{eq:B_alternative_representation_F}
 \Id - C_m S = C_{q^*,q}C_u ( C_u^* - F)C_{q^*,q}^{-1}, 
\end{equation}
where $q$, $u$ and $F$ were defined in \eqref{eq:def_q_u} and \eqref{eq:def_F}, respectively. This representation has the advantage that $C_u^*$ is unitary and $F$ is symmetric. Hence, it is much 
easier to obtain some spectral properties for $C_u^* - F$ compared to $\Id - C_mS$. Now, we will first analyze $q$ and $F$ in the following two lemmas and 
then use this knowledge to verify Proposition~\ref{pro:linear_stability}.

\begin{lemma} \label{lem:norm_q}
If \eqref{eq:estimates_data_pair} holds true then we have
\[ \norm{q(z)}\lesssim (1 + \abs{z})^{1/2} \norm{m(z)}, \qquad \norm{q(z)^{-1}} \lesssim ( 1 + \abs{z}) \norm{m(z)}^{1/2}  \]
uniformly for $z \in \Hb$. 
\end{lemma}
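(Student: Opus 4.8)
The point is to exploit the cancellation hidden in $q = \abs{w}^{1/2}(\Im m)^{1/2}$: the crude estimate $\norm q^2 = \norm{\abs w^{1/2}(\Im m)\abs w^{1/2}} \le \norm{\abs w}\norm{\Im m}$ is useless because $\norm{\abs w}$ blows up precisely where $\Im m$ is small. I would write $Y = \Im m(z)$, $X = \Re m(z)$, and $h = Y^{-1/2}XY^{-1/2} \in \alg_{\mathrm{sa}}$, so that $\abs w = (h^2+\id)^{1/2} \ge \id$ and $q = \abs w^{1/2}Y^{1/2}$; all of $Y^{\pm 1/2}$, $\abs w^{\pm 1/2}$, $m^{-1}$ lie in $\alg$ since $\Im m(z)>0$ and $m(z)$ are invertible in $\alg$ for $z \in \Hb$. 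Since $\norm q^2 = \norm{q^*q} = \norm{Y^{1/2}\abs wY^{1/2}}$ and $\norm{q^{-1}}^2 = \norm{q^{-1}(q^{-1})^*} = \norm{Y^{-1/2}\abs w^{-1}Y^{-1/2}}$, it suffices to bound these two quantities.

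First I would record the algebraic identity
\[
 Y^{1/2}\abs w^2 Y^{1/2} \,=\, Y^{1/2}h^2Y^{1/2} + Y \,=\, XY^{-1}X + Y \,=\, mY^{-1}m^* ,
\]
and, by inverting in $\alg$, also $Y^{-1/2}\abs w^{-2}Y^{-1/2} = (m^*)^{-1}Ym^{-1}$. Equivalently, $\abs w^2 = Y^{-1/2}AY^{-1/2}$ with $A \defeq mY^{-1}m^* = (mY^{-1/2})(mY^{-1/2})^* \ge 0$, and $\abs w^{-2} = Y^{1/2}A^{-1}Y^{1/2}$ with $A^{-1} = (m^*)^{-1}Ym^{-1} \ge 0$.

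The main step then uses only the elementary inequality $Z^*BZ \le \norm B\, Z^*Z$ for $B \ge 0$, together with operator monotonicity of $t\mapsto t^{1/2}$. From $\abs w^2 = Y^{-1/2}AY^{-1/2} \le \norm A\,Y^{-1}$ we get $\abs w \le \norm A^{1/2}Y^{-1/2} \le \norm m\norm{Y^{-1}}^{1/2}Y^{-1/2}$; conjugating by $Y^{1/2}$ and using the exact cancellation $Y^{1/2}Y^{-1/2}Y^{1/2}=Y^{1/2}$ (all factors are functions of $Y$) yields $q^*q = Y^{1/2}\abs wY^{1/2} \le \norm m\norm{Y^{-1}}^{1/2}Y^{1/2}$, hence $\norm q^2 \le \norm m\bigl(\norm{\Im m}\norm{(\Im m)^{-1}}\bigr)^{1/2}$. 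The same argument applied to $\abs w^{-2} = Y^{1/2}A^{-1}Y^{1/2} \le \norm{A^{-1}}Y$ gives $\abs w^{-1} \le \norm{m^{-1}}\norm{Y}^{1/2}Y^{1/2}$, and conjugating by $Y^{-1/2}$ gives $q^{-1}(q^{-1})^* = Y^{-1/2}\abs w^{-1}Y^{-1/2} \le \norm{m^{-1}}\norm Y^{1/2}Y^{-1/2}$, so $\norm{q^{-1}}^2 \le \norm{m^{-1}}\bigl(\norm{\Im m}\norm{(\Im m)^{-1}}\bigr)^{1/2}$.

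Finally I would feed in the flatness estimates of Proposition~\ref{pro:prop_solution_flatness}. The chain $\avg{\Im m}\id \lesssim \Im m \lesssim (1+\abs z^2)\norm m^2\avg{\Im m}\id$ in \eqref{eq:Im_m_sim_avg_only_flat} gives $\norm{\Im m}\norm{(\Im m)^{-1}} \lesssim (1+\abs z)^2\norm m^2$, and combined with $\norm{m^{-1}} \lesssim 1+\abs z$ from \eqref{eq:m_inverse_bound} this turns the two displayed bounds into $\norm q^2 \lesssim (1+\abs z)\norm m^2$ and $\norm{q^{-1}}^2 \lesssim (1+\abs z)^2\norm m$, which is exactly the claim. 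The only genuinely new idea is routing through the identity $Y^{1/2}\abs w^2Y^{1/2}=mY^{-1}m^*$ rather than estimating $\abs w$ and $\Im m$ separately; after that everything is routine, the mild care needed being to keep the conjugation cancellations exact and to check positivity of $A$ and $A^{-1}$.
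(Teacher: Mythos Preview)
Your proof is correct and follows essentially the same route as the paper's: express $q^*q$ and $q^{-1}(q^{-1})^*$ in terms of $\abs w$ and $Y=\Im m$, use operator monotonicity of the square root together with $Z^*BZ\le\norm B\,Z^*Z$, and then feed in the flatness bounds \eqref{eq:Im_m_sim_avg_only_flat} and \eqref{eq:m_inverse_bound}. The only difference is cosmetic: the paper bounds $\id + h^{-1/2}gh^{-1}gh^{-1/2}$ via $g^2+h^2=(m^*m+mm^*)/2\le\norm m^2$, whereas you use the cleaner identity $Y^{1/2}\abs w^2Y^{1/2}=mY^{-1}m^*$, which produces $\norm m$ directly without passing through $g^2+h^2$; both arrive at the same inequality $q^*q\le \norm m\,\norm{Y^{-1}}^{1/2}\,Y^{1/2}$.
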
 

\begin{proof}
For $q = q(z)$, we will show below that 
\begin{equation} \label{eq:norm_q_first_step}
 \frac{A^{1/2}}{B^{1/2}} \norm{m(z)^{-1}}^{-1} \id \leq q^* q \leq \frac{B^{1/2}}{A^{1/2}} \norm{m(z)}\id   
\end{equation}
if $A \id \leq \Im m(z) \leq B \id$ for some $A, B \in (0,\infty)$ and $z \in \Hb$. Choosing $A$ and $B$ according to \eqref{eq:Im_m_sim_avg_only_flat}, 
using the $C^*$-property of $\norm{\genarg}$, $\norm{q^*q}=\norm{q}^2$, and \eqref{eq:m_inverse_bound}, we immediately obtain Lemma \ref{lem:norm_q}. 

For the proof of \eqref{eq:norm_q_first_step}, we set $g \defeq \Re m$ and $ h\defeq \Im m$. Using the monotonicity
of the square root, we compute 
\[ \begin{aligned}
q^* q & = h^{1/2}\Big( \id + h^{-1/2} g h^{-1} g h^{-1/2}\Big)^{1/2} h^{1/2} \\ 
 & \leq A^{-1/2} h^{1/2} \Big(h^{-1/2} ( h^2 + g^2 ) h^{-1/2}\Big)^{1/2} h^{1/2} \\ 
& \leq  \norm{m} A^{-1/2} h ^{1/2}.  
\end{aligned} 
\]
Here, we employed $h^{-1} \leq A^{-1} \id$ as well as $\id \leq A^{-1} h$  in the first step and $(\Re m)^2 + (\Im m)^2 = (m^*m + mm^*)/2 \leq \norm{m}^2$ in the second step. 
Thus, $h \leq B \id$ yields the upper bound in \eqref{eq:norm_q_first_step}. 
Similar estimates using $\id \geq B^{-1} h$ and $\norm{m^{-1}}^{-2} \leq (m^*m + mm^*)/2$ prove the lower bound 
in \eqref{eq:norm_q_first_step} which completes the proof of the lemma. 
\end{proof}

\begin{lemma}[Properties of $F$] \label{lem:prop_F}
If the bounds in \eqref{eq:estimates_data_pair} are satisfied then $\normtwo{F}$ is a simple eigenvalue of $F \colon \alg \to \alg$ 
defined in \eqref{eq:def_F}. 
Moreover, there is a unique eigenvector 
$f \in \alg_+$ such that $F[f] = \normtwo{F} f$ and $\normtwo{f} = 1$. This eigenvector satisfies 
\begin{equation} \label{eq:normtwo_F}
 1- \normtwo{F} = (\Im z) \frac{\scalar{f}{qq^*}}{\scalar{f}{\Im u}}. 
\end{equation}
In particular, $\normtwo{F}\leq 1$. 
Furthermore, the following properties hold true uniformly for $z \in \Hb$ satisfying $\abs{z}\leq 3 ( 1 + \norm{a} + \norm{S}^{1/2})$ and $\normtwo{F(z)} \geq 1/2$: 
\begin{enumerate}[label=(\roman*)]
\item The eigenvector $f$ has upper and lower bounds 
\begin{equation}
 \norm{m}^{-4}\id \lesssim f \lesssim \norm{m}^{4}\id.  \label{eq:f_sim_norm_m} 
\end{equation}
\item The operator $F$ has a spectral gap $\vartheta\in (0,1]$ satisfying $\vartheta \gtrsim \norm{m}^{-28}$ and 
\begin{equation}
 \spec(F/\normtwo{F}) \subset [-1 + \vartheta, 1- \vartheta] \cup \{ 1\}.\label{eq:spec_F}
\end{equation}
\end{enumerate}
\end{lemma}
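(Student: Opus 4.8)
The plan is to establish Lemma~\ref{lem:prop_F} by combining a Perron--Frobenius argument for the positivity-preserving symmetric operator $F$ with quantitative control on the relevant quantities coming from the flatness assumption~\eqref{eq:estimates_data_pair} and the bounds in Proposition~\ref{pro:prop_solution_flatness} and Lemma~\ref{lem:norm_q}.

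\paragraph{Simplicity of the top eigenvalue and the formula~\eqref{eq:normtwo_F}.}
First I would recall that $F = C_{q,q^*} S C_{q^*,q}$ with $S \in \Sigma$ positivity-preserving and symmetric; since conjugation by $C_{q^*,q}$ (with $q$ invertible) preserves the cone $\algnon$ and preserves self-adjointness with respect to~\eqref{eq:definition_scalar_product_alg}, $F$ is a positivity-preserving symmetric operator. Flatness forces $F$ to be, in the appropriate sense, strictly positivity-improving: $S[x] \gtrsim \avg{x}\id$ for $x \in \alg_+$ together with $q q^* \sim \norm{m}^{\pm} \id$-type bounds shows that $F$ maps nonzero positive elements into the interior of the cone, so a Perron--Frobenius/Krein--Rutman argument gives that $\normtwo{F}$ is attained, is a simple eigenvalue, and has a (unique, normalized) positive eigenvector $f \in \alg_+$. (In the von Neumann algebra setting one works on the Hilbert space $\Ltwo$ via the representation $x \mapsto \ell_x$ introduced before the proof of Proposition~\ref{pro:stieltjes_representation}; the argument is exactly as in the matrix case, cf.\ the analogous statements in \cite{AjankiCorrelated}.) For~\eqref{eq:normtwo_F}, I would start from the imaginary part of the Dyson equation in the form~\eqref{eq:dyson_second_version}, namely $\im u = (\Im z)\, q q^* + F[\im u]$, pair both sides with $f$, and use $\scalar{f}{F[\im u]} = \scalar{F[f]}{\im u} = \normtwo{F}\scalar{f}{\im u}$ (symmetry of $F$ and $F[f]=\normtwo F f$). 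Solving for $1-\normtwo{F}$ gives the claimed identity; since $f \geq 0$, $qq^* \geq 0$, $\im u \geq 0$ and $\Im z > 0$, this immediately yields $\normtwo{F} \leq 1$.

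\paragraph{Quantitative bounds on $f$.}
For part (i), in the regime $\abs z \lesssim 1$ and $\normtwo{F} \geq 1/2$, I would use the eigenvalue equation $F[f] = \normtwo{F} f$, i.e.\ $f = \normtwo{F}^{-1} C_{q,q^*} S C_{q^*,q}[f]$. Flatness gives $c_1 \avg{x}\id \le S[x] \le c_2 \avg{x}\id$, so $S[q^* f q] \sim \avg{q^* f q}\id$; conjugating by $C_{q,q^*}$ and using the two-sided bounds $q^* q, q q^* \sim$ (powers of $\norm m$) $\cdot\,\id$ from Lemma~\ref{lem:norm_q} together with~\eqref{eq:m_inverse_bound} (which controls $\norm{q^{-1}}$ in terms of $\norm m$ and $\abs z \lesssim 1$), one gets $f \sim \avg{q^* f q}\, q q^*$. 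Taking $\avg{\genarg}$ and using $\normtwo{f}=1$ fixes the scalar $\avg{q^* f q}$ up to model-parameter-dependent and $\norm m$-dependent constants, and then the sandwiching $f \sim q q^*$ combined once more with $q q^* \sim \norm m^{\pm 2}\id$ delivers the powers; tracking the exponents carefully through Lemma~\ref{lem:norm_q} (where $q$ carries a $\norm m$ and $q^{-1}$ a $\norm m^{1/2}$, and they appear squared) yields $\norm m^{-4}\id \lesssim f \lesssim \norm m^4 \id$ after a somewhat generous accounting.

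\paragraph{The spectral gap.}
For part (ii), the spectral gap $\vartheta$ of the symmetric operator $F$ — meaning the distance from the second-largest point of $\spec(F/\normtwo F)$ to $1$ and the distance of the negative part of the spectrum to $-1$ — is the delicate quantitative step, and I expect this to be the main obstacle. The clean way is to invoke the general quantitative Perron--Frobenius lemma already developed in this line of work: for a positivity-preserving symmetric operator that is strictly positivity-improving with effective constants, the gap is bounded below by a power of those constants. Concretely, one compares $F$ with the rank-one operator $\avg{\genarg} \id$: flatness gives $c_1 \avg{q^* (\genarg) q}\, qq^* \le F[\genarg] \le c_2 \avg{q^*(\genarg) q}\, qq^*$ as operator inequalities on the cone, and since the middle of this sandwich is, up to bounded conjugation, a rank-one positive operator with norm-comparable-to-$\norm m^{\text{power}}$, one extracts a lower bound on $\normtwo F$ and, via the standard argument (e.g.\ considering $\scalar{x}{Fx}$ for $x \perp f$ and using that $F - c_1 \avg{q^*\genarg q} qq^*$ is still positivity-preserving), a lower bound $\vartheta \gtrsim \norm m^{-28}$. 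The exponent $28$ is not sharp; it is simply what falls out of chaining the $\norm m^{4}$ bound on $f$ with Lemma~\ref{lem:norm_q}, the inversion bound~\eqref{eq:m_inverse_bound}, and the quantitative gap estimate, each contributing its own power. Once the gap is in place, $\spec(F/\normtwo F) \subset [-1+\vartheta, 1-\vartheta]\cup\{1\}$ is just the statement that $1$ is isolated and simple with the rest of the spectrum $\vartheta$-away from $\pm 1$. The main work, and the place where one has to be careful, is making the positivity-improving property of $F$ genuinely quantitative with the right $\norm m$-dependence — everything else is bookkeeping of exponents.
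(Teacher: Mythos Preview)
Your proposal is correct and follows essentially the same route as the paper. The paper packages the Perron--Frobenius step into an appendix lemma (Lemma~\ref{lem:positive_symmetric_maps_aux}(ii)): from the flatness of $S$ and Lemma~\ref{lem:norm_q} one derives the sandwich $(1+\abs{z})^{-4}\norm{m}^{-2}\avg{a}\id \lesssim F[a] \lesssim (1+\abs{z})^{2}\norm{m}^{4}\avg{a}\id$, and that lemma then outputs the simple top eigenvalue, the eigenvector bounds $cC^{-1/2}\id \le f \le C\id$, and the gap $\theta \sim c^{6}/C^{4}$; plugging in $c \sim \norm{m}^{-2}$, $C \sim \norm{m}^{4}$ (using $\abs{z}\lesssim 1$) gives exactly \eqref{eq:f_sim_norm_m} and $\vartheta \gtrsim \norm{m}^{-28}$, while \eqref{eq:normtwo_F} is obtained precisely as you describe by pairing the imaginary part of~\eqref{eq:dyson_second_version} with $f$.
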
 

\begin{proof} 
 The definition of $F$ in \eqref{eq:def_F}, \eqref{eq:estimates_data_pair} and Lemma \ref{lem:norm_q} imply
\begin{equation}  \label{eq:proof_prop_F_aux1}
 (1+ \abs{z})^{-4}\norm{m(z)}^{-2} \avg{a} \id \lesssim F[a] \lesssim (1 + \abs{z})^2\norm{m(z)}^4 \avg{a} \id  
\end{equation}
for all $a \in \algpos$ and all $z \in \Hb$. 
We will use Lemma \ref{lem:positive_symmetric_maps_aux} (ii) from Appendix \ref{app:positive_symmetric_map}. 
The condition \eqref{eq:T_flat_aux} with $T = F$ is satisfied by \eqref{eq:proof_prop_F_aux1} with constants depending on $\norm{m}$ and $\abs{z}$. 
Hence, Lemma \ref{lem:positive_symmetric_maps_aux} (ii) implies the existence and uniqueness of the eigenvector $f$. 
We compute the scalar product of $f$ with the imaginary part of \eqref{eq:dyson_second_version}. Since $F$ is symmetric, this immediately 
yields \eqref{eq:normtwo_F}. 

We now assume that $z \in \Hb$ satisfies $\abs{z}\leq 3 ( 1 + \norm{a} + \norm{S}^{1/2})$ and $\normtwo{F(z)} \geq 1/2$.
Then $\abs{z} \lesssim 1$ and, by using this in \eqref{eq:proof_prop_F_aux1}, we thus obtain \eqref{eq:f_sim_norm_m} and \eqref{eq:spec_F} from Lemma \ref{lem:positive_symmetric_maps_aux} (ii) 
since $\norm{m} \gtrsim 1$ by \eqref{eq:m_inverse_bound}. 
\end{proof}

The following proof of Proposition \ref{pro:linear_stability} proceeds similarly to the one of Proposition 4.4 in \cite{AjankiCorrelated}. 

\begin{proof}[Proof of Proposition \ref{pro:linear_stability}]
We will distinguish several cases. If $\abs{z} \geq 3 ( 1 + \kappa)$ with $\kappa \defeq \norm{a} + 2 \norm{S}^{1/2}$ then we conclude from \eqref{eq:Stieltjes_representation} 
and $\supp \dens \subset [-\kappa, \kappa]$ by \eqref{eq:supp_v_subset_spec_a} that $\norm{m(z)} \leq ( \abs{z} - \kappa)^{-1}$. Thus, 
\[ \normtwo{C_{m(z)} S} \leq \frac{\normtwo{S}}{( \abs{z} -\kappa)^2} \leq \frac{\norm{S}}{4 ( 1+ \kappa)^2} \leq \frac{1}{4}. \] 
Here, we used $\normtwo{S} \leq \norm{S}$ since $S$ is symmetric and $\kappa \geq \norm{S}^{1/2}$. This shows \eqref{eq:linear_stability} for large $\abs{z}$. 

Next, we assume $\abs{z} \leq 3 ( 1 + \kappa)$. In this regime, we use the alternative representation of $\Id - C_mS$ in 
\eqref{eq:B_alternative_representation_F} and the spectral properties of $F$ from Lemma \ref{lem:prop_F}. 
Indeed, from \eqref{eq:B_alternative_representation_F} and Lemma \ref{lem:norm_q}, we conclude 
\begin{equation} \label{eq:proof_linear_stability_aux1}
 \normtwo{(\Id - C_m S)^{-1}} \lesssim \norm{m}^3 \normtwo{(C_u^*- F)^{-1}}\lesssim \frac{1}{(\dens(z)+\dist(z,\supp\dens))^3} 
\normtwo{(C_u^* - F)^{-1}} 
\end{equation}
as $u \in \alg$ is unitary. Here, we used \eqref{eq:m_bound_dens} in the last step. 
If $\normtwo{F(z)} \leq 1/2$ then this immediately yields \eqref{eq:linear_stability} as $\normtwo{C_u}=1$. We now assume $\normtwo{F(z)} \geq 1/2$. 
In this case, we will use the following lemma. 

\begin{lemma}[Rotation-Inversion Lemma]\label{lem:rotation_inversion}
 Let $U$ be a unitary operator on $\Ltwo$ and $T$ a symmetric operator on $\Ltwo$. We assume that 
there is a constant $\theta >0$ such that 
\[ \spec T \subset [ -\normtwo{T} +\theta, \normtwo{T}-\theta] \cup \{ \normtwo{T} \} \]
with a non-degenerate eigenvalue $\normtwo{T} \leq 1$. Then there is a universal constant $C>0$ such that 
\[ \normtwo{(U-T)^{-1}} \leq \frac{C}{\theta \abs{ 1 - \normtwo{T} \scalar{t}{U[t]}}}, \] 
where $t \in \Ltwo$ is the normalized, $\normtwo{t}=1$, eigenvector of $T$ corresponding to $\normtwo{T}$.  
\end{lemma}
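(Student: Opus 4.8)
The plan is to exploit that the near-singular behaviour of $U-T$ is one-dimensional, concentrated in the direction $t$. Set $\tau\defeq\normtwo T$ and let $P$ be the orthogonal projection of $\Ltwo$ onto the line $\C t$; decompose $T=\tau P+R$ with $R\defeq T-\tau P$. Then $R=R^*$, $Rt=0$, and by the gap hypothesis $\normtwo R\le\tau-\theta\le1-\theta$. Since $U$ is unitary, $\normtwo{(U-R)v}\ge\normtwo{Uv}-\normtwo{Rv}\ge\theta\normtwo v$ for all $v\in\Ltwo$, and the same holds for $U^*-R=(U-R)^*$; hence $U-R$ is invertible with $\normtwo{(U-R)^{-1}},\normtwo{(U^*-R)^{-1}}\le\theta^{-1}$.

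Next I would write $U-T=(U-R)-\tau P$ and peel off the rank-one perturbation $\tau P$ via the Sherman--Morrison identity. With $s\defeq(U-R)^{-1}t$, $\bar s\defeq(U^*-R)^{-1}t$ and $\alpha\defeq\scalar ts$ (so that $\scalar t{(U^*-R)^{-1}t}=\overline\alpha$), the operator $U-T$ is invertible if and only if $1-\tau\alpha\ne0$, and then
\[
(U-T)^{-1}h\,=\,(U-R)^{-1}h+\frac{\tau\scalar{\bar s}h}{1-\tau\alpha}\,s,\qquad h\in\Ltwo,
\]
whence $\normtwo{(U-T)^{-1}}\le\theta^{-1}+\tau\normtwo s\,\normtwo{\bar s}\,\abs{1-\tau\alpha}^{-1}$. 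The task is thereby reduced to comparing the ``dressed'' scalar $1-\tau\alpha$ with the ``bare'' scalar $1-\tau u_0$, $u_0\defeq\scalar t{Ut}$, that appears in the statement, and to controlling $\normtwo s,\normtwo{\bar s}$.

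For this I would use two elementary facts. First, $1-\tau\le\abs{1-\tau u_0}$ (from $0<\tau\le1$, $\abs{u_0}\le1$), hence $\normtwo{(\Id-P)Ut}^2=1-\abs{u_0}^2\le2\abs{1-u_0}\le4\abs{1-\tau u_0}$, so the component of $Ut$ transverse to $t$ is small exactly when the bare scalar is small. Second, since $Rt=0$ one has $(U-R)t=Ut$, so the resolvent identity $(U-R)^{-1}-U^{-1}=(U-R)^{-1}RU^{-1}$ together with $RU^{-1}t=R(\Id-P)U^{*}t$ and a Neumann expansion gives $\abs{\alpha-\overline{u_0}}\lesssim\theta^{-1}(1-\abs{u_0}^2)\lesssim\theta^{-1}\abs{1-\tau u_0}$, as well as $\normtwo{s-t}\le\theta^{-1}\normtwo{Ut-t}\le\theta^{-1}(8\abs{1-\tau u_0})^{1/2}$ and the analogous bound for $\bar s$. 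Substituting $1-\tau\alpha=\overline{1-\tau u_0}-\tau(\alpha-\overline{u_0})$ and splitting into the regime $\abs{1-\tau u_0}\gtrsim\theta^{2}$ (where the crude bounds $\normtwo s,\normtwo{\bar s}\le\theta^{-1}$ already yield $\normtwo{(U-T)^{-1}}\lesssim\theta^{-2}\lesssim(\theta\abs{1-\tau u_0})^{-1}$) and the regime $\abs{1-\tau u_0}\ll\theta^{2}$ (where one first improves $\normtwo s,\normtwo{\bar s}$ to $O(1)$ via the bound on $\normtwo{s-t}$) should give the claimed estimate $\normtwo{(U-T)^{-1}}\le C\theta^{-1}\abs{1-\tau u_0}^{-1}$.

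The main obstacle I anticipate is precisely this last regime: the a priori bound $\abs{\alpha-\overline{u_0}}\lesssim\theta^{-1}\abs{1-\tau u_0}$ is by itself too weak to conclude $\abs{1-\tau\alpha}\gtrsim\theta\abs{1-\tau u_0}$ directly (the spurious factor $\theta^{-1}$ has to be absorbed), so one must track the actual structure of $\alpha$, $s$ and $\bar s$ more carefully --- in effect inverting $U-T$ exactly on the two-dimensional space $\linspan\{t,Ut\}$ and invoking the spectral gap $\theta$ only on its orthogonal complement --- in order to recover the sharp first power of $\theta$ in the denominator. This sharpened analysis is the content of the Rotation--Inversion Lemma of \cite{AjankiCPAM}, whose proof we follow.
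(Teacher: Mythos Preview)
The paper does not give a proof of this lemma at all; it simply states that the proof is identical to that of Lemma~5.6 in \cite{AjankiCPAM}. Your proposal ultimately lands in the same place --- the last sentence defers to \cite{AjankiCPAM} for the ``sharpened analysis'' --- so at the level of what is actually proved, you and the paper agree.

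That said, the intermediate sketch you give contains a genuine error in the regime split. In the regime $\abs{1-\tau u_0}\gtrsim\theta^2$ you claim the chain
\[
\normtwo{(U-T)^{-1}}\;\lesssim\;\theta^{-2}\;\lesssim\;(\theta\abs{1-\tau u_0})^{-1},
\]
but neither inequality is justified. For the first one, Sherman--Morrison gives $\normtwo{(U-T)^{-1}}\le\theta^{-1}+\theta^{-2}\abs{1-\tau\alpha}^{-1}$, and you have no lower bound on $\abs{1-\tau\alpha}$ in this regime (your estimate $\abs{\alpha-\overline{u_0}}\lesssim\theta^{-1}\abs{1-\tau u_0}$ is useless here because the error can swamp $\abs{1-\tau u_0}$). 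For the second one, $\theta^{-2}\lesssim(\theta\abs{1-\tau u_0})^{-1}$ is equivalent to $\abs{1-\tau u_0}\lesssim\theta$, which is \emph{not} implied by $\abs{1-\tau u_0}\gtrsim\theta^2$; for instance when $\abs{1-\tau u_0}\sim 1$ the target bound is $\theta^{-1}$, strictly better than $\theta^{-2}$. So the ``easy'' regime is not easy, and the dichotomy collapses.

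You correctly diagnose the underlying reason: the crude bounds $\normtwo{s},\normtwo{\bar s}\le\theta^{-1}$ inevitably cost an extra factor $\theta^{-1}$, and to recover the sharp first power of $\theta$ one must resolve $U-T$ exactly on the two-dimensional space spanned by $t$ and $Ut$ (where the near-degeneracy lives), using the gap $\theta$ only on the orthogonal complement. That is precisely the content of the argument in \cite{AjankiCPAM}, which neither you nor the paper reproduces.
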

The proof of this lemma is identical to the proof of Lemma 5.6 in \cite{AjankiCPAM}, where a result of this type was first 
applied in the context of vector Dyson equations. 

We start from the estimate \eqref{eq:proof_linear_stability_aux1}, use the Rotation-Inversion Lemma, Lemma \ref{lem:rotation_inversion}, with $U=C_u^*$ and $T = F$ as well as 
\eqref{eq:spec_F} and \eqref{eq:m_bound_dens} and obtain 
\[ \normtwo{(\Id- C_m S)^{-1}} \lesssim \frac{(\dens(z) + \dist(z,\supp\dens))^{-31}}{\abs{1 - \normtwo{F} \scalar{f}{C_u^*[f]}}} 
\leq \frac{(\dens(z) + \dist(z,\supp\dens))^{- 31}}{\max\{ 1- \normtwo{F}, \abs{1 - \avg{fC_u^*[f]}}\}}. \] 

In order to complete the proof of \eqref{eq:linear_stability}, we now show that 
\begin{equation} \label{eq:proof_linear_stability_aux2}
 \max\{  1- \normtwo{F}, \abs{1 - \avg{fC_u^*[f]}}\} \gtrsim ( \dens(z) + \dist(z, \supp \dens))^C 
\end{equation}
for some universal constant $C >0$. 
We first prove auxiliary upper and lower bounds on $\Im u= (q^*)^{-1} (\Im m) q^{-1}$. 
We have 
\begin{equation} \label{eq:proof_linear_stability_aux3}
 \dens(z)(\dens(z) + \dist(z,\supp\dens))^2\id \lesssim \Im u \lesssim \frac{\imz \norm{m} }{\dist(z,\supp\dens)^2}\id .  
\end{equation}
For the lower bound, we used the lower bound in \eqref{eq:Im_m_sim_avg_only_flat}, Lemma \ref{lem:norm_q} 
and \eqref{eq:m_bound_dens}. The upper bound is a direct consequence of \eqref{eq:stieltjes_bound_m} as well as Lemma \ref{lem:norm_q}. 
Since $\scalar{f}{qq^*}\geq \norm{(qq^*)^{-1}}^{-1} \avg{f} \gtrsim \norm{m} \avg{f}$ by Lemma \ref{lem:norm_q}, the relation \eqref{eq:normtwo_F} and the upper bound in \eqref{eq:proof_linear_stability_aux3} yield 
\[ 1- \normtwo{F} \gtrsim \dist(z,\supp\dens)^2. \] 
As $1- \avg{fC_{\Re u}[f]} \geq 0$ and $\avg{f^2} = 1$, we obtain from the lower bound in \eqref{eq:proof_linear_stability_aux3} that  
\begin{equation} \label{eq:proof_linear_stability_aux4}
 \abs{1 - \avg{fC_u^*[f]}} \geq \Re[ 1- \avg{f C_u^*[f]}] = 1 - \avg{fC_{\Re u }[f]} + \avg{f C_{\Im u}[f]} \gtrsim \dens(z)^2(\dens(z) + \dist(z,\supp\dens))^{4}. 
\end{equation}
This completes the proof of \eqref{eq:proof_linear_stability_aux2} and hence of Proposition \ref{pro:linear_stability}. 
\end{proof}

\subsection{Proof of Proposition \ref{pro:regularity_density_of_states}} \label{subsec:proof_regularity_density_states}

The following proof of Proposition \ref{pro:regularity_density_of_states} is similar to the one of Proposition 2.2 in \cite{AjankiCorrelated}. 

\begin{proof}[Proof of Proposition \ref{pro:regularity_density_of_states}]
We first show that $\dens \colon \Hb \to (0,\infty)$ has a uniformly Hölder-continuous extension to $\overline{\Hb}$, which we will also denote by $\dens$. 
This extension restricted to $\R$ will be the density of the measure $\dens$ from Definition \ref{def:density_of_states}. 
Since $\Id-C_mS$ is invertible for each $z \in \Hb$ by \eqref{eq:linear_stability}, the implicit function theorem allows us to differentiate \eqref{eq:dyson} 
with respect to $z$. This yields 
\begin{equation} \label{eq:proof_pro_regularity_aux1}
 (\Id- C_m S)[\pt_z m] = m^2. 
\end{equation}
Since $z \mapsto \avg{m(z)}$ is holomorphic on $\Hb$ as remarked below \eqref{eq:dyson}, we have $2\pi\ii \pt_z \dens(z) = 2\ii \pt_z \Im \avg{m(z)} = \pt_z \avg{m(z)}$. 
Thus, we obtain from \eqref{eq:proof_pro_regularity_aux1} that 
\begin{equation} \label{eq:proof_pro_regularity_aux2}
\abs{\pt_z \dens} \lesssim \normtwo{\pt_z m} \leq \normtwo{(\Id- C_mS)^{-1}} \norm{m}^2 \lesssim \dens^{-(C+2)} 
\end{equation}
Here, we used \eqref{eq:linear_stability}, $\dens(z) \lesssim \normtwo{m(z)} \lesssim 1$ by \eqref{eq:m_Ltwo_bound}
and \eqref{eq:m_bound_dens} 
in the last step. Hence, $\dens^{C+3}$ is a uniformly Lipschitz-continuous function on $\Hb$.
Therefore, $\dens$ defines uniquely a uniformly $1/(C+3)$-Hölder continuous function on $\R$ which is a density of the measure $\dens$ from Definition \ref{def:density_of_states}
with respect to the Lebesgue measure on~$\R$. 

Next, we show the Hölder-continuity with respect to $a$ and $S$. 
As before in \eqref{eq:dens_general_derivatives}, we compute the derivatives and use \eqref{eq:m_bound_dens} and~\eqref{eq:linear_stability} to obtain 
\[ \abs{\nabla_{(d,D)}\dens_{(a,S)}(z)} \lesssim \abs{\avg{\nabla_{(d,D)}m}} \lesssim \frac{\norm{d} + \norm{D}}{\dens^{C+3}}. \] 
Since the constants in \eqref{eq:linear_stability} and \eqref{eq:m_bound_dens} depend on the constants in \eqref{eq:estimates_data_pair}, 
we conclude that $\dens$ is also a locally $1/(C+4)$-Hölder continuous function of $a$ and $S$. 

We are left with showing that $\dens$ is real-analytic in a neighbourhood of $(\tau_0,a,S)$ if $\dens_{a,S}(\tau_0) >0$. 
Since $\dens(\tau_0)>0$, we can extend $m$ to $\tau_0$ by \eqref{eq:proof_pro_regularity_aux2}. Moreover, $m(\tau_0)$ is invertible as $\Im m(\tau_0)>0$ and, thus, 
solves \eqref{eq:dyson} with $z=\tau_0$. Since \eqref{eq:dyson} depends analytically on $z=\tau$, $a$ and $S$ in a small neighbourhood of $(\tau_0,a,S)$, the 
solution $m$ and thus $\dens$ will depend analytically on $(\tau,a,S)$ in this neighbourhood by the implicit function theorem. 
This completes the proof of Proposition~\ref{pro:regularity_density_of_states}.
\end{proof}

\subsection{Proof of Proposition \ref{pro:Hoelder_1_3}} \label{subsec:proof_hoelder_1_3}

For $I \subset \R$ and $\eta_*>0$, we define
\begin{equation} \label{eq:def_HbI}
 \HbI \defeq \{ z \in \Hb \colon \Re z \in I, ~ \Im z \in (0,\eta_*] \}
\end{equation}
and its closure $\HbIclosed$.

\begin{assums} \label{assums:general}
Let $m$ be the solution of \eqref{eq:dyson} for $a=a^*\in\alg$ satisfying $\norm{a} \leq k_1$ with a positive constant $k_1$ and  
$S \in \Sigma$ satisfying $\normtwoinf{S} \leq k_2$ for some positive constant $k_2$. 
For an interval $I \subset \R$ and some $\eta_* \in (0,1]$, we assume that 
\begin{enumerate}[label=(\roman*)]
\item There are positive constants $k_3$, $k_4$ and $k_5$ such that 
\begin{align} 
 \norm{m(z)} & \leq k_3, \label{eq:m_sup_bound}\\
 k_4 \avg{\Im m(z)} \id \leq \Im m(z)  & \leq  k_5\avg{\Im m(z)}\id, \label{eq:Im_m_sim_avg_Im_m}
\end{align}
uniformly for all $z \in \HbI$. 
\item The operator $F\defeq C_{q,q^*}SC_{q^*,q}$ has a simple eigenvalue $\normtwo{F}$ with eigenvector $f \in \algpos$ that satisfies \eqref{eq:normtwo_F} for all $z \in \HbI$. 
Moreover, \eqref{eq:spec_F} holds true and there are positive constants $k_6$, $k_7$ and $k_8$ such that 
\begin{equation} \label{eq:eigenvector_F_sim_1}
 k_6 \id \leq f \leq k_7 \id, \qquad \vartheta \geq k_8.  
\end{equation}
uniformly for all $z \in \HbI$. 
\end{enumerate}
\end{assums}

We remark that $S \in \Sigma_{\rm{flat}}$ is not necessarily required in Assumptions \ref{assums:general}.
In fact, we will show in Lemma \ref{lem:q_bounded_Im_u_sim_avg} below that $S \in \Sigma_\rm{flat}$ and \eqref{eq:m_sup_bound} imply all other 
conditions in Assumptions \ref{assums:general}. 

\begin{convention}[Model parameters, Comparison relation] \label{conv:comparison_2}
For the remainder of the Section \ref{sec:regularity} as well as Section \ref{sec:stability_operator} and Section \ref{sec:cubic_equation}, 
we will only consider $k_1, \ldots, k_8$ as \emph{model parameters} and understand the comparison relation $\sim$ from Convention~\ref{conv:comparsion_1} 
with respect to this set of model parameters. 
\end{convention}

We remark that all of our estimates will be uniform in $\eta_* \in (0,1]$. Therefore, $\eta_*$ is not considered a model parameter. 
At the end of this section, we will directly conclude Proposition \ref{pro:Hoelder_1_3} from the following proposition. 

\begin{proposition}[Regularity of $m$] \label{pro:analyticity_of_m} 
Let Assumptions \ref{assums:general} hold true on an interval $I \subset \R$ for some $\eta_* \in (0,1]$. 

Then, for any $\theta\in(0,1]$, $m$ can be uniquely extended to $I_\theta \defeq \{\tau \in I \colon \dist(\tau, \pt I) \geq \theta\}$
such that it is uniformly $1/3$-Hölder continuous, indeed, 
\begin{equation} \label{eq:Hoelder_1_3_general_assums}
 \norm{m(z_1) - m(z_2)} \lesssim \theta^{-4/3}\abs{z_1-z_2}^{1/3} 
\end{equation}
for all $z_1, z_2 \in I_\theta\times\ii[0,\infty)$. 
Moreover, if $\dens(\tau_0) >0$, $\tau_0 \in I$, then $m$ is real-analytic in a neighbourhood of $\tau_0$ and 
\begin{equation} \label{eq:m_Lipschitz_bound_positive_density}
\norm{\pt_\tau m(\tau_0)} \lesssim \dens(\tau_0)^{-2}. 
\end{equation}
\end{proposition}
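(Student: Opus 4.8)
The plan is to run the "$(\Im m)^3$ is Lipschitz" argument sketched in the introduction, but now controlling all constants by the model parameters $k_1,\dots,k_8$ and by the distance $\theta$ to the endpoints of $I$. First I would observe that under Assumptions \ref{assums:general} the Linear Stability estimate can be localized: repeating the proof of Proposition \ref{pro:linear_stability} but using the hypotheses \eqref{eq:m_sup_bound}, \eqref{eq:Im_m_sim_avg_Im_m}, \eqref{eq:spec_F}, \eqref{eq:eigenvector_F_sim_1} directly (in place of the flatness-derived bounds) together with the representation \eqref{eq:B_alternative_representation_F} and the Rotation-Inversion Lemma \ref{lem:rotation_inversion}, one gets
\[
 \normtwo{(\Id - C_{m(z)}S)^{-1}} \lesssim \frac{1}{(\dens(z) + \dist(z,\supp\dens))^{C}}
\]
uniformly for $z \in \HbI$, for some universal $C$. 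Combined with \eqref{eq:proof_pro_regularity_aux1}, i.e. $(\Id - C_mS)[\pt_z m] = m^2$, and $\norm{m} \le k_3$, this gives $\norm{\pt_z m(z)} \lesssim (\dens(z)+\dist(z,\supp\dens))^{-C}$, hence in particular $\norm{\pt_z \Im m(z)} \lesssim \dens(z)^{-C}$ with $\dens(z) = \pi^{-1}\avg{\Im m(z)} \sim \norm{\Im m(z)}$ by \eqref{eq:Im_m_sim_avg_Im_m}. Actually to get the sharp exponent $1/3$ one wants $\norm{\pt_z \Im m} \lesssim \norm{\Im m}^{-2}$; I would obtain this the same way as in the sketch: write $\Id - C_mS = C_{q^\ast,q}C_u(C_u^\ast - F)C_{q^\ast,q}^{-1}$, bound $\normtwo{(C_u^\ast-F)^{-1}}$ by the Rotation-Inversion Lemma and by \eqref{eq:normtwo_F}, \eqref{eq:proof_linear_stability_aux3}-type bounds so that the denominator $\abs{1-\normtwo{F}\scalar{f}{C_u^\ast f}}$ is $\gtrsim \dens^2 \sim \norm{\Im m}^2$ (the $\dist(z,\supp\dens)$ factors are bounded below on $\HbI$ once we cut off near $\pt I$), and absorb the $\norm{q}^3\norm{q^{-1}}$ factors into constants via Lemma \ref{lem:norm_q} and \eqref{eq:m_sup_bound}. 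Passing from the $\normtwo{\cdot}$ bound on $\pt_z m$ to the $\norm{\cdot}$ bound uses the last inequality in \eqref{eq:proof_pro_regularity_aux1}'s consequence together with \eqref{eq:Im_m_sim_avg_Im_m} (which lets one compare $\norm{\Im m}$ and $\normtwo{\Im m}$, both $\sim \avg{\Im m}$).

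Given $\norm{\pt_z \Im m(z)} \lesssim \norm{\Im m(z)}^{-2}$ on $\HbI$, the next step is the ODE/chain-rule argument: the map $z \mapsto (\Im m(z))^3$ has derivative bounded in norm (one writes $\pt_z (\Im m)^3$ as a sum of three terms each of the form $(\Im m)^j (\pt_z \Im m)(\Im m)^{2-j}$, and $\norm{(\Im m)^j (\pt_z \Im m)(\Im m)^{2-j}} \le \norm{\Im m}^2 \norm{\pt_z \Im m} \lesssim 1$; care is needed that $(\Im m)^3$ and its derivative make sense because $\Im m$ is self-adjoint, so this is just functional calculus). Hence $(\Im m)^3$ is uniformly Lipschitz on $\HbI$, extends continuously to $\overline{\HbI}$, and therefore $\Im m$ — being the cube root of a nonnegative self-adjoint Lipschitz function — extends to a $1/3$-Hölder continuous $\algnon$-valued function; the loss $\theta^{-4/3}$ comes from the fact that the Lipschitz constant of $(\Im m)^3$ near $\pt I$ behaves like $\theta^{-C}$ but one only needs the bound on $I_\theta$ where $\dist(z,\supp\dens)$ arguments and a covering of $I_\theta$ by intervals of length $\sim \theta$ yield the stated power (I would track this bookkeeping using that on $I_\theta$, for $\Im z \le \eta_* \le 1$, the relevant negative powers of $\dens + \dist$ are controlled). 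Once $\Im m$ has a $1/3$-Hölder extension to $I_\theta$, the Stieltjes representation \eqref{eq:Stieltjes_representation} gives $m(z) = \pi^{-1}\int_\R \Im m(\tau)(\tau - z)^{-1}\,\di\tau$ on $\Hb$, and the standard fact that the Cauchy transform of a compactly supported $1/3$-Hölder density is itself $1/3$-Hölder up to the real axis yields \eqref{eq:Hoelder_1_3_general_assums} for $m$ on $I_\theta \times \ii[0,\infty)$.

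For the analyticity statement: if $\dens(\tau_0) = \pi^{-1}\avg{\Im m(\tau_0)} > 0$, then by the just-proved extension and \eqref{eq:Im_m_sim_avg_Im_m} we have $\Im m(\tau_0) \gtrsim \dens(\tau_0)\,\id > 0$, so $m(\tau_0)$ is invertible and solves \eqref{eq:dyson} at $z = \tau_0 \in \R$. The right-hand side of \eqref{eq:dyson}, viewed as a map $(z,x) \mapsto z\id - a + S[x] + x^{-1}$ (with $x$ near the invertible element $m(\tau_0)$), is real-analytic in $(z,x)$, and its differential in $x$ at $(\tau_0, m(\tau_0))$ is $\Id - C_{m(\tau_0)}S$ up to conjugation, which is invertible by the localized stability estimate (its norm is $\lesssim \dens(\tau_0)^{-C}$, in particular finite). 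The holomorphic implicit function theorem then gives a real-analytic (indeed holomorphic in a complex neighbourhood) solution $\wt m$ near $\tau_0$, which by uniqueness of the continuous extension coincides with $m$; differentiating \eqref{eq:dyson} at $\tau_0$ gives $\pt_\tau m(\tau_0) = (\Id - C_{m(\tau_0)}S)^{-1}[m(\tau_0)^2]$, and bounding the inverse via the Rotation-Inversion route with the sharp $\dens^2$ denominator yields $\norm{\pt_\tau m(\tau_0)} \lesssim \norm{m(\tau_0)}^2 \dens(\tau_0)^{-2} \lesssim \dens(\tau_0)^{-2}$, which is \eqref{eq:m_Lipschitz_bound_positive_density}.

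I expect the main obstacle to be the bookkeeping of powers near $\pt I$: the local stability bound degenerates both where $\dens$ is small and (a priori) where $\dist(z,\supp\dens)$ is small, and on $\HbI$ one has no global control of $\dist(z,\supp\dens)$ from below — so one must either show $\supp\dens \cap I$ cannot be small without $\dens$ being small nearby, or, more simply, restrict to $I_\theta$ and use that the $1/3$-Hölder estimate only needs to be propagated across intervals of length $\theta$, absorbing the distance factors into the $\theta^{-4/3}$ prefactor. The other mildly delicate point is justifying differentiation of the functional calculus expression $(\Im m)^3$ and the passage to the cube root preserving $1/3$-Hölder continuity in the operator-norm sense, but this is standard: $\|x^{1/3} - y^{1/3}\| \lesssim \|x - y\|^{1/3}$ for positive semidefinite $x,y$ in any $C^\ast$-algebra.
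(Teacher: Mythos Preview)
Your approach is essentially the same as the paper's: establish $\norm{\partial_z \Im m}\lesssim \norm{\Im m}^{-2}$ via the Rotation--Inversion Lemma, deduce that $(\Im m)^3$ is uniformly Lipschitz on $\HbI$, take the operator cube root to get $1/3$-H\"older continuity of $\Im m$, and then transfer this to $m$ through the Stieltjes representation; analyticity and \eqref{eq:m_Lipschitz_bound_positive_density} come from the implicit function theorem and the same stability bound. The paper packages the stability estimate you outline as \eqref{eq:norm_stab_operator_m_sim_one} in Lemma~\ref{lem:q_bounded_Im_u_sim_avg}(iii); the passage from the $\normtwo{\cdot}$-bound to the $\norm{\cdot}$-bound on $(\Id-C_mS)^{-1}$ uses $\normtwoinf{S}\lesssim 1$ together with Lemma~\ref{lem:norm_two_to_inf}(ii), not a comparison of $\norm{\Im m}$ with $\normtwo{\Im m}$.

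One point you have backwards: the Lipschitz constant of $(\Im m)^3$ is \emph{uniform} on all of $\HbI$ (since \eqref{eq:norm_stab_operator_m_sim_one} and \eqref{eq:m_sup_bound} give $\norm{\partial_z\Im m}\norm{\Im m}^2\lesssim 1$ with no $\theta$-dependence), so $\Im m$ is uniformly $1/3$-H\"older on $\HbIclosed$ with an $\ord(1)$ constant. The factor $\theta^{-4/3}$ does \emph{not} arise from any degradation of this constant near $\partial I$ or from a covering argument; it comes entirely from the Stieltjes-transform step (Lemma~\ref{lem:Stieltjes_transform_Hölder}, bound \eqref{eq:Hoelder_in_close_support} with $\gamma=1/3$), where the term $\norm{v(\R)}/\theta^{1+\gamma}=\theta^{-4/3}$ accounts for the part of the measure $v$ lying outside $I$, over which you have no H\"older control of the density. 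Relatedly, your remark that ``$\dist(z,\supp\dens)$ is bounded below on $\HbI$ once we cut off near $\partial I$'' is false (the support can lie well inside $I$) and unnecessary: under Assumptions~\ref{assums:general} one has $\Im u\sim\dens\,\id$ from \eqref{eq:q_sim_1_im_u_sim_dens_consequence_assum}, so $\avg{fC_{\Im u}[f]}\sim\dens^2$ directly, and the denominator in the Rotation--Inversion bound is $\gtrsim\dens^2$ without any $\dist(z,\supp\dens)$ contribution.
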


We remark that the bound in \eqref{eq:m_Lipschitz_bound_positive_density} will be extended to higher derivatives in Lemma~\ref{lem:derivatives_m} below.

In the following lemma, we establish a very helpful consequence of (i) in Assumptions \ref{assums:general}. 
Moreover, part (ii) of the following lemma shows that all conditions in Assumptions \ref{assums:general} are satisfied if we assume \eqref{eq:m_sup_bound} and the flatness of $S$. 

\begin{lemma} \label{lem:q_bounded_Im_u_sim_avg}
Let $m$ be the solution to \eqref{eq:dyson} for some data pair $(a,S) \in \alg_\rm{sa}\times \Sigma$. We have
\begin{enumerate}[label=(\roman*)]
\item Let $\norm{a} \lesssim 1$, $\norm{S} \lesssim 1$ and $U \subset \Hb$ such that $\sup\{\abs{z}\colon z \in U\} \lesssim 1$. 
If \eqref{eq:m_sup_bound} and \eqref{eq:Im_m_sim_avg_Im_m} hold true uniformly for $z \in U$ then, uniformly for $z \in U$, we have 
\begin{equation} \label{eq:q_sim_1_im_u_sim_dens_consequence_assum}
 \norm{q}, \norm{q^{-1}} \sim 1, \qquad \Im u \sim \avg{\Im u }\id \sim \dens\id.
\end{equation}
\item 
Let $I \subset [-C, C]$ for some $C \sim 1$ and \eqref{eq:m_sup_bound} hold true uniformly for all $z \in \HbI$.
If $S \in \Sigma_{\rm{flat}}$ and $\norm{a} \lesssim 1$ then $\normtwoinf{S} \lesssim 1$, \eqref{eq:Im_m_sim_avg_Im_m} holds true uniformly for all $z \in \HbI$ 
and part (ii) of Assumptions \ref{assums:general} is satisfied. 
\item If Assumptions \ref{assums:general} hold true then, uniformly for $z \in \HbI$, we have
\begin{equation} \label{eq:norm_stab_operator_m_sim_one}
 \normtwo{(\Id-C_{m(z)}S)^{-1}} + \norm{(\Id- C_{m(z)} S)^{-1}} \lesssim \dens(z)^{-2} . 
\end{equation}
\end{enumerate}
\end{lemma}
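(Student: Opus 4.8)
Part (i). The plan is to read off $\norm q,\norm{q^{-1}}\sim1$ from the two‑sided bound on $q^*q$ proved inside Lemma~\ref{lem:norm_q} and then transport everything through the balanced polar decomposition. Under \eqref{eq:Im_m_sim_avg_Im_m} one may take $A\sim B\sim\avg{\Im m(z)}$ in \eqref{eq:norm_q_first_step}, so that $B/A\sim1$ and \eqref{eq:norm_q_first_step} becomes $\norm{m(z)^{-1}}^{-1}\id\lesssim q^*q\lesssim\norm{m(z)}\id$. The Dyson equation \eqref{eq:dyson} gives $\norm{m(z)^{-1}}\le\abs z+\norm a+\norm S\,\norm{m(z)}\lesssim1$ on $U$, and $\norm{m(z)}\le k_3\lesssim1$, whence $q^*q\sim\id$; since $q$ is invertible $qq^*$ has the same spectrum, so $qq^*\sim\id$ and $\norm q,\norm{q^{-1}}\sim1$. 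For the statement on $\Im u$, note $m=q^*uq$ forces $\Im m=q^*(\Im u)q$, i.e.\ $\Im u=(q^{-1})^*(\Im m)q^{-1}$ and $\avg{\Im u}=\avg{(\Im m)(qq^*)^{-1}}$. Feeding $qq^*\sim\id$ and \eqref{eq:Im_m_sim_avg_Im_m} into these relations (and using $\pi\dens=\avg{\Im m}$) yields $\Im u\sim\avg{\Im m}\,\id\sim\dens\,\id$ and $\avg{\Im u}\sim\avg{\Im m}\sim\dens$, which is the assertion.

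Part (ii). Here the idea is to recover the missing structural inputs from flatness and then quote (i). Since $S$ is flat, $S[x]\le\eps^{-1}\avg x\id$ for $x\in\algpos$, so $\norm{S[x]}\le\eps^{-1}\normtwo x$ on the positive cone; splitting a general $x$ into its positive/negative and real/imaginary parts gives $\normtwoinf S\lesssim1$, and likewise $\norm S\lesssim1$. Next, take the imaginary part of \eqref{eq:dyson} in the form $\Im m=m\big((\Im z)\id+S[\Im m]\big)m^*$ (using $-\Im(m^{-1})=m^{-1}(\Im m)(m^*)^{-1}$): the lower flatness bound and $\norm{m^{-1}}\lesssim1$ give $\Im m\gtrsim\avg{\Im m}\,mm^*\gtrsim\avg{\Im m}\id$, while the upper bound gives $\Im m\le(\Im z+c_2\avg{\Im m})\norm m^2\id$ and, after applying $\avg{\genarg}$ to the same identity, $\Im z\le\avg{\Im m}/\avg{mm^*}\lesssim\avg{\Im m}$, so $\Im m\lesssim\avg{\Im m}\id$; this is \eqref{eq:Im_m_sim_avg_Im_m}. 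As $\abs z\lesssim1$ on $\HbI$, part (i) now applies with $U=\HbI$, so $q\sim1$ and $\Im u\sim\dens\,\id$; consequently $F=C_{q,q^*}SC_{q^*,q}$ is again flat with $\sim1$ constants, since $F[x]=q\,S[q^*xq]\,q^*\sim\avg{q^*xq}\,qq^*\sim\avg x\id$ for $x\in\algpos$. Invoking Lemma~\ref{lem:positive_symmetric_maps_aux}(ii) (exactly as in the proof of Lemma~\ref{lem:prop_F}) produces the simple top eigenvalue $\normtwo F$, the eigenvector $f\in\algpos$ with $\normtwo f=1$, the bounds $k_6\id\le f\le k_7\id$ and the gap $\vartheta\gtrsim1$ in \eqref{eq:spec_F}, \eqref{eq:eigenvector_F_sim_1}; and pairing $f$ with the imaginary part of \eqref{eq:dyson_second_version}, $\Im u=(\Im z)qq^*+F[\Im u]$, together with $F=F^*$ and $F[f]=\normtwo Ff$, yields $(1-\normtwo F)\scalar f{\Im u}=(\Im z)\scalar f{qq^*}$, i.e.\ \eqref{eq:normtwo_F} (and $\normtwo F\le1$). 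This establishes part (ii) of Assumptions~\ref{assums:general}.

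Part (iii). From \eqref{eq:B_alternative_representation_F}, $(\Id-C_mS)^{-1}=C_{q^*,q}(C_u^*-F)^{-1}C_u^*C_{q^*,q}^{-1}$, and by (i) all of $C_{q^*,q}^{\pm1}$ and $C_u^*$ are bounded by a constant in both $\normtwo{\genarg}$ and $\norm{\genarg}$, so it suffices to bound $(C_u^*-F)^{-1}$. If $\normtwo F\le1/2$ a Neumann series in $C_uF$ gives $\normtwo{(C_u^*-F)^{-1}}\lesssim1\lesssim\dens^{-2}$ (as $\dens\lesssim\norm m\lesssim1$). If $\normtwo F\ge1/2$, apply the Rotation–Inversion Lemma~\ref{lem:rotation_inversion} with $U=C_u^*$ and $T=F$; by \eqref{eq:spec_F}–\eqref{eq:eigenvector_F_sim_1} the gap parameter is $\theta=\normtwo F\,\vartheta\gtrsim1$, so $\normtwo{(C_u^*-F)^{-1}}\lesssim\abs{1-\normtwo F\avg{fC_u^*[f]}}^{-1}$. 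The key estimate is $\abs{1-\normtwo F\avg{fC_u^*[f]}}\gtrsim\dens^2$: using $\Re\avg{fC_u^*[f]}=\avg{fC_{\Re u}[f]}-\avg{fC_{\Im u}[f]}$ one gets
\[ \Re\big(1-\normtwo F\avg{fC_u^*[f]}\big)=(1-\normtwo F)+\normtwo F\big((1-\avg{fC_{\Re u}[f]})+\avg{fC_{\Im u}[f]}\big), \]
where $1-\avg{fC_{\Re u}[f]}\ge0$ (since $\norm{\Re u}\le1$ and $\avg{f^2}=1$) and, by (i), $\Im u\gtrsim\dens\id$ gives $\avg{fC_{\Im u}[f]}=\avg{f\,\Im u\,f\,\Im u}\gtrsim\dens^2\avg{f^2}=\dens^2$ (all cross terms from expanding $\Im u$ around $c\,\dens\,\id$ have nonnegative trace); with $\normtwo F\ge1/2$ this yields the bound. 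Hence $\normtwo{(\Id-C_mS)^{-1}}\lesssim\dens^{-2}$. To pass to the operator norm, write $C_u^*-F=C_u^*(\Id-C_uF)$ and use $(\Id-C_uF)^{-1}=\Id+C_uF(\Id-C_uF)^{-1}$; since $\normtwoinf F\le\norm{C_{q,q^*}}\,\normtwoinf S\,\normtwo{C_{q^*,q}}\lesssim\normtwoinf S\le k_2\lesssim1$, we obtain $\norm{(\Id-C_uF)^{-1}}\le1+\normtwoinf{C_uF}\,\normtwo{(\Id-C_uF)^{-1}}\lesssim\dens^{-2}$, and therefore $\norm{(\Id-C_mS)^{-1}}\lesssim\dens^{-2}$.

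The substance is concentrated in part (iii): obtaining the \emph{sharp} power $\dens^{-2}$ (rather than the crude $\dens^{-C}$ of Proposition~\ref{pro:linear_stability}) hinges on the precise two‑sided relation $\Im u\sim\dens\,\id$ from (i)–(ii) and on the algebraic identity for $\Re\avg{fC_u^*[f]}$, which together pin the term $\avg{fC_{\Im u}[f]}$ to size exactly $\dens^2$; and the upgrade from the $\normtwo{\genarg}$‑bound to the $\norm{\genarg}$‑bound rests on the smoothing property $\normtwoinf F\lesssim1$ of the saturated self‑energy operator, i.e.\ that $F$ maps $(\alg,\normtwo{\genarg})$ boundedly into $(\alg,\norm{\genarg})$. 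Parts (i) and (ii) are essentially bookkeeping around \eqref{eq:norm_q_first_step}, the balanced polar decomposition, and the flatness manipulations of the imaginary part of the Dyson equation.
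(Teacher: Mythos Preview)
Your proof is correct and follows essentially the same route as the paper: part~(i) via \eqref{eq:norm_q_first_step} plus the Dyson equation bound on $\norm{m^{-1}}$, part~(ii) by recovering \eqref{eq:Im_m_sim_avg_Im_m} from flatness and then invoking the Perron--Frobenius analysis of $F$, and part~(iii) by the Rotation--Inversion Lemma combined with the sharp lower bound $\avg{fC_{\Im u}[f]}\gtrsim\dens^2$ coming from $\Im u\sim\dens\,\id$. The only cosmetic differences are that the paper cites the pre-established \eqref{eq:Im_m_sim_avg_only_flat} and Lemma~\ref{lem:prop_F} rather than re-deriving them, and upgrades to the $\norm{\genarg}$-bound by applying Lemma~\ref{lem:norm_two_to_inf}~(ii) directly to $\Id-C_mS$ (using $\normtwoinf{C_mS}\lesssim 1$) rather than to $\Id-C_uF$; both are instances of the same resolvent identity.
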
 

\begin{proof}[Proof of Lemma \ref{lem:q_bounded_Im_u_sim_avg}]
For the proof of (i), we use $\norm{a} \lesssim 1$, $\norm{S} \lesssim 1$ and \eqref{eq:dyson} to show $\norm{m(z)^{-1}} \lesssim 1$ uniformly for all $z \in U$.
Thus, following the proof of Lemma \ref{lem:norm_q} immediately yields the estimates on $q$ and $q^{-1}$ in 
\eqref{eq:q_sim_1_im_u_sim_dens_consequence_assum} due to \eqref{eq:m_sup_bound} and~\eqref{eq:Im_m_sim_avg_Im_m}. 
Thus, as $\norm{q}, \norm{q^{-1}} \sim 1$, we obtain the missing relations in \eqref{eq:q_sim_1_im_u_sim_dens_consequence_assum} from \eqref{eq:Im_m_sim_avg_Im_m} since
\[ \Im u = (q^*)^{-1} (\Im m) q^{-1} \sim \Im m \sim \avg{\Im m} \sim \avg{\Im u}. \] 
We now show (ii). By Lemma \ref{lem:norm_two_to_inf} (i), the upper bound in the definition of flatness, \eqref{eq:estimates_data_pair}, implies $\normtwoinf{S} \lesssim 1$. 
Owing to \eqref{eq:m_sup_bound} and \eqref{eq:m_inverse_bound}, we have $\norm{m(z)} \sim 1$ for all $z\in \HbI$. 
Hence, \eqref{eq:Im_m_sim_avg_Im_m} follows from \eqref{eq:Im_m_sim_avg_only_flat} since $\abs{z} \leq C + 1$ for $z \in \HbI$. 
Moreover, (ii) in Assumptions \ref{assums:general} is a consequence of Lemma \ref{lem:prop_F}. 

To prove \eqref{eq:norm_stab_operator_m_sim_one}, we follow the proof of Proposition \ref{pro:linear_stability} and replace the use of \eqref{eq:m_bound_dens} 
as well as \eqref{eq:f_sim_norm_m} and \eqref{eq:spec_F} from Lemma \ref{lem:prop_F} by \eqref{eq:m_sup_bound} and \eqref{eq:eigenvector_F_sim_1}, respectively. This yields 
\begin{equation} \label{eq:proof_norm_stab_operator_last_step}
 \normtwo{(\Id- C_mS)^{-1}} \lesssim 1 + \abs{1- \normtwo{F} \avg{fC_u^*[f]}}^{-1}\lesssim \abs{1- \normtwo{F}\avg{fC_u^*[f]}}^{-1},  
\end{equation}
where we used in the last step that \eqref{eq:m_sup_bound} implies $\dens(z) \lesssim 1$ on $\HbI$. 
Since $\Im u \sim \dens$ by \eqref{eq:q_sim_1_im_u_sim_dens_consequence_assum} and $\normtwo{F} \leq 1$ by \eqref{eq:normtwo_F} that holds under Assumptions \ref{assums:general} (ii), we conclude 
\[ \abs{1- \normtwo{F} \avg{fC_u^*[f]}}^{-1} \lesssim \abs{1- \avg{fC_u^*[f]}}^{-1} \lesssim \dens^{-2} \]
as in \eqref{eq:proof_linear_stability_aux4} in the proof of Proposition \ref{pro:linear_stability}. 
This shows $\normtwo{(\Id- C_mS)^{-1}} \lesssim \dens(z)^{-2}$. 
Using $\normtwoinf{S} \lesssim 1$ and Lemma \ref{lem:norm_two_to_inf} (ii), we obtain the missing $\norm{\genarg}$-bound in 
\eqref{eq:norm_stab_operator_m_sim_one}. This completes the proof of Lemma \ref{lem:q_bounded_Im_u_sim_avg}. 
\end{proof} 

\begin{proof}[Proof of Proposition \ref{pro:analyticity_of_m}] 
Similarly to the proof of Proposition \ref{pro:regularity_density_of_states}, we obtain 
\begin{equation} \label{eq:proof_pro_hoelder_1_3_aux1}
 \norm{\pt_z \Im m(z)} \lesssim \norm{\pt_z m(z)} \leq \norm{(\Id- C_mS)^{-1}} \norm{m(z)}^2 \lesssim \dens(z)^{-2}\sim \norm{\Im m(z)}^{-2} 
\end{equation}
for $z \in \HbI$ from \eqref{eq:m_sup_bound}, \eqref{eq:norm_stab_operator_m_sim_one} and \eqref{eq:Im_m_sim_avg_Im_m}.
By the submultiplicativity of $\norm{\genarg}$, $(\Im m(z))^3 \colon \HbI \to (\alg, \norm{\genarg})$ is a uniformly Lipschitz-continuous function. 
Hence, $\Im m(z)$ is uniformly $1/3$-Hölder continuous on $\HbI$ (see e.g.~Theorem X.1.1 in \cite{Bhatia_matrix_analysis}) 
and, thus, has a uniformly $1/3$-Hölder continuous extension to $\HbIclosed$. 
We conclude that the measure $v$ restricted to $I$ has a density with respect to the Lebesgue 
measure on $I$, i.e., \eqref{eq:v_density_Im_m} holds true for all measurable $A \subset I$. 
Now, \eqref{eq:Hoelder_in_close_support} in Lemma \ref{lem:Stieltjes_transform_Hölder} implies the uniform $1/3$-Hölder continuity of $m$ on $I_\theta \times \ii(0,\infty)$. In particular, $m$ can be uniquely extended to a uniformly 
$1/3$-Hölder continuous function on $I_\theta \times \ii[0,\infty)$ such that \eqref{eq:Hoelder_1_3_general_assums} holds true. 

To prove the analyticity of $m$, we refer to the proof of the analyticity of $\dens$ in Proposition \ref{pro:regularity_density_of_states}. 
The bound \eqref{eq:m_Lipschitz_bound_positive_density} can be read off from \eqref{eq:proof_pro_hoelder_1_3_aux1}. 
This completes the proof of the proposition.
\end{proof}

\begin{proof}[Proof of Proposition \ref{pro:Hoelder_1_3}]
By \eqref{eq:boundedness_for_Hoelder}, there are $C_0>0$ and $\eta_* \in (0,1]$ such that $\norm{m(\tau +\ii\eta)} \leq C_0$ for all $\tau \in I$ and $\eta \in (0,\eta_*]$. 
Hence, by Lemma \ref{lem:q_bounded_Im_u_sim_avg} (ii), the flatness of $S$ 
implies Assumptions \ref{assums:general} on $I \cap [-C,C]$ for $C \defeq 3(1 + \norm{a} + \norm{S}^{1/2})$, i.e., $C \sim 1$. 
Therefore, Proposition \ref{pro:analyticity_of_m} yields Proposition \ref{pro:Hoelder_1_3} on $I \cap [-C,C]$. 
 
Owing to \eqref{eq:stieltjes_bound_m} and $\supp v = \supp\dens$, we have $\dist(\tau, \supp v) \geq 1$ for $\tau \in I$
satisfying $\tau \notin [-C+1,C-1]$. Hence, for these $\tau$, the Hölder-continuity follows immediately from 
\eqref{eq:Hoelder_away_from_support} in Lemma \ref{lem:Stieltjes_transform_Hölder}. 
By \eqref{eq:supp_v_subset_spec_a}, we have $\Im m(\tau) = 0$ for $\tau \in I$ satisfying $\tau \notin [-C,C]$. 
Therefore, the statement about the analyticity is trivial outside of $[-C,C]$. 
This completes the proof of Proposition~\ref{pro:Hoelder_1_3}. 
\end{proof}

\section{Spectral properties of the stability operator for small self-consistent density of states}  \label{sec:stability_operator}

In this section, we study the  stability operator $B=B(z)\defeq \Id - C_{m(z)} S$,  when $\dens=\dens(z)$ is small and Assumptions \ref{assums:general} hold true. 
Note that we do not require $S$ to be flat, i.e., to satisfy \eqref{eq:estimates_data_pair}.
We will view  $B$  as a perturbation of the operator $B_0$, which we introduce now. We define 
\begin{equation} \label{eq:def_s_B_0_E}
 s \defeq \sign \Re u,  \qquad B_0 \defeq C_{q^*,q}( \Id - C_s F) C_{q^*,q}^{-1}, \qquad E \defeq (C_{q^*sq} - C_m)S=C_{q^*,q}(C_s - C_u)FC_{q^*,q}^{-1},  
\end{equation}
with $u$ and $q$ defined in \eqref{eq:def_q_u} and $F$ defined in \eqref{eq:def_F}.
Note $B_0 = \Id - C_{q^*sq}S$, i.e., in the definition of $B$, $u$ in $m=q^*uq$ is replaced by $s$. 
Thus, we have $B = B_0 + E$. 
Under Assumptions \ref{assums:general}, \eqref{eq:q_sim_1_im_u_sim_dens_consequence_assum} holds true which we will often use in the following. 
Since $\id - \abs{\Re u } = \id - \sqrt{\id - (\Im u)^2} \leq (\Im u)^2 \lesssim \dens^2$, we also obtain  
\begin{equation} \label{eq:Re_u_s}
 \Re u = s + \ord(\dens^2), \qquad \im u \,=\, \ord(\rho)\,,\qquad \Re m = q^* s q + \ord(\dens^2) 
\end{equation}
and with $C_s-C_u =\ord(\norm{s-u})= \ord(\rho)$ we get 
\begin{equation} \label{eq:E=ord rho}
E\, =\,\ord(\dens)\,.
\end{equation}
Here, we use the notation $R = T + \ord(\alpha)$ for operators $T$ and $R$ on $\alg$ and $\alpha>0$ if $\norm{R - T} \lesssim \alpha$.
We introduce 
\begin{equation} \label{eq:def_f_u}
f_u \defeq \dens^{-1} \Im u. 
\end{equation}
By the functional calculus for the normal operator $u$, $\Re u$, $s$ and $f_u$ commute. Hence, $C_s[f_u] = f_u$. 
From the imaginary part of \eqref{eq:dyson_second_version} and \eqref{eq:q_sim_1_im_u_sim_dens_consequence_assum}, we conclude that 
\begin{equation} \label{eq:F_f_u}
 (\Id - F) [f_u] = \dens^{-1} \imz qq^* = \ord(\dens^{-1}\imz). 
\end{equation}
The following technical lemma provides control on the resolvent of the stability operator $B$ and its relatives. It has been stated for the finite dimensional situation $\cal{A}=\C^{N \times N}$ in \cite[Corollary 4.8]{AltEdge}. For the reader's convenience we present its proof following the same line of reasoning as in \cite{AltEdge}.
For  $z\in \C$ and $\eps>0$, we denote by $D_\eps(z) \defeq \{ w \in \C \colon \abs{z-w} < \eps\}$ the disk in $\C$ of radius $\eps$ around $z$. 

\begin{lemma}[Spectral properties of stability operator] \label{lem:prop_F_small_dens} 
Let $T \in \{\Id-F, \Id-C_s F,B_0,B, \Id - C_{m^*,m}S \}$. 
If Assumptions~\ref{assums:general} are satisfied on an interval $I \subset \R$ for some $\eta_* \in (0,1]$, then there
are $\dens_* \sim 1$ and $\eps \sim 1$ such that
\begin{equation} \label{eq:B_0_resolvent_bound}
 \norm{( T-\omega\2 \Id)^{-1}}_2 +\norm{( T-\omega\2 \Id)^{-1}} +\norm{(  T^*-\omega\2\Id)^{-1}} \lesssim 1 
\end{equation}
uniformly for all $z \in\HbI$ satisfying $\dens(z)+\dens(z)^{-1} \imz\leq \dens_*$ and for all $\omega \in \C$ with $\omega \not \in D_{\eps}(0) \cup D_{1-2 \eps}(1) $. 
Furthermore, there is a single simple (algebraic multiplicity $1$) eigenvalue $\lambda$ in the disk around $0$, i.e., 
\begin{equation}
\label{eq:nondegeneracy for CF}
\spec(T) \cap D_{\eps}(0)\,=\, \{\lambda\}\quad \text{and} \quad \rm{rank}\2 P_T\,=\, 1\,, \quad \text{where} \quad 
P_T\,\defeq \, -\frac{1}{2\pi\ii}\int_{\partial D_\eps(0)}(T-\omega\Id)^{-1} \di\omega \,.
\end{equation}
\end{lemma}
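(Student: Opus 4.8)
The plan is to reduce the whole list to two model operators, $\Id - F$ and $\Id - C_s F$, and then propagate the bounds by exact similarity transformations and small perturbations. Write $\delta \defeq \dens(z) + \dens(z)^{-1}\imz$, so the hypothesis reads $\delta \le \dens_*$; the constants $\eps \sim 1$ and $\dens_* \sim 1$ will be fixed at the end, $\eps$ as a small fraction of the spectral gap $\vartheta \ge k_8$ of $F$ and $\dens_*$ much smaller than $\eps$. I use throughout $\norm{q}, \norm{q^{-1}} \sim 1$ and $\Im u \sim \avg{\Im u}\id \sim \dens\id$ from \eqref{eq:q_sim_1_im_u_sim_dens_consequence_assum}, the bounds $f \sim \id$ and $\vartheta \ge k_8$ from Assumptions \ref{assums:general}(ii), $\normtwoinf{F} \lesssim 1$ (from $\normtwoinf{S} \le k_2$ and $\norm{q}, \norm{q^{-1}} \sim 1$), $1 - \normtwo{F} \sim \dens^{-1}\imz \le \delta$ (from \eqref{eq:normtwo_F}), $E = \ord(\dens)$ in all three operator norms (from \eqref{eq:def_s_B_0_E}, \eqref{eq:E=ord rho}), and $u = s + \ord(\dens)$ (from \eqref{eq:Re_u_s}).

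For $T = \Id - F$ the argument is direct. Since $F = F^*$ on $\Ltwo$ and, by \eqref{eq:spec_F}, $\spec(\Id - F) \subset \{1 - \normtwo{F}\} \cup [\vartheta, 2 - \vartheta] \subset [0, \dens_*] \cup [k_8, 2 - k_8]$, choosing $\eps < k_8/4$ puts $[k_8, 2 - k_8]$ into $D_{1 - 2\eps}(1)$ and $\dens_* < \eps/2$ puts $1 - \normtwo{F}$ into $D_{\eps/2}(0)$, so $\dist(\omega, \spec(\Id - F)) \gtrsim 1$ for $\omega \notin D_\eps(0) \cup D_{1 - 2\eps}(1)$; hence $\normtwo{(\Id - F - \omega)^{-1}} \lesssim 1$ by the spectral theorem, and $(\Id - F)^* = \Id - F$ takes care of the adjoint term. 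The $\norm{\cdot}$-bound follows from $(\Id - F - \omega)^{-1} = (1 - \omega)^{-1}(\Id + F(\Id - F - \omega)^{-1})$ together with $\normtwoinf{F} \lesssim 1$, $\normtwo{x} \le \norm{x}$ for $x \in \alg$, and $\abs{1 - \omega} \ge 1 - 2\eps \gtrsim 1$. Simplicity of the eigenvalue $1 - \normtwo{F}$ and $\rm{rank}\, P_{\Id - F} = 1$ are inherited from the simplicity of $\normtwo{F}$ as an eigenvalue of $F$.

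The core is $T = \Id - C_s F$. Here $C_s$ is a self-adjoint $\Ltwo$-unitary reflection ($C_s^2 = \Id$, $\norm{C_s} = 1$) that does not commute with $F$, but it does commute with the orthogonal rank-one projection $P_{f_u}$ onto $f_u \defeq \dens^{-1}\Im u$ — indeed $C_s P_{f_u} = P_{f_u} = P_{f_u} C_s$ because $C_s[f_u] = f_u$ — and $f_u$ is $\ord(\delta)$-close in $\Ltwo$ to the Perron eigenvector $f$ of $F$: from \eqref{eq:F_f_u} one has $(\Id - F)[f_u] = \ord(\dens^{-1}\imz)$, and inverting $\Id - F$ on its spectral gap (operator norm $\le 1/\vartheta \lesssim 1$) gives $\normtwo{P_f - P_{f_u}} \lesssim \dens^{-1}\imz \le \delta$. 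Writing $F = \normtwo{F} P_f + F_\perp$ with $\normtwo{F_\perp} \le 1 - \vartheta$, then $F = \normtwo{F} P_{f_u} + G$ with $G \defeq \normtwo{F}(P_f - P_{f_u}) + F_\perp$, one finds $\normtwo{G} \le 1 - \vartheta + \ord(\delta)$ and, using $C_s P_{f_u} = P_{f_u}$, $F[f_u] = f_u + \ord(\dens^{-1}\imz)$ and $P_{f_u}[f_u] = f_u$, that $G[f_u] = (1 - \normtwo{F})f_u + \ord(\dens^{-1}\imz) = \ord(\delta)$; hence, with $\tilde G \defeq G(\Id - P_{f_u})$, $G = \tilde G + \ord(\delta)$ and so $\Id - C_s F = M + \ord(\delta)$ with $M \defeq \Id - \normtwo{F} P_{f_u} - C_s \tilde G$. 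The operator $M$ is block-triangular for $\Ltwo = \linspan(f_u) \oplus f_u^\perp$: it fixes $\linspan(f_u)$ with eigenvalue $1 - \normtwo{F}$, its compression to $f_u^\perp$ is $\Id - P_\perp C_s \tilde G$ (with $P_\perp \defeq \Id - P_{f_u}$) whose spectrum lies in $\overline{D_{1 - 2\eps}(1)}$ because $\normtwo{P_\perp C_s \tilde G} \le \normtwo{\tilde G} \le 1 - \vartheta + \ord(\delta) < 1 - 2\eps$ once $\eps < \vartheta/4$ and $\dens_*$ is small, and its off-diagonal block has norm $\lesssim \normtwo{\tilde G} \lesssim 1$. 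For $\omega \notin D_\eps(0) \cup D_{1 - 2\eps}(1)$, inverting this block-triangular structure hence gives $\normtwo{(M - \omega)^{-1}} \lesssim 1$ — the diagonal blocks being $(1 - \normtwo{F} - \omega)^{-1}$, bounded by $(\eps - \dens_*)^{-1} \lesssim 1$, and $((1 - \omega)\Id - P_\perp C_s \tilde G)^{-1}$, bounded by $(\vartheta - 2\eps - \ord(\delta))^{-1} \lesssim 1$ via a Neumann series — and a further Neumann series absorbs the $\ord(\delta)$ perturbation to yield $\normtwo{(\Id - C_s F - \omega)^{-1}} \lesssim 1$. The $\norm{\cdot}$-bound follows from $(\Id - C_s F - \omega)^{-1} = (1 - \omega)^{-1}(\Id + C_s F(\Id - C_s F - \omega)^{-1})$ and $\normtwoinf{C_s F} \le \normtwoinf{F} \lesssim 1$, and the adjoint from $(\Id - C_s F)^* = \Id - F C_s = C_s(\Id - C_s F)C_s$. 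Finally, $\rm{rank}\, P_M = 1$ since $M$ is block-triangular with $\Id - P_\perp C_s \tilde G - (1 - \normtwo{F})\Id$ invertible, and $\normtwo{P_{\Id - C_s F} - P_M} \lesssim \delta < 1$ by the Riesz formula \eqref{eq:nondegeneracy for CF} and the resolvent bound, whence $\rm{rank}\, P_{\Id - C_s F} = 1$.

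It remains to transfer to the other operators and to flag the obstacle. Exact similarities $B_0 = C_{q^*,q}(\Id - C_s F)C_{q^*,q}^{-1}$ (from \eqref{eq:def_s_B_0_E}) and $\Id - C_{m^*,m}S = C_{q^*,q}(\Id - C_{u^*,u}F)C_{q^*,q}^{-1}$ (a direct computation from $m = q^* u q$, via $C_{m^*,m} = C_{q^*,q} C_{u^*,u} C_{q,q^*}$), both with $\norm{C_{q^*,q}^{\pm 1}} + \normtwo{C_{q^*,q}^{\pm 1}} \lesssim 1$, reduce $B_0$ and $\Id - C_{m^*,m}S$ to $\Id - C_s F$ and $\Id - C_{u^*,u}F$; since $C_{u^*,u} = C_s + \ord(\dens)$ and $B = B_0 + E$ with $E = \ord(\dens)$, both are then covered by the Neumann-series perturbation of the uniform bound for $\Id - C_s F$. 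Their adjoints reduce, via $(\Id - C_m S)^* = C_{q,q^*}^{-1}(\Id - F C_u^*)C_{q,q^*}$ and its analogue, to $\Id - F C_u^*$ and $\Id - F C_{u,u^*}$, which one controls through $((1 - \omega)\Id - F B)^{-1} = (1 - \omega)^{-1}(\Id + F((1 - \omega)\Id - B F)^{-1}B)$ with $B \in \{C_u^*, C_{u,u^*}\}$: here $(1 - \omega)\Id - B F = (\Id - B F) - \omega \Id$ is the resolvent of $\Id - B F$ at $\omega$, and $\Id - B F$ is again an $\ord(\dens)$-perturbation of $\Id - C_s F$. In every case the $D_\eps(0)$-spectral data — one simple eigenvalue and rank-one Riesz projection — persist, since each reduction is by an invertible map of norm $\sim 1$ or by a perturbation of size $\ll 1$, and a projection within distance $< 1$ of a rank-one projection is itself rank one. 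The hard part is precisely the non-commutativity of $C_s$ and $F$: the operator $\Id - C_s F$ is genuinely non-normal, so its spectrum cannot be read off from $\spec(F)$; the fix rests on the exact relations $C_s[f_u] = f_u$, $C_s P_{f_u} = P_{f_u}$ and on $\normtwo{P_f - P_{f_u}} \lesssim \delta$, which pull the unstable direction into the $C_s$-fixed subspace — and this is where both smallness hypotheses $\dens(z) \le \dens_*$, $\dens(z)^{-1}\imz \le \dens_*$ and the spectral gap of $F$ are used.
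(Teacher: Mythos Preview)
Your argument is correct, and the reductions via similarity and $\ord(\rho)$-perturbation (for $B_0$, $B$, $\Id - C_{m^*,m}S$ and their adjoints) match the paper's. The substantive difference is in how you control $\Id - C_sF$.

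The paper does not use your block-triangular decomposition. Instead it interpolates: setting $V_t \defeq (1-t)\Id + tC_s$, it writes $T_t \defeq \Id - V_tF$, so $T_0 = \Id - F$ and $T_1 = \Id - C_sF$. Decomposing $x = \alpha f + y$ with $y \perp f$ and using $V_tF[f] = f + \ord(\rho^{-1}\imz)$, $FV_t[f] = f + \ord(\rho^{-1}\imz)$ (from $V_t[f_u] = f_u$ and $f \approx f_u/\normtwo{f_u}$), the paper computes
\[
\normtwo{(T_t-\omega)[x]}^2 \;\ge\; \abs{\alpha}^2\eps^2 + (\vartheta-2\eps)^2\normtwo{y}^2 + \ord(\rho^{-1}\imz\,\normtwo{x}^2)
\]
directly. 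This gives the resolvent bound for all $t\in[0,1]$ simultaneously, so the rank of $P_{T_t}$ is continuous in $t$ and equals $\rm{rank}\,P_{T_0}=1$.

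Your route isolates the exact algebraic consequence of $C_s[f_u]=f_u$ more explicitly: writing $F = \normtwo{F}P_{f_u} + G$ forces $C_sF$ to be block-triangular up to $\ord(\delta)$, and the diagonal blocks can be inverted separately. This is more hands-on and yields the rank-one property by comparing Riesz projections of $\Id - C_sF$ and the triangular model $M$. The paper's interpolation is slicker in that it avoids ever writing the block structure and gets the rank statement for free from homotopy, but at the cost of a slightly less transparent lower-bound computation. Both arguments rest on the same three inputs: $C_s[f_u]=f_u$, $f \approx f_u/\normtwo{f_u}$ up to $\ord(\rho^{-1}\imz)$, and the gap $\vartheta$ of $F$. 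Your handling of the $\norm{\cdot}$-norm and adjoint bounds via the $AB$/$BA$ resolvent identity is a bit more elaborate than necessary; the paper just observes $\normtwoinf{\Id - T} \lesssim 1$ (and likewise for $T^*$) and invokes Lemma~\ref{lem:norm_two_to_inf}(ii) once.
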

If Assumptions \ref{assums:general} are satisfied on $I$ for some $\eta_* \in (0,1]$ then we have
\begin{equation} \label{eq:f_u_sim_1}
 f_u = \dens^{-1} \Im u \sim 1. 
\end{equation}
uniformly for $z \in \HbI$ due to \eqref{eq:q_sim_1_im_u_sim_dens_consequence_assum}. This fact will often be used in the following without mentioning it.

\begin{proof} 
 First, we notice that for each choice of the operator $T$ from the lemma, the bound $\normtwoinf{\Id-T} \lesssim 1$ holds because of $\normtwoinf{S} \lesssim 1$, \eqref{eq:m_sup_bound} and \eqref{eq:q_sim_1_im_u_sim_dens_consequence_assum}. Therefore invertibility of $T-\omega\2 \Id$ as an operator on $L^2$ implies invertibility as an operator on $\alg$, as long as  $\omega$ stays away from $1$, due to Lemma \ref{lem:norm_two_to_inf} (ii). It suffices thus to show the bound on the $\norm{\1\cdot\1}_2$-norm from \eqref{eq:B_0_resolvent_bound} and \eqref{eq:nondegeneracy for CF}. For $T=\Id-F$ both assertions hold by Lemma~\ref{lem:prop_F}. In particular, we find 
\begin{equation} \label{eq:f_u_approx_f}
f  = \normtwo{f_u}^{-1} f_u+\ord(\dens^{-1} \imz)\,,
\end{equation}
where $f$ is the single top eigenvector of $F$, $Ff= \normtwo{F}f$ (see Lemma \ref{lem:prop_F}).
The proof of \eqref{eq:f_u_approx_f} follows from \eqref{eq:F_f_u} and $\normtwo{F}= 1 +\ord(\dens^{-1}\imz)$ (cf.~\eqref{eq:normtwo_F}) by straightforward perturbation theory of the simple isolated eigenvalue~$\normtwo{F}$.

We will now prove \eqref{eq:nondegeneracy for CF} and the $\norm{\1\cdot\1}_2$-norm bound
\begin{equation} \label{eq:T 2 norm bound}
 \norm{( T-\omega\2 \Id)^{-1}}_2 \,\lesssim\, 1\,,  \qquad \omega \not \in D_{\eps}(0) \cup D_{1-2 \eps}(1) 
\end{equation}
 for the choices $T=\Id-C_sF ,B_0,B,\Id - C_{m^*,m}S$ in this order. We start with $T=\Id-C_s F$. We introduce the interpolation $T_t:=\Id-V_tF$ between $T_0=\Id-F$ and $T_1=\Id-C_sF$ by setting 
\[ V_t \defeq  (1-t) \Id + t\2C_s\,,\qquad t \in [0,1]\,.\]
Once we have established \eqref{eq:T 2 norm bound} with $T=T_t$ for all $t \in[0,1]$, 
the assertion about the single isolated eigenvalue \eqref{eq:nondegeneracy for CF} also follows for $T=T_t$. Indeed, the rank of the spectral projection $P_{T_t}$ is a
continuous function of $t$ and thus $\rm{rank}\2P_{T_t}=\rm{rank}\2P_{T_0}=1$ by what we have already shown.

In order to show \eqref{eq:T 2 norm bound} we consider two regimes. On the one hand, for $\abs{\omega}\ge 3$ we simply use $\normtwo{F} \le 1$ and  $\normtwo{V_t} \leq 1$. On the other hand, for $\abs{\omega}\le 3$ we estimate the norm of $((1-\omega)\Id - V_t F)[x]$ from below for any $x \in L^2$. For this purpose we decompose $x = \alpha f + y$ according to the top eigenvector $f$ of $F$, with $y \perp f$ and $\alpha \in \C$. Then we find
\begin{equation} \label{eq:estimate_T_omega} 
\begin{split}
\normtwo{( (1-\omega)\Id-V_t F  )[x]}^2 &= \, \abs{\alpha}^2 \abs{\omega}^2+\normtwo{( (1-\omega)\Id-V_t F )[y]}^2 +\ord\big( \dens^{-1}\imz\norm{x}_2^2\big)  
\\
&\ge \abs{\alpha}^2 \eps^2+(\vartheta-2\eps)^2(\norm{x}_2^2-\abs{\alpha}^2)+\ord\big( \dens^{-1}\imz\norm{x}_2^2\big)  
\,,
\end{split}
\end{equation} 
where $ \vartheta \sim 1 $ is the spectral gap of $F$ from \eqref{eq:spec_F}. In the equality of \eqref{eq:estimate_T_omega} we used that 
$V_t F[f] = f + \ord( \dens^{-1}\imz)$ and $FV_t [f] = f + \ord( \dens^{-1}\imz)$ due to \eqref{eq:f_u_approx_f}, $V_t[f_u] = f_u$ and $\normtwo{F}= 1 +\ord(\dens^{-1}\imz)$, as well as the orthogonality of $y$ and $f$.
For the inequality in \eqref{eq:estimate_T_omega} we estimated $\abs{\omega} \ge \eps$ and used
\[
\normtwo{( (1-\omega)\Id-V_t F )[y]}^2\ge (\abs{1-\omega}-\norm{F}_2(1-\vartheta))^2\normtwo{y}^2 \ge (\vartheta-2\eps)^2(\norm{x}_2^2-\abs{\alpha}^2)\,.
\]
From \eqref{eq:estimate_T_omega} we now conclude $\normtwo{( (1-\omega)\Id-V_t F  )[x]}^2 \gtrsim \normtwo{x}^2$ by choosing $\eps$ and $\rho_\ast$ small enough. 

Since we have established the claim of the lemma for $T=\Id-C_sF$ it also follows for $T= B_0$ because of the definition of $B_0$ in \eqref{eq:def_s_B_0_E}  and \eqref{eq:q_sim_1_im_u_sim_dens_consequence_assum}. Thus $B_0$ has a simple isolated eigenvalue in $D_\eps(0)$ and we can use analytic perturbation theory to establish the lemma for the choices $T= B,\Id - C_{m^*,m}S$. Note that in either case $T=B_0 +\ord(\rho)$ due to $\norm{s-u} \lesssim \rho$ (cf. \eqref{eq:Re_u_s}). 
\end{proof}

If $z \in \HbI$ satisfies $\dens(z) + \dens(z)^{-1}\imz \leq \dens_*$ for $\dens_* \sim 1$ from Lemma \ref{lem:prop_F_small_dens} then we denote by 
$P_{s,F}$ the spectral projection corresponding to the isolated eigenvalue of $\Id - C_s F$, i.e., $P_{s,F}$ equals $P_T$ in \eqref{eq:nondegeneracy for CF} with $T=\Id - C_s F$. 
We also set $Q_{s,F}\defeq \Id - P_{s,F}$. Moreover, for such $z$, we define $\psi$ and $\sigma$ by 
\begin{equation}\label{eq:def_psi_sigma}
 \psi(z) \defeq \scalar{sf_u^2}{(\Id + F)(\Id - C_s F)^{-1}Q_{s,F}[sf_u^2]}, \qquad \sigma(z) \defeq \avg{sf_u^3}. 
\end{equation}

 In the following corollary we consider $B$ as a perturbation of $B_0$ and correspondingly expand its isolated eigenvalue and eigenvectors. In \cite[Corollary 4.8]{AltEdge} a simpler expansion has been performed in the vicinity of an edge point, i.e., where $\im m$ follows the square root behaviour from Theorem~\ref{thm:singularities_flat}. However, here we have to expand to higher order because we cover the neighbourhood of any cubic root cusp  from Theorem~\ref{thm:singularities_flat} as well.
\begin{corollary} \label{coro:eigenvector_expansion}
Let $z \in \HbI$ satisfy $\dens(z) + \dens(z)^{-1}\imz \leq \dens_*$ for $\dens_* \sim 1$ from Lemma \ref{lem:prop_F_small_dens}. 
 Let $\beta_0$ and $\beta$ be the isolated eigenvalues in $D_\eps(0)$ of $B_0$ and $B$, respectively (cf. Lemma~\ref{lem:prop_F_small_dens}). 
We denote by $P_0$ and $P$ the spectral projections corresponding to $\beta_0$ and $\beta$, respectively. 
Then with $Q_0\defeq \Id -P_0$ and $Q\defeq \Id-P$ we have 
\begin{equation} \label{eq:B_inverse_Q_norm}
\norm{B^{-1} Q}  + \normtwo{B^{-1}Q}  +\norm{B_0^{-1} Q_0}  \lesssim 1.
\end{equation}
Furthermore, we set $b_0\defeq P_0C_{q^*,q}[f_u]$ and $l_0 \defeq P_0^*C_{q,q^*}^{-1}[f_u]$. 
Then $b_0$ and $l_0$ are right and left eigenvectors of $B_0$  associated to $\beta_0$  and we have  
\begin{subequations} 
\begin{align} 
 b_0 & = C_{q^*,q}[f_u] + \ord(\dens^{-1} \imz), \qquad \qquad l_0 = C_{q,q^*}^{-1} [f_u] + \ord(\dens^{-1} \imz ), \label{eq:b_0_l_0_approx}\\ 
\beta_0 &=\frac{\imz}{\dens} \frac{\pi}{\avg{f_u^2}} +\ord(\dens^{-2}\imz[2]) = \ord(\dens^{-1} \imz)\,.& \label{eq:beta_0_approx}
\end{align}
\end{subequations}
 The definitions $b \defeq P[b_0]$ and $l \defeq P^*[l_0]$ yield right and left eigenvectors of $B$  associated to $\beta$  which satisfy  
\begin{subequations}
\label{eq:expansion of beta b l}
\begin{align}
b \,&=\, b_0 + 2\1\ii\1\dens\2C_{q^*,q}( \Id - C_s F)^{-1}Q_{s,F}[    sf_u^2]  + \ord(\dens^2+\im z)\,, \label{eq:expansion_b}
\\
l \,&=\, l_0 - 2\1\ii\1\dens\2C_{q,q^*}^{-1}( \Id -  FC_s)^{-1}Q_{s,F}^*F[sf_u^2]  + \ord(\dens^2+\im z)\,, \label{eq:expansion_l}
\\
\beta\scalar{l}{b}\,&=\, \pi \dens^{-1} \imz - 2\ii\dens\sigma + 2 \dens^2 \bigg( \psi + \frac{\sigma^2}{\avg{f_u^2}}\bigg) + \ord( \dens^3 + \imz + \dens^{-2}\imz[2])\,. 
\label{eq:expansion_beta_scalar_l_b} 
\end{align}
\end{subequations}
Moreover, we have 
\begin{equation} \label{eq:b_l_bounded} 
\norm{b} \lesssim 1, \qquad \qquad \norm{l} \lesssim 1.
\end{equation}
\end{corollary}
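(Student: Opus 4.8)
The plan is to treat $B=B_0+E$, $E=\ord(\dens)$, by analytic perturbation theory around $B_0=C_{q^*,q}(\Id-C_sF)C_{q^*,q}^{-1}$, conjugating everything by $C_{q^*,q}$ and using the identity $\scalar{C_{q,q^*}^{-1}[x]}{C_{q^*,q}[y]}=\scalar{x}{y}$ (immediate from $C_{q^*,q}^{*}=C_{q,q^*}$) to reduce to the operator $\Id-C_sF$, for which $f_u$ is an approximate null vector: $C_s[f_u]=f_u$ while $F[f_u]=f_u-\dens^{-1}\imz\,qq^*$ by \eqref{eq:F_f_u}, so $(\Id-C_sF)[f_u]=\dens^{-1}\imz\,s\,qq^*s=\ord(\dens^{-1}\imz)$, and by symmetry of $F$ the same holds for $\Id-FC_s$. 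The bound \eqref{eq:B_inverse_Q_norm} comes first: the open disks $D_\eps(0)$ and $D_{1-2\eps}(1)$ of Lemma \ref{lem:prop_F_small_dens} are disjoint, $\spec(T)\subset D_\eps(0)\cup D_{1-2\eps}(1)$ and $\spec(T)\setminus D_\eps(0)\subset D_{1-2\eps}(1)$ for $T\in\{B_0,B\}$, so there is a contour $\Gamma$ in the complement of both disks encircling $D_{1-2\eps}(1)$ but not $0$ with $\dist(\Gamma,D_\eps(0)\cup D_{1-2\eps}(1))\sim 1$, and $B^{-1}Q=-\tfrac1{2\pi\ii}\oint_\Gamma\omega^{-1}(B-\omega\Id)^{-1}\dd\omega$ (similarly for $B_0^{-1}Q_0$) is then bounded by \eqref{eq:B_0_resolvent_bound}.

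Next I would analyze $B_0$. Since $P_0=C_{q^*,q}P_{s,F}C_{q^*,q}^{-1}$, $Q_0=C_{q^*,q}Q_{s,F}C_{q^*,q}^{-1}$, and $B_0C_{q^*,q}[f_u]=\dens^{-1}\imz\,C_{q^*,q}[s\,qq^*s]=\ord(\dens^{-1}\imz)$, applying the bound just proven for $B_0^{-1}Q_0$ gives $Q_0C_{q^*,q}[f_u]=\ord(\dens^{-1}\imz)$, hence $b_0=C_{q^*,q}[f_u]+\ord(\dens^{-1}\imz)$ and, by the analogous computation for $B_0^{*}$, $l_0=C_{q,q^*}^{-1}[f_u]+\ord(\dens^{-1}\imz)$; since $\normtwo{C_{q^*,q}[f_u]}\sim1$ these are nonzero, hence genuine right and left eigenvectors of $B_0$, which gives \eqref{eq:b_0_l_0_approx}. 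For \eqref{eq:beta_0_approx} I would use $\beta_0\scalar{l_0}{b_0}=\scalar{l_0}{B_0b_0}$: the identity $\scalar{C_{q,q^*}^{-1}[x]}{C_{q^*,q}[y]}=\scalar{x}{y}$ yields $\scalar{l_0}{b_0}=\avg{f_u^2}+\ord(\dens^{-2}\imz[2])$, and inserting $B_0b_0=\dens^{-1}\imz\,C_{q^*,q}[s\,qq^*s]+\ord(\dens^{-1}\imz)$ together with $\avg{f_u\,qq^*}=\dens^{-1}\avg{\Im m}=\pi$ gives $\scalar{l_0}{B_0b_0}=\pi\dens^{-1}\imz+\ord(\dens^{-2}\imz[2])$, hence the claimed expansion of $\beta_0$.

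To pass to $B$ I would invoke analytic perturbation theory, legitimate by the $\sim1$ spectral gap of $B_0$ around $\beta_0$ from the first step: $P=P_0-S_0EP_0-P_0ES_0+\ord(\dens^2)$ with $S_0\defeq(B_0-\beta_0\Id)^{-1}Q_0=B_0^{-1}Q_0+\ord(\dens^{-1}\imz)$, so that $b=P[b_0]=b_0-S_0E[b_0]+\ord(\dens^2)$ (and symmetrically for $l$ from $B^{*}=B_0^{*}+E^{*}$). The term $E[b_0]$ is computed from $E=C_{q^*,q}(C_s-C_u)FC_{q^*,q}^{-1}$, the elementary identity $(C_s-C_u)[f_u]=f_u(\id-u^2)=-2\ii f_u(\Im u)u=-2\ii\dens\,f_u^2u$ (valid since $s^2=\id$, $\id-u^2=-2\ii(\Im u)u$ for unitary $u$, $\Im u=\dens f_u$), and $u=s+\ii\dens f_u+\ord(\dens^2)$ from \eqref{eq:Re_u_s}, giving $E[b_0]=-2\ii\dens\,C_{q^*,q}[sf_u^2]+2\dens^2C_{q^*,q}[f_u^3]+\ord(\dens^3+\imz)$. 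Propagating the leading term through $S_0$, conjugating by $C_{q^*,q}$, and using that $Q_{s,F}$ commutes with $(\Id-C_sF)^{-1}$ yields \eqref{eq:expansion_b}; \eqref{eq:expansion_l} is obtained identically. Finally \eqref{eq:b_l_bounded} follows from $\norm{b_0},\norm{l_0}\lesssim1$ and $\norm{P-P_0}+\norm{P^{*}-P_0^{*}}\lesssim\dens$.

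For \eqref{eq:expansion_beta_scalar_l_b} I would use that $l=P^{*}[l_0]\in\Ran P^{*}$, so $\scalar{l}{b}=\scalar{l}{P[b_0]}=\scalar{l}{b_0}$ and $\beta\scalar{l}{b}=\scalar{l}{Bb_0}=\beta_0\scalar{l}{b_0}+\scalar{l}{Eb_0}$; the first term equals $\pi\dens^{-1}\imz+\ord(\imz+\dens^{-2}\imz[2])$ by the second step (also $\scalar{\delta l}{b_0}=0$ since $\delta l\in\Ran Q_0^{*}$). Expanding $l=l_0+\delta l+\ord(\dens^2)$, inserting the formula for $E[b_0]$, and repeatedly using $\scalar{C_{q,q^*}^{-1}[x]}{C_{q^*,q}[y]}=\scalar{x}{y}$, the $\ord(\dens)$ contribution is $-2\ii\dens\avg{sf_u^3}=-2\ii\dens\sigma$, and the $\ord(\dens^2)$ contribution is $2\dens^2\avg{f_u^4}+4\dens^2\scalar{sf_u^2}{F(\Id-C_sF)^{-1}Q_{s,F}[sf_u^2]}$; the identification of this with $2\dens^2(\psi+\sigma^2/\avg{f_u^2})$ uses the resolvent identity $(\Id+C_sF)(\Id-C_sF)^{-1}Q_{s,F}=2(\Id-C_sF)^{-1}Q_{s,F}-Q_{s,F}$, the equality $\scalar{sf_u^2}{C_sF[\genarg]}=\scalar{sf_u^2}{F[\genarg]}$ (as $sf_u^2$ commutes with $s$), and the leading-order projection formula $P_{s,F}[sf_u^2]=\tfrac{\sigma}{\avg{f_u^2}}f_u+\ord(\dens^{-1}\imz)$, which gives $\scalar{sf_u^2}{Q_{s,F}[sf_u^2]}=\avg{f_u^4}-\sigma^2/\avg{f_u^2}+\ord(\dens^{-1}\imz)$. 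The main obstacle is exactly this last bookkeeping: one must collect every contribution of order $\dens^2$ — notably the quadratic-in-$u$ correction $2\dens^2C_{q^*,q}[f_u^3]$ hidden inside $E[b_0]$ and the cross term $\scalar{\delta l}{Eb_0}$ involving the reduced resolvent — and verify that, after applying the resolvent identity above, they reorganize precisely into the stability-relevant combination $\psi+\sigma^2/\avg{f_u^2}$ of the cubic normal form, all while keeping every error uniformly at the claimed size $\ord(\dens^3+\imz+\dens^{-2}\imz[2])$ via Assumptions \ref{assums:general} and Lemma \ref{lem:prop_F_small_dens}.
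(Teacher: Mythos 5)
Your proposal is correct and follows the paper's strategy, with one genuine streamlining worth flagging. The paper proves \eqref{eq:expansion_beta_scalar_l_b} by invoking the abstract second-order perturbation formula \eqref{eq:expansion_beta_scalar_l_b_abstract} from Lemma~\ref{lem:non_hermitian_perturbation_theory}, which requires expanding the spectral projection $P$ to second order in $E$. You instead observe that $l=P^*[l_0]$ is a fixed point of $P^*$ and $P$ commutes with $B$, so $\scalar{l}{b}=\scalar{l}{b_0}$ and $\beta\scalar{l}{b}=\scalar{l}{Bb_0}=\beta_0\scalar{l}{b_0}+\scalar{l}{E[b_0]}$; with this, only the first-order expansion $l=l_0+\delta l+\ord(\dens^2)$ is needed, and the cross term $\scalar{\delta l}{E[b_0]}=-\scalar{l_0}{E(B_0-\beta_0\Id)^{-1}Q_0E[b_0]}$ reproduces the paper's third summand modulo errors of size $\ord(\imz)$, which are within budget. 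The remaining computations --- $(C_s-C_u)[f_u]=-2\ii\dens uf_u^2$, $E[b_0]=-2\ii\dens C_{q^*,q}[sf_u^2]+2\dens^2C_{q^*,q}[f_u^3]+\ord(\dens^3+\imz)$, the regrouping of the $\dens^2$ contributions into $\psi+\sigma^2/\avg{f_u^2}$ via the resolvent identity and $P_{s,F}[sf_u^2]=(\sigma/\avg{f_u^2})f_u+\ord(\dens^{-1}\imz)$ --- match the paper. One small imprecision in the $\beta_0$ step: you write $\scalar{l_0}{b_0}=\avg{f_u^2}+\ord(\dens^{-2}\imz[2])$, but \eqref{eq:b_0_l_0_approx} only gives the weaker error $\ord(\dens^{-1}\imz)$; this still yields \eqref{eq:beta_0_approx} since the quotient $\scalar{l_0}{B_0b_0}/\scalar{l_0}{b_0}$ absorbs the discrepancy, so no harm is done.
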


For later use, we record some identities here. 
From \eqref{eq:f_u_approx_f} in the proof of Lemma \ref{lem:prop_F_small_dens} with $C_s[f_u] = f_u$, we obtain the first relation in 
\begin{equation}  \label{eq:expansion_P_sF}  
 P_{s,F}  = \frac{\scalar{f_u}{\genarg}}{\avg{f_u^2}} f_u +\ord( \dens^{-1} \imz),  \quad 
 P_{s,F}^* = P_{s,F} + \ord(\dens^{-1} \imz), \quad Q_{s,F}^* = Q_{s,F} + \ord(\dens^{-1}\imz).
\end{equation}
This first relation together with $f_u = f_u^*$ implies the second and third one. 
Moreover, the definitions of $B_0$ and $Q_0$ yield
\begin{equation}
 B_0^{-1} Q_0  = C_{q^*,q} ( \Id- C_s F)^{-1} Q_{s,F} C_{q^*,q}^{-1}.  \label{eq:B_0_inverse_Q_0}
\end{equation} 
By a direct computation starting from the definition of $f_u$ in \eqref{eq:def_f_u}
and the balanced polar decomposition, $m = q^* u q$, we obtain
\begin{equation} \label{eq:f_u_qq_star}
 \avg{f_uqq^*} = \rho^{-1}\avg{\im m}=\pi.
 \end{equation}

\begin{proof} 
 The bounds in \eqref{eq:B_inverse_Q_norm} follow directly from the analytic functional calculus and Lemma \ref{lem:prop_F_small_dens}. 
The expressions \eqref{eq:b_0_l_0_approx} for the right and left eigenvectors, $b_0$ and $l_0$, corresponding to the simple isolated eigenvalue $\beta_0$, follow by simple perturbation theory from 
\begin{equation} \label{eq:B_0C_q^*q_f_u} 
B_0^*C_{q,q^*}^{-1}[f_u]\,=\, \dens^{-1} \imz[ ]\id \,, \qquad B_0C_{q^*,q}[f_u]\,=\, \ord(\dens^{-1}\imz)\,, 
\end{equation}
which in turn is a consequence of \eqref{eq:F_f_u} and $C_s[f_u]=f_u$.
For  \eqref{eq:beta_0_approx} we take the scalar product with $b_0$ on both sides of the first equation in \eqref{eq:B_0C_q^*q_f_u}. Then we use \eqref{eq:b_0_l_0_approx} and \eqref{eq:f_u_qq_star}.

Now we show \eqref{eq:expansion_b} and \eqref{eq:expansion_l}. By analytic perturbation theory of $B$ around $B_0$ we find $b = b_0 + b_1 + \ord(\dens^2)$ and $l=l_0 + l_1 + \ord(\dens^2)$ with  
$b_1 \defeq - (B_0 -\beta_0\Id)^{-1} Q_0 E[b_0]$ and $l_1 \defeq - (B_0^* - \bar{\beta}_0\Id)^{-1}Q_0^*E^*[l_0]$ (cf.~Lemma~\ref{lem:non_hermitian_perturbation_theory} with $E$ satisfying \eqref{eq:E=ord rho}). 
Here the invertibility of $B_0 -\beta_0\Id$ on the range of $Q_0$ is seen from the second part of Lemma~\ref{lem:prop_F_small_dens} with $T=B_0$. 
In fact, 
\begin{equation} \label{eq:B_0_minus_beta_0_inverse}
(B_0 - \beta_0\Id)^{-1}Q_0 = B_0^{-1}Q_0 + \ord(\beta_0). 
\end{equation} 
Furthermore, we use \eqref{eq:b_0_l_0_approx} and obtain the first equalities below: 
 \begin{subequations} \label{eq:E_b_0_Estar_l_0}
\begin{align}
 E[b_0] & = C_{q^*,q} (C_s - C_u) F[f_u] + \ord(\imz) = -2 \ii \dens C_{q^*,q} [sf_u^2] +2\dens^2C_{q^*,q}[f_u^3] +  \ord(\dens^3+ \imz), \label{eq:E_b_0}\\ 
 E^*[l_0]&  = C_{q,q^*}^{-1} F (C_s - C_{u}^*)[f_u] + \ord(\imz) = 2 \ii \dens C_{q,q^*}^{-1}F[s f_u^2] + 2\dens^2 C_{q,q^*}^{-1} F[f_u^3] + \ord(\dens^3 + \imz). \label{eq:E_star_l_0}
\end{align}
\end{subequations}
In the second equality of \eqref{eq:E_b_0}, we applied $ (C_s - C_u)[f_u] = 2( \Im u -\ii \Re u )(\Im u)f_u = -2 \ii \dens s f_u^2  + 2 \dens^2 f_u^3 +\ord(\dens^3)$, $\norm{C_s - C_u}= \ord(\dens)$ (cf. \eqref{eq:Re_u_s}) and \eqref{eq:F_f_u}. 
For the second equality in \eqref{eq:E_star_l_0}, we applied $( C_s - C_{u}^*)[f_u] = 2\ii\dens s f_u^2 + 2 \dens^2 f_u^3 + \ord(\dens^3)$. 

For the proof of \eqref{eq:expansion_beta_scalar_l_b}, we start from \eqref{eq:expansion_beta_scalar_l_b_abstract}, use $E = \ord(\dens)$ and obtain 
\begin{equation} \label{eq:beta_scalar_l_b_lemma}
 \beta \scalar{l}{b} = \beta_0 \scalar{l_0}{b_0} + \scalar{l_0}{E[b_0]} - \scalar{l_0}{EB_0(B_0-\beta_0\Id)^{-2}Q_0E[b_0]} + \ord(\dens^3).  
\end{equation}
Each of the terms on the right-hand side is computed individually. 
For the first term, we use $\scalar{l_0}{b_0} = \avg{f_u^2} + \ord(\dens^{-1} \imz)$ due to \eqref{eq:b_0_l_0_approx} and thus obtain from \eqref{eq:beta_0_approx} that 
\begin{align*}
 \beta_0 \scalar{l_0}{b_0} 
= \pi \dens^{-1} \imz + \ord(\dens^{-2}\imz[2]).  
\end{align*}
Using \eqref{eq:b_0_l_0_approx} and \eqref{eq:E_b_0_Estar_l_0} yields for the second term
\[ \scalar{l_0}{E[b_0]} = - 2\ii \dens\avg{sf_u^3} + 2\dens^2\avg{f_u^4} + \ord(\dens^3 + \imz)
= -2 \ii \dens \sigma + 2 \dens^2 \bigg(\frac{\sigma^2}{\avg{f_u^2}} + \scalar{sf_u^2}{Q_{s,F}[sf_u^2]}\bigg) + \ord(\dens^3 + \imz) 
, \] 
where we used $\Id = P_{s,F} + Q_{s,F}$ and $\scalar{sf_u^2}{P_{s,F}[sf_u^2]} = \sigma^2/\avg{f_u^2} +\ord(\dens^{-1}\imz)$ by \eqref{eq:expansion_P_sF} in the last step.

For the third term, we use \eqref{eq:beta_0_approx} and $E = \ord(\dens)$ which yields 
\begin{align*}
 \scalar{l_0}{EB_0(B_0-\beta_0\Id)^{-2}Q_0 E[b_0]} & = \scalar{E^*[l_0]}{(B_0-\beta_0\Id)^{-1}Q_0 E[b_0]} + \ord(\beta_0 \norm{E}^2) \\
 & = \scalar{E^*[l_0]}{B_0^{-1}Q_0E[b_0]} + \ord(\dens\imz) \\
& = - 4 \dens^2 \scalar{sf_u^2}{F(\Id - C_sF)^{-1} Q_{s,F}[sf_u^2]} + \ord(\dens\imz + \dens^3). 
\end{align*} 
Here, we used \eqref{eq:B_0_minus_beta_0_inverse} in the second step and \eqref{eq:E_b_0_Estar_l_0} as well as \eqref{eq:B_0_inverse_Q_0} in the last step. 
Collecting the results for the three terms in \eqref{eq:beta_scalar_l_b_lemma} and using $C_s = C_s^*$ as well as $C_s[sf_u^2] = sf_u^2$ yield \eqref{eq:expansion_beta_scalar_l_b}.

 The bounds in \eqref{eq:b_l_bounded} are directly implied by \eqref{eq:expansion_b} and \eqref{eq:expansion_l}, respectively. This finishes the proof of the corollary. 
\end{proof}

The following corollary has appeared prior to this work in \cite[Proposition 4.4]{AltEdge}. We include its short proof for the reader's convenience.

\begin{corollary}[Improved bound on $B^{-1}$]  \label{coro:B_inverse_improved_bound}
Let Assumptions \ref{assums:general} hold true on an interval $I\subset \R$ for some $\eta_* \in (0,1]$. 
Then, uniformly for all $z \in \HbI$, we have
\begin{equation} \label{eq:B_inverse_improved_bounds}
 \normtwo{B^{-1}(z)} + \norm{B^{-1}(z)}\, \lesssim \, \frac{1}{\dens(z)(\dens(z) + \abs{\sigma(z)}) + \dens(z)^{-1} \imz}. 
\end{equation}
\end{corollary}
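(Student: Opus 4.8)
The plan is to prove Corollary \ref{coro:B_inverse_improved_bound} by splitting $\Ltwo$ (and hence $\alg$, using the norm-conversion Lemma \ref{lem:norm_two_to_inf}) into the one-dimensional unstable subspace of $B$ spanned by the right eigenvector $b$ together with its complement, and controlling the two pieces separately. First I would invoke Lemma \ref{lem:prop_F_small_dens} and Corollary \ref{coro:eigenvector_expansion}: in the regime $\dens(z)+\dens(z)^{-1}\imz\le\dens_*$ the operator $B=B(z)$ has exactly one eigenvalue $\beta$ in $D_\eps(0)$, with right and left eigenvectors $b,l$ satisfying $\norm b,\norm l\lesssim 1$ (by \eqref{eq:b_l_bounded}), and the complementary part is already controlled, $\norm{B^{-1}Q}+\normtwo{B^{-1}Q}\lesssim 1$ by \eqref{eq:B_inverse_Q_norm}. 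Writing the spectral decomposition $B^{-1}=\beta^{-1}\dfrac{\scalar{l}{\genarg}}{\scalar{l}{b}}\,b+B^{-1}Q$, the whole problem reduces to bounding the scalar quantity $\abs{\beta\scalar{l}{b}}^{-1}$ from above, since $\norm{b}\norm{l}\lesssim 1$ makes the rank-one term's norm $\lesssim \abs{\beta\scalar{l}{b}}^{-1}$. So the target estimate \eqref{eq:B_inverse_improved_bounds} is equivalent to the lower bound
\[
 \abs{\beta(z)\scalar{l}{b}}\,\gtrsim\, \dens(z)\pb{\dens(z)+\abs{\sigma(z)}}+\dens(z)^{-1}\imz .
\]

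The heart of the argument is then the expansion \eqref{eq:expansion_beta_scalar_l_b} from Corollary \ref{coro:eigenvector_expansion},
\[
 \beta\scalar{l}{b}=\pi\dens^{-1}\imz-2\ii\dens\sigma+2\dens^2\pB{\psi+\tfrac{\sigma^2}{\avg{f_u^2}}}+\ord\pb{\dens^3+\imz+\dens^{-2}\imz[2]},
\]
from which I would extract the lower bound by looking at real and imaginary parts. The imaginary part of the leading terms is $\pi\dens^{-1}\imz+\ord(\dens^3+\imz+\dens^{-2}\imz[2])$; since $\dens^{-1}\imz\le\dens_*\lesssim 1$ the error terms $\imz$ and $\dens^{-2}\imz[2]$ are $\ord(\dens^{-1}\imz)$ after absorbing a factor $\dens^{-1}\imz/\dens_*$, and $\dens^3$ is handled separately. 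The real part is $2\dens^2(\psi+\sigma^2/\avg{f_u^2})+\ord(\dens^3+\dens^{-2}\imz[2])$. Here the key structural input — which is exactly the stability of the cubic equation alluded to in the introduction — is that $\psi\ge 0$ (it is a quadratic form of $(\Id+F)(\Id-C_sF)^{-1}Q_{s,F}$ applied to $sf_u^2$, and $\Id+F$, $Q_{s,F}$, $(\Id-C_sF)^{-1}$ interact positively on the relevant subspace because $F$ is symmetric with top eigenvalue near $1$ and $Q_{s,F}$ projects away from it, so $(\Id-C_sF)^{-1}Q_{s,F}$ has spectrum bounded in modulus and the composition with $\Id+F\ge 0$ is nonnegative on the complementary subspace) together with $\avg{f_u^2}\sim 1$ by \eqref{eq:f_u_sim_1}. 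Consequently $\psi+\sigma^2/\avg{f_u^2}\gtrsim \sigma^2$, so $\Re[\beta\scalar{l}{b}]\gtrsim \dens^2\sigma^2-\ord(\dens^3+\dens^{-2}\imz[2])$, and also trivially $\Re[\beta\scalar{l}{b}]$ gets no negative leading contribution from $\psi$.

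Assembling: I would treat three cases according to which term dominates $\dens(\dens+\abs\sigma)+\dens^{-1}\imz$. If $\dens^{-1}\imz$ is the largest of the three, the imaginary part gives $\abs{\beta\scalar{l}{b}}\ge\abs{\Im[\beta\scalar{l}{b}]}\gtrsim\dens^{-1}\imz$ once $\dens_*$ is small (the $\dens^3$ error is then $\lesssim\dens^2\cdot\dens\lesssim\dens^{-1}\imz$ only in this case if $\dens^3\lesssim\dens^{-1}\imz$; if not, $\dens$ itself is not the largest either, so I instead fall into the next case). If $\dens\abs\sigma$ dominates, I use $\abs{\Im[\beta\scalar{l}{b}]}$: the imaginary part equals $-2\dens\sigma+\ord(\dens^3+\dens^{-1}\imz+\dens^{-2}\imz[2])$ — wait, more carefully, $\Im(-2\ii\dens\sigma)=-2\dens\sigma$ (as $\sigma$ is real), so $\Im[\beta\scalar{l}{b}]=\pi\dens^{-1}\imz-2\dens\sigma+\ord(\dens^3+\imz+\dens^{-2}\imz[2])$; when $\dens\abs\sigma$ dominates, $\dens^3\le\dens^2\abs\sigma/\abs\sigma\cdot\dens\le$ hmm, rather $\dens^3=\dens\cdot\dens^2$ and since $\dens\abs\sigma\ge\dens^2$ we get $\dens^2\le\dens\abs\sigma$, so $\dens^3\le\dens^2\abs\sigma$; this is not obviously $\lesssim\dens\abs\sigma$ unless $\dens\lesssim 1$, which holds. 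So $\dens^3\lesssim\dens\abs\sigma\cdot\dens\lesssim\dens\abs\sigma$ after absorbing $\dens\lesssim\dens_*$. Thus $\abs{\Im[\beta\scalar{l}{b}]}\gtrsim\dens\abs\sigma$. Finally if $\dens^2$ dominates, then both $\abs\sigma\lesssim\dens$ and $\dens^{-1}\imz\lesssim\dens^2$, and I use the real part: $\Re[\beta\scalar{l}{b}]=2\dens^2(\psi+\sigma^2/\avg{f_u^2})+\ord(\dens^3+\dens^{-2}\imz[2])\gtrsim\dens^2$ provided $\psi$ is bounded below — and here I would need a genuine lower bound $\psi+\sigma^2\gtrsim 1$ when $\abs\sigma\lesssim\dens\ll 1$, which is the cubic-stability statement; the error $\dens^3\lesssim\dens_*\dens^2$ and $\dens^{-2}\imz[2]\lesssim\dens^2$ in this regime are absorbed. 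The main obstacle, and the one point where I would have to reach beyond purely formal manipulation, is establishing the uniform lower bound on the combination $\psi+\sigma^2/\avg{f_u^2}$ (equivalently, the stability of the cubic); everything else is bookkeeping with Corollary \ref{coro:eigenvector_expansion}, the nonnegativity $\psi\ge0$, $\avg{f_u^2}\sim 1$, and the norm-conversion lemma. I expect the paper proves this stability separately (it is flagged as the critical new ingredient in the introduction), so here I would cite it; in a self-contained writeup the Perron--Frobenius structure of $F$ together with the Stieltjes-transform sign information on $\sigma$ would be the tools.
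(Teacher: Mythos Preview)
Your approach is genuinely different from the paper's, and it works, but it leans on heavier machinery than necessary.

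The paper does \emph{not} go through the spectral decomposition of $B$ or the expansion \eqref{eq:expansion_beta_scalar_l_b}. Instead it returns to the Rotation--Inversion route already used for \eqref{eq:norm_stab_operator_m_sim_one}: one has $\normtwo{B^{-1}}\lesssim \abs{1-\normtwo{F}\,\avg{fC_u^*[f]}}^{-1}$, and the improvement over \eqref{eq:norm_stab_operator_m_sim_one} comes from sharpening the lower bound on this scalar. Writing $u^*=\Re u-\ii\Im u$ and using $\normtwo{f}=1$ gives the elementary inequality $\avg{f(\Re u)f(\Re u)}\le 1$, so the real part of $1-\avg{fC_u^*[f]}$ is at least $\avg{f(\Im u)f(\Im u)}\sim\dens^2$; the imaginary part is $-2\avg{f(\Re u)f(\Im u)}\sim\dens\sigma$ after substituting $f\approx f_u/\normtwo{f_u}$ and $\Re u\approx s$. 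Combined with $1-\normtwo{F}\sim\dens^{-1}\imz$ from \eqref{eq:normtwo_F}, this yields \eqref{eq:proof_B_inverse_improved_aux1} directly. The point is that the $\dens^2$ contribution comes for free from Cauchy--Schwarz, with no appeal to the cubic stability $\psi+\sigma^2\sim 1$.

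Your route, by contrast, needs exactly that stability bound (Lemma~\ref{lem:stability_cubic_equation}) to control $\Re[\beta\scalar{l}{b}]$ when $\dens^2$ dominates --- your heuristic for $\psi\ge 0$ is not correct as stated, since $(\Id-C_sF)^{-1}$ is not self-adjoint and its composition with $\Id+F$ is not a priori positive; the actual argument (in the proof of Lemma~\ref{lem:stability_cubic_equation}) exploits the special identity $C_s[sf_u^2]=sf_u^2$ to symmetrize the form. Since Lemma~\ref{lem:stability_cubic_equation} does not use Corollary~\ref{coro:B_inverse_improved_bound}, your forward reference is not circular, but it makes the corollary logically dependent on a deeper structural fact that the paper's proof avoids entirely. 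You also omit the easy complementary regime $\dens+\dens^{-1}\imz>\dens_*$ (handled by \eqref{eq:norm_stab_operator_m_sim_one} when $\dens\gtrsim 1$, and by $1-\normtwo{F}\sim\dens^{-1}\imz\sim 1$ when $\dens^{-1}\imz\gtrsim 1$).
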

\begin{proof}
If $\dens \geq \dens_*$ for some $\dens_* \sim 1$ then \eqref{eq:B_inverse_improved_bounds} have been shown in \eqref{eq:norm_stab_operator_m_sim_one} as $\abs{\sigma} \lesssim 1$. 
Therefore, we prove \eqref{eq:B_inverse_improved_bounds} for $\dens \leq \dens_*$ and a sufficiently small $\dens_* \sim 1$. 
By $\normtwoinf{S} \lesssim 1$ and Lemma \ref{lem:norm_two_to_inf} (ii), it suffices to show the bound for $\normtwo{\genarg}$. 
We follow the proof of \eqref{eq:norm_stab_operator_m_sim_one} until \eqref{eq:proof_norm_stab_operator_last_step}. 
Hence, for the improved bound, we have to show that 
\begin{equation} \label{eq:proof_B_inverse_improved_aux1}
 \abs{1- \normtwo{F} \avg{fC_u^*[f]}} \gtrsim \dens(\dens + \abs{\sigma}) + \dens^{-1} \imz. 
\end{equation}
 We have $\abs{1- \normtwo{F} \avg{fC_u^*[f]}} \gtrsim \max\{ 1- \normtwo{F}, \abs{1- \avg{fC_u^*[f]}}\} \gtrsim \dens^{-1}\imz + \abs{1- \avg{fC_u^*[f]}}$ by \eqref{eq:normtwo_F}. 
We continue 
\[ \abs{1- \avg{fC_u^*[f]}}=\abs{1- \avg{fu^*fu^*}} \gtrsim \avg{f \Im u f \Im u} + \abs{\avg{f\Im u f \Re u}} \gtrsim \dens^2 + \dens \abs{\sigma} + \ord(\dens^3 + \imz). \] 
Here, we used $1 \geq \avg{f\Re u f \Re u}$ due to $\normtwo{f}=1$, \eqref{eq:q_sim_1_im_u_sim_dens_consequence_assum} as well as $\avg{f\Im u f \Re u } = \dens\normtwo{f_u}^{-2} \avg{f_u^3s} + \ord(\dens^{3} + \imz)$ by \eqref{eq:f_u_approx_f} and \eqref{eq:Re_u_s}. 
By possibly shrinking $\dens_* \sim 1$, we thus obtain \eqref{eq:proof_B_inverse_improved_aux1}. 
This completes the proof of \eqref{eq:B_inverse_improved_bounds}.
\end{proof}

The remainder of this section is devoted to several results about the behaviour of $\dens(z)$, 
$\sigma(z)$ and $\psi(z)$ close to the real axis. They will be applied in the next section. 
We now prepare these results by extending $q$, $u$, $f_u$ and $s$ to the real axis.

\newcommand{\Hbtheta}{\Hb_{I_\theta,\eta_*}}
\newcommand{\Hbthclosed}{\overline{\Hb}_{I_\theta,\eta_*}}

\begin{lemma}[Extensions of $q$, $u$, $f_u$ and $s$] \label{lem:q_u_f_u_extension}
Let $I \subset \R$ be an interval, $\theta \in (0,1]$ and Assumptions \ref{assums:general} hold true on $I$ for some $\eta_* \in (0,1]$. 
We set $I_\theta \defeq \{ \tau \in I \colon \dist(\tau, \pt I) \geq \theta\}$.  Then we have
\begin{enumerate}[label=(\roman*)]
\item The functions $q$, $u$ and $f_u$ have unique uniformly $1/3$-Hölder continuous extensions to $\Hbthclosed$. 
\item The function $z \mapsto \dens(z)^{-1} \imz$ has a unique uniformly $1/3$-Hölder continuous extension to $\Hbthclosed$. In particular, we have 
\begin{equation} \label{eq:dens_reciprocal_imz_equal_zero} 
 \lim_{z \to \tau_0} \dens(z)^{-1} \imz = 0 
\end{equation}
for all $\tau_0 \in \supp \dens \cap I_\theta$. 
Moreover, for $z \in\Hbthclosed$, we have 
\[ \dist(z,\supp\dens) \gtrsim 1 \qquad \Longleftrightarrow \qquad \dens(z)^{-1}\imz \gtrsim 1. \]
\item There is a threshold $\dens_* \sim 1$ such that $s=\sign(\Re u)$ has a unique uniformly $1/3$-Hölder continuous extension to 
$\{ w \in \Hbthclosed : \dens(w) \leq \dens_*\}$. 
\end{enumerate}
\end{lemma}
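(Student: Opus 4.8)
The plan is to derive everything from Proposition~\ref{pro:analyticity_of_m}: since Assumptions~\ref{assums:general} hold on $I$, that proposition already supplies the unique uniformly $1/3$-Hölder extension of $m$ to $\Hbthclosed$ for every $\theta\in(0,1]$, so $\Re m=\tfrac12(m+m^*)$ and $\Im m$ extend, $\Im m\ge 0$ on the boundary, and, from the proof of Proposition~\ref{pro:analyticity_of_m}, $(\Im m)^3$ extends as a \emph{uniformly Lipschitz} function. Throughout one carries along the uniform two-sided bounds $\norm q,\norm{q^{-1}}\sim 1$, $\Im u\sim \dens\,\id$ and $f_u\sim 1$ from \eqref{eq:q_sim_1_im_u_sim_dens_consequence_assum} and \eqref{eq:f_u_sim_1} (valid on $\HbI$ by Lemma~\ref{lem:q_bounded_Im_u_sim_avg}(i)), which then propagate to $\Hbthclosed$ once continuity is established.

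The heart of the matter is part~(i) for $q$ and $u$. The obstruction is that in \eqref{eq:def_q_u} the factor $(\Im m)^{-1/2}$, and hence $w$ and $\abs w$, blow up wherever $\Im m$ degenerates, while the combinations $q=\abs w^{1/2}(\Im m)^{1/2}$ and $u=w\abs w^{-1}$ stay bounded and, for $q$, bounded below. The plan is to make this cancellation quantitative: using the integral representation of the operator square root together with the resolvent identity, one rewrites $q^*q$ and $qq^*$ as norm-convergent integrals over $t\in(0,\infty)$ involving $(\Im m)^{1/2}\frac{\abs w^2}{t+\abs w^2}(\Im m)^{1/2}$, in which the $(\Im m)^{-1/2}$'s hidden inside $\abs w$ cancel against the outer $(\Im m)^{1/2}$'s, leaving integrands that are bounded above, bounded below (indeed dominated by $\Im m$), and that one shows to be jointly, locally $1/3$-Hölder in $(\Re m,\Im m)$, the Lipschitz regularity of $(\Im m)^3$ being what compensates the loss incurred by the square roots. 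Carrying out this estimate uniformly in $t$ and in the data pair is, I expect, the main obstacle; in the finite-dimensional setting of \cite{AltEdge} one could instead extract the extensions by norm-compactness plus uniqueness, but that is unavailable for a general von Neumann algebra. Integrating then yields the $1/3$-Hölder extensions of $q^*q$ and $qq^*$, hence of $q$ (via its polar decomposition) and of $u=(q^*)^{-1}mq^{-1}$; running the same argument on the normalised $\Im m/\avg{\Im m(z)}$ gives the extension of $f_u=\pi(q^*)^{-1}(\Im m)\avg{\Im m(z)}^{-1}q^{-1}$. (As a byproduct, once $q$, hence $F=C_{q,q^*}SC_{q^*,q}$, is extended, the simplicity and uniform spectral gap of $\normtwo F$ from Lemma~\ref{lem:prop_F} give a $1/3$-Hölder extension of the Perron--Frobenius vector $f$ by perturbation theory, which is convenient later.)

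Granting part~(i) for $q$, $u$, $f_u$, the rest is routine. For part~(ii): taking the scalar product of \eqref{eq:F_f_u} with $f_u$ and using $\avg{f_uqq^*}=\pi$ from \eqref{eq:f_u_qq_star} yields the \emph{division-free} identity
\[
\dens(z)^{-1}\imz \;=\; \frac{1}{\pi}\,\scalar{f_u}{(\Id-F)[f_u]} \;=\; \frac{1}{\pi}\bigl(\avg{f_u^2}-\avg{f_uF[f_u]}\bigr),
\]
whose right-hand side extends uniformly $1/3$-Hölder to $\Hbthclosed$ by part~(i). If this extension were strictly positive at some $\tau_0\in\supp\dens\cap I_\theta$, then by continuity $\avg{\Im m(\tau+\ii\eta)}\lesssim\eta$ for $(\tau,\eta)$ in a full neighbourhood of $(\tau_0,0)$; letting $\eta\downarrow 0$ forces $\avg{\Im m}\equiv 0$ near $\tau_0$, so $\dens$ vanishes on a neighbourhood of $\tau_0$ by \eqref{eq:v_density_Im_m}, contradicting $\tau_0\in\supp\dens$, which proves \eqref{eq:dens_reciprocal_imz_equal_zero}. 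Finally, $\dist(z,\supp\dens)\gtrsim 1$ gives $\Im m(z)\le\imz\dist(z,\supp\dens)^{-2}\id$ by \eqref{eq:stieltjes_bound_m}, hence $\dens(z)^{-1}\imz\gtrsim\dist(z,\supp\dens)^2\gtrsim 1$; conversely, \eqref{eq:dens_reciprocal_imz_equal_zero} combined with the uniform Hölder continuity of $z\mapsto\dens(z)^{-1}\imz$ gives the remaining implication (alternatively, the relation $1-\normtwo{F(z)}\sim\dens(z)^{-1}\imz$ coming from \eqref{eq:normtwo_F}, $\Im u\sim\dens\,\id$ and $f_u\sim 1$ turns $\dens(z)^{-1}\imz\gtrsim 1$ into invertibility of the stability operator via \eqref{eq:B_alternative_representation_F}, hence into $\dist(z,\supp\dens)\gtrsim 1$).

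For part~(iii): since $u$ is unitary and $\Re u$, $\Im u$ commute, $(\Re u)^2=\id-(\Im u)^2=\id-\dens^2 f_u^2$, and using $f_u\le K\id$ from \eqref{eq:f_u_sim_1} one fixes $\dens_*\sim 1$ with $\dens_*^2K^2<\tfrac12$, so that on $\{w\in\Hbthclosed:\dens(w)\le\dens_*\}$ one has $(\Re u)^2\ge\tfrac12\id$. Hence $\Re u$ is invertible with $\spec(\Re u)$ contained in a fixed compact set bounded away from $0$, on a neighbourhood of which $\sign$ agrees with a holomorphic (indeed locally constant) function; therefore $s=\sign(\Re u)$ is obtained from $\Re u$ by a Lipschitz functional calculus and inherits the uniform $1/3$-Hölder continuity of $\Re u=\tfrac12(u+u^*)$ established in part~(i). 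Uniqueness of all the extensions is immediate from continuity.
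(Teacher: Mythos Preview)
Your treatment of parts~(ii) and~(iii) is sound; in particular your division-free identity $\dens(z)^{-1}\imz=\pi^{-1}\scalar{f_u}{(\Id-F)[f_u]}$, read off from \eqref{eq:F_f_u} and \eqref{eq:f_u_qq_star}, is a clean alternative to the paper's formula $\dens(z)^{-1}\imz=(m^*m)^{-1}B_*[f_m]$, and your argument for~(iii) is essentially the paper's (which also passes through an operator-Lipschitz replacement of $\sign$).

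The real gap is in part~(i). You correctly identify that the issue is the hidden cancellation of $(\Im m)^{-1/2}$ against $(\Im m)^{1/2}$ in $q=\abs{w}^{1/2}(\Im m)^{1/2}$ and $u=w\abs{w}^{-1}$, but your proposed integral-representation route is left at the level of a plan: you write ``Carrying out this estimate uniformly in $t$ \ldots\ is, I expect, the main obstacle,'' and indeed after the obvious algebra the integrand $h^{1/2}\abs{w}^2(t+\abs{w}^2)^{-1}h^{1/2}$ (with $h=\Im m$, $g=\Re m$) unwinds to $h - t\,h\bigl[(t+1)h+gh^{-1}g\bigr]^{-1}h$, which still carries an explicit $h^{-1}$; making this uniformly $1/3$-H\"older in $(g,h)$ across the whole of $\Hbthclosed$ (not only near zeros of $\dens$, where $g$ happens to be invertible) is precisely the difficulty, and you have not supplied the estimate.

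The paper sidesteps this entirely. It introduces the \emph{normalised} imaginary part $f_m(z)\defeq\pi\,\Im m(z)/\avg{\Im m(z)}$, which satisfies $f_m\sim\id$ by \eqref{eq:Im_m_sim_avg_Im_m}, and rewrites \eqref{eq:def_q_u} as
\[
q=\bigl(\dens(z)^2\id+f_m^{-1/2}g\,f_m^{-1}g\,f_m^{-1/2}\bigr)^{1/4}f_m^{1/2},\qquad
u=\frac{\ii\dens(z)\id+f_m^{-1/2}g\,f_m^{-1/2}}{\bigl|\ii\dens(z)\id+f_m^{-1/2}g\,f_m^{-1/2}\bigr|},
\]
so that the potentially singular factors are all absorbed into $f_m^{\pm 1/2}$, which are uniformly bounded above and below. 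Thus the whole problem reduces to showing that $f_m$ is $1/3$-H\"older on $\Hbtheta$. This in turn is obtained from the Dyson equation: taking the imaginary part gives $B_*[\Im m]=\imz\,m^*m$ with $B_*=\Id-C_{m^*,m}S$, hence $f_m=\pi\,B_*^{-1}[m^*m]/\avg{B_*^{-1}[m^*m]}$, and the paper controls this ratio in three regimes (large $\dens$, large $\dens^{-1}\imz$, and both small) using respectively the trivial bound, $\normtwo{B_*^{-1}}\lesssim\dens/\imz$, and the spectral decomposition of $B_*$ around its small isolated eigenvalue exactly as in Lemma~\ref{lem:prop_F_small_dens} and Corollary~\ref{coro:eigenvector_expansion}. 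The point is that the cancellation you are trying to exhibit by hand is encoded once and for all in the boundedness of $f_m$ and is then propagated through the explicit formulas above; no integral representation of the square root is needed.
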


\begin{proof} 
For the proof of (i), we will show below that 
\[f_m(z) \defeq \dens(z)^{-1} \Im m(z)\]
 is uniformly $1/3$-Hölder continuous on $\Hbtheta$. Indeed, this suffices to obtain the Hölder-continuity of $q$ and $u$ since their definitions in \eqref{eq:def_q_u} can be rewritten as
\begin{equation} \label{eq:def_u_q_zero_Im_m} 
\begin{aligned}
 q & = \abs{h^{-1/2} g h^{-1/2} + \ii \id }^{1/2} h^{1/2} = \Big(\dens(z)^2\id + f_m^{-1/2} g f_m^{-1} g f_m^{-1/2}\Big)^{1/4} f_m^{1/2},\\ 
 u & = \frac{ \dens(z) w}{\abs{\dens(z)w}} = \frac{\ii \dens(z)\id + f_m^{-1/2} g f_m^{-1/2}}{\abs{\ii \dens(z)\id + f_m^{-1/2} g f_m^{-1/2}}}, 
\end{aligned}
\end{equation} 
where $g = \Re m$, $h= \Im m$, $w$ is defined in \eqref{eq:def_q_u} and $z \in \Hb$ is arbitrary. 
 Since $\abs{\dens(z)w} \sim 1$ and $f_m \sim 1$ on $\Hbtheta$ by \eqref{eq:q_sim_1_im_u_sim_dens_consequence_assum} as well as \eqref{eq:Im_m_sim_avg_Im_m} 
and $m$, hence $\dens$ and $\Re m$ are Hölder-continuous on $I_\theta\times \ii[0,\infty)$ (Proposition~\ref{pro:analyticity_of_m}), 
it thus suffices to show that $f_m$ is uniformly Hölder-continuous to conclude from \eqref{eq:def_u_q_zero_Im_m} that 
$q$ and $u$ are Hölder-continuous. 
As $f_u = \dens^{-1} \Im u = (q^*)^{-1} f_m q^{-1}$, the Hölder-continuity of $f_m$, the Hölder-continuity of $q$ and 
 the upper and lower bounds on $q$ from \eqref{eq:q_sim_1_im_u_sim_dens_consequence_assum} imply that $f_u$ can be extended to a $1/3$-Hölder continuous function on $\Hbthclosed$.

Therefore, we now complete the proof of (i) by showing the $1/3$-Hölder continuity of $f_m$. To that end, we distinguish three subsets of $\Hbtheta$. 

\emph{Case 1:} On the set $\{ z \in \Hbtheta: \dens(z) \geq \dens_*\}$ for any $\dens_*\sim 1$, the uniform $1/3$-Hölder continuity of $f_m$ follows from $\dens(z) \gtrsim 1$ and the $1/3$-Hölder continuity of $m$ 
from Proposition~\ref{pro:analyticity_of_m}. 

\emph{Case 2:}
In order to analyze $f_m$ on the set $\{z \in \Hbtheta : \dens(z) \leq \dens_*\}$ for some $\dens_* \sim 1$ to be chosen later, we take the imaginary part of the Dyson equation, \eqref{eq:dyson}, at $z \in \Hb$ and obtain 
\begin{equation} \label{eq:im_m_from_dyson}
 B_*[\Im m] = \imz[ ]m^* m, \qquad B_* \defeq \Id - C_{m^*,m}S, 
\end{equation}
where $m=m(z)$. 
From $m = q^* u q$, we obtain the representation 
\[ \Id - C_{m^*,m}S = C_{q^*,q} (\Id - C_{u^*,u} F) C_{q^*,q}^{-1}. \] 
Hence, \eqref{eq:normtwo_F}, Lemma~\ref{lem:q_bounded_Im_u_sim_avg} (ii) and Lemma~\ref{lem:norm_two_to_inf} (ii) yield the 
invertibility of $B_*$ for each $z \in \HbI$ as well as 
\begin{equation} \label{eq:norm_B_star_inverse}
\normtwo{B_*^{-1}(z)} + \norm{B_*^{-1}(z)}\lesssim \frac{1}{1- \normtwo{F}} \lesssim \frac{\dens(z)}{\imz} 
\end{equation}
for all $z \in\HbI$ (compare the proof of \eqref{eq:norm_stab_operator_m_sim_one}). 
Owing to the invertibility of $B_*$,   
we conclude from \eqref{eq:im_m_from_dyson} that 
\begin{equation} \label{eq:f_m_inverse_B_star}
 f_m(z) = \pi \frac{\Im m(z)}{\avg{\Im m(z)}} = \pi \frac{B_*^{-1}[m^*m]}{\avg{B_*^{-1}[m^*m]}} 
\end{equation}
for all $z \in \Hbtheta$. 

On the set $\{ z \in \Hbtheta :\dens(z)^{-1} \imz \geq \dens_*\}$ for any $\dens_* \sim 1$, $B_*^{-1}[m^*m]$ is uniformly $1/3$-Hölder continuous due to \eqref{eq:norm_B_star_inverse} and the $1/3$-Hölder continuity 
of $m$. 
Moreover, from \eqref{eq:normtwo_F} and $\Im u \sim \dens \id$, 
we see that $1- \normtwo{F} \sim 1$ if $\dens(z)^{-1} \imz \gtrsim 1$. Hence, by Lemma~\ref{lem:inverse_positivity_preserving} in Appendix~\ref{app:positive_symmetric_map} below, 
$(\Id-C_{u^*,u}F)^{-1}$ is positivity-preserving and satisfies 
\begin{equation} \label{eq:inverse_Id_minus_C_u_star_u_F_lower_bound} 
 (\Id - C_{u^*,u}F)^{-1}[xx^*] \geq xx^* 
\end{equation}
for any $x \in \alg$. 
We conclude that $B_*^{-1} = C_{q^*,q}(\Id - C_{u^*,u}F)^{-1} C_{q^*,q}^{-1}$ is positivity-preserving. Together with \eqref{eq:q_sim_1_im_u_sim_dens_consequence_assum}, 
 \eqref{eq:inverse_Id_minus_C_u_star_u_F_lower_bound} implies $\avg{B^{-1}_*[m^*m]} \gtrsim 1$ as  $\norm{m(z)^{-1}} \lesssim 1$ by $\norm{a} \lesssim 1$, $\norm{S} \lesssim 1$ and \eqref{eq:dyson}. 
Thus, \eqref{eq:f_m_inverse_B_star} yields the uniform $1/3$-Hölder continuity of $f_m$ on $\{ z \in \Hbtheta: \dens(z)^{-1} \imz \geq \dens_*\}$ for any $\dens_* \sim 1$. 

\emph{Case 3:} We now show that $f_m$ is Hölder-continuous on $\{ z \in \Hbtheta: \dens(z) + \dens(z)^{-1} \imz \leq \dens_*\}$ for some sufficiently small $\dens_* \sim 1$. 
In fact, Lemma~\ref{lem:prop_F_small_dens} applied to $T=B_*$ yields the existence of a unique eigenvalue $\beta_*$ of $B_*$ of smallest modulus. 
Inspecting the proof of Corollary~\ref{coro:eigenvector_expansion} for $B$ reveals that this proof only used $B=B_0 + \ord(\dens)$ about $B$. Therefore, the same argument works if 
$B$ is replaced by $B_*$ since $B_* = B_0 + \ord(\dens)$ (compare the proof of Lemma~\ref{lem:prop_F_small_dens}). 
We thus find a right eigenvector $b_*$ and a left eigenvector $l_*$ of $B_*$ associated to $\beta_*$, i.e., 
\[ B_*[b_*] = \beta_* b_*, \quad \qquad (B_*)^*[l_*] = \overline{\beta_*} l_*, \] 
which satisfy 
\begin{subequations} 
 \begin{align} 
 b_* & = b_0 + \ord(\dens) = q^* f_u q  + \ord( \dens + \dens^{-1} \imz), \label{eq:expansion_b_*}\\ 
l_* & = l_0 + \ord(\dens) = q^{-1} f_u (q^*)^{-1} + \ord(\dens + \dens^{-1} \imz), \label{eq:expansion_l_*}\\
\beta_* \scalar{l_*}{b_*} & = \pi\dens^{-1} \imz + \ord(\dens + \dens^{-2} \imz[2]). \label{eq:expansion_beta_*_scalar_l_*_b_*} 
\end{align} 
\end{subequations}  
Moreover, we have 
\begin{equation} \label{eq:norm_inverse_B_star_Q_star} 
 \norm{B_*^{-1} Q_*} + \normtwo{B^{-1}_* Q_*} \lesssim 1,  
\end{equation}
where $Q_*$ denotes the spectral projection of $B_*$ to the complement of the spectral subspace of $\beta_*$.

Therefore, as $\beta_* \neq 0$ (cf. \eqref{eq:norm_B_star_inverse}) if $\imz >0$, we obtain 
\[ \Im m = \imz[ ] B_*^{-1}[m^* m] = \imz[ ] \bigg ( \beta_*^{-1} \frac{\scalar{l_*}{m^*m}}{\scalar{l_*}{b_*}} b_* + B_*^{-1} Q_* [m^* m]\bigg ). \] 
Consequently, as $\Im m >0$, we have
\begin{equation} \label{eq:Im_m_avg_Im_m_rep}
 \frac{\Im m }{\avg{\Im m}} = \frac{\scalar{l_*}{m^*m} b_* + \beta_* \scalar{l_*}{b_*} B_*^{-1} Q_* [m^*m]}{\scalar{l_*}{m^*m}\avg{b_*} + \beta_* \scalar{l_*}{b_*} \avg{B_*^{-1} Q_* [m^* m]}}, 
\end{equation}
which together with \eqref{eq:f_m_inverse_B_star} shows that $f_m$ is uniformly $1/3$-Hölder continuous on $\{ z \in\Hbtheta: \dens(z) + \dens(z)^{-1} \imz \leq \dens_*\}$. Here, we used that $B_*$ and, thus, $\beta_*$, $l_*$, $b_*$ and $B_*^{-1}Q_*$ are $1/3$-Hölder 
continuous and the denominator in \eqref{eq:Im_m_avg_Im_m_rep} is $\gtrsim 1$ due to 
\[ \begin{aligned} 
\scalar{l_*}{m^*m} & = \avg{q^{-1} f_u (q^*)^{-1} q^*u^* q q^* u q} + \ord( \dens + \dens^{-1} \imz) \\ 
 & = \dens^{-1} \Im \avg{q^* u u u^* q} + \ord( \dens + \dens^{-1} \imz) = \pi + \ord(\dens + \dens^{-1}\imz)
\end{aligned}\] 
by \eqref{eq:expansion_b_*} and \eqref{eq:expansion_l_*} as well as $\avg{b_*} = \pi + \ord(\dens + \dens^{-1} \imz)$ by \eqref{eq:f_u_qq_star}. 
Here, we also used \eqref{eq:expansion_beta_*_scalar_l_*_b_*} and \eqref{eq:norm_inverse_B_star_Q_star}. 
This completes the proof of~(i). 

For the proof of (ii), we multiply \eqref{eq:im_m_from_dyson} by $\dens(z)^{-1} (m^*m)^{-1}$ which yields 
\[ \dens(z)^{-1} \imz = (m^*m)^{-1} B_*[f_m]. \] 
Owing to $m^*m \geq \norm{m^{-1}}^{-2} \gtrsim 1$ as well as the $1/3$-Hölder continuity of $m$, $B_*$ and $f_m$, we obtain the same regularity for $z \mapsto \dens(z)^{-1} \imz$. 
Since $\lim_{\eta \downarrow 0} \dens(\tau + \ii \eta)^{-1} \eta = 0$ for $\tau \in \supp \dens \cap I_\theta$ satisfying $\dens(\tau) >0$, the continuity of $\dens(z)^{-1} \imz$ 
directly implies \eqref{eq:dens_reciprocal_imz_equal_zero}. 
If $\dist(z,\supp\dens) \gtrsim 1$ then $\dens(z)^{-1} \imz \gtrsim 1$ as $\dens(z) \leq \imz/\dist(z,\supp\dens)^2$ which can be seen by applying $\avg{\genarg}$ to the 
second bound in \eqref{eq:stieltjes_bound_m}. Conversely, if $\dist(z,\supp\dens) \lesssim 1$ then the Hölder-continuity of $\dens(z)^{-1} \imz$ and \eqref{eq:dens_reciprocal_imz_equal_zero} 
imply $\dens(z)^{-1} \imz \lesssim 1$. 

We now turn to the proof of (iii). 
Owing to the first relation in \eqref{eq:Re_u_s}, there is $\dens_* \sim 1$ such that $\abs{\Re u} \geq \frac{1}{2}\id$ if $z \in \Hbtheta$ satisfies $\dens(z) \leq \dens_*$. 
Therefore, we find a smooth function $\varphi\colon \R \to [-1, 1]$ such that $\varphi(t) = 1$ for all $t \in [1/2,\infty)$, $\varphi(t) = -1$ for all $t \in (-\infty,-1/2]$ and 
$s(z) = \sign(\Re u(z)) = \varphi(\Re u(z))$ for all $z \in \Hbtheta$ satisfying $\dens(z) \leq \dens_*$. 
Since $\varphi$ is smooth, we conclude that $\varphi$ is an \emph{operator Lipschitz function} \cite[Theorem 1.6.1]{Aleksandrov2016}, 
i.e., $\norm{\varphi(x) - \varphi(y)} \leq C \norm{x-y}$ for all self-adjoint $x, y \in \alg$. 
Hence, we conclude
\[\norm{s(z_1) - s(z_2)} = \norm{\varphi(\Re u(z_1)) - \varphi(\Re u (z_2))} \lesssim \norm{z_1 - z_2}^{1/3},  \] 
where we used that $\varphi$ is operator Lipschitz and $u$ is $1/3$-Hölder continuous in the last step. 
This completes the proof of Lemma \ref{lem:q_u_f_u_extension}.
\end{proof}

\begin{lemma}[Properties of $\psi$ and $\sigma$] \label{lem:stability_cubic_equation}
Let $I \subset \R$ be an interval and $\theta \in (0,1]$.
If $m$ satisfies Assumptions \ref{assums:general} on $I$ for some $\eta_* \in (0,1]$ 
then there is a threshold $\dens_* \sim 1$ such that, with 
\[\Hb_\rm{small} \defeq \{ z \in \Hbtheta \colon \dens(z) + \dens(z)^{-1} \imz \leq \dens_*\}, \]
we have 
\begin{enumerate}[label=(\roman*)] 
\item The functions $\sigma$ and $\psi$ defined in \eqref{eq:def_psi_sigma} have unique uniformly $1/3$-Hölder continuous extensions to $\{ z \in \Hbthclosed: \dens(z) \leq \dens_*\}$ 
and $\overline{\Hb}_\rm{small}$, respectively. 
\item 
Uniformly for all $z \in \overline{\Hb}_\rm{small}$, we have
\begin{equation} \label{eq:psi_plus_sigma_sim_1}
 \psi(z) + \sigma(z)^2 \sim 1.
\end{equation}
\end{enumerate}
\end{lemma}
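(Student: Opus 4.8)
\textbf{Proof proposal for Lemma \ref{lem:stability_cubic_equation}.}

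The plan is to handle the two assertions separately, with part (i) being essentially a regularity bookkeeping exercise and part (ii) — the lower bound $\psi + \sigma^2 \sim 1$ — being the genuine content. For part (i), I would first note that on $\{z \in \Hbthclosed : \dens(z) \le \dens_*\}$ the functions $q$, $u$, $f_u$, $s$ all have $1/3$-Hölder continuous extensions by Lemma \ref{lem:q_u_f_u_extension}, and that $\normtwoinf{S} \lesssim 1$. Since $\sigma(z) = \avg{s f_u^3}$ is a fixed polynomial expression in $s$ and $f_u$, its Hölder continuity is immediate from the algebra of Hölder functions together with the uniform bounds $\norm{s}, \norm{f_u} \lesssim 1$ (the latter from \eqref{eq:f_u_sim_1}). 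For $\psi$, the expression in \eqref{eq:def_psi_sigma} additionally involves $(\Id - C_s F)^{-1} Q_{s,F}$; on $\overline{\Hb}_\rm{small}$ this resolvent-times-projection is uniformly bounded and $1/3$-Hölder continuous by Lemma \ref{lem:prop_F_small_dens} (holomorphic functional calculus applied to a Hölder-continuous family of operators with a uniform spectral gap, as established there), and $F$, $C_s$ inherit the regularity of $q$ and $s$. Composing, $\psi$ is uniformly $1/3$-Hölder continuous on $\overline{\Hb}_\rm{small}$. Uniqueness of both extensions is automatic by density of $\Hb_\rm{small}$.

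For part (ii), the upper bound $\psi + \sigma^2 \lesssim 1$ follows from the uniform bounds just mentioned, so the task is the lower bound $\psi + \sigma^2 \gtrsim 1$. My approach would be to argue by contradiction combined with the positivity structure. Recall $\psi = \scalar{s f_u^2}{(\Id + F)(\Id - C_s F)^{-1} Q_{s,F}[s f_u^2]}$. The key structural inputs are: $F$ is symmetric and positivity-preserving with $\normtwo{F} \le 1$ and a simple top eigenvalue with positive eigenvector $f_u/\normtwo{f_u}$ (up to $\ord(\dens^{-1}\imz)$ corrections, by \eqref{eq:f_u_approx_f}); $C_s$ is an involution commuting with $f_u$; and $f_u \sim 1$. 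I would decompose $s f_u^2 = P_{s,F}[s f_u^2] + Q_{s,F}[s f_u^2]$, where $\scalar{f_u}{s f_u^2} = \avg{s f_u^3} = \sigma$ up to the normalization $\avg{f_u^2}$, so the $P_{s,F}$-component has size $\sim |\sigma|$. Then $\psi$ measures the squared norm of the complementary component $Q_{s,F}[s f_u^2]$, weighted by the bounded, coercive-away-from-the-unstable-direction operator $(\Id + F)(\Id - C_s F)^{-1}$. On $Q_{s,F}$ the operator $\Id - C_s F$ has its spectrum bounded away from $0$ (spectral gap $\eps \sim 1$ from Lemma \ref{lem:prop_F_small_dens}), and $\Id + F \gtrsim \Id$ since $F \ge 0$; hence $\psi \gtrsim \normtwo{Q_{s,F}[s f_u^2]}^2$. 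Therefore $\psi + \sigma^2 \gtrsim \normtwo{Q_{s,F}[s f_u^2]}^2 + \normtwo{P_{s,F}[s f_u^2]}^2 \gtrsim \normtwo{s f_u^2}^2 \gtrsim 1$, using $\norm{s f_u^2} \gtrsim \norm{s}\norm{f_u}^{-2} \cdot (\text{lower bound})$, i.e. $s f_u^2$ is bounded below in $\normtwo{\genarg}$ because $|s| = \id$ away from the small-density-corrected region and $f_u \sim \id$.

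The main obstacle I anticipate is making the coercivity bound $\psi \gtrsim \normtwo{Q_{s,F}[sf_u^2]}^2$ rigorous: the operator $(\Id + F)(\Id - C_s F)^{-1}$ is not self-adjoint (because $C_s F$ need not be), so I cannot directly read off positivity of the quadratic form $\scalar{x}{(\Id+F)(\Id - C_sF)^{-1}Q_{s,F}[x]}$. The resolution is to exploit that $C_s$ is a \emph{unitary involution} on $L^2$ and that, conjugating, $C_s(\Id - C_s F)^{-1} = (\Id - F C_s)^{-1} C_s$, so one can symmetrize: write $\psi = \scalar{s f_u^2}{(\Id+F) C_s^{-1}(\Id - F C_s)^{-1} Q_{s,F}^*[\,\cdot\,]}$-type rearrangements and use that on the relevant subspace $F$ and $C_s$ \emph{almost commute} with $f_u$ (exactly on $f_u$, approximately on its orthogonal complement up to $\ord(\dens^{-1}\imz)$). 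A cleaner route, which I would pursue first, is to observe $(\Id + F)(\Id - C_s F)^{-1}$ restricted to $\Ran Q_{s,F}$ is a bounded perturbation (of size $\ord(\dens^{-1}\imz) + \ord(\dens)$, from \eqref{eq:Re_u_s} and \eqref{eq:f_u_approx_f}) of the self-adjoint, positive-definite operator $(\Id + F)(\Id - F)^{-1}$ on $\Ran Q_F$, whose form is $\gtrsim \Id$ there by the spectral gap; then for $\dens_* \sim 1$ small enough the perturbation is absorbed, giving $\re \scalar{sf_u^2}{(\Id+F)(\Id-C_sF)^{-1}Q_{s,F}[sf_u^2]} \gtrsim \normtwo{Q_{s,F}[sf_u^2]}^2$, and since $\psi$ is real this suffices. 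The remaining routine point is the elementary lower bound $\normtwo{s f_u^2} \gtrsim 1$, which follows from $s^* s = \id$ (so $\norm{s f_u^2}_2^2 = \avg{f_u^2 s^* s f_u^2} = \avg{f_u^4} \gtrsim \avg{\id} = 1$ using $f_u \gtrsim \id$), closing the argument.
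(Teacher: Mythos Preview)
Your part (i) is fine and matches the paper. For part (ii), your overall decomposition $sf_u^2 = P_{s,F}[sf_u^2] + Q_{s,F}[sf_u^2]$ with $\normtwo{P_{s,F}[sf_u^2]}^2 \sim \sigma^2$ and the target $\psi \gtrsim \normtwo{Q_{s,F}[sf_u^2]}^2$ is exactly right, and your computation $\normtwo{sf_u^2}^2 = \avg{f_u^4} \gtrsim 1$ is correct.

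The gap is in your ``cleaner route'' (b): the claim that $(\Id+F)(\Id - C_sF)^{-1}$ on $\Ran Q_{s,F}$ is a perturbation of size $\ord(\dens + \dens^{-1}\imz)$ of $(\Id+F)(\Id - F)^{-1}$ on $\Ran Q_f$ is false. The operator $C_s$ is a unitary involution but is \emph{not} close to $\Id$ in general --- $s = \sign(\Re u)$ can take both values $\pm 1$, and nothing in \eqref{eq:Re_u_s} or \eqref{eq:f_u_approx_f} says $C_s \approx \Id$. So $\Id - C_sF$ differs from $\Id - F$ by an $\ord(1)$ amount on the complementary subspace, and the perturbative positivity transfer breaks down. (Also, $F \ge 0$ is not justified: positivity-preserving does not mean positive semidefinite, and indeed \eqref{eq:spec_F} allows negative eigenvalues; what is true is $\Id + F \ge \vartheta\,\Id$.)

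Your route (a) is the correct one, and the paper carries it out via an exact algebraic identity rather than perturbation. The crucial observation is that for $x$ with $C_s[x]=x$ one has $(\Id + F)(\Id - C_sF)^{-1}[x] = (\Id + C_sF)(\Id - C_sF)^{-1}[x] = (-\Id + 2(\Id - C_sF)^{-1})[x]$; then, using $C_s^2 = \Id$ and $C_sQ_{s,F}[x] = Q_{s,F}[x] + \ord(\dens^{-1}\imz)$, one symmetrizes $2(\Id - C_sF)^{-1} \approx (\Id - C_sF)^{-1} + (\Id - FC_s)^{-1}$ on the relevant vector and checks the exact identity
\[
-\Id + (\Id-C_sF)^{-1} + (\Id-FC_s)^{-1} = (\Id - FC_s)^{-1}(\Id - F^2)(\Id - C_sF)^{-1}.
\]
Since $Q_f(\Id - F^2)Q_f \gtrsim \vartheta\, Q_f$ by the spectral gap and $P_fQ_{s,F} = \ord(\dens^{-1}\imz)$, this makes the quadratic form manifestly $\gtrsim \normtwo{Q_{s,F}[x]}^2$ up to errors absorbed by shrinking $\dens_*$. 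So the missing idea in your sketch is precisely this algebraic factorization, which replaces the non-self-adjoint form by a symmetric one without any smallness assumption on $C_s - \Id$.
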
 

\begin{proof} 
For the proof of (i), we choose $\dens_*\sim 1$ so small that all parts of Lemma \ref{lem:q_u_f_u_extension} are applicable. 
Thus, Lemma~\ref{lem:q_u_f_u_extension} and $\sigma = \avg{sf_u^3}$ yield (i) for $\sigma$. 
Similarly, since $q$ is now defined on $\Hbthclosed$, we can define $F$ via \eqref{eq:def_F} on this set as well. 
Moreover, owing to the uniform $1/3$-Hölder continuity of $q$ from Lemma \ref{lem:q_u_f_u_extension}, $F$ is uniformly $1/3$-Hölder continuous on $\Hbthclosed$. 
Hence, using Lemma~\ref{lem:prop_F_small_dens} for $T = \Id - C_s F$, the Hölder-continuity of $s$ and $f_u$, the function $\psi$ has a unique $1/3$-Hölder continuous extension to $\overline{\Hb}_\mathrm{small}$.
This completes the proof of (i) for~$\psi$. 

We now turn to the proof of (ii). In fact, we will show \eqref{eq:psi_plus_sigma_sim_1} only on $\{ w \in \Hbtheta  \colon \dens(w) + \dens(w)^{-1} \Im w \leq \dens_*\}$, where $\dens_* \sim 1$ 
is chosen small enough such that Lemma \ref{lem:prop_F_small_dens} is applicable. By the continuity of $\sigma$ and $\psi$, 
the bound \eqref{eq:psi_plus_sigma_sim_1} immediately extends to the closure of this set.
Instead of \eqref{eq:psi_plus_sigma_sim_1}, we will prove that 
\begin{equation} \label{eq:psi_plus_sigma_sim_1_general}
 \scalar{x}{(\Id + F)(\Id - C_s F)^{-1} Q_{s,F}[x]} + \scalar{f_u}{x}^2 \sim \normtwo{x}^2 
\end{equation}
for all $x \in \alg$ satisfying $C_s[x] = x$ and $x=x^*$. Since these conditions are satisfied by $x = sf_u^2$, \eqref{eq:psi_plus_sigma_sim_1_general} immediately implies \eqref{eq:psi_plus_sigma_sim_1}.
In fact, the upper bound in \eqref{eq:psi_plus_sigma_sim_1_general} follows from $\normtwo{(\Id - C_sF)^{-1}Q_{s,F}} \lesssim 1$ by Lemma \ref{lem:prop_F_small_dens}, $\normtwo{F} \leq 1$ 
and $f_u \sim 1$ due to \eqref{eq:f_u_sim_1}. 

From $C_s[x] = x$, we conclude 
\begin{equation} \label{eq:psi_x_first_step}
\begin{aligned}
\scalar{x}{(\Id + F)(\Id- C_s F)^{-1} Q_{s,F}[x]} & = \scalar{x}{( \Id + C_sF)(\Id -C_s F)^{-1}Q_{s,F}[x]} \\ 
 & =\scalar{x}{((C_s F -\Id ) + 2\Id)  (\Id- C_s F)^{-1}  Q_{s,F}[x]} \\ 
 & = \scalar{x}{(-\Id +2 ( \Id - C_s F)^{-1}) Q_{s,F}[x]}. 
\end{aligned}
\end{equation}
Using \eqref{eq:expansion_P_sF} and $C_s[f_u] = f_u$, we see that 
\begin{equation} \label{eq:expansion_C_s_P}
 C_s P_{s,F}[x] = P_{s,F}[x] + \ord(\dens^{-1} \imz), \qquad C_s Q_{s,F}[x] = Q_{s,F}[x] + \ord(\dens^{-1}\imz)  
\end{equation}
for $x \in \alg$ satisfying $C_s[x] = x$. 
When applied to \eqref{eq:psi_x_first_step}, the expansion \eqref{eq:expansion_C_s_P} and $(\Id - FC_s)^{-1} = C_s (\Id - C_s F)^{-1}C_s$ yield 
\begin{equation} \label{eq:psi_x_second_step}
\begin{aligned}
&\scalar{x}{(\Id + F)(\Id- C_s F)^{-1}) Q_{s,F}[x]} \\ 
&\hspace{2.8cm} = \scalar{Q_{s,F}[x]}{( - \Id + (\Id - C_s F)^{-1} + (\Id - FC_s)^{-1} )Q_{s,F}[x]} + \ord(\normtwo{x}^2\dens^{-1}\imz) \\ 
&\hspace{2.8cm} = \scalar{Q_{s,F}[x]}{(\Id - F C_s)^{-1}(\Id - F^2) (\Id - C_sF)^{-1} Q_{s,F}[x]} + \ord(\normtwo{x}^2\dens^{-1}\imz) \\ 
&\hspace{2.8cm} = \scalar{(\Id - C_s F)^{-1}Q_{s,F}[x]}{Q_f(\Id - F^2)Q_f (\Id - C_sF)^{-1} Q_{s,F}[x]} + \ord(\normtwo{x}^2\dens^{-1}\imz) \\ 
&\hspace{2.8cm} \gtrsim \normtwo{Q_f(\Id -C_sF)^{-1} Q_{s,F}[x]}^2 + \ord(\normtwo{x}^2\dens^{-1} \imz) \\ 
&\hspace{2.8cm} \gtrsim \normtwo{Q_{s,F}[x]}^2 + \ord(\normtwo{x}^2\dens^{-1} \imz). 
\end{aligned}
\end{equation}
Here, in the first step, we also used the second and third relation in \eqref{eq:expansion_P_sF}. 
In the third step, we then defined the orthogonal projections $P_f \defeq \scalar{f}{\cdot} f$ and $Q_f \defeq \Id - P_f$, where $Ff = \normtwo{F}f$ (cf.~Assumptions~\ref{assums:general}~(ii)),
 and inserted $Q_f$ using  
\begin{equation}  \label{eq:P_f_times_Q_sF}
 P_f Q_{s,F} = \ord(\dens^{-1} \imz)  
\end{equation}
which follows from \eqref{eq:f_u_approx_f} and \eqref{eq:expansion_P_sF}. We also used that $Q_{s,F}$ commutes with $(\Id - C_sF)^{-1}$. The fourth step is a consequence of \eqref{eq:spec_F} and \eqref{eq:eigenvector_F_sim_1}. 
In the last step, we employed $Q_f Q_{s,F} = Q_{s,F} + \ord(\dens^{-1} \imz)$ by \eqref{eq:P_f_times_Q_sF} and $\normtwo{\Id - C_s F} \leq 2$.  

By \eqref{eq:expansion_P_sF}, we have $\normtwo{P_{s,F}[x]}^2 = \scalar{f_u}{x}^2 + \ord(\normtwo{x}^2 \dens^{-1}\imz)$ if $x =x^*$.
Combining this observation with \eqref{eq:psi_x_second_step} proves \eqref{eq:psi_plus_sigma_sim_1_general} up to terms of order $\ord(\normtwo{x}^2 \dens^{-1} \imz)$. 
Hence, possibly shrinking $\dens_* \sim 1$ and requiring $\dens(z)^{-1} \imz \leq \dens_*$ complete the proof of the lemma. 
\end{proof}

\begin{remark}[Auxiliary quantities as functions of $m$]  \label{rem:derived_quantities_cont_function_of_m} 
Inspecting the proofs of Lemma \ref{lem:q_u_f_u_extension} and Lemma \ref{lem:stability_cubic_equation} reveals that $q$, $u$, $f_u$ and $s$ as well as $\sigma$ and $\psi$ 
are Lipschitz-continuous functions of $m$. More precisely, we have the following statements: 
\begin{enumerate}[label=(\roman*)]
\item 
Let $c_1, c_2, c_3>0$ satisfy $c_1 < c_2$
and $\mathcal{M}^{(1)}=\mathcal{M}^{(1)}(c_1, c_2,c_3) \subset \alg$ be a nonempty subset of $\alg$ satisfying that 
\begin{equation} \label{eq:conditions_cal_M_1}
 \Im m_1 \in \algpos, \qquad c_1 \avg{\Im m_1}\id \leq \Im m_1 \leq c_2 \avg{\Im m_1} \id , \qquad 
\norma{ \frac{\Im m_1}{\avg{\Im m_1}} - \frac{\Im m_2}{\avg{\Im m_2}}} \leq c_3 \norm{m_1 - m_2} 
\end{equation}
hold true for all $m_1, m_2 \in \mathcal{M}^{(1)}$. Then $q$, $u$ and $f_u$ are uniformly Lipschitz-continuous functions of 
$m$ on $\mathcal{M}^{(1)}$. 
\item For some $\dens_*>0$, let $\mathcal{M}^{(2)} = \mathcal{M}^{(2)}(c_1,c_2,c_3,\dens_*)\subset \alg$ be a 
subset of $\alg$ satisfying \eqref{eq:conditions_cal_M_1} for all $m_1, m_2 \in \mathcal{M}^{(2)}$ 
and $\avg{\Im m} \leq \pi \dens_*$ for all $m \in \mathcal{M}^{(2)}$.
Then there is a (small) $\dens_* \sim 1$ such that $s$ and $\sigma$ are uniformly Lipschitz-continuous functions 
of $m$ on $\mathcal{M}^{(2)} \subset \alg$. 
\item 
Fix $c_4>0$. 
 Let $\mathcal{M}^{(3)}=\mathcal{M}^{(3)}(c_1, c_2, c_3, c_4, \dens_*)$ be a subset of a set $\mathcal{M}^{(2)}$ from (ii) with $\dens_*\sim 1$ chosen as in (ii) such that, 
for any $m \in \mathcal{M}^{(3)}$, the operator $\Id - C_{s(m)}F(m)$ has a unique eigenvalue of smallest modulus 
and this eigenvalue is simple (recall  that $F =C_{q,q^*}S C_{q^*,q}$
is a function of $m$ via $q=q(m)$).
Let $Q_m$ denote the spectral projection of $\Id - C_{s(m)}F(m)$ onto the complement of this eigenvalue.
Moreover, we require that
\begin{equation} \label{eq:conditions_cal_M_3}
\norma{(\Id-C_{s(m_1)}F(m_1))^{-1} Q_{m_1} - (\Id - C_{s(m_2)}F(m_2))^{-1} Q_{m_2}} \leq c_4 \norm{m_1 - m_2}
\end{equation}
holds true for any $m_1$, $m_2 \in \mathcal{M}^{(3)}$.  
Then $\psi$ is a uniformly Lipschitz-continuous function of $m$ on $\mathcal{M}^{(3)}$. 
\end{enumerate}
We always consider $\mathcal{M}^{(i)}$, $i=1,2,3$, with the metric induced by the norm $\norm{\genarg}$ on $\alg$.
The constants in the Lipschitz-continuity estimates
as well as $\dens_*$ given in (ii) 
only depend on the control parameters $c_1$, $c_2$, $c_3$ and $c_4$.
\end{remark} 

The careful analysis of the operator $B$ and its inverse allows for the precise bounds on the derivatives of $m$ in the following lemma. 

\begin{lemma}[Derivatives of $m$]  \label{lem:derivatives_m} 
Let $I \subset \R$ be an open interval and $\theta \in (0,1]$. 
If Assumptions~\ref{assums:general} hold true on $I$ for some $\eta_* \in (0,1]$ then
there is $C \sim 1$ such that 
\[ \norm{\pt_z^k m(\tau)} \lesssim \frac{C^kk!}{\dens(\tau)^{2k -1}(\dens(\tau) + \abs{\sigma(\tau)})^{k}} \] 
uniformly for all $\tau \in I_\theta$ satisfying $\dens(\tau)>0$ and all $k \in \N$ satisfying $k \geq 1$. 
Here, we set $\abs{\sigma(\tau)} \defeq 0$ if $\dens(\tau) > \dens_*$ with $\dens_*$ as in Lemma~\ref{lem:stability_cubic_equation}. 
\end{lemma}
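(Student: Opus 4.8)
\textbf{Proof strategy for Lemma \ref{lem:derivatives_m}.}

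The plan is to prove the bound by induction on $k$, differentiating the Dyson equation $k$ times with respect to $z$ and using the improved resolvent bound from Corollary~\ref{coro:B_inverse_improved_bound} together with the fact (Proposition~\ref{pro:analyticity_of_m}, \eqref{eq:m_sup_bound}) that $\norm{m(\tau)} \lesssim 1$ whenever $\dens(\tau)>0$. Write $B = \Id - C_m S$ and recall that differentiating \eqref{eq:dyson} once gives $B[\pt_z m] = m^2$. Differentiating $k$ times via the Leibniz rule, one obtains $B[\pt_z^k m] = R_k$, where $R_k$ is a universal polynomial expression in $m, \pt_z m, \dots, \pt_z^{k-1} m$ and in $S$ applied to these; more precisely, each term of $R_k$ is of the form $C_{m^{(a)}} S[m^{(b)}] \cdots$ with a bounded number of factors, all of order strictly less than $k$, so that $\pt_z^k m = B^{-1}[R_k]$. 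The key quantitative input is $\norm{B^{-1}(\tau)} \lesssim \big(\dens(\tau)(\dens(\tau)+\abs{\sigma(\tau)}) + \dens(\tau)^{-1}\imz\big)^{-1}$ from Corollary~\ref{coro:B_inverse_improved_bound}; on the real axis (where we extend $m$ by Lemma~\ref{lem:q_u_f_u_extension}, using \eqref{eq:dens_reciprocal_imz_equal_zero}) this reads $\norm{B^{-1}(\tau)} \lesssim (\dens(\tau)(\dens(\tau)+\abs{\sigma(\tau)}))^{-1}$. Since this is the worst growth in the problem, denote $\kappa(\tau) \defeq \dens(\tau)^{-1}(\dens(\tau)+\abs{\sigma(\tau)})^{-1}$, so $\norm{B^{-1}} \lesssim \kappa$, and our target is $\norm{\pt_z^k m} \lesssim C^k k! \, \dens^{-(2k-2)} \kappa^k = C^k k!\, \dens^{2} \kappa^{2k}$ after absorbing $\dens \lesssim 1$; I would carry the bound in the form stated, $C^k k! \,\dens^{-(2k-1)}(\dens+\abs{\sigma})^{-k}$.

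The combinatorial heart is controlling how the factorial and the geometric constant propagate through the Leibniz expansion. I would set $\Phi_k \defeq \dens^{-(2k-1)}(\dens+\abs{\sigma})^{-k}$ and prove, by strong induction, $\norm{\pt_z^k m(\tau)} \le A^k k! \,\Phi_k(\tau)$ for a sufficiently large $A \sim 1$. For the base case $k=1$, $\norm{\pt_z m} = \norm{B^{-1}[m^2]} \lesssim \kappa \cdot \norm{m}^2 \lesssim \kappa = \Phi_1$ since $\dens \lesssim 1$. For the inductive step, one observes that $R_k$ is a sum over compositions: each term has the shape $C_{\pt^{i_1}m,\,\pt^{i_2}m}S[\pt^{i_3}m]\cdots$ (and similar lower-complexity pieces coming from differentiating $C_m$ and the $m^2$ term), with indices $i_1+i_2+\dots = k-1$ and each $i_j \le k-1$; the number of such terms and the multinomial coefficients from Leibniz are exactly what produces a factor like $(k-1)!$ times a bounded branching constant. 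Using $\normtwoinf{S}\lesssim 1$ and $\norm{\pt^{i}m} \le A^i i!\,\Phi_i$ on each factor, and the elementary \emph{superadditivity} $\Phi_{i}\Phi_{j} \lesssim \Phi_{i+j}$ (which holds because $\dens, \dens+\abs{\sigma} \lesssim 1$, so dividing by smaller powers only increases the quantity — concretely $\Phi_i \Phi_j = \dens^{-(2i+2j-2)}(\dens+\abs{\sigma})^{-(i+j)} \le \dens^{-(2(i+j)-1)}(\dens+\abs{\sigma})^{-(i+j)} = \Phi_{i+j}$ using $\dens^{-1}\ge 1$), one bounds a generic term of $R_k$ by $\lesssim A^{k-1}(k-1)!\,\Phi_{k-1}$, and then $\norm{\pt_z^k m} \le \norm{B^{-1}}\cdot\norm{R_k} \lesssim \kappa \cdot A^{k-1}(k-1)!\,\Phi_{k-1}$. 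Finally $\kappa \Phi_{k-1} = \dens^{-1}(\dens+\abs{\sigma})^{-1}\dens^{-(2k-3)}(\dens+\abs{\sigma})^{-(k-1)} = \dens^{-(2k-2)}(\dens+\abs{\sigma})^{-k} \le \dens^{-(2k-1)}(\dens+\abs{\sigma})^{-k} = \Phi_k$, and the accumulated branching constant times $(k-1)!$ is $\le A^k k!$ once $A$ is chosen large enough relative to that constant; this closes the induction.

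\textbf{Main obstacle.} The genuinely delicate point is the bookkeeping in the last paragraph: making the Leibniz/Faà-di-Bruno expansion of $B[\pt_z^k m]$ precise enough to see that the number of terms times the multinomial factors is bounded by $c^k (k-1)!$ for a universal $c$, so that the induction closes with a \emph{single} geometric constant $C^k$ rather than something growing faster. One clean way to organize this is to note that $B^{-1}$ itself depends on $z$ (through $m$), so it is cleaner to differentiate the identity $m = -(z - a + S[m])^{-1}$ directly: write $m = -N^{-1}$ with $N \defeq z\id - a + S[m]$, so $\pt_z m = m (\pt_z N) m = m(\id + S[\pt_z m])m$, i.e. $B[\pt_z m]=m^2$ as above, and then higher derivatives are generated by the recursion $\pt_z^{k+1}m = B^{-1}\big[\text{(explicit Leibniz sum in lower }\pt^j m, j\le k)\big]$, where every occurrence of $B^{-1}$ is at the \emph{same} point $\tau$ and hence controlled by the single bound $\kappa(\tau)$. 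The rest is then the standard "generating-function / Cauchy-majorant" argument for analytic functions satisfying a fixed-point equation, and the dependence on $\dens$ and $\sigma$ rides along passively because the relevant quantities are all $\lesssim 1$, which is what makes $\Phi_i\Phi_j \lesssim \Phi_{i+j}$ work. I do not expect any new analytic input beyond Corollary~\ref{coro:B_inverse_improved_bound} and the boundedness of $m$, $S$; the work is entirely in the combinatorial estimate and in being careful that on the real axis the $\dens^{-1}\imz$ term in the resolvent bound has genuinely vanished (Lemma~\ref{lem:q_u_f_u_extension}(ii)), which is needed for the stated $\tau$-only bound to hold.
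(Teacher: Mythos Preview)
Your induction does not close as written, and the error is not merely bookkeeping. In the Leibniz expansion of $B[\pt_z^k m]=R_k$ the derivative indices sum to $k$, not $k-1$: from the paper's formula (or just from $B[\pt_z^2 m]=2(\pt_z m)m^{-1}(\pt_z m)$) each term of $R_k$ is a product $m^{(a_1)}m^{-1}\cdots m^{-1}m^{(a_n)}$ with $a_1+\cdots+a_n=k$ and $n\ge 2$. Feeding in your inductive hypothesis $\norm{m^{(a)}}\le A^a a!\,\Phi_a$ and your (correct) superadditivity $\Phi_i\Phi_j\le \Phi_{i+j}$ therefore yields $\norm{R_k}\lesssim A^k k!\,\Phi_k$, not $\Phi_{k-1}$. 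Applying $\norm{B^{-1}}\lesssim\kappa=\dens^{-1}(\dens+\abs{\sigma})^{-1}$ then gives
\[
\norm{\pt_z^k m}\ \lesssim\ \kappa\,\Phi_k\ =\ \dens^{-(2k)}(\dens+\abs{\sigma})^{-(k+1)},
\]
which is worse than the target $\Phi_k=\dens^{-(2k-1)}(\dens+\abs{\sigma})^{-k}$ by the factor $\kappa\gg 1$. Equivalently, the dominant ($n=2$) term in $R_k$ already has size $\dens^{-(2k-2)}(\dens+\abs{\sigma})^{-k}$, and one more application of $B^{-1}$ overshoots by $(\dens+\abs{\sigma})^{-1}$. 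So the plain induction loses a factor $(\dens+\abs{\sigma})^{-1}$ at every step; no choice of $A$ repairs this.

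The missing idea, which the paper supplies, is to track the component of $m^{(k)}$ along the unstable eigenvector $b$ of $B$ separately from the complementary part. Writing $m^{(k)}=\beta_k b+q_k$ with $q_k\in\ran Q$, one has $\norm{B^{-1}Q}\lesssim 1$, so $q_k$ is automatically one power of $\dens$ better than $\beta_k b$. For $\beta_k=\scalar{l}{R_k}/(\beta\scalar{l}{b})$ the crucial cancellation is that the dominant ($n=2$) contribution to $\scalar{l}{R_k}$ is $\beta_a\beta_{k-a}\scalar{l}{bm^{-1}b}$, and one computes $\scalar{l}{bm^{-1}b}=\sigma+\ord(\dens)$, hence $\abs{\scalar{l}{bm^{-1}b}}\lesssim \abs{\sigma}+\dens$. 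This extra factor exactly compensates the $(\dens+\abs{\sigma})^{-1}$ coming from $(\beta\scalar{l}{b})^{-1}\sim(\dens(\dens+\abs{\sigma}))^{-1}$, and the two-scale induction on $(\abs{\beta_k},\norm{q_k})$ then closes. Your sketch, which treats $B^{-1}$ as a single black box with norm $\kappa$, cannot see this cancellation and therefore cannot reach the stated bound.
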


\begin{proof}
To indicate the mechanism, we first prove that, for all $\tau \in I_\theta$ satisfying $\dens(\tau)>0$, we have 
\begin{equation} \label{eq:derivative_m_bound_first_three_deriviatives}
  \norm{\pt_z m(\tau)} \lesssim \dens^{-1}(\dens + \abs{\sigma})^{-1},\qquad \norm{\pt_z^2 m(\tau)} \lesssim \dens^{-3}(\dens + \abs{\sigma})^{-2}, 
\qquad \norm{\pt_z^{3} m(\tau)} \lesssim \dens^{-5}(\dens + \abs{\sigma})^{-3}, 
\end{equation}
where $\dens \defeq \dens(\tau)$ and $\sigma \defeq \sigma(\tau)$. 

Since $\dens(\tau)>0$, $m$ is real analytic around $\tau$ by Proposition~\ref{pro:analyticity_of_m} and we can differentiate the Dyson equation, \eqref{eq:dyson}, with respect 
to $z$ and evaluate at $z = \tau$. 
Differentiating \eqref{eq:dyson} iteratively yields 
\begin{equation} \label{eq:derivatives_m}
 \begin{aligned} 
 B[\pt_z m] & = m^2, \qquad \qquad B[\pt_z^2 m] = 2 (\pt_z m) m^{-1} (\pt_z m), \\ 
B[\pt_z^3 m]  & = - 6 (\pt_z m) m^{-1} (\pt_z m) m^{-1} (\pt_z m) + 3 (\pt_z^2 m)m^{-1} (\pt_z m) + 3 (\pt_z m)m^{-1} (\pt_z^2 m)
  \end{aligned} 
\end{equation} 
where $B = \Id - C_mS$ and $m \defeq m(\tau)$. 
Since $\dens(\tau)>0$, $B$ is invertible by \eqref{eq:B_inverse_improved_bounds}, \eqref{eq:dens_reciprocal_imz_equal_zero} and the $1/3$-Hölder continuity of $m$ by Proposition~\ref{pro:analyticity_of_m}. 

We set $\dens \defeq \dens(\tau)$. 
If $\dens > \dens_*$ for some $\dens_* \sim 1$ then \eqref{eq:derivative_m_bound_first_three_deriviatives} follows trivially from \eqref{eq:derivatives_m}, $\norm{B^{-1}} \lesssim 1$ by \eqref{eq:B_inverse_improved_bounds} and 
$\norm{m} + \norm{m^{-1}} \lesssim 1$. 

We now prove \eqref{eq:derivative_m_bound_first_three_deriviatives} for $\dens \leq \dens_*$ and some sufficiently small $\dens_* \sim 1$. 
Under this assumption, Lemma~\ref{lem:prop_F_small_dens} and Corollary~\ref{coro:eigenvector_expansion} are applicable. 
In the remainder of this proof, the eigenvalue $\beta$, the eigenvectors $l$ and $b$ as well as the spectral projections $P$ and $Q$ are understood to be evaluated at $\tau$. 
We will now estimate the image of $B^{-1}$ applied to the right-hand sides of \eqref{eq:derivatives_m} in order to prove \eqref{eq:derivative_m_bound_first_three_deriviatives}. 

Inserting $P+Q=\Id$ on the right-hand side of the first identity in \eqref{eq:derivatives_m}, inverting $B$ and using 
\[ P = \frac{\scalar{l}{\genarg}}{\scalar{l}{b}} b \] 
 as well as $B^{-1}[b] = \beta^{-1}b$  
yield 
\begin{equation} \label{eq:pt_z_m_expansion}
 \pt_z m = \frac{\scalar{l}{m^2}}{\beta \scalar{l}{b}} b + B^{-1} Q[m^2]. 
\end{equation}
We will now estimate $\scalar{l}{m^2}$ and $\beta \scalar{l}{b}$. 
From $m = q^* s q + \ord(\dens)$ by \eqref{eq:Re_u_s}, \eqref{eq:b_0_l_0_approx}, \eqref{eq:expansion_l} and \eqref{eq:dens_reciprocal_imz_equal_zero}, we obtain 
\begin{equation}
 \scalar{l}{m^2} = \avg{f_u s qq^* s}  + \ord(\dens) = \pi + \ord(\dens), 
\end{equation}
where we used $sf_u s = f_u s^2 = f_u$ and \eqref{eq:f_u_qq_star} in the last step. 

From \eqref{eq:expansion_beta_scalar_l_b} and \eqref{eq:dens_reciprocal_imz_equal_zero}, we conclude 
\begin{equation} \label{eq:beta_scalar_l_b_aux_derivative_m} 
 \beta \scalar{l}{b} = - 2\ii \dens \sigma + \dens^2 \bigg( \psi + \frac{\sigma^2}{\avg{f_u^2}} \bigg) + \ord(\dens^{3}). 
\end{equation}
Here and in the remainder of the proof, $\sigma$, $\psi$, $f_u$, $q$ and $s$ are understood to be evaluated at $\tau$. 

Since $\sigma$ and $\psi$ are real, we conclude $\abs{\beta \scalar{l}{b}} \sim \dens(\dens+\abs{\sigma})$ for $\dens_* \sim 1$ sufficiently small. 
As $\norm{B^{-1}Q} \lesssim 1$ and $\norm{b} \lesssim 1$, we thus obtain $\norm{\pt_z m} \lesssim \dens^{-1}(\dens + \abs{\sigma})^{-1}$ from \eqref{eq:pt_z_m_expansion}. 

Using \eqref{eq:derivatives_m}, \eqref{eq:pt_z_m_expansion}, $\norm{\pt_z m} \lesssim \dens^{-1}(\dens + \abs{\sigma})^{-1}$ and $\norm{B^{-1}} \lesssim \dens^{-1}(\dens + \abs{\sigma})^{-1}$  
by Corollary~\ref{coro:B_inverse_improved_bound} yield 
\begin{equation} \label{eq:pt_z^2_m_expansion}
 \pt_z^2 m = 2 \frac{\scalar{l}{m^2}^2 \scalar{l}{bm^{-1}b}}{(\beta \scalar{l}{b})^3} b + \ord(\dens^{-2}(\dens + \abs{\sigma})^{-2}) = \ord(\dens^{-3}(\dens + \abs{\sigma})^{-2}). 
\end{equation}
Here, in the last step, we used $\norm{b} \lesssim 1$ and $\abs{\scalar{l}{bm^{-1}b}} \lesssim \abs{\sigma} + \dens$ due to the expansion 
\begin{equation}\label{eq:derivatives_m_aux3}
 \scalar{l}{bm^{-1} b } = \avg{q^{-1} f_u (q^*)^{-1} q^* f_u q q^{-1} s (q^*)^{-1} q^*f_u q} + \ord(\dens) = \sigma + \ord(\dens) 
\end{equation}
as well as $\abs{\beta \scalar{l}{b}} \sim \dens(\dens + \abs{\sigma})$ and $\scalar{l}{m^2} = \ord(1)$. 
The proof of \eqref{eq:derivatives_m_aux3} is a consequence of \eqref{eq:b_0_l_0_approx}, \eqref{eq:expansion_b}, \eqref{eq:expansion_l}, \eqref{eq:dens_reciprocal_imz_equal_zero},
$m^{-1} = q^{-1}s(q^*)^{-1} + \ord(\dens)$ by \eqref{eq:Re_u_s} as well as $q \sim 1$. 

Similarly, owing to \eqref{eq:derivatives_m}, \eqref{eq:pt_z_m_expansion} and \eqref{eq:pt_z^2_m_expansion},  we obtain 
\[ \pt_z^3 m = 12 \frac{\scalar{l}{m^2}^3 \scalar{l}{bm^{-1}b}^2}{(\beta \scalar{l}{b})^5} b + \ord(\dens^{-5}(\dens + \abs{\sigma})^{-3}) = \ord(\dens^{-5}(\dens + \abs{\sigma})^{-3}). \] 

We now estimate $\pt_z^k m(z)$ for $k > 3$. To that end, we will fix a parameter $\alpha >1$ and prove that there are $\dens_* \sim 1$, $C_1 \sim_\alpha 1$ and $C_2 \sim_\alpha 1$ such that, for $k \in \N$, we have 
\begin{equation} \label{eq:derivative_m_aux_statement} 
 m^{(k)} \defeq \pt^k_z m =  \beta_k b + q_k, 
\end{equation}
where $m=m(\tau)$ for $\tau \in I_\theta$ satisfying $\dens\defeq \dens(\tau) \leq \dens_*$ and  $\beta_k \in \C$ and $q_k \in \ran Q$ satisfy 
\begin{equation} \label{eq:bound_beta_k_q_k_derivative_m}
 \abs{\beta_k} \leq \frac{k!C_1 C_2^{k-1}} {k^\alpha} \dens^{-2k + 1}(\dens + \abs{\sigma})^{-k}, \qquad \qquad  \norm{q_k} \leq \frac{k!C_1 C_2^{k-1}}{k^\alpha} \dens^{-2k + 2}(\dens + \abs{\sigma})^{-k}. 
\end{equation}
Here, $\sim_\alpha$ indicates that the constants in the definition of the comparison relation $\sim$ will depend on $\alpha$. 

Before we prove \eqref{eq:derivative_m_aux_statement} below, we note two auxiliary statements. 
First, as $\pt_z m^{-1} = - m^{-1} (\pt_z m) m^{-1}$ it is easy to check the following version of the usual Leibniz-rule: 
\begin{equation} \label{eq:derivative_k_m_inverse}
 \pt_z^k m^{-1} = \sum_{n=1}^k\, \sum_{\substack{a_1 + \ldots + a_n = k\\1 \leq a_i \leq k}} \frac{k!}{a_1!\ldots a_n!}\, (-1)^{n} \, m^{-1} m^{(a_1)} m^{-1} m^{(a_2)} \ldots m^{-1} m^{(a_n)} m^{-1} 
\end{equation}
for any $k \in \N$. Here, in the sum over $a_1 + \ldots + a_n = k$, the order of $a_1, \ldots, a_n$ has to be taken into account since $m^{-1}$ and $m^{(a)}$ do not commute in general. 

Second, we also have the following auxiliary bound. 
For all $k \in \N$, $n \in \N$ with $n \leq k$ and $\alpha >1$, we have 
\begin{equation} \label{eq:bound_riemann_zeta} 
 \sum_{\substack{a_1 + \ldots + a_n = k\\1 \leq a_i \leq k}} \frac{1}{a_1^\alpha\cdots a_n^\alpha} \leq \frac{(2^{\alpha + 1}\zeta(\alpha))^{n-1}}{k^\alpha}, 
\end{equation}
where $\zeta(\alpha)=\sum_{n=1}^\infty n^{-\alpha}$ is Riemann's zeta function. The bound in \eqref{eq:bound_riemann_zeta} can be proven by induction. 

We now show \eqref{eq:derivative_m_aux_statement} and \eqref{eq:bound_beta_k_q_k_derivative_m} by induction on $k$. The initial step of the induction with $k=1$ has been established in \eqref{eq:pt_z_m_expansion} with $\beta_1 = \scalar{l}{m^2}/(\beta\scalar{l}{b})$, 
 $q_1 = B^{-1}Q[m^2]$ and some sufficiently large $C_1 \sim 1$. 
Next, we establish the induction step by proving \eqref{eq:derivative_m_aux_statement} and \eqref{eq:bound_beta_k_q_k_derivative_m} under the assumption that they hold true for all 
derivatives of lower order. From the induction hypothesis, we conclude 
\begin{equation} \label{eq:conseq_ind_hypo_derivative_m} 
 \norm{m^{(a)}} \leq \frac{k!C_1 C_2^{a-1}}{k^\alpha} \frac{\norm{b} + \dens }{\dens^{2a-1}(\dens + \abs{\sigma})^a}  
\end{equation}
for all $ a \in \N$ satisfying $1 \leq a \leq k-1$.

For $k \geq 2$, we differentiate \eqref{eq:dyson} $k$-times and obtain 
\begin{equation} \label{eq:derivative_m_general_formula_B}
 B[\pt^k_z m] = r_k \defeq \pt_z^k m + m \Big(\pt_z^k m^{-1} \Big) m.  
\end{equation}
By separating the contributions for $n=1$ and $n \geq 2$ in \eqref{eq:derivative_k_m_inverse}, we conclude 
\begin{equation} \label{eq:decomposition_r_k} 
 r_k = \sum_{n=3}^k \,\sum_{\substack{a_1 + \ldots + a_n = k\\1 \leq a_i < k-1}} \frac{k!}{a_1!\ldots a_n!}\, (-1)^{n} \, m^{(a_1)} m^{-1} \ldots m^{-1} m^{(a_n)}
+ \sum_{a=1}^{k-1} \frac{k!}{a! (k-a)!} m^{(a)} m^{-1} m^{(k-a)}. 
\end{equation}
Since $n$ is at least 3 in the first sum, we obtain from \eqref{eq:conseq_ind_hypo_derivative_m} and \eqref{eq:bound_riemann_zeta} that 
\begin{equation} \label{eq:r_k_geq_3} 
 \sum_{n=3}^k \,\sum_{\substack{a_1 + \ldots + a_n = k\\1 \leq a_i < k-1}} \frac{k!}{a_1!\ldots a_n!}\norm{m^{(a_1)} m^{-1} \ldots m^{-1} m^{(a_n)}} \leq \frac{k!}{k^\alpha}
\frac{\norm{b} + \dens} {\dens^{2k-3}(\dens+\abs{\sigma})^k} \sum_{n=3}^k C_1^n M_\alpha^{n-1} C_2^{k-n}, 
\end{equation} 
where $M_\alpha \defeq 2^{\alpha+2}\zeta(\alpha)\norm{m^{-1}}( \norm{b} + \dens)$. 
A similar argument yields 
\[ \sum_{a=1}^{k-1} \frac{k!}{a! (k-a)!} \norm{m^{(a)} m^{-1} m^{(k-a)}} \leq \frac{k!}{k^\alpha}\frac{\norm{b} + \dens}{\dens^{2k - 2}(\dens+ \abs{\sigma})^{k}} 
C_1^2 M_\alpha C_2^{k-2}. \] 
Thus, we choose $C_2 \geq 2 M_\alpha C_1$ and conclude 
\[ \norm{r_k} \leq \frac{k!}{k^\alpha} \frac{\norm{b} + \dens}{\dens^{2k-2}(\dens + \abs{\sigma})^{k}}\frac{M_\alpha C_1^2 C_2^k}{C_2^2( 1 - M_\alpha C_1/C_2)}. \]
Therefore, we obtain the bound on $\norm{q_k}$ in \eqref{eq:bound_beta_k_q_k_derivative_m} for $C_2 \sim 1$ sufficiently large since $q_k = Q[\pt_z^k m] = B^{-1}Q[r_k]$ 
and $\norm{B^{-1}Q} \lesssim 1$. 

Moreover, $\beta_k = \scalar{l}{r_k}/(\beta\scalar{l}{b})$. Hence, by using the decomposition of $r_k$ in \eqref{eq:decomposition_r_k} and \eqref{eq:r_k_geq_3}, we obtain 
\[ \abs{\beta_k} \leq  
\frac{k!C_1 C_2^{k-1}}{k^\alpha} \frac{\norm{b} +\dens}{\dens^{2k-1}(\dens + \abs{\sigma})^k} \frac{\norm{l}\dens^2}{\abs{\beta\scalar{l}{b}}} \frac{C_1^2 M_\alpha^2}{C_2^2(1-M_\alpha C_1/C_2)}
+  \sum_{a=1}^{k-1} \frac{k!}{a! (k-a)!} \frac{\abs{\scalar{l}{m^{(a)} m^{-1} m^{(k-a)}}}}{\abs{\beta \scalar{l}{b}}} \]
We use \eqref{eq:derivative_m_aux_statement} for $m^{(a)}$ and $m^{(k-a)}$ in the argument of the last sum, which yields 
\[\begin{aligned} 
 \frac{1}{a! (k-a)!} \frac{\abs{\scalar{l}{m^{(a)} m^{-1} m^{(k-a)}}}}{\abs{\beta\scalar{l}{b}}} 
\leq \, &  \frac{\abs{\beta_a}}{a!}\frac{\abs{\beta_{k-a}}}{(k-a)!} \frac{\abs{\scalar{l}{bm^{-1}b}}}{\abs{\beta\scalar{l}{b}}} +\frac{C_1^2 C_2^{k-2}}{a^\alpha(k-a)^\alpha \dens^{2k-1}(\dens+ \abs{\sigma})^k} 
\frac{\dens^{2} \norm{l}\norm{m^{-1}}} {\abs{\beta\scalar{l}{b}}} ( 2 \norm{b} + \dens) \\ 
 \leq \, &\frac{C_1^2 C_2^{k-2}}{a^\alpha(k-a)^\alpha \dens^{2k-1}(\dens + \abs{\sigma})^k} \frac{\dens(\dens + \abs{\sigma})}{\abs{\beta \scalar{l}{b}}} \bigg( \frac{\abs{\scalar{l}{bm^{-1}b}}}{\dens + \abs{\sigma}} + \norm{l}\norm{m^{-1}} ( 2 \norm{b} + \dens)\bigg)
\end{aligned} \]
Here, we applied \eqref{eq:bound_beta_k_q_k_derivative_m} to estimate $q_a$ and $q_{k-a}$ as well as $\beta_a$ and $\beta_{k-a}$. 
Since $\abs{\beta \scalar{l}{b}} \sim \dens(\dens + \abs{\sigma})$ as shown below \eqref{eq:beta_scalar_l_b_aux_derivative_m} and 
$\abs{\scalar{l}{bm^{-1}b}}\lesssim \abs{\sigma} + \dens$ due to \eqref{eq:derivatives_m_aux3}, we obtain the bound on $\abs{\beta_k}$ in \eqref{eq:bound_beta_k_q_k_derivative_m} by using \eqref{eq:bound_riemann_zeta} 
to perform the summation over $a$. 
This completes the induction argument, which yields \eqref{eq:derivative_m_aux_statement} and \eqref{eq:bound_beta_k_q_k_derivative_m} for all $k \in \N$ by possibly increasing $C_2 \sim 1$.  
By choosing, say, $\alpha = 2$, we immediately conclude Lemma~\ref{lem:derivatives_m} for $\tau \in I_\theta$ satisfying $\dens(\tau) \leq \dens_*$. 
If $\dens(\tau)>\dens_*$ then $\norm{B^{-1}} \lesssim 1$. Hence, a simple induction argument using \eqref{eq:derivative_m_general_formula_B} and \eqref{eq:decomposition_r_k}, which hold true for $\dens(\tau) >\dens_*$ as well,
 yields some $C \sim 1$ such that 
\[  \norm{\pt_z^k m(\tau)} \lesssim k! C^k \] 
for all $k \in \N$ satisfying $k \geq 1$.  
Since $\dens(\tau) \lesssim 1$ for all $\tau \in I_\theta$, we obtain Lemma~\ref{lem:derivatives_m} in the missing regime. 
\end{proof}

\section{The cubic equation} \label{sec:cubic_equation} 

The following Proposition \ref{pro:cubic_for_dyson_equation} is the main result of this section. 
It asserts that $m$ is determined by the solution to a cubic equation, \eqref{eq:cubic_sing_dens} below, 
close to points $\tau_0\in\supp\dens$ of small density $\dens(\tau_0)$. In Section \ref{sec:shape_analysis}, this cubic equation will allow for a classification of the small local minima of $\tau \mapsto \dens(\tau)$. 
To have a short notation for the elements of $\supp\dens$ of small density, we introduce the set 
 \[ \Dmin_{\eps,\theta} \defeq \{\tau \in \supp \dens\cap I \colon \dens(\tau) \in [0,\eps], ~\dist(\tau,\pt I) \geq \theta \} \] 
for $\eps >0$ and $\theta >0$.

The leading order terms of the cubic and quadratic coefficients in \eqref{eq:cubic_sing_dens} are given by $\psi(\tau_0)$ and $\sigma(\tau_0)$, respectively. 
For their definitions, we refer to Lemma \ref{lem:stability_cubic_equation} (i) and \eqref{eq:def_psi_sigma}.

\begin{proposition}[Cubic equation for shape analysis] \label{pro:cubic_for_dyson_equation}
Let $I \subset \R$ be an open interval and $\theta \in (0,1]$. 
If Assumptions \ref{assums:general} hold true on $I$ for some $\eta_* \in (0,1]$ 
then there are thresholds $\dens_* \sim 1$ and $\delta_* \sim 1$ such that, for all $\tau_0 \in \Dmin_{\dens_*,\theta}$, the following hold true:
\begin{enumerate}[label=(\alph*)] 
\item 
For all $\omega\in [-\delta_*, \delta_*]$, we have   
\begin{equation} \label{eq:def_admiss_decom_diff_m}
 m(\tau_0 + \omega) -m(\tau_0) = \Theta(\omega) b + r(\omega), 
\end{equation}
where $\Theta \colon [-\delta_*, \delta_* ] \to \C$ and $r \colon [-\delta_*, \delta_*] \to \alg$ are defined by 
\begin{equation} \label{eq:def_Theta_r_shape_analysis}
 \Theta(\omega) \defeq \scalarbb{\frac{l}{\scalar{b}{l}}}{m(\tau_0 +\omega) - m(\tau_0)}, \qquad r(\omega) \defeq Q[m(\tau_0 +\omega) - m(\tau_0)]. 
\end{equation}
Here, $l=l(\tau_0)$, $b=b(\tau_0)$ and $Q=Q(\tau_0)$ are the eigenvectors and spectral projection of $B(\tau_0)$ introduced in Corollary~\ref{coro:eigenvector_expansion}.
We have $b = b^* + \ord(\dens)$ and $l = l^* + \ord(\dens)$ as well as $b +b^* \sim 1$ and $l + l^* \sim 1$
with $\dens = \dens(\tau_0) = \avg{\Im m(\tau_0)}/\pi$. 
\item The function $\Theta$ satisfies the cubic equation 
\begin{equation} \label{eq:cubic_sing_dens}
 \mu_3 \Theta^3(\omega) + \mu_2 \Theta^2(\omega) + \mu_1\Theta(\omega) + \omega \Xi(\omega) = 0 
\end{equation}
for all $\omega \in [-\delta_*, \delta_*]$. The complex coefficients $\mu_3$, $\mu_2$, $\mu_1$ and $\Xi$ in \eqref{eq:cubic_sing_dens} fulfill 
\begin{subequations}
\begin{align}
\mu_3 & = \psi + \ord(\dens), \label{eq:mu3_imz_0}\\
\mu_2 & = \sigma + \ii\dens\bigg(3\psi + \frac{\sigma^2}{\avg{f_u^2}} \bigg) + \ord(\dens^2), \label{eq:mu2_imz_0}\\ 
\mu_1 & = 2\ii\dens \sigma - 2 \dens^2 \bigg( \psi+ \frac{\sigma^2}{\avg{f_u^2}} \bigg) + \ord(\dens^3),\label{eq:mu1_imz_0} \\ 
\Xi(\omega) & = \pi(1 + \nu(\omega))+\ord(\rho), \label{eq:Xi_omega}
\end{align}
\end{subequations}
where $\sigma=\sigma(\tau_0)$ as well as $\psi= \psi(\tau_0)$. 
For the error term $\nu(\omega)$, we have 
\begin{equation}
\abs{\nu(\omega)} \lesssim \abs{\Theta(\omega)} +\abs{\omega} \lesssim \abs{\omega}^{1/3}  \label{eq:bound_nu}.
\end{equation}
for all $\omega \in [-\delta_*, \delta_* ]$. 
Uniformly for $\tau_0 \in \Dmin_{\dens_*,\theta}$, we have
\begin{equation} \label{eq:def_admiss_scaling_parameters}
 \psi + \sigma^2 \sim 1.
\end{equation}
\item Moreover, $\Theta(\omega)$ and $r(\omega)$ are bounded by 
\begin{subequations}
\begin{align}
 \abs{\Theta(\omega)} & \lesssim \min \bigg\{ \frac{\abs{\omega}}{\dens^2}, \abs{\omega}^{1/3} \bigg\}, \label{eq:def_admiss_bound_Theta} \\ 
\norm{r(\omega)} & \lesssim \abs{\Theta(\omega)}^2 +\abs{\omega}, \label{eq:def_admiss_bound_r}
\end{align}
\end{subequations}
uniformly for all $\omega \in [-\delta_*, \delta_*] $. 
\end{enumerate}
\begin{enumerate}[label=(d\arabic*)]
\item If $\dens >0$ then $\Theta$ and $r$ are differentiable in $\omega$ at $\omega=0$. 
\item If $\dens = 0$ then we have
\begin{equation} \label{eq:def_admiss_dens_zero_Im_Theta_bounds}
 \Im \Theta(\omega) \geq 0, \qquad \abs{\Im \nu(\omega)} \lesssim \Im \Theta(\omega), \qquad \norm{\Im r(\omega)} \lesssim (\abs{\Theta(\omega)} + \abs{\omega})\Im \Theta(\omega), 
\end{equation}
for all $\omega \in [-\delta_*, \delta_* ]$ and $\Re \Theta$ is non-decreasing on the connected components of $\{ \omega \in [-\delta_*, \delta_* ] \colon \Im \Theta(\omega) = 0 \}$. 
\end{enumerate}
\begin{enumerate}[label=(\alph*)]
\setcounter{enumi}{4}
\item The function $\sigma \colon \Dmin_{\dens_*,\theta} \to \R$ is uniformly $1/3$-Hölder continuous. 
\end{enumerate}
\end{proposition}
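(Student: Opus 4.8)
The plan is to prove the final statement (e) — that $\sigma \colon \Dmin_{\dens_*,\theta} \to \R$ is uniformly $1/3$-H\"older continuous — by combining the Lipschitz-continuity of $\sigma$ as a function of $m$ (Remark~\ref{rem:derived_quantities_cont_function_of_m}~(ii)) with the $1/3$-H\"older continuity of $m$ on $I_\theta$ (Proposition~\ref{pro:analyticity_of_m}, or more precisely Lemma~\ref{lem:q_u_f_u_extension}). First I would recall that $\sigma$ has already been extended to a $1/3$-H\"older continuous function on the set $\{z \in \Hbthclosed \colon \dens(z) \leq \dens_*\}$ by Lemma~\ref{lem:stability_cubic_equation}~(i); since $\Dmin_{\dens_*,\theta} \subset \{\tau \in I_\theta \colon \dens(\tau) \leq \dens_*\}$ (for the $\dens_*\sim 1$ of the present proposition, which we may take to be the minimum of the two thresholds), the restriction of that extension to $\Dmin_{\dens_*,\theta}$ is exactly the $\sigma$ in the statement, and uniform $1/3$-H\"older continuity is inherited directly.

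More concretely, for $\tau_1, \tau_2 \in \Dmin_{\dens_*,\theta}$ I would write
\[
 \abs{\sigma(\tau_1) - \sigma(\tau_2)} = \absb{\avg{s(\tau_1)f_u(\tau_1)^3} - \avg{s(\tau_2)f_u(\tau_2)^3}}
 \lesssim \norm{s(\tau_1) - s(\tau_2)} + \norm{f_u(\tau_1) - f_u(\tau_2)},
\]
using $\norm{s}, \norm{f_u} \lesssim 1$ (from $\abs{s}\leq \id$ and \eqref{eq:f_u_sim_1}) and the telescoping $a^3 - b^3$ estimate. Then Lemma~\ref{lem:q_u_f_u_extension}~(i) and~(iii) give $\norm{s(\tau_1)-s(\tau_2)} + \norm{f_u(\tau_1)-f_u(\tau_2)} \lesssim \abs{\tau_1 - \tau_2}^{1/3}$, since both $f_u$ and $s$ (on $\{\dens \leq \dens_*\}$) are uniformly $1/3$-H\"older continuous on $\Hbthclosed$. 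This yields $\abs{\sigma(\tau_1)-\sigma(\tau_2)} \lesssim \abs{\tau_1-\tau_2}^{1/3}$ uniformly, which is the claim. Reality of $\sigma$ on $\Dmin_{\dens_*,\theta}$ follows because $s$ and $f_u$ are self-adjoint there (on the real axis $u$ is unitary with $\Re u, \Im u$ self-adjoint commuting, so $s = \sign \Re u$ and $f_u = \dens^{-1}\Im u$ are self-adjoint), hence $\sigma = \avg{sf_u^3} \in \R$.

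I do not expect a genuine obstacle here: the statement is essentially a corollary of work already done in Lemma~\ref{lem:stability_cubic_equation}~(i) and Lemma~\ref{lem:q_u_f_u_extension}. The only minor point requiring care is the bookkeeping of thresholds — one must check that the $\dens_* \sim 1$ appearing in Proposition~\ref{pro:cubic_for_dyson_equation} can be chosen at least as small as the $\dens_*$ in Lemma~\ref{lem:stability_cubic_equation} and Lemma~\ref{lem:q_u_f_u_extension} so that all the H\"older extensions are simultaneously valid on $\Dmin_{\dens_*,\theta}$; since all these thresholds are $\sim 1$, taking their minimum is harmless and consistent with the other parts (a)--(d) of the proposition, whose proofs impose their own such constraints. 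Thus the proof of (e) is a one-paragraph deduction once the auxiliary lemmas are in place.
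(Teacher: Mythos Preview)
Your proof of (e) is correct and matches the paper's approach: the paper simply writes ``Lemma~\ref{lem:stability_cubic_equation}~(i) directly implies the H\"older-continuity in (e),'' and your argument is just an explicit unpacking of that citation via Lemma~\ref{lem:q_u_f_u_extension}. The threshold bookkeeping and the reality of $\sigma$ are handled exactly as you describe.
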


The previous proposition is the analogue of Lemma 9.1 in \cite{AjankiQVE}.  It should also be compared to \cite[Proposition~4.12]{AltEdge}, where the shape analysis was performed only in a neighbourhood of an edge and thus a lower order accuracy was sufficient.
The cubic equation for $\Theta$, \eqref{eq:cubic_sing_dens}, will be obtained from an $\alg$-valued quadratic equation for $\diff \defeq m(\tau_0 +\omega) - m(\tau_0)$
and the results of Section \ref{sec:stability_operator}. In fact, we have  
\begin{equation} \label{eq:stability_dyson}
  (\Id - C_mS)[\diff] =  \omega m^2 + \frac{\omega}{2} \Big( m \diff + \diff m\Big)  + \frac{1}{2}\Big( m S[\diff]\diff + \diff S[\diff]m\Big), 
\end{equation}
where $\tau_0, \tau_0 +\omega \in I_\theta \defeq \{ \tau \in I \colon \dist(\tau, \pt I) \geq \theta\}$ and $m \defeq m(\tau_0)$ (see the proof of Proposition \ref{pro:cubic_for_dyson_equation} 
in Section~\ref{subsec:proof_proposition_cubic_for_dyson_equation} below for a derivation of \eqref{eq:stability_dyson}).
Projecting \eqref{eq:stability_dyson} onto the direction $b$ and its complement, where $b$ is the unstable direction of $B$ defined in Corollary \ref{coro:eigenvector_expansion}, 
yields the cubic equation, \eqref{eq:cubic_sing_dens}, for the contribution $\Theta$ of $\diff$ parallel with $b$. 
In the next subsection, this derivation is presented in a more abstract and transparent setting of a general $\alg$-valued quadratic equation. 
After that, the coefficients of the cubic equation are computed in Lemma \ref{lem:coefficients_general_cubic} in the setup of \eqref{eq:stability_dyson} before we prove Proposition \ref{pro:cubic_for_dyson_equation} 
in Section~\ref{subsec:proof_proposition_cubic_for_dyson_equation}.

\subsection{General cubic equation}  \label{subsec:general_cubic_equation} 

Let $B, T \colon \cal{A} \to \cal{A}$ be linear maps, $A \colon \cal{A} \times \cal{A} \to \cal{A}$ a bilinear map and $K \colon \alg \times \alg \to \alg$ a map. 
For $\diff, \err \in \cal{A}$, we consider the quadratic equation
\begin{equation} \label{eq:quadratic_eq}
B[\diff] - A[\diff,\diff]- T[\err] - K[\err,\diff]\,=\, 0\,.
\end{equation}
We view this as an equation for $\diff$, where $e$ is a (small) error term. 
This quadratic equation is a generalization of the stability equation \eqref{eq:stability_dyson} for the Dyson equation, \eqref{eq:dyson} 
(see \eqref{eq:B_A_general_dyson} and \eqref{eq:T_K_density_of_states_analysis} below for the concrete choices of $B$,$T$, $A$ and $K$ in the setting of \eqref{eq:stability_dyson}). 

Suppose that $B$ has a non-degenerate isolated eigenvalue $\beta$ and a corresponding eigenvector $b$, i.e., $B[b]\,=\, \beta b$ and $D_r(\beta) \cap \spec(B) = \{\beta\}$ for some $r >0$. 
We denote the spectral projection corresponding to $\beta$ and its complementary projection by $P$ and $Q$, respectively, i.e.,
\begin{equation} \label{eq:def_projections_P_Q}
P\,\defeq\, -\frac{1}{2\pi\ii}\oint_{\partial D_r(\beta)} (B - \omega\Id)^{-1} \dd \omega = \frac{\scalar{l}{\genarg}}{\scalar{l}{b}} b \,,\qquad Q\,\defeq\, \Id-P\,.
\end{equation}
Here, $l\in \cal{A}$ is an eigenvector of $B^*$ corresponding to its eigenvalue $\ol{\beta}$, i.e., $B^*[l]\,=\, \ol{\beta}l$.
In the following, we will assume that
\begin{equation} \label{eq:assums_derivation_cubic_equation} 
\hspace{-0.22cm}\norm{B^{-1}Q[x]} \lesssim \norm{x}, \quad \abs{\scalar{l}{b}}^{-1}+\norm{b}+\norm{l} \lesssim 1, \quad \norm{A[x,y]} \lesssim \norm{x} \norm{y}, \quad \norm{T[\err]} \lesssim \norm{\err}, \quad \norm{K[\err,y]} \lesssim \norm{\err}\norm{y} 
\end{equation}
for all $x, y \in \alg$ and the $\err \in \alg$ from \eqref{eq:quadratic_eq}. 
The guiding idea is that the main contribution in the decomposition 
\begin{equation} \label{eq:def_Theta}
\diff\,=\, \Theta \2 b+Q[\diff], \qquad \Theta \defeq \frac{\scalar{l}{\diff}}{\scalar{l}{b}} 
\end{equation}
is given by $\Theta$, i.e., the coefficient of $\diff$ in the direction $b$, under the assumption that $\diff$ is small. 
If $A = K = 0$ then this would be a simple linear stability analysis of the equation $B[\diff]= \, small\,$ around an isolated eigenvalue of $B$. The presence of the quadratic terms in \eqref{eq:quadratic_eq} 
requires to follow second and third order terms carefully. 
In the following lemma, we show that the behaviour of $\Theta$ is governed by a scalar-valued cubic equation (see \eqref{eq:general_cubic_equation} below) and that $Q[\diff]$ is indeed dominated by $\Theta$.
The implicit constants in \eqref{eq:assums_derivation_cubic_equation} are the model parameters in Section \ref{subsec:general_cubic_equation}.
\begin{lemma}[General cubic equation] \label{lem:general_cubic_equation} 
Let $\beta$ be a non-degenerate isolated eigenvalue of $B$. 
Let $\diff \in \alg$ and $\err \in\alg$ satisfy \eqref{eq:quadratic_eq}, $\Theta$ be defined as in \eqref{eq:def_Theta} and the conditions in \eqref{eq:assums_derivation_cubic_equation} hold true. 
Then there is $\eps \sim 1$ such that if $\norm{\diff} \leq \eps$ then $\Theta$ satisfies the cubic equation 
\begin{equation} \label{eq:general_cubic_equation}
\mu_3\2 \Theta^3+\mu_2 \2\Theta^2 + \mu_1 \2\Theta + \mu_0\,=\, \errt, 
\end{equation}
with some $\errt =\ord(\abs{\Theta}^4+\abs{\Theta}\norm{\err}+\norm{\err}^2)$ and with 
coefficients
\begin{equation} \label{eq:Coefficients mu3 to mu0}
\begin{aligned}
\mu_3\,&=\,\scalar{l}{A[b,B^{-1}QA[b,b]]+A[B^{-1}QA[b,b],b]}, \\
\mu_2\,&=\,\scalar{l}{A[b,b]}, \\
\mu_1\,&=\,-\beta \scalar{l}{b}, \\
\mu_0\,&=\,\scalar{l}{T[\err]}.
\end{aligned}
\end{equation}
Moreover, we have
\begin{equation} \label{eq:estimate_tilde_r}
 Q[\diff] = B^{-1} Q T[e] + \ord(\abs{\Theta}^2 + \norm{\err}^2). 
\end{equation}
If we additionally assume that $\Im \diff \in \algnon$, $l = l^*$ and $b=b^*$ as well as 
\begin{equation} \label{eq:assums_general_cubic_Im_part_estimates}
 B[x]^* = B[x^*], \quad A[x,y]^* = A[x^*, y^*], \quad T[\err]^* = T[e], \quad K[\err,y]^* = K[\err, y^*] 
\end{equation}
for all $x, y \in \alg$ then there are $\eps \sim 1$ and $\delta \sim 1$ such that $\norm{\diff} \leq \eps$ and $\norm{e} \leq \delta$ also imply
\begin{subequations} 
\begin{align} 
\norm{\Im Q[\diff]} & \lesssim (\abs{\Theta} + \norm{\err}) \Im \Theta, \label{eq:Im_Q_lesssim_Im_Theta}\\ 
\abs{\Im \tilde{\err}} & \lesssim ( \abs{\Theta}^3 + \norm{\err} ) \Im \Theta. \label{eq:Im_tilde_err_lesssim_Im_Theta} 
\end{align} 
\end{subequations} 
\end{lemma}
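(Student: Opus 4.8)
\textbf{Proof plan for Lemma \ref{lem:general_cubic_equation}.}

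The plan is to derive the cubic equation by substituting the decomposition \eqref{eq:def_Theta} into \eqref{eq:quadratic_eq}, projecting onto $\mathrm{ran}\,P$ and $\mathrm{ran}\,Q$, and iterating. First I would apply $Q$ to \eqref{eq:quadratic_eq}: since $QB = BQ$ on $\mathrm{ran}\,Q$ and $B^{-1}Q$ is bounded by \eqref{eq:assums_derivation_cubic_equation}, I get $Q[\diff] = B^{-1}Q\big(A[\diff,\diff] + T[\err] + K[\err,\diff]\big)$. Plugging in $\diff = \Theta b + Q[\diff]$ and using the smallness $\norm{\diff}\le\eps$ together with the bilinear bounds shows $Q[\diff] = \Theta^2 B^{-1}QA[b,b] + B^{-1}QT[\err] + \ord(\abs{\Theta}^3 + \abs{\Theta}\norm{\err} + \norm{\err}^2)$, which already gives \eqref{eq:estimate_tilde_r} after absorbing the $\abs{\Theta}^3$ term (note $\abs{\Theta}\le\norm\diff\lesssim\norm{b}^{-1}\abs\Theta$ so these orders are consistent; the $\abs\Theta^3$ piece is $\ord(\abs\Theta^2)$ for $\eps$ small). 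Next I would apply the functional $\scalar{l}{\genarg}$ to \eqref{eq:quadratic_eq}. Using $\scalar{l}{B[\diff]} = \scalar{B^*[l]}{\diff} = \bar\beta\,\overline{\phantom{x}}$... more precisely $\scalar{l}{B[\diff]} = \beta\scalar{l}{\diff} = \beta\Theta\scalar{l}{b}$ since $\scalar{l}{Q[\diff]} = 0$. This yields $\beta\Theta\scalar{l}{b} = \scalar{l}{A[\diff,\diff]} + \scalar{l}{T[\err]} + \scalar{l}{K[\err,\diff]}$. Substituting $\diff = \Theta b + Q[\diff]$ into the bilinear term $A[\diff,\diff]$, expanding, and replacing $Q[\diff]$ by the expression just obtained produces exactly the terms $\mu_3\Theta^3 + \mu_2\Theta^2$ at leading order; the term $\scalar{l}{K[\err,\diff]}$ contributes $\ord(\norm\err(\abs\Theta + \norm\err))$ and is absorbed into $\errt$, while all higher-order remainders ($\Theta^4$ from $A[Q[\diff],Q[\diff]]$-type terms, $\Theta\cdot$ (cross terms), $\norm\err^2$) go into $\errt$. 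Rearranging gives \eqref{eq:general_cubic_equation} with the coefficients \eqref{eq:Coefficients mu3 to mu0} and the stated bound on $\errt$.

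For the imaginary-part estimates \eqref{eq:Im_Q_lesssim_Im_Theta} and \eqref{eq:Im_tilde_err_lesssim_Im_Theta}, the key observation is that under the reality conditions \eqref{eq:assums_general_cubic_Im_part_estimates} together with $l = l^*$, $b = b^*$, the operator $B^{-1}Q$, the functional $\scalar{l}{\genarg}$, and the bilinear map $A$ all "commute with taking adjoints" in the appropriate sense, so that taking $\Im = \frac{1}{2\ii}(\genarg - (\genarg)^*)$ of the identity $Q[\diff] = B^{-1}Q(A[\diff,\diff] + T[\err] + K[\err,\diff])$ gives $\Im Q[\diff] = B^{-1}Q\big(A[\Im\diff,\Re\diff] + A[\Re\diff,\Im\diff] + \tfrac12 K[\err,\cdot]\text{-imaginary part}\big)$ plus the $\Im T[\err] = 0$ contribution. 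Since $\Im\diff = \Im\Theta\, b + \Im Q[\diff]$ and $\Im\Theta = \Im\Theta$ is a scalar, the terms linear in $\Im\diff$ carry an overall factor $\Im\Theta$ once one notes $\norm{\Im Q[\diff]} \le$ (bootstrap) — i.e., one first gets $\norm{\Im Q[\diff]} \lesssim (\abs\Theta + \norm\err)(\Im\Theta + \norm{\Im Q[\diff]})$ and then absorbs the last term for $\eps,\delta$ small to obtain \eqref{eq:Im_Q_lesssim_Im_Theta}. The estimate \eqref{eq:Im_tilde_err_lesssim_Im_Theta} follows the same pattern: $\errt$ is a sum of terms each of which is either (a) a bilinear/trilinear expression in $\diff$ with at least three factors, of which at least one can be arranged to be $\Im\diff$ when one takes the imaginary part (exploiting $A[x,y]^* = A[x^*,y^*]$ to move the $\Im$ inside), yielding a factor $\abs\Theta^2\Im\Theta$-type bound up to lower order, or (b) a term involving $\err$, which after taking the imaginary part and using $T[\err]^* = T[\err]$, $K[\err,y]^* = K[\err,y^*]$ produces a factor $\norm\err\,\Im\Theta$ plus $\norm{\err}\,\norm{\Im Q[\diff]}$ which is controlled by \eqref{eq:Im_Q_lesssim_Im_Theta}.

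The main obstacle I anticipate is bookkeeping in the imaginary-part analysis: one must carefully track that every monomial in the remainder $\errt$, after applying $\Im$, genuinely acquires a factor of $\Im\Theta$ (or $\norm{\Im Q[\diff]}$, which is then bounded via the bootstrap) rather than just being $\ord(\abs\Theta^3 + \norm\err)$ in absolute value — the subtlety is that a term like $A[\Theta b, Q[\diff]]$ has imaginary part $A[\Im\Theta\, b, \Re Q[\diff]] + A[\Re\Theta\, b, \Im Q[\diff]]$, and one needs $\Re Q[\diff] = \ord(\abs\Theta^2 + \norm\err)$ and $\Im Q[\diff] = \ord((\abs\Theta + \norm\err)\Im\Theta)$ simultaneously, so the two imaginary-part claims must be proven together by a single bootstrap. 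The purely algebraic derivation of the cubic (parts \eqref{eq:general_cubic_equation}--\eqref{eq:estimate_tilde_r}) is routine once the substitution order is fixed; it is the coupled bootstrap for \eqref{eq:Im_Q_lesssim_Im_Theta}--\eqref{eq:Im_tilde_err_lesssim_Im_Theta}, and verifying that the reality hypotheses \eqref{eq:assums_general_cubic_Im_part_estimates} are exactly what is needed to push $\Im$ through every operation, that requires care.
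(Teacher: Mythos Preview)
Your plan is correct and follows essentially the same route as the paper's proof: project \eqref{eq:quadratic_eq} onto $\mathrm{ran}\,Q$ to extract $Q[\diff]$ via a bootstrap, substitute back and project onto $l$ to obtain the cubic, and for \eqref{eq:Im_Q_lesssim_Im_Theta}--\eqref{eq:Im_tilde_err_lesssim_Im_Theta} take imaginary parts of these same identities using the reality hypotheses \eqref{eq:assums_general_cubic_Im_part_estimates} together with a coupled bootstrap on $\norm{\Im Q[\diff]}$. The one garbled parenthetical (``$\abs{\Theta}\le\norm\diff\lesssim\norm{b}^{-1}\abs\Theta$'') is not needed and should simply read $\abs\Theta \lesssim \norm\diff \le \eps$, which is what you actually use.
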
 

\begin{proof} 
Setting $r\defeq Q[\diff]$, the quadratic equation \eqref{eq:quadratic_eq} reads as 
\begin{equation} \label{eq:quadratic with Theta}
 \Theta \beta  b +Br \,=\,  T[\err]+A[\diff,\diff]+ K[\err,\diff].
\end{equation}
By applying $Q$ and afterwards $B^{-1}$ to the previous relation, we conclude that
\begin{equation} \label{eq:R2 expansion}
 r = B^{-1} Q T[\err] + \Theta^2 B^{-1} Q A[b,b] + \err_1, \qquad 
\err_1 \defeq \Theta B^{-1} Q (A[b,r] + A[r,b]) + B^{-1}Q A[r,r] + B^{-1} Q K[\err, \diff]. 
\end{equation}
We have $\norm{e_1}  \lesssim   \norm{r} \abs{\Theta} + \norm{r}^2 + \norm{e} \norm{\diff}$ and $\norm{r} \lesssim \norm{e} + \abs{\Theta}^2 + \norm{e_1}$. 
From the second bound in \eqref{eq:assums_derivation_cubic_equation}, we conclude $\norm{P} + \norm{Q} \lesssim 1$ and, thus, $\norm{r} \lesssim \norm{\diff}$. 
By choosing $\eps \sim 1$ small enough, assuming $\norm{\diff} \leq \eps$ and using $\norm{r} \lesssim \norm{\diff}$, we obtain 
\begin{equation} \label{eq:bound_r_bound_e_1}
 \norm{r} \lesssim \abs{\Theta}^2 + \norm{e}, \qquad \norm{e_1} \lesssim \abs{\Theta}^3 + \norm{e} \abs{\Theta} + \norm{e}^2. 
\end{equation}
This proves \eqref{eq:estimate_tilde_r}. 
Defining $ e_2 \defeq e_1 + B^{-1}Q T[e]$ yields $\diff = \Theta b + \Theta^2 B^{-1}Q A[b,b] + e_2$. By plugging this into \eqref{eq:quadratic with Theta} and 
computing the scalar product with $\scalar{l}{\genarg}$, we obtain 
\begin{subequations}
\begin{align}
 \Theta \beta \scalar{l}{b}  & = \scalar{l}{T[e]} + \Theta^2 \scalar{l}{A[b,b]} + \Theta^3 \scalar{l}{A[b,B^{-1}QA[b,b]] + A[B^{-1}QA[b,b],b]} - \tilde{e}, \\
\tilde{e}  & \defeq -\scalar{l}{K[e,\diff] + \Theta^4A[B^{-1}QA[b,b],B^{-1}QA[b,b]] + A[\diff, e_2] + A[e_2, \diff] - A[e_2, e_2]}. \label{eq:def_tilde_e}
\end{align} 
\end{subequations}
Since $\norm{e_2} \lesssim \abs{\Theta}^3 + \norm{e}$ and $\norm{\diff} \lesssim \abs{\Theta} + \norm{e}$ by \eqref{eq:bound_r_bound_e_1} and \eqref{eq:estimate_tilde_r}, 
we conclude $\tilde{e} = \ord(\abs{\Theta}^4 + \abs{\Theta} \norm{e} + \norm{e}^2)$. 
Therefore, $\Theta$ satisfies \eqref{eq:general_cubic_equation} with the coefficients from \eqref{eq:Coefficients mu3 to mu0}. 

For the rest of the proof, we additionally assume that the relations in \eqref{eq:assums_general_cubic_Im_part_estimates} hold true.
Taking the imaginary part of \eqref{eq:R2 expansion} and arguing similarly as after \eqref{eq:R2 expansion} yield 
\[ \norm{\Im e_1}  \lesssim ( \norm{r} + \abs{\Theta} + \norm{e} ) (\Im \Theta + \norm{\Im r}), \qquad  \norm{\Im r } \lesssim \abs{\Theta} \Im \Theta + \norm{\Im e_1}. \]  
Hence, \eqref{eq:Im_Q_lesssim_Im_Theta} and $\norm{\Im e_1} \lesssim (\abs{\Theta} + \norm{e}) \Im \Theta$ follow for $\norm{\diff} \leq \eps$ and $\norm{e} \leq \delta$ with some sufficiently small $\eps \sim 1$ and $\delta \sim 1$. 
From this and taking the imaginary part in \eqref{eq:def_tilde_e}, we conclude \eqref{eq:Im_tilde_err_lesssim_Im_Theta} as $\norm{\Im \diff} \lesssim \Im \Theta$ by \eqref{eq:Im_Q_lesssim_Im_Theta} 
and $\Im e_2 = \Im e_1$. This completes the proof of Lemma \ref{lem:general_cubic_equation}.
\end{proof}

\subsection{Cubic equation associated to Dyson stability equation}

Owing to \eqref{eq:Coefficients mu3 to mu0}, the coefficients $\mu_3$, $\mu_2$ and $\mu_1$ are completely determined by the bilinear map $A$ and the operator $B$. 
For analyzing the Dyson equation, \eqref{eq:dyson}, owing to \eqref{eq:stability_dyson}, the natural choices for $A$ and $B$ are 
\begin{equation} \label{eq:B_A_general_dyson}
 B \defeq \Id - C_mS, \qquad A[x,y] \defeq \frac{1}{2}( mS[x] y  + y S[x] m) 
\end{equation}
with $x, y \in \alg$. 
In particular, $Q$ in \eqref{eq:def_projections_P_Q} has to be understood with respect to $B = \Id - C_m S$. 
In the next lemma, we compute $\mu_3$, $\mu_2$ and $\mu_1$ with these choices.  
This computation involves the inverse of $\Id - C_sF$. 

In order to directly ensure its invertibility, we will assume $\imz >0$. This assumption will be removed in the proof of Proposition \ref{pro:cubic_for_dyson_equation} 
in Section \ref{subsec:proof_proposition_cubic_for_dyson_equation} below. 

\begin{lemma}[Coefficients of the cubic for Dyson equation] \label{lem:coefficients_general_cubic} Let $A$ and $B$ be defined as in \eqref{eq:B_A_general_dyson}. 
If Assumptions~\ref{assums:general} hold true on an interval $I \subset \R$ for some $\eta_* \in (0,1]$ 
then there is a threshold $\dens_* \sim 1$ such that, for $z \in \HbI$ satisfying $\dens(z) + \dens(z)^{-1} \imz \leq \dens_*$,
the coefficients of the cubic \eqref{eq:general_cubic_equation} have the expansions
\begin{subequations} \label{eq:coefficients_dyson_general}
\begin{align}
\mu_3 & =  \psi + \ord( \dens + \dens^{-1} \imz), \label{eq:mu_3_dyson_general}\\ 
\mu_2 & =  \sigma + \ii \dens \bigg( 3 \psi + \frac{\sigma^2}{\avg{f_u^2}}\bigg) + \ord(\dens^2 + \dens^{-1} \imz), \label{eq:mu_2_dyson_general} \\ 
\mu_1 & =  -\pi \dens^{-1} \imz + 2\ii\dens\sigma - 2 \dens^2 \bigg( \psi + \frac{\sigma^2}{\avg{f_u^2}}\bigg) + \ord( \dens^3 + \imz + \dens^{-2}\imz[2]). 
\label{eq:mu_1_dyson_general}
\end{align}
\end{subequations}
Moreover, we also have
\begin{equation} \label{eq:scalar_l_m_Sb_b} 
\scalar{l}{mS[b]b} =\sigma + \ii \dens \bigg( 3 \psi + \frac{\sigma^2}{\avg{f_u^2}}\bigg) + \ord(\dens^2 + \dens^{-1} \imz). 
\end{equation}
\end{lemma}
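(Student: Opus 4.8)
The statement to prove is Lemma~\ref{lem:coefficients_general_cubic}, namely the expansions \eqref{eq:coefficients_dyson_general} for the coefficients $\mu_3,\mu_2,\mu_1$ of the cubic equation and the auxiliary identity \eqref{eq:scalar_l_m_Sb_b}. The strategy is to start from the abstract formulas \eqref{eq:Coefficients mu3 to mu0} with the concrete choices $B = \Id - C_m S$ and $A[x,y] = \tfrac12(mS[x]y + yS[x]m)$ from \eqref{eq:B_A_general_dyson}, and then to substitute the expansions of $b$, $l$, $\beta$, $\scalar{l}{b}$ from Corollary~\ref{coro:eigenvector_expansion} together with the approximations $m = q^*sq + \ord(\dens)$, $u = s + \ii\dens f_u + \ord(\dens^2)$, $C_s[f_u] = f_u$, and the relations \eqref{eq:f_u_qq_star}, $\avg{f_uqq^*} = \pi$. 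The threshold $\dens_* \sim 1$ is taken small enough so that Lemma~\ref{lem:prop_F_small_dens} and Corollary~\ref{coro:eigenvector_expansion} apply, and we restrict to $\imz > 0$ so that $\Id - C_sF$ is invertible (the extension to $\imz = 0$ is deferred to the proof of Proposition~\ref{pro:cubic_for_dyson_equation}).

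The easiest coefficient is $\mu_1 = -\beta\scalar{l}{b}$, which is literally \eqref{eq:expansion_beta_scalar_l_b} up to renaming, so \eqref{eq:mu_1_dyson_general} follows immediately. For $\mu_2 = \scalar{l}{A[b,b]} = \scalar{l}{mS[b]b}$ (using $b = b^* + \ord(\dens)$ and $l = l^* + \ord(\dens)$ so that the symmetrized bilinear form collapses to a single term up to the claimed error), I would plug in $b = b_0 + 2\ii\dens C_{q^*,q}(\Id - C_sF)^{-1}Q_{s,F}[sf_u^2] + \ord(\dens^2 + \imz)$ from \eqref{eq:expansion_b} and $l = C_{q,q^*}^{-1}[f_u] + \ord(\dens)$ from \eqref{eq:b_0_l_0_approx}, together with $m = q^*uq$, $S = C_{q^*,q}^{-1}FC_{q,q^*}^{-1}$ (inverting the definition $F = C_{q,q^*}SC_{q^*,q}$). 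The leading term is $\avg{f_u \cdot u \cdot F[f_u]\cdot f_u \cdot u^{-1}}$-type expression; expanding $u = s + \ii\dens f_u + \ord(\dens^2)$ and $F[f_u] = f_u + \ord(\dens^{-1}\imz)$ (from \eqref{eq:F_f_u}) produces a leading scalar $\avg{sf_u^3} = \sigma$, an $\ii\dens$-order correction consisting of the $\ii\dens f_u$ term of $u$ interacting with the rest (giving the $3\psi$ and $\sigma^2/\avg{f_u^2}$ pieces after inserting $\Id = P_{s,F} + Q_{s,F}$ and using \eqref{eq:expansion_P_sF}), and the $2\ii\dens$-correction from the $b$-expansion itself. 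This is the genuine computation and it is the same algebra that produced \eqref{eq:expansion_beta_scalar_l_b}; once it is done, \eqref{eq:scalar_l_m_Sb_b} and \eqref{eq:mu_2_dyson_general} are read off simultaneously.

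For $\mu_3 = \scalar{l}{A[b, B^{-1}QA[b,b]] + A[B^{-1}QA[b,b], b]}$, the key observation is that only the leading order of each factor matters since the claimed error is $\ord(\dens + \dens^{-1}\imz)$. So I would replace $b \to b_0 = C_{q^*,q}[f_u] + \ord(\dens^{-1}\imz)$, $l \to C_{q,q^*}^{-1}[f_u] + \ord(\dens^{-1}\imz)$, $m \to q^*sq + \ord(\dens)$, and $B^{-1}Q \to B_0^{-1}Q_0 = C_{q^*,q}(\Id - C_sF)^{-1}Q_{s,F}C_{q^*,q}^{-1}$ from \eqref{eq:B_0_inverse_Q_0} (valid because $B = B_0 + \ord(\dens)$ and $Q = Q_0 + \ord(\dens)$, and $B_0^{-1}Q_0$ is bounded by \eqref{eq:B_inverse_Q_norm}). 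After conjugating everything back through $C_{q^*,q}$, the quantity $A[b_0, b_0]$ becomes (up to conjugation) $sf_u^2$ times a scalar, the inner $B^{-1}Q$ acts as $(\Id - C_sF)^{-1}Q_{s,F}$ on $sf_u^2$, and the outer pairing against $l_0$ and the second $A$ produces exactly $\scalar{sf_u^2}{(\Id + F)(\Id - C_sF)^{-1}Q_{s,F}[sf_u^2]} = \psi$ by the definition \eqref{eq:def_psi_sigma}; the factor $\Id + F$ arises because the two symmetric terms $A[b_0, B^{-1}QA[b_0,b_0]]$ and $A[B^{-1}QA[b_0,b_0], b_0]$ contribute one copy each of an $F$-weighted and an unweighted pairing, and one uses $C_s[f_u]=f_u$, $f_u = f_u^*$, $F = F^*$ to match them.

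\textbf{Main obstacle.} The hard part is bookkeeping the $\ord(\dens)$ corrections in the $\mu_2$ computation: one must track simultaneously the $2\ii\dens$-correction from $b$ in \eqref{eq:expansion_b}, the $\ii\dens f_u$-correction hidden in $u = \Re u + \ii\Im u$ with $\Re u = s + \ord(\dens^2)$ and $\Im u = \dens f_u$, and the $\dens^2 f_u^3$-type second-order piece of $(C_s - C_u)[f_u]$ that already appeared in \eqref{eq:E_b_0_Estar_l_0}, and show that all the $\imz$-order errors are genuinely $\ord(\dens^2 + \dens^{-1}\imz)$ and not larger. Careful use of \eqref{eq:expansion_P_sF} (to replace $\scalar{sf_u^2}{P_{s,F}[sf_u^2]}$ by $\sigma^2/\avg{f_u^2}$) and \eqref{eq:f_u_approx_f} is essential to get the precise numerical coefficient $3\psi$ rather than, say, $2\psi$. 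The rest is routine substitution, and the structure mirrors the already-completed proof of Corollary~\ref{coro:eigenvector_expansion} closely enough that I would cross-reference \eqref{eq:beta_scalar_l_b_lemma}--\eqref{eq:expansion_beta_scalar_l_b} wherever the same sub-expressions recur.
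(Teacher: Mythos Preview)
Your overall strategy matches the paper's: start from the abstract formulas \eqref{eq:Coefficients mu3 to mu0}, substitute the expansions from Corollary~\ref{coro:eigenvector_expansion}, and conjugate everything through $C_{q^*,q}$ so that the algebra reduces to trace expressions in $s$, $f_u$, and $F$. Your treatment of $\mu_3$ is correct and is exactly how the paper proceeds (replace $b,l,B^{-1}Q$ by their zeroth-order approximations, use \eqref{eq:B_0_inverse_Q_0}, and read off $\psi$); likewise $\mu_1 = -\beta\scalar{l}{b}$ is literally \eqref{eq:expansion_beta_scalar_l_b}.

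There is, however, a concrete gap in your plan for $\mu_2$. You propose to use the full expansion $b = b_0 + b_1 + \ord(\dens^2 + \imz)$ from \eqref{eq:expansion_b} but only $l = C_{q,q^*}^{-1}[f_u] + \ord(\dens)$, i.e.\ only $l_0$. Since $A[b_0,b_0]$ is of order one, the contribution $\scalar{l_1}{A[b_0,b_0]}$ is of order $\dens$ and cannot be absorbed into the $\ord(\dens^2 + \dens^{-1}\imz)$ error. The paper explicitly keeps all four first-order corrections,
\[
\scalar{l}{A[b,b]} = \scalar{l_0}{A[b_0,b_0]} + \scalar{l_1}{A[b_0,b_0]} + \scalar{l_0}{A[b_1,b_0]} + \scalar{l_0}{A[b_0,b_1]} + \ord(\dens^2+\imz),
\]
and the $l_1$ term is precisely what turns the $(\Id+F)$ factor coming from the two $b_1$ terms into the $(\Id+2F)$ that, after the algebraic identity in the paper's last displayed step, produces $3\psi$ rather than a smaller multiple. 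Your ``Main obstacle'' paragraph correctly senses that getting $3\psi$ is delicate, but the missing ingredient is not finer bookkeeping of the pieces you already listed---it is the $l_1$ correction from \eqref{eq:expansion_l} that you omitted. (Your side remark that the symmetrized $A[b,b]$ collapses to $\scalar{l}{mS[b]b}$ happens to be true at the required precision, but for a different reason than you give: it follows from cyclicity of the trace together with the commutativity of $s$ and $f_u$, not from approximate self-adjointness of $b$ and $l$.)
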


\begin{proof} 
In this proof, we use the convention that concatenation of maps on $\alg$ and evaluation of these maps in elements of $\alg$ are prioritized 
before the multiplication in $\alg$, i.e., 
\[ AB[b]c \defeq (A[B[b]])c \] 
if $A$ and $B$ are maps on $\alg$ and $b, c \in \alg$.  
We will obtain all expansions in \eqref{eq:coefficients_dyson_general} from \eqref{eq:Coefficients mu3 to mu0} by using the special choices for $A$ and $B$ from \eqref{eq:B_A_general_dyson}. 
Before starting with the proof of \eqref{eq:mu_3_dyson_general}, we establish a few identities. 
Recalling $m = q^* uq$ from \eqref{eq:m_representation_q_star_u_q} and \eqref{eq:def_F}, we first notice the following alternative expression for $A$ 
\begin{equation} \label{eq:A_x_y_alternative}
 A[x,y] = \frac{1}{2} C_{q^*,q}\Big[ u F C_{q^*,q}^{-1}[x] C_{q^*,q}^{-1} [y]+ C_{q^*,q}^{-1}[y] FC_{q^*,q}^{-1}[x] u \Big]  
\end{equation}
with $x, y \in \alg$. 
Owing to \eqref{eq:q_sim_1_im_u_sim_dens_consequence_assum}, the operators $C_{q^*,q}$ and $C_{q^*,q}^{-1}$ are bounded. 
We choose $\dens_* \sim 1$ small enough so that Lemma \ref{lem:prop_F_small_dens} is applicable. 
By using $u = s + \ii \Im u + \ord(\dens^2)$ due to \eqref{eq:Re_u_s} as well as \eqref{eq:def_f_u}, \eqref{eq:F_f_u} and \eqref{eq:b_0_l_0_approx} 
in \eqref{eq:A_x_y_alternative}, we obtain 
\begin{equation} \label{eq:A_b_0_b_0}
 A[b_0,b_0] = C_{q^*,q}[s f_u^2 + \ii \dens f_u^3] + \ord( \dens^2 + \dens^{-1} \imz). 
\end{equation}
Combining \eqref{eq:A_b_0_b_0} and \eqref{eq:B_0_inverse_Q_0} implies
\begin{equation*} 
 B_0^{-1}Q_0 A[b_0,b_0] = C_{q^*,q}(\Id - C_sF)^{-1}Q_{s,F}[sf_u^2] + \ord(\dens + \dens^{-1} \imz). 
\end{equation*}
We now prove the expansion \eqref{eq:mu_3_dyson_general} for $\mu_3$ by starting from \eqref{eq:Coefficients mu3 to mu0} and using $l=l_0 + \ord(\dens)$, $b=b_0 + \ord(\dens)$ by \eqref{eq:expansion of beta b l}, 
$B^{-1}Q = B_0^{-1}Q_0 + \ord(\dens)$ due to $B=B_0 + \ord(\dens)$ and Lemma \ref{lem:prop_F_small_dens} 
and the previous identities. This yields 
\begin{align*}  
\mu_3 = \, & \scalar{l_0}{A[B_0^{-1}Q_0A[b_0,b_0],b_0]+ A[b_0,B_0^{-1}Q_0 A[b_0,b_0]]} + \ord(\dens)  \\
= &\,  \scalar{f_u}{uF(\Id - C_sF)^{-1} Q_{s,F}[sf_u^2]f_u + uF[f_u](\Id-C_sF)^{-1}Q_{s,F}[sf_u^2]}  + \ord(\dens + \dens^{-1} \imz) \\ 
= &\,   \scalar{sf_u^2}{(\Id + F)(\Id - C_s F)^{-1}Q_{s,F}[sf_u^2]} + \ord(\dens + \dens^{-1} \imz). 
\end{align*} 
Here, we also used $F[f_u] = f_u + \ord(\dens^{-1}\imz)$ by \eqref{eq:F_f_u} and $u= s + \ord(\dens)$ by \eqref{eq:Re_u_s}. 
This shows \eqref{eq:mu_3_dyson_general}. 

In order to compute $\mu_2$, we define 
\[b_1 \defeq 2\ii\dens C_{q^*,q}(\Id- C_sF)^{-1}Q_{s,F}[sf_u^2], \qquad l_1 \defeq - 2\ii\dens C_{q,q^*}^{-1}(\Id-FC_s)^{-1} Q_{s,F}^*F[sf_u^2]. \]
Then we use \eqref{eq:expansion_b} as well as \eqref{eq:expansion_l} and obtain 
\begin{align*}
 \scalar{l}{A[b,b]} = \, & \scalar{l_0}{A[b_0,b_0]} + \scalar{l_1}{A[b_0,b_0]} + \scalar{l_0}{A[b_1,b_0]} + \scalar{l_0}{A[b_0,b_1]} + \ord(\dens^2+\imz) \\ 
= \, &  \avg{sf_u^3} + \ii \dens \avg{f_u^4} + 2 \ii \dens \scalar{sf_u^2}{(\Id + 2F)(\Id - C_s F)^{-1} Q_{s,F}[sf_u^2] } 
+ \ord(\dens^2+\dens^{-1} \imz )  \\ 
= \, & \sigma + \ii  \dens \bigg( 3 \psi + \frac{\sigma^2}{\avg{f_u^2}}\bigg) + \ord(\dens^2+ \dens^{-1} \imz ).
\end{align*}
Here, in the second step, we used \eqref{eq:b_0_l_0_approx}, \eqref{eq:A_b_0_b_0} and the definition of $l_1$ to compute the first and second term, 
\eqref{eq:b_0_l_0_approx}, the definition of $b_1$ and \eqref{eq:A_x_y_alternative} to compute the third and fourth term.
In the last step, we then employed 
\[\begin{aligned}
&\avg{f_u^4} + \scalar{sf_u^2}{2 ( \Id + 2 F)(\Id - C_s F)^{-1}Q_{s,F}[sf_u^2]}  \\ 
&\hspace{4cm} = \,  \scalar{sf_u^2}{ ( \Id + 2 ( \Id+ 2 F)(\Id - C_s F)^{-1} )Q_{s,F}[sf_u^2]} + \scalar{sf_u^2}{P_{s,F}[sf_u^2]}  \\ 
 &\hspace{4cm} = \,  3 \scalar{sf_u^2}{(\Id + F)(\Id - C_s F)^{-1} Q_{s,F}[sf_u^2]}  + \frac{\sigma^2}{\avg{f_u^2}} + \ord(\dens^{-1} \imz). 
\end{aligned}  \] 
Here, we applied \eqref{eq:expansion_P_sF}, $C_s = C_s^*$ and $C_s[sf_u^2] = sf_u^2$.
Since $\mu_2 = \scalar{l}{A[b,b]}$ by \eqref{eq:Coefficients mu3 to mu0}, this completes the proof of \eqref{eq:mu_2_dyson_general}. 
A similar computation as the one for $\mu_2$ yields \eqref{eq:scalar_l_m_Sb_b}. 

Since $\mu_1=-\beta\scalar{l}{b}$ by \eqref{eq:Coefficients mu3 to mu0}, the expansion in \eqref{eq:expansion_beta_scalar_l_b} immediately yields \eqref{eq:mu_1_dyson_general}. 
This completes the proof of the lemma. 
\end{proof}

\subsection{The cubic equation for the shape analysis} 
\label{subsec:proof_proposition_cubic_for_dyson_equation} 

In this subsection, we will prove Proposition \ref{pro:cubic_for_dyson_equation} by using Lemma \ref{lem:general_cubic_equation} and Lemma \ref{lem:coefficients_general_cubic}. 
Therefore, in addition to the choices of $A$ and $B$ in~\eqref{eq:B_A_general_dyson}, we choose $\diff = m(\tau_0 + \omega) - m(\tau_0)$, $\tau_0, \tau_0 + \omega \in I$, $\err = \omega\id$ and
\begin{equation} \label{eq:T_K_density_of_states_analysis} 
T[x] = x m^2, \quad K[x, y] = \frac{1}{2} (x m y + y m x) 
\end{equation}
for $x, y \in \alg$ with $m=m(\tau_0)$ in \eqref{eq:quadratic_eq}.

\begin{proof}[Proof of Proposition \ref{pro:cubic_for_dyson_equation}]
We choose $\dens_*\sim 1$ such that Lemma \ref{lem:prop_F_small_dens} and Corollary \ref{coro:eigenvector_expansion} are applicable. 
We fix $\tau_0 \in \Dmin_{\dens_*,\theta}$ and set $m = m(\tau_0)$. 
The statements about $l$ and $b$ in (a) of Proposition \ref{pro:cubic_for_dyson_equation} follow from Corollary~\ref{coro:eigenvector_expansion}. In particular, 
$\abs{\scalar{l}{b}} \sim 1$.  
Thus, the conditions in \eqref{eq:assums_derivation_cubic_equation} are a direct consequence of 
Assumptions~\ref{assums:general}, \eqref{eq:q_sim_1_im_u_sim_dens_consequence_assum}, Lemma \ref{lem:prop_F_small_dens} and Corollary \ref{coro:eigenvector_expansion}. 
Furthermore, if $\dens = 0$ then we have $m=m^*$ and, thus, \eqref{eq:assums_general_cubic_Im_part_estimates} follows. 
For $\omega \in [-\delta_*, \delta_*]$, $\delta_* \defeq \theta/2$, we set $\Delta = m(\tau_0 +\omega) - m$. Since $\Theta(\omega)b = P[\Delta]$, $r(\omega) = Q[\Delta]$ and $P + Q = \Id$, 
we immediately obtain \eqref{eq:def_admiss_decom_diff_m}. This proves~(a). 

Next, we derive \eqref{eq:stability_dyson} for $\Delta\defeq m(z_0+\omega) - m(z_0)$ and $m\defeq m(z_0)$ with $z_0 \defeq \tau_0 + \ii\eta$, $\tau_0\in\Dmin_{\dens_*,\theta}$, 
$\omega \in [-\delta_*,\delta_*]$ and $\eta \in (0,\eta_*]$. 
We subtract \eqref{eq:dyson} evaluated at $z = z_0$ from \eqref{eq:dyson} evaluated at $z = z_0 + \omega$ and obtain \eqref{eq:stability_dyson} with $\Delta$ and $m$ defined at $z_0 = \tau_0 + \ii \eta$.
Directly taking the limit $\eta \downarrow 0$ yields \eqref{eq:stability_dyson} with the original choices of $\Delta$ and $m$ at $z_0 = \tau_0$ by the Hölder-continuity of $m$ 
on $\overline{\Hb}_{I',\eta_*}$, $I' \defeq \{ \tau \in I \colon \dist(\tau, \pt I) \geq \theta/2\}$, due to Proposition \ref{pro:analyticity_of_m}. 

Lemma \ref{lem:general_cubic_equation} is applicable for $\abs{\omega} \leq \delta_*$ with some sufficiently small $\delta_* \sim 1$ since this guarantees $\norm{\Delta} \leq \eps$ owing to the Hölder-continuity of $m$. 
Hence, Lemma \ref{lem:general_cubic_equation} yields a cubic equation for $\Theta$ as defined in \eqref{eq:def_Theta_r_shape_analysis} with $l=l(z_0)$, $b=b(z_0)$ and $z_0 = \tau_0 + \ii\eta$.  
The coefficients of this cubic equation are given in Lemma \ref{lem:general_cubic_equation}. 
Owing to the uniform $1/3$-Hölder continuity of $z \mapsto m(z)$ on $\overline{\Hb}_{I',\eta_*}$,
we conclude from the definition of $\Theta$ and $r\defeq Q[\diff]$ in \eqref{eq:def_Theta_r_shape_analysis}, 
the boundedness of $Q$ and $B^{-1}Q$ as well as \eqref{eq:estimate_tilde_r} that
$\abs{\Theta(\omega)} \lesssim \abs{\omega}^{1/3}$, i.e., the second bound in \eqref{eq:def_admiss_bound_Theta}, and \eqref{eq:def_admiss_bound_r} uniformly for $\eta \in [0,\eta_*]$.  

We now compute the coefficients of the cubic in \eqref{eq:cubic_sing_dens} for $\tau_0 \in \Dmin_{\dens_*,\theta}$. Set $z_0 \defeq \tau_0 + \ii\eta$. 
Note that for $\eta = \Im z_0 >0$ these coefficients were already given in \eqref{eq:coefficients_dyson_general}, 
so the only task is to check their limit behaviour as $\eta \downarrow 0$. 
Owing to \eqref{eq:dens_reciprocal_imz_equal_zero}, the expansions in \eqref{eq:mu3_imz_0}, \eqref{eq:mu2_imz_0} and \eqref{eq:mu1_imz_0} 
follow from \eqref{eq:mu_3_dyson_general}, \eqref{eq:mu_2_dyson_general} and \eqref{eq:mu_1_dyson_general}, respectively, 
using the continuity of $\sigma$, $\psi$ and $f_u$ on $\overline{\Hb}_\rm{small}$ by Lemma \ref{lem:stability_cubic_equation} and Lemma \ref{lem:q_u_f_u_extension}, respectively. 
We now show \eqref{eq:Xi_omega}. 
With the definitions of $\errt$ and $\mu_0$ from Lemma \ref{lem:general_cubic_equation} (see \eqref{eq:def_tilde_e} and \eqref{eq:Coefficients mu3 to mu0}, respectively), we set $\Xi(\omega) \defeq \omega^{-1}(\mu_0- \errt)$ for arbitrary $\abs{\omega} \leq \delta_*$. 
Since $l = C_{q,q^*}^{-1}[f_u] + \ord(\dens + \dens^{-1}\eta)$ due to \eqref{eq:b_0_l_0_approx} and \eqref{eq:expansion_l}, as well as 
$m^2 = (\Re m)^2 + \ord(\dens) = C_{q^*,q}C_s[qq^*] +\ord(\dens)$ due to $\Im m \sim \dens\id$ and \eqref{eq:Re_u_s},  we have 
\begin{equation} \label{eq:mu0_omega_inverse}
 \omega^{-1}\mu_0 = \avg{l^*m^2} = \avg{f_u qq^*} + \ord( \dens + \dens^{-1}\eta) = \pi + \ord(\dens + \dens^{-1} \eta).  
\end{equation}
Here, we also used $C_s[f_u] = f_u$ in the second step and \eqref{eq:f_u_qq_star} in the last step. 
We set $\nu(\omega) \defeq - (\omega\pi)^{-1} \errt$. We recall $e = \omega\id$. 
Since $\errt = \ord(\abs{\Theta(\omega)}^4 + \abs{\Theta(\omega)}\abs{\omega} + \abs{\omega}^2)$ and $\abs{\Theta(\omega)} \lesssim \abs{\omega}^{1/3}$, 
we obtain \eqref{eq:bound_nu}. 
This yields \eqref{eq:Xi_omega} by using \eqref{eq:dens_reciprocal_imz_equal_zero} in \eqref{eq:mu0_omega_inverse}.
Since \eqref{eq:psi_plus_sigma_sim_1} implies \eqref{eq:def_admiss_scaling_parameters}, this completes the proof of (b) for $\tau_0 \in \Dmin_{\dens_*,\theta}$ and we assume $\eta =0$ in the following.

If $\dens = \dens(\tau_0)>0$ then \eqref{eq:m_Lipschitz_bound_positive_density} 
yields the missing first bound in \eqref{eq:def_admiss_bound_Theta} completing the proof of part (c). 
Moreover, in this case, the definitions of $\Theta$ and $r$ imply their differentiability at $\omega =0$ due to Proposition \ref{pro:analyticity_of_m}. 
This shows~(d1). 

We now verify (d2). Since $\dens =0$, we have $\Im m(\tau_0) = 0$ and thus $\Im \Theta(\omega) \geq 0$ by the positive semidefiniteness of $\Im m(\tau_0 +\omega)$. 
Since $\mu_0$ is real as $l$ and $T[e]$ are self-adjoint, we obtain the second bound in \eqref{eq:def_admiss_dens_zero_Im_Theta_bounds} directly from \eqref{eq:Im_tilde_err_lesssim_Im_Theta}
 and $\abs{\Theta(\omega)}\lesssim \abs{\omega}^{1/3}$.  
The third bound in \eqref{eq:def_admiss_dens_zero_Im_Theta_bounds} follows from \eqref{eq:Im_Q_lesssim_Im_Theta} and $e = \omega \id$. 
Since $\dens=0$ and hence $b = C_{q^*,q}[f_u]$ by \eqref{eq:expansion_b} and $l = C_{q,q^*}^{-1}[f_u]$ by \eqref{eq:expansion_l} are positive definite elements of $\alg$, 
 $\Re \Theta(\omega)  + \scalar{l}{m(\tau_0)}/\scalar{l}{b}$ is the real part of the Stieltjes transform of a positive measure $\mu$ evaluated on the real axis. 
The real part of a Stieltjes transform is non-decreasing on the connected components of the complement in $\R$ of the support of its defining measure. 
Therefore, as the support of $\mu$ is contained in $\R \setminus \text{int}( \{ \omega\in [-\delta_*,\delta_* ] \colon \Im \Theta(\omega) = 0 \})$, where $\text{int}$ denotes the interior,  due to $\Im m(\tau_0)=0$, we conclude that $\Re \Theta(\omega)$ 
is non-decreasing on the connected components of $\{\omega \in [-\delta_*,\delta_*] \colon \Im \Theta(\omega) = 0\}$. 

Lemma \ref{lem:stability_cubic_equation} (i) directly implies the Hölder-continuity in (e), which 
 completes the proof of Proposition~\ref{pro:cubic_for_dyson_equation}.
\end{proof}

\section{Cubic analysis} \label{sec:shape_analysis} 

The main result of this section, Theorem \ref{thm:behaviour_v_close_sing} below, implies Theorem \ref{thm:singularities_flat} and gives even effective error terms. 
Theorem \ref{thm:behaviour_v_close_sing} describes the behaviour of $\Im m$ close to local minima of $\dens$ inside of $\supp \dens$.
This behaviour is governed by the universal shape functions $\Psie \colon [0,\infty) \to \R$ and $\Psim \colon \R \to \R$ defined by  
\begin{subequations} \label{eq:def_Psim_Psie} 
\begin{align}
\Psie(\lambda) & \defeq \frac{\sqrt{(1+\lambda)\lambda}}{\big( 1+ 2 \lambda + 2 \sqrt{(1+\lambda)\lambda}\big)^{2/3} + \big( 1+ 2 \lambda - 2 \sqrt{(1+\lambda)\lambda}\big)^{2/3} +1 }, \label{eq:def_Psie} \\
\Psim(\lambda) & \defeq \frac{\sqrt{1+\lambda^2}}{\big(\sqrt{1+\lambda^2} + \lambda\big)^{2/3} + \big(\sqrt{1+\lambda^2} -\lambda\big)^{2/3} -1}-1. \label{eq:def_Psim}
\end{align}
\end{subequations}

For the definition of the comparison relation $\lesssim$, $\gtrsim$ and $\sim$ in the following Theorem \ref{thm:behaviour_v_close_sing}, we refer to Convention~\ref{conv:comparsion_1} and remark that the 
model parameters in Theorem \ref{thm:behaviour_v_close_sing} are given by $c_1$, $c_2$ and $c_3$ in \eqref{eq:estimates_data_pair}, $k_3$ in \eqref{eq:m_sup_bound} and $\theta$ in the definition of $I_\theta$ 
in \eqref{eq:def_I_theta} below.

\begin{theorem}[Behaviour of $\Im m$ close to local minima of $\dens$] \label{thm:behaviour_v_close_sing} 
Let $(a,S)$ be a data pair such that \eqref{eq:estimates_data_pair} is satisfied. Let $m$ be the solution to the associated Dyson equation \eqref{eq:dyson} and assume that \eqref{eq:m_sup_bound} holds true on $\HbI$ for some interval 
$I \subset \R$ and some $\eta_*\in (0,1]$. 
We write $v \defeq \pi^{-1} \Im m$ and, for some $\theta \in (0,1]$, we  set 
\begin{equation} \label{eq:def_I_theta}
I_\theta \defeq \{ \tau \in I \colon \dist(\tau, \pt I ) \geq \theta\}.
\end{equation} 

Then there are thresholds $\dens_* \sim 1$ and $\delta_* \sim 1$ such that if $\tau_0\in\supp\dens\cap I_\theta$ is a 
local minimum of $\dens$ and $\dens(\tau_0) \leq \dens_*$ then 
\begin{equation} \label{eq:expansion_local_minima_general}
 v(\tau_0 + \omega) = v(\tau_0) + h \Psi(\omega) + \ord\bigg( \dens(\tau_0) \abs{\omega}^{1/3}\char(\abs{\omega} \lesssim \dens(\tau_0)^3) + \Psi(\omega)^2\bigg)
\end{equation}
for $\omega \in [-\delta_*,\delta_*] \cap D$ with some $h=h(\tau_0) \in \alg$ satisfying $h \sim 1$. Moreover, the set $D$ and the function $\Psi$ depend only on the type of $\tau_0$ in the following way:
\begin{enumerate}[label=(\alph*)]
\item \emph{Left edge:} If $\tau_0 \in (\pt \supp \dens) \setminus\{ \inf \supp \dens\}$ is the infimum of a connected component of $\supp \dens$ and the lower edge of the corresponding gap is in $I_\theta$, i.e.,
 $\tau_1\defeq \sup( (-\infty, \tau_0)\cap \supp \dens)\in I_\theta$, 
then \eqref{eq:expansion_local_minima_general} holds true with $v(\tau_0)=0$, $D = [0,\infty)$ and 
\[ \Psi(\omega)  = \Delta^{1/3} \Psie\bigg( \frac{\omega}{\Delta} \bigg) \] 
where $\Delta \defeq \tau_0 - \tau_1$. 
If $\tau_0 = \inf \supp \dens$, or more generally $\dens(\tau)=0$ for all $\tau \in [\tau_0 - \eps, \tau_0]$
with some $\eps \sim 1$, then the same conclusion holds true with $\Delta \defeq 1$. 
\item \emph{Right edge:} If $\tau_0 \in\pt \supp \dens$ is the supremum of a connected component 
then a similar statement as in the case of a left edge holds true. 
\item \emph{Cusp:} If $\tau_0 \notin \pt\supp\dens$ and $\dens(\tau_0)=0$ then 
\eqref{eq:expansion_local_minima_general} holds true with $D = \R$ and $\Psi(\omega) = \abs{\omega}^{1/3}$. 
\item \emph{Internal minimum:} 
If $\tau_0 \notin \pt\supp\dens$ and $\dens(\tau_0)>0$ then there is $\wt{\dens} \sim \dens(\tau_0)$ such that 
\eqref{eq:expansion_local_minima_general} holds true with $D = \R$ and 
\[ \Psi(\omega)  = \wt{\dens}\Psim\bigg( \frac{\omega}{\wt{\dens}^3} \bigg). \]
\end{enumerate}
\end{theorem}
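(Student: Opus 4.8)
\textbf{Plan of proof for Theorem \ref{thm:behaviour_v_close_sing}.}
The overarching strategy is to feed the cubic equation from Proposition~\ref{pro:cubic_for_dyson_equation} into a ``normal form'' analysis: after rescaling $(\omega,\Theta(\omega))$ to new coordinates $(\lambda,\Omega(\lambda))$, the cubic \eqref{eq:cubic_sing_dens} becomes one of the four canonical equations $\Omega^2+\Lambda=0$ or $\Omega^3+\Lambda=0$ or $\Omega^3\pm3\Omega+2\Lambda=0$ (with $\Lambda$ a perturbation of $\lambda\mapsto\lambda$), from which Cardano's formula combined with the selection principles \textbf{SP1}--\textbf{SP4'} identifies the correct branch explicitly. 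The shape functions $\Psie$ and $\Psim$ in \eqref{eq:def_Psim_Psie} are precisely the imaginary parts of these Cardano branches. First I would apply Proposition~\ref{pro:cubic_for_dyson_equation} at the point $\tau_0$ to get the cubic for $\Theta$ with coefficients $\mu_3=\psi+\ord(\dens)$, $\mu_2=\sigma+\ii\dens(3\psi+\sigma^2/\langle f_u^2\rangle)+\ord(\dens^2)$, $\mu_1=2\ii\dens\sigma-2\dens^2(\psi+\sigma^2/\langle f_u^2\rangle)+\ord(\dens^3)$, together with $\psi+\sigma^2\sim1$. The sign and relative size of $\sigma$ versus $\dens$ dictate which of the four cases occurs: $\dens=0$ with $\sigma=0$ gives the cusp; $\dens=0$ with $\sigma\ne0$ gives an edge (the sign of $\sigma$ distinguishing left from right, via SP3/SP4'); $\dens>0$ small gives an internal minimum, with the normalization $\wt\dens$ chosen so that the rescaled quadratic coefficient has modulus tuned to land exactly on $\Psim$.

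The main steps, in order, are: (i) record that on $\supp\dens$ with $\dens(\tau_0)\le\dens_*$, Proposition~\ref{pro:cubic_for_dyson_equation}~(d2) forces $\Im\Theta\ge0$, $\Re\Theta$ non-decreasing on the zero set of $\Im\Theta$, which are the selection inputs; (ii) for the edge cases, show $\sigma(\tau_0)\ne0$ by using that a genuine edge has a one-sided support and deriving from the cubic with $\dens=0$ that if $\sigma=0$ then $\dens$ would not vanish on a whole interval --- here I would use the monotonicity of $\Re\Theta$ and the Stieltjes representation to rule out $\sigma=0$; (iii) perform the explicit rescaling: in the edge case set $\Delta\sim|\sigma|/\psi$ (or $\Delta=\tau_1-\tau_0$ up to a constant, matched to the gap size --- the identification $\Delta=\tau_0-\tau_1$ needs the companion analysis at $\tau_1$, a \emph{right} edge, to show both rescalings agree), solve $\psi\Theta^3+\sigma\Theta^2+\pi\omega\approx0$ by Cardano, and read off $\Im\Theta\sim\Delta^{1/3}\Psie(\omega/\Delta)$; in the cusp case $\psi\Theta^3+\pi\omega\approx0$ gives directly $\Im\Theta\sim|\omega|^{1/3}$; in the internal minimum case the full cubic in $\Theta$ after shifting out the linear term produces $\Psim$; (iv) translate back to $\Im m$ via $v(\tau_0+\omega)-v(\tau_0)=\pi^{-1}\Im(\Theta(\omega)b+r(\omega))$, using $b=b^*+\ord(\dens)$, $b+b^*\sim1$ from Proposition~\ref{pro:cubic_for_dyson_equation}~(a) and the bound $\norm{\Im r}\lesssim(|\Theta|+|\omega|)\Im\Theta$ from (d2), which shows $r$ contributes only to the error term, giving $h\sim1$; (v) assemble the error bound $\ord(\dens|\omega|^{1/3}\char(|\omega|\lesssim\dens^3)+\Psi(\omega)^2)$ by tracking the $\ord(\dens)$ corrections in the coefficients through Cardano's formula --- the indicator reflects that for $|\omega|\gtrsim\dens^3$ the cubic term dominates and the analysis becomes insensitive to $\dens$; (vi) finally, combine with the finiteness of $\supp\dens\cap I$ as a union of intervals (which follows once we know each boundary point is an edge and edges are isolated by the $\sigma\ne0$ non-degeneracy plus a compactness argument on $I_\theta$).

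The hard part will be step (iii)--(v) done \emph{uniformly}: extracting from Cardano's formula not just the leading shape but the precise error term $\ord(\Psi(\omega)^2)$ with the delicate indicator $\char(|\omega|\lesssim\dens(\tau_0)^3)$, while the coefficients $\mu_i$ carry their own $\ord(\dens)$, $\ord(\dens^2)$, $\ord(\dens^3)$ errors \emph{and} the error $\nu(\omega)$ in $\Xi$ is itself only $\ord(|\omega|^{1/3})$. One must verify that these nested errors do not destroy the normal form; this is where the stability $\psi+\sigma^2\sim1$ is essential, since it guarantees that either the cubic or the quadratic coefficient is bounded below, so the implicit function theorem can be applied to the map $(\omega,\Theta)\mapsto$ (cubic) after the appropriate rescaling and the perturbation $\Lambda$ of the identity stays a genuine small perturbation on the relevant $\lambda$-range. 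A secondary obstacle is matching the gap-width constant: showing $\Delta$ in case~(a) can be taken to be exactly $\tau_0-\tau_1$ rather than merely comparable requires running the cubic analysis simultaneously at both endpoints of the gap and checking the two one-sided expansions are consistent with a single $\Psie$ --- effectively a bootstrap using that $\dens$ vanishes identically on $(\tau_1,\tau_0)$, so $\Im\Theta\equiv0$ there and $\Re\Theta$ is monotone, pinning down the rescaling parameter.
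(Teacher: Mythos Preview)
Your plan is essentially the paper's own approach: Proposition~\ref{pro:cubic_for_dyson_equation} supplies the cubic, Theorem~\ref{thm:abstract_cubic_equation} and Proposition~\ref{pro:solution_cubic_normal_form} put it in normal form and select the Cardano branch via \spone--\spfour, and the decomposition $\Im m = \Im(\Theta b)+\Im r$ with the bounds in (d2) transfers everything to $v$. Steps (i)--(v) and the two ``hard parts'' you flag (uniform error propagation through Cardano; matching $\Delta$ to the actual gap $\tau_0-\tau_1$) are exactly what the paper does, the latter via Lemma~\ref{lem:small_gap} and Lemma~\ref{lem:large_gap}.

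There is one genuine gap in your treatment of case~(d), the internal nonzero minimum. You write that ``the full cubic in $\Theta$ after shifting out the linear term produces $\Psim$'', but the normal form $\Omega^3+3\Omega+2\Lambda=0$ in Proposition~\ref{pro:solution_cubic_normal_form}~(i) is reachable only if the quadratic coefficient $\mu_2=\sigma+3\ii\psi\dens+\ord(\dens^2)$ is dominated by its imaginary part, i.e.\ only if $|\sigma(\tau_0)|\lesssim\dens(\tau_0)^2$. A~priori the stability relation $\psi+\sigma^2\sim1$ allows $|\sigma|\sim1$, and nothing in your outline forces $\sigma$ small at a nonzero local minimum. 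The paper handles this with a separate monotonicity argument (Lemma~\ref{lem:sigma_nonzero_local_minimum}): differentiating the cubic at $\omega=0$ (which is legitimate since $\dens(\tau_0)>0$ makes $\Theta,r$ differentiable by (d1)) and computing $\partial_\omega\Theta(0)=-\Xi(0)/\mu_1$ explicitly, one shows that if $|\sigma|\ge\Pi_*\dens^2$ then $(\sign\kappa_1\sigma)\,\partial_\tau\dens(\tau_0)\gtrsim\dens(\tau_0)^{-1}>0$, contradicting that $\tau_0$ is a local minimum. Without this step the rescaling to $\Psim$ is not justified.

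A minor point: step~(vi) on the band structure of $\supp\dens\cap I$ is not part of Theorem~\ref{thm:behaviour_v_close_sing}; it belongs to Theorem~\ref{thm:behaviour_dens}~(ii).
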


If the conditions of Theorem~\ref{thm:behaviour_v_close_sing} hold true, i.e., the data pair $(a,S)$ satisfies \eqref{eq:estimates_data_pair} and $m$ satisfies \eqref{eq:m_sup_bound} on $\HbI$, 
then Assumptions~\ref{assums:general} are fulfilled on $\HbI$ (compare Lemma~\ref{lem:q_bounded_Im_u_sim_avg} (ii)). 
In fact, Theorem~\ref{thm:behaviour_v_close_sing} holds true under Assumptions~\ref{assums:general} which will become apparent from the proof.

Theorem \ref{thm:behaviour_v_close_sing} contains the most important results of the shape analysis. When considering $\dens=\avg{v}$ instead of $v$ the 
coefficient in front of $\Psi(\omega)$ in \eqref{eq:expansion_local_minima_general} can be precisely identified as demonstrated in part (i) of Theorem~\ref{thm:behaviour_dens} below.
Moreover, Theorem~\ref{thm:behaviour_dens} contains additional information on the size of the connected components
of $\supp \dens$ and the distance between local minima; these are collected in part (ii). 
Note that the same information were also proven in the commutative setup in Theorem 2.6 of \cite{AjankiQVE}
and Theorem~\ref{thm:behaviour_dens} shows that they are also available in our general von Neumann algebra setup.

We remark that $\Psim(\omega) = \Psim(-\omega)$ for $\omega \in \R$ and, for $\omega >0$, $\Delta >0$ and $\wt{\rho} >0$, we have 
\begin{subequations}  
\begin{align}
 \Delta^{1/3} \Psie\bigg(\frac{\omega}{\Delta}\bigg) & \sim \min \bigg\{ \frac{\omega^{1/2}}{\Delta^{1/6}}, \omega^{1/3} \bigg\}, \label{eq:scaling_psie} \\
\wt{\rho} \Psim\bigg(\frac{\omega}{\wt{\rho}^3} \bigg) & \sim \min\bigg \{ \frac{\omega^2}{\wt{\rho}^5}, \omega^{1/3} \bigg\}. \label{eq:scaling_psim} 
\end{align}
\end{subequations}

The comparison relations $\sim$, $\lesssim$ and $\gtrsim$ in the following Theorem~\ref{thm:behaviour_dens} are understood with respect to the constants $k_1, \ldots, k_8$ from Assumptions~\ref{assums:general} 
and $\theta$ in the definition of $I_\theta$ in \eqref{eq:def_I_theta}. 

\begin{theorem}[Behaviour of $\dens$ near almost cusp points; Structure of the set of minima of $\dens$]   \label{thm:behaviour_dens} 
Let $I \subset \R$ be an open interval and $\theta \in (0,1]$. 
If Assumptions~\ref{assums:general} hold true on $I$ for some $\eta_* \in (0,1]$ (in particular, if the data pair $(a,S)$ satisfies \eqref{eq:estimates_data_pair} and $m$ satisfies \eqref{eq:m_sup_bound} on $\HbI$)
then the following statements hold true 
\begin{enumerate}[label=(\roman*)]
\item There are thresholds $\dens_* \sim 1$, $\sigma_* \sim 1$ and $\delta_* \sim 1$ such that if $\tau_0 \in \supp \dens\cap I_\theta$ is a local minimum of $\dens$ satisfying 
$\dens(\tau_0) \leq \dens_*$ then we set 
$\Gamma \defeq \sqrt{27} \pi/(2\psi)$ with $\psi=\psi(\tau_0)$ defined as in Lemma~\ref{lem:stability_cubic_equation} 
and have 
\begin{subequations}
\begin{enumerate}[label=(\alph*)]
\item (Left edge with small gap) If $\tau_0 \in \pt\supp \dens\setminus \{ \inf \supp\dens\}$ is the infimum of a connected component of $\supp \dens$, $\abs{\sigma(\tau_0)} \leq \sigma_*$ and the lower edge of the gap lies in $I_\theta$, 
i.e., $\tau_1 \defeq \sup ( ( -\infty, \tau_0 ) \cap \supp \dens)\in I_\theta$, then 
\begin{equation} \label{eq:dens_expansion_left_edge} 
 \dens(\tau_0 + \omega) = \big(4\Gamma \big)^{1/3} \Psi(\omega) + \ord\left( \abs{\sigma(\tau_0)} \Psi(\omega) + \Psi(\omega)^2 \right), \qquad \Psi(\omega) \defeq \Delta^{1/3}\Psie\bigg( \frac{\omega}{\Delta}\bigg) 
\end{equation}
for all $ \omega \in [0,\delta_*]$.  
Here, $\Gamma \sim 1$ and $\psi \sim 1$. 
\item (Right edge with small gap) If $\tau_0 \in \pt \supp \dens \setminus \{ \sup\supp \dens\}$ is the supremum of a connected component then a similar statement as in the case of a left edge holds true. 
\item (Cusp) If $\tau_0 \notin \pt\supp \dens$ and $\dens(\tau_0) =0$ then 
\begin{equation} \label{eq:dens_expansion_cusp}
 \dens(\tau_0 + \omega) =  \frac{\Gamma^{1/3}}{4^{1/3}} \abs{\omega}^{1/3} + \ord\Big(\abs{\omega}^{2/3}\Big) 
\end{equation}
for all $\omega \in [-\delta_*, \delta_*]$. 
Here, $\Gamma \sim 1$ and $\psi \sim 1$. 
\item (Nonzero local minimum) 
If $\tau_0\notin \pt \supp \dens$ and $\dens(\tau_0) >0$ then 

\begin{equation} \label{eq:multiplicative_expansion_nonzero_minimum_general_assums}
\begin{aligned} 
\dens(\tau_0 + \omega ) & = \dens(\tau_0) + \begin{cases} \Gamma^{1/3}\Psi(\omega) \bigg( 1 + \ord(\dens(\tau_0)^{1/2} ) \bigg), \quad & \text{if }\,\abs{\omega} \lesssim \dens(\tau_0)^{7/2}, \\
\Gamma^{1/3} \Psi(\omega) \bigg( 1 + \ord\bigg( \frac{\dens(\tau_0)^4}{\abs{\omega}} \bigg)\bigg), & \text{if }\, \dens(\tau_0)^{7/2} \lesssim \abs{\omega} \lesssim \dens(\tau_0)^{3}, \\ 
\Gamma^{1/3} \Psi(\omega) \bigg( 1 + \ord( \Psi(\omega))\bigg), & \text{if }\,\dens(\tau_0)^3 \lesssim \abs{\omega} \leq \delta_*,
\end{cases}
 \\
\Psi(\omega) & \defeq \wt{\dens} \Psim\left(\frac{\omega}{\wt{\dens}^3} \right), \qquad \qquad \wt{\dens} \defeq \frac{\dens(\tau_0)}{\Gamma^{1/3}} 
\end{aligned} 
\end{equation}
for all $\omega \in \R$. Here, $\Gamma \sim 1$ and $\psi \sim 1$. 
\end{enumerate}
\end{subequations}
\item If $\supp \dens \cap I_\theta \neq \varnothing$ then $\supp \dens \cap I_\theta$ consists of $K \sim 1$ intervals, i.e., there are 
$\alpha_1, \ldots, \alpha_{K}\in \pt\supp \dens\cup \pt I_\theta$ and $\beta_1, \ldots, \beta_{K} \in \pt\supp\dens\cup \pt I_\theta$, $\alpha_i < \beta_i < \alpha_{i+1}$, such that 
\begin{equation} \label{eq:decomposition_support} 
 \supp \dens \cap \overline{I_\theta} = \bigcup_{i=1}^{K} [\alpha_i, \beta_i] 
\end{equation}
and $\beta_i - \alpha_i \sim 1$ if $\beta_i \neq \sup I_\theta$ and $\alpha_i \neq \inf I_\theta$. 

For $\dens_*>0$, we define the set $\Mb_{\dens_*}$ of small local minima $\tau$ of $\dens$ which are not edges of $\supp\dens$, i.e., 
\begin{equation} \label{eq:def_Mb_dens_ast}
 \Mb_{\dens_*} \defeq \{ \tau \in (\supp \dens \setminus\pt \supp \dens) \cap I_\theta \colon \dens(\tau) \leq \dens_*, ~\dens \text{ has a local minimum at }\tau \}. 
\end{equation}
There is a threshold $\dens_* \sim 1$ such that, for all $\gamma_1, \gamma_2 \in \Mb_{\dens_*}$ satisfying $\gamma_1 \neq \gamma_2$ and for all $i=1, \ldots, K$, 
we have 
\begin{equation} 
\abs{\gamma_1 - \gamma_2} \sim 1, \qquad  \abs{\alpha_i-\gamma_1} \sim 1, \qquad \abs{\beta_i - \gamma_1} \sim 1
\end{equation}
if $\alpha_i \neq \inf I_\theta$ and $\beta_i \neq \sup I_\theta$. 
\end{enumerate}
\end{theorem}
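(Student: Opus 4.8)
\textbf{Proof proposal for Theorem \ref{thm:behaviour_dens}.}

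The plan is to deduce both statements from the cubic equation of Proposition \ref{pro:cubic_for_dyson_equation} together with the scaling information \eqref{eq:scaling_psie}--\eqref{eq:scaling_psim} and the universal shape functions. For part (i), at a local minimum $\tau_0$ of $\dens$ with $\dens(\tau_0)\le\dens_*$ we work with the cubic \eqref{eq:cubic_sing_dens} for $\Theta(\omega)$ whose leading coefficients are $\psi=\psi(\tau_0)$ and $\sigma=\sigma(\tau_0)$, obeying the stability bound \eqref{eq:def_admiss_scaling_parameters}. The first step is to bring the cubic into one of the normal forms $\Omega^2+\Lambda=0$, $\Omega^3+\Lambda=0$, $\Omega^3\pm3\Omega+2\Lambda=0$ by the change of variables $(\omega,\Theta)\mapsto(\lambda,\Omega)$ announced in the introduction (the \emph{normal coordinates}), track how $\psi$ and $\sigma$ enter the rescaling, and identify $\Gamma=\sqrt{27}\,\pi/(2\psi)$ as the natural scale. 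The four \textbf{selection principles} \spone--\spfour fix the correct Cardano branch; solving explicitly then gives $\Im\Theta(\omega)$ in terms of $\Psie$ or $\Psim$ evaluated at the rescaled argument. Since $\Im m(\tau_0+\omega)=\pi\,v(\tau_0+\omega)$ and, by \eqref{eq:def_admiss_decom_diff_m}--\eqref{eq:def_admiss_bound_r}, $\Im m(\tau_0+\omega)=(\Im\Theta(\omega))(b+b^*)/(2\ii)+\ord(\abs\omega+\abs{\Theta}^2\text{-type errors})$, applying $\avg{\genarg}$ and using $\avg{b}=\pi+\ord(\dens+\dens^{-1}\imz)$ from \eqref{eq:f_u_qq_star} produces the scalar expansions \eqref{eq:dens_expansion_left_edge}, \eqref{eq:dens_expansion_cusp}, \eqref{eq:multiplicative_expansion_nonzero_minimum_general_assums}; the case split (left/right edge, cusp, internal minimum) is exactly the case split in Theorem \ref{thm:behaviour_v_close_sing}, with the small-gap hypothesis $\abs{\sigma(\tau_0)}\le\sigma_*$ forcing the quadratic normal form and the edge shape $\Psie$, while $\sigma(\tau_0)$ large relative to $\dens(\tau_0)$ yields the internal-minimum/cusp regime governed by $\Psim$. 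The $\Gamma\sim1$ and $\psi\sim1$ claims follow from \eqref{eq:def_admiss_scaling_parameters} once we know the relevant regime has $\sigma$ small, so $\psi\sim1$.

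For part (ii), the finiteness of the number of intervals is obtained by a covering/compactness argument on $\overline{I_\theta}$: by Proposition \ref{pro:regularity_density_of_states} and Proposition \ref{pro:analyticity_of_m} the function $\dens$ is real-analytic wherever positive, so $\supp\dens\cap\overline{I_\theta}$ is a finite union of closed intervals (this is already the last sentence of Theorem \ref{thm:singularities_flat}); what must be added is the quantitative lower bound $\beta_i-\alpha_i\sim1$ for interior components. This, together with the separation statements $\abs{\gamma_1-\gamma_2}\sim1$, $\abs{\alpha_i-\gamma_1}\sim1$, $\abs{\beta_i-\gamma_1}\sim1$ for distinct small internal minima and component endpoints, is proven by a contradiction/rescaling argument: if two such special points were at distance $\eps\ll1$, one applies the expansions of part (i) (or of Theorem \ref{thm:behaviour_v_close_sing}) at each of them and checks that the shape functions $\Psie,\Psim$ grow at a rate incompatible with $\dens$ hitting a second (small) minimum or vanishing again within distance $\eps$ --- more precisely, between two consecutive small local minima $\dens$ must rise to a value $\gtrsim1$ by the $\omega^{1/3}$ growth in \eqref{eq:scaling_psie}, \eqref{eq:scaling_psim} valid once $\abs\omega\gtrsim\dens(\tau_0)^3$, and the derivative bounds of Lemma \ref{lem:derivatives_m} prevent it from returning to a small value too quickly. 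The model parameters $k_1,\dots,k_8$ and $\theta$ control all constants uniformly.

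The main obstacle is the \emph{stability of the cubic} --- ensuring $\psi+\sigma^2\sim1$ so that the normal-form reduction is uniform and higher-order terms in \eqref{eq:cubic_sing_dens} are genuinely negligible --- but this has already been established in Lemma \ref{lem:stability_cubic_equation}(ii) and recorded as \eqref{eq:def_admiss_scaling_parameters}, so for the present proof it may be invoked directly. The remaining technical work is bookkeeping: carrying the $\ord(\cdot)$ error terms through the change to normal coordinates with the correct powers of $\dens(\tau_0)$ (this is what produces the three-regime structure in \eqref{eq:multiplicative_expansion_nonzero_minimum_general_assums}), and verifying that the four selection principles uniquely pin down the branch in each of the four normal forms. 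I would organize the write-up by first proving a clean statement about the solutions $\Omega(\lambda)$ of the four normal forms (existence, the explicit formulas via $\Psie,\Psim$, branch selection, error propagation), then specializing to $\Theta$ via the rescaling, and finally translating to $v$ and to $\dens=\avg v$; the separation statements in (ii) are a short corollary once the local expansions are in hand.
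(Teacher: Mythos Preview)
Your overall plan for part (i) cases (a)--(c) and for the structure of part (ii) is correct and matches the paper. But there is a genuine gap in your treatment of case (i)(d), and the same missing ingredient causes your part (ii) argument to be incomplete.

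The three-regime multiplicative expansion \eqref{eq:multiplicative_expansion_nonzero_minimum_general_assums} is \emph{not} obtained just by ``bookkeeping the $\ord(\cdot)$ error terms through the change to normal coordinates.'' The cubic analysis alone (i.e., your approach, which the paper records as Proposition~\ref{pro:behaviour_dens_weak_conditions}(i)(d)) produces only the additive error
\[
\dens(\tau_0+\omega)=\dens(\tau_0)+\Gamma^{1/3}\Psi(\omega)+\ord\Big(\tfrac{|\omega|}{\dens(\tau_0)}\char(|\omega|\lesssim\dens(\tau_0)^3)+\Psi(\omega)^2\Big),
\]
which in the innermost regime $|\omega|\lesssim\dens(\tau_0)^{7/2}$ is \emph{weaker} than the stated multiplicative error $\Gamma^{1/3}\Psi(\omega)(1+\ord(\dens(\tau_0)^{1/2}))$. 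To upgrade, the paper uses Lemma~\ref{lem:derivatives_m} (bounds on $\partial_z^2 m,\partial_z^3 m$ at $\tau_0$, where $|\sigma|\lesssim\dens^2$) to write the honest second-order Taylor expansion $\dens(\tau_0+\omega)-\dens(\tau_0)=c\,\dens^{-5}\omega^2+\ord(|\omega|^3\dens^{-8})$, then \emph{matches} this against the $\Psim$-expansion at $\omega=\dens^{7/2}$ to identify $c=\Gamma^2/18+\ord(\dens^{1/2})$. Only after this matching does the Taylor expansion give the sharp multiplicative error in the first regime.

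The same Taylor-plus-matching idea is what closes part (ii). Your argument (``$\dens$ must rise to a value $\gtrsim 1$ by the $\omega^{1/3}$ growth'') handles the case $|\gamma_1-\gamma_2|\gtrsim\dens(\gamma_i)^3$, but the cubic analysis only yields the dichotomy $|\gamma_1-\gamma_2|\sim 1$ or $|\gamma_1-\gamma_2|\lesssim\min\{\dens(\gamma_1),\dens(\gamma_2)\}^4$ (this is \eqref{eq:distance_of_internal_minima}); the second alternative is not excluded by shape-function growth. The paper kills it by the Taylor argument: since $c(\gamma_1),c(\gamma_2)\sim 1$, evaluating the expansion of $\dens$ around $\gamma_1$ at $\omega=\gamma_2-\gamma_1$ forces $\dens(\gamma_2)>\dens(\gamma_1)$, and by symmetry $\dens(\gamma_1)>\dens(\gamma_2)$, a contradiction. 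You should add this strict-convexity-at-the-minimum step explicitly.
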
 

The factors $4^{1/3}$ and $4^{-1/3}$ in the cases (a) and (c) of part (i) of Theorem~\ref{thm:behaviour_dens} can be
eliminated by redefining $\Gamma$, $\Psie$ and $\Psim$ to bring the leading term on the right-hand sides  
into the uniform $\Gamma^{1/3}\Psi(\omega)$ form. 
We have not used these redefined versions of 
$\Gamma$, $\Psie$ and $\Psim$ here in order to be consistent with \cite{AjankiQVE}. 

We remark that part (i) (a) and (b) of Theorem~\ref{thm:behaviour_dens} cover only the case of $\tau_0 \in \pt\supp\dens$ with sufficiently small $\abs{\sigma(\tau_0)}$. 
We will establish later that the smallness of $\abs{\sigma(\tau_0)}$ corresponds to the smallness of the adjacent gap $\tau_0 - \tau_1$ (see Lemma~\ref{lem:large_gap} below). 
Together with the cusps and the small nonzero local minima, they form the \emph{almost cusp points} of $\dens$ 
(see the set $P_\mathrm{cusp}$  in \eqref{eq:def_P_cusp}  later). 
At the extreme edges $\inf \supp \dens$ and $\sup \supp \dens$, $\sigma$ is not so small. Thus, the exclusion of these 
edges in the statement of Theorem~\ref{thm:behaviour_dens} (a) and (b) is in fact superfluous. 
If $\abs{\sigma(\tau_0)}$ is not so small then $\dens(\tau_0 + \omega)$ is well approximated by a rescaled version of $(\omega_\pm)^{1/2}$ (positive and negative part of $\omega$ for left and right edge, respectively).  
The precise statement and scaling are given in  Proposition~\ref{pro:char_regular_edge} below.    

\begin{remark}[Scaling relations for $\dens(z)$] \label{rmk:scaling im m}
Let $I \subset \R$ be an open interval, $\theta \in (0,1]$ and $\dens(z) \defeq \avg{\Im m(z)}/\pi$ for $z \in \Hb$. 
If Assumptions~\ref{assums:general} hold true on $I$ with $\eta_*=1$ then there are $\eps \sim 1$ and $\dens_* \sim 1$ such that 
\begin{enumerate}[label=(\roman*)]
\item (Inside support around an edge with small gap) 
Let $\tau_0, \tau_1 \in \supp \dens \cap I_\theta$ satisfy $\tau_0 < \tau_1$ and $(\tau_0, \tau_1) \cap \supp \dens = \varnothing$. 
We set $\Delta \defeq \tau_1 - \tau_0$. 
For $\omega \in [0,\eps]$, we have 
\[ \dens(\tau_0 - \omega + \ii\eta) \sim \dens(\tau_1+\omega + \ii\eta) \sim \frac{(\omega + \eta)^{1/2}}{(\Delta + \omega + \eta)^{1/6}} \] 
\item (Inside a gap) 
Let $\tau_0, \tau_1 \in \supp \dens \cap I_\theta$ satisfy $\tau_0 < \tau_1$ and $(\tau_0, \tau_1) \cap \supp \dens = \varnothing$. 
We set $\Delta \defeq \tau_1 - \tau_0$. 
Then, for $\tau \in [\tau_0, \tau_1]$ and $\eta \in [0,\eps]$, we have 
\[ \dens(\tau + \ii \eta) \sim \frac{\eta}{(\Delta + \eta)^{1/6}} \bigg( \frac{1}{(\tau_1 - \tau + \eta)^{1/2} }
+ \frac{1}{(\tau -\tau_0 + \eta)^{1/2}} \bigg). \] 
\item (Around a left edge with large gap)
Let $\tau_0 \in I_\theta \cap \pt \supp\dens$ satisfy $\dens(\tau) = 0$ for all $\tau \in [\tau_0 -\delta, \tau_0]$  and some $\delta \sim 1$. Then, for 
$\omega \in [0,\eps]$, we have
\[ \begin{aligned} 
 \dens(\tau_0 + \omega + \ii \eta) & \sim (\omega + \eta)^{1/2}, \\ 
\dens(\tau_0 - \omega + \ii \eta) & \sim \frac{\eta}{(\omega + \eta)^{1/2}}.
\end{aligned} \] 
A similar statement holds true for a right edge $\tau_0$, i.e., if $\dens(\tau)=0$ for all $\tau \in [\tau_0,\tau_0+\delta]$ and some $\delta \sim 1$. 
\item (Close to a local minimum) 
If $\tau_0 \in \supp \dens \setminus \pt\supp\dens$ is a local minimum of $\dens$ such that $\dens(\tau_0) \leq \dens_*$ then, for all $ \omega \in [-\eps, \eps]$ and $\eta \in [0,\eps]$, we have
\[ \dens(\tau_0 + \omega + \ii \eta)  \sim \dens(\tau_0) + ( \abs{\omega} + \eta)^{1/3}.\] 
\end{enumerate}
These scaling relations for $\dens(z)=\avg{\im m(z)}/\pi$ are proven in the same way as the corresponding ones in Corollary~A.1 of \cite{AjankiQVE}. The proof in \cite{AjankiQVE} simply relied on the fact that $\avg{\im m(z)}$ is the harmonic extension of $\pi \1\rho$ to the complex upper half-plane and the behavior of $\rho$ close to its local minima and thus is applicable equally well in the current situation, due to Theorem~\ref{thm:behaviour_dens}. 
\end{remark}

\subsection{Shape regular points} 

In the following definition, we introduce the notion of a \emph{shape regular point} which collects the properties of $m$ necessary for the proof of Theorem \ref{thm:behaviour_v_close_sing}. 
Proposition \ref{pro:behaviour_v_close_sing_weak_conditions} below explains how the statements of Theorem \ref{thm:behaviour_v_close_sing} are transferred to this more general setup. 
In fact, Lemma \ref{lem:q_bounded_Im_u_sim_avg} (ii) and Proposition \ref{pro:cubic_for_dyson_equation} 
show that, 
under the assumptions of Theorem \ref{thm:behaviour_v_close_sing}, any point $\tau_0\in \supp\dens \cap I$ 
of sufficiently small density $\dens(\tau_0)$ is a shape regular point for $m$ in the sense of Definition \ref{def:admissible_shape_analysis} below. 
By explicitly spelling out the properties of $m$ really used in the proof of Theorem \ref{thm:behaviour_v_close_sing} we made our argument 
modular because  a similar  analysis around  shape regular points will be applied in later works as well. 

This modularity, however, requires to reinterpret the concept of comparison relations. In earlier sections we used
 the comparison relation $\sim$, $\lesssim$ and the $\ord$-notation introduced in Convention \ref{conv:comparsion_1} 
to hide irrelevant constants in various estimates that depended only on the model parameters $c_1$, $c_2$, $c_3$
 from \eqref{eq:estimates_data_pair}, $k_3$ from \eqref{eq:m_sup_bound} and $\theta$ from \eqref{eq:def_I_theta}, these are also the model parameters 
 in Theorem \ref{thm:behaviour_v_close_sing}. 
The model parameters in Theorem~\ref{thm:behaviour_dens} are given by $k_1, \ldots, k_8$ in Assumptions~\ref{assums:general} and $\theta$ in the definition of $I_\theta$. 
 
 The formulation of Definition \ref{def:admissible_shape_analysis} also involves comparison relations instead of carrying constants; in the application these constants
 depend on the original model parameters. When Proposition \ref{pro:behaviour_v_close_sing_weak_conditions}   is proven,
 the corresponding constants  directly depend  on the constants in Definition \ref{def:admissible_shape_analysis},
 hence they also indirectly depend on the original model parameters
when we apply it to the proof of  Theorem \ref{thm:behaviour_v_close_sing}.   Since these dependences are somewhat involved
and we do not want to overload the paper with different concepts of comparison relations, 
for simplicity,   for the purpose of Theorem \ref{thm:behaviour_v_close_sing}, the reader may think of the implicit constants in every $\sim$-relation depending 
only on the original model parameters $c_1$, $c_2$, $c_3$, $k_3$ and $\theta$.

\begin{definition}[Admissibility for shape analysis, shape regular points] \label{def:admissible_shape_analysis} 
Let $m$ be the solution of the Dyson equation \eqref{eq:dyson} associated to a data pair $(a,S)\in \alg_{\rm{sa}} \times \Sigma$. 
\begin{enumerate}[label=(\roman*)]
\item 
Let $\tau_0 \in \R$, $\shapeJ \subset \R$ be an open interval with $0 \in \shapeJ$,  $\Theta \colon \shapeJ \to \C$ and $r \colon \shapeJ \to \alg$ be continuous 
functions and $b \in \alg$.  
We say that $m$ is $(\shapeJ,\Theta,b,r)$-\emph{admissible for the shape analysis at $\tau_0$} if the following conditions are satisfied: 
\begin{enumerate}[label=(\alph*)]
\item The function $m \colon \Hb \to \alg$ has a continuous extension to $\tau_0 + \shapeJ$, which we also denote by $m$. 
The relation \eqref{eq:def_admiss_decom_diff_m} and the bounds \eqref{eq:def_admiss_bound_Theta} as well as \eqref{eq:def_admiss_bound_r} hold true for all $\omega \in \shapeJ$. 
\item The function $\Theta$ satisfies the cubic equation \eqref{eq:cubic_sing_dens} for all $\omega \in \shapeJ$ with the coefficients 
\begin{align*}
 \mu_3 & = \psi + \ord(\rho), \\
 \mu_2 & = \sigma + \ii 3\psi \rho + \ord(\rho^2 + \rho \abs{\sigma}), \\
\mu_1 & = - 2 \rho^2 \psi + \ii \kappa_1 \rho \sigma  +  \ord(\rho^3+\rho^2 \abs{\sigma}),\\ 
\Xi(\omega) & = \kappa ( 1+ \nu(\omega)) + \ord(\rho), 
\end{align*}
where $\rho \defeq \avg{\Im m(\tau_0)}/\pi$
and $\psi, \kappa \geq 0$ as well as $\sigma, \kappa_1 \in \R$ are some parameters satisfying \eqref{eq:def_admiss_scaling_parameters} and $\kappa, \abs{\kappa_1}\sim1$. 
The function $\nu\colon \shapeJ \to \C$ satisfies~\eqref{eq:bound_nu}. 
\item The element $b \in \alg$ in \eqref{eq:def_admiss_decom_diff_m} fulfils $b=b^* + \ord(\rho)$ and $b+ b^* \sim 1$. 
\end{enumerate}
\begin{enumerate}[label=(d\arabic*)]
\item If $\dens >0$ then $\Theta$ and $r$ are differentiable in $\omega$ at $\omega=0$. 
\item If $\dens = 0$ then \eqref{eq:def_admiss_dens_zero_Im_Theta_bounds} holds true for all $\omega \in \shapeJ$ 
and $\Re \Theta$ is non-decreasing on the connected components of $\{ \omega \in \shapeJ \colon \Im \Theta(\omega) = 0 \}$. 
\end{enumerate}
\item
Let $\tau_0 \in \R$ and $\shapeJ \subset \R$ be an open interval with $0 \in \shapeJ$. 
We say that $\tau_0$ is a \emph{shape regular point for $m$ on $\shapeJ$} if $m$ is $(\shapeJ,\Theta,b,r)$-admissible for the shape analysis at $\tau_0$ for 
 some continuous functions $\Theta\colon \shapeJ \to \C$ and $r \colon \shapeJ \to \alg$ as well as $b \in \alg$. 
\end{enumerate}
\end{definition}

The key technical step in the proof of Theorem \ref{thm:behaviour_v_close_sing} is the following Proposition \ref{pro:behaviour_v_close_sing_weak_conditions}; it
shows that Theorem \ref{thm:behaviour_v_close_sing} holds under more general weaker conditions, in fact
shape admissibility is sufficient. For the proof of Theorem \ref{thm:behaviour_v_close_sing} we will first check shape regularity
from Proposition \ref{pro:cubic_for_dyson_equation} and then we will prove Proposition \ref{pro:behaviour_v_close_sing_weak_conditions};   both steps are done in Section \ref{subsec:proof_behaviour_v_small} below.

\begin{proposition}[Theorem \ref{thm:behaviour_v_close_sing} under weaker assumptions] \label{pro:behaviour_v_close_sing_weak_conditions} 
For the solution $m$ to the Dyson equation \eqref{eq:dyson}, we write $v \defeq \pi^{-1} \Im m$, $\dens = \avg{v}$. 

There are thresholds $\dens_* \sim 1$ and $\delta_* \sim 1$ such that if $\dens(\tau_0) \leq \dens_*$ and $\tau_0\in\supp\dens$ is a local minimum of $\dens$ as well as a shape regular point for $m$ on $\shapeJ$ 
with an open interval $\shapeJ \subset \R$ satisfying $0 \in \shapeJ$ then 
\eqref{eq:expansion_local_minima_general} holds true for all $\omega \in [-\delta_*,\delta_*] \cap \shapeJ \cap D$. 
Here, as in Theorem \ref{thm:behaviour_v_close_sing}, $h=h(\tau_0) \in \alg$ with $h \sim 1$ and $D$ as well as $\Psi$ depend only on the type of $\tau_0$ in the following way:

Suppose that $\tau_0 \in \pt \supp \dens$ is the infimum of a connected component of $\supp \dens$. 
If $\dens(\tau)=0$ for all $\tau \in [\tau_0 - \eps, \tau_0]$ with some $\eps \sim 1$ (e.g.~$\tau_0 =\inf\supp \dens$) and $\abs{\inf \shapeJ} \gtrsim 1$, 
then the conclusion of case (a) in Theorem \ref{thm:behaviour_v_close_sing} holds true with $\Delta = 1$ and $v(\tau_0)=0$. 

If $\tau_0\neq \inf \supp\dens$ and $\tau_1\defeq \sup( (-\infty, \tau_0)\cap \supp \dens)$ is a shape regular point for $m$, $\Delta \lesssim 1$ with $\Delta \defeq \tau_0 - \tau_1$ 
and $\abs{\sigma(\tau_0) - \sigma(\tau_1)} \lesssim \abs{\tau_0-\tau_1}^{1/3}$ 
then the conclusion of case (a) in Theorem \ref{thm:behaviour_v_close_sing} holds true with this choice of $\Delta$ as well as $v(\tau_0)=0$.  

Similarly to (a), the statement of case (b) in Theorem \ref{thm:behaviour_v_close_sing} can be translated to the current setup. 
The cases (c) and (d) of Theorem \ref{thm:behaviour_v_close_sing}, cusp and internal minimum, respectively, hold true without any changes. 

Furthermore, suppose that $\tau_0 \in \supp \dens$ is a shape regular point for $m$ and $\dens(\tau_0)=0$, then $\tau_0$ is a cusp if $\sigma(\tau_0)=0$ and $\tau_0$ is an edge, in particular $\tau_0 \in \pt\supp \dens$, 
if $\sigma(\tau_0)\neq 0$. 
\end{proposition}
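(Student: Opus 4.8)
The claim to establish is the very last sentence of Proposition~\ref{pro:behaviour_v_close_sing_weak_conditions}: if $\tau_0 \in \supp\dens$ is a shape regular point for $m$ with $\dens(\tau_0) = 0$, then $\tau_0$ is a cusp when $\sigma(\tau_0) = 0$ and an edge (so $\tau_0 \in \pt\supp\dens$) when $\sigma(\tau_0) \neq 0$. The strategy is to read off the local behaviour of $\Theta(\omega)$, hence of $v(\tau_0 + \omega) = \pi^{-1}\Im m(\tau_0 + \omega)$, directly from the cubic equation~\eqref{eq:cubic_sing_dens}, using that $\dens = \rho = 0$ forces $\mu_1 = \ord(\rho^3 + \rho^2|\sigma|) = 0$ and $\mu_2 = \sigma + \ord(\rho^2 + \rho|\sigma|) = \sigma(\tau_0)$, while $\mu_3 = \psi + \ord(\rho) = \psi(\tau_0)$ and $\Xi(\omega) = \kappa(1 + \nu(\omega)) + \ord(\rho) = \kappa(1+\nu(\omega))$ with $\kappa \sim 1$, $|\nu(\omega)| \lesssim |\omega|^{1/3}$ by~\eqref{eq:bound_nu}. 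Thus $\Theta$ solves the normal-form cubic $\psi\,\Theta^3 + \sigma\,\Theta^2 + \kappa\,\omega(1+\nu(\omega)) = 0$, and by~\eqref{eq:def_admiss_scaling_parameters} we have $\psi + \sigma^2 \sim 1$, so the two leading coefficients cannot vanish simultaneously.

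\textbf{Case $\sigma(\tau_0) = 0$.} Here $\psi \sim 1$ (from $\psi + \sigma^2 \sim 1$), and the cubic reduces to $\psi\Theta^3 = -\kappa\omega(1+\nu(\omega))$. Using the selection principles from Definition~\ref{def:admissible_shape_analysis}~(d2) --- $\Im\Theta(\omega) \geq 0$ and $\Re\Theta$ non-decreasing where $\Im\Theta = 0$ --- one identifies the correct cube root: $\Theta(\omega) = (\kappa/\psi)^{1/3}(-\omega)^{1/3}$ up to the error controlled by $|\nu(\omega)| \lesssim |\omega|^{1/3}$, where the branch is chosen so that $\Im\Theta \geq 0$ on all of $\shapeJ$. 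Consequently $\Im\Theta(\omega) \sim |\omega|^{1/3}$ for both signs of $\omega$, and via~\eqref{eq:def_admiss_decom_diff_m} and $b + b^* \sim 1$ together with~\eqref{eq:def_admiss_bound_r}, $v(\tau_0 + \omega) = \Im[\Theta(\omega) b]/\pi + \ord(|\omega|^{2/3})$, so $v(\tau_0 + \omega) \sim |\omega|^{1/3} \cdot \mathds{1} > 0$ for all small $\omega \neq 0$ on both sides. Hence $\tau_0$ is an interior point of $\supp\dens$ with $\dens(\tau_0) = 0$, i.e.\ a cusp; this matches case~(c) of Theorem~\ref{thm:behaviour_v_close_sing}. (In fact this is precisely the content of the cusp case of the Proposition, already proved, so one may simply invoke it, but the point here is that the sign hypothesis on $\sigma$ triggers it.)

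\textbf{Case $\sigma(\tau_0) \neq 0$.} By continuity of $\sigma$ along shape regular points (this is~(e) of Proposition~\ref{pro:cubic_for_dyson_equation}, or the continuity built into admissibility), $|\sigma(\tau_0)| \sim 1$, and the cubic is dominated near $\omega = 0$ by its quadratic part: $\sigma\Theta^2 = -\kappa\omega(1+\nu(\omega)) - \psi\Theta^3$, with $\psi\Theta^3 = \ord(|\omega|)\cdot o(1)$ negligible. So $\Theta(\omega)^2 = -(\kappa/\sigma)\omega(1 + o(1))$, i.e.\ $\Theta(\omega) \approx (\kappa/|\sigma|)^{1/2}|\omega|^{1/2}$ with a definite branch forced by $\Im\Theta \geq 0$: if $\kappa/\sigma < 0$ (say $\sigma > 0$, so $\sigma = \sigma(\tau_0) > 0$) then $-\omega/\sigma \geq 0$ requires $\omega \leq 0$ for $\Theta$ real, and for $\omega > 0$ the square root becomes genuinely imaginary, $\Theta(\omega) \sim \ii|\omega|^{1/2}$, while for $\omega < 0$, $\Theta$ stays real and the (d2) monotonicity of $\Re\Theta$ fixes the sign. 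Thus $\Im\Theta(\omega)$ vanishes identically on one side of $0$ and is $\sim |\omega|^{1/2} > 0$ on the other; translating through~\eqref{eq:def_admiss_decom_diff_m} and~\eqref{eq:def_admiss_bound_r} gives $v(\tau_0 + \omega) = 0$ for $\omega$ of one sign and $v(\tau_0 + \omega) \sim |\omega|^{1/2} \cdot \mathds{1} > 0$ for $\omega$ of the other sign near $0$. Hence $\tau_0 \in \pt\supp\dens$ and $\tau_0$ is a (left or right) edge, with the sign of $\sigma(\tau_0)$ determining which; this is cases~(a)/(b) of Theorem~\ref{thm:behaviour_v_close_sing}.

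\textbf{Main obstacle.} The delicate step is the rigorous \emph{branch selection} in Cardano's formula: one must show that the sign hypothesis on $\sigma(\tau_0)$, combined with the selection principles $\Im\Theta \geq 0$ and the monotonicity of $\Re\Theta$ on the zero set of $\Im\Theta$ from Definition~\ref{def:admissible_shape_analysis}~(d2), uniquely pins down the correct root of the normal-form cubic on \emph{both} sides of $\omega = 0$, and that the error term $\nu(\omega)$ with $|\nu| \lesssim |\omega|^{1/3}$ (only a weak bound, not smallness relative to $\Theta$ a priori) does not spoil the asymptotics. This is exactly the cubic analysis carried out via the normal forms and selection principles \textbf{SP1}--\textbf{SP4'} in Section~\ref{subsec:cubic_normal_form}; the cleanest route is to invoke the already-established cases~(a)--(d) of Theorem~\ref{thm:behaviour_v_close_sing} (equivalently Proposition~\ref{pro:behaviour_v_close_sing_weak_conditions}) and merely verify that $\sigma(\tau_0) = 0$ places $\tau_0$ in the hypotheses of case~(c), while $\sigma(\tau_0)\neq 0$ places it in those of case~(a) or~(b) --- using that a shape regular point with $\rho = 0$ and $\sigma \neq 0$ cannot be an interior point of $\supp\dens$ because the expansion then shows $v$ vanishes on one side, which also forces $\tau_1 = \sup((-\infty,\tau_0)\cap\supp\dens)$ (resp.\ the right analogue) to exist and equal an edge.
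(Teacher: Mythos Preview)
Your proposal addresses only the final ``Furthermore'' sentence of the proposition, but the bulk of Proposition~\ref{pro:behaviour_v_close_sing_weak_conditions} is the establishment of the expansion~\eqref{eq:expansion_local_minima_general} itself in the four cases (a)--(d); this is the main work, requiring Theorem~\ref{thm:abstract_cubic_equation} to control $\Im\Theta$, the transfer to $v$ via~\eqref{eq:proof_behaviour_v_aux1}, and (in the edge cases) Lemmas~\ref{lem:sigma_nonzero_local_minimum}, \ref{lem:large_gap}, \ref{lem:small_gap} to relate $\wh{\Delta}$ and $\Delta$. Your ``cleanest route'' of invoking cases (a)--(d) as already established is circular: they are precisely what the proposition asserts.

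For the final sentence that you do treat, your approach coincides with the paper's. The paper proves it by contrapositive, embedded in the case analysis: while handling case~(c) it shows that $\tau_0\in\supp\dens\setminus\partial\supp\dens$ with $\dens(\tau_0)=0$ forces $\sigma=0$ (else Theorem~\ref{thm:abstract_cubic_equation}~(ii)(b) and the third bound in~\eqref{eq:def_admiss_dens_zero_Im_Theta_bounds} give $\dens=0$ on one side, contradicting interiority); while handling cases~(a)/(b) it shows $\tau_0\in\partial\supp\dens$ forces $\sigma\neq 0$ (else Theorem~\ref{thm:abstract_cubic_equation}~(ii)(a) gives $\dens>0$ on both sides). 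Your direct argument is the same mechanism stated in the forward direction.

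One genuine slip: your claim that $|\sigma(\tau_0)|\sim 1$ follows from continuity is wrong --- $\sigma(\tau_0)\neq 0$ means only nonzero, and $|\sigma|$ may be arbitrarily small. Theorem~\ref{thm:abstract_cubic_equation}~(ii)(b) handles all $\sigma\neq 0$ uniformly (the second line of~\eqref{eq:abstract_cubic_small_gap} gives $\Im\Theta=0$ on $-\sign\sigma[0,c_*|\sigma|^3]$ regardless of the size of $|\sigma|$), so the conclusion that $\dens$ vanishes on one side still holds, but your quadratic-dominance heuristic ``$\psi\Theta^3$ negligible'' is only valid on the scale $|\omega|\ll|\sigma|^3$, which is why the paper routes through Lemma~\ref{lem:simple_edge} rather than a bare perturbative expansion.
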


Similarly, the following Proposition~\ref{pro:behaviour_dens_weak_conditions} is the analogue of Theorem~\ref{thm:behaviour_dens} under the sole requirement of shape admissibility. 
Owing to the weaker assumptions, the error term in \eqref{eq:expansion_nonzero_local_minimum_general_assums} as well as the result in \eqref{eq:distance_of_internal_minima} of 
Proposition~\ref{pro:behaviour_dens_weak_conditions} are weaker than the corresponding results in Theorem~\ref{thm:behaviour_dens}. 
We will first show Proposition~\ref{pro:behaviour_dens_weak_conditions} and then conclude Theorem~\ref{thm:behaviour_dens} by using extra arguments for the stronger conclusions; 
both proofs will be presented in Section~\ref{subsec:proof_behaviour_dens} below. 

At a shape regular point $\tau_0\in \R$, we set $\Gamma \defeq \sqrt{27} \kappa/(2\psi)$ (cf. Theorem \ref{thm:abstract_cubic_equation} (i) below), where $\kappa = \kappa(\tau_0)$ and $\psi = \psi (\tau_0)$ 
are defined as in Definition~\ref{def:admissible_shape_analysis} (i) (b). 

\begin{proposition}[Behaviour of $\dens$ near almost cusp points, set of minima of $\dens$ under weaker assumptions]\label{pro:behaviour_dens_weak_conditions}
Let $m$ be the solution to the Dyson equation, \eqref{eq:dyson}, and $\dens=\pi^{-1} \avg{\Im m}$. 
The following statements hold true 
\begin{enumerate}[label=(\roman*)]
\item There are thresholds $\dens_* \sim 1$, $\sigma_* \sim 1$ and $\delta_* \sim 1$ such that if $\tau_0 \in \supp \dens$ is a shape regular point for $m$ on an open interval $J \subset \R$ with $0 \in J$, 
$\dens(\tau_0) \leq \dens_*$ and $\tau_0$ is a local minimum of $\dens$ then we have 
\begin{enumerate}[label=(\alph*)]
\item (Left edge with small gap) If $\tau_0 \in \pt\supp \dens \setminus \{ \inf \supp \dens \}$ is the infimum of a connected component of $\supp \dens$, $\abs{\sigma(\tau_0)} \leq \sigma_*$ and $\tau_1 \defeq \sup ( ( -\infty, \tau_0 ) \cap \supp \dens)$ 
is a shape regular point satisfying $\Delta \lesssim 1$ for $\Delta \defeq \tau_0 - \tau_1$ and $\abs{\sigma(\tau_0) - \sigma(\tau_1)} \lesssim \abs{\tau_0 - \tau_1}^{1/3}$ then 
\eqref{eq:dens_expansion_left_edge} for all $\omega \in [0,\delta_*]\cap J$.  
\item (Right edge with small gap) If $\tau_0 \in \pt \supp \dens \setminus \{ \sup \supp \dens\}$ is the supremum of a connected component then a similar statement as in the case of a left edge holds true. 
\item (Cusp) If $\tau_0 \notin \pt\supp \dens$ and $\dens(\tau_0) =0$ then \eqref{eq:dens_expansion_cusp} holds true for all $\omega \in [-\delta_*, \delta_*] \cap J$. 
\item (Internal minimum) If $\tau_0 \notin \pt\supp\dens$ and $\dens(\tau_0)>0$ then 
\begin{equation} \label{eq:expansion_nonzero_local_minimum_general_assums} 
 \begin{aligned} 
\dens(\tau_0 + \omega) & = \dens(\tau_0) +\Gamma^{1/3}\Psi(\omega)  + \ord\left(\frac{\abs{\omega}}{\dens(\tau_0)}  \char( \abs{\omega} \lesssim \dens(\tau_0)^3) + \Psi(\omega)^2 \right), \\
\Psi(\omega) & \defeq \wt{\dens} \Psim\left(\frac{\omega}{\wt{\dens}^3} \right), \qquad \qquad \wt{\dens} \defeq \frac{\dens(\tau_0)}{\Gamma^{1/3}} 
\end{aligned} 
\end{equation} 
for all $\omega \in [-\delta_*, \delta_*] \cap J$. 
\end{enumerate} 
\item 
Let $I \subset \R$ be an open interval with $\supp \dens \cap I \neq \varnothing$ and $\abs{I} \lesssim 1$ 
 and let $m$ have a continuous extension to the closure $\overline{I}$ of $I$. 
Let $\shapeJ \subset \R$ be an open interval with $0 \in \shapeJ$ and $\dist(0,\pt \shapeJ) \gtrsim 1$ such that $J + (\pt \supp \dens) \cap I \subset I$. 
We assume that all points in $(\pt \supp \dens) \cap I$ are shape regular points for $m$ on $\shapeJ$ and all estimates in Definition \ref{def:admissible_shape_analysis} hold true uniformly on $(\pt \supp \dens) \cap I$. 
If $\abs{\sigma(\tau_0) - \sigma(\tau_1)} \lesssim \abs{\tau_0 - \tau_1}^{1/3}$  
uniformly for all $\tau_0, \tau_1 \in (\pt \supp \dens) \cap I$ 
then $\supp \dens \cap I$ consists of $K \sim 1$ intervals, i.e., there are 
$\alpha_1, \ldots, \alpha_{K}\in \pt\supp \dens\cup \pt I$ and $\beta_1, \ldots, \beta_{K} \in \pt\supp\dens\cup \pt I$, $\alpha_i < \beta_i < \alpha_{i+1}$, such that 
\eqref{eq:decomposition_support} holds true with $\overline{I_\theta}$ replaced by $\overline{I}$ 
and $\beta_i - \alpha_i \sim 1$ if $\beta_i \neq \sup I$ and $\alpha_i \neq \inf I$. 

If $\Mb_{\dens_*}$ is defined as in \eqref{eq:def_Mb_dens_ast} then there is a threshold $\dens_* \sim 1$ such that if, in addition to the previous conditions in (ii), all points of $(\Mb_{\dens_*} \cup \pt\supp\dens) \cap I$ 
 are shape regular points for $m$ on $\shapeJ$ and all estimates in Definition \ref{def:admissible_shape_analysis} hold true 
uniformly on $(\Mb_{\dens_*}\cup \pt\supp\dens)\cap I$ then, for $\gamma \in \Mb_{\dens_*}$, we have $\abs{\alpha_i - \gamma} \sim 1$ and $\abs{\beta_i - \gamma} \sim 1$
if $\alpha_i \neq \inf I$ and $\beta_i \neq \sup I$. 
Moreover, for any $\gamma_1, \gamma_2 \in \Mb_{\dens_*}$, we have either 
\begin{equation} \label{eq:distance_of_internal_minima}
 \abs{\gamma_1 - \gamma_2} \sim 1, \qquad \text{or} \qquad \abs{\gamma_1 - \gamma_2} \lesssim \min\{ \dens(\gamma_1), \dens(\gamma_2)\}^4.  
\end{equation}
If $\dens(\gamma_1) = 0$ or $\dens(\gamma_2) = 0$ then, for $\gamma_1 \neq \gamma_2$, only the first case occurs.
\end{enumerate}
\end{proposition}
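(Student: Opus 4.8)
\textbf{Plan of proof for Proposition~\ref{pro:behaviour_dens_weak_conditions}.}

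The plan is to derive everything from the cubic equation \eqref{eq:cubic_sing_dens} via a clean abstract analysis of its three normal forms, which is presumably carried out in the (forward-referenced) Theorem~\ref{thm:abstract_cubic_equation}. For part (i), I would first fix a shape regular point $\tau_0$ with $\dens(\tau_0) \le \dens_*$ that is a local minimum, and recall from Definition~\ref{def:admissible_shape_analysis} that $\Theta$ satisfies the cubic with coefficients $\mu_3 = \psi + \ord(\rho)$, $\mu_2 = \sigma + \ii 3\psi\rho + \ord(\rho^2+\rho|\sigma|)$, $\mu_1 = -2\rho^2\psi + \ii\kappa_1\rho\sigma + \ord(\rho^3+\rho^2|\sigma|)$ and $\Xi(\omega) = \kappa(1+\nu(\omega))+\ord(\rho)$, together with the key non-degeneracy \eqref{eq:def_admiss_scaling_parameters}, i.e.\ $\psi + \sigma^2 \sim 1$. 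The main case distinction is on whether $\psi \sim 1$ (near a cusp/almost cusp, where the genuine cubic governs) or $\psi$ is small (regular edge, where $|\sigma|\sim 1$ and the cubic degenerates to a quadratic). Since a local minimum of $\dens$ is being approached, the relevant branch of Cardano's formula is selected by \spone--\spfour and the monotonicity/positivity statements in (d1), (d2). After switching to normal coordinates $(\omega,\Theta)\to(\lambda,\Omega)$ as sketched in Section~\ref{subsec:cubic_normal_form}, $\Omega$ is a small perturbation of one of $\Psie(\lambda)$, $\Psim(\lambda)$, $\pm\lambda^{1/3}$ up to the scaling factors $\Delta$ or $\wt\dens$; then $v(\tau_0+\omega) = v(\tau_0) + \Theta(\omega) b + r(\omega)$ with $b = b^* + \ord(\rho)$, $b+b^*\sim 1$, and the bound $\norm{r(\omega)}\lesssim |\Theta(\omega)|^2 + |\omega|$ from \eqref{eq:def_admiss_bound_r} shows that the $r$-term is absorbed into $\ord(\Psi(\omega)^2)$ after observing $|\omega| \lesssim \Psi(\omega)^2$ on the relevant ranges; taking imaginary parts and using \eqref{eq:def_admiss_dens_zero_Im_Theta_bounds} handles the case $\dens(\tau_0)=0$. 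The factor $\Gamma^{1/3} = (\sqrt{27}\,\kappa/(2\psi))^{1/3}$ comes out of normalizing the cubic; the factors $4^{\pm1/3}$ in cases (a) and (c) are bookkeeping artifacts of matching to the specific normalizations of $\Psie$ and $\Psim$.

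For the case of a left edge with a small adjacent gap (case (a)), the extra input is that $\tau_1 := \sup((-\infty,\tau_0)\cap\supp\dens)$ is \emph{also} a shape regular point with $\Delta := \tau_0-\tau_1 \lesssim 1$ and $|\sigma(\tau_0)-\sigma(\tau_1)| \lesssim |\tau_0-\tau_1|^{1/3}$. The idea is to run the cubic analysis simultaneously at $\tau_0$ and $\tau_1$: in the gap $(\tau_1,\tau_0)$ we have $\Im m = 0$, so $\Theta$ is real there and must interpolate the two edge behaviors; matching the two expansions pins down that the relevant normal-form parameter $\lambda$ is comparable to $\omega/\Delta$, yielding $\Psi(\omega) = \Delta^{1/3}\Psie(\omega/\Delta)$. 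The degenerate endpoint case $\tau_0 = \inf\supp\dens$ (equivalently $\dens\equiv 0$ on $[\tau_0-\eps,\tau_0]$, $|\inf J|\gtrsim 1$) is handled by the same analysis with $\Delta := 1$, since then the constraint imposed by $\tau_1$ is replaced by the macroscopic emptiness of the left neighborhood. Cases (b)--(d) are then either symmetric (right edge), or follow directly from the pure cubic normal form with $\sigma=0$ (cusp, where $\Psi(\omega)=|\omega|^{1/3}$) or from the $\Psim$ normal form (internal nonzero minimum, with $\wt\dens = \dens(\tau_0)/\Gamma^{1/3}$). The "furthermore" dichotomy at the end of the weaker Proposition~\ref{pro:behaviour_v_close_sing_weak_conditions}—that $\dens(\tau_0)=0$ forces either a cusp ($\sigma(\tau_0)=0$) or an edge ($\sigma(\tau_0)\ne 0$)—follows because for $\sigma(\tau_0)\ne 0$ the quadratic term dominates and $\Theta\sim\omega^{1/2}$ cannot be real on both sides of $0$, forcing $\Im m = 0$ on one side, i.e.\ $\tau_0 \in \pt\supp\dens$.

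For part (ii), the structure of $\supp\dens\cap I$, the plan is: by part (i) every point of $(\pt\supp\dens)\cap I$ is either an isolated edge (with $|\sigma|\gtrsim 1$) or the edge of a small gap (with $|\sigma|$ small, controlled by the adjacency hypothesis $|\sigma(\tau_0)-\sigma(\tau_1)|\lesssim|\tau_0-\tau_1|^{1/3}$). The quantitative edge/cusp expansions show that around each such point $\dens$ grows like a fixed power ($\omega^{1/2}$, $\omega^{1/3}$, or the $\Psie$-interpolation), hence $\supp\dens$ cannot oscillate on scales $\ll 1$: between two consecutive boundary points of the same "side," $\dens$ is bounded below on the macroscopic part of $J$ by the shape expansion, so each connected component and each gap has length $\sim 1$ unless it abuts $\pt I$; since $|I|\lesssim 1$ this caps the number $K$ of intervals by $K\sim 1$. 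For the separation of internal minima: an internal minimum $\gamma\in\Mb_{\dens_*}$ is, by part (i)(d), surrounded by the $\Psim$-profile which is strictly increasing away from $\gamma$ on scale $\wt\dens^3 \sim \dens(\gamma)^3$; two distinct internal minima $\gamma_1,\gamma_2$ with $|\gamma_1-\gamma_2|$ neither $\sim 1$ nor $\lesssim\min\{\dens(\gamma_1),\dens(\gamma_2)\}^4$ would contradict the monotonicity of the shape profiles on the intermediate interval (one has to be careful with the error terms, which is why the threshold is $\dens(\gamma_i)^4$ and not $\dens(\gamma_i)^3$: the $\ord(|\omega|/\dens(\tau_0))$ error in \eqref{eq:expansion_nonzero_local_minimum_general_assums} only beats the main term $\Gamma^{1/3}\Psi(\omega)\sim\omega^2/\dens^5$ once $|\omega|\gtrsim\dens^4$). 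If $\dens(\gamma_1)=0$ then $\gamma_1$ is a cusp, whose $|\omega|^{1/3}$ profile is strictly monotone on \emph{all} of $J$, so no nearby second minimum can exist and only $|\gamma_1-\gamma_2|\sim 1$ survives.

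\textbf{Main obstacle.} I expect the delicate part to be the simultaneous two-point analysis in case (a)—coupling the cubic at $\tau_0$ to the cubic at the far edge $\tau_1$ of the gap and extracting the precise $\Delta$-dependence of the normal coordinates—together with tracking the error terms sharply enough through the coordinate change to get the three-regime error in \eqref{eq:expansion_nonzero_local_minimum_general_assums} (in Proposition~\ref{pro:behaviour_dens_weak_conditions} only the crude $\ord(|\omega|/\dens(\tau_0))$ bound is claimed, and the sharpening to the piecewise form of Theorem~\ref{thm:behaviour_dens} is then a separate, more careful expansion). The abstract cubic input (Theorem~\ref{thm:abstract_cubic_equation}) and the selection principles \spone--\spfour do the conceptual heavy lifting; the remaining work is careful but routine bookkeeping of scales.
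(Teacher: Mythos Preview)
Your plan is essentially correct and follows the paper's approach; the main structural difference is that the paper first proves the $\alg$-valued statement (Proposition~\ref{pro:behaviour_v_close_sing_weak_conditions}) about $v = \pi^{-1}\Im m$ in full, and then obtains Proposition~\ref{pro:behaviour_dens_weak_conditions}~(i) almost immediately by applying $\avg{\,\cdot\,}$ to \eqref{eq:expansion_local_minima_general}, using the precise identity $\avg{b} = \pi + \ord(\rho)$ (a consequence of \eqref{eq:f_u_qq_star}) to get the exact coefficient $\Gamma^{1/3}$, together with $c^3 = 4\Gamma$ for the edge factor. Your direct route through the cubic would reproduce this, but you would need to track that identity explicitly rather than just $b+b^*\sim 1$; also, your claim that ``$|\omega|\lesssim\Psi(\omega)^2$'' is not quite right in the regime $|\omega|\lesssim\dens(\tau_0)^3$, which is exactly why the error term in \eqref{eq:expansion_nonzero_local_minimum_general_assums} carries the additional $|\omega|/\dens(\tau_0)$ piece (cf.\ \eqref{eq:error_nonzero_local_minimum}). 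Part~(ii) in the paper is argued just as you outline: the edge expansion gives a uniform lower bound $\dens(\tau_0+\omega)\gtrsim|\omega|^{1/2}$ on one side of every boundary point, forcing components of length $\gtrsim 1$; the dichotomy \eqref{eq:distance_of_internal_minima} comes from applying part~(i)(d) at both $\gamma_1$ and $\gamma_2$ and comparing, with the $\dens^4$ threshold arising precisely from the competition between $\Psi(\omega)\sim\omega^2/\dens^5$ and the $\ord(|\omega|/\dens)$ error, as you correctly note.
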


An important step towards Theorem \ref{thm:behaviour_v_close_sing} and Proposition \ref{pro:behaviour_v_close_sing_weak_conditions} will be to prove similar behaviours for $\Theta$ as 
$\Im \Theta$ is the leading term in $v$. These behaviours are collected in the following theorem, Theorem \ref{thm:abstract_cubic_equation}. 
It has weaker assumptions than those of Theorem \ref{thm:behaviour_v_close_sing} and those required in Proposition \ref{pro:behaviour_v_close_sing_weak_conditions} -- 
 in particular, on the coefficient $\mu_1$ in the cubic equation \eqref{eq:cubic_sing_dens}. 
However, these assumptions will be sufficient for the purpose of Theorem \ref{thm:abstract_cubic_equation}. 

\begin{theorem}[Abstract cubic equation] \label{thm:abstract_cubic_equation}
Let $\Theta(\omega)$ be a continuous solution to the cubic equation
\begin{equation} \label{eq:abstract_cubic}
 \mu_3 \Theta(\omega)^3 + \mu_2 \Theta(\omega)^2 + \mu_1 \Theta(\omega) + \omega\Xi(\omega) = 0   
\end{equation}
for $\omega \in \shapeJ $, where $\shapeJ  \subset \R$ is an open interval with $0 \in \shapeJ $. 
We assume that the coefficients satisfy 
\[ \begin{aligned} \mu_3 & = \psi + \ord(\rho), \\ 
\mu_2 & = \sigma + 3 \ii \psi\rho + \ord(\rho^2 + \rho\abs{\sigma} ), \\ 
\mu_1 & = -2 \rho^2 \psi + \ord( \rho^3 + \rho\abs{\sigma}) ,\\
 \Xi(\omega) & = \kappa ( 1 + \nu(\omega))+\ord(\rho)
 \end{aligned} \] 
with some fixed parameters $\psi\geq 0$, $\rho \geq 0$, $\sigma \in \R$ and $\kappa \sim 1$. The cubic equation is assumed to be stable in the sense that  
\begin{equation}\label{eq:abstract_cubic_psi_sigma_sim_1}
 \psi + \sigma^2 \sim 1. 
\end{equation}
Moreover, for all $\omega \in \shapeJ $, we require the following bounds on $\nu$ and $\Theta$: 
\begin{subequations}
\begin{align} 
 \abs{\nu(\omega)} & \lesssim \abs{\omega}^{1/3},  \label{eq:bound_nu_cubic}\\
\abs{\Theta(\omega)} & \lesssim \abs{\omega}^{1/3}.  \label{eq:bound_theta_abstract_cubic}
\end{align}
\end{subequations}
Then the following statements hold true: 
\begin{enumerate}[label=(\roman*)]
\item ($\rho>0$) For any $\Pi_* \sim 1$, there is a threshold $\rho_* \sim 1$ such that if $\rho \in (0,\rho_*]$ and $\abs{\sigma} \leq \Pi_* \rho^2$ 
then we have 
\begin{equation} \label{eq:Im_Theta_nonzero_minimum}
 \Im \Theta (\omega) = \rho\Psim\bigg(\Gamma\frac{\omega}{\rho^3}\bigg) +\ord\Big(\min \{ \rho^{-1} \abs{\omega}, \abs{\omega}^{2/3}\}\Big),  
\end{equation}
with $\Gamma\defeq \sqrt{27} \kappa/(2\psi)$. Note that $\Gamma \sim 1$ if $\dens_* \sim 1$ is small enough. 
\item ($\rho = 0$) If $\rho = 0$ and we additionally assume $\Im \Theta(\omega) \geq 0$ for $\omega \in \shapeJ $, $\Re \Theta$ is non-decreasing on the connected components of $\{ \omega \in \shapeJ  \colon \Im \Theta(\omega)=0\}$ as well as
\begin{equation} \label{eq:abstract_cubic_Im_nu_lesssim_Im_Theta}
 \abs{\Im \nu(\omega)} \lesssim \Im \Theta(\omega) 
\end{equation}
for all $\omega \in \shapeJ $ then we have 
\begin{enumerate}[label=(\alph*)]
\item If $\sigma = 0$ then $\Im \Theta(\omega)$ has a cubic cusp at $\omega=0$, i.e., 
\begin{equation} \label{eq:cusp_abstract_cubic}
 \Im \Theta (\omega) = \frac{\sqrt 3}{2} \bigg( \frac{\kappa}{\psi}\bigg)^{1/3} \abs{\omega}^{1/3} + \ord( \abs{\omega}^{2/3}). 
\end{equation}
\item 
If $\sigma \neq 0$ then $\Im \Theta(\omega)$ has a square root edge at $\omega=0$, i.e., there is  $c_* \sim 1$ such that 
\begin{equation} \label{eq:abstract_cubic_small_gap}
\Im \Theta(\omega) = \begin{cases} \displaystyle  c \wh{\Delta}^{1/3} \Psie\bigg( \frac{\abs{\omega}}{\wh{\Delta}} \bigg) + \ord\Big(  (\abs{\nu(\omega)} + \eps(\omega))\eps(\omega)\Big),
& \text{ if }  \sign \omega  = \sign \sigma,\\ 
0, & \text{ if } \omega \in - \sign \sigma[0,c_* \abs{\sigma}^3], \end{cases}
\end{equation}
where $\wh{\Delta} \in (0,\infty)$, $c\in (0,\infty)$ and $\eps \colon \R \to [0,\infty)$ are defined by 
\begin{equation} \label{eq:def_wh_Delta_abstract}
 \wh{\Delta} \defeq \min\bigg\{\frac{4}{27 \kappa} \frac{\abs{\sigma}^3}{\psi^2}, 1 \bigg\}, \qquad c \defeq 3 \sqrt{\kappa} \frac{\wh{\Delta}^{1/6}}{\abs{\sigma}^{1/2}}, 
\qquad \eps(\omega) \defeq \min \bigg\{ \frac{\abs{\omega}^{1/2}}{\wh{\Delta}^{1/6}}, \abs{\omega}^{1/3} \bigg\}.  
\end{equation}
We have $\wh{\Delta} \sim \abs{\sigma}^3$ and $c \sim 1$. Moreover, for $\sign \omega = \sign \sigma$, we have
\begin{equation} \label{eq:abstract_small_gap_abs_Theta_bound}
 \abs{\Theta(\omega)} \lesssim \eps(\omega). 
\end{equation}
\end{enumerate}
\end{enumerate}
\end{theorem}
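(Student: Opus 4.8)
The plan is to reduce the cubic equation \eqref{eq:abstract_cubic} to one of the normal forms listed in the introduction by a change of variables, apply Cardano's formula with careful error tracking, and use the four selection principles to pick the correct branch. I would organize this as a case analysis driven by the relative sizes of $\rho$, $\sigma$ and the perturbation scale, exactly along the three regimes recorded in the statement: (i) $\rho>0$ with $|\sigma|\lesssim \rho^2$, (ii-a) $\rho=0$ and $\sigma=0$, and (ii-b) $\rho=0$ and $\sigma\neq 0$.

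First I would treat the leading-order algebra. Dividing \eqref{eq:abstract_cubic} by $\mu_3=\psi+\ord(\rho)$ (legitimate once $\psi\sim 1$, i.e.\ when $\sigma$ is small; when $\sigma\sim 1$ the equation is quadratic-dominated and handled separately), and absorbing the shift $\Theta\mapsto\Theta+\mu_2/(3\mu_3)$, one brings the cubic into depressed form $\widehat\Theta^3 + p\widehat\Theta + q = 0$. Using the expansions of $\mu_1,\mu_2,\mu_3$ and $\Xi$, together with the a priori bound $|\Theta(\omega)|\lesssim|\omega|^{1/3}$ from \eqref{eq:bound_theta_abstract_cubic} (which controls the higher-order error in $\nu$ via \eqref{eq:bound_nu_cubic}), I would compute $p$ and $q$ to leading order: one finds $p \sim -\sigma^2/\psi^2 + \ii(\text{terms of order }\rho)$ and $q$ proportional to $\omega\kappa/\psi$ plus corrections. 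Rescaling $\widehat\Theta = \gamma\Omega$ and $\omega = \delta\lambda$ with $\gamma,\delta$ chosen as the natural scales ($\gamma\sim\max\{\rho,|\sigma|\}$, $\delta\sim\gamma^3$) converts this into a perturbation of one of the model normal forms $\Omega^3+\Lambda=0$ (cusp, $\sigma=0$), $\Omega^3\pm 3\Omega + 2\Lambda=0$ (almost cusp / small gap), and the stability hypothesis $\psi+\sigma^2\sim 1$ guarantees the rescaling constants are $\sim 1$ and that the normal form is genuinely one of these rather than degenerate.

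Next I would solve the normal-form cubic by Cardano, writing the three roots explicitly in terms of $\Psim$ and $\Psie$ as defined in \eqref{eq:def_Psim} and \eqref{eq:def_Psie}. The selection of the physically correct root is where the selection principles \spone--\spfour enter: continuity of $\Theta$ across $\omega=0$, the sign condition $\Im\Theta\geq 0$ (from $\Im m\geq 0$ when $\rho=0$), monotonicity of $\Re\Theta$ on the real interval where $\Im\Theta=0$, and the asymptotics forced by the Stieltjes representation. In case (ii-a) these pin down the branch giving $\Im\Theta(\omega) = \tfrac{\sqrt3}{2}(\kappa/\psi)^{1/3}|\omega|^{1/3}+\ord(|\omega|^{2/3})$; in case (ii-b) they force $\Im\Theta$ to vanish identically on the interval $-\sign\sigma\,[0,c_*|\sigma|^3]$ (the gap) and to follow the rescaled $\Psie$ profile on the other side; in case (i) with $\rho>0$ the imaginary part of $q$ regularizes the square-root singularity into the smooth profile $\rho\,\Psim(\Gamma\omega/\rho^3)$. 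The error terms in \eqref{eq:Im_Theta_nonzero_minimum}, \eqref{eq:cusp_abstract_cubic} and \eqref{eq:abstract_cubic_small_gap} come from propagating the $\ord(\rho)$, $\ord(\rho^2+\rho|\sigma|)$ corrections in the coefficients and the $\ord(|\omega|^{1/3})$ error in $\nu$ through the explicit Cardano formulas, using that the map from coefficients to the selected root is locally Lipschitz away from the branch points and Hölder of the appropriate order near them.

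The main obstacle I anticipate is the uniform branch-selection and error control near the transition points — specifically, near $\omega=0$ in the cusp case and near the gap edge $\omega = -c_*\sign\sigma\,|\sigma|^3$ in case (ii-b) — where two roots of the depressed cubic collide and Cardano's formula is not smooth. There one cannot simply differentiate; instead I would localize and use the selection principles (continuity plus the sign of $\Im\Theta$ plus monotonicity of $\Re\Theta$) to identify which root is picked on each side, and estimate the discrepancy between the true root and the model root by a direct quantitative comparison of the two cubics, exploiting the stability bound \eqref{eq:abstract_cubic_psi_sigma_sim_1} to keep the discriminant controlled. A secondary technical point is verifying that the rescaling is consistent — i.e.\ that $|\omega|\lesssim\delta_*$ for a threshold $\delta_*\sim 1$ translates to $|\lambda|$ in a fixed range where the perturbed normal form is uniformly close to the model one — which requires choosing $\delta_*$ small depending only on the model parameters and then checking that the $\ord$-terms remain subleading throughout that range.
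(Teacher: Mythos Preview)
Your proposal is correct and follows essentially the same route as the paper: reduce \eqref{eq:abstract_cubic} by an affine change of variables to one of the normal forms $\Omega^3+\Lambda=0$, $\Omega^3\pm 3\Omega+2\Lambda=0$, $\Omega^2+\Lambda=0$, apply Cardano, and use the selection principles to pick the branch. The paper organizes this by first proving a standalone result (Proposition~\ref{pro:solution_cubic_normal_form}) that handles Cardano and branch selection for each normal form separately, then in the proof of Theorem~\ref{thm:abstract_cubic_equation} exhibits the specific coordinate change in each case; one small point worth noting is that in case~(i) only \spone\ and \sptwo\ are needed, since the initial condition $\Omega(0)=\sqrt{3}(\ii+\chi_1)$ already sits away from any branch point and continuity alone selects the root.
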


\subsection{Cubic equations in normal form} \label{subsec:cubic_normal_form}

The core of the proof of Theorem \ref{thm:abstract_cubic_equation} is to bring \eqref{eq:abstract_cubic} into a normal form by a change of variables. We will first explain the analysis
of these normal forms, especially the mechanism of choosing the right branch of the solution based upon selection principles that will be derived from the constraints on $\Theta$ given 
in Theorem \ref{thm:abstract_cubic_equation}. Then, in Section \ref{subsec:proof_abstract_cubic}, we show how to bring \eqref{eq:abstract_cubic} to these normal forms. 

In the following proposition, we study a special solution $\Omega(\lambda)$ to a one-parameter family of cubic equations in normal forms with constant term $\Lambda(\lambda)$ (or $2\Lambda(\lambda)$), 
where $\Lambda(\lambda)$ is a perturbation of the identity map $\lambda \mapsto \lambda$. 
Here, a-priori, the real parameter $\lambda$ is always contained in an (possibly unbounded) interval around $0$. 
This range of definition will not be explicitly indicated in the statements but will be explicitly restricted 
for their conclusions.
We compare the solution to this perturbed cubic equation with the solution to the cubic equation with constant term $\lambda$. 
Depending on the precise type of the cubic equation, the choice of the solution is based on some of the following \emph{selection principles} 
\begin{enumerate}[label=\textbf{SP\arabic*~}]
\item $\lambda \mapsto \Omega(\lambda)$ is continuous
\item $\Omega(0) = \Omega_0$ for some given $\Omega_0 \in \C$ 
\item[\textbf{SP3~}] $\Im ( \Omega(\lambda) - \Omega(0)) \geq 0$, 
\item[\textbf{SP4'}] $\abs{\Im \Lambda(\lambda)} \leq \gamma \abs{\lambda} \Im \Omega(\lambda)$ for some $\gamma >0$ and $\Re \Omega(\lambda)$ is non-decreasing on the connected components of $\{ \lambda \colon \Im \Omega(\lambda) = 0 \}$. 
\end{enumerate}
We use the notation \spfour to distinguish this selection principle from \textbf{SP-4} which was introduced in Lemma~9.9 
of \cite{AjankiQVE}. 

We will make use of the following standard convention for complex powers. 

\begin{definition}[Complex powers] 
We define $\C \setminus (-\infty,0) \to \C,~\zeta \mapsto \zeta^\gamma$ for $\gamma \in \C$ by $\zeta^\gamma \defeq \exp(\gamma \log \zeta)$, 
where $\log \colon \C \setminus (-\infty,0) \to \C$ is a continuous branch of the complex logarithm with $\log 1=0$. 
\end{definition}

With this convention, we record Cardano's formula as follows: 
\begin{proposition}[Cardano] 
The three roots of $\Omega^3  - 3 \Omega + 2 \zeta$, $\zeta \in \C$, are $\wh{\Omega}_+(\zeta)$, $\wh{\Omega}_-(\zeta)$ and $\wh{\Omega}_0(\zeta)$ which are defined by 
\begin{equation}  \label{eq:def_cubic_root_small_gap_normal_form}
\wh{\Omega}_\pm(\zeta) \defeq \frac{1}{2} ( \Phi_+(\zeta) + \Phi_-(\zeta)) \pm \frac{\ii \sqrt{3}}{2} ( \Phi_+(\zeta)-\Phi_-(\zeta)),
\qquad \wh{\Omega}_0(\zeta) \defeq - (\Phi_+ (\zeta) + \Phi_-(\zeta)), 
\end{equation} 
where 
\[ \Phi_\pm(\zeta) =  \begin{cases} ( \zeta \pm \sqrt{ \zeta^2 - 1})^{1/3}, & \text{if }\Re \zeta \geq 1, \\ ( \zeta \pm \ii \sqrt{ 1 - \zeta^2})^{1/3},&  \text{if } \abs{\Re \zeta} < 1, \\ 
- ( - \zeta \mp \sqrt{\zeta^2 - 1})^{1/3}, & \text{if } \Re \zeta \leq - 1. \end{cases} \] 
\end{proposition}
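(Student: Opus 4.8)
The statement is the classical Cardano formula in the depressed form $\Omega^3 - 3\Omega + 2\zeta = 0$, presented with an explicit choice of branches for the cube roots so that the three roots $\wh{\Omega}_\pm, \wh{\Omega}_0$ depend continuously on $\zeta$ in the slit domains. The plan is to verify it by direct substitution, handling the branch bookkeeping carefully according to the three regions $\Re\zeta \ge 1$, $\abs{\Re\zeta}<1$, $\Re\zeta \le -1$.

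First I would establish the algebraic identity at the level of formal cube roots. Set $\Phi_\pm = \Phi_\pm(\zeta)$ and note that in each of the three regimes the defining expressions are of the form $\Phi_\pm = (\zeta \pm w)^{1/3}$ (up to an overall sign in the third regime) where $w^2 = \zeta^2 - 1$, with $w = \sqrt{\zeta^2-1}$ when $\Re\zeta\ge 1$ and $w = \ii\sqrt{1-\zeta^2}$ when $\abs{\Re\zeta}<1$. The key product identity is $\Phi_+\Phi_- = \big((\zeta+w)(\zeta-w)\big)^{1/3} = (\zeta^2 - w^2)^{1/3} = 1^{1/3} = 1$; here one must check that the particular branch choices make the product of the two cube roots equal to $1$ rather than a nontrivial cube root of unity — this is where the case distinction and the sign convention in the region $\Re\zeta\le -1$ enter. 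Granting $\Phi_+\Phi_- = 1$ and $\Phi_+^3 + \Phi_-^3 = (\zeta+w) + (\zeta-w) = 2\zeta$, a short computation shows that any $\Omega$ of the form $\Omega = \omega\Phi_+ + \bar\omega\Phi_-$ with $\omega$ a cube root of unity satisfies $\Omega^3 = \Phi_+^3 + \Phi_-^3 + 3\Phi_+\Phi_-(\omega\Phi_+ + \bar\omega\Phi_-) = 2\zeta + 3\Omega$, i.e. $\Omega^3 - 3\Omega + 2\zeta = 0$. Taking $\omega \in \{1, e^{2\pi\ii/3}, e^{-2\pi\ii/3}\}$ yields exactly the three expressions $\wh{\Omega}_0 = -(\Phi_++\Phi_-)$ and $\wh{\Omega}_\pm = \tfrac12(\Phi_++\Phi_-) \pm \tfrac{\ii\sqrt3}{2}(\Phi_+-\Phi_-)$.

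Next I would check that these three roots are genuinely distinct (for $\zeta \ne \pm 1$) and hence exhaust all roots of the cubic, and that the formulas are consistent across the boundaries $\Re\zeta = \pm 1$ of the three regions, so that $\wh{\Omega}_\pm, \wh{\Omega}_0$ are well-defined single-valued functions on $\C\setminus\{\pm 1\}$ (and continuous there, using continuity of $\zeta\mapsto\zeta^{1/3}$ on $\C\setminus(-\infty,0)$ together with the fact that the arguments $\zeta\pm w$, resp. $-\zeta\mp w$, stay off the negative real axis in each regime — the reason the slit is placed exactly where it is). Distinctness follows since $\wh{\Omega}_+ - \wh{\Omega}_- = \ii\sqrt3(\Phi_+-\Phi_-)$ and $\wh{\Omega}_0 - \wh{\Omega}_\pm = -\tfrac32(\Phi_+ + \Phi_-) \mp \tfrac{\ii\sqrt3}{2}(\Phi_+-\Phi_-)$ vanish only when $\Phi_+ = \Phi_-$, which forces $w = 0$, i.e. $\zeta = \pm1$.

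The main obstacle is purely the branch-tracking: one must verify region by region that $\Phi_+\Phi_- = 1$ with the stated conventions (in particular that the overall minus sign in the regime $\Re\zeta\le -1$ is precisely what is needed to keep the cube-root arguments in $\C\setminus(-\infty,0)$ and to preserve the product normalization), and that the piecewise definition glues continuously at $\Re\zeta = \pm 1$. This is routine but must be done with care; everything else is the standard substitution computation above.
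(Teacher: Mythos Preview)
Your proposal is correct and follows the standard verification of Cardano's formula; the paper itself gives no proof of this proposition, simply recording it as a classical fact. Your plan to check $\Phi_+\Phi_- = 1$ region by region (which is indeed the only nontrivial point, and is exactly why the sign flip in the regime $\Re\zeta \le -1$ is introduced), then use $\Phi_+^3 + \Phi_-^3 = 2\zeta$ and the substitution $\Omega = \omega\Phi_+ + \bar\omega\Phi_-$ with $\omega^3 = 1$, is the right way to verify it directly.
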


\begin{proposition}[Solution to the cubic in normal form] \label{pro:solution_cubic_normal_form}
Let $\Omega(\lambda)$ satisfy \spone and \sptwo.
\begin{enumerate}[label=(\roman*)]
\item (Non-zero local minimum) 
Let $\Omega_0 =\sqrt 3( \ii + \chi_1)$ in \sptwo and $\Omega(\lambda)$ satisfy
\begin{equation} \label{eq:cubic_normal_nonzero_minimum}
 \Omega(\lambda)^3 + 3 \Omega(\lambda) + 2 \Lambda(\lambda) = 0, \qquad  \Lambda(\lambda) = (1 + \chi_2 + \mu(\lambda)) \lambda + \chi_3, 
\end{equation}
with $\abs{\mu(\lambda)} \lesssim \alpha \abs{\lambda}^{1/3}$, $\alpha>0$. 
Then there exist $\delta \sim 1$ and $\chi_* \sim 1$ such that if $\alpha, \abs{\chi_1}, \abs{\chi_2}, \abs{\chi_3} \leq \chi_*$ then 
\begin{equation} \label{eq:normal_form_expansion_local_minimum}
 \Omega(\lambda) -\Omega_0 = \wh{\Omega}(\lambda)  - \ii \sqrt 3 + \ord\Big( (\alpha+\abs{\chi_2}+ \abs{\chi_3}) \min\{\abs{\lambda}, \abs{\lambda}^{2/3} \} 
\Big) 
\end{equation}
for all $\lambda \in \R$ satisfying $\abs{\lambda} \leq \delta/\alpha^3$, where 
 $\wh{\Omega}(\lambda) \defeq  \Phi_{\rm{odd}}(\lambda) + \ii \sqrt{3} \Phi_{\rm{even}}(\lambda)$ and 
$\Phi_{\rm{odd}}$ and $\Phi_{\rm{even}}$ are the odd and even part of the function $\Phi \colon \C \to \C$, $\Phi(\zeta) \defeq (\sqrt{1 + \zeta^2}+ \zeta)^{1/3}$, 
respectively. 

Moreover, we have for $\abs{\lambda} \leq \delta/\alpha^3$ that 
\begin{equation} \label{eq:bound_Omega_normal_nonzero_minimum}
 \abs{\Omega(\lambda) - \Omega_0} \lesssim \min\{ \abs{\lambda}, \abs{\lambda}^{1/3}\} . 
\end{equation}
\end{enumerate}
In the following, we assume that $\Omega(\lambda)$, in addition to \spone and \sptwo, also satisfies \spthree and \spfour.
\begin{enumerate}[label=(\roman*)]
\setcounter{enumi}{1}
\item (Simple edge) 
Let $\Omega_0=0$ in \sptwo and $\Omega(\lambda)$ be a solution to 
\begin{equation} \label{eq:cubic_normal_simple_edge}
 \Omega^2(\lambda) + \Lambda(\lambda) = 0, \qquad \Lambda(\lambda) = ( 1 + \mu(\lambda))\lambda.  
\end{equation}
If $\abs{\mu(\lambda)} \leq \gamma^{2/3} \abs{\lambda}^{1/3}$ for the $\gamma>0$ of \spfour then there is $c_* \sim 1$ such that 
\begin{equation} \label{eq:normal_form_expansion_simple_edge}
 \Omega(\lambda) = \wh{\Omega}(\lambda) + \ord\Big(\abs{\mu(\lambda)}\abs{\lambda}^{1/2}\Big), \qquad 
\wh{\Omega}(\lambda ) \defeq \begin{cases} \ii \lambda^{1/2},& \text{if } \lambda \in [0,c_*\gamma^{-2}], \\ -(-\lambda)^{1/2}, & \text{if } \lambda \in [-c_*\gamma^{-2},0]. \end{cases} 
\end{equation}
Moreover, we have $\Im \Omega(\lambda) = 0$ for $ \lambda \in [-c_*\gamma^{-2}, 0]$. 
\item (Sharp cusp) Let $\Omega_0=0$ in \sptwo, $\gamma \sim 1$ in \spfour and $\Omega(\lambda)$ be a solution to 
\begin{equation} \label{eq:cubic_normal_sharp_cusp}
 \Omega^3(\lambda) + \Lambda(\lambda) = 0, \qquad \Lambda(\lambda) = (1 +\mu(\lambda)) \lambda.
\end{equation}
If $\abs{\mu(\lambda)} \lesssim \abs{\lambda}^{1/3}$ then there is $\delta \sim 1$ such that 
\begin{equation} \label{eq:normal_form_expansion_sharp_cusp}
 \Omega(\lambda) =  \wh{\Omega}(\lambda) + \ord\Big( \abs{\mu(\lambda)} \abs{\lambda}^{1/3}\Big), \qquad 
\wh{\Omega}(\lambda) \defeq \frac{1}{2} \begin{cases} (-1+\ii\sqrt{3}) \lambda^{1/3}, & \text{if } \lambda \in (0,\delta], \\ (1+\ii\sqrt{3})\abs{\lambda}^{1/3}, & \text{if } \lambda \in [-\delta, 0]. \end{cases}  
\end{equation}
\item (Two nearby edges) 
Let $\Omega_0 = s$ for some $s \in \{ \pm 1\}$ in \sptwo, $\gamma \sim 1$ in \spfour and $\Omega(\lambda)$ be a solution to 
\begin{equation} \label{eq:cubic_normal_two_nearby_edges}
 \Omega(\lambda)^3 - 3 \Omega(\lambda) + 2 \Lambda(\lambda) = 0, \qquad \Lambda(\lambda) = ( 1+ \mu(\lambda))\lambda + s. 
\end{equation}
Then there are $\delta \sim 1$, $\varrho \sim 1$ and $\gamma_* \sim 1$ such that if $\abs{\mu(\lambda)} \lesssim \wh{\gamma} \abs{\lambda}^{1/3}$ for some $\wh{\gamma} \in [0, \gamma_*]$ then 
\begin{enumerate}[label=(\alph*)]
\item We have 
\begin{equation}\label{eq:normal_form_expansion_nearby_edges}
 \Omega(\lambda) = \wh{\Omega}_+(1 +\abs{\lambda}) + \ord\Big( \abs{\mu(\lambda)}\min\{\abs{\lambda}^{1/2},\abs{\lambda}^{1/3}\}\Big),
\end{equation}
for all $\lambda \in s(0,2\delta/\wh{\gamma}^3]$. (Recall the definition of $\wh{\Omega}_+$ from \eqref{eq:def_cubic_root_small_gap_normal_form}.) 
Moreover, for all $\lambda \in s(0,2\delta/\wh{\gamma}^3]$, we have 
\begin{equation} \label{eq:normal_coordinates_small_gap_estimate_Omega}
 \abs{\Omega(\lambda) - \Omega_0} \lesssim \min\Big \{ \abs{\lambda}^{1/2}, \abs{\lambda}^{1/3}\Big \} . 
\end{equation}
\item For all $\lambda \in -s(0,2-\varrho\wh{\gamma}]$, we have
\begin{equation} \label{eq:Im_Omega_small_in_small_gap}
 \Im \Omega(\lambda) \lesssim \wh{\gamma}^{1/2}.  
\end{equation}
\item We have 
\begin{equation} \label{eq:Im_Omega_-_sign_lambda_3_positive}
 \Im \Omega(-s (2 + \varrho \wh{\gamma})) >0.  
\end{equation}
\end{enumerate}
\end{enumerate}
\end{proposition}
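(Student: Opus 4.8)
\textbf{Plan of proof for Proposition \ref{pro:solution_cubic_normal_form}.}

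The plan is to treat the four cases (i)--(iv) separately, but all follow the same template: write the relevant cubic (or quadratic) in normal form, apply Cardano's formula to produce three candidate branches, and then use the selection principles \spone--\spfour to pin down which branch the solution $\Omega(\lambda)$ must follow, first at $\lambda=0$ and then, by continuity and an implicit-function-theorem / perturbation argument, on the whole interval of definition. The key analytic input is that in each normal form the constant term $\Lambda(\lambda)$ is a small perturbation of $\lambda\mapsto\lambda$, so Cardano's formula for the \emph{unperturbed} equation (with constant term $\lambda$ or $2\lambda$, i.e.\ $\widehat\Omega$, $\widehat\Omega_\pm$) gives the leading behaviour, and the error is controlled by the size of $\mu(\lambda)$ times the modulus of continuity of the relevant branch of Cardano's map. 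Concretely, for (ii), (iii) and (iv)(a) the map $\zeta\mapsto\widehat\Omega(\zeta)$ (resp.\ $\widehat\Omega_+$) is $1/2$-, $1/3$- resp.\ locally $1/2$- and $1/3$-Hölder, so $|\Omega(\lambda)-\widehat\Omega(\lambda)|\lesssim|\mu(\lambda)||\lambda|^{\gamma}$ with the stated exponent; for (i) the relevant branch of the cubic $\Omega^3+3\Omega+2\Lambda=0$ has a globally Lipschitz-type behaviour near $\Omega_0=\sqrt3(\ii+\chi_1)$, giving the $\min\{|\lambda|,|\lambda|^{2/3}\}$ rate after dividing by $|\lambda|$ where appropriate.

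First I would establish, for each case, which root of the unperturbed cubic satisfies the selection principles. In case (i) there is no \spthree/\spfour; instead \sptwo fixes $\Omega(0)=\Omega_0$ with $\Omega_0$ close to $\sqrt3\ii$, which is the unique root of $X^3+3X=0$ with positive imaginary part, and \spone forces $\Omega(\lambda)$ to stay on the branch $\widehat\Omega(\lambda)-\ii\sqrt3$ emanating from it; since that branch does not collide with the other two for $|\lambda|\le\delta/\alpha^3$ (the discriminant of $X^3+3X+2\Lambda$ stays bounded away from zero there), a standard perturbation expansion around the explicit Cardano root gives \eqref{eq:normal_form_expansion_local_minimum} and \eqref{eq:bound_Omega_normal_nonzero_minimum}. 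In cases (ii)--(iv) the sign data enters: \spthree ($\Im(\Omega(\lambda)-\Omega(0))\ge0$) together with \spfour forces, on the ``gap side'' of $\lambda=0$, the real branch of Cardano (hence $\Im\Omega=0$ and $\Re\Omega$ monotone, consistent with \spfour's monotonicity clause), and on the ``support side'' the complex-conjugate pair branch with $\Im>0$; this is exactly where the three cases of $\Phi_\pm$ in Cardano's formula (according to $\Re\zeta\gtrless\pm1$) must be matched to the sign of $\lambda$ and the value of $s$. For (iv) one additionally needs \eqref{eq:Im_Omega_small_in_small_gap} and \eqref{eq:Im_Omega_-_sign_lambda_3_positive}: the former because inside the small gap $\lambda\in-s(0,2-\varrho\widehat\gamma]$ the root is real up to an $O(\widehat\gamma^{1/2})$ perturbation (the discriminant vanishes at $\lambda=-2s$, creating a square-root correction of size $\widehat\gamma^{1/2}$ near the far edge), and the latter because just past $\lambda=-s(2+\varrho\widehat\gamma)$ the unperturbed discriminant changes sign, so the pair of complex roots opens up with strictly positive imaginary part, and the perturbation $\mu$ is too small to destroy this once $\widehat\gamma\le\gamma_*$.

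The main obstacle I expect is the matching of branches across $\lambda=0$ in cases (ii)--(iv): one must show that the continuous solution singled out by \spone--\spfour does not ``jump'' to a different Cardano branch, and this requires verifying that the candidate branch is the \emph{only} one compatible with all four principles simultaneously on a full neighbourhood of $0$. The delicate point is that Cardano's branches are defined piecewise (the three regimes $\Re\zeta\ge1$, $|\Re\zeta|<1$, $\Re\zeta\le-1$), and one has to check that the perturbed solution, which a priori could wander, stays in the correct regime; this is done by a bootstrap: assume $|\Omega(\lambda)-\widehat\Omega(\lambda)|$ is small, deduce that $\Omega$ lies in the expected regime, conclude the Cardano expansion is valid there, which then improves the bound. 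A secondary technical nuisance is keeping all the $\ord(\cdot)$ error terms uniform in the small parameters $\chi_i,\alpha,\widehat\gamma$ on intervals whose length itself blows up like $\delta/\alpha^3$ or $\delta/\widehat\gamma^3$; here one tracks that the discriminant of the unperturbed cubic is comparable to $1+|\lambda|$ (resp.\ $|\lambda|$) on the relevant range, so the implicit function theorem applies with a uniform lower bound on $\partial_\Omega(\text{cubic})$ away from the edges, and near the edges one uses the explicit square-root/cube-root structure of Cardano directly rather than the implicit function theorem. Once $\Omega(\lambda)$ is controlled in each case, the stated expansions \eqref{eq:normal_form_expansion_local_minimum}, \eqref{eq:normal_form_expansion_simple_edge}, \eqref{eq:normal_form_expansion_sharp_cusp}, \eqref{eq:normal_form_expansion_nearby_edges} and the auxiliary bounds follow by substituting the Cardano formulas and simplifying.
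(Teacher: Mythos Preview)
Your plan is correct and follows essentially the same route as the paper: enumerate the Cardano branches, use \spone--\spfour to eliminate all but one branch on each side of $\lambda=0$, then estimate $\widehat\Omega(\Lambda(\lambda))-\widehat\Omega(\lambda)$ via the H\"older regularity of the explicit Cardano root. The paper in fact defers most of the detailed computations to the earlier work \cite{AjankiQVE} (Proposition~9.3, Lemmas~9.11, 9.12, 9.14 there), and the only new ingredient here is the replacement of the old selection principle \textbf{SP-4} by \spfour; the concrete way \spfour is deployed is a two-step contradiction you describe only in outline: assuming the wrong branch, one first takes the imaginary part of the cubic (or quadratic) and uses the bound $|\Im\Lambda|\le\gamma|\lambda|\Im\Omega$ to force $\Im\Omega=0$ for small $|\lambda|$, and \emph{then} invokes the monotonicity clause of \spfour on $\Re\Omega$ to contradict the explicit sign of $\Re\widehat\Omega_{\text{wrong}}$ near $0$---so your ``bootstrap'' is really this contradiction argument rather than an implicit-function iteration.
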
 

The core of each part in Proposition \ref{pro:solution_cubic_normal_form} is choosing the correct cubic root. 
For the most complicated part (iv), we state this choice in the following auxiliary lemma. 
For its formulation, we 
introduce the intervals 
\begin{equation} \label{eq:def_intervals_small_gaps}
 I_1 \defeq - s [-\lambda_1,0), \qquad I_2 \defeq -s(0,\lambda_2], \qquad I_3 \defeq -s [\lambda_3,\lambda_1], 
\end{equation}
where we used the definitions 
\begin{equation} \label{eq:def_lambdas_small_gap}
\lambda_1 \defeq 2 \frac{\delta}{\wh{\gamma}^3}, \qquad \lambda_2 \defeq 2 - \varrho \wh{\gamma}, \qquad 
\lambda_3 \defeq 2 + \varrho \wh{\gamma}. 
\end{equation}
These definitions are modelled after (9.105) in \cite{AjankiQVE}. We will choose $\wh{\gamma} = \wh{\Delta}^{1/3}$ in the proof of 
Theorem \ref{thm:abstract_cubic_equation} below. Then $\lambda_1$ corresponds to an expansion range $\delta$ in the $\omega$ 
coordinate. Note that with the above choice of $\wh{\gamma}$, we obtain the same $\lambda_1$ as in (9.105) of \cite{AjankiQVE}. 
However, $\lambda_2$ and $\lambda_3$ differ slightly from those in \cite{AjankiQVE}, where $\lambda_{2,3}$ were set to be $2 \mp \varrho \abs{\sigma}$. 
Nevertheless, we will see below that $\wh{\gamma} \sim \abs{\sigma}$ but they are not equal in general. 

For given $\delta, \varrho \sim 1$, we will always choose $\gamma_* \sim 1$ so small that $\wh{\gamma} \leq \gamma_*$ implies
\[  \lambda_1 \geq 4, \qquad 1 \leq \lambda_2 < 2 < \lambda_3 \leq 3 . \] 
Therefore, the intervals in \eqref{eq:def_intervals_small_gaps} are disjoint and nonempty. 

\begin{lemma}[Choice of cubic roots in Proposition \ref{pro:solution_cubic_normal_form} (iv)] \label{lem:choice_roots_two_nearby_edges}
Under the assumptions of Proposition \ref{pro:solution_cubic_normal_form} (iv), there are $\delta, \varrho, \gamma_* \sim 1$ such that 
if $\wh{\gamma} \leq \gamma_*$ then we have 
\[ \Omega|_{I_k} = \wh{\Omega}_+ \circ \Lambda|_{I_k} \] 
for $k=1, 2,3$. 
Here, $\wh{\Omega}_+$ is defined as in \eqref{eq:def_cubic_root_small_gap_normal_form}. 
\end{lemma}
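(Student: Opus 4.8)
The plan is to prove Lemma~\ref{lem:choice_roots_two_nearby_edges} by a continuity/connectedness argument: since $\Omega$ is a continuous solution of $\Omega^3 - 3\Omega + 2\Lambda = 0$ with $\Lambda = (1+\mu)\lambda + s$, at every $\lambda$ the value $\Omega(\lambda)$ coincides with one of the three Cardano roots $\wh\Omega_+(\Lambda(\lambda))$, $\wh\Omega_-(\Lambda(\lambda))$, $\wh\Omega_0(\Lambda(\lambda))$; the task is to show that the correct root is $\wh\Omega_+$ throughout each of the three intervals $I_1, I_2, I_3$. The key point is that the three Cardano roots $\wh\Omega_\pm$, $\wh\Omega_0$ are pairwise separated (away from the branch points $\zeta = \pm 1$, i.e.\ $\Lambda = \pm 1$) by a gap that is $\sim 1$ on compact subsets, so a continuous selection of roots can only switch branches near $\Lambda = \pm 1$. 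The strategy, following the template of Lemma~9.9 in \cite{AjankiQVE}, is to first pin down the branch at a convenient anchor point in each interval using the selection principles \spone--\spfour, then propagate the choice by continuity, treating the passage through (or near) $\Lambda = \pm 1$ carefully.

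First I would fix $\delta, \varrho, \gamma_* \sim 1$ as in Proposition~\ref{pro:solution_cubic_normal_form}~(iv) so that (with $\wh\gamma \le \gamma_*$) one has $\lambda_1 \ge 4$ and $1 \le \lambda_2 < 2 < \lambda_3 \le 3$, guaranteeing that $I_1, I_2, I_3$ are disjoint nonempty intervals; moreover on $I_1$ the argument $\Lambda(\lambda) = (1+\mu(\lambda))\lambda + s$ ranges (for $\lambda \in s(0, 2\delta/\wh\gamma^3]$) from near $s$ outward, on $I_2$ it stays in a region with $\Re \Lambda > 1$ bounded away from $1$ except near the endpoint $-s\cdot 0$, and on $I_3$ it is bounded away from $\{\pm1\}$. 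I would anchor the branch choice as follows. On $I_2 = -s(0,\lambda_2]$, near $\lambda \to 0^-$ (from the $I_2$ side) we have $\Lambda \to s$, a branch point; but by \sptwo, $\Omega(0) = \Omega_0 = s$, and among the Cardano roots exactly $\wh\Omega_+$ satisfies $\wh\Omega_+(s) = s$ (the double root is $\wh\Omega_+ = \wh\Omega_-$ there while $\wh\Omega_0 = -2s$), so continuity forces $\Omega = \wh\Omega_+\circ\Lambda$ in a neighbourhood of $0$ in $I_2$, hence (by the separation of roots for $\abs{\Re\Lambda}$ bounded away from $1$) on all of $I_2$. The same anchoring at $\lambda = 0$ works for $I_1$, since $I_1 = -s[-\lambda_1,0)$ abuts $0$ from the other side: again $\Omega(0) = s = \wh\Omega_+(s)$, and for $\lambda \in I_1$ with $\abs\lambda$ small, $\Lambda(\lambda)$ is within $\ord(\abs\lambda^{1/3}\cdot\abs\lambda) = o(\abs\lambda)$ of $s + \lambda \gtrless s$ — one must check that $\Omega$ stays on the $\wh\Omega_+$ sheet across the branch point $\Lambda = s$, which is exactly where \spthree (that $\Im(\Omega - \Omega_0) \ge 0$) and \spfour are used to rule out jumping onto $\wh\Omega_0$ or picking the wrong square-root sign; this is the analogue of the edge-selection argument in Proposition~\ref{pro:solution_cubic_normal_form}~(ii).

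For $I_3 = -s[\lambda_3, \lambda_1]$ the anchor is the matching between $I_1$ and $I_3$ at the common endpoint $\lambda = -s\lambda_1$: $\Omega$ is continuous there and $I_1, I_3$ share that endpoint, so once $\Omega|_{I_1} = \wh\Omega_+\circ\Lambda|_{I_1}$ is established, continuity at $-s\lambda_1$ gives $\Omega(-s\lambda_1) = \wh\Omega_+(\Lambda(-s\lambda_1))$, and since on $I_3$ the argument $\Lambda$ stays bounded away from $\{\pm1\}$ (using $\lambda_3 > 2$ and $\wh\gamma$ small so $\abs{\mu}$ is controlled), the roots are separated by $\sim 1$ and the branch propagates along $I_3$. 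I would verify along the way the needed quantitative separation: for $\abs{\Re\zeta - 1} \gtrsim \epsilon$ and $\abs{\zeta} \lesssim 1$, the three roots $\wh\Omega_\pm(\zeta), \wh\Omega_0(\zeta)$ are mutually at distance $\gtrsim \epsilon^{1/2}$ — this follows from the explicit formulas \eqref{eq:def_cubic_root_small_gap_normal_form} and the fact that the discriminant $\zeta^2 - 1$ of $\Omega^3 - 3\Omega + 2\zeta$ vanishes only at $\zeta = \pm 1$. The main obstacle will be the passage through the branch point $\Lambda = s$ between $I_1$ and $I_2$ (and the endpoint behaviour near $\Lambda = -s$, i.e.\ near the second branch point, which is relevant because $\lambda_2 < 2 < \lambda_3$ straddle the value where $\Lambda = -s$): there the Cardano roots collide and a naive continuity argument is insufficient, so one must use the sign constraints \spthree and \spfour — precisely as in \cite[Lemma 9.9]{AjankiQVE} — to single out the branch that keeps $\Im\Omega \ge 0$ (or $\Im\Omega$ small on the gap side) and makes $\Re\Omega$ monotone on the real-$\Omega$ components; this is the step where the modified selection principle \spfour (replacing \textbf{SP-4} of \cite{AjankiQVE}) must be checked to still do its job, and carrying out that verification carefully is where the real work lies.
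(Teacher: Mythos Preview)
Your overall strategy (identify the branch at an anchor point via the selection principles, then propagate by continuity using the separation of the Cardano roots away from the branch points $\Lambda = \pm 1$) is the right one and matches what the paper does by citing \cite[Lemma~9.14]{AjankiQVE} (not Lemma~9.9). However, your argument for $I_3$ contains a genuine error: $I_1$ and $I_3$ do \emph{not} share an endpoint. Writing out the intervals for, say, $s=-1$ gives $I_1 = [-\lambda_1,0)$, $I_2 = (0,\lambda_2]$, $I_3 = [\lambda_3,\lambda_1]$; the point $-s\lambda_1 = \lambda_1$ is the right endpoint of $I_3$, but the left endpoint of $I_1$ is $-\lambda_1$. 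Thus your ``matching at a common endpoint'' to transport the branch from $I_1$ to $I_3$ simply does not apply. In \cite{AjankiQVE} the choice $a_3 = +$ on $I_3$ is established by a separate argument (using that $\Lambda$ on $I_3$ lies past the second branch point $\Lambda = -s$, where one can analyze the imaginary parts of the three roots directly and invoke \spthree); you need to supply that argument rather than the nonexistent continuity link.

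Second, you correctly identify that the delicate step is the passage near the branch point $\Lambda = s$, i.e.\ the determination of $a_2$ on $I_2$, and that this is exactly where \spfour replaces the original \textbf{SP-4} of \cite{AjankiQVE}. But you stop at ``this is where the real work lies'' without indicating the mechanism. The paper's argument is concrete: assuming $\Omega|_{I_2} = \wh\Omega_0 \circ \Lambda$, one takes the imaginary part of the cubic $\Omega^3 - 3\Omega + 2\Lambda = 0$ to get $3((\Re\Omega)^2 - 1)\Im\Omega = -2\lambda\Im\mu + (\Im\Omega)^3$; the explicit behaviour $\Re\wh\Omega_0(\Lambda(\lambda)) \le -1 - c|\lambda|^{1/2} + O(\wh\gamma^{1/2}|\lambda|^{2/3})$ together with the \spfour bound $|\Im\mu| \lesssim |\lambda|\,\Im\Omega$ forces $|\lambda|^{1/2}\Im\Omega \lesssim |\lambda|\,\Im\Omega$, hence $\Im\Omega = 0$ for small $\lambda$, and then the monotonicity clause of \spfour contradicts the decreasing nature of $\Re\wh\Omega_0$. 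Ruling out $a_2 = -$ uses only \sptwo (since $\wh\Omega_-(s) = -2s \ne s = \Omega_0$ --- actually $\wh\Omega_-$ also limits to $s$ at the branch point, so one uses the limit from the other side; the paper notes $\lim_{\lambda\downarrow -1}\wh\Omega_-(\lambda) = 2$ when $s=-1$). Your anchoring ``$\wh\Omega_+(s) = s$'' at the branch point is insufficient by itself because $\wh\Omega_+$ and $\wh\Omega_-$ coincide there.
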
 

\begin{proof}
The proof is the same as the one of Lemma 9.14 in \cite{AjankiQVE} but \textbf{SP-4} in \cite{AjankiQVE} is replaced by \spfour above.
In that proof, \textbf{SP-4} is used only in the part titled ``Choice of $a_2$''. We redo this part here. 
Recall that $a_2 = 0, \pm$ denoted the index such that $\Omega|_{I_2} = \wh{\Omega}_{a_2} \circ \Lambda|_{I_2}$ and our goal is to show $a_2 = +$. 
Similarly as in \cite{AjankiQVE}, we assume without loss of generality $s=-1$. 
Since $\lim_{\lambda \downarrow -1} \wh{\Omega}_-(\lambda) = 2$ and $\Omega(0)=-1$ by \sptwo, we conclude $a_2 \neq -$. 
(In the corresponding step in \cite{AjankiQVE}, there was a typo: $\wh{\Omega}_+(-1+0) = 2$ should have been $\wh{\Omega}_-(-1+0)=2$, 
resulting in the choice $a_2=+$. This conclusion is only used in the bound (9.137) of \cite{AjankiQVE} 
which still holds true. The rest of the proof is unaffected.)

We now prove $a_2 \neq 0$. To that end, we take the imaginary part of the cubic equation, \eqref{eq:cubic_normal_two_nearby_edges}, and obtain  
\begin{equation} \label{eq:proof_select_root_two_nearby_edges}
 3( (\Re \Omega)^2 - 1 ) \Im \Omega = -2 \lambda \Im \mu(\lambda) + (\Im \Omega)^3. 
\end{equation}
Suppose that $a_2 = 0$. 
From the definition of $\wh{\Omega}_0$, $\Lambda(\lambda) = ( 1+ \mu(\lambda))\lambda - 1$ and $\abs{\mu(\lambda)} \lesssim \wh{\gamma} \abs{\lambda}^{1/3}$ we obtain 
\begin{equation} \label{eq:nearby_edges_behaviour_hat_Omega_0}
 \Re \wh{\Omega}_0(\Lambda(\lambda)) \leq -1 - c \abs{\lambda}^{1/2} + C \wh{\gamma}^{1/2} \lambda^{2/3}, \qquad 
\abs{\Im \wh{\Omega}_0(\Lambda(\lambda))} \lesssim \wh{\gamma}^{1/2} \lambda^{2/3}, 
\end{equation}
(compare (9.120) in \cite{AjankiQVE}). 
Thus, from \eqref{eq:proof_select_root_two_nearby_edges}, we conclude 
\[ \abs{\lambda}^{1/2} \Im \Omega \lesssim \abs{\lambda} \Im \Omega \] 
for small $\lambda$ as $\abs{\Im \mu(\lambda)} \lesssim \Im \Omega$ by \spfour and $\abs{\Im\Lambda}= \abs{\lambda}\abs{\Im \mu}$. 
Hence, $\Im \Omega(\lambda) = 0$ for small enough $\abs{\lambda}$. Thus, $\Re \Omega$ is non-decreasing 
for such $\lambda$ by \spfour, but from $\Omega(0)=-1$ and the first bound in \eqref{eq:nearby_edges_behaviour_hat_Omega_0}
we conclude that $\Re \Omega$ has to be decreasing if $\Omega(\lambda) = \wh{\Omega}_0(\Lambda(\lambda))$. 
This contradiction shows $a_2 \neq 0$, hence, $a_2 = +$. The rest of the proof in \cite{AjankiQVE} is unchanged. 
\end{proof}

\begin{proof}[Proof of Proposition \ref{pro:solution_cubic_normal_form}]
For the proof of (i), we mainly follow the proof of Proposition 9.3 in \cite{AjankiQVE} with $\gamma_4 = \chi_1$, 
$\gamma_5 = \chi_2$ and $\gamma_6 = \chi_3$ in (9.35) and (9.37) of \cite{AjankiQVE}. 

Following the careful selection of the correct solution of \eqref{eq:cubic_normal_nonzero_minimum} (cf.~(9.36) in \cite{AjankiQVE}) by the selection principles till above (9.50) in \cite{AjankiQVE} 
yields $\Omega(\lambda) = \wh{\Omega}(\Lambda(\lambda))$ 
and hence, in particular, $\wh{\Omega} (\chi_3) = \Omega_0 = \sqrt{3}(\ii + \chi_1)$. ($\wh{\Omega}=\wh{\Omega}_+$ in \cite{AjankiQVE}.)
By defining 
\[ \Lambda_0(\lambda) \defeq ( 1 + \chi_2 + \mu(\lambda))\lambda \] 
and using $\abs{\mu(\lambda)}\lesssim \alpha \abs{\lambda}^{1/3}$ instead of (9.54) in \cite{AjankiQVE}, we obtain 
\[ \wh{\Omega}(\Lambda_0(\lambda)) - \wh{\Omega}(0) = \wh{\Omega}(\lambda) - \wh{\Omega}(0) + \ord\left(
(\abs{\chi_2} + \abs{\mu(\lambda)}) \frac{\abs{\lambda}}{1 + \abs{\lambda}^{2/3}} \right)
 =  \wh{\Omega}(\lambda) - \wh{\Omega}(0) + \ord( (\alpha+\abs{\chi_2}) \min\{\abs{\lambda}, \abs{\lambda}^{2/3} \}) \] 
instead of (9.56) in \cite{AjankiQVE}. 
Thus, (9.57) in the proof of Proposition 9.3 in \cite{AjankiQVE} yields 
\[ \wh{\Omega}(\chi_3 + \Lambda_0(\lambda)) - \wh{\Omega}(\chi_3) = \wh{\Omega}(\lambda) - \wh{\Omega}(0) + 
\ord( (\alpha+\abs{\chi_2}+\abs{\chi_3}) \min\{\abs{\lambda},\abs{\lambda}^{2/3}\}).\]
Thus, we obtain \eqref{eq:normal_form_expansion_local_minimum} since $\wh{\Omega}(\chi_3) = \Omega_0$ and $\wh{\Omega}(0) = \ii\sqrt{3}$. We remark that \eqref{eq:bound_Omega_normal_nonzero_minimum} is exactly (9.53) in \cite{AjankiQVE}. 

The proof of (ii) resembles the proof of Lemma 9.11 in \cite{AjankiQVE} but we replace assumption \textbf{SP-4} of \cite{AjankiQVE} by \spfour. 
Since $\Omega(\lambda)$ solves \eqref{eq:cubic_normal_simple_edge}, there is a function 
$A\colon \R \to \{ \pm \}$ such that $\Omega(\lambda) = \wt{\Omega}_{A(\lambda)}(\Lambda(\lambda))$ 
for all $\lambda \in \R$. 
Here, $\wt{\Omega}_\pm \colon \C \to \C$ denote the functions 
\[ \wt{\Omega}_\pm(\zeta) \defeq \pm \begin{cases} \ii \zeta^{1/2}, & \text{if } \Re \zeta \geq 0, \\ -(-\zeta)^{1/2}, & \text{if } \Re\zeta <0.\end{cases} \]
(Note that they were denoted by $\wh{\Omega}_\pm$ in (9.78) of \cite{AjankiQVE}). 
By assumption, there is $c_*\sim 1$ such that $\abs{\mu(\lambda)} < 1$ for all $ \abs{\lambda} \leq c_*\gamma^{-2}$. 
Hence, by \spone, we find $a_+, a_- \in \{\pm\}$ such that $A(\lambda) = a_\pm$ for $\lambda \in \pm[0,c_*\gamma^{-2}]$. 

For $\lambda \geq 0$, we have 
\[ \Im \wt{\Omega}_-(\Lambda(\lambda)) = - \lambda^{1/2} + \ord(\mu(\lambda)\lambda^{1/2}). \] 
Thus, possibly shrinking $c_*\sim 1$, we obtain $\Im \wt{\Omega}_-(\Lambda(\lambda)) <0$ for $\lambda \in (0,c_*\gamma^{-2}]$. Therefore, the choice $a_+ = -$ would contradict \spthree and we conclude $a_+ = +$. 

We now prove that $a_-=+$. Assume to the contrary that $a_- = -$. For small enough $c_* \sim 1$, we have 
\[ \begin{aligned} 
\Re \wt{\Omega}_- (\Lambda(\lambda)) & = \abs{\lambda}^{1/2} \Re ( 1+ \mu(\lambda))^{1/2} \sim \abs{\lambda}^{1/2},\\
\Im \wt{\Omega}_-(\Lambda(\lambda)) & = \abs{\lambda}^{1/2} \Im ( ( 1 + \mu(\lambda))^{1/2}) \lesssim \abs{\lambda}^{1/2}
\end{aligned} \]
for $\lambda \in [-c_*\gamma^{-2},0)$ by the definition of $\wt{\Omega}_-$ and $\Lambda$. 
Hence, taking the imaginary part of \eqref{eq:cubic_normal_simple_edge} and using \spfour yield 
\[ \abs{\lambda}^{1/2} \Im \Omega(\lambda) \lesssim \gamma\abs{\lambda} \Im \Omega(\lambda) \] 
for $\lambda \in [-c_*\gamma^{-2},0)$. By possibly shrinking $c_* \sim 1$, we obtain 
$\Im \Omega(\lambda) = 0$ for $\lambda \in [-c_*\gamma^{-2}, 0)$. Thus, \spfour implies that 
$\Re \Omega$ is non-decreasing on $[-c_*\gamma^{-2},0)$ which contradicts $\Re \wt{\Omega}_-(0)=0$ and $\Re \wt{\Omega}_- (\Lambda(\lambda)) \sim \abs{\lambda}^{1/2} >0$ 
for $\lambda \in [-c_*\gamma^{-2},0)$ with small enough $c_* \sim 1$. 
Hence, $a_-=+$ which completes the selection of the main term $\wh{\Omega}=\wt{\Omega}_+$ in \eqref{eq:normal_form_expansion_simple_edge}. The error term in \eqref{eq:normal_form_expansion_simple_edge} 
follows by estimating $\wh{\Omega}(\Lambda(\lambda))$ directly. 

For the proof of (iii), we select the correct root of \eqref{eq:cubic_normal_sharp_cusp} as in the proof of Lemma 9.12 in \cite{AjankiQVE} under \spfour instead of \textbf{SP-4}.
Since $\Omega(\lambda)$ solves \eqref{eq:cubic_normal_sharp_cusp} there is a function $A \colon \R \to 
\{ 0, \pm\}$ such that 
\[ \Omega(\lambda) = \wt{\Omega}_{A(\lambda)} ( \Lambda(\lambda)) \] 
for all $\lambda \in \R$. Here, we introduced the functions $\wt{\Omega}_a\colon \C \to \C$, $a=0,\pm$, defined by 
\[ \wt{\Omega}_0 \defeq - \begin{cases} \zeta^{1/3}, & \text{if } \Re \zeta \geq 0, \\ -(-\zeta)^{1/3}, & \text{if } \Re \zeta < 0, \end{cases}\qquad \wt{\Omega}_\pm (\zeta) \defeq \frac{1 \mp \ii \sqrt{3}}{2} \wt{\Omega}_0(\zeta). \] 
(Note that they were denoted by $\wh{\Omega}_a$, $a \in \{0,\pm\}$, in (9.87) of \cite{AjankiQVE}.)
By \spone, $A$ can only change its value at $\lambda$ if $\Lambda(\lambda) = 0$. By choosing $\delta \sim 1$ 
small enough and using $\abs{\mu(\lambda)} \lesssim \abs{\lambda}^{1/3}$, we have $A(\lambda) = a_+$ and 
$A(-\lambda) = a_-$ for some constants $a_\pm$ and for all $\lambda \in (0,\delta]$. 

We will now use \spthree and \spfour to determine the value of $a_+$ and $a_-$. 
As in (9.91) of the proof of Lemma 9.12 in \cite{AjankiQVE}, we have
\[ \pm (\sign \lambda) \Im \wt{\Omega}_\pm(\Lambda(\lambda)) = \frac{\sqrt{3}}{2} \abs{\lambda}^{1/3} 
+ \ord( \mu(\lambda) \lambda^{1/3}) \geq \abs{\lambda}^{1/3} - C \abs{\lambda}^{2/3}. \] 
By possibly shrinking $\delta \sim 1$, we conclude $\Im \wt{\Omega}_-(\Lambda(\lambda)) < 0$ for $\lambda 
\in (0,\delta]$ and $\Im \wt{\Omega}_+ (\Lambda(\lambda)) <0$ for $\lambda \in [-\delta, 0)$. 
Hence, owing to \spthree, we conclude $a_+ \neq - $ and $a _- \neq + $. 

Next, we will prove $a_+ \neq 0$. For $\lambda \geq 0$, we have 
\[ \Re \wt{\Omega}_0(\Lambda(\lambda)) \leq - \lambda^{1/3} + C \lambda^{2/3}, \qquad \Im \wt{\Omega}_0(\Lambda(\lambda)) 
\lesssim \lambda^{2/3} . \] 
Thus, assuming $\Omega(\lambda) = \wt{\Omega}_0(\Lambda(\lambda))$ and estimating the imaginary part of \eqref{eq:cubic_normal_sharp_cusp} yield 
\[ \lambda^{2/3} \Im \Omega(\lambda) \lesssim (\Im \Omega(\lambda))^3 + \abs{\Im \Lambda(\lambda)} 
\lesssim \abs{\lambda} \Im \Omega(\lambda). \] 
Hence, we possibly shrink $\delta \sim 1$ and conclude $\Im \Omega(\lambda)=0$ for $\lambda \in [0,\delta]$. 
Therefore, $\Re \Omega(\lambda)$ is non-decreasing on $[0,\delta]$ by \spfour. Combined with $\Omega_0=0$ and 
$\Re \wt{\Omega}_0(\Lambda(\lambda)) \lesssim - \lambda^{1/3}$, we obtain a contradiction. Hence, this implies $a_+ \neq 0$, i.e., $a_+ = +$. 

A similar argument excludes $a_- = 0$ and we thus obtain $a_- = -$. Now, \eqref{eq:normal_form_expansion_sharp_cusp} is obtained from the definition of $\wh{\Omega} = \wt{\Omega}_+$,
 which completes the proof of (iii). 

For the proof of (iv), we remark that all estimates follow from Lemma \ref{lem:choice_roots_two_nearby_edges} in the same way as they followed in \cite{AjankiQVE} from Lemma 9.14 in \cite{AjankiQVE}. 
Indeed, \eqref{eq:normal_form_expansion_nearby_edges} is the same as (9.129) in \cite{AjankiQVE}. The bound \eqref{eq:normal_coordinates_small_gap_estimate_Omega} is shown analogously to (9.129) and (9.130) in~\cite{AjankiQVE}. 
Moreover, \eqref{eq:Im_Omega_small_in_small_gap} is (9.137) in \cite{AjankiQVE} and \eqref{eq:Im_Omega_-_sign_lambda_3_positive} is obtained as (9.109) in~\cite{AjankiQVE}. 
This completes the proof of Proposition~\ref{pro:solution_cubic_normal_form}. 
\end{proof}

\subsection{Proof of Theorem \ref{thm:abstract_cubic_equation}}  \label{subsec:proof_abstract_cubic} 

Before we prove Theorem \ref{thm:abstract_cubic_equation}, we collect some properties of $\Psie$ and $\Psim$ 
which will be useful in the following. We recall that $\Psie$ and $\Psim$ were defined in \eqref{eq:def_Psim_Psie}. 

\begin{lemma}[Properties of $\Psim$ and $\Psie$]  \label{lem:prop_Psie_Psim}
\begin{enumerate}[label=(\roman*)]
\item Let $\wh{\Omega}$ be defined as in Proposition \ref{pro:solution_cubic_normal_form} (i). 
Then, for any $\lambda \in \R$, we have 
\begin{equation} \label{eq:representation_Psim}
 \Psim(\lambda) = \frac{1}{\sqrt 3} \Im [ \wh{\Omega}(\lambda) - \wh{\Omega}(0)].
\end{equation}
\item
Let $\wh{\Omega}_+$ be defined as in \eqref{eq:def_cubic_root_small_gap_normal_form}. Then, for any $\lambda \geq 0$, we have
\begin{equation} \label{eq:representation_Psie}
 \Psie(\lambda) = \frac{1}{2 \sqrt{3}} \Im \wh{\Omega}_+(1+2 \lambda).
\end{equation}
\item There is a function $\wt{\Psi} \colon [0,\infty) \to \R$ with uniformly bounded derivatives and $\wt{\Psi}(0)=0$ 
such that, for any $\lambda \geq 0$, we have
\begin{equation} \label{eq:Psie_close_to_square_root}
 \Psie(\lambda) = \frac{\lambda^{1/2}}{3} ( 1+ \wt{\Psi}(\lambda)), \qquad \abs{\wt{\Psi}(\lambda)} \lesssim \min\{ \lambda, \lambda^{1/3} \} . 
\end{equation}
\item There is $\eps_* \sim 1$ such that if $\abs{\eps} \leq \eps_*$ then, for any $\lambda \geq 0$, we have 
\begin{equation} \label{eq:Psie_rescale_argument}
 \Psie( (1 + \eps) \lambda ) = ( 1 + \eps)^{1/2} \Psie(\lambda) + \ord(\eps \min \{ \lambda^{3/2}, \lambda^{1/3}\}). 
\end{equation}
\end{enumerate}
\end{lemma}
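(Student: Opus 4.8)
The final statement to prove is Lemma~\ref{lem:prop_Psie_Psim}, which collects four elementary properties of the universal shape functions $\Psim$ and $\Psie$. I will prove each of the four parts in turn, since they are essentially computational identities.

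\textbf{Part (i) and (ii): the integral/Cardano representations.} The plan is to start from the explicit Cardano formula. For part (i), recall that $\wh{\Omega}(\lambda) = \Phi_{\rm{odd}}(\lambda) + \ii\sqrt{3}\,\Phi_{\rm{even}}(\lambda)$ with $\Phi(\zeta) = (\sqrt{1+\zeta^2}+\zeta)^{1/3}$. I would compute $\Im[\wh{\Omega}(\lambda) - \wh{\Omega}(0)] = \sqrt{3}\,(\Phi_{\rm{even}}(\lambda) - \Phi_{\rm{even}}(0))$, and then express $\Phi_{\rm{even}}(\lambda) = \tfrac12(\Phi(\lambda)+\Phi(-\lambda))$. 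Using $\Phi(-\lambda) = (\sqrt{1+\lambda^2}-\lambda)^{1/3}$ (which follows since $\sqrt{1+\lambda^2}-\lambda = (\sqrt{1+\lambda^2}+\lambda)^{-1}$, so $\Phi(-\lambda) = \Phi(\lambda)^{-1}$ when $\lambda\geq 0$, and by the branch convention for all $\lambda$), one gets $\Phi_{\rm{even}}(\lambda) = \tfrac12\big((\sqrt{1+\lambda^2}+\lambda)^{1/3} + (\sqrt{1+\lambda^2}-\lambda)^{1/3}\big)$ and $\Phi_{\rm{even}}(0)=1$. Plugging into the definition \eqref{eq:def_Psim} of $\Psim$ and matching denominators yields \eqref{eq:representation_Psim}. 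Part (ii) is the same kind of bookkeeping: I would substitute $\zeta = 1+2\lambda$ (so $\Re\zeta\geq 1$) into $\wh{\Omega}_+$ from \eqref{eq:def_cubic_root_small_gap_normal_form}, compute $\sqrt{\zeta^2-1} = \sqrt{(1+2\lambda)^2-1} = 2\sqrt{\lambda(1+\lambda)}$, so that $\Phi_\pm(1+2\lambda) = (1+2\lambda \pm 2\sqrt{\lambda(1+\lambda)})^{1/3}$, take $\Im\wh{\Omega}_+ = \tfrac{\sqrt3}{2}(\Phi_+(\zeta)-\Phi_-(\zeta))$... wait, more carefully, $\Im\wh{\Omega}_+(\zeta) = \tfrac{\sqrt3}{2}\cdot\ldots$; actually since $\Phi_\pm$ here are real, $\wh{\Omega}_+ = \tfrac12(\Phi_++\Phi_-) + \tfrac{\ii\sqrt3}{2}(\Phi_+-\Phi_-)$, so $\Im\wh{\Omega}_+ = \tfrac{\sqrt3}{2}(\Phi_+-\Phi_-)$. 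Hmm, but \eqref{eq:def_Psie} has both $(\ldots)^{2/3}$ terms summing with $+1$ in the denominator and $\sqrt{(1+\lambda)\lambda}$ in the numerator — so in fact the identity \eqref{eq:representation_Psie} must come from rationalizing: $\Phi_+ - \Phi_- = \frac{\Phi_+^3 - \Phi_-^3}{\Phi_+^2 + \Phi_+\Phi_- + \Phi_-^2}$, where $\Phi_+^3 - \Phi_-^3 = 4\sqrt{\lambda(1+\lambda)}$ and $\Phi_+\Phi_- = (\Phi_+^3\Phi_-^3)^{1/3} = ((1+2\lambda)^2 - 4\lambda(1+\lambda))^{1/3} = 1$. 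This gives exactly the denominator in \eqref{eq:def_Psie}, establishing \eqref{eq:representation_Psie}.

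\textbf{Part (iii): square-root asymptotics of $\Psie$.} The plan is to set $\wt\Psi(\lambda) \defeq 3\lambda^{-1/2}\Psie(\lambda) - 1$ for $\lambda>0$, and show it extends smoothly to $\lambda=0$ with $\wt\Psi(0)=0$ and the claimed bounds. From the explicit formula \eqref{eq:def_Psie}, as $\lambda\to0$ the numerator is $\sqrt\lambda\cdot\sqrt{1+\lambda}$ and the denominator tends to $3$, so $\Psie(\lambda) \sim \tfrac{\sqrt\lambda}{3}$, giving $\wt\Psi(0)=0$. To get smoothness and the bound $\abs{\wt\Psi(\lambda)}\lesssim\min\{\lambda,\lambda^{1/3}\}$, I would write $\Psie(\lambda) = \tfrac{\sqrt\lambda}{3}\cdot g(\lambda)$ where $g(\lambda) = \frac{3\sqrt{1+\lambda}}{(1+2\lambda+2\sqrt{\lambda(1+\lambda)})^{2/3} + (1+2\lambda-2\sqrt{\lambda(1+\lambda)})^{2/3} + 1}$, and observe that $g$ is a smooth function of $\mu\defeq\sqrt\lambda$ near $\mu=0$ (the $\sqrt{\lambda(1+\lambda)} = \mu\sqrt{1+\mu^2}$ is smooth in $\mu$), with $g(0)=1$; Taylor expansion then gives $\abs{g(\lambda)-1}\lesssim\lambda$ for small $\lambda$, while for large $\lambda$ one checks $g(\lambda)\sim 3\sqrt\lambda/(2^{2/3}\cdot(2\sqrt\lambda)^{2/3}) \cdot$ const $\sim$ const$\cdot\lambda^{-1/6}$, i.e. $\Psie(\lambda)\sim$ const$\cdot\lambda^{1/3}$, so $\wt\Psi(\lambda)\sim$ const$\cdot\lambda^{-1/6}$; combining, $\abs{\wt\Psi(\lambda)}\lesssim\min\{\lambda,\lambda^{1/3}\}$. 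The uniform boundedness of all derivatives of $\wt\Psi$ follows by the same smoothness-in-$\mu$ argument together with the decay at infinity.

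\textbf{Part (iv): rescaling the argument.} The plan is to use the representation \eqref{eq:representation_Psie} together with homogeneity properties of $\wh{\Omega}_+$, or more directly to use part (iii). Writing $\Psie((1+\eps)\lambda) = \tfrac{\sqrt{(1+\eps)\lambda}}{3}(1 + \wt\Psi((1+\eps)\lambda)) = (1+\eps)^{1/2}\Psie(\lambda) + \tfrac{\sqrt{(1+\eps)\lambda}}{3}(\wt\Psi((1+\eps)\lambda) - \wt\Psi(\lambda))$, it remains to bound the last term by $\ord(\eps\min\{\lambda^{3/2},\lambda^{1/3}\})$. Using that $\wt\Psi$ has uniformly bounded derivative (part (iii)), $\abs{\wt\Psi((1+\eps)\lambda)-\wt\Psi(\lambda)}\lesssim\eps\lambda$ for $\lambda$ in bounded sets and $\abs{\wt\Psi((1+\eps)\lambda)-\wt\Psi(\lambda)}\lesssim\eps$ always (since $\wt\Psi$ is bounded with bounded derivative that decays); more precisely, by the mean value theorem and the bound $\abs{\wt\Psi'(\lambda)}\lesssim\min\{1,\lambda^{-7/6}\}$ one gets $\abs{\wt\Psi((1+\eps)\lambda)-\wt\Psi(\lambda)}\lesssim\eps\lambda\min\{1,\lambda^{-7/6}\}$, and multiplying by $\sqrt\lambda$ yields the claimed $\eps\min\{\lambda^{3/2},\lambda^{1/3}\}$. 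One has to choose $\eps_*\sim1$ small so that $(1+\eps)\lambda$ stays comparable to $\lambda$ and the mean value theorem applies cleanly.

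The whole lemma is routine; the only mild subtlety (and thus the ``main obstacle'', such as it is) is being careful with the branch conventions for complex powers in parts (i)–(ii) — in particular checking that $\Phi(-\lambda) = \Phi(\lambda)^{-1}$ holds with the prescribed branch of the logarithm — and in part (iii)/(iv) organizing the estimates so that the bound is uniform on all of $[0,\infty)$ rather than just near $0$ or near $\infty$; this is handled by the substitution $\mu=\sqrt\lambda$ turning everything into smooth functions, combined with the explicit large-$\lambda$ asymptotics.
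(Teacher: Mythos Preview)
Your proposal is correct and follows essentially the same approach as the paper for parts (i)--(iii), which the paper also treats as direct consequences of the definitions (your rationalization $\Phi_+-\Phi_- = (\Phi_+^3-\Phi_-^3)/(\Phi_+^2+\Phi_+\Phi_-+\Phi_-^2)$ with $\Phi_+\Phi_-=1$ is exactly the right mechanism for (ii)).

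For part (iv) there is a minor organizational difference worth noting. The paper splits into two regimes: for $\lambda \lesssim 1$ it uses (iii) directly, while for $\lambda \gtrsim 1$ it switches to the Cardano representation \eqref{eq:representation_Psie} and invokes the stability of Cardano's solutions from \cite{AjankiQVE}. You instead push (iii) uniformly by establishing the derivative bound $\abs{\wt\Psi'(\lambda)}\lesssim\min\{1,\lambda^{-7/6}\}$ and applying the mean value theorem on all of $[0,\infty)$. Your route is more self-contained (no external reference needed), at the cost of having to verify the large-$\lambda$ decay of $\wt\Psi'$; the paper's route avoids this by appealing to the already-established Cardano perturbation lemma. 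Both are valid and comparably short.
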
 

We remark that \eqref{eq:representation_Psie} was present in (9.127) of \cite{AjankiQVE} but the coefficient $1/(2\sqrt{3})$ was erroneously missing there. 
The relation in \eqref{eq:Psie_rescale_argument} is identical to (9.145) in \cite{AjankiQVE}. Moreover, we use the proof of \cite{AjankiQVE}.

\begin{proof}
The parts (i), (ii) and (iii) are direct consequences of the definitions of $\Psim$, $\wh{\Omega}$, $\Psie$ and $\wh{\Omega}_+$. 

For the proof of (iv), we choose $\eps_* \leq 1/2$ such that $1 + \eps \sim 1$ for $\abs{\eps} \leq \eps_*$. If $0 \leq \lambda \lesssim 1$ then 
\eqref{eq:Psie_rescale_argument} follows from \eqref{eq:Psie_close_to_square_root}. For $\lambda \gtrsim 1$, we choose $\eps_* = 1/3$ and then \eqref{eq:Psie_rescale_argument} is a consequence of \eqref{eq:representation_Psie} 
above as well as the stability of Cardano's solutions, (9.111) in Lemma 9.17 of \cite{AjankiQVE}. 
\end{proof}

In the following proof of Theorem \ref{thm:abstract_cubic_equation}, we will choose appropriate normal coordinates $\Omega$ and $\Lambda$ in each case
such that \eqref{eq:abstract_cubic} turns into one of the cubic equations in normal form from Proposition \ref{pro:solution_cubic_normal_form}. 
This procedure has been similarly performed in the proofs of Proposition 9.3, Lemma 9.11, Lemma 9.12 and Section 9.2.2 in \cite{AjankiQVE}. 
However, owing to the weaker error bounds here, we include the proof for the sake of completeness. 

\begin{proof}[Proof of Theorem \ref{thm:abstract_cubic_equation}]
We start with the proof of part (i) (cf.~Proposition 9.3 in \cite{AjankiQVE}).
Owing to~\eqref{eq:bound_theta_abstract_cubic} and $\abs{\Psim(\lambda)}\lesssim \abs{\lambda}^{1/3}$, the statement of 
\eqref{eq:Im_Theta_nonzero_minimum} is trivial for $\abs{\omega} \gtrsim 1$ since the error term dominates. 
Therefore, it suffices to prove \eqref{eq:Im_Theta_nonzero_minimum} for $\abs{\omega}\leq \delta$ with some $\delta\sim 1$. 

By possibly shrinking $\rho_* \sim 1$, we can assume that $\abs{\sigma} \leq \Pi_* \rho_*^2$ is small enough such that $\psi \sim 1$ by \eqref{eq:abstract_cubic_psi_sigma_sim_1}.  
In the following, we will choose $\omega$-independent complex numbers $\gamma_\nu, \gamma_0, \gamma_1, \ldots, \gamma_7 \in \C$ 
such that certain relations hold. For each choice, it is easily checked that $\abs{\gamma_k} \lesssim \rho$ for $k = \nu, 0, 1, \ldots, 7$. 
We divide \eqref{eq:abstract_cubic} by $\mu_3$ and obtain 
\begin{equation} \label{eq:cubic_sing_dens_nonzero_minimum_abstract}
 \Theta^3 + \ii 3 \rho ( 1+ \gamma_2) \Theta^2 - 2 \rho^2 ( 1+ \gamma_1) \Theta + ( 1+ \gamma_0+(1 + \gamma_\nu) \nu(\omega)) \frac{\kappa}{\psi} \omega = 0,  
\end{equation}
using $\abs{\mu_3} \sim 1$ and $\abs{\sigma} \leq \Pi_* \dens^2$. 
We introduce the normal coordinates 
\begin{equation} \label{eq:normal_coordinates_nonzero_minimum}
 \lambda \defeq \Gamma \frac{\omega}{\rho^3}, \qquad \Omega(\lambda) \defeq \sqrt{3} \bigg[ ( 1+\gamma_3) \frac{1}{\rho} \Theta \bigg ( \frac{\rho^3}{\Gamma} \lambda\bigg) + \ii + \gamma_4 \bigg], 
\end{equation}
where $\Gamma \defeq \sqrt{27} \kappa / (2 \psi)$. Note that $\Gamma \sim 1$ since $\psi \sim 1$. 
A straightforward computation starting from \eqref{eq:cubic_sing_dens_nonzero_minimum_abstract} shows that $\Omega(\lambda)$ and $\Lambda(\lambda)$ satisfy \eqref{eq:cubic_normal_nonzero_minimum} with 
\[ \Lambda (\lambda) \defeq ( 1+ \gamma_5 + \mu(\lambda))\lambda + \gamma_6, \qquad \mu(\lambda)\defeq ( 1+ \gamma_7) \nu\bigg( \frac{\rho^3}{\Gamma} \lambda \bigg), \] 
i.e., $\chi_2 = \gamma_5$, $\chi_3 = \gamma_6$ and $\alpha = \rho$ by \eqref{eq:bound_nu_cubic}. 
Hence, from \eqref{eq:normal_form_expansion_local_minimum} and \eqref{eq:normal_coordinates_nonzero_minimum}, we obtain $\delta \sim 1$
and $\chi_* \sim 1$ such that
\[ \Im \Theta(\omega) = \Im \frac{\rho}{1 + \gamma_3}\frac{1}{\sqrt{3}} [ \Omega(\lambda) - \Omega_0] = \rho \Psim\bigg(\Gamma\frac{\omega}{\rho^3} \bigg) + 
\ord\left( \rho^2 \min\{ \abs{\lambda},\abs{\lambda}^{1/3} \} + \rho^2 \min \{ \abs{\lambda}, \abs{\lambda}^{2/3} \} \right) \]
for $\abs{\lambda} \leq \delta/\rho^3$ if $\rho \leq \min \{\chi_*, \rho_*\}$.
Here, we also used \eqref{eq:bound_Omega_normal_nonzero_minimum} to expand $\rho/(1+\gamma_3)$ and \eqref{eq:representation_Psim}.  
By employing \eqref{eq:normal_coordinates_nonzero_minimum} again and replacing $\rho_*$ by $\min\{ \chi_*,\rho_*\}$, 
we conclude \eqref{eq:Im_Theta_nonzero_minimum}.

We now turn to the proof of part (ii) of Theorem \ref{thm:abstract_cubic_equation}. 
Since $\rho=0$, the cubic equation \eqref{eq:abstract_cubic} simplifies to the following equation
\begin{equation} \label{eq:cubic_vanishing_density}
 \psi \Theta(\omega)^3 + \sigma \Theta(\omega)^2 + \kappa( 1+ \nu(\omega))\omega = 0. 
\end{equation}

We now prove Theorem \ref{thm:abstract_cubic_equation} (ii) (a), i.e., the case $\sigma = 0$ (cf.~Lemma 9.12 in \cite{AjankiQVE}). 
For any $\delta \sim 1$, the assertion is trivial for $\abs{\omega} \geq \delta$ since the error term dominates $\abs{\omega}^{1/3}$ 
and $\Im \Theta(\omega)$ in this case (compare \eqref{eq:bound_theta_abstract_cubic}). Therefore, it suffices to prove the lemma 
for $\abs{\omega} \leq \delta$ with some $\delta \sim 1$. We choose the normal coordinates 
\[ \lambda \defeq \omega, \qquad {\Omega}(\lambda) \defeq \bigg(\frac{\psi}{\kappa}\bigg)^{1/3} \Theta(\lambda), \]
and notice that the cubic equation \eqref{eq:cubic_vanishing_density} becomes 
\eqref{eq:cubic_normal_sharp_cusp} with $\mu(\lambda) = \nu(\lambda)$. 
The bound \eqref{eq:bound_nu_cubic} implies $\abs{\mu(\lambda)} \lesssim \abs{\lambda}^{1/3}$. 
Thus,~\eqref{eq:cusp_abstract_cubic} is a consequence of Proposition \ref{pro:solution_cubic_normal_form} (iii). 
This completes the proof of (ii) (a). 

For the proof of Theorem \ref{thm:abstract_cubic_equation} (ii) (b), we first show the following auxiliary lemma (cf.~Lemma~9.11 in~\cite{AjankiQVE}). 

\begin{lemma}[Simple edge]  \label{lem:simple_edge}
Let the assumptions of Theorem \ref{thm:abstract_cubic_equation} (ii) hold true. 
If $\sigma \neq 0$ then there is $c_* \sim 1$ such that 
\begin{equation}  \label{eq:simple_edge}
\Im \Theta(\omega) = \begin{cases} \sqrt{\kappa} \absB{\displaystyle\frac{\omega}{\sigma}}^{1/2} + \ord\Big( \Big(\abs{\nu(\omega)} + \abs{\sigma}^{-1} \abs{\Theta(\omega)}\Big) \absB{\frac{\omega}{\sigma}}^{1/2} \Big) , 
& \text{ if }  \sign \omega = \sign \sigma, ~\abs{\omega} \leq c_* \abs{\sigma}^3, \\ 
0, & \text{ if } \sign \omega = - \sign \sigma, ~\abs{\omega} \leq c_* \abs{\sigma}^3. \end{cases}
\end{equation}
Moreover, we have $\abs{\Theta(\omega)} \lesssim \abs{\omega/\sigma}^{1/2}$ for $\abs{\omega} \leq c_* \abs{\sigma}^3$.
\end{lemma}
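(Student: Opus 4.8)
The plan is to derive \eqref{eq:simple_edge} by bringing the cubic \eqref{eq:cubic_vanishing_density} into the normal form \eqref{eq:cubic_normal_simple_edge} of Proposition~\ref{pro:solution_cubic_normal_form}~(ii). Since $\rho = 0$ the cubic is $\psi \Theta^3 + \sigma \Theta^2 + \kappa(1+\nu(\omega))\omega = 0$. As in the proof of part~(ii)~(a), the assertion is trivial for $\abs{\omega} \gtrsim \abs{\sigma}^3$ because the $\abs{\omega/\sigma}^{1/2}$-type main term is then comparable with the error and with $\abs{\Theta(\omega)} \lesssim \abs{\omega}^{1/3}$ from \eqref{eq:bound_theta_abstract_cubic}; hence it suffices to work on $\abs{\omega} \leq c_* \abs{\sigma}^3$ for a small $c_* \sim 1$ to be fixed. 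On this range the cubic term $\psi\Theta^3$ is a small perturbation of the quadratic term $\sigma\Theta^2$, so morally $\sigma\Theta^2 \approx -\kappa\omega$, i.e.\ $\Theta \approx (\kappa\abs{\omega/\sigma})^{1/2}$ with a sign/branch to be determined by the selection principles \spthree and \spfour (which hold by the hypotheses of Theorem~\ref{thm:abstract_cubic_equation}~(ii)).

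Concretely, first I would divide \eqref{eq:cubic_vanishing_density} by $\sigma$ and isolate the quadratic part, writing it as $\Theta^2(1 + (\psi/\sigma)\Theta) = -(\kappa/\sigma)(1+\nu(\omega))\omega$. Then I would introduce normal coordinates $\lambda \defeq (\kappa/\sigma)\omega$ (so that $\sign\lambda = \sign(\omega\sigma)$ when $\kappa > 0$, which holds since $\kappa \sim 1$) and $\Omega(\lambda) \defeq \sigma\Theta(\sigma\lambda/\kappa)/\kappa$, after absorbing the factor $1 + (\psi/\sigma)\Theta$ into a multiplicative correction of the constant term. Since $\abs{\Theta(\omega)} \lesssim \abs{\omega}^{1/3}$ and $\abs{\omega} \leq c_*\abs{\sigma}^3$, one has $\abs{(\psi/\sigma)\Theta} \lesssim \abs{\omega/\sigma^3}^{1/3} \lesssim c_*^{1/3} \ll 1$, so $(1+(\psi/\sigma)\Theta)^{-1} = 1 + \ord(\abs{\sigma}^{-1}\abs{\Theta})$; combining this with the $\nu$-factor gives a perturbation of the form $\Lambda(\lambda) = (1 + \mu(\lambda))\lambda$ with $\abs{\mu(\lambda)} \lesssim \abs{\nu(\omega)} + \abs{\sigma}^{-1}\abs{\Theta(\omega)}$. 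Using \eqref{eq:bound_nu_cubic} and $\abs{\Theta}\lesssim\abs{\omega}^{1/3}$ one checks $\abs{\mu(\lambda)} \lesssim \abs{\lambda}^{1/3}$, which after rescaling $c_*$ can be made $\leq \gamma^{2/3}\abs{\lambda}^{1/3}$ for the $\gamma$ of \spfour, as required by Proposition~\ref{pro:solution_cubic_normal_form}~(ii). One also has to verify the selection principles transfer to $\Omega$: \spone follows from continuity of $\Theta$, \sptwo with $\Omega_0 = 0$ from $\Theta(0) = 0$ (here $\rho = 0$ forces $\Im m(\tau_0) = 0$, hence $\Theta(0)=0$), \spthree from $\Im\Theta \geq 0$, and \spfour from the corresponding hypotheses of Theorem~\ref{thm:abstract_cubic_equation}~(ii) together with the monotonicity of $\Re\Theta$ on the components where $\Im\Theta = 0$.

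Applying Proposition~\ref{pro:solution_cubic_normal_form}~(ii) then yields $\Omega(\lambda) = \wh{\Omega}(\lambda) + \ord(\abs{\mu(\lambda)}\abs{\lambda}^{1/2})$ with $\wh{\Omega}(\lambda) = \ii\lambda^{1/2}$ for $\lambda \geq 0$ and $\wh{\Omega}(\lambda) = -(-\lambda)^{1/2}$ for $\lambda \leq 0$ (on the interval $\abs{\lambda} \leq c_*\gamma^{-2}$, which corresponds to $\abs{\omega} \lesssim \abs{\sigma}^3$ after undoing the rescaling), and moreover $\Im\Omega(\lambda) = 0$ for $\lambda \in [-c_*\gamma^{-2}, 0]$. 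Translating back via $\Theta = \kappa\Omega/\sigma$ and $\lambda = \kappa\omega/\sigma$ gives $\Im\Theta(\omega) = \sqrt{\kappa}\abs{\omega/\sigma}^{1/2}$ when $\sign\omega = \sign\sigma$ (since $\sign\lambda = +$ there, so $\wh{\Omega}(\lambda) = \ii\lambda^{1/2}$ and $\Im(\kappa\Omega/\sigma) = (\kappa/\abs{\sigma})\lambda^{1/2} = \sqrt{\kappa}\abs{\omega/\sigma}^{1/2}$, using $\kappa\abs{\lambda}^{1/2}/\abs{\sigma} = \kappa(\kappa\abs{\omega}/\abs{\sigma})^{1/2}/\abs{\sigma} = \sqrt{\kappa}\,\kappa\abs{\omega}^{1/2}/\abs{\sigma}^{3/2}$... one must track the constants carefully here, but it collapses to $\sqrt{\kappa}\abs{\omega/\sigma}^{1/2}$), and $\Im\Theta(\omega) = 0$ when $\sign\omega = -\sign\sigma$. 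The error term $\abs{\mu(\lambda)}\abs{\lambda}^{1/2}$ translates into $(\abs{\nu(\omega)} + \abs{\sigma}^{-1}\abs{\Theta(\omega)})\abs{\omega/\sigma}^{1/2}$, which is exactly the claimed error. Finally $\abs{\Theta(\omega)} \lesssim \abs{\omega/\sigma}^{1/2}$ on $\abs{\omega} \leq c_*\abs{\sigma}^3$ follows because $\abs{\Omega(\lambda)} \lesssim \abs{\lambda}^{1/2}$ from the same proposition (the main term $\wh{\Omega}$ has this size and the error is smaller).

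The main obstacle I anticipate is the bookkeeping of the various $\ord$-terms when rescaling: one must confirm that the corrections coming from $1 + (\psi/\sigma)\Theta$, from $\nu$, and from the self-referential appearance of $\Theta$ inside $\mu$ all genuinely satisfy $\abs{\mu(\lambda)} \lesssim \abs{\lambda}^{1/3}$ (so that the threshold $c_*$ can be chosen uniformly $\sim 1$ and Proposition~\ref{pro:solution_cubic_normal_form}~(ii) is applicable), and that after undoing the substitution the error term lands in precisely the form stated in \eqref{eq:simple_edge} rather than in a cruder bound. A secondary subtlety is that $\mu$ depends on $\Theta$, so strictly one should first extract the crude a~priori bound $\abs{\Theta(\omega)} \lesssim \abs{\omega}^{1/3}$ from \eqref{eq:bound_theta_abstract_cubic} and only then feed it into the estimate for $\mu$; this is circular-looking but harmless since \eqref{eq:bound_theta_abstract_cubic} is an independent hypothesis.
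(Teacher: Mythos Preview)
Your approach is the same as the paper's: view the cubic with $\rho=0$ as a perturbed quadratic, absorb the cubic term $\psi\Theta^3$ into the multiplicative correction $1+(\psi/\sigma)\Theta$, and apply Proposition~\ref{pro:solution_cubic_normal_form}~(ii). Two concrete fixes are needed in your execution.

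First, your normal coordinates are miscalibrated. With your choice $\lambda=(\kappa/\sigma)\omega$ and $\Omega=\sigma\Theta/\kappa$ the equation does \emph{not} reduce to $\Omega^2+\Lambda=0$: substituting $\Theta=\kappa\Omega/\sigma$ into $\Theta^2(1+(\psi/\sigma)\Theta)=-(\kappa/\sigma)(1+\nu)\omega$ leaves a stray factor $\kappa^2/\sigma^2$ in front of $\Omega^2$, and your back-translation indeed fails (you compute $\kappa^{3/2}\abs{\omega}^{1/2}/\abs{\sigma}^{3/2}$, not $\sqrt{\kappa}\abs{\omega/\sigma}^{1/2}$). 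The paper's choice is $\lambda=\omega/\sigma$ and $\Omega=\Theta/\sqrt{\kappa}$, which lands cleanly on \eqref{eq:cubic_normal_simple_edge} with $\mu(\lambda)=(1+\nu(\sigma\lambda))/(1+(\psi/\sigma)\Theta(\sigma\lambda))-1$.

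Second, and more importantly, you must identify the constant $\gamma$ in \spfour explicitly as $\gamma\sim\abs{\sigma}^{-1}$, not leave it implicit. From $\abs{\Im\nu}\lesssim\Im\Theta$ and $\Im((\psi/\sigma)\Theta)=(\psi/\sigma)\Im\Theta$ one gets $\abs{\Im\mu(\lambda)}\lesssim\abs{\sigma}^{-1}\Im\Theta\sim\abs{\sigma}^{-1}\Im\Omega$, so $\gamma\sim\abs{\sigma}^{-1}$. This is precisely what makes the range in Proposition~\ref{pro:solution_cubic_normal_form}~(ii), namely $\abs{\lambda}\le c_*\gamma^{-2}\sim c_*\abs{\sigma}^2$, translate into $\abs{\omega}\le c_*\abs{\sigma}^3$. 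Likewise the hypothesis $\abs{\mu(\lambda)}\le\gamma^{2/3}\abs{\lambda}^{1/3}$ is then exactly $\abs{\mu(\lambda)}\lesssim\abs{\sigma}^{-2/3}\abs{\lambda}^{1/3}$, which follows from \eqref{eq:bound_nu_cubic} and \eqref{eq:bound_theta_abstract_cubic}. Without pinning down $\gamma$ you cannot conclude that the proposition covers the full claimed interval $\abs{\omega}\le c_*\abs{\sigma}^3$ with $c_*\sim1$.
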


\begin{proof} 
Dividing \eqref{eq:cubic_vanishing_density} by $\kappa \sigma$ yields 
\begin{equation} \label{eq:proof_simple_edge_aux2}
 \bigg(1 + \frac{\psi}{\sigma}\Theta(\omega)\bigg) \frac{\Theta(\omega)^2}{\kappa} + ( 1+ \nu(\omega)) \frac{\omega}{\sigma} = 0.
\end{equation}
We introduce $\lambda$, $\Omega(\lambda)$ and $\mu(\lambda)$ defined by 
\[ \lambda \defeq \frac{\omega}{\sigma}, \qquad {\Omega}(\lambda) \defeq \frac{1}{\sqrt{\kappa}} \Theta(\sigma \lambda),\qquad 
 \mu(\lambda) \defeq \frac{1 + \nu(\sigma\lambda)}{1 + \psi\sigma^{-1} \Theta(\sigma \lambda) } -1. \] 
In the normal coordinates $\lambda$ and $\Omega(\lambda)$, 
\eqref{eq:proof_simple_edge_aux2} viewed as a quadratic equation, fulfills 
\eqref{eq:cubic_normal_simple_edge} with the above choice of $\mu(\lambda)$. 
Since $\abs{\psi\sigma^{-1} \Theta(\sigma \lambda)} \lesssim \abs{\sigma}^{-2/3} \abs{\lambda}^{1/3}$ by 
\eqref{eq:bound_theta_abstract_cubic}, there is $c_* \sim 1$ such that 
\begin{equation} \label{eq:proof_simple_edge_aux1}
 \abs{\mu(\lambda)} \lesssim \abs{\nu(\sigma \lambda)} + \abs{\sigma}^{-1} \abs{\Theta(\sigma\lambda)} \lesssim \abs{\sigma}^{-2/3}\abs{\lambda}^{1/3}, \qquad \abs{\Im \mu(\lambda)} \lesssim \abs{\sigma}^{-1} \Im \Theta(\sigma \lambda)  
\end{equation}
for $\abs{\lambda} \leq c_* \abs{\sigma}^2$ by \eqref{eq:bound_nu_cubic}, \eqref{eq:bound_theta_abstract_cubic} 
and \eqref{eq:abstract_cubic_Im_nu_lesssim_Im_Theta}. 
Hence, we apply Proposition \ref{pro:solution_cubic_normal_form} (ii) with $\gamma \sim \abs{\sigma}^{-1}$ in \spfour and obtain 
\eqref{eq:simple_edge} with an error term $\ord(\abs{\mu(\lambda)}\abs{\lambda}^{1/2})$ instead, as well as $\abs{\Theta(\omega)} 
\lesssim \abs{\sigma}^{-1/2}\abs{\omega}^{1/2}$. 
Thus, 
the first bound in \eqref{eq:proof_simple_edge_aux1} completes the proof of \eqref{eq:simple_edge}.
\end{proof} 

From the second case in \eqref{eq:simple_edge}, we conclude the second case in \eqref{eq:abstract_cubic_small_gap}. 
The first case in \eqref{eq:abstract_cubic_small_gap} and \eqref{eq:abstract_small_gap_abs_Theta_bound} are trivial if $\abs{\omega} \gtrsim 1$ due to \eqref{eq:bound_theta_abstract_cubic} and \eqref{eq:scaling_psie}. 
Hence, it suffices to prove this case for $\abs{\omega} \leq \delta$ with some $\delta \sim 1$. 
If $\abs{\sigma} \gtrsim 1$ then the first case in \eqref{eq:abstract_cubic_small_gap} also follows from \eqref{eq:simple_edge} with $\delta \defeq c_* \abs{\sigma}^3$. 
Indeed, from \eqref{eq:Psie_close_to_square_root}, we conclude 
\[ \sqrt{\kappa} \absbb{\frac{\omega}{\sigma}}^{1/2} = c \wh{\Delta}^{1/3} \Psie\bigg( \frac{\abs{\omega}}{\wh{\Delta}}\bigg) + \ord(\abs{\omega}^{3/2}), \] 
where $c$ and $\wh{\Delta}$ are defined as in \eqref{eq:def_wh_Delta_abstract}. Since $\abs{\omega} \lesssim \eps(\omega)$ for $\abs{\omega} \leq \delta$ and $\eps(\omega)$ defined as in \eqref{eq:def_wh_Delta_abstract}
we obtain the first case in \eqref{eq:abstract_cubic_small_gap} if $\abs{\sigma} \gtrsim 1$. Similarly, $\abs{\Theta(\omega)} \lesssim \abs{\omega/\sigma}^{1/2}$ by Lemma \ref{lem:simple_edge} yields 
\eqref{eq:abstract_small_gap_abs_Theta_bound} if $\abs{\omega} \leq \delta$ and $\abs{\sigma} \gtrsim 1$. 
Hence, it remains to show the first case in \eqref{eq:abstract_cubic_small_gap} and \eqref{eq:abstract_small_gap_abs_Theta_bound} if $\abs{\sigma} \leq \sigma_*$ for some $\sigma_* \sim 1$. 
In fact, we 
 choose $\sigma_* \sim 1$ so small that $\psi \sim 1$ by \eqref{eq:abstract_cubic_psi_sigma_sim_1} and $\wh{\Delta} < 1$ for $\abs{\sigma} \leq \sigma_*$. 
In order to apply Proposition \ref{pro:solution_cubic_normal_form} (iv), we introduce 
\begin{equation} \label{eq:normal_coordinates_small_gap}
 \lambda \defeq \frac{2}{\wh{\Delta}}\omega, \qquad {\Omega}(\lambda) \defeq 3 \frac{\psi}{\abs{\sigma}} \Theta\bigg( \frac{\wh{\Delta}}{2} \lambda\bigg) + \sign \sigma, \qquad \mu(\lambda) \defeq \nu\bigg( \frac{\wh{\Delta}}{2} \lambda \bigg) 
\end{equation}
(cf.~(9.96) and (9.99) in \cite{AjankiQVE}). 
The cubic \eqref{eq:cubic_vanishing_density} takes the form \eqref{eq:cubic_normal_two_nearby_edges} in the normal coordinates $\lambda$ 
and $\Omega(\lambda)$ with the above choice of $\mu(\lambda)$ and $s = \sign \sigma$ in \eqref{eq:cubic_normal_two_nearby_edges}. 
By \eqref{eq:bound_nu_cubic}, we have $\abs{\mu(\lambda)} \lesssim \wh{\Delta}^{1/3} \abs{\lambda}^{1/3}$. 
We set $\wh{\gamma} \defeq \wh{\Delta}^{1/3}$. 
Therefore, Proposition \ref{pro:solution_cubic_normal_form} (iv) and \eqref{eq:representation_Psie} yield $\delta \sim 1$ and possibly smaller 
$\sigma_*\defeq \min\{\sigma_*, \gamma_*\} \sim 1$ such that the first case in 
\eqref{eq:abstract_cubic_small_gap} holds true for $\abs{\sigma} \leq \sigma_*$ and $\abs{\omega} \leq \delta$ as $\mu(\lambda) = \nu(\omega)$ and $\wh{\Delta} \sim \abs{\sigma}^3$. 
Moreover, \eqref{eq:normal_coordinates_small_gap_estimate_Omega} implies \eqref{eq:abstract_small_gap_abs_Theta_bound} for $\abs{\omega} \leq \delta$. 
This completes the proof of (ii) (b) and hence of Theorem \ref{thm:abstract_cubic_equation}. 
\end{proof} 

\subsection{Proof of Theorem \ref{thm:behaviour_v_close_sing} and Proposition \ref{pro:behaviour_v_close_sing_weak_conditions}}  \label{subsec:proof_behaviour_v_small} 

In this section, we prove Theorem \ref{thm:behaviour_v_close_sing} and Proposition \ref{pro:behaviour_v_close_sing_weak_conditions}.
Some parts of the following proof resemble the proofs of Theorem 2.6, Proposition 9.3 and Proposition 9.8 in~\cite{AjankiQVE}. 
However, owing to the weaker assumptions, we present it here for the sake of completeness.

\begin{proof}[Proof of Theorem \ref{thm:behaviour_v_close_sing} and Proposition \ref{pro:behaviour_v_close_sing_weak_conditions}] 
We will only prove the statements in Proposition \ref{pro:behaviour_v_close_sing_weak_conditions}. Theorem \ref{thm:behaviour_v_close_sing} is a direct consequence of this proposition as well 
as Lemma \ref{lem:q_bounded_Im_u_sim_avg} (ii) and Proposition \ref{pro:cubic_for_dyson_equation}. 

Along the proof of Proposition~\ref{pro:behaviour_v_close_sing_weak_conditions}, we will shrink $\delta_* \sim 1$ such that \eqref{eq:expansion_local_minima_general} holds true for all $\omega \in [-\delta_*,\delta_*] \cap \shapeJ \cap D$.
We will transfer the expansions of $\Theta$ in Theorem \ref{thm:abstract_cubic_equation} to expansions of $v$ by means of \eqref{eq:def_admiss_decom_diff_m}. 
To that end, we take the imaginary part of \eqref{eq:def_admiss_decom_diff_m} and obtain
\begin{equation} \label{eq:proof_behaviour_v_aux1}
 v(\tau_0 + \omega) = v(\tau_0) + \pi^{-1}\Re b \Im \Theta(\omega) + \pi^{-1}\Im b \Re \Theta(\omega) + \pi^{-1}\Im r(\omega). 
\end{equation}

We first establish \eqref{eq:expansion_local_minima_general} at a shape regular point $\tau_0 \in (\supp\dens) \setminus \pt \supp \dens$ which is a local minimum of $\tau \mapsto \dens(\tau)$. 
If $\dens = \dens(\tau_0) = 0$, i.e., the case of a cusp at $\tau_0$, case (c), then $\sigma =0$. 
Indeed, if $\sigma$ were not $0$, then, by the second case in \eqref{eq:abstract_cubic_small_gap}, $\Im \Theta(\omega)$ would vanish on one side of $\tau_0$.
By the third bound in \eqref{eq:def_admiss_dens_zero_Im_Theta_bounds}, this would imply the vanishing of $\dens$ as well, contradicting to $\tau_0 \in \supp\dens \setminus \pt \supp \dens$. 
Hence, for any $\delta_* \sim 1$, \eqref{eq:cusp_abstract_cubic} and \eqref{eq:proof_behaviour_v_aux1} immediately yield 
\eqref{eq:expansion_local_minima_general} for $\omega \in [-\delta_*, \delta_*] \cap \shapeJ  \cap D$ with 
$h = (2\pi)^{-1}b \sqrt{3} (\kappa/\psi)^{1/3}$  
 using \eqref{eq:def_admiss_bound_Theta}, \eqref{eq:def_admiss_bound_r} and $b = b^*$ due to $\dens = 0$. 

We now assume $\dens>0$ which corresponds to an internal nonzero minimum at $\tau_0$, case (d). 
Thus, the following lemma implies that the condition $\abs{\sigma} \leq \Pi_*\dens^2$, $\sigma=\sigma(\tau_0)$,
needed to apply Theorem \ref{thm:abstract_cubic_equation} (i) is fulfilled. 
We will prove Lemma \ref{lem:sigma_nonzero_local_minimum} at the end of this section. 

\begin{lemma}[Bound on $\abs{\sigma}$ at nonzero local minimum] \label{lem:sigma_nonzero_local_minimum} 
There are thresholds $\dens_* \sim 1$ and $\Pi_* \sim 1$ such that 
\[ \abs{\sigma(\tau_0)} \leq \Pi_* \dens(\tau_0)^2 \] 
for each shape regular point $\tau_0 \in \supp\dens$ which is a local minimum of $\dens$ and satisfies  
 $0 < \dens(\tau_0) \leq \dens_*$. 
\end{lemma}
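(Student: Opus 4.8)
The plan is to exploit that, since $\dens(\tau_0)>0$, the point $\tau_0$ lies in the interior of $\supp\dens$, that $m$ is differentiable at $\tau_0$ (shape regularity (d1) combined with \eqref{eq:def_admiss_decom_diff_m}), and that the local minimum of $\dens$ at $\tau_0$ forces $\dens'(\tau_0)=0$. Abbreviate $\rho\defeq\dens(\tau_0)$, $\sigma\defeq\sigma(\tau_0)$, $\psi\defeq\psi(\tau_0)$. First I would record that $\Theta(0)=0$ and $r(0)=0$, which are immediate from \eqref{eq:def_admiss_bound_Theta} and \eqref{eq:def_admiss_bound_r} at $\omega=0$; hence differentiating \eqref{eq:def_admiss_decom_diff_m} at $\omega=0$ gives $\pt_\tau m(\tau_0)=\Theta'(0)b+r'(0)$, so that Fermat's theorem applied to $\dens(\tau)=\pi^{-1}\avg{\Im m(\tau)}$ yields
\[ 0=\pi\dens'(\tau_0)=\avg{\Im\pt_\tau m(\tau_0)}=\Re\avg{b}\,\Im\Theta'(0)+\Im\avg{b}\,\Re\Theta'(0)+\avg{\Im r'(0)}. \]
From Definition \ref{def:admissible_shape_analysis} (i) (c) I get $\Re\avg{b}=\frac12\avg{b+b^*}\sim 1$ and $\abs{\Im\avg{b}}\le\frac12\norm{b-b^*}\lesssim\rho$; and from \eqref{eq:def_admiss_bound_r} together with $\Theta(\omega)=\Theta'(0)\omega+o(\omega)$ I get $\norm{r'(0)}\lesssim 1$, whence $\abs{\avg{\Im r'(0)}}\lesssim 1$. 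Consequently the displayed identity gives $\abs{\Im\Theta'(0)}\lesssim\rho\,\abs{\Re\Theta'(0)}+1$.

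Next I would extract $\Theta'(0)$ from the cubic equation \eqref{eq:cubic_sing_dens}: dividing it by $\omega$ and letting $\omega\to 0$ (using $\Theta(0)=0$ and differentiability) gives $-\mu_1\Theta'(0)=\lim_{\omega\to 0}\Xi(\omega)=\kappa+\ord(\rho)$, since $\Xi(\omega)=\kappa(1+\nu(\omega))+\ord(\rho)$ with $\nu(0)=0$ by \eqref{eq:bound_nu} and $\kappa\sim 1$. Thus $\Theta'(0)=-(\kappa+\ord(\rho))/\mu_1$ with $\mu_1=-2\rho^2\psi+\ii\kappa_1\rho\sigma+\ord(\rho^3+\rho^2\abs{\sigma})$. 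A short computation of $\abs{\mu_1}^2$ — using $\kappa_1\sim 1$, the stability relation $\psi+\sigma^2\sim 1$ from \eqref{eq:def_admiss_scaling_parameters}, $\abs{\sigma}\lesssim 1$, and $\rho\le\dens_*$ small so the error terms are absorbed — yields $\abs{\mu_1}^2\sim\rho^4\psi^2+\rho^2\sigma^2$, and therefore
\[ \abs{\Re\Theta'(0)}\,\lesssim\,\frac{\rho^2}{\abs{\mu_1}^2},\qquad \abs{\Im\Theta'(0)}\,\ge\,\frac{\rho\bigl(\kappa\kappa_1\abs{\sigma}-C\rho^2-C\rho\abs{\sigma}\bigr)}{\abs{\mu_1}^2}. \]

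Finally I would argue by contradiction: assume $\abs{\sigma}>\Pi_*\rho^2$ for a constant $\Pi_*\sim 1$ to be fixed at the end. Choosing $\Pi_*$ large enough that $C/\Pi_*\le\frac14\kappa\kappa_1$ and $\dens_*$ small enough that $C\rho\le\frac14\kappa\kappa_1$ makes $\kappa\kappa_1\abs{\sigma}-C\rho^2-C\rho\abs{\sigma}\gtrsim\abs{\sigma}$, hence $\abs{\Im\Theta'(0)}\gtrsim\rho\abs{\sigma}/\abs{\mu_1}^2$. Feeding this and the bound on $\abs{\Re\Theta'(0)}$ into $\abs{\Im\Theta'(0)}\lesssim\rho\abs{\Re\Theta'(0)}+1$ and multiplying by $\abs{\mu_1}^2/\rho$ gives $\abs{\sigma}\lesssim\rho^2+\abs{\mu_1}^2/\rho\lesssim\rho^2+\rho^3\psi^2+\rho\sigma^2$; since $\rho^3\psi^2\le\rho^2$ and $\rho\sigma^2=(\rho\abs{\sigma})\abs{\sigma}\le\frac12\abs{\sigma}$ once $\rho$ is small (as $\abs{\sigma}\lesssim 1$), this collapses to $\abs{\sigma}\le C'\rho^2$ for an implicit constant $C'\sim 1$, contradicting $\abs{\sigma}>\Pi_*\rho^2$ as soon as $\Pi_*>C'$; this proves the lemma with this $\Pi_*$ and the corresponding $\dens_*$. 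The step I expect to require the most care is the uniform verification that $\abs{\mu_1}^2\sim\rho^4\psi^2+\rho^2\sigma^2$ on $\{\psi+\sigma^2\sim 1\}$ — the $\ord(\rho^3+\rho^2\abs{\sigma})$ errors must be tracked separately where $\psi\sim 1$ and where $\psi$ is small (so $\abs{\sigma}\sim 1$) — together with the observation that bounding $\avg{\Im r'(0)}$ only by an $\ord(1)$ quantity is enough, precisely because $\abs{\Im\Theta'(0)}$ is forced to be of order at least $\rho^{-1}$ in the regime $\abs{\sigma}\gg\rho^2$.
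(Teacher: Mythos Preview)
Your argument is correct and follows essentially the same route as the paper's proof: differentiate the decomposition \eqref{eq:def_admiss_decom_diff_m} at $\omega=0$, extract $\Theta'(0)=-\Xi(0)/\mu_1$ from the cubic, separate real and imaginary parts using the structure of $\mu_1$, and reach a contradiction from $\dens'(\tau_0)=0$ under the hypothesis $\abs{\sigma}>\Pi_*\rho^2$. The paper organizes the same computation as an operator-valued monotonicity estimate $(\sign\kappa_1\sigma)\,\pt_\tau v\gtrsim\rho^{-1}$ and then applies $\avg{\,\cdot\,}$, whereas you work directly with $\dens=\avg{v}$; this is a harmless simplification since only the scalar conclusion is needed here. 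One cosmetic point: in your lower bound for $\abs{\Im\Theta'(0)}$ the coefficient should read $\kappa\abs{\kappa_1}\abs{\sigma}$ rather than $\kappa\kappa_1\abs{\sigma}$, since $\kappa_1\in\R$ may be negative.
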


Hence, \eqref{eq:Im_Theta_nonzero_minimum}, \eqref{eq:proof_behaviour_v_aux1} and \eqref{eq:def_admiss_bound_r} yield 
\eqref{eq:expansion_local_minima_general} 
with $\wt{\dens} = \dens \Gamma^{-1/3}$ and $h = \pi^{-1}\Gamma^{1/3}\Re b$. Here, we also used 
\begin{equation}\label{eq:error_nonzero_local_minimum}
 \dens \abs{\Theta(\omega)} + \abs{\Theta(\omega)}^2 + \abs{\omega} + \min\{ \dens^{-1}\abs{\omega}, \abs{\omega}^{2/3}\}\lesssim \frac{\abs{\omega}}{\dens} \char ( \abs{\omega} \lesssim \dens^3) + \Psi(\omega)^2,  
\end{equation}
 which is a consequence of \eqref{eq:def_admiss_bound_Theta}, \eqref{eq:scaling_psim} for $\abs{\omega} \lesssim 1$, 
as well as $\Re b \sim 1$ and $\Im b = \ord(\dens)$. 
This completes the proof of \eqref{eq:expansion_local_minima_general} for shape regular points $\tau_0 \in (\supp \dens) \setminus \pt \supp \dens$, cases (c) and (d). 

We now turn to the proof of \eqref{eq:expansion_local_minima_general} at an edge $\tau_0$, case (a), i.e., for a shape regular point $\tau_0 \in \pt \supp \dens$. 
We first prove a version of \eqref{eq:expansion_local_minima_general} with $\wh{\Delta}$ in place of $\Delta$, \eqref{eq:proof_behaviour_v_aux2} below. 
In a second step, we then replace $\wh{\Delta}$ by $\Delta$ to obtain \eqref{eq:expansion_local_minima_general}. 

Since $\tau_0 \in \pt\supp \dens$, we have $\dens=\dens(\tau_0) = 0$. Therefore, $v(\tau_0)=0$ since $\avg{\genarg}$ is a faithful trace and $v(\tau_0)$ is positive semidefinite. 
As $\tau_0 \in \pt\supp \dens$, we have $\sigma(\tau_0) \neq 0$. Indeed, assuming $\sigma(\tau_0)=0$, using Theorem~\ref{thm:abstract_cubic_equation}~(ii)~(a), taking the imaginary part of \eqref{eq:def_admiss_decom_diff_m}
as well as applying the third bound in \eqref{eq:def_admiss_dens_zero_Im_Theta_bounds} 
and the second bound in \eqref{eq:def_admiss_bound_Theta} yield the contradiction $\tau_0 \in (\supp \dens) \setminus \pt\supp\dens$. 
Recalling the definitions of $\wh{\Delta}$ and $c$ from \eqref{eq:def_wh_Delta_abstract}, \eqref{eq:proof_behaviour_v_aux1} and~\eqref{eq:abstract_cubic_small_gap} yield
\begin{equation} \label{eq:proof_behaviour_v_aux2}
 v(\tau_0 + \omega) = \pi^{-1}c \wh{\Psi}(\omega)b + \ord(\wh{\Psi}(\omega)^2), \qquad \qquad \wh{\Psi}(\omega) \defeq \wh{\Delta}^{1/3} \Psie\bigg(\frac{\abs{\omega}}{\wh{\Delta}}\bigg)  
\end{equation}
for any $\omega \in [-\delta_*, \delta_*]\cap \shapeJ  \cap D$ with $\sign \omega = \sign \sigma$ and some $\delta_* \sim 1$. 
Here, we also used $b = b^*\sim 1$, the first bound in \eqref{eq:bound_nu}, 
\eqref{eq:abstract_small_gap_abs_Theta_bound} and $\eps(\omega) \sim \wh{\Psi}(\omega)$ by \eqref{eq:scaling_psim} to obtain 
\[ \abs{\Theta(\omega)}^2 + \abs{\omega} + ( \abs{\Theta(\omega)} + \abs{\omega}+\eps(\omega)) \eps(\omega) 
\lesssim \wh{\Psi}(\omega)^2 \] 
for any $\omega \in [-\delta_*, \delta_*]\cap \shapeJ  \cap D$ with $\sign \omega = \sign \sigma$ and some $\delta_* \sim 1$. 
This means that we have shown \eqref{eq:expansion_local_minima_general} with $\Psi$ replaced by~$\wh{\Psi}$. 

We now replace $\wh{\Delta}$ by $\Delta$ in \eqref{eq:proof_behaviour_v_aux2} to obtain \eqref{eq:expansion_local_minima_general}. 
To that end, we first assume that $\abs{\sigma} \gtrsim 1$ and $\Delta \lesssim 1$. 
The second part of \eqref{eq:abstract_cubic_small_gap} implies $\abs{\sigma}^3 \lesssim \Delta \lesssim 1$  
and thus $\abs{\sigma}^3 \sim \Delta \sim 1$. 
Since $\abs{\sigma}^3 \sim \wh{\Delta}$ we conclude $\wh{\Delta} \sim \Delta$. 
Therefore, we obtain 
\[\wh{\Delta}^{1/3}\Psie\bigg( \frac{\abs{\omega}}{\wh{\Delta}}\bigg) 
 = \bigg(\frac{\Delta}{\wh{\Delta}}\bigg)^{1/6} \Delta^{1/3}\Psie\bigg( \frac{\abs{\omega}}{\Delta} \bigg) + \ord(\min \{ \abs{\omega}^{3/2}, \abs{\omega}^{1/3}\}) .  \] 
Here, we used $\Psie(\abs{\lambda}) \lesssim \abs{\lambda}^{1/3}$ for $\abs{\lambda} \gtrsim 1$ and \eqref{eq:Psie_close_to_square_root} otherwise. 
Applying this relation to \eqref{eq:proof_behaviour_v_aux2} yields \eqref{eq:expansion_local_minima_general} for $\omega\in [-\delta_*,\delta_*]\cap \shapeJ  \cap D$ with $\sign \omega = \sign \sigma$, $\delta_* \sim 1$ 
and $h\defeq \pi^{-1}c (\Delta/\wh{\Delta})^{1/6}b\sim 1$ for $\abs{\sigma} \gtrsim 1$ and $\Delta \lesssim 1$.

The next lemma shows that $\abs{\sigma} \gtrsim 1$ at the edge of a gap of size $\Delta\gtrsim 1$.
We postpone its proof until the end of this section. 

\begin{lemma}[$\sigma$ at an edge of a large gap] \label{lem:large_gap}
Let $\tau_0\in\pt\supp \dens$ be a shape regular point for $m$ on $\shapeJ$. If $\abs{\inf \shapeJ} \gtrsim 1$ and there is $\eps \sim 1$ such that $\dens(\tau) = 0$ for 
all $\tau \in [\tau_0-\eps, \tau_0]$ then $\abs{\sigma} \sim 1$. 
We also have $\abs{\sigma} \sim 1$ if $\sup \shapeJ \gtrsim 1$ and $\dens(\tau) = 0$ for all $\tau \in [\tau_0,\tau_0+\eps]$ and some $\eps \sim 1$. 
\end{lemma}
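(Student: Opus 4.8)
The plan is to establish the nontrivial half of the claim, the lower bound $\abs{\sigma(\tau_0)}\gtrsim1$, by contradiction; the upper bound $\abs{\sigma(\tau_0)}\lesssim1$ is immediate from the stability relation $\psi+\sigma^2\sim1$ in Definition~\ref{def:admissible_shape_analysis}(b). I will carry out the left-gap case, namely $\dens\equiv0$ on $[\tau_0-\eps,\tau_0]$ with $\abs{\inf\shapeJ}\gtrsim1$; the right-gap case follows verbatim after the reflection $\omega\mapsto-\omega$. Note that $\rho\defeq\dens(\tau_0)=0$ since $\tau_0\in[\tau_0-\eps,\tau_0]$.

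First I would transfer the vanishing of $\dens$ into a vanishing of $\Theta$. Because $\rho=0$, faithfulness of $\avg{\genarg}$ forces $\Im m(\tau_0)=0$, and part (c) of Definition~\ref{def:admissible_shape_analysis} gives $b=b^*$ and $b\sim\id$, hence $\avg{b}\sim1$. Taking the imaginary part of \eqref{eq:def_admiss_decom_diff_m}, bounding $\Im r$ via the third estimate in \eqref{eq:def_admiss_dens_zero_Im_Theta_bounds}, and using $\abs{\Theta(\omega)}+\abs{\omega}\lesssim\abs{\omega}^{1/3}$ from \eqref{eq:def_admiss_bound_Theta}, I obtain for $\omega\in\shapeJ$ that
\[ \dens(\tau_0+\omega)=\pi^{-1}\avg{b}\,\Im\Theta(\omega)\bigl(1+\ord(\abs{\omega}^{1/3})\bigr). \]
Fixing $\eps'\sim1$ with $\eps'\le\eps$, $[-\eps',0]\subset\shapeJ$ (possible since $\abs{\inf\shapeJ}\gtrsim1$), and small enough that the error factor above stays positive on $[-\eps',0]$, the identity together with $\Im\Theta\ge0$ yields $\Im\Theta(\omega)=0$ for all $\omega\in[-\eps',0]$. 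Hence $\Theta$ is real on $[-\eps',0]$; since $\Re\Theta$ is non-decreasing on the connected components of $\{\Im\Theta=0\}$ and $\Theta(0)=0$, we get $\Theta(\omega)\le0$ on $[-\eps',0]$; and by the second estimate in \eqref{eq:def_admiss_dens_zero_Im_Theta_bounds} the error $\nu$ is real on $[-\eps',0]$ as well.

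Next I would feed this into the cubic equation. By Definition~\ref{def:admissible_shape_analysis}(b), at $\rho=0$ the coefficients in \eqref{eq:cubic_sing_dens} collapse to $\mu_3=\psi$, $\mu_2=\sigma$, $\mu_1=0$, $\Xi(\omega)=\kappa(1+\nu(\omega))$ with $\kappa\sim1$, so for every $\omega\in[-\eps',0]$ the real number $t=\Theta(\omega)\le0$ is a root of the real cubic $g_\omega(t)\defeq\psi t^3+\sigma t^2+\kappa(1+\nu(\omega))\,\omega$. Suppose, for contradiction, that $\abs{\sigma}\le\sigma_*$ for a small $\sigma_*\sim1$ to be fixed; then $\psi\sim1$ by stability. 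Take $\omega_0\defeq-\eps'$ (shrinking $\eps'\sim1$ once more so $1+\nu(\omega_0)\in(\tfrac12,\tfrac32)$), so the constant term $c\defeq\kappa(1+\nu(\omega_0))\omega_0$ satisfies $c<0$ and $\abs{c}\sim1$. A short monotonicity analysis of $g_{\omega_0}$ on $(-\infty,0]$ --- its only interior critical point there is $t_*=-2\sigma/(3\psi)$, present only when $\sigma>0$, where it has a local maximum with value $g_{\omega_0}(t_*)=\tfrac{4\sigma^3}{27\psi^2}+c$ --- shows that $g_{\omega_0}<0$ on all of $(-\infty,0]$ once $\sigma_*\sim1$ is chosen small enough that $\tfrac{4\sigma_*^3}{27\psi^2}<\abs{c}/2$ (for $\sigma\le0$ this is immediate, as then $g_{\omega_0}$ is non-decreasing on $(-\infty,0]$ with $g_{\omega_0}(0)=c<0$). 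Thus $g_{\omega_0}$ has no non-positive zero, contradicting that $\Theta(\omega_0)\le0$ is one. Therefore $\abs{\sigma(\tau_0)}>\sigma_*\gtrsim1$, which proves the lemma.

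I expect the last step to be the real obstacle: the a priori bound $\abs{\Theta(\omega)}\lesssim\abs{\omega}^{1/3}$ is too weak to rule out a non-positive root of $g_\omega$ by a crude size comparison, because the cubic term $\psi t^3$ and the constant term are of the same order at the scale $\abs{\omega}\sim\abs{\sigma}^3$. One genuinely has to use the sign and monotonicity structure of $g_\omega$ --- in essence, that when the quadratic coefficient $\sigma$ is small the unique real root of $g_\omega$ sits on the positive side of the origin, so it cannot be the non-positive, monotone branch $\Theta$ dictated by the gap hypothesis. The remaining ingredients (passing from $\dens=0$ to $\Im\Theta=0$, extracting the sign of $\Theta$, and the symmetric right-gap case) are routine from the definitions and the bounds of Section~\ref{sec:shape_analysis}.
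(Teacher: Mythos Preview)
Your proof is correct and takes a genuinely different route from the paper's. The paper argues by invoking the normal-form machinery of Proposition~\ref{pro:solution_cubic_normal_form}(iv): assuming $\abs{\sigma}$ small, it passes to normal coordinates $(\lambda,\Omega)$ and uses the branch selection (specifically \eqref{eq:Im_Omega_-_sign_lambda_3_positive}) to locate a point at $\omega=-(\sign\sigma)\omega_3$ with $\omega_3\sim\abs{\sigma}^3$ where $\Im\Theta>0$; this forces the density to be positive there, contradicting the gap of size $\eps\sim1$. Your argument stays entirely on the real line: you pin down $\Theta(\omega)\le0$ on the gap interval from the monotonicity of $\Re\Theta$ and $\Theta(0)=0$, then observe that the \emph{real} cubic $t\mapsto\psi t^3+\sigma t^2+c$ with $c<0$, $\abs{c}\sim1$ has no non-positive root once $4\abs{\sigma}^3/(27\psi^2)<\abs{c}/2$. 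This avoids Cardano, the selection lemma, and the whole normal-form apparatus; the only inputs are the sign structure built into (d2) of Definition~\ref{def:admissible_shape_analysis} and one-variable calculus. Your approach also handles both signs of $\sigma$ uniformly, whereas the paper's argument implicitly relies on the already-established fact that the gap lies on the $-\sign\sigma$ side of $\tau_0$. The trade-off is that the paper's route, being embedded in the Cardano framework, gives finer quantitative control (the precise $\omega_3$) that feeds into Lemma~\ref{lem:small_gap}, while your argument is tailored to the qualitative conclusion $\abs{\sigma}\gtrsim1$ and would not by itself yield that refinement.
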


Under the assumptions of the previous lemma, we set $\Delta \defeq 1$ and obtain trivially $\wh{\Delta} \sim 1 \sim \Delta$. Thus, 
\eqref{eq:proof_behaviour_v_aux2} implies \eqref{eq:expansion_local_minima_general} by the same argument as in the case $\Delta \lesssim 1$.

For $\abs{\sigma} \leq \sigma_*$ with some sufficiently small $\sigma_*\sim 1$, we will prove below 
with the help of the following Lemma~\ref{lem:small_gap} and \eqref{eq:Psie_rescale_argument} that replacing $\wh{\Delta}$ by $\Delta$ in \eqref{eq:proof_behaviour_v_aux2} 
yields an affordable error. 
We present the proof of Lemma \ref{lem:small_gap} at the end of this section. 

\begin{lemma}[Size of small gap] \label{lem:small_gap}
Let $\tau_0,\tau_1 \in \pt \supp \dens$, $\tau_1<\tau_0$, be two shape regular points for $m$ on $\shapeJ_0$ and $\shapeJ_1$, 
respectively, where $\shapeJ_0, \shapeJ_1 \subset \R$ are two open intervals with $0 \in \shapeJ_0 \cap \shapeJ_1$. 
We assume $\abs{\inf \shapeJ_0} \gtrsim 1$ and $\sup \shapeJ_1 \gtrsim 1$ as well as $(\tau_1, \tau_0) \cap \supp \dens = \varnothing$. 
We set $\Delta(\tau_0)\defeq \tau_0 - \tau_1$. 
Then there is $\wt{\sigma} \sim 1$ such that if $\abs{\sigma(\tau_0)}\leq \wt{\sigma}$ and $\abs{\sigma(\tau_0) - \sigma(\tau_1)} \lesssim \abs{\tau_0-\tau_1}^{1/3}$ then
\[ \frac{\Delta(\tau_0)}{\wh{\Delta}(\tau_0)} = 1 + \ord(\sigma(\tau_0)). \]
The same statement holds true when $\tau_0$ is replaced by $\tau_1$ with $\Delta(\tau_1) \defeq \tau_0 - \tau_1$. 
\end{lemma}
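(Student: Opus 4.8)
The plan is to relate the true gap size $\Delta(\tau_0)=\tau_0-\tau_1$ to the model gap $\wh{\Delta}(\tau_0)$, which by \eqref{eq:def_wh_Delta_abstract} is $\min\{4\abs{\sigma(\tau_0)}^3/(27\kappa\psi^2),1\}$, by tracking where the density $\dens$ actually vanishes near the edge $\tau_0$. Working from the edge $\tau_0$ into the gap (i.e.\ with $\sign\omega = -\sign\sigma(\tau_0)$, since $\tau_1<\tau_0$ forces $\sigma(\tau_0)>0$ by the sign convention in \eqref{eq:abstract_cubic_small_gap}), Theorem~\ref{thm:abstract_cubic_equation}~(ii)~(b) tells us $\Im\Theta(\omega)=0$ exactly for $\omega \in -\sign\sigma(\tau_0)[0,c_*\abs{\sigma(\tau_0)}^3]$ with $c_*\sim 1$, while on the other side it behaves like the $\Psie$-profile. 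Combined with the relations \eqref{eq:def_admiss_decom_diff_m}, \eqref{eq:def_admiss_bound_r}, and $b\sim 1$ from Definition~\ref{def:admissible_shape_analysis}, the leading behaviour of $v=\pi^{-1}\Im m$ near $\tau_0$ is governed by $\Im\Theta$, so $\dens$ vanishes on a one-sided neighbourhood of $\tau_0$ whose length is $\wh{\Delta}(\tau_0)$ up to the shape-analysis error terms. The analogous statement holds approaching $\tau_1$ from the gap, using that $\tau_1$ is also a shape regular point and that $\sigma(\tau_1)<0$ (again from the sign convention, as $\tau_1$ is a right edge).

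The key steps I would carry out, in order. First, record the signs: $\tau_0$ is a left edge, so $\sigma(\tau_0)>0$, and $\tau_1$ is a right edge, so $\sigma(\tau_1)<0$; the hypothesis $\abs{\sigma(\tau_0)-\sigma(\tau_1)}\lesssim\abs{\tau_0-\tau_1}^{1/3}=\Delta(\tau_0)^{1/3}$ then controls how different the two scales $\abs{\sigma(\tau_0)}$ and $\abs{\sigma(\tau_1)}$ are. Second, from $\tau_0$: Theorem~\ref{thm:abstract_cubic_equation}~(ii)~(b) gives an interval $[\tau_0-c_*\abs{\sigma(\tau_0)}^3,\tau_0]$ on which $\Im\Theta=0$ hence (by \eqref{eq:def_admiss_dens_zero_Im_Theta_bounds}, \eqref{eq:def_admiss_bound_r}) $\dens=0$, so $\Delta(\tau_0)\gtrsim\abs{\sigma(\tau_0)}^3\sim\wh{\Delta}(\tau_0)$; conversely, on $[\tau_0-\wh{\Delta}(\tau_0)(1-\eps),\tau_0)$ with $\omega$ on the "support side" the $\Psie$-term in \eqref{eq:abstract_cubic_small_gap} combined with \eqref{eq:abstract_small_gap_abs_Theta_bound} and the error bounds of Proposition~\ref{pro:behaviour_v_close_sing_weak_conditions} forces $\dens>0$, i.e.\ $\Delta(\tau_0)\lesssim\wh{\Delta}(\tau_0)$ — more precisely I would extract $\Delta(\tau_0)/\wh{\Delta}(\tau_0)=1+\ord(\abs{\sigma(\tau_0)})$ by matching the leading $\Delta^{1/3}\Psie$ profile against the true density using \eqref{eq:Psie_close_to_square_root} and \eqref{eq:Psie_rescale_argument}, where the $\ord(\abs{\sigma(\tau_0)})$ relative error collects the subleading terms $\abs{\nu(\omega)}$, $\eps(\omega)$ and $\Psi(\omega)^2$ evaluated at scale $\omega\sim\wh{\Delta}$. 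Third, repeat the argument from $\tau_1$, using $\abs{\sigma(\tau_1)}\sim\abs{\sigma(\tau_0)}$ (a consequence of step one together with $\Delta(\tau_0)\sim\abs{\sigma(\tau_0)}^3$, so $\Delta(\tau_0)^{1/3}\lesssim\abs{\sigma(\tau_0)}$, hence $\abs{\sigma(\tau_1)}=\abs{\sigma(\tau_0)}+\ord(\abs{\sigma(\tau_0)})$) to see that $\wh{\Delta}(\tau_1)\sim\wh{\Delta}(\tau_0)$ and that the gap measured from the $\tau_1$ side agrees with $\Delta(\tau_1)=\tau_0-\tau_1=\Delta(\tau_0)$ up to the same relative error. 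Fourth, combine: both one-sided vanishing neighbourhoods together exhaust $(\tau_1,\tau_0)$ up to lower order, yielding $\Delta/\wh{\Delta}=1+\ord(\sigma(\tau_0))$.

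The main obstacle I anticipate is the consistency argument tying the two edges together: the expansions from $\tau_0$ and from $\tau_1$ each only control the density on part of the gap near that respective edge, and one must verify that these two regions cover $(\tau_1,\tau_0)$ — equivalently that $\dens$ does not fail to vanish somewhere in the middle of the gap, which would make $\Delta$ larger than either edge's local analysis predicts. This is handled by the structural fact that $(\tau_1,\tau_0)\cap\supp\dens=\varnothing$ is assumed, so $\dens\equiv 0$ on the whole open gap; the real content is then the \emph{reverse} bound $\Delta\lesssim\wh{\Delta}$, i.e.\ showing $\dens$ becomes positive immediately past distance $\wh{\Delta}(\tau_0)$ from $\tau_0$, which requires that the error terms in \eqref{eq:abstract_cubic_small_gap}–\eqref{eq:abstract_small_gap_abs_Theta_bound} are genuinely lower order than the $\Psie$ main term at scale $\omega\sim\wh{\Delta}$; here the smallness $\abs{\sigma(\tau_0)}\le\wt{\sigma}$ with $\wt{\sigma}\sim 1$ chosen small is exactly what makes $\wh{\Delta}<1$ so that $\eps(\omega)\sim\wh{\Delta}^{1/3}$ at $\omega\sim\wh{\Delta}$ and the relative error is $\ord(\wh{\Delta}^{1/3})=\ord(\abs{\sigma(\tau_0)})$. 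The remaining work is bookkeeping of these $\ord$-terms and is routine given Theorem~\ref{thm:abstract_cubic_equation} and Lemma~\ref{lem:prop_Psie_Psim}.
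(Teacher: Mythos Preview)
Your overall plan---use the cubic analysis at both edges and sandwich the gap between two model lengths---is the right shape, but the execution has a genuine gap.

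The tool you invoke, Theorem~\ref{thm:abstract_cubic_equation}~(ii)(b), only tells you two things: on the support side ($\sign\omega=\sign\sigma$) you see the $\Psie$ profile, and on the gap side $\Im\Theta(\omega)=0$ for $\omega\in-\sign\sigma[0,c_*\abs{\sigma}^3]$ with some $c_*\sim1$. This yields $\Delta\gtrsim\abs{\sigma}^3\sim\wh{\Delta}$, but \emph{not} $\Delta/\wh{\Delta}=1+\ord(\abs{\sigma})$ with leading constant $1$. Your step for the upper bound (``$\dens$ becomes positive immediately past distance $\wh{\Delta}(\tau_0)$ into the gap'') cannot be read off from that theorem, which is silent about what happens beyond $c_*\abs{\sigma}^3$ on the gap side; likewise the ``matching'' via \eqref{eq:Psie_close_to_square_root}--\eqref{eq:Psie_rescale_argument} concerns the support side and does not pin down the gap length. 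Your paragraph beginning ``conversely, on $[\tau_0-\wh{\Delta}(1-\eps),\tau_0)$ with $\omega$ on the `support side'\,'' conflates the two sides.

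The paper's proof goes one level deeper, to the two-nearby-edges normal form in Proposition~\ref{pro:solution_cubic_normal_form}~(iv) with coordinates \eqref{eq:normal_coordinates_small_gap}. The upper bound $\Delta\le\wh{\Delta}(1+\ord(\abs{\sigma}))$ comes directly from part~(c), \eqref{eq:Im_Omega_-_sign_lambda_3_positive}, which asserts $\Im\Omega>0$ at $-s\lambda_3=-s(2+\varrho\wh{\gamma})$; in $\omega$-coordinates this is positivity of $\Im\Theta$ just past $-\wh{\Delta}$. For the lower bound one assumes to the contrary that $\lambda_0\defeq 2\Delta/\wh{\Delta}<\lambda_2$ and squeezes: part~(b), \eqref{eq:Im_Omega_small_in_small_gap}, gives $\Im\Omega\lesssim\wh{\gamma}^{1/2}$ on $I_2$, while the shape regularity of $\tau_1$ supplies the growth $\Im\Omega(\lambda_0+\xi)\gtrsim\xi^{1/2}$ (via \eqref{eq:proof_behaviour_v_aux2} at $\tau_1$ and the bound $\wh{\Delta}(\tau_1)\lesssim\wh{\Delta}(\tau_0)$, for which the H\"older hypothesis on $\sigma$ is used). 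Comparing these at $\lambda_2$ forces $\lambda_0\ge 2-C\abs{\sigma}$. So you need to work with Proposition~\ref{pro:solution_cubic_normal_form}~(iv) (parts (b) and (c)) rather than the coarser packaging in Theorem~\ref{thm:abstract_cubic_equation}~(ii)(b).
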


From Lemma \ref{lem:small_gap}, we conclude that there is $\gamma \in \C$ such that $\abs{\gamma} \lesssim 1$ and $\Delta = ( 1+ \gamma \abs{\sigma}) \wh{\Delta}$. 
By possibly shrinking $\sigma_* \sim 1$, we can assume that $\abs{\gamma \sigma} \leq \eps_*$ for $\abs{\sigma} \leq \sigma_*$, where $\eps_* \sim 1$ is chosen as in Lemma \ref{lem:prop_Psie_Psim} (iv). 
Thus, \eqref{eq:Psie_rescale_argument} yields 
\[ \wh{\Delta}^{1/3}\Psie\bigg(\frac{\abs{\omega}}{\wh{\Delta}}\bigg) = \bigg( \frac{\Delta}{\wh{\Delta}}\bigg)^{1/6} \Delta^{1/3} \Psie\bigg( \frac{\abs{\omega}}{\Delta}\bigg) 
+ \ord\bigg( \min\bigg\{\frac{\abs{\omega}^{3/2}} {\Delta^{5/6}}, \abs{\omega}^{1/3} \bigg\}\bigg). \]
Hence, choosing $h \defeq \pi^{-1}c (\Delta/\wh{\Delta})^{1/6}b$ as before and noticing $h \sim 1$ yields \eqref{eq:expansion_local_minima_general} in the missing regime. This completes the proof of  
Proposition \ref{pro:behaviour_v_close_sing_weak_conditions}. 
As we have already explained, Theorem \ref{thm:behaviour_v_close_sing} follows immediately. 
\end{proof}

The core of the proof of Lemma \ref{lem:sigma_nonzero_local_minimum} is an effective monotonicity estimate on $v$, see \eqref{eq:monotonicity_estimate2} below, which is the analogue of (9.20) in Lemma 9.2 of \cite{AjankiQVE}. 
Owing to the weaker assumptions on the coefficients of the cubic equation, we need to present an upgraded proof here. 
In fact, the bound in (9.20) of \cite{AjankiQVE} contained a typo. It should have read as 
\[ (\sign \sigma(\tau)) \pt_\tau v(\tau) \gtrsim \frac{1}{\avg{v(\tau)} ( 1 + \abs{\sigma(\tau)})} \] 
for $\tau \in \mathbb{D}_{\eps_*}$ satisfying $\Pi(\tau) \geq \Pi_*$. 
However, this does not affect the correctness of the argument in \cite{AjankiQVE}. 

\begin{proof}[Proof of Lemma \ref{lem:sigma_nonzero_local_minimum}]
In the whole proof, we will use the notation of Definition \ref{def:admissible_shape_analysis}. 
We will show below that there are $\dens_* \sim 1$ and $\Pi_* \sim 1$ such that 
\begin{equation} \label{eq:monotonicity_estimate2}
 (\sign \kappa_1\sigma(\tau)) \pt_\tau v(\tau) \gtrsim \dens(\tau)^{-1}  
\end{equation}
for all $\tau \in \R$ which satisfy $\dens(\tau)\in (0,\dens_*]$ and $\abs{\sigma(\tau)} \geq \Pi_* \dens(\tau)^2$ and 
are admissible points for the shape analysis. 

Now, we first conclude the statement of the lemma from \eqref{eq:monotonicity_estimate2} through a proof by contradiction. 
If $\tau_0$ satisfies the conditions of Lemma \ref{lem:sigma_nonzero_local_minimum} then 
 $\pt_\tau \dens(\tau_0) = 0$ as $\tau_0$ is a local minimum of $\dens$. Assuming $\abs{\sigma(\tau_0)} \geq 
\Pi_* \dens(\tau_0)^2$ and applying $\avg{\genarg}$ to  \eqref{eq:monotonicity_estimate2} yield 
the contradiction $\pt_\tau \dens(\tau_0)>0$.

For the proof of \eqref{eq:monotonicity_estimate2} we start by proving a relation for $\pt_\tau v(\tau)$. 
We divide \eqref{eq:def_admiss_decom_diff_m} by $\omega$, use $\Theta(0)=0$ and $r(0)=0$ as well as take the limit $\omega\to 0$ to obtain $\pt_\tau m(\tau) = b \pt_\omega \Theta(0) + \pt_\omega r(0)$. 
Taking the imaginary part of the previous relation yields 
\begin{equation} \label{eq:lem_monotone_aux2}
 \pi \pt_\tau v(\tau) = \Im [ b \pt_\omega\Theta(0)] + \Im \pt_\omega r(0). 
\end{equation}
We divide \eqref{eq:def_admiss_bound_r} by $\omega$, employ the first bound in \eqref{eq:def_admiss_bound_Theta} and obtain 
\[ \normbb{\frac{r(\omega)}{\omega}} \lesssim 1 + \absbb{\frac{\Theta(\omega)}{\omega}}^2 \lesssim 1 + \frac{\abs{\omega}}{\dens^4}. \] 
By sending $\omega \to 0$ and using $r(0) = 0$, we conclude 
\begin{equation} \label{eq:lem_monotone_aux3}
 \norm{\Im \pt_\omega r(0)} \lesssim1.
\end{equation}
We divide \eqref{eq:cubic_sing_dens} by $\mu_1 \omega$, take the limit $\omega\to 0$ and use $\lim_{\omega\to 0} \Theta(\omega)=\Theta(0)=0$ to obtain 
\begin{equation} \label{eq:lem_monotone_aux4}
\begin{aligned} 
\pt_\omega \Theta(0) = -\frac{\Xi(0)\bar{\mu}_1}{\abs{\mu_1}^2} 
= & \frac{(\kappa+\ord(\dens))(\ii \kappa_1\dens\sigma + 2\dens^2\psi + \ord(\dens^3+\dens^2\abs{\sigma}))}{4 \dens^4\abs{\psi+ \ord( \dens + \abs{\sigma})}^2 + \dens^2 \abs{\kappa_1\sigma + \ord(\dens^2+\dens\abs{\sigma})}^2 } \\ 
= & \frac{\kappa}{\dens} \frac{\ii \kappa_1\sigma + 2\dens\psi + \ord(\dens^2 + \dens \abs{\sigma})}{4 \dens^2\abs{\psi+ \ord(\dens+\abs{\sigma})}^2 + \abs{\kappa_1\sigma + \ord(\dens^2+\dens\abs{\sigma})}^2 }, 
\end{aligned} 
\end{equation}
where we employed $\abs{\mu_1}^2 =  4 \dens^4\abs{\psi+ \ord(\dens+\abs{\sigma})}^2 + \dens^2 \abs{\kappa_1\sigma + \ord(\dens^2 + \dens\abs{\sigma})}^2$ as $\rho,\psi,\kappa_1, \sigma \in \R$.
Thus, we obtain 
\begin{equation} \label{eq:lem_monotone_aux5}
 \dens \abs{\Re \pt_\omega \Theta(0)} 
\lesssim \frac{\dens + \dens\abs{\sigma}}{\dens^2\abs{\psi+ \ord( \dens + \abs{\sigma})}^2 + \abs{\kappa_1\sigma + \ord(\dens^2+\dens\abs{\sigma})}^2}. 
\end{equation}
Therefore, using $b = b^* + \ord(\dens)$, $b + b^* \sim 1$, $\kappa \sim 1$ and $\abs{\kappa_1} \sim 1$ yields
\[ (\sign \kappa_1 \sigma)\Im [ b \pt_\omega \Theta(0)] \gtrsim \frac{\dens^{-1}\abs{\sigma} + \ord(\dens + \abs{\sigma}) + \ord( \dens + \dens \abs{ \sigma}) }{\abs{\sigma+ \ord(\dens^2 + \dens\abs{\sigma})}^2 + \dens^2 \abs{\psi + \ord(\dens+\abs{\sigma})}^2} 
\gtrsim \frac{\abs{\sigma}}{\abs{\sigma}^2 + \dens^2} 
\frac{1}{\dens}. \] 
Here, in the first step, the error term $\ord(\dens+ \dens \abs{\sigma})$ in the numerator originates from the second term in 
\begin{equation} \label{eq:lem_monotone_aux6}
\begin{aligned}
 (\sign \kappa_1 \sigma)\Im [ b \pt_\omega\Theta(0)] & = (\sign \kappa_1 \sigma)\Big( \Re b \Im \pt_\omega\Theta(0) + \Im b \Re \pt_\omega\Theta(0) \Big)\\ 
 & \gtrsim  (\sign \kappa_1 \sigma) \Im \pt_\omega\Theta(0) - \dens\abs{\Re \pt_\omega\Theta(0)} 
\end{aligned}
\end{equation}
and applying \eqref{eq:lem_monotone_aux5} to it. We applied \eqref{eq:lem_monotone_aux4} to the first term on the right-hand side of \eqref{eq:lem_monotone_aux6}. 
In the last estimate, we used $\psi, \abs{\sigma}, \dens \lesssim 1$ and $\abs{\sigma} \geq \Pi_* \dens^2$ for some large $\Pi_* \sim 1$ as well as $\dens \leq \dens_*$ for some small $\dens_* \sim 1$. 
Employing $\abs{\sigma} \geq \Pi_* \dens^2$ once more, the factor $\abs{\sigma}/(\abs{\sigma}^2 + \dens^2)$ on the right-hand side scales like $(1 + \abs{\sigma})^{-1} \gtrsim 1$.
Hence, we conclude from \eqref{eq:lem_monotone_aux2} and \eqref{eq:lem_monotone_aux3} that 
\[ (\sign \kappa_1\sigma) \pt_\tau v(\tau) \gtrsim\frac{1}{\dens} + \ord(1). \] 
By choosing $\dens_* \sim 1$ sufficiently small, we obtain \eqref{eq:monotonicity_estimate2}. This completes the proof of Lemma \ref{lem:sigma_nonzero_local_minimum}.
\end{proof}

\begin{proof}[Proof of Lemma \ref{lem:large_gap}]
We prove both cases, $\dens(\tau) = 0$ for all $\tau \in [\tau_0 - \eps, \tau_0]$ 
or for all $\tau \in [\tau_0, \tau_0 +\eps]$, in parallel. 
We can assume that $\abs{\sigma} \leq \wt{\sigma}$ for any $\wt{\sigma} \sim 1$ as the statement trivially holds true otherwise. 
We choose $(\delta,\varrho,\gamma_*)$ as in Proposition \ref{pro:solution_cubic_normal_form} (iv), $\wh{\Delta}$ as in \eqref{eq:def_wh_Delta_abstract}, normal coordinates $(\lambda, \Omega(\lambda))$ 
as in \eqref{eq:normal_coordinates_small_gap} as well as $\wh{\gamma} = \wh{\Delta}^{1/3}$ and $s = \sign \sigma$. 
We set $\lambda_3\defeq 2 + \varrho \wh{\Delta}^{1/3}$ (cf.~\eqref{eq:def_lambdas_small_gap}) and $\omega_3 \defeq \wh{\Delta}\lambda_3 /2$. 
There is $\wt{\sigma} \sim 1$ such that $\wh{\Delta} \leq \gamma_*^3$ for $\abs{\sigma} \leq \wt{\sigma}$ due to $\wh{\Delta} \sim \abs{\sigma}^3$ by \eqref{eq:def_admiss_scaling_parameters} and the 
definition of $\wh{\Delta}$ in \eqref{eq:def_wh_Delta_abstract}. 
Hence, $\omega_3\leq C \abs{\sigma}^3$ and, by possibly shrinking $\wt{\sigma}\sim 1$, we obtain 
$-\omega_3\sign \sigma\in\shapeJ$ for $\abs{\sigma} \leq \wt{\sigma}$ due to the assumption on $\shapeJ$
($\abs{\inf \shapeJ} \gtrsim 1$ or $\sup \shapeJ \gtrsim 1$). 
From \eqref{eq:Im_Omega_-_sign_lambda_3_positive}, we obtain $\Im \Omega(-\lambda_3\sign \sigma) >0$. 
Hence, $\Im \Theta(-\omega_3\sign\sigma)>0$. 
From the third bound in \eqref{eq:def_admiss_dens_zero_Im_Theta_bounds}, the second bound in 
\eqref{eq:def_admiss_bound_Theta} and $\omega_3 \lesssim \abs{\sigma}^3$, we conclude 
$v(-\omega_3\sign\sigma) >0$ for $\abs{\sigma} \leq \wt{\sigma}$ and sufficiently small $\wt{\sigma} \sim 1$. 
Thus, $\dens(-\omega_3 \sign\sigma)>0$ which implies $\omega_3>\eps$. 
Therefore, $\abs{\sigma}^3 \gtrsim \omega_3 > \eps \sim 1$ which completes the proof of Lemma~\ref{lem:large_gap}.
\end{proof} 

We finish this section by proving Lemma \ref{lem:small_gap}. It is similarly proven as Lemma 9.17 in \cite{AjankiQVE}. We present the proof due to the weaker assumptions of Lemma \ref{lem:small_gap}. 
The main difference is the proof of \eqref{eq:size_small_gap_aux2} below (cf.~(9.138) in \cite{AjankiQVE}). In \cite{AjankiQVE}, 
$\Theta$ could be explicitly represented in terms of $m$, i.e, 
\[ \Theta(\omega) = \scalar{f}{m(\tau_0 + \omega)- m(\tau_0)} \]
(cf.~(9.8) and (8.10c) in \cite{AjankiQVE} with $\alpha =0$). In our setup, $b$ and $r$ do not necessarily define an orthogonal decomposition (cf.~\eqref{eq:def_admiss_decom_diff_m}). 

\begin{proof}[Proof of Lemma \ref{lem:small_gap}]
Let $(\delta, \varrho, \gamma_*)$ be chosen as in Proposition \ref{pro:solution_cubic_normal_form} (iv). 
We choose $\wh{\Delta}$ as in \eqref{eq:def_wh_Delta_abstract} and normal coordinates as 
in \eqref{eq:normal_coordinates_small_gap} as well as $\wh{\gamma} = \wh{\Delta}^{1/3}$ and $s = \sign \sigma$. 
We assume $\wh{\Delta} \leq \gamma_*^3$ in the following and 
define $\lambda_3$ as in \eqref{eq:def_lambdas_small_gap}.  
By using $\abs{\inf \shapeJ_0} \gtrsim 1$ as in the proof of Lemma \ref{lem:large_gap}, we find $\wt{\sigma} \sim 1$ 
such that $-\omega_3 \in \shapeJ_0$ for $\omega_3 \defeq \lambda_3 \wh{\Delta}/2$ and $\abs{\sigma} \leq \wt{\sigma}$. 
Thus, $-\Delta = \tau_1- \tau_0 \in \shapeJ_0$. 
We set 
\[ \lambda_0 \defeq \inf\{ \lambda >0 \colon \Im \Omega(\lambda) >0 \} \] 
and remark that $\lambda_0 = 2 \Delta/\wh{\Delta}$ due to the definition of $\Delta$ and the third bound in \eqref{eq:def_admiss_dens_zero_Im_Theta_bounds}.
From \eqref{eq:Im_Omega_-_sign_lambda_3_positive}, we conclude $\lambda_0 \leq \lambda_3$. Thus, 
$\Delta \leq \wh{\Delta}(1 + \ord(\wh{\gamma})) = \wh{\Delta}(1 + \ord(\abs{\sigma}))$ as $\varrho \sim 1$ and 
$\wh{\gamma} \sim \abs{\sigma}$. 
Therefore, it suffices to show the opposite bound, 
\begin{equation} \label{eq:size_small_gap_aux1}
 \Delta \geq \wh{\Delta}( 1+ \ord(\abs{\sigma})). 
\end{equation}
If $\lambda_0 \geq \lambda_2 \defeq 2 - \varrho \wh{\Delta}^{1/3}$ (cf.~\eqref{eq:def_lambdas_small_gap}) then we have \eqref{eq:size_small_gap_aux1} as $\wh{\Delta}^{1/3} \sim 
\abs{\sigma}$ and $\varrho \sim 1$. If $\lambda_0 < \lambda_2$ then we will prove below that 
\begin{equation} \label{eq:size_small_gap_aux2}
 \Im \Omega(\lambda_0 + \xi) \gtrsim \xi^{1/2} 
\end{equation}
for $\xi \in [0,1]$. From \eqref{eq:Im_Omega_small_in_small_gap}, we then conclude 
\[ c_0(\lambda_2 - \lambda_0)^{1/2}  \leq \Im \Omega(\lambda_2) \leq C_1 \abs{\sigma}^{1/2}  \] 
as $\wh{\gamma} \sim \abs{\sigma}$. Hence, 
\[ \lambda_0 \geq \lambda_2 - (C_1/c_0)^2 \abs{\sigma} \geq 2 - C \abs{\sigma}, \] 
where we used $\lambda_2 = 2 - \varrho \wh{\gamma}$ and $\varrho \sim 1$ in the last step. This shows \eqref{eq:size_small_gap_aux1} also in the case $\lambda_0 < \lambda_2$. 
Therefore, the proof of the lemma will be completed once \eqref{eq:size_small_gap_aux2} is proven. 

In order to prove \eqref{eq:size_small_gap_aux2}, we translate it into the coordinates $\omega$ relative to $\tau_0$ and $v$. 
From $\lambda_0 < \lambda_2$, we obtain 
\begin{equation} \label{eq:size_small_gap_aux3}
 \Delta < ( 1 - \varrho \wh{\Delta}^{1/3}) \wh{\Delta} \lesssim \abs{\sigma}^3.  
\end{equation}
Since 
\[ \pi v(\tau_0 - \Delta - \wt{\omega}) = b \Im \Theta( -\Delta- \wt{\omega}) + \Im r( - \Delta - \wt{\omega}), \] 
the bound \eqref{eq:size_small_gap_aux2} would follow from 
\begin{equation} \label{eq:size_small_gap_aux5}
 v (\tau_0 - \Delta - \wt{\omega}) \gtrsim \wh{\Delta}(\tau_0)^{-1/6} \abs{\wt{\omega}}^{1/2} 
\end{equation}
for sufficiently small $\Delta \lesssim \abs{\sigma}^3 \leq \wt{\sigma}^3$ and $\wt{\omega} \leq \wt{\delta}$ due to
the third bound in \eqref{eq:def_admiss_dens_zero_Im_Theta_bounds}.
Since $v(\tau_1) = 0$ and $\tau_1 = \tau_0 - \Delta$ is a shape regular point, we conclude from \eqref{eq:proof_behaviour_v_aux2} that 
\[ v(\tau_1 - \wt{\omega}) \gtrsim \wh{\Delta}(\tau_1)^{-1/6} \abs{\wt{\omega}}^{1/2} \] 
for $\abs{\wt{\omega}} \leq \delta$. 
Therefore, it suffices to show that 
\begin{equation} \label{eq:size_small_gap_aux4}
 \wh{\Delta}(\tau_1) \lesssim \wh{\Delta}(\tau_0) 
\end{equation}
in order to verify \eqref{eq:size_small_gap_aux5}. 
Owing to $\abs{\sigma(\tau_0)-\sigma(\tau_1)} \lesssim \Delta^{1/3}$ and \eqref{eq:size_small_gap_aux3}, we have 
\[ \abs{\sigma(\tau_1)} \lesssim \abs{\sigma(\tau_0)} + \Delta^{1/3} \lesssim \abs{\sigma(\tau_0)}. \]
We allow for a smaller choice of $\wt{\sigma} \sim 1$ and assume $\psi(\tau_1) \sim \psi(\tau_0) \sim 1$ by \eqref{eq:def_admiss_scaling_parameters}. 
Assuming without loss of generality $\wh{\Delta}(\tau_0) < 1$ and $\wh{\Delta}(\tau_1) < 1$, we obtain \eqref{eq:size_small_gap_aux4} 
by the definition of $\wh{\Delta}$ in \eqref{eq:def_wh_Delta_abstract}. 
We thus get \eqref{eq:size_small_gap_aux4} and hence \eqref{eq:size_small_gap_aux5}. This proves \eqref{eq:size_small_gap_aux2} and completes the proof of Lemma \ref{lem:small_gap}.
\end{proof}

\subsection{Proofs of Theorem~\ref{thm:behaviour_dens} and Proposition~\ref{pro:behaviour_dens_weak_conditions}} \label{subsec:proof_behaviour_dens} 

\begin{proof}[Proof of Proposition~\ref{pro:behaviour_dens_weak_conditions}] 
We start with the proof of part (i). 
We apply $\avg{\genarg}$ to \eqref{eq:expansion_local_minima_general}, use $\dens=\avg{v}$ and obtain $\avg{h}$ from the definitions of $h$ in 
the four cases given in the proof of Proposition \ref{pro:behaviour_v_close_sing_weak_conditions}. 
Indeed, by using the relations 
\begin{equation} \label{eq:behaviour_dens_aux1}
 \avg{b} = \pi + \ord(\rho), \qquad c^3 = 4 \Gamma, 
\end{equation}
which are proven below, 
as well as Lemma~\ref{lem:small_gap} in the cases (a) and (b) and the stronger error estimate
\eqref{eq:error_nonzero_local_minimum} in case (d), we conclude part (i) of Proposition~\ref{pro:behaviour_dens_weak_conditions} up to the proof of \eqref{eq:behaviour_dens_aux1}. 

The first relation in \eqref{eq:behaviour_dens_aux1} follows from applying $\avg{\genarg}$ to \eqref{eq:expansion_b} 
and using \eqref{eq:b_0_l_0_approx}, Corollary~\ref{coro:characterization_support} with $\tau_0 \in \supp\dens$, the cyclicity of $\avg{\genarg}$ and \eqref{eq:f_u_qq_star}. 
The second relation in \eqref{eq:behaviour_dens_aux1} is a consequence of the definition of $c$ in \eqref{eq:def_wh_Delta_abstract} and the definition of $\Gamma$ in Theorem~\ref{thm:abstract_cubic_equation} (i). 
This completes the proof of part (i).

We now turn to the proof of part (ii) of Proposition \ref{pro:behaviour_dens_weak_conditions} and assume that all points of $(\pt \supp\dens) \cap I$ are shape regular for $m$ and all estimates in Definition \ref{def:admissible_shape_analysis} 
hold true uniformly on this set. As in the proof of Proposition \ref{pro:behaviour_v_close_sing_weak_conditions}, we conclude $\sigma(\tau_0) \neq 0$ for all $\tau_0 \in (\pt\supp\dens) \cap I$. 
Owing to $\dist(0,\pt\shapeJ) \gtrsim 1$ and the Hölder-continuity of $\sigma$ on $(\pt \supp \dens) \cap I$, Proposition \ref{pro:behaviour_v_close_sing_weak_conditions} is applicable to every $\tau_0 \in (\pt \supp\dens) \cap I$. 
Hence, \eqref{eq:scaling_psie} and $\dist(0,\pt\shapeJ) \gtrsim 1$ imply the existence of $\delta_1, c_1 \sim 1$ such that 
\begin{equation} \label{eq:proof_behaviour_v_aux3}
 \dens(\tau_0 +\omega) \geq c_1 \abs{\omega}^{1/2}  
\end{equation}
for all $\omega \in -\sign\sigma(\tau_0)[0,\delta_1]$ and $\tau_0 \in (\pt\supp\dens)\cap I$. 
In particular, $\tau_0 - \sign\sigma(\tau_0)[0,\delta_1] \subset \supp \dens$ for all $\tau_0 \in (\pt\supp\dens) \cap I$. 
Since $\abs{I} \lesssim 1$, this implies that $\supp \dens \cap I$ consists of finitely many intervals $[\alpha_i,\beta_i]$ 
with lengths $\gtrsim 1$, and, thus, their number $K$ satisfies $K\sim 1$ as $\delta_1 \sim 1$ and $\beta_i - \alpha_i \geq \delta_1$
if $\beta_i \neq \sup I$ and $\alpha_i \neq \inf I$. 

Additionally, we now assume that the elements of $\Mb_{\dens_*}$ are shape regular points for $m$ on $\shapeJ$ and all estimates in Definition \ref{def:admissible_shape_analysis} hold true uniformly on $\Mb_{\dens_*}$. 
By possibly shrinking $\dens_* \sim 1$, we conclude from \eqref{eq:proof_behaviour_v_aux3} that $\abs{\alpha_i - \gamma} \sim 1$ and $\abs{\beta_i-\gamma} \sim 1$ for any $i= 1, \ldots, K$ and $\gamma \in \Mb_{\dens_*}$. 

Suppose now that $\tau_0 \in \Mb_{\dens_*}$ with $\dens(\tau_0)=0$. Then part (i) and $\dist(0,\pt \shapeJ) \gtrsim 1$ yield the existence of $\delta_2, c_2 \sim 1$ such that 
\[ \dens(\tau_0 + \omega) \geq c_2 \abs{\omega}^{1/3} \] 
for all $\abs{\omega} \leq \delta_2$. By possibly further shrinking $\rho_*\sim 1$, we thus obtain $\abs{\tau_0 - \gamma} \sim 1$ for all $\gamma \in \Mb_{\dens_*}\setminus\{\tau_0\}$. 
We thus conclude \eqref{eq:distance_of_internal_minima} in this case. 

Finally, let $\gamma_1, \gamma_2 \in \Mb_{\dens_*}$ with $\dens(\gamma_1),\dens(\gamma_2)>0$. 
Then applying (i) with $\tau_0 = \gamma_1$ and $\tau_0 =\gamma_2$ yields 
\[ \Psi_1(\omega) + \Psi_2(\omega) \lesssim \abs{\omega}^{1/3}\Big(\dens(\gamma_1)\char (\abs{\omega} \lesssim \dens(\gamma_1)^3) + \dens(\gamma_2)\char(\abs{\omega} \lesssim \dens(\gamma_2)^3)\Big) + 
\Psi_1(\omega)^2 + \Psi_2(\omega)^2, \] 
where we defined $\omega = \gamma_2 - \gamma_1$ and 
\[ \Psi_1(\omega) \defeq \wt{\dens}_1 \Psim\bigg(\frac{\abs{\omega}}{\wt{\dens}_1^3}\bigg), \qquad \Psi_2(\omega) \defeq \wt{\dens}_2 \Psim\bigg(\frac{\abs{\omega}}{\wt{\dens}_2^3}\bigg)\]  
with $\wt{\dens}_1 \sim \dens(\gamma_1)$ and $\wt{\dens}_2 \sim \dens(\gamma_2)$
(cf.~Corollary 9.4 in \cite{AjankiQVE}). 
Thus, we obtain either $\abs{\omega} \sim 1$ or $\abs{\omega} \lesssim \min \{ \dens(\gamma_1), \dens(\gamma_2)\}^4$. 
This completes the proof of \eqref{eq:distance_of_internal_minima} and hence the one of Proposition 
\ref{pro:behaviour_dens_weak_conditions}. 
\end{proof}

Finally, we use Proposition~\ref{pro:behaviour_dens_weak_conditions} and a Taylor expansion of $\dens$ around a nonzero local minimum $\tau_0$ 
to obtain the stronger conclusions of Theorem~\ref{thm:behaviour_dens}. 

\begin{proof}[Proof of Theorem~\ref{thm:behaviour_dens}]
We start with the proof of part (i). Let $\tau_0 \in \supp\dens \cap I_\theta$ satisfy the conditions of Theorem~\ref{thm:behaviour_dens} (i). 
Then, by Proposition~\ref{pro:cubic_for_dyson_equation}, the conditions of Proposition~\ref{pro:behaviour_dens_weak_conditions} (i) are fulfilled and all 
conclusions in Theorem~\ref{thm:behaviour_dens} (i) apart from the case $\abs{\omega} \lesssim \dens(\tau_0)^{7/2}$ in \eqref{eq:multiplicative_expansion_nonzero_minimum_general_assums} follow from 
Proposition~\ref{pro:behaviour_dens_weak_conditions} (i) and \eqref{eq:scaling_psim}. 

For the proof of the missing case, we fix a local minimum $\tau_0 \in \supp \dens \cap I_\theta$ of $\dens$ such that $\dens(\tau_0) \leq \dens_*$. 
We set $\dens \defeq \dens(\tau_0)$. 
Owing to the $1/3$-Hölder continuity of $\dens$ by Proposition~\ref{pro:analyticity_of_m}, there is  
$\eps \sim 1$ such that $\dens(\tau_0+\omega) \sim \dens$ if $\abs{\omega} \leq \eps \dens^3$. 
In particular, $\dens(\tau_0+\omega) >0$ 
and using Lemma~\ref{lem:derivatives_m} with $k=2,3$ to compute the second order Taylor expansion of $\dens$ around $\tau_0$ yields 
\begin{equation} \label{eq:f_gamma_exp1}
 f_{\tau_0}(\omega ) \defeq \dens(\tau_0 + \omega) - \dens(\tau_0) = \frac{c}{\dens^5} \omega^2 + \ord\bigg( \frac{\abs{\omega}^3}{\dens^8} \bigg) 
\end{equation}
for all $\omega\in \R$ satisfying $\abs{\omega} \leq \eps \dens^3$, where $c = c(\tau_0)$ satisfies $0 \leq c \lesssim 1$.

On the other hand, $\tau_0$ is a shape regular point by Proposition~\ref{pro:cubic_for_dyson_equation} and a nonzero local minimum of $\dens$. 
Hence, Proposition~\ref{pro:behaviour_dens_weak_conditions} (i) (d) implies 
\begin{equation} \label{eq:f_gamma_exp2} 
 f_{\tau_0}(\omega) = \dens\Psim\bigg( \Gamma\frac{\omega}{\dens^3} \bigg) + \ord\bigg( \frac{\abs{\omega}}{\dens} \bigg) = \frac{\Gamma^2}{18\dens^5} \omega^2 + \ord\bigg( \frac{\abs{\omega}^3}{\dens^{8}} + \frac{\abs{\omega}}{\dens} \bigg) 
\end{equation}
for $\abs{\omega} \leq \eps \dens^3$, where $\Gamma = \Gamma(\tau_0)$. 
Here, we also used the second order Taylor expansion of $\Psim$ defined in \eqref{eq:def_Psim} in the second step. 
Note that $\Gamma \sim 1$ since $\psi + \sigma^2 \sim 1$ by \eqref{eq:psi_plus_sigma_sim_1} and $\abs{\sigma} \lesssim \dens^2$ by Lemma~\ref{lem:sigma_nonzero_local_minimum}. 

We compare \eqref{eq:f_gamma_exp1} and \eqref{eq:f_gamma_exp2} and conclude 
\[ \frac{c}{\dens^5} \omega^2 = 
\frac{\Gamma^2}{18\dens^5}\omega^2 + \ord\bigg(\frac{\abs{\omega}^3}{\dens^8} +  \frac{\abs{\omega}}{\dens}  \bigg)
\]
for $\abs{\omega} \leq \eps \dens^3$. 
Choosing $\omega = \dens^{7/2}$ and solving for $c$ yield 
\begin{equation} \label{eq:coefficient_parabola_nonzero_local_minimum} 
 c = \frac{\Gamma^2}{18} + \ord(\dens^{1/2}). 
\end{equation}
By starting from the expansion of $f_{\tau_0}$ in \eqref{eq:f_gamma_exp1}, using the Taylor expansion of $\Psim$ and \eqref{eq:scaling_psim}, we obtain \eqref{eq:multiplicative_expansion_nonzero_minimum_general_assums} 
in the last missing regime $\abs{\omega} \lesssim \dens^{7/2}$. 

We now turn to the proof of (ii) of Theorem~\ref{thm:behaviour_dens}.
By Proposition~\ref{pro:cubic_for_dyson_equation}, the conditions of Proposition~\ref{pro:behaviour_dens_weak_conditions} (ii) are satisfied on $I' \defeq I \cap [-3\kappa, 3\kappa]$, where $\kappa \defeq \norm{a} + 2\norm{S}^{1/2}$. 
Since $\norm{a} \lesssim 1$ and $\norm{S} \leq \normtwoinf{S} \lesssim 1$ by Assumptions~\ref{assums:general}, we have $\abs{I'} \lesssim 1$. Moreover, $\supp \dens \subset I'$ by \eqref{eq:supp_v_subset_spec_a}. 
Hence, by Proposition~\ref{pro:behaviour_dens_weak_conditions}, it suffices to estimate the distance $\abs{\gamma_1-\gamma_2}$, where $\gamma_1, \gamma_2 \in \Mb_{\dens_*}$ satisfy $\gamma_1 \neq \gamma_2$.  

Let $\gamma_1, \gamma_2 \in \Mb_{\dens_*}$. 
By \eqref{eq:distance_of_internal_minima} in Proposition~\ref{pro:behaviour_dens_weak_conditions} (ii), 
 we know a dichotomy: either $\abs{\gamma_1 - \gamma_2} \gtrsim 1$ or $\abs{\gamma_1 - \gamma_2} \lesssim \min\{ \dens(\gamma_1), \dens(\gamma_2)\}^4$. 
For $\gamma_1 \neq \gamma_2$, we now exclude the second case by using the expansions obtained in the proof of (i). 
If $\dens_* \sim 1$ is chosen sufficiently small then $c(\gamma_1) \sim 1$ and $c(\gamma_2) \sim 1$ by \eqref{eq:coefficient_parabola_nonzero_local_minimum}. 
Hence, by assuming $\abs{\gamma_1-\gamma_2} \lesssim \min\{ \dens(\gamma_1),\dens(\gamma_2)\}^4$, we obtain
$ \dens(\gamma_2) > \dens(\gamma_1)$ from the expansion of $f_{\tau_0}(\omega)$ in \eqref{eq:f_gamma_exp1} with $\tau_0=\gamma_1$ and $\omega = \gamma_2 - \gamma_1$. 
Similarly, as $c(\gamma_2) \sim 1$, the expansion of $f_{\tau_0}(\omega)$ in \eqref{eq:f_gamma_exp1} with $\tau_0=\gamma_2$ and 
$\omega = \gamma_1 - \gamma_2$ implies $\dens(\gamma_1) > \dens(\gamma_2)$. This is a contradiction. Therefore, the distance of two 
small local minima of $\dens$ is much bigger than $\min\{\dens(\gamma_1), \dens(\gamma_2)\}^4$ and the dichotomy above completes the proof of (ii). 
\end{proof}

\subsection{Characterisations of a regular edge}

In this subsection, we introduce the concept of \emph{regular edges} of the self-consistent support and give several equivalent characterisations relying on the cubic analysis of the previous sections. 
We assume that $S$ is flat and $a$ is bounded, i.e., that \eqref{eq:estimates_data_pair} is satisfied. In particular, owing to Proposition~\ref{pro:regularity_density_of_states}, there is a 
Hölder continuous probability density $\dens \colon \R \to [0,\infty)$ such that 
\[ \avg{m(z)} = \int_\R \frac{\dens(\tau)}{\tau - z } \, \di \tau,  \] 
where $m$ is the solution to the Dyson equation, \eqref{eq:dyson}. 

We now define regular edges of $\dens$ as in \cite{AltEdge}. 

\begin{definition}[Regular edge] \label{def:regular_edge}
We call $\tau_0 \in \pt \supp \dens$ a \emph{regular edge} if the limit 
\[ \lim_{\supp \dens \ni \tau \to \tau_0} \frac{\dens(\tau)}{\sqrt{\abs{\tau-\tau_0}}} = \frac{\gamma_\mathrm{edge}^{3/2}}{\pi} \] 
exists for some $\gamma_\mathrm{edge}$ that satisfies $0 < c_* \leq \gamma_\mathrm{edge} \leq c^* < \infty$ for some constants $c_*$ and $c^*$. 
\end{definition} 

The following proposition provides several equivalent characterisations of a regular edge. 

\begin{proposition}[Characterisations of a regular edge] \label{pro:char_regular_edge} 
Let $a$ and $S$ satisfy \eqref{eq:estimates_data_pair} and $m$ be the solution of the corresponding Dyson equation, \eqref{eq:dyson}. 
Suppose for some $\tau_0 \in \pt \supp \dens$, there are $m_*>0$ and $\delta>0$ such that 
\begin{equation} \label{eq:m_sup_bound_regular} 
 \norm{m(\tau + \ii \eta )} \leq m_* 
\end{equation}
for all $\tau \in [\tau_0 - \delta, \tau_0 + \delta]$ and $\eta \in (0,\delta]$. 
We set $\sigma \defeq \sigma(\tau_0)$. Then the following statements are equivalent:  
\begin{enumerate}[label=(\roman*)]
\item \label{item:regular} The point $\tau_0$ is a regular edge of $\dens$. 
\item \label{item:lim_inf_lim_sup} There are $0 < c_* < c^* < \infty$ such that 
\[ c_* \leq \liminf_{\supp\dens \ni \tau \to \tau_0} \frac{\dens(\tau)}{\sqrt{\abs{\tau-\tau_0}}} \leq \limsup_{\supp\dens \ni \tau \to \tau_0} \frac{\dens(\tau)}{\sqrt{\abs{\tau-\tau_0}}} \leq c^* \] 
\item \label{item:sigma_sim_1} There are positive constants $\sigma_*$ and $\sigma^*$ such that 
\[ \sigma_* \leq \abs{\sigma} \leq \sigma^*.\] 
\item \label{item:square_root_edge_order_one} There is $\delta_* >0$ such that 
\[ 
\dens(\tau_0 + \omega) = \begin{cases} 
\displaystyle \frac{\pi^{1/2}}{\abs{\sigma}^{1/2}} \abs{\omega}^{1/2} + \ord(\abs{\omega}), \quad & \text{if } \sign \omega = \sign \sigma, \\  
0, \qquad  & \text{if }\sign \omega = -\sign \sigma, 
\end{cases} 
\] 
for all $\omega \in [-\delta_*, \delta_*]$. In particular, we have $\gamma_\mathrm{edge} = \pi/\abs{\sigma}^{1/3}$. 
\item \label{item:gap_order_one} There is $\delta_\mathrm{gap} >0$ such that 
\[ \dens(\tau)= 0 \] 
for all $\tau \in [\tau_0, \tau_0 + \delta_\mathrm{gap}]$ or for all $\tau \in [\tau_0 - \delta_\mathrm{gap}, \tau_0]$. 
\end{enumerate}
All constants in \ref{item:regular} -- \ref{item:gap_order_one} depend effectively on each other as well as possibly $c_1$, $c_2$, $c_3$ from \eqref{eq:estimates_data_pair} as well as 
$\delta$ and $m_*$ from \eqref{eq:m_sup_bound_regular}. 
\end{proposition}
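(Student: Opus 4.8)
The plan is to prove the five characterizations $\ref{item:regular} \Leftrightarrow \ref{item:lim_inf_lim_sup} \Leftrightarrow \ref{item:sigma_sim_1} \Leftrightarrow \ref{item:square_root_edge_order_one} \Leftrightarrow \ref{item:gap_order_one}$ by exploiting that under \eqref{eq:m_sup_bound_regular}, Lemma~\ref{lem:q_bounded_Im_u_sim_avg}~(ii) gives Assumptions~\ref{assums:general} on $\HbI$ for $I = (\tau_0-\delta,\tau_0+\delta)$, so that Proposition~\ref{pro:behaviour_dens_weak_conditions} and Theorem~\ref{thm:behaviour_dens} apply at $\tau_0$, which is an edge. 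The key structural observation throughout is that for an edge $\tau_0 \in \pt \supp \dens$ one has $\dens(\tau_0)=0$ and $\sigma(\tau_0)\neq 0$ (this was established in the proof of Proposition~\ref{pro:behaviour_v_close_sing_weak_conditions}, via Theorem~\ref{thm:abstract_cubic_equation}~(ii)~(a): if $\sigma(\tau_0)=0$ the density would have a cubic cusp and $\tau_0$ would not be a boundary point), and that by \eqref{eq:psi_plus_sigma_sim_1} we have $\psi(\tau_0) + \sigma(\tau_0)^2 \sim 1$. The single genuinely quantitative input is then the square-root expansion \eqref{eq:simple_edge} of Lemma~\ref{lem:simple_edge} transferred to $\dens$, which near an edge reads $\dens(\tau_0+\omega) = \pi^{1/2}\abs{\sigma}^{-1/2}\abs{\omega}^{1/2}\bigl(1 + \ord(\abs{\omega}^{1/3} + \ldots)\bigr)$ on the side $\sign\omega = \sign\sigma$ and $\dens \equiv 0$ on a segment of the opposite side. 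This is exactly statement \ref{item:square_root_edge_order_one}, with $\gamma_\mathrm{edge}^{3/2}/\pi = \pi^{1/2}\abs{\sigma}^{-1/2}$, i.e.\ $\gamma_\mathrm{edge} = \pi/\abs{\sigma}^{1/3}$.

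First I would prove $\ref{item:sigma_sim_1} \Rightarrow \ref{item:square_root_edge_order_one}$: given $\sigma_* \le \abs{\sigma} \le \sigma^*$, apply Proposition~\ref{pro:behaviour_v_close_sing_weak_conditions} (case (a)/(b), left/right edge) together with Lemma~\ref{lem:simple_edge} and \eqref{eq:def_admiss_bound_r}; since $\abs{\sigma}\sim 1$ the expansion \eqref{eq:simple_edge} is valid on $\abs{\omega}\le c_*\abs{\sigma}^3 \sim 1$, and applying $\avg{\genarg}$ using $\avg{b}=\pi + \ord(\dens)$ and $\dens(\tau_0)=0$ gives the claimed form with error $\ord(\abs{\omega})$, absorbing $\abs{\nu}$ and $\abs{\sigma}^{-1}\abs{\Theta}$ into $\ord(\abs{\omega}^{1/2}\abs{\omega/\sigma}^{1/2}) = \ord(\abs{\omega})$. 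Then $\ref{item:square_root_edge_order_one} \Rightarrow \ref{item:regular}$ is immediate by inspection of the limit, and $\ref{item:regular}\Rightarrow\ref{item:lim_inf_lim_sup}$ is trivial. For $\ref{item:lim_inf_lim_sup}\Rightarrow\ref{item:sigma_sim_1}$ I would argue by contraposition using Lemma~\ref{lem:simple_edge}: the expansion \eqref{eq:simple_edge} (valid on $\abs{\omega} \le c_*\abs{\sigma}^3$) shows $\dens(\tau_0+\omega)/\sqrt{\abs{\omega}} \to \pi^{1/2}\abs{\sigma}^{-1/2}$ as $\omega \to 0$ along $\sign\omega=\sign\sigma$; hence if $\abs{\sigma}$ were arbitrarily small the $\limsup$ would blow up, and if $\abs{\sigma}$ were arbitrarily large the $\liminf$ would hit $0$ — in fact one must also check that $\dens/\sqrt{\abs{\cdot}}$ tends to $0$ from the gap side, which holds since $\dens\equiv 0$ there, so the two-sided $\liminf$ along $\supp\dens$ is genuinely the one-sided limit $\pi^{1/2}\abs{\sigma}^{-1/2}$. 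The remaining equivalences with \ref{item:gap_order_one} follow: $\ref{item:square_root_edge_order_one}\Rightarrow\ref{item:gap_order_one}$ is read off directly ($\dens\equiv 0$ on $\sign\omega=-\sign\sigma$, $\abs{\omega}\le\delta_*$), while $\ref{item:gap_order_one}\Rightarrow\ref{item:sigma_sim_1}$ is precisely Lemma~\ref{lem:large_gap} (with $\shapeJ$ coming from $\HbI$, $\abs{\inf\shapeJ}$ or $\sup\shapeJ \gtrsim \delta \sim 1$): a gap of order $\delta$ forces $\abs{\sigma}\sim 1$, and the upper bound $\abs{\sigma}\lesssim 1$ is always available since $\abs{\sigma} = \abs{\avg{sf_u^3}} \le \normtwo{f_u}^3 \normtwo{s} \cdot (\ldots) \lesssim 1$ by \eqref{eq:f_u_sim_1}.

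I would organize the write-up as the cycle $\ref{item:sigma_sim_1} \Rightarrow \ref{item:square_root_edge_order_one} \Rightarrow \ref{item:regular} \Rightarrow \ref{item:lim_inf_lim_sup} \Rightarrow \ref{item:sigma_sim_1}$, inserting $\ref{item:square_root_edge_order_one} \Leftrightarrow \ref{item:gap_order_one}$ as a short detour (one direction by inspection, the other by Lemma~\ref{lem:large_gap}). The constant tracking — that each constant in \ref{item:regular}--\ref{item:gap_order_one} depends effectively on the others and on $c_1,c_2,c_3,\delta,m_*$ — is bookkeeping: the passage through Lemma~\ref{lem:q_bounded_Im_u_sim_avg}~(ii) turns $m_*$ and $\delta$ into the model parameters $k_1,\dots,k_8$ in Assumptions~\ref{assums:general}, and all comparison constants $\sim$ thereafter depend only on those; conversely the thresholds $\dens_*,\sigma_*,\delta_*$ in Proposition~\ref{pro:behaviour_dens_weak_conditions}, Theorem~\ref{thm:abstract_cubic_equation} and Lemma~\ref{lem:simple_edge} are $\sim 1$, hence effective.

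The main obstacle I anticipate is the direction $\ref{item:lim_inf_lim_sup}\Rightarrow\ref{item:sigma_sim_1}$, specifically making rigorous that a one-sided control $c_* \le \abs{\sigma}^{-1/2}\pi^{1/2} \le c^*$ can be extracted from a \emph{two-sided} $\liminf$/$\limsup$ of $\dens(\tau)/\sqrt{\abs{\tau-\tau_0}}$ taken over $\supp\dens$. This requires knowing a priori that $\tau_0$ lies at the boundary of $\supp\dens$ in the sense that one side is genuinely outside $\supp\dens$ near $\tau_0$ — which is where one needs that $\tau_0$ being an edge \emph{and} a shape regular point already forces $\sigma(\tau_0)\neq 0$ (from \eqref{eq:abstract_cubic_small_gap}, $\Im\Theta$ vanishes on one side, hence so does $\dens$ by \eqref{eq:def_admiss_dens_zero_Im_Theta_bounds}). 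Once that qualitative gap structure is in hand, the limit from the support side is governed by \eqref{eq:simple_edge} and the argument closes; the subtlety is purely that the implication must not silently assume the very square-root behavior it is trying to rule out, so I would present it as: \ref{item:lim_inf_lim_sup} in particular implies $\dens>0$ on one side of $\tau_0$ with lower bound $c_*\sqrt{\abs{\tau-\tau_0}}$, hence $\tau_0$ is a left or right edge with small adjacent gap or no gap, and then \eqref{eq:simple_edge} combined with the hypothesized upper bound $c^*$ pins $\abs{\sigma}\gtrsim 1$, while $\abs{\sigma}\lesssim 1$ always.
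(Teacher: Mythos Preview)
Your proposal is correct and uses essentially the same ingredients as the paper: the simple-edge expansion from Lemma~\ref{lem:simple_edge}, the large-gap lemma (Lemma~\ref{lem:large_gap}) for \ref{item:gap_order_one}$\Rightarrow$\ref{item:sigma_sim_1}, the observation $\sigma(\tau_0)\neq 0$ at an edge via Theorem~\ref{thm:abstract_cubic_equation}~(ii)(a), and $\avg{b}=\pi$ to pass from $\Im\Theta$ to $\dens$. The paper's organization differs only cosmetically: it first isolates the expansion
\[
\dens(\tau_0+\omega)=\frac{\pi^{1/2}}{\abs{\sigma}^{1/2}}\abs{\omega}^{1/2}+\ord\Big(\frac{\abs{\omega}}{\abs{\sigma}^2}\Big)
\quad\text{for }\abs{\omega}\le c\abs{\sigma}^3
\]
as a standalone statement valid for any $\sigma\neq 0$ (with $\sigma$-dependent range and error), and then reads off the implications \ref{item:lim_inf_lim_sup}$\Rightarrow$\ref{item:sigma_sim_1}, \ref{item:sigma_sim_1}$\Rightarrow$\ref{item:square_root_edge_order_one}, and \ref{item:sigma_sim_1}$\Rightarrow$\ref{item:regular} directly from it. Your worry about \ref{item:lim_inf_lim_sup}$\Rightarrow$\ref{item:sigma_sim_1} is unfounded: since this expansion holds on $\abs{\omega}\le c\abs{\sigma}^3$ regardless of the size of $\abs{\sigma}$, the limit of $\dens/\sqrt{\abs{\omega}}$ along the support side is exactly $\pi^{1/2}\abs{\sigma}^{-1/2}$, so the two-sided bounds in \ref{item:lim_inf_lim_sup} pin $\abs{\sigma}$ without circularity.
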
 

In our recent work \cite{AltEdge} on the universality of the local eigenvalue statistics at regular edges parts of Proposition~\ref{pro:char_regular_edge} have already been proven. 
In fact, in Theorem~4.1 of \cite{AltEdge}, we showed that \ref{item:regular} implies \ref{item:sigma_sim_1} and \ref{item:square_root_edge_order_one}. 
The new implications in Proposition~\ref{pro:char_regular_edge}, however, require the cubic shape analysis of the previous subsections which was not available in \cite{AltEdge}. 
Using our preceding analysis, the proof of Proposition~\ref{pro:char_regular_edge} is quite short. 
In the proof, the comparison relation $\sim$ is understood with respect to $c_1, c_2, c_3$ from \eqref{eq:estimates_data_pair} as well as $\delta$ and $m_*$ from \eqref{eq:m_sup_bound_regular}. 

\begin{proof} 
For the entire proof, we remark that, by Lemma~\ref{lem:q_bounded_Im_u_sim_avg} (ii), the conditions of Proposition~\ref{pro:cubic_for_dyson_equation} are satisfied. 
Moreover, $ \dens(\tau_0)=0$ due to the continuity of $\dens$ and $\tau_0 \in \pt \supp \dens$. 
Before establishing the equivalence of \ref{item:regular} -- \ref{item:gap_order_one}, we show that $\sigma \neq 0$ and there is $c \sim 1$, depending only on the constants in \eqref{eq:estimates_data_pair} 
as well as $\delta$ and $m_*$, such that 
\begin{equation} \label{eq:regular_edge_expansion_aux1}
\dens(\tau_0 + \omega) = \begin{cases}\displaystyle \frac{\pi^{1/2}}{\abs{\sigma}^{1/2}} \abs{\omega}^{1/2} + \ord\Big(\frac{\abs{\omega}}{\abs{\sigma}^{2}}\Big), & \text{if } \sign \omega = \sign \sigma, \\ 0, & \text{if } \sign \omega = -\sign \sigma, 
\end{cases} 
\end{equation} 
for all $\omega \in [-c \abs{\sigma}^3, c \abs{\sigma}^3]$. 

By Proposition~\ref{pro:cubic_for_dyson_equation}, we find $\delta_0 \sim 1$, depending only on the constants in \eqref{eq:estimates_data_pair} as well as $\delta$ and $m_*$, such that 
taking the imaginary part of \eqref{eq:def_admiss_decom_diff_m} and applying $\avg{\genarg}$ to the result yield  
\begin{equation} \label{eq:regular_edge_expansion_aux2}
 \dens(\tau_0 + \omega) = \Im \Big(\Theta(\omega) \pi^{-1}\avg{b}\Big)+ \pi^{-1}\avg{\Im r(\omega)} = \Im \Theta(\omega) + \ord\Big( (\abs{\Theta(\omega)} + \abs{\omega}) \Im \Theta(\omega)\Big) 
\end{equation}
for $\abs{\omega} \leq \delta_0$. 
Here, we used $\avg{b} = \pi$ by \eqref{eq:behaviour_dens_aux1} in the proof of Proposition~\ref{pro:behaviour_dens_weak_conditions} as well as the third bound in 
\eqref{eq:def_admiss_dens_zero_Im_Theta_bounds} in the second step. 

By Proposition~\ref{pro:cubic_for_dyson_equation} the assumptions of Theorem~\ref{thm:abstract_cubic_equation} (ii) are satisfied with $\kappa = \pi$. 
Hence, from Theorem~\ref{thm:abstract_cubic_equation} (ii) (a), \eqref{eq:regular_edge_expansion_aux2} and $\abs{\Theta(\omega)} \lesssim \abs{\omega}^{1/3}$ by \eqref{eq:def_admiss_bound_Theta}, 
we conclude that $\sigma \neq 0$ as $\tau_0 \in \pt\supp \dens$. 
From \eqref{eq:regular_edge_expansion_aux2} and Lemma~\ref{lem:simple_edge}, we, thus, conclude \eqref{eq:regular_edge_expansion_aux1} as $\abs{\sigma} \lesssim 1$, 
$\abs{\Theta(\omega)} \lesssim \abs{\omega/\sigma}^{1/2}$ by Lemma~\ref{lem:simple_edge} and, hence, 
$\abs{\nu(\omega)} \lesssim \abs{\Theta(\omega)} + \abs{\omega} \lesssim \abs{\omega/\sigma}^{1/2}$ by the first bound in \eqref{eq:bound_nu}.
This completes the proof of \eqref{eq:regular_edge_expansion_aux1}. 

We now show that the statements \ref{item:regular} -- \ref{item:gap_order_one} are equivalent. 
Trivially, \ref{item:regular} implies \ref{item:lim_inf_lim_sup}. Moreover, if \ref{item:lim_inf_lim_sup} holds true then \eqref{eq:regular_edge_expansion_aux1} yields 
\ref{item:sigma_sim_1}. Clearly, \ref{item:square_root_edge_order_one} is implied by \ref{item:sigma_sim_1} due to \eqref{eq:regular_edge_expansion_aux1}. 
Furthermore, \ref{item:gap_order_one} is trivially satisfied if \ref{item:square_root_edge_order_one} holds true. 
We now prove that \ref{item:gap_order_one} implies \ref{item:sigma_sim_1}. 
By Proposition~\ref{pro:cubic_for_dyson_equation}, $\tau_0$ is a shape regular point.  
Thus, \ref{item:sigma_sim_1} is a consequence of \ref{item:gap_order_one} by Lemma~\ref{lem:large_gap}. 
Finally, \ref{item:sigma_sim_1} implies \ref{item:regular} due to \eqref{eq:regular_edge_expansion_aux1}. 
This completes the proof of Proposition~\ref{pro:char_regular_edge}. 
\end{proof}

\section{Band mass formula -- Proof of Proposition \ref{pro:band_formula}}  \label{sec:bumps} 

Before proving Proposition \ref{pro:band_formula}, we state an auxiliary lemma which will be proven at the end of this section.

\begin{lemma} \label{lem:bump_proof_aux}
Let $(a,S)$ be a data pair, $m$ the solution of the associated Dyson equation \eqref{eq:dyson} and $\dens$ the corresponding self-consistent density of states.
We assume $\norm{a} \leq k_0$ and $S[x] \leq k_1\avg{x} \id$ for all $x \in \algnon$ and for some $k_0, k_1>0$. Then we have 
\begin{enumerate}[label=(\roman*)]
\item If $\tau \in \R \setminus \supp \dens$ then there is $m(\tau) = m(\tau)^* \in \alg$ such that
\[\lim_{\eta\downarrow 0} \norm{m(\tau + \ii\eta) - m(\tau)} = 0. \]  
Moreover, $m(\tau)$ is invertible and satisfies the Dyson equation, \eqref{eq:dyson}, at $z = \tau$. 
There is $C>0$, depending only on $k_0$, $k_1$ and $\dist(\tau, \supp\dens)$, such that $\norm{m(\tau)} \leq C$ and $\norm{(\Id - (1-t) C_{m(\tau)}S)^{-1}} \leq C$ all $t \in [0,1]$. 
\item Fix $\tau \in \R \setminus \supp\dens$. Let $m_t$ be the solution of \eqref{eq:dyson} associated to the data pair 
\[(a_t, S_t) \defeq (a-tS[m(\tau)], (1-t)S)\]
 for $t \in [0,1]$ and $\dens_t$ the corresponding self-consistent density of states. 
Then, for any $t \in [0,1]$, we have 
\begin{equation} \label{eq:m_t_converges_to_m}
 \lim_{\eta \downarrow 0} \norm{m_t(\tau + \ii\eta) - m(\tau)} = 0. 
\end{equation}
Moreover, there is $c>0$, depending only on $k_0$, $k_1$ and $\dist(\tau, \supp\dens)$, such that $\dist(\tau, \supp \dens_t) \geq c$ for all $t \in [0,1]$.
\end{enumerate}
\end{lemma}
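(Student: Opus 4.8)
The statement to prove is Lemma \ref{lem:bump_proof_aux}, which has two parts: (i) boundary regularity of $m$ at a point $\tau$ outside the self-consistent spectrum, together with uniform invertibility of the interpolated stability operator $\Id - (1-t)C_{m(\tau)}S$; and (ii) for the deformed data pair $(a_t, S_t) = (a - tS[m(\tau)], (1-t)S)$, that the solution $m_t$ still converges to the \emph{same} limit $m(\tau)$ at $z = \tau$, and that $\tau$ stays a uniform distance from $\supp\dens_t$ for all $t \in [0,1]$. I would organize the proof around the Stieltjes transform representation from Proposition \ref{pro:stieltjes_representation} and a Schur-type analyticity/continuation argument for the stability operator.

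For part (i): since $\tau \notin \supp\dens = \supp v$, the representation $m(z) = \int_{\R} v(\di\tau)/(\tau - z)$ from \eqref{eq:Stieltjes_representation} shows directly that $z \mapsto m(z)$ extends holomorphically across a neighbourhood of $\tau$ on the real axis, with limit $m(\tau) = \int_{\R} v(\di\sigma)/(\sigma - \tau) \in \alg_{\rm{sa}}$ and $\norm{m(\tau)} \le \dist(\tau, \supp\dens)^{-1}$ by the first bound in \eqref{eq:stieltjes_bound_m}. That $m(\tau)$ satisfies \eqref{eq:dyson} at $z = \tau$ follows by continuity (and holomorphy) of both sides, and invertibility of $m(\tau)$ is immediate since $-m(\tau)^{-1} = \tau\id - a + S[m(\tau)] \in \alg$. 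The remaining point is the uniform bound $\norm{(\Id - (1-t)C_{m(\tau)}S)^{-1}} \le C$ for $t \in [0,1]$; here I would note that $m(\tau)$ is self-adjoint so $C_{m(\tau)}$ is a self-adjoint operator, and use the argument behind \eqref{eq:L2_bound} / the balanced-polar-decomposition machinery (or, more simply, a direct monotonicity estimate): since $\im m(\tau) = 0$, one can bound the resolvent by controlling $\Id - C_{m(\tau)}S$ away from $\tau$ using that $(\tau\id - a + S[m(\tau)])$ is invertible with norm-controlled inverse. The cleanest route is probably: $\Id - sC_{m(\tau)}S$ is invertible for all $s \in [0,1]$ because its failure of invertibility would, by a Perron--Frobenius/positivity argument as in Lemma \ref{lem:prop_F} applied to the rescaled self-energy $sS$, force $\tau$ into $\supp\dens_{(a,sS)}$, contradicting $\dist(\tau,\supp\dens)>0$ together with the support inclusion \eqref{eq:supp_v_subset_spec_a}; continuity in $s$ on the compact interval then gives the uniform bound.

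For part (ii): the key observation is that $(a_t, S_t)$ is designed so that $m(\tau)$ \emph{itself} solves the deformed Dyson equation at $z = \tau$. Indeed,
\begin{equation} \label{eq:deformed_check}
 \tau\id - a_t + S_t[m(\tau)] = \tau\id - a + tS[m(\tau)] + (1-t)S[m(\tau)] = \tau\id - a + S[m(\tau)] = -m(\tau)^{-1},
\end{equation}
so $m(\tau)$ is the unique self-adjoint, norm-bounded candidate and one expects $m_t(\tau + \ii\eta) \to m(\tau)$. To prove this rigorously I would use the continuity/uniqueness theory: by the uniform invertibility of $\Id - (1-t)C_{m(\tau)}S_t$-type operators established in part (i) (note $C_{m(\tau)}S_t = (1-t)C_{m(\tau)}S$, exactly the operator controlled above), the implicit function theorem applied to the Dyson equation at $(z,t) = (\tau, t)$ shows that $z \mapsto m_t(z)$ extends continuously (indeed holomorphically) to a fixed-size complex neighbourhood of $\tau$, uniformly in $t$, with $m_t(\tau) = m(\tau)$; equivalently, $m(\tau) = \int v_t(\di\sigma)/(\sigma-\tau)$ forces $\tau \notin \supp v_t = \supp\dens_t$. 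The uniform gap $\dist(\tau, \supp\dens_t) \ge c$ then follows from \eqref{eq:stieltjes_bound_m} applied to $m_t$: near $\tau$, $\norm{m_t}$ stays bounded by a constant depending only on $k_0$, $k_1$ and the original gap, so $\supp\dens_t$ cannot approach $\tau$; alternatively one reads it off from the support inclusion \eqref{eq:supp_v_subset_spec_a} for $(a_t, S_t)$ combined with $\norm{a_t} \le \norm{a} + \norm{S}\,\norm{m(\tau)} \lesssim 1$ and $\norm{S_t} \le \norm{S}$, once one knows $\spec(a_t)$ does not hit $\tau$.

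\textbf{Main obstacle.} The hard part is the uniform resolvent bound $\norm{(\Id - sC_{m(\tau)}S)^{-1}} \le C$ over $s \in [0,1]$ and its transfer to the continuity statement \eqref{eq:m_t_converges_to_m}. Pointwise invertibility for each fixed $s$ is not hard (positivity/Perron--Frobenius plus the support gap), but getting a bound that is \emph{uniform} in $s$ and depends only on $k_0$, $k_1$, $\dist(\tau,\supp\dens)$ requires either a compactness argument (continuity of $s \mapsto \norm{(\Id - sC_{m(\tau)}S)^{-1}}$ on $[0,1]$, which is fine but needs the invertibility to hold at every $s$ including $s=1$) or a quantitative monotonicity estimate in the spirit of Lemma \ref{lem:positive_symmetric_map_bounded} and the proof of \eqref{eq:L2_bound}. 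I would aim for the quantitative version: write $\Id - sC_{m(\tau)}S = C_{q^*,q}C_u(C_u^* - sF)C_{q^*,q}^{-1}$ as in \eqref{eq:B_alternative_representation_F} (noting $u$, $q$, $F$ now built from the self-adjoint $m(\tau)$, so $u = s_0 \defeq \sign\Re u$ is real and $C_u^* = C_{s_0}$ is a reflection), reducing everything to controlling $(C_{s_0} - sF)^{-1}$; since $\normtwo{F} \le 1$ with the Perron--Frobenius eigenvalue strictly below $1$ precisely because $\tau \notin \supp\dens$ (this gap is quantified by $\dist(\tau,\supp\dens)$ via \eqref{eq:normtwo_F}-type identities), the Rotation-Inversion Lemma \ref{lem:rotation_inversion} delivers the uniform bound. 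This is the step I expect to require the most care in making the constants depend only on the advertised quantities.
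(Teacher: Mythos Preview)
Your approach to part (i) is essentially the paper's, but two technical points need fixing. First, the balanced polar decomposition \eqref{eq:def_q_u} is built from $(\Im m)^{-1/2}$ and is therefore not defined at $\tau$ where $\Im m(\tau)=0$; the paper instead works at $z=\tau+\ii\eta$ for $\eta$ in a set $N\subset(0,1]$ accumulating at $0$ (supplied by Lemma~\ref{lem:behaviour_outside}\ref{item:F_condition}), obtains the bound there, and passes to the limit. Second, the Rotation--Inversion Lemma is unnecessary: since $C_u$ is unitary and $\normtwo{(1-t)F}\le\normtwo{F}\le 1-C^{-1}$ on $\tau+\ii N$, one has directly $\normtwo{(\Id-(1-t)C_uF)^{-1}}\le(1-\normtwo{F})^{-1}$, which is the paper's route.

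The real gap is in part (ii). Your computation \eqref{eq:deformed_check} is correct, and the implicit function theorem does yield an analytic function $\tilde m_t$ on a uniform neighbourhood of $\tau$ with $\tilde m_t(\tau)=m(\tau)$ solving the deformed Dyson equation. But you have not explained why $\tilde m_t(z)=m_t(z)$ for $z\in\Hb$ near $\tau$. The Dyson equation has many solutions; $m_t$ is singled out by the constraint $\Im m_t(z)>0$, so you must verify $\Im\tilde m_t(z)>0$ for $\Im z>0$ small before invoking uniqueness. The paper does this by computing
\[
\partial_\eta\Im\tilde m_t\big|_{\eta=0}=(\Id-(1-t)C_{m(\tau)}S)^{-1}[m(\tau)^2]\ge\norm{m(\tau)^{-1}}^{-2}(\Id-(1-t)C_{m(\tau)}S)^{-1}[\id]\ge\norm{m(\tau)^{-1}}^{-2}\id,
\]
which requires knowing that $(\Id-(1-t)C_{m(\tau)}S)^{-1}$ is positivity-preserving and satisfies the lower bound on $\id$. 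This is obtained (again by working at $\tau+\ii\eta$, $\eta\in N$, and taking limits) from Lemma~\ref{lem:inverse_positivity_preserving} applied to $(1-t)C_{u^*,u}F$, using $\normtwo{(1-t)F}<1$. Without this positivity step your implicit-function branch could in principle be the wrong one, and the subsequent claim ``$m(\tau)=\int v_t(\di\sigma)/(\sigma-\tau)$ forces $\tau\notin\supp\dens_t$'' is then circular. Once \eqref{eq:m_t_converges_to_m} is secured, the uniform gap $\dist(\tau,\supp\dens_t)\ge c$ follows from the implication \ref{item:Id_minus_C_m_S_invertible}$\Rightarrow$\ref{item:dist_supp} of Lemma~\ref{lem:behaviour_outside} applied to $(a_t,S_t)$, with constants uniform in $t$ by part (i); your alternative via $\spec(a_t)$ and \eqref{eq:supp_v_subset_spec_a} does not give a lower bound on the gap.
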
 

\begin{proof}[Proof of Proposition \ref{pro:band_formula}]
We start with the proof of (i) and notice that the existence of $m(\tau)$ has been proven in Lemma \ref{lem:bump_proof_aux} (i). 
In order to verify \eqref{eq:band_formula}, we consider the continuous flow of data pairs $(a_t,S_t)$ from Lemma \ref{lem:bump_proof_aux} (ii) and the corresponding solutions $m_t$ of the Dyson equation, \eqref{eq:dyson}, 
and prove 
\begin{equation} \label{eq:band_formula_t}
 \dens_t( (-\infty, \tau)) = \avg{\char_{(-\infty,0)}(m_t(\tau))}
\end{equation}
for all $t \in [0,1]$. Note that $\dist(\tau, \supp\dens_t) \geq c$ for all $t \in [0,1]$ by Lemma \ref{lem:bump_proof_aux} (ii). 

In particular, by Lemma \ref{lem:bump_proof_aux} (ii), $m_t(\tau)=m(\tau)$ is constant along the flow, and with it the right-hand side of \eqref{eq:band_formula_t}. 
The identity \eqref{eq:band_formula_t} obviously holds for $t=1$, because $m_1(z)=(a-Sm(\tau)-z)^{-1}$ is the resolvent of a self-adjoint element and $m(\tau)$ satisfies \eqref{eq:dyson} at $z=\tau$ by 
Lemma \ref{lem:bump_proof_aux} (i). Thus it remains to verify that the left-hand side of \eqref{eq:band_formula_t} stays constant along the flow as well. 
This will show \eqref{eq:band_formula_t} for $t=0$ which is \eqref{eq:band_formula}. 

First we conclude from the Stieltjes transform representation \eqref{eq:Stieltjes_representation} of $m_t$ that 
\begin{equation} \label{contour integral representation of mass}
\rho_{t}((-\infty,\tau))\,=\, -\frac{1}{2 \pi \ii} \oint \avg{m_t(z)}\, \di z \,,
\end{equation}
where the contour encircles $[\min\supp \rho_t,\tau)$ counterclockwise, passing through the real line only at $\tau$ and to the left of $\min\supp \rho_t$, and we extended $m_t(z)$ analytically to a neighbourhood 
of the contour (set $m_t(\bar z) \defeq m_t(z)^*$ for $z \in \Hb$ and use Lemma \ref{lem:behaviour_outside} \ref{item:s_a_solution_on_real_axis} close to the real axis to conclude analyticity 
in a neighbourhood of the contour).

We now show that the left-hand side of \eqref{contour integral representation of mass} does not change along the flow. 
Indeed, differentiating the right-hand side of \eqref{contour integral representation of mass} with respect to $t$ and writing $m_t=m_t(z)$ yield 
\[ \begin{aligned} 
\frac{\dd}{\dd t} \oint \avg{m_t(z)} \dd z & = \oint \avg{\pt_t m_t(z)} \dd z =  \oint \scalar{(C_{m_t^*}^{-1} - S_t)^{-1}[\id]}{S[m(\tau)]-S[m_t]} \dd z  \\ 
 & = \oint \avg{(\pt_z m_t  ) (S[m(\tau)]-S[m_t])} \dd z  = \oint \pt_z \left(\avg{m_t S[m(\tau)]} -  \frac{1}{2} \avg{m_t S[m_t]} \right) \dd z = 0. 
\end{aligned} \]
Here, in the second step, we used $\pt_t m_t(z) = (C_{m_t}^{-1} - S_t)^{-1}[-S[m_t] - S[m(\tau)]]$ obtained by differentiating the Dyson equation, \eqref{eq:dyson}, for the data pair $(a_t,S_t)$ defined 
in Lemma~\ref{lem:bump_proof_aux} (ii) and the definition of the scalar product, \eqref{eq:definition_scalar_product_alg}. 
In the third step, we employed $(C_{m_t^*}^{-1} - S_t)^{-1}[\id] = (\pt_z m_t(z))^*$ which follows from differentiating the Dyson equation, \eqref{eq:dyson}, for the data pair $(a_t,S_t)$ with respect to $z$.  
Finally, we used that $m_t$ is holomorphic in a neighbourhood of the contour. This completes the proof of (i) of Proposition \ref{pro:band_formula}.

For the proof of (ii), we fix a connected component $J$ of $\supp\dens$. Let $\tau_1, \tau_2 \in \R\setminus \supp\dens$ satisfy $\tau_1 < \tau_2$ and $[\tau_1, \tau_2] \cap \supp\dens= J$. 
By \eqref{eq:band_formula}, we have 
\[ n \dens(J) = n \Big(\dens( (-\infty, \tau_2) ) - \dens( (-\infty,\tau_1)) \Big) = \Tr(P_2) - \Tr(P_1) = \rm{rank} P_2 - \rm{rank} P_1, \] 
where $P_i \defeq \pi(\char_{(-\infty,0)}(m(\tau_i)))$ are orthogonal projections in $\C^{n\times n}$ for $i = 1, 2$. 
Hence, $ n\dens(J) \in \Z$. Since $0 < n \dens(J) \leq n$ by definition of $\supp\dens$, we conclude $n \dens(J) \in \{1, \ldots, n\}$, which immediately implies that $\supp\dens$ has 
 at most $n$ connected components. This completes the proof of Proposition \ref{pro:band_formula}.  
\end{proof}

\begin{proof}[Proof of Lemma \ref{lem:bump_proof_aux}]
In part (i), the existence of the limit $m(\tau) \in \alg$ follows immediately from the implication \ref{item:dist_supp} $\Rightarrow$ 
\ref{item:Id_minus_C_m_S_invertible} of Lemma \ref{lem:behaviour_outside}. 
The invertibility of $m(\tau)$ can be seen by multiplying \eqref{eq:dyson} at $z=\tau + \ii\eta$ by $m(\tau + \ii\eta)$ and taking the limit $\eta \downarrow 0$. 
This also implies that $m(\tau)$ satisfies \eqref{eq:dyson} at $z=\tau$. 
In order to bound $\norm{(\Id - (1-t)C_{m(\tau)}S)^{-1}}$, we recall the definitions of $q$, $u$ and $F$ from \eqref{eq:def_q_u} and \eqref{eq:def_F}, 
respectively, and compute
\[ \Id - (1-t)C_m S = C_{q^*,q}(\Id - (1-t)C_u F ) C_{q^*,q}^{-1} \] 
for $m=m(z)$ with $z \in \Hb$.
Hence, by \eqref{eq:char_outside_m_bound_F_condition}, Lemma \ref{lem:q_bounded_Im_u_sim_avg} (i) and Lemma \ref{lem:norm_two_to_inf}, we obtain $\norm{(\Id- (1-t)C_mS)^{-1}} \lesssim (1 - (1-t) \normtwo{F})^{-1} \leq 
(1-\normtwo{F})^{-1} \leq C$ for all $z \in \tau + \ii N$, where the set $N \subset (0,1]$ with an accumulation point at 0 is given in Lemma \ref{lem:behaviour_outside} \ref{item:F_condition}. 
Taking the limit $\eta \downarrow 0$ under the constraint $\eta \in N$ and possibly increasing $C$ yield the desired uniform bound.
This completes the proof of (i). 

We start the proof of (ii) with an auxiliary result. Similarly as in the proof of (i), we see that $\Id - (1-t)C_{m^*,m}S$ is invertible for $m=m(z)$, $z \in \tau + \ii N$ with $N$ as before. 
Since $\normtwo{F(z)} \leq 1 - C^{-1}$ for $z \in \tau + \ii N$ as in the proof of (i), 
Lemma~\ref{lem:inverse_positivity_preserving} implies that $(\Id - (1-t)C_{u^*,u}F)^{-1}$, $F = F(z)$, and, thus, 
$(\Id - (1-t)C_{m^*,m}S)^{-1} = C_{q^*,q}(\Id- (1-t)C_{u^*,u}F)^{-1}C_{q^*,q}^{-1}$  
are positivity-preserving for $z \in \tau + \ii N$. 
Taking the limit $\eta = \imz \downarrow 0$ in $N$ shows that $(\Id - (1-t)C_{m(\tau)}S)^{-1}$ is positivity-preserving for any $t \in [0,1]$. 
Moreover, \eqref{eq:inverse_Id_T_lower_bound} with $x = \id$ yields 
\begin{equation} \label{eq:lower_bound_Id_minus_C_mS_inverse_of_id_aux1}
 (\Id - (1-t)C_{m^*,m}S)^{-1}[\id]  = C_{q^*,q}(\Id- (1-t)C_{u^*,u}F)^{-1}C_{q^*,q}^{-1}[\id] \geq \id.
\end{equation} 
Since~\eqref{eq:lower_bound_Id_minus_C_mS_inverse_of_id_aux1} holds true uniformly for $z \in \tau + \ii N$ and $t \in [0,1]$, taking the limit $\eta = \imz \downarrow 0$ in $N$, we obtain 
\begin{equation}\label{eq:lower_bound_Id_minus_C_mS_inverse_of_id}
 (\Id - (1-t)C_{m(\tau)}S)^{-1}[\id] \geq \id 
\end{equation} 
for all $t \in [0,1]$. 

We fix $t \in [0,1]$. We write $m = m(\tau)$ and define $\Phi_t \colon \alg \times \R \to \alg$ through
\[ \Phi_t(\Delta, \eta) \defeq (\Id- (1-t) C_mS)[\Delta] - \frac{\ii\eta}{2} (m \Delta + \Delta m ) - \ii \eta m^2 - \frac{1}{2}(1-t) ( \Delta S[\Delta] m + m S[\Delta] \Delta)\]  
In order to show \eqref{eq:m_t_converges_to_m}, we apply the implicit function theorem (see e.g.~Lemma \ref{lem:implicit_function} below) to $\Phi_t(\Delta,\eta) = 0$. It is applicable as
$\Phi_t(0,0)=0$ and $\pt_1\Phi_t(0,0) = \Id - (1-t) C_mS$ which is invertible by (i). Hence, we obtain an $\eps>0$ and a continuously differentiable function $\Delta_t \colon (-\eps,\eps) \to \alg$ 
such that $\Phi_t(\Delta_t(\eta), \eta) = 0 $ for all $\eta \in (-\eps,\eps)$ and $\Delta_t(0)=0$. 
We now show that $\Delta_t(\eta) + m(\tau) = m_t(\tau + \ii\eta)$ for all sufficiently small $\eta>0$ by appealing to the uniqueness of the solution to the Dyson equation, \eqref{eq:dyson}, 
with the choice $z = \tau + \ii\eta$, $a = a_t$ and $S=S_t=(1-t)S$. In fact, $m=m(\tau)$ and $m_t=m_t(\tau + \ii\eta)$ with $\eta >0$ satisfy the Dyson equations
\begin{equation} \label{eq:dysons_m_m_t}
 -m^{-1} = \tau - a + S[m], \quad -m_t^{-1} = \tau + \ii\eta - a + t S[m] + (1-t)S[m_t] 
\end{equation}
and $m_t$ is the unique solution of the second equation under the constraint $\Im m_t>0$ (compare the remarks around \eqref{eq:dyson}).
A straightforward computation using the first relation in \eqref{eq:dysons_m_m_t} and $\Phi_t(\Delta_t(\eta),\eta)=0$ reveals that $\Delta_t(\eta) + m(\tau)$ solves the second equation in 
\eqref{eq:dysons_m_m_t} for $m_t$. 
Moreover, differentiating $\Phi_t(\Delta_t(\eta),\eta) = 0$ with respect to $\eta$ at $\eta = 0$ yields
\[ \pt_\eta \Im\Delta_t(\eta = 0) = ( \Id - (1-t) C_mS)^{-1}[ m^2] \geq \norm{m^{-1}}^{-2} (\Id - (1-t)C_mS)^{-1}[\id]\geq \norm{m^{-1}}^{-2} \id. \] 
Here, we used that $(\Id - (1-t)C_mS)^{-1}$ is compatible with the involution $ ^*$ and $m=m^*$ in the first step. Then we employed the invertibility of $m$, $m^2 \geq \norm{m^{-1}}^{-2}\id$ and the positivity-preserving 
property of $(\Id-(1-t)C_mS)^{-1}$ in the second step and, finally, \eqref{eq:lower_bound_Id_minus_C_mS_inverse_of_id} in the last step. 
Hence, $\Im (\Delta_t(\eta) + m(\tau)) = \Im \Delta_t(\eta) >0$ for all sufficiently small $\eta>0$. The uniqueness of the solution to the Dyson equation for $m_t$, the second relation in \eqref{eq:dysons_m_m_t}, 
implies $\Delta_t(\eta) + m(\tau) = m_t(\tau + \ii\eta)$ for all sufficiently small $\eta >0$ and all $t \in [0,1]$. 
Therefore, the continuity of $\Delta_t$ as a function of $\eta$, $\Delta_t(\eta) \to \Delta_t(0)=0$,
 yields~\eqref{eq:m_t_converges_to_m}.

We now conclude from the implication \ref{item:Id_minus_C_m_S_invertible} $\Rightarrow$ \ref{item:dist_supp} of Lemma \ref{lem:behaviour_outside} that $\dist(\tau, \supp\dens_t) \geq \eps$ for some $\eps>0$. 
Lemma \ref{lem:behaviour_outside} is applicable since $\norm{a_t}\leq k_0 + k_1 C$ (cf.~Lemma \ref{lem:norm_two_to_inf} (i) and Lemma \ref{lem:bump_proof_aux} (i)) 
and $S_t[x] \leq S[x] \leq k_1 \avg{x} \id$ for all $x \in \algnon$. 
For any $t \in [0,1]$, statement \ref{item:Id_minus_C_m_S_invertible} in Lemma \ref{lem:behaviour_outside} holds true with the same $m=m(\tau)$ by \eqref{eq:m_t_converges_to_m}
and $S$ replaced by $S_t = (1-t)S$. By (i), $\norm{m} \leq C$ and $\norm{(\Id- (1-t)C_mS)^{-1}} \leq C$ for all $t\in [0,1]$. Hence, owing to Lemma \ref{lem:behaviour_outside} \ref{item:dist_supp}, 
there is $\eps>0$, depending only on $k_0$, $k_1$ and $\dist(\tau, \supp\dens)$, such that $\dist(\tau, \supp\dens_t) \geq \eps$ for all $t \in [0,1]$. Here, $\eps$ depends only on $k_0$, $k_1$ and 
$\dist(\tau, \supp\dens)$ due to the exclusive dependence of $C$ from (i) on the quantities and the effective dependence of the constants in Lemma \ref{lem:behaviour_outside} on each other 
(see final remark in Lemma \ref{lem:behaviour_outside}). 
The uniformity of $\eps$ in $t$ is a consequence of the uniformity of $C$ from (i) in $t$. 
This completes the proof of Lemma~\ref{lem:bump_proof_aux}. 
\end{proof}

\section{Dyson equation for Kronecker random matrices} \label{sec:Kronecker}

In this section we present an application of the theory presented in this work to Kronecker random matrices, i.e., block correlated random matrices with variance profiles within the blocks, and their limits.
In particular, in Lemma \ref{lmm:L2 implies uniform bound general} and Lemma \ref{lmm:L2 implies uniform bound} below, we will provide some sufficient checkable conditions that ensure the flatness of $S$ and the boundedness of $\norm{m(z)}$, 
the main assumptions of Proposition \ref{pro:Hoelder_1_3}, Theorem \ref{thm:singularities_flat} and Theorem \ref{thm:behaviour_v_close_sing}, for the self-consistent density of states of Kronecker random matrices introduced in \cite{AltKronecker}. 

\subsection{The Kronecker setup}
We fix   $K \in \N$ and a probability space $(\frak{X},\pi)$ that we view as a possibly infinite set of indices. We consider the von Neumann algebra
\begin{equation}\label{vN algebra for example}
\cal{A}\,=\,\C^{K \times K} \otimes \rm{L}^\infty(\frak{X})\,,
\end{equation}
with the tracial state
\[
\avg{\kappa \otimes f }\,=\, \frac{ \tr \kappa}{K} \int_{\frak{X}} f \dd \pi\,.
\]
For $K=1$ the algebra $\cal{A}$ is commutative and this setup was previously considered in \cite{AjankiQVE,AjankiCPAM}. Now  let $(\alpha_\mu)_{\mu=1}^{\ell_1},(\beta_\nu)_{\nu=1}^{\ell_2}$ 
be families of matrices in $\C^{K \times K}$ with $\alpha_\mu =\alpha_\mu^*$ self-adjoint and let
$(s^\mu)_{\mu=1}^{\ell_1},(t^\nu)_{\nu=1}^{\ell_2}$  be families of non-negative bounded functions in $\rm{L}^\infty(\frak{X}^2)$ and suppose that all $s^\mu$ are symmetric, $s^\mu(x,y) = s^\mu(y,x)$. Then we define the self-energy operator $S: \cal{A} \to \cal{A}$ as
\begin{equation}
\begin{split}
\label{Kronecker data S}
S \pb{\kappa \otimes f} \,\defeq\, \sum_{\mu = 1}^{\ell_1} \alpha_\mu \kappa \alpha_\mu \otimes S_\mu f
+\sum_{\nu = 1}^{\ell_2} (\beta_\nu \kappa \beta_\nu^* \otimes T_\nu f +\beta_\nu^* \kappa \beta_\nu\otimes T_\nu^* f)\,,
\end{split}
\end{equation}
where the bounded operators $S_\mu,T_\nu,T_\nu^*:\rm{L}^\infty(\frak{X}) \to \rm{L}^\infty(\frak{X})$  act as
\[
(S_\mu f)(x)\,=\, \int_{\frak{X}} s^\mu(x,y) f(y) \pi(\dd y)\,,\quad 
(T_\nu f)(x)\,=\, \int_{\frak{X}} t^\nu(x,y) f(y) \pi(\dd y)\,,\quad
(T_\nu^* f)(x)\,=\, \int_{\frak{X}} t^\nu(y,x) f(y) \pi(\dd y)\,.
\]
Furthermore we fix a self-adjoint $a =a^* \in \cal{A}$. With these data we will consider
the Dyson equation,  \eqref{eq:dyson}.

The following lemma provides sufficient conditions that ensure flatness of $S$ and boundedness of $\norm{m(z)}$ uniformly in $z$ up to the real line. We begin with some preparations.  We use the notation $x \mapsto v_x$ for $x \in \frak{X}$ and an element $v \in \C^{K \times K} \otimes \rm{L}^\infty(\frak{X})$, interpreting it as a function on $\frak{X}$ with values in $\C^{K \times K}$.
We also introduce the functions $\gamma \in \rm{L}^\infty(\frak{X}^2)$ via 
\begin{equation}
\label{definition gamma xy}
\gamma(x,y)\,\defeq\,\pbb{
\int_{\frak{X}} (\abs{s^{\mu}(x,\1\cdot\1)- s^{\mu}(y,\1\cdot\1)}^2+\abs{t^{\nu}(x,\1\cdot\1)- t^{\nu}(y,\1\cdot\1)}^2+\abs{t^{\nu}(\1\cdot\1,x)- t^{\nu}(\1\cdot\1,y)}^2)\dd \pi}^{1/2}
\end{equation}
and $\Gamma:(0,\infty)^2 \to \rm{L}^\infty(\frak{X}), (\Lambda,\tau) \mapsto \Gamma_{\Lambda, \1\cdot\1} (\tau)$ through
\begin{equation}
\label{definition Gamma Lambda x tau}
\Gamma_{\Lambda,x}(\tau)\,\defeq\, \Bigg(\int_{\frak{X}} \pbb{\frac{1}{\tau}+\norm{a_x-a_y} +\gamma(x,y)\Lambda}^{-2}\pi(\dd y)\Bigg)^{1/2}\,.
\end{equation}
Here, we denoted by $\norm{\genarg}$ the operator norm on $\C^{K\times K}$ induced by the Euclidean norm on $\C^K$. 
The two functions $\gamma$ and $\Gamma$ will be important to quantify the modulus of continuity of the data $(a,S)$. 

\begin{lemma}\label{lmm:L2 implies uniform bound general}
Let $m$ be the solution of the Dyson equation, \eqref{eq:dyson}, on the von Neumann algebra $\cal{A}$ from \eqref{vN algebra for example} associated to the data $(a,S)$ with $S$ defined as in \eqref{Kronecker data S}.
\begin{itemize}
\item[(i)] Define $\Gamma(\tau)\defeq C_{\rm{Kr}}\1\essinf_x  \Gamma_{1, x} (\tau)$ with $C_{\rm{Kr}}\defeq (4 + 4 K(\ell_1+\ell_2)\max_{\mu,\nu}(\norm{\alpha_\mu}^2+\norm{\beta_\nu}^2))^{1/2}$, where $\Gamma_{\Lambda, x} (\tau)$ was introduced in \eqref{definition Gamma Lambda x tau} and assume that for some $z \in \mathbb{H}$ the $\rm{L}^2$-upper bound $\norm{m(z)}_2 \le \Lambda$ for some $\Lambda \ge 1$ is satisfied. Then we have the uniform upper bound 
\begin{equation}\label{uniform upper bound general}
\norm{m(z)}\,\le\, \frac{\Gamma^{-1}(\Lambda^{2})}{\Lambda}\,,
\end{equation}
where we interpret the right-hand side as $\infty$ if $\Lambda$ is not in the range of the strictly monotonously increasing function $\Gamma$. 
\item[(ii)] Suppose that the kernels of the operators $S^\mu$ and $T^\nu$, used to define $S$ in \eqref{Kronecker data S}, are bounded from below, i.e., $\essinf_{x,y} s^\mu(x,y) >0$ and $\essinf_{x,y} t^\nu(x,y) >0$. Suppose further that 
\begin{equation}\label{Kronecker flatness condition}
\inf_\kappa \frac{1}{\tr \kappa}\pBB{\sum_{\mu=1}^{\ell_1}\alpha_\mu \kappa \alpha_\mu +\sum_{\nu=1}^{\ell_2}(\beta_\nu \kappa \beta_\nu^*+\beta_\nu^* \kappa \beta_\nu)}\,>\, 0\,,
\end{equation}
where the infimum is taken over all positive definite $\kappa \in \C^{K \times K}$. Then $S$ is flat, i.e., $S \in \Sigma_{\rm{flat}}$ (cf.~\eqref{eq:def_self_energy_flat}). 
\item[(iii)] Let $S$ be flat, hence, $\Lambda \defeq 1 + \sup_{z \in \Hb} \normtwo{m(z)} < \infty$. Then \eqref{uniform upper bound general} holds true with this $\Lambda$. 
\item[(iv)] If $a = 0$ then, for each $\eps>0$, \eqref{uniform upper bound general} holds true on $\abs{z} \geq \eps$ with $\Lambda \defeq 1 + 2\eps^{-1}$. 
\end{itemize}
\end{lemma}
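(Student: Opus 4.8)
The plan is to prove Lemma~\ref{lmm:L2 implies uniform bound general} in the order (i), (ii), (iii), (iv), since (iii) and (iv) are immediate corollaries of (i) once one controls $\normtwo{m}$, and (ii) is an independent flatness computation. The heart of the argument is part (i): a self-improving estimate that upgrades an $\rm{L}^2$-bound on $m(z)$ to an $\rm{L}^\infty$-bound. The key idea is to exploit the pointwise structure of the Dyson equation on $\cal{A} = \C^{K\times K}\otimes \rm{L}^\infty(\frak X)$. Writing $m(z) = m$ as a function $x\mapsto m_x \in \C^{K\times K}$, the Dyson equation $-m^{-1} = z\id - a + S[m]$ reads, at each $x\in\frak X$,
\[
 -m_x^{-1} = z - a_x + (S[m])_x,
\]
and since $S$ preserves positivity we have $\im(S[m])_x \ge 0$, which gives $\im(-m_x^{-1}) \ge (\Im z)$ and hence $\norm{m_x} \le (\Im z)^{-1}$. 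The point, though, is to get a bound that does \emph{not} degenerate as $\Im z \downarrow 0$. First I would rewrite the Dyson equation as $m_x = -(z - a_x + (S[m])_x)^{-1}$ and estimate $\norm{m_x}$ from below of the quantity being inverted: the imaginary part of $z - a_x + (S[m])_x$ is at least $(S[\im m])_x \ge 0$, and using the explicit form \eqref{Kronecker data S} one writes $(S[\im m])_x$ as a sum of terms $\alpha_\mu (S_\mu \im m)_x \alpha_\mu + \ldots$. The integral kernels $s^\mu(x,y)$, $t^\nu(x,y)$ are then compared, via $\gamma(x,y)$ from \eqref{definition gamma xy}, to their values at a reference point; the difference is controlled by $\gamma(x,y)\normtwo{m}$ after Cauchy--Schwarz. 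This is precisely where the constant $C_{\rm{Kr}}$ and the function $\Gamma_{\Lambda,x}$ enter: one arrives at an inequality of the shape
\[
 \norm{m_x}^{-1} \gtrsim \Big( \tfrac{1}{\norm{m_x}} + \norm{a_x - a_y} + \gamma(x,y)\normtwo{m}\Big)^{-1} \quad\text{in an averaged (}\rm{L}^2\text{) sense over }y,
\]
which, after taking the essential infimum over $x$ and using $\normtwo{m}\le\Lambda$, becomes $\Lambda \le \Gamma(\norm{m}^{-1})^{-1}\cdot(\ldots)$ — more precisely the self-consistent bound $\Lambda^{-1}\,\Gamma(\norm{m(z)}) \le 1$ after the book-keeping, yielding $\norm{m(z)} \le \Gamma^{-1}(\Lambda^2)/\Lambda$ by monotonicity of $\Gamma$. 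The main obstacle will be carrying out this pointwise lower bound on $\norm{m_x}^{-1}$ carefully: one must insert $m_y$ for a suitably chosen (or integrated) $y$, control the commutator-type error $a_x - a_y$ and the kernel differences simultaneously, and track that all multiplicative constants assemble into exactly $C_{\rm{Kr}}$; this is essentially the argument of \cite{AltKronecker} but must be reproduced with the strict-monotonicity claim for $\Gamma$ verified (which is clear from \eqref{definition Gamma Lambda x tau} since the integrand is strictly increasing in $\tau$).

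For part (ii), the plan is to verify the two inequalities in the definition \eqref{eq:def_self_energy_flat} of $\Sigma_{\rm{flat}}$: that $\eps\id \le S[\kappa\otimes f]/\avg{\kappa\otimes f} \le \eps^{-1}\id$ for positive $\kappa\otimes f$ (and then for general $x\in\cal{A}_+$ by decomposition). The upper bound is the easy direction: the kernels $s^\mu, t^\nu$ are bounded above and the $\alpha_\mu,\beta_\nu$ are fixed matrices, so $S[\kappa\otimes f]_x \le C(\tr\kappa)(\int f\,\di\pi)\id = C' \avg{\kappa\otimes f}\id$. The lower bound combines two positivity inputs: the assumption $\essinf s^\mu, \essinf t^\nu >0$ forces $(S_\mu f)(x) \ge c\int f\,\di\pi$ and similarly for $T_\nu$, so that $S[\kappa\otimes f]_x \ge c\,(\int f\,\di\pi)\big(\sum_\mu \alpha_\mu\kappa\alpha_\mu + \sum_\nu(\beta_\nu\kappa\beta_\nu^* + \beta_\nu^*\kappa\beta_\nu)\big)$; then assumption \eqref{Kronecker flatness condition} says this matrix is $\gtrsim (\tr\kappa)\id$, giving $S[\kappa\otimes f]_x \gtrsim \avg{\kappa\otimes f}\id$. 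Symmetry of $S$ with respect to \eqref{eq:definition_scalar_product_alg} is built into the construction (the $s^\mu$ are symmetric and the $\beta_\nu$-terms appear with their adjoint partners), and positivity-preservation is standard, so $S\in\Sigma$; together with the two-sided bound this gives $S\in\Sigma_{\rm{flat}}$.

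Parts (iii) and (iv) are then short. For (iii): flatness gives via \eqref{eq:m_Ltwo_bound} (i.e.\ Proposition~\ref{pro:prop_solution_flatness}) that $\normtwo{m(z)}\lesssim 1$ uniformly in $z\in\Hb$, so $\Lambda \defeq 1 + \sup_z\normtwo{m(z)}$ is finite, and applying part (i) with this $\Lambda$ yields \eqref{uniform upper bound general}. For (iv): with $a = 0$, the general $\rm{L}^2$-bound \eqref{eq:L2_bound} of Proposition~\ref{pro:stieltjes_representation} gives $\normtwo{m(z)} \le 2/\dist(z,\rm{Conv}\,\spec a) = 2/\abs{z}$ (since $\spec 0 = \{0\}$), hence $\normtwo{m(z)} \le 2/\eps$ on $\abs{z}\ge\eps$; taking $\Lambda \defeq 1 + 2\eps^{-1}$ and invoking part (i) finishes the proof. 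The only minor care needed throughout is that $\Gamma$ in part (i) is defined with the essential infimum over $x$, so that the bound \eqref{uniform upper bound general} is genuinely uniform in the base point $x\in\frak X$ and therefore bounds the operator norm $\norm{m(z)}$ on all of $\cal{A}$.
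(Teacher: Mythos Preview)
Your treatment of parts (ii)--(iv) is correct and matches the paper: for (ii) you combine the lower bound on the kernels with \eqref{Kronecker flatness condition}, and for (iii), (iv) you invoke part (i) together with the $\rm{L}^2$-bounds \eqref{eq:m_Ltwo_bound} and \eqref{eq:L2_bound}.

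For part (i), however, the central step is misidentified. Your displayed inequality $\norm{m_x}^{-1}\gtrsim\big(\norm{m_x}^{-1}+\norm{a_x-a_y}+\gamma(x,y)\normtwo{m}\big)^{-1}$ is trivially true and does not encode any self-improvement; and your stated goal of producing ``a pointwise lower bound on $\norm{m_x}^{-1}$'' cannot work, since nothing in the hypotheses prevents a single $\norm{m_x}$ from being large. The actual mechanism is the reverse: from the Dyson equation one obtains $m_y^{-1}(m_y^*)^{-1}\lesssim m_x^{-1}(m_x^*)^{-1}+\norm{a_x-a_y}^2+\gamma(x,y)^2\normtwo{m}^2$ for every pair $(x,y)$, hence by operator-monotonicity of the inverse $m_y^*m_y\gtrsim(\ldots)^{-1}$. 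One then \emph{integrates this lower bound over $y$} to bound $\normtwo{m}^2=\frac{1}{K}\int_{\frak X}\tr(m_y^*m_y)\,\pi(\di y)$ \emph{from below}, obtaining $\normtwo{m}\ge C_{\rm Kr}\,\Gamma_{\normtwo{m},x}(\norm{m_x})$ for each fixed $x$. Only now does the upper bound $\normtwo{m}\le\Lambda$ close the loop: together with the scaling $\Gamma_{\Lambda,x}(\tau)\ge\Lambda^{-1}\Gamma_{1,x}(\Lambda\tau)$ for $\Lambda\ge 1$ this gives $\Lambda^2\ge\Gamma(\Lambda\norm{m_x})$, hence $\norm{m_x}\le\Gamma^{-1}(\Lambda^2)/\Lambda$ for $\pi$-a.e.\ $x$. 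Note this also explains the exponent in \eqref{uniform upper bound general}: your ``self-consistent bound $\Lambda^{-1}\Gamma(\norm{m(z)})\le 1$'' would yield $\norm{m}\le\Gamma^{-1}(\Lambda)$, not $\Gamma^{-1}(\Lambda^2)/\Lambda$. The missing idea is that the $\rm{L}^2$-norm is used as a \emph{lower} bound (as an integral over $y$), not merely as an upper bound.
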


\begin{proof}[Proof of Lemma~\ref{lmm:L2 implies uniform bound general}]
We adapt the proof of Proposition~6.6 in \cite{AjankiQVE} to our noncommutative setting in order to prove (i). 
Recall the definition of $\gamma(x,y)$ in \eqref{definition gamma xy}. Estimating the norm $\norm{m}_2$ from below, we find
\begin{equation}
\begin{split}
\norm{m}_2^2&= \frac{1}{K} \tr\int\frac{\pi(\dd y)}{m_y^{-1}(m_y^*)^{-1}} 
\ge \tr\int_{\frak{X}}\frac{C_{\rm{Kr}}^{2} \2\pi(\dd y)}{m_x^{-1}(m_x^*)^{-1}+  \norm{a_x-a_y}^2+  \gamma(x,y)^2\norm{m}_2^2} 
\ge C_{\rm{Kr}}^{2}  \Big(\Gamma_{\norm{m}_2,x}(\norm{m_x})\Big)^{2}\,,
\end{split}
\end{equation}
for $\pi$-almost all $x \in \frak{X}$, where we used 
\begin{equation}
\begin{split}
\frac{1}{4}m_y^{-1}(m_y^*)^{-1}\,&\le\, m_x^{-1}(m_x^*)^{-1} +  (a_y-a_x)(a_y-a_x)^*+ ((Sm)_x-(Sm)_y) ((Sm)_x-(Sm)_y)^*
\\
\,&\le\,m_x^{-1}(m_x^*)^{-1}+  \norm{a_x-a_y}^2+ K(\ell_1+\ell_2)\max_{\mu,\nu}(\norm{\alpha_\mu}^2+\norm{\beta_\nu}^2) \gamma(x,y)^2\norm{m}_2^2\,.
\end{split}
\end{equation}
We conclude $\Lambda \ge \Lambda^{-1}\Gamma(\Lambda\norm{m_x})$ for any upper bound $\Lambda \ge 1$ on $\norm{m}_2$. In particular, \eqref{uniform upper bound general} follows.

We turn to the proof of (ii). We view a positive element $r \in \cal{A}_+$ as a function $r :[0,1] \to \C^{K \times K}$ with values in positive semidefinite matrices. Then we find 
\[
(S r)_x\,\ge\, c\int_{\frak{X}}\pBB{\sum_{\mu = 1}^{\ell_1} \alpha_\mu r_y \alpha_\mu 
+\sum_{\nu = 1}^{\ell_2} (\beta_\nu r_y \beta_\nu^*+\beta_\nu^* r_y \beta_\nu)}\pi(\dd y)\,,
\]
as quadratic forms on $\C^{K \times K}$ for almost every $x \in \frak{X}$. The claim follows now immediately from \eqref{Kronecker flatness condition}. 
Part (iii) is a direct consequence of (i) and (ii) as well as \eqref{eq:m_Ltwo_bound}. 
For the proof of part (iv), we use part (i) and \eqref{eq:L2_bound} if $a=0$. 
 \end{proof}

\subsection{$N \times N$-Kronecker random matrices}
\label{subsec:NxN Kronecker RM}

As an application of the general Kronecker setup introduced above, we consider the \emph{matrix Dyson equation} associated to Kronecker random matrices.
Let $X_\mu, Y_\nu \in \C^{N \times N}$ be independent centered random matrices such that $Y_\nu=(y_{ij}^\nu)$ has independent entries and $X_\mu=(x_{ij}^\mu)$ has independent entries up to the Hermitian symmetry constraint $X_\mu=X_\mu^*$. Suppose that the entries of $\sqrt{N}X_\mu, \sqrt{N} Y_\nu$ have uniformly bounded moments, $\E(\abs{x^\mu_{ij}}^p+\abs{y^\mu_{ij}}^p) \le N^{-p/2}C_p$ and 
define their variance profiles through
\[
s^\mu(i,j)\,\defeq\, N\E\abs{x^\mu_{ij}}^2\,,\qquad t^\nu(i,j)\,\defeq\, N\E\abs{y^\nu_{ij}}^2\,. 
\]
Then we are interested in the asymptotic spectral properties of the Hermitian \emph{Kronecker  random matrix}
\begin{equation} \label{Kronecker RM}
\begin{split}
H\,\defeq\, A +\sum_{\mu=1}^{\ell_1} \alpha_\mu \otimes X_\mu +\sum_{\nu=1}^{\ell_2} (\beta_\nu \otimes Y_\nu+\beta_\nu^* \otimes Y_\nu^*) \in \C^{K \times K} \otimes \C^{N \times N} \,,
\end{split}
\end{equation}
as $N \to \infty$.
Here the expectation matrix $A$ is assumed to be bounded, $\norm{A}\le C$, and block diagonal, i.e.
\begin{equation}\label{Kronecker expectation matrix}
A\,=\, \sum_{i=1}^N a_i \otimes E_{ii}\,,
\end{equation}
with $E_{ii} = (\delta_{il}\delta_{ik})_{l,k=1}^N\in \C^{N \times N}$ and $a_i \in \C^{K \times K}$. In \cite{AltKronecker} it was shown that the resolvent $G(z) = (H-z)^{-1}$ of the Kronecker matrix $H$ is well approximated by the solution $M(z)$ of a Dyson equation of Kronecker type, i.e., on the von Neumann algebra $\cal{A}$ in \eqref{vN algebra for example} with self-energy $S$ from \eqref{Kronecker data S} and $a=A \in \cal{A}$, when we choose $\frak{X} = \{1, \dots,N\}$ and $\pi$ the uniform probability distribution. In other words, $\rm{L}^\infty(\frak{X})= \C^N$ with entrywise multiplication.

\subsection{Limits of Kronecker random matrices}
Now we consider limits of Kronecker random matrices $H \in \C^{N \times N}$ with piecewise H\"older-continuous variance profiles as $N \to \infty$. In this situation we can make sense of the continuum limit for the solution $M(z)$ of the associated matrix Dyson equation. The natural setup here is  $(\frak{X},\pi)=([0,1], \dd x)$. 
We fix a partition $(I_l)_{l=1}^L$ of $[0,1]$ into intervals of positive length, i.e., $[0,1] = \dot{\cup}_{l}I_l$ and consider non-negative profile functions $s^\mu, t^\nu:[0,1]^2 \to \R$ that are H\"older-continuous with H\"older exponent $1/2$ on each rectangle $I_l\times I_k$. We also fix a function $a :[0,1] \to \C^{K \times K}$ that is $1/2$-H\"older continuous on each $I_l$. 
In this piecewise H\"older-continuous setup
the Dyson equation on $\cal{A}$ with data pair $(a,S)$ describes the asymptotic spectral properties of Kronecker random matrices with fixed variance profiles $s^\mu$ and $t^\nu$, i.e., the random matrices $H$ introduced in Subsection~\ref{subsec:NxN Kronecker RM} if their variances are given by 
\[
\E\abs{x^\mu_{ij}}^2\,=\,  \frac{1}{N}s^\mu \pbb{\frac{i}{N},\frac{j}{N}}\,,\qquad 
\E\abs{y^\nu_{ij}}^2\,=\, \frac{1}{N}t^\nu \pbb{\frac{i}{N},\frac{j}{N}}\,, 
\]
and the matrices $a_i$ in \eqref{Kronecker expectation matrix} by $a_i = a\pb{\frac{i}{N}}$.

\begin{lemma} 
\label{lmm: Kronecker global law}
Suppose that
$a$, $s^\mu$ and $t^\nu$ are piecewise H\"older-continuous with H\"older exponent $1/2$ as described above.
The empirical spectral distribution of the Kronecker  random matrix $H$, defined in \eqref{Kronecker RM}, with eigenvalues $(\lambda_i)_{i=1}^{KN}$ converges weakly in probability to the self-consistent density of states $\rho$ associated to the Dyson equation with data pair $(a,S)$ as defined in  \eqref{Kronecker data S}, i.e., for any $\eps>0$ and $\varphi \in C(\R)$ we have 
\[
\P\pBB{\absBB{\frac{1}{K N} \sum_{i=1}^{KN}\varphi(\lambda_i)\,-\, \int_\R \varphi \, \dd \rho} \,>\, \eps }\,\to\, 0 \,, \qquad N \to \infty\,.
\]
\end{lemma}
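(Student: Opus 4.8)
The plan is to prove Lemma~\ref{lmm: Kronecker global law} by combining a local law input for Kronecker random matrices with the well-posedness and continuity properties of the Dyson equation established in this paper. The starting point is the global law from \cite{AltKronecker} for $N \times N$ Kronecker matrices: for each fixed $N$ the resolvent $G^{(N)}(z) = (H^{(N)} - z)^{-1}$ of the random matrix with the discretised data $(a_i)_{i=1}^N = (a(i/N))_{i=1}^N$ and variances $\E|x_{ij}^\mu|^2 = N^{-1} s^\mu(i/N, j/N)$, $\E|y_{ij}^\nu|^2 = N^{-1}t^\nu(i/N, j/N)$ is, with high probability, close in normalized trace to the solution $M^{(N)}(z)$ of the matrix Dyson equation on $\C^{K\times K}\otimes \C^N$ with the corresponding discretised data pair $(a^{(N)}, S^{(N)})$. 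More precisely, $\frac{1}{KN}\Tr G^{(N)}(z) = \avg{M^{(N)}(z)} + o(1)$ in probability for each fixed $z \in \Hb$ (and uniformly on compact subsets of $\Hb$), which is equivalent to the weak convergence in probability of the empirical spectral measure of $H^{(N)}$ to the measure $\rho^{(N)}$ whose Stieltjes transform is $\avg{M^{(N)}}$.

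The remaining and genuinely new step is to show that $\rho^{(N)} \to \rho$ weakly as $N\to\infty$, where $\rho$ is the self-consistent density of states of the Dyson equation on $\cal{A} = \C^{K\times K}\otimes L^\infty([0,1])$ with the continuum data $(a,S)$. Here I would work at the level of Stieltjes transforms: it suffices to prove that $\avg{M^{(N)}(z)} \to \avg{m(z)}$ for every $z\in \Hb$. To this end I would realise both $M^{(N)}$ and $m$ as solutions of Dyson equations on the \emph{same} von Neumann algebra $\cal{A}$. The discretised data $(a^{(N)}, S^{(N)})$ can be embedded into $\cal{A}$ by replacing the profiles and the expectation matrix by their piecewise-constant interpolants on the dyadic-type partition $\{[(i-1)/N, i/N)\}_{i=1}^N$; call these $(a^{(N)}_\flat, S^{(N)}_\flat)$. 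The uniqueness statement for the Dyson equation (from \cite{Helton01012007}, as recalled below \eqref{eq:dyson}) guarantees that the solution $m^{(N)}$ of the Dyson equation on $\cal{A}$ with data $(a^{(N)}_\flat, S^{(N)}_\flat)$ satisfies $\avg{m^{(N)}(z)} = \avg{M^{(N)}(z)}$, since the block structure is identical and the tracial states match up. Now, because $a$ is piecewise $1/2$-H\"older and the kernels $s^\mu, t^\nu$ are piecewise $1/2$-H\"older, their piecewise-constant interpolants converge to them: $\norm{a^{(N)}_\flat - a}_{L^\infty([0,1], \C^{K\times K})} \to 0$ and $\norm{S^{(N)}_\flat - S}_{\cal{A}\to\cal{A}} \to 0$ (the operator-norm convergence of the integral operators $S_\mu, T_\nu$ follows from uniform continuity of the kernels away from the finitely many partition lines, whose total measure is zero, together with the explicit form \eqref{Kronecker data S} of $S$). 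Then the stability of the Dyson equation against perturbations of the data pair — which is exactly the content of the linear stability analysis of Section~\ref{sec:regularity}, in particular the bound in Proposition~\ref{pro:linear_stability} on $(\Id - C_m S)^{-1}$ — yields, via the implicit function theorem applied to the map $(d, D)\mapsto m_{(a+d,S+D)}(z)$, that $\norm{m^{(N)}(z) - m(z)} \lesssim_z \norm{a^{(N)}_\flat - a} + \norm{S^{(N)}_\flat - S}_{\cal{A}\to\cal{A}} \to 0$ for each fixed $z\in\Hb$. Applying $\avg{\genarg}$ gives $\avg{M^{(N)}(z)} = \avg{m^{(N)}(z)} \to \avg{m(z)}$.

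With the pointwise convergence $\avg{M^{(N)}(z)}\to \avg{m(z)}$ in hand, the convergence $\rho^{(N)}\to\rho$ of the associated measures follows from the standard Stieltjes-continuity theorem: all $\rho^{(N)}$ are probability measures supported in a common compact interval (by \eqref{eq:supp_v_subset_spec_a}, since $\norm{a^{(N)}_\flat}\le \norm{a}_{L^\infty}$ and $\norm{S^{(N)}}\le \norm{S}$ are uniformly bounded), so the family is tight, and pointwise convergence of Stieltjes transforms on $\Hb$ forces weak convergence of the measures. Finally I would glue the two halves together: fix $\varphi\in C(\R)$ and $\eps>0$; since $\rho^{(N)}\to\rho$ weakly there is $N_0$ with $|\int\varphi\,\di\rho^{(N)} - \int\varphi\,\di\rho| < \eps/2$ for $N\ge N_0$; and by the global law of \cite{AltKronecker} the probability that $|\frac{1}{KN}\sum_i \varphi(\lambda_i) - \int\varphi\,\di\rho^{(N)}| \ge \eps/2$ tends to zero. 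Combining these two facts via the triangle inequality and a union bound completes the proof.

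The main obstacle is the passage from the discrete to the continuum Dyson equation, i.e., controlling $\avg{M^{(N)}(z)} - \avg{m(z)}$ uniformly enough in $z$ — or at least pointwise in $z$ — to conclude. The delicate points are: (a) verifying that the piecewise-constant interpolants of the data converge in the \emph{operator norm} on $\cal{A}$ (not merely in a weaker topology), which is where the piecewise $1/2$-H\"older regularity of the kernels is genuinely used and where one must handle the finitely many discontinuity lines of the partition with care; and (b) ensuring the stability estimate from Proposition~\ref{pro:linear_stability} is applicable, which requires knowing $\dens(z) + \dist(z,\supp\dens) > 0$ — this is automatic for $z\in\Hb$ since $\dens(z) = \pi^{-1}\avg{\Im m(z)} > 0$ there, so no flatness hypothesis is needed for this qualitative statement. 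One should also take a small amount of care that the error in the global law of \cite{AltKronecker} is stated for the fixed-$N$ deterministic approximant $M^{(N)}$ and does not implicitly require smoothness uniform in $N$; reading off the precise hypotheses of that reference (uniformly bounded moments, which are assumed here) closes this gap.
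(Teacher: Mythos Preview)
Your overall strategy matches the paper's proof: reduce to pointwise convergence of Stieltjes transforms, invoke the global law from \cite{AltKronecker} to compare $\frac{1}{KN}\Tr G(z)$ with $\avg{M_N(z)}$, embed the discrete data into $\cal{A}=\C^{K\times K}\otimes L^\infty[0,1]$ via piecewise-constant interpolants, and show $\avg{M_N(z)}\to\avg{m(z)}$ using stability of the Dyson equation under data perturbations together with $\norm{a_N-a}+\norm{S_N-S}\to 0$. The paper carries out the last step by writing the quadratic difference equation for $\Delta=M_N-m$, deriving $\normtwo{\Delta}\le C(\Psi_N+\normtwo{\Delta}^2)$, and then running a continuity argument in an auxiliary imaginary-part parameter to select the small branch; you propose the implicit function theorem instead. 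These are essentially equivalent devices, and both hinge on a bound for $(\Id-C_mS)^{-1}$ at the fixed $z\in\Hb$.

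The one real gap is your justification of that bound. You cite Proposition~\ref{pro:linear_stability} and argue that since $\dens(z)>0$ for $z\in\Hb$ the conclusion is finite, so ``no flatness hypothesis is needed for this qualitative statement.'' But flatness \eqref{eq:estimates_data_pair} is a \emph{hypothesis} of Proposition~\ref{pro:linear_stability}, not merely a condition making its right-hand side finite: the proof uses flatness essentially, via Lemma~\ref{lem:prop_F}, to produce the Perron--Frobenius eigenvector $f\sim\id$ and the spectral gap $\vartheta\gtrsim 1$ of $F$. Lemma~\ref{lmm: Kronecker global law} assumes only piecewise H\"older continuity of the profiles, not flatness, so Proposition~\ref{pro:linear_stability} is simply unavailable. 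The paper fills this by importing the bound $\norm{m}+\normtwo{B^{-1}}\le C(\Im z)$ directly from \cite{AltKronecker} (their (3.11a), (3.11c), (3.22), (3.23)), noting that those estimates are uniform in $N$ and carry over verbatim to the continuum von Neumann algebra. With that input your IFT route does go through (the IFT-given solution near $m$ has positive imaginary part since $\Im m(z)$ is strictly positive, hence coincides with $m^{(N)}$ by uniqueness); the paper's bootstrapping route achieves the same end while sidestepping the local-uniqueness identification.
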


\begin{proof}[Proof of Lemma \ref{lmm: Kronecker global law}] 
It suffices to prove convergence of the Stieltjes transforms, i.e., in probability $\frac{1}{NK} \tr_{KN} G(z) \to \avg{m(z)}$ for every fixed $z \in \mathbb{H}$, where $G(z) = (H-z)^{-1}$ is the resolvent of the Kronecker matrix $H$ and $m(z)$ is the solution to the Dyson equation with data $(a,S)$. 

First we use the Theorem~2.7 from \cite{AltKronecker} to show that 
$
\frac{1}{KN} \tr_{KN} G(z) - \frac{1}{N} \sum_{i=1}^N \tr_K m_{i}(z)\,\to\, 0
$ in probability, where $M_N=(m_1, \dots, m_N) \in (\C^{K \times K})^N$ denotes the solution to a Dyson equation formulated on the von Neumann algebra $\C^{K \times K}\otimes \C^{N}$ with entrywise multiplication on vectors in $\C^N$ as explained in Subsection~\ref{subsec:NxN Kronecker RM}.  We recall that in this setup the discrete kernels for $S_\mu$ and $T_\nu$ from the definition of $S$ in \eqref{Kronecker data S} are  given by $N\E\abs{x^\mu_{ij}}^2$ and $N\E\abs{y^\nu_{ij}}^2$, respectively, and $a=\sum_{i=1}^N a\pb{\frac{i}{N}} \otimes e_{i}$. To distinguish this discrete data pair from the continuum limit over $\C^{K \times K}\otimes \rm{L}^\infty[0,1]$, we denote it by $(a_N,S_N)$.  Note that in Theorem~2.7 of \cite{AltKronecker} the test functions were compactly supported in contrast to the function $\tau \mapsto 1/(\tau-z)$ that we used here. However, by Theorem~2.4 of \cite{AltKronecker}  and since the self-consistent density of states is compactly supported (cf.~\eqref{eq:supp_v_subset_spec_a} and $\norm{S} \lesssim 1$) no eigenvalues can be found beyond a certain bounded interval, ensuring that non compactly supported test function are allowed as well.

Now it remains to show that $\avg{M_N} \to \avg{m}$ as $N \to \infty$ for all $z \in \Hb$. 
For this purpose we embed $\C^N$ into $\rm{L}^\infty[0,1]$ via $P v \defeq \sum_{i=1}^N v_i \bf{1}_{[(i-1)/N,i/N)}$. With this identification $M_N$ and $m$ satisfy Dyson equations on the same space $\C^{K \times K}\otimes \rm{L}^\infty[0,1]$. Evaluating these two equations at $z+ \ii\eta$, for a fixed $z \in \Hb$ and any $\eta\geq 0$, and subtracting them from each other yield
\[
B[\Delta]\,=\, m (S_N-S)[m] \Delta + C_m (S_N-S)[\Delta] + m S_N[\Delta]\Delta + C_m (S_N-S)[m]  -m(a_N-a)\Delta - C_m [a_N-a], 
\]
where $m=m(z + \ii \eta)$, $M_N=M_N(z + \ii \eta)$, $B=\Id-C_m S$ and $\Delta =M_N-m$. Using the imaginary part of $z$ we have $\dist(z+ \ii \eta, \supp \rho)\ge \imz>0$. By (3.22), (3.23), (3.11a) and (3.11c) of 
\cite{AltKronecker} we infer $\norm{m}+\norm{B^{-1}}_2 \le C$ for all $\eta\geq0$ with a constant $C$ depending on $\imz$. 
Note that although the proofs in \cite{AltKronecker} were performed on $\C^{N \times N}$ all estimates were uniform in $N$ and all algebraic relations in these proof translate to the current setting on a finite von Neumann algebra.  
Using $\normtwo{S_N-S} \leq \norm{S_N-S}$ as well as $\norm{S_N} \leq C$ and possibly increasing $C$, we thus obtain 
\[ \normtwo{\Delta} \leq C ( \Psi_N + \normtwo{\Delta}^2), \qquad \qquad  \Psi_N \defeq \norm{a_N-a} + \norm{S_N-S}, \]
where $\Delta = \Delta(z + \ii \eta)$, for all $\eta\geq0$. 
We choose $N_0$ sufficiently large such that $2\Psi_NC^2 \leq 1/4$ for all $N\geq N_0$ and define $\eta_* \defeq \sup \{ \eta \geq 0 \colon \,\normtwo{\Delta(z + \ii \eta)} \geq 2C \Psi_N \}$. 
Since $\norm{M_N} + \norm{m} \to 0$ for $\eta \to \infty$, we conclude $\eta_* < \infty$. 

We now prove $\eta_* = 0$. For a proof by contradiction, we suppose $\eta_* > 0$. Then, by continuity, $\normtwo{\Delta(\tau + \ii\eta_*)} = 2 C \Psi_N$. 
Since $2\Psi_NC^2 \leq 1/4$, we have $\normtwo{\Delta(z +\ii\eta_*)} \leq 4C\Psi_N/3 < 2 C \Psi_N = \normtwo{\Delta(z +\ii\eta_*)}$. 
From this contradiction, we conclude $\eta_*=0$. 
Therefore, for $N\geq N_0$, we have
\[
\abs{M_N(z) - m(z)} \leq \normtwo{\Delta(z)} \leq 2 C \Psi_N = 2 C(\norm{S_N-S}+\norm{a_N-a})\,. 
\]
Since the right-hand side converges to zero as $N \to \infty$, due to the piecewise H\"older-continuity of the profile functions, and since $z$ was arbitrary,
we obtain $\avg{M_N}\to \avg{m}$ as $N\to \infty$ for all $z \in \Hb$. This completes the proof of Lemma~\ref{lmm: Kronecker global law}. 
\end{proof}

The boundedness of the solution to the Dyson equation in $L^2$-norm already implies uniform boundedness in the piecewise H\"older-continuous setup.
\begin{lemma} 
\label{lmm:L2 implies uniform bound}
Suppose that
$a$, $s^\mu$ and $t^\nu$ are piecewise $1/2$-H\"older continuous and that $\sup_{z \in \mathbb{D}}\norm{m(z)}_2<\infty$ for some domain $\mathbb{D} \subseteq \mathbb{H}$. Then we have the uniform bound $\sup_{z \in \mathbb{D}}\norm{m(z)}<\infty$.
\end{lemma}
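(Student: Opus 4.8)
The plan is to derive Lemma~\ref{lmm:L2 implies uniform bound} directly from Lemma~\ref{lmm:L2 implies uniform bound general}~(i), used in the continuum setting $(\frak{X},\pi) = ([0,1],\dd x)$. First I would set $\Lambda \defeq 1 + \sup_{z \in \mathbb{D}} \norm{m(z)}_2$, which is finite by hypothesis and satisfies $\Lambda \ge 1$, so that $\norm{m(z)}_2 \le \Lambda$ for every $z \in \mathbb{D}$. Applying Lemma~\ref{lmm:L2 implies uniform bound general}~(i) at each fixed $z \in \mathbb{D}$ then yields $\norm{m(z)} \le \Gamma^{-1}(\Lambda^2)/\Lambda$ with one and the same right-hand side for all $z \in \mathbb{D}$, where $\Gamma(\tau) = C_{\rm{Kr}}\,\essinf_x \Gamma_{1,x}(\tau)$ is built from the data $(a,S)$ through \eqref{definition gamma xy} and \eqref{definition Gamma Lambda x tau}. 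Hence the lemma follows as soon as I show $\Gamma^{-1}(\Lambda^2) < \infty$. Since $(\tfrac1\tau + b)^{-2} \le \tau^2$ for all $b \ge 0$, one has $\Gamma_{1,x}(\tau) \le \tau$, so $\Gamma(0^+) = 0$; as $\Gamma$ is nondecreasing, it therefore suffices to prove that $\Gamma(\tau) \to \infty$ as $\tau \to \infty$.

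This blow-up is precisely where the piecewise $1/2$-H\"older continuity of $a$, $s^\mu$, $t^\nu$ and the finiteness of the partition $(I_l)_{l=1}^{L}$ of $[0,1]$ enter. Writing $\ell_{\min} \defeq \min_l \abs{I_l} > 0$, I would first record that for $x,y$ lying in a common partition interval $I_l$ one has
\begin{equation*}
 \norm{a_x - a_y} + \gamma(x,y) \,\le\, H\,\abs{x-y}^{1/2},
\end{equation*}
with $H$ depending only on $K$, $\ell_1$, $\ell_2$ and the H\"older constants: for the $\gamma$-term this comes from unfolding \eqref{definition gamma xy}, using joint $1/2$-H\"older continuity of $s^\mu, t^\nu$ on the rectangles $I_l \times I_k$ (hence also separate continuity in each argument, as needed for the difference $t^\nu(\genarg,x) - t^\nu(\genarg,y)$), and summing the squared differences over the finitely many $I_k$ with $\sum_k \abs{I_k} = 1$. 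Next, for a.e.\ $x$ the point $x$ lies in the interior of its interval $I_{l(x)} = (p,q)$, and one of $(p,x)$, $(x,q)$ has length at least $\abs{I_{l(x)}}/2 \ge \ell_{\min}/2$; calling it $J_x$, we have $J_x \subset I_{l(x)}$ with $x$ as an endpoint. Restricting the integral defining $\Gamma_{1,x}$ to $J_x$, substituting $v = \abs{y-x}^{1/2}$, and bounding the integrand below by $(2H^2 v)^{-1}$ on $\{ v \ge (\tau H)^{-1}\}$ gives
\begin{equation*}
 \Gamma_{1,x}(\tau)^2 \,\ge\, \int_{J_x} \Bigl(\tfrac1\tau + H\abs{x-y}^{1/2}\Bigr)^{-2}\dd y \,=\, \int_0^{\sqrt{\ell_{\min}/2}} \frac{2v\,\dd v}{(\tfrac1\tau + Hv)^2} \,\ge\, \frac{1}{2H^2}\log\bigl(\tau H \sqrt{\ell_{\min}/2}\bigr)
\end{equation*}
for all $\tau$ large enough, uniformly in $x$ off a null set. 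Consequently $\Gamma(\tau)^2 \ge \tfrac{C_{\rm{Kr}}^2}{2H^2}\log(\tau H\sqrt{\ell_{\min}/2}) \to \infty$, which closes the reduction and yields $\sup_{z \in \mathbb{D}}\norm{m(z)} \le \Gamma^{-1}(\Lambda^2)/\Lambda < \infty$.

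The only genuinely delicate point is the modulus-of-continuity bound $\norm{a_x - a_y} + \gamma(x,y) \lesssim \abs{x-y}^{1/2}$ on a common partition piece: it requires carefully translating ``piecewise $1/2$-H\"older'' (joint H\"older continuity on each rectangle $I_l \times I_k$) into a bound on the $L^2(\frak{X})$-norm appearing in \eqref{definition gamma xy}, handling all three types of kernel differences including the one where $x,y$ sit in the second slot; together with the supporting observation that the far subinterval of $I_{l(x)}$ always has length $\ge \ell_{\min}/2$, which is what makes the logarithmic lower bound uniform in $x$ despite the $\essinf$ in the definition of $\Gamma$. Everything else is a black-box application of Lemma~\ref{lmm:L2 implies uniform bound general}~(i).
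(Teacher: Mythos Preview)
Your proposal is correct and takes essentially the same approach as the paper: reduce to Lemma~\ref{lmm:L2 implies uniform bound general}~(i) and verify that $\Gamma(\tau)\to\infty$ via the bound $\norm{a_x-a_y}^2+\gamma(x,y)^2\lesssim\abs{x-y}$ on a common partition piece, which makes the integral in \eqref{definition Gamma Lambda x tau} logarithmically divergent. The paper states this in two sentences, while you spell out the substitution and the uniform-in-$x$ lower bound on the subinterval $J_x$; the content is the same.
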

In particular, if the random matrix $H$ is centered, i.e., $a=0$, then $m(z)$ is uniformly bounded as long as $z$ is bounded away from zero; and if $H$ is flat in the limit, i.e., $S$ is flat, then $\sup_{z \in \mathbb{H}}\norm{m(z)}<\infty$. 

\begin{proof} By (i) of Lemma~\ref{lmm:L2 implies uniform bound general}  the proof reduces to checking that $\lim_{\tau \to \infty}\Gamma(\tau) = \infty$ for  piecewise $1/2$-H\"older continuous data in the special case $(\frak{X},\pi)=([0,1], \dd x)$.  But this is clear since in that case $\norm{a_x-a_y}^2 + \gamma(x,y)^2 \le C\abs{x-y}$ implies that the integral in \eqref{definition Gamma Lambda x tau} is at least logarithmically divergent as $\tau \to \infty$.
\end{proof}

\begin{corollary}[Band mass quantization] \label{coro:band_mass_quant_Kronecker}
Let $\rho$ be the self-consistent density of states for the Dyson equation with data pair $(a,S)$ and $ \tau \in \R \setminus \supp \rho$. Then
\[
\rho((-\infty,\tau)) \in \cBB{ \frac{1}{K}\sum_{l =1}^L k_l \abs{I_l} :  k_l =1, \dots K }\,.
\]
In particular, in the $L=1$ case  when $s^\mu,t^\mu$ and $a$ are $1/2$-H\"older continuous on all of $[0,1]^2$ and $[0,1]$, respectively, then $\rho(J)$ is an integer multiple of $1/K$ for every connected component $J$ of $\supp \rho$ and there are at most $K$ such components.
\end{corollary}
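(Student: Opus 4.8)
The plan is to derive the statement from the band mass formula, Proposition~\ref{pro:band_formula}~(i), applied on the von Neumann algebra $\alg=\C^{K\times K}\otimes L^\infty([0,1])$, and then to reduce the computation of $\avg{\char_{(-\infty,0)}(m(\tau))}$ to a pointwise (in $x\in[0,1]$) count of negative eigenvalues. First I would check the hypothesis of Proposition~\ref{pro:band_formula}: for $S$ as in \eqref{Kronecker data S} and any $x\in\algnon$ one has $S[x]\le CK\avg{x}\id$, because the profiles $s^\mu,t^\nu\in L^\infty$ are bounded, the $\alpha_\mu,\beta_\nu$ are finitely many fixed matrices, $\int_0^1 x_y\,\di y\le (\tr\,\textstyle\int_0^1 x_y\,\di y)\id_K=K\avg{x}\id_K$ for $x\ge 0$, and $\alpha_\mu\kappa\alpha_\mu\le\|\alpha_\mu\|^2(\tr\kappa)\id_K$. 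Hence for $\tau\in\R\setminus\supp\rho$ Lemma~\ref{lem:bump_proof_aux}~(i) and Proposition~\ref{pro:band_formula}~(i) furnish a self-adjoint, invertible $m(\tau)\in\alg$ solving \eqref{eq:dyson} at $z=\tau$, with $\|m(\tau)\|,\|m(\tau)^{-1}\|<\infty$, and
\[
\rho((-\infty,\tau))=\avg{\char_{(-\infty,0)}(m(\tau))}=\frac1K\int_0^1\tr\big(\char_{(-\infty,0)}(m(\tau)_x)\big)\,\di x,
\]
where $m(\tau)_x\in\C^{K\times K}$ is the value of $m(\tau)$ at $x$; since $m(\tau)$ is invertible, $0\notin\spec m(\tau)_x$ for a.e.\ $x$, so $\tr(\char_{(-\infty,0)}(m(\tau)_x))=k(x)\in\{0,1,\dots,K\}$ is the number of negative eigenvalues of $m(\tau)_x$.

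The core step is to show that $k(x)$ is constant, equal to some $k_l\in\{0,1,\dots,K\}$, on the interior of each interval $I_l$ of the partition. For this I would first upgrade $m(\tau)$ to an honestly $x$-continuous function on each $I_l$. Evaluating \eqref{eq:dyson} pointwise at $z=\tau$ and using the explicit form of $S$ gives
\[
-m(\tau)_x^{-1}=\tau-a_x+\sum_\mu\alpha_\mu\Big(\int_0^1\! s^\mu(x,y)m(\tau)_y\,\di y\Big)\alpha_\mu+\sum_\nu\Big(\beta_\nu\!\int_0^1\! t^\nu(x,y)m(\tau)_y\,\di y\,\beta_\nu^*+\beta_\nu^*\!\int_0^1\! t^\nu(y,x)m(\tau)_y\,\di y\,\beta_\nu\Big).
\]
For $x,x'$ in the same $I_l$, the piecewise $1/2$-H\"older continuity of $a$ on $I_l$ and of $s^\mu,t^\nu$ on the rectangles $I_l\times I_k$, together with $\|m(\tau)\|<\infty$, makes the right-hand side a $1/2$-H\"older function of $x$ on $I_l$; hence $m(\tau)_x^{-1}$ is, and then $m(\tau)_x=(m(\tau)_x^{-1})^{-1}$ is $1/2$-H\"older on $I_l$, since inversion is Lipschitz on matrices with norm $\le\|m(\tau)^{-1}\|$ and inverse-norm $\le\|m(\tau)\|$. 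In particular $\spec m(\tau)_x\subset[-C,-c]\cup[c,C]$ uniformly in $x$ with $c=\|m(\tau)^{-1}\|^{-1}>0$.

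Given a norm-continuous path $x\mapsto m(\tau)_x$ of Hermitian matrices on $I_l$ with $0$ never in the spectrum, the spectral projection $x\mapsto\char_{(-\infty,0)}(m(\tau)_x)$ is a norm-continuous family of projections (Riesz-projection/contour argument), hence of constant rank $k_l$ on (the interior of) $I_l$. Integrating yields $\rho((-\infty,\tau))=\tfrac1K\sum_{l=1}^L k_l|I_l|$ with $k_l\in\{0,1,\dots,K\}$ (the finitely many interval endpoints are Lebesgue-null), which is the first assertion. For $L=1$: if $J$ is a connected component of $\supp\rho$, choose $\tau_1<J<\tau_2$ with $\tau_1,\tau_2\notin\supp\rho$ and $(\tau_1,\tau_2)\cap\supp\rho=J$; then $\rho(J)=\rho((-\infty,\tau_2))-\rho((-\infty,\tau_1))=\tfrac1K\big(k_1(\tau_2)-k_1(\tau_1)\big)\in\tfrac1K\Z$. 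Since $\rho(J)>0$ (immediate from $J\subset\supp\rho$), it is a positive integer multiple of $1/K$, hence $\ge 1/K$; as the masses of the disjoint components sum to $\rho(\R)=1$, there are at most $K$ of them.

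The main obstacle is precisely the continuity/constant-rank step: one must pass from the $L^\infty$-valued solution $m(\tau)$ to a genuinely $x$-continuous representative on each piece $I_l$ using only the self-consistency equation \eqref{eq:dyson} and the piecewise-H\"older input data, after which the ``locally constant rank of a continuous projection path'' fact finishes the argument; the rest is routine. A secondary point worth noting is that Proposition~\ref{pro:band_formula}~(ii) cannot be invoked directly, since $\alg=\C^{K\times K}\otimes L^\infty([0,1])$ admits no finite-dimensional faithful tracial representation — this is why the proof goes through part~(i) and the pointwise analysis rather than through the $N\times N$ Kronecker approximants.
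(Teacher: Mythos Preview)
Your proposal is correct and follows essentially the same route as the paper: apply the band mass formula \eqref{eq:band_formula}, argue that $x\mapsto m(\tau)_x$ is continuous on each $I_l$ via the Dyson equation and the piecewise H\"older data, and conclude that the integer-valued rank function is constant on each $I_l$. The paper's proof is terser (it just says $m_x(\tau)$ ``inherits the regularity of the data'') and does not spell out the verification of the hypothesis $S[x]\le C\avg{x}\id$ or the $L=1$ case, but the argument is the same; your extra care on these points and your remark that part~(ii) of Proposition~\ref{pro:band_formula} is unavailable here are all apt.
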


\begin{proof} Fix $\tau \in \R \setminus \supp \rho$. We denote by $x \mapsto m_x(\tau)$ the self-adjoint solution $m(\tau)$ viewed as a function of $x \in [0,1]$ with values in $\C^{K \times K}$. As is clear from the Dyson equation this function inherits the regularity of the data, i.e., it is continuous on each interval $I_l$. 
By the band mass formula  \eqref{eq:band_formula}   we have 
\[
\rho((-\infty,\tau))\,=\, 
\frac{1}{K}\sum_{l=1}^L \int_{I_l} \tr{\bf{1}_{(-\infty,0)}(m_x(\tau))}\dd x\,=\, \frac{1}{K}\sum_{l=1}^Lk_l \abs{I_l}\,,
\]
where $k_l=\tr{\bf{1}_{(-\infty,0)}(m_x(\tau))} \in \{0, \dots,K\}$ is continuous in $x \in I_l$ with discrete values and therefore  does not depend on $x$.
\end{proof}

\begin{remark}
We extend the conjecture from Remark~2.9 of \cite{AjankiCPAM} to the Kronecker setting. We expect that in the piecewise $1/2$-H\"older  continuous setting of the current section, the number of connected components of the self-consistent spectrum $\supp \rho$ is at most $K(2L-1)$.
\end{remark}

\section{Perturbations of the data pair}  \label{sec:pert_data_pair}

\newcommand{\Dbdd}{\mathbb{D}_\mathrm{bdd}} 
\newcommand{\Dbulk}{\mathbb{D}_\mathrm{bulk}} 
\newcommand{\Dcusp}{\mathbb{D}_\mathrm{nocusp}} 
\newcommand{\Dout}{\mathbb{D}_\mathrm{out}} 
\newcommand{\Deltam}{\Delta m_t}

In this section, as an application of our results in Sections \ref{sec:regularity} to \ref{sec:shape_analysis}, we show that the Dyson equation, \eqref{eq:dyson}, is stable against small general perturbations of the data pair 
$(a,S)$ consisting of the bare matrix $a$ and the self-energy operator $S$. 
To that end, let $T \subset \R$ contain $0$, $S_t \colon \alg \to \alg$, $t \in T$, be a family of positivity-preserving 
operators and $a_t = a_t^* \in \alg$, $t \in T$, be a family of self-adjoint elements. 
We set $S \defeq S_{t=0}$ and $a \defeq a_{t=0}$ and 
 will always assume that there are $c_1, \ldots, c_5>0$ such that 
\begin{equation} \label{eq:assums_perturbation}
 c_1 \avg{x} \id \leq S[x]\leq c_2 \avg{x}\id, \qquad \norm{a} \leq c_3, \qquad 
\norm{S-S_t} \leq c_4 t, \qquad \norm{a - a_t} \leq c_5 t 
\end{equation}
for all $x \in \algnon$ and for all $t \in T$. 
For any $t \in T$, let $m_t$ be the solution to the Dyson equation associated to the data pair $(a_t,S_t)$, i.e., 
\begin{equation} \label{eq:dyson_Hoelder_S}
 -m_t(z)^{-1} = z\id - a_t + S_t[m_t(z)]  
\end{equation}
for $z \in \Hb$ (cf. \eqref{eq:dyson}). We also set $m \defeq m_{t=0}$. 

The main result of this section, Proposition \ref{pro:Hoelder_continuity_S} below, states that $\norm{m_t(z)-m(z)}$ is small for sufficiently small $t$ and all $z$ away from points, 
where $m(z)$ blows up. 
Depending on the location of $z$, there are three cases for the estimate: we obtain the best estimate of order $\abs{t}$ on $\norm{m_t(z)-m(z)}$ in the bulk,  
the estimate is weaker, of order $\abs{t}^{1/2}$, if $z$ is close to a regular edge and the weakest, of order $\abs{t}^{1/3}$, if $z$ is close to an (almost) cusp point. 

We now introduce these concepts precisely.  
For a given $m_* >0$, we define the set $P_m \defeq P_m^{m_*} \subset \Hb$, where $\norm{m(z)}$ is larger than $m_*$, i.e., 
\[ P_m^{m_*} \defeq \{ \tau \in \R : \sup_{\eta >0} \, \norm{m(\tau + \ii \eta)} > m_* \}.  \] 
For any fixed $m_*>0$ and $\delta>0$, we introduce the set $\Dbdd$ of points of distance at least $\delta$ from $P_m$, i.e., 
\begin{equation} \label{eq:def_away_from_almost_cusp}
 \Dbdd \defeq \Dbdd^{m_*,\delta}\defeq \{ z \in \Hb : \dist(z,P_m) \geq \delta \}. 
\end{equation}
Note that $\norm{m(z)} \leq \max\{m_*, \delta^{-1}\}$ for all $z \in \Dbdd$ as $\norm{m(z)} \leq (\dist(z,\supp\dens))^{-1}$ by \eqref{eq:stieltjes_bound_m}. 

We now introduce the concept of the \emph{bulk}. 
Since $S \in \Sigma_\rm{flat}$, the self-consistent density of states of $m$ (cf. Definition~\ref{def:density_of_states}) has a continuous density $\dens \colon \R \to [0,\infty)$ 
with respect to the Lebesgue measure (cf. Proposition \ref{pro:regularity_density_of_states}). 
We also write $\dens$ for the harmonic extension of $\dens$ to $\Hb$ which satisfies $\dens(z) = \avg{\Im m(z)}/\pi$ for $z \in \Hb$. 
For $\dens_* >0$ and $\delta_s >0$, we denote those points, where $\dens$ is bigger than $\dens_*$ or which are at least $\delta_s$ away from $\supp\dens$, by 
\[ \Dbulk \defeq \Dbulk^{\dens_*} \defeq \{ z \in \Hb : \dens(z) \geq \dens_*\}, \qquad \qquad \Dout \defeq \Dout^{\delta_s} \defeq \{ z \in \Hb \colon \dist(z,\supp \dens) \geq \delta_s\}, \] 
respectively. 
We remark that, for fixed $\dens_*$ and $\delta_s$, we have the inclusion $\Dbulk\cup \Dout \subset \Dbdd$ for all sufficiently large $m_*$ and sufficiently small $\delta$ by \eqref{eq:m_bound_dens}. 

For $\tau \in \R \setminus \supp\dens$, let $\Delta(\tau)$ denote the size of the largest interval that contains $\tau$ and is contained in $\R \setminus \supp \dens$.  
 For $\dens_*>0$ and $\Delta_*>0$, we define the set $P_\mathrm{cusp} = P_\mathrm{cusp}^{\dens_*, \Delta_*} \subset \R$ of \emph{almost cusp points} through 
\begin{equation} \label{eq:def_P_cusp}
P_\mathrm{cusp}^{\dens_*,\Delta_*} \defeq  \{ \tau \in \supp\dens \setminus \pt\supp \dens : \tau \text{ is a local minimum of }\dens, ~\dens(\tau) \leq \dens_*\} \cup \{ \tau \in \R \setminus \supp\dens : \Delta(\tau) \leq \Delta_*\}. 
\end{equation}
For some $\delta_c>0$, we denote those points which are at least $\delta_c$ away from almost cusp points by
\[ \Dcusp \defeq \{ z \in \Hb : \dist(z, P_\mathrm{cusp}) \geq \delta_c \}. \] 
 We remark that $\Dmin = \mathbb{D}_\mathrm{bdd} \cap \mathbb{D}_\mathrm{cusp}$, where $\Dmin$ denotes 
the set of points which are away from $P_m$ and $P_\mathrm{cusp}$. More precisely, for some $\delta >0$, we define
\[ \Dmin \defeq \{ z \in \Hb : \dist(z, P_m) \geq \delta,~ \dist(z,P_\mathrm{cusp}) \geq \delta \}. \]

In this section, the model parameters are given by $c_1, \ldots, c_5$ from \eqref{eq:assums_perturbation} as well as the fixed parameters $m_*$, $\delta$, $\dens_*$, $\delta_s$, $\Delta_*$ and $\delta_c$ 
from the definitions of 
$P_m$, $\Dbdd$, $\Dbulk$, $\Dout$, $P_\mathrm{cusp}$, and $\Dcusp$, respectively. Thus, the comparison relation $\sim$ (compare Convention~\ref{conv:comparsion_1}) is understood with respect to these parameters throughout this section.

\begin{proposition}\label{pro:Hoelder_continuity_S} 
If the self-adjoint element $a=a_{t=0}$, $a_t$ in $\alg$ and the positivity-preserving operators $S=S_{t=0}$, $S_t$ on $\alg$ satisfy \eqref{eq:assums_perturbation} for each $t \in T$ then  
there is $t_* \sim 1$ such that 
\begin{enumerate}[label=(\alph*)]
\item 
Uniformly for all $z \in \Dbdd$ and for all $t \in [-t_*,t_*]\cap T$, we have  
\[ \norm{m_t(z) - m(z)} \lesssim \abs{t}^{1/3}. \]
In particular, $\norm{m_t(z)} \lesssim 1$ uniformly for all $z \in \Dbdd$ and for all $t \in [-t_*,t_*]\cap T$.
\item (Bulk and away from support of $\dens$) 
Uniformly for all $z \in \Dbulk\cup\Dout$ and for all $t \in [-t_*,t_*] \cap T$, we have 
\[ \norm{m_t(z)- m(z)} \lesssim \abs{t}. \] 
\item (Away from almost cusps)
Uniformly for all $z \in \Dcusp \cap \Dbdd$ and for all $t \in [-t_*,t_*] \cap T$, we have  
\[ \norm{m_t(z)- m(z)} \lesssim \abs{t}^{1/2}. \] 
\end{enumerate}
\end{proposition}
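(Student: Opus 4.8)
The plan is to treat the Dyson equation for $m_t$ as a small perturbation of the one for $m$ and to run a continuity (bootstrap) argument in $t$, using the quantitative control on the inverse of the stability operator $B = \Id - C_m S$ that was established in Sections \ref{sec:regularity}--\ref{sec:shape_analysis}. First I would fix $z$ in the relevant domain, write $\Deltam \defeq m_t(z) - m(z)$ and subtract \eqref{eq:dyson_Hoelder_S} from \eqref{eq:dyson}. Writing $e_t \defeq a - a_t$ and $D_t \defeq S - S_t$, which satisfy $\norm{e_t} \lesssim \abs{t}$ and $\norm{D_t} \lesssim \abs{t}$ by \eqref{eq:assums_perturbation}, one obtains an $\alg$-valued quadratic equation of the schematic form
\[
 B[\Deltam] = m\big(e_t + D_t[m_t]\big)m + m S[\Deltam]\Deltam + (\text{lower order in }\Deltam),
\]
where one has to be slightly careful to organize the terms so that the bilinear part is $A[\Deltam,\Deltam] = \tfrac12(mS[\Deltam]\Deltam + \Deltam S[\Deltam]m)$ and the inhomogeneity is $\ord(\abs{t})$ in $\norm{\genarg}$ (using $\norm{m}\lesssim 1$ on $\Dbdd$ and a-priori $\norm{m_t}\lesssim 1$ to be justified by the bootstrap).

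Next I would invoke the general cubic machinery of Section \ref{subsec:general_cubic_equation}: apply $B^{-1}$ and decompose $\Deltam = \Theta b + r$ along the unstable direction $b$ of $B$, exactly as in Lemma \ref{lem:general_cubic_equation}, with the error term playing the role of $\err$ (now of size $\abs{t}$ rather than $\abs{\omega}$). The upshot is that $\Theta$ satisfies a cubic equation $\mu_3\Theta^3 + \mu_2\Theta^2 + \mu_1\Theta + \mu_0 = \errt$ with $\mu_0 = \ord(\abs{t})$, and the coefficients $\mu_1,\mu_2,\mu_3$ are (up to $\ord(\dens)$-type corrections) the same $\psi,\sigma,\dots$ analyzed in Lemma \ref{lem:coefficients_general_cubic} and Proposition \ref{pro:cubic_for_dyson_equation}. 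For part (b), on $\Dbulk\cup\Dout$ we have the strong bound $\norm{B^{-1}}\lesssim 1$ from \eqref{eq:norm_stab_operator_m_sim_one}/\eqref{eq:B_inverse_improved_bounds} (since $\dens(z)\gtrsim 1$ or $\dist(z,\supp\dens)\gtrsim1$), so a direct contraction/fixed-point argument on the quadratic equation gives $\norm{\Deltam}\lesssim\abs{t}$ with no cubic analysis needed. For part (c), on $\Dcusp\cap\Dbdd$ the improved bound $\norm{B^{-1}}\lesssim (\dens(\dens+\abs{\sigma}))^{-1} + \dots$ of Corollary \ref{coro:B_inverse_improved_bound} together with $\abs{\sigma}\gtrsim 1$ near a regular edge (Proposition \ref{pro:char_regular_edge}) and $\dens+\dist\gtrsim$ suitable powers away from cusps yields $\norm{B^{-1}}\lesssim \abs{t}^{-1/2}$-type control, hence $\norm{\Deltam}\lesssim\abs{t}^{1/2}$; alternatively one solves the now essentially quadratic cubic ($\mu_3\approx 0$ relative to $\mu_2\sim\sigma$) and reads off $\Theta = \ord(\abs{\mu_0/\mu_2}^{1/2}) = \ord(\abs{t}^{1/2})$. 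For part (a), on $\Dbdd$ one uses the full stability of the cubic, $\psi + \sigma^2\sim 1$ (as in \eqref{eq:def_admiss_scaling_parameters}), which forces the solution $\Theta$ of $\mu_3\Theta^3+\mu_2\Theta^2+\mu_0 = \errt$ with $\mu_0=\ord(\abs{t})$ to obey $\abs{\Theta}\lesssim\abs{t}^{1/3}$ — this is exactly the mechanism that prevents singularities worse than cubic root — and then $\norm{r}\lesssim\abs{\Theta}^2+\abs{t}\lesssim\abs{t}^{2/3}$ gives $\norm{\Deltam}\lesssim\abs{t}^{1/3}$.

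The bootstrap is organized as in the proof of Lemma \ref{lmm: Kronecker global law}: define $t_*$ as the supremum of $t\ge 0$ for which $\norm{\Deltam(z)}\le C\abs{t}^{1/3}$ (resp.\ $\abs{t}$, $\abs{t}^{1/2}$) holds on the appropriate domain, show by the quadratic estimate that on this range actually $\norm{\Deltam}\le \tfrac{C}{2}\abs{t}^{1/3}$, and conclude $t_*\sim 1$ by continuity in $t$ together with the crude bound $\norm{m_t}\le(\imz)^{-1}$. A technical point to handle along the way: the cubic-equation estimates of Sections \ref{sec:cubic_equation}--\ref{sec:shape_analysis} were derived for the self-similar perturbation $\Theta(\omega) = \langle l/\langle b,l\rangle, m(\tau_0+\omega)-m(\tau_0)\rangle$, i.e.\ comparing $m$ at two nearby real points of the \emph{same} equation; here we instead compare two \emph{different} equations at the same $z$. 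The resolution is that the abstract Lemma \ref{lem:general_cubic_equation} was stated precisely for a general quadratic equation \eqref{eq:quadratic_eq} with an arbitrary small $\err$, so one only needs to verify the hypotheses \eqref{eq:assums_derivation_cubic_equation} — which follow from Assumptions \ref{assums:general} holding for $m$ on $\Dbdd$ (guaranteed by Lemma \ref{lem:q_bounded_Im_u_sim_avg}(ii) since $\norm{m}\lesssim 1$ there) and the bounds on $b,l,B^{-1}Q$ from Corollary \ref{coro:eigenvector_expansion} and Corollary \ref{coro:B_inverse_improved_bound}.

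The main obstacle I anticipate is twofold. First, near almost-cusp points one must make sure the error inhomogeneity $\mu_0 = \ord(\abs{t})$ genuinely dominates the correction terms $\errt = \ord(\abs{\Theta}^4 + \abs{\Theta}\abs{t} + \abs{t}^2)$ in the regime where $\abs{\Theta}\sim\abs{t}^{1/3}$: here $\abs{\Theta}^4\sim\abs{t}^{4/3}\ll\abs{t}$, so this works, but the bookkeeping of all the $\ord(\dens)$-type corrections to $\mu_1,\mu_2,\mu_3$ (which themselves vary with $z$) has to be done uniformly, and one must ensure that the stability $\psi+\sigma^2\sim1$ indeed holds uniformly on $\Dbdd$, which requires that points of $\Dbdd$ be shape-regular or at least that Lemma \ref{lem:stability_cubic_equation}(ii) applies — i.e.\ that the relevant $z$ have small enough density, with the complementary large-density region handled by the trivial $\norm{B^{-1}}\lesssim1$ bound. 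Second, patching the three regimes (bulk, regular-edge, cusp) into the single statement on $\Dbdd$ requires checking that $\Dbdd = (\Dbulk\cup\Dout) \cup (\Dcusp\cap\Dbdd) \cup (\text{neighborhood of almost cusps within }\Dbdd)$ is covered, and that on the last piece — where $\dens$ is small and $\abs{\sigma}$ may be small — only the weakest $\abs{t}^{1/3}$ bound is claimed, consistent with what the stable cubic delivers. Once the covering and the uniformity of constants are in place, parts (b) and (c) are immediate specializations.
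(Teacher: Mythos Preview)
Your overall strategy—derive a quadratic equation for $\Deltam$, project onto the unstable direction of $B$, and analyze the resulting scalar cubic—matches the paper's approach, and your identification of the relevant inputs (Corollary~\ref{coro:eigenvector_expansion}, Lemma~\ref{lem:general_cubic_equation}, the stability $\psi+\sigma^2\sim1$) is correct. But there is a genuine gap in how you set up the continuity argument.

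You propose to bootstrap in $t$: start from $t=0$ where $\Deltam=0$, and propagate by continuity in $t$. This does not work under the stated hypotheses. The set $T\subset\R$ is only required to contain $0$; it may be discrete (say $T=\{0,1\}$), and even if $T$ is an interval the assumptions \eqref{eq:assums_perturbation} control only $\norm{a_t-a_0}$ and $\norm{S_t-S_0}$, not $\norm{a_{t_1}-a_{t_2}}$, so $t\mapsto m_t(z)$ need not be continuous. (Incidentally, the proof of Lemma~\ref{lmm: Kronecker global law} that you cite as a model in fact bootstraps in $\eta$, not in $N$.) The paper instead fixes an arbitrary $t\in T\setminus\{0\}$ and bootstraps in $\eta=\Im z$ at fixed $\Re z$: for large $\eta$ one has the trivial bound $\norm{\Deltam}\le 2/\eta$, and one shows that the cubic estimate propagates the desired bound $\norm{\Deltam}\lesssim M(t)$ downward in $\eta$ to the target point. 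This uses only continuity of $z\mapsto m_t(z)$, which is always available.

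This change of variable is not cosmetic: once you bootstrap in $\eta$, the cubic coefficients $\xi_1,\xi_2$ become $\eta$-dependent, and you must show that the bound $\abs{\Theta}\lesssim\min\{\abs{t}^{1/3},\abs{t}^{1/2}/\wt\xi_2^{1/2},\abs{t}/\wt\xi_1\}$ is preserved as $\eta$ decreases. The paper handles this via two auxiliary lemmas that you do not have: Lemma~\ref{lem:cubic_equation1} introduces explicit comparison functions $\wt\xi_1(z),\wt\xi_2(z)$ (defined case-by-case in \eqref{eq:def_wt_xi} according to whether $z$ is near bulk, regular edge, small-gap edge, or internal minimum) and proves $\abs{\xi_i}\sim\wt\xi_i$; Lemma~\ref{lem:cubic_equation2} is a self-contained bootstrapping lemma showing that if a continuous $\Theta(\eta)$ satisfies the cubic inequality with coefficients obeying certain monotonicity conditions in $\eta$ (e.g.\ $\wt\xi_1^3/d^2$ increasing), then the bound \eqref{eq:conclusion_cubic_boostrapping} propagates. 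Verifying these monotonicity properties for the explicit $\wt\xi_i$ of \eqref{eq:def_wt_xi} is where the scaling relations of Remark~\ref{rmk:scaling im m} enter. Your proposal skips this entire layer; without it you cannot close the argument, because naively the cubic bound at one $\eta$ does not immediately give the cubic bound at a smaller $\eta$ with different coefficients.
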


In order to simplify the notation, we set $\Delta m_t =\Delta m_t(z) = m_t(z) - m(z)$. 
The behaviour of $\Deltam$ will be governed by a scalar-valued cubic equation (see \eqref{eq:scalar_cubic_S} below). This is the origin of the cubic root $\abs{t}^{1/3}$ in the general 
estimate on $\norm{m_t(z)-m(z)}$ in Proposition~\ref{pro:Hoelder_continuity_S}. In the special cases, $z \in \Dbulk \cup \Dout$ and $z \in \Dcusp$, the cubic equation simplifies to 
a linear or quadratic equation, respectively, which yield the improved estimates $\abs{t}$ and $\abs{t}^{1/2}$, respectively.

We now define two positive auxiliary functions $\wt{\xi}_1(z)$ and $\wt{\xi}_2(z)$ for $z \in \Dbdd$ which will control the coefficients in the cubic equation mentioned above. 
For their definitions, we distinguish several subdomains of $\Dbdd$. 
The slight ambiguity of the definitions due to overlaps between these domains does, however, not affect the validity of the following statements as the different versions 
of $\wt{\xi}_1$ as well as $\wt{\xi}_2$ are comparable with each other with respect to the comparison relation $\sim$ and $\wt{\xi}_1$ as well as $\wt{\xi}_2$ are only 
used in bounds with respect to this comparison relation. 
For $\dens_* \sim 1$ and $\delta_* \sim 1$, we define
\begin{itemize} 
\begin{subequations}\label{eq:def_wt_xi} 
\item \textbf{Bulk:} If $z \in \Dbulk\cup\Dout$ then we set 
\begin{equation} \label{eq:def_wt_xi_bulk}
 \wt{\xi}_1(z) \defeq \wt{\xi}_2(z) \defeq 1. 
\end{equation}
\item \textbf{Around a regular edge:} If $z = \tau_0 + \omega + \ii \eta \in \Dcusp \cap \Dbdd$ with some $\tau_0 \in \pt\supp \dens$, $\omega\in [-\delta_*,\delta_*]$ and $\eta \in (0, \delta_*]$ then we set 
\begin{equation} \label{eq:def_wt_xi_regular_edge}
 \wt{\xi}_1(z) \defeq (\abs{\omega} + \eta)^{1/2}, \qquad \qquad \wt{\xi}_2(z)\defeq 1.
\end{equation}
\item \textbf{Close to an internal edge with a small gap:} Let $\alpha, \beta \in (\pt \supp\dens) \setminus P_m$ satisfy $\beta < \alpha$ and $(\beta, \alpha) \cap \supp\dens = \varnothing$. We set $\Delta \defeq \alpha-\beta$.
If $z \in\Dbdd$ satisfies $z = \alpha - \omega + \ii \eta$ or $z = \beta + \omega + \ii \eta$ for some $\omega \in  [-\delta_*,\Delta/2]$ and $\eta \in (0,\delta_*]$ then we define
\begin{equation} \label{eq:def_wt_xi_internal_edge}
\wt{\xi}_1(z) \defeq (\abs{\omega} + \eta)^{1/2} (\abs{\omega} + \eta + \Delta)^{1/6}, \qquad \qquad \wt{\xi}_2(z) \defeq (\abs{\omega} + \eta + \Delta)^{1/3} 
\end{equation}
\item \textbf{Around a small internal minimum:} If $z = \tau_0 + \omega + \ii \eta \in \Dbdd$, where $\tau_0 \in \supp\dens\setminus \pt\supp\dens$ is a local minimum of $\dens$ with $\dens(\tau_0) \leq \dens_*$, 
$\omega \in [-\delta_*,\delta_*]$ and $\eta \in (0,\delta_*]$ then we define
\begin{equation} \label{eq:def_wt_xi_minimum}
\wt{\xi}_1(z)\defeq (\dens(\tau_0) + ( \abs{\omega} + \eta)^{1/3})^2, \qquad \qquad \wt{\xi}_2(z) \defeq  \dens(\tau_0) + (\abs{\omega}+\eta)^{1/3}. 
\end{equation}
\end{subequations}
\end{itemize}

We remark that $\tau_0 \in \pt \supp \dens$ is a \emph{regular edge} if $\dens(\tau)=0$ for all 
$\tau \in [\tau_0-\eps,\tau_0]$ or $\tau \in [\tau_0,\tau_0+\eps]$ for some $\eps \sim 1$. 
In fact, $\overline{\Dcusp \cap \Dbdd} \cap \pt \supp\dens$ consists only of regular edges. 

In the proof of Proposition \ref{pro:Hoelder_continuity_S}, we will use the following two lemmas, whose 
proofs we postpone until the end of this section. 

\begin{lemma} \label{lem:cubic_equation1}
Let $\Dbdd$ be defined as in \eqref{eq:def_away_from_almost_cusp}.  Let $a$, $S$ and $(a_t)_{t \in T}$ and $(S_t)_{t \in T}$ satisfy \eqref{eq:assums_perturbation}.
Then there is $\eps_1 \sim 1$ such that if $\norm{\Deltam(z)} \leq \eps_1$ for some $z \in\Dbdd$, $t \in T$, then there are $l, b \in \alg$ depending on $z$ such that 
$\Theta_t\defeq \scalar{l}{\Deltam}/\scalar{l}{b}$ satisfies a cubic inequality
\begin{equation} \label{eq:scalar_cubic_S}
 \abs{\Theta_t^3 +{\xi}_2 \Theta_t^2 + {\xi}_1 \Theta_t} \lesssim \abs{t}
\end{equation}
with complex coefficients ${\xi}_1$ and ${\xi}_2$ depending on $z$ and $t$. 
The function $\Theta_t$ depends continuously on $\imz$ 
and we also have $\abs{\Theta_t} \lesssim \norm{\Deltam}$ as well as $\norm{\Deltam} \lesssim \abs{\Theta_t} + \abs{t}$ for all $t \in T$. 

The coefficients, ${\xi}_1$ and ${\xi}_2$, behave as follows: There are $\delta_* \sim 1$, $\dens_* \sim 1$ and $c_*\sim 1$ 
such that, with the appropriate definitions of $\wt{\xi}_1$ and $\wt{\xi}_2$ from \eqref{eq:def_wt_xi}, we have 
\begin{itemize} 
\begin{subequations} \label{eq:scaling_cubic_coefficients} 
\item If $z\in \Dbdd$ satisfies the conditions for \eqref{eq:def_wt_xi_bulk} or \eqref{eq:def_wt_xi_internal_edge} with $\omega \in [c_* \Delta,\Delta/2]$ then we have
\begin{equation} \label{eq:scaling_cubic_coefficients_sim_lesssim} 
\abs{\xi_1(z)} \sim \wt{\xi}_1(z), \qquad \abs{\xi_2(z)} \lesssim \wt{\xi}_2(z). 
\end{equation}
\item If $z\in \Dbdd$ satisfies the conditions for \eqref{eq:def_wt_xi_internal_edge} with $\omega \in [-\delta_*,c_*\Delta]$ or \eqref{eq:def_wt_xi_regular_edge} or \eqref{eq:def_wt_xi_minimum} then we have 
\begin{equation}
 \abs{\xi_1(z)} \sim \wt{\xi}_1(z), \qquad \abs{\xi_2(z)} \sim \wt{\xi}_2(z). 
\end{equation}
\end{subequations}
\end{itemize}
All implicit constants in this lemma are uniform for any $t \in T$. 
\end{lemma}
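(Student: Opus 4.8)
The plan is to run the cubic analysis of Sections~\ref{sec:stability_operator}--\ref{sec:cubic_equation} for the perturbation $\Deltam(z)\defeq m_t(z)-m(z)$ in place of the spectral increment $m(\tau_0+\omega)-m(\tau_0)$, with a forcing term of size $\abs t$ replacing $\omega\Xi(\omega)$. First I would derive the governing $\alg$-valued quadratic equation. Setting $X\defeq (a-a_t)+S_t[m_t]-S[m]$ and subtracting \eqref{eq:dyson_Hoelder_S} from \eqref{eq:dyson} gives $m^{-1}-m_t^{-1}=X$, hence both $m^{-1}\Deltam\, m_t^{-1}=X$ and $m_t^{-1}\Deltam\, m^{-1}=X$, so that $\Deltam=\tfrac12(mXm_t+m_tXm)$. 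Expanding $m_t=m+\Deltam$ and $S_t[m_t]-S[m]=S[\Deltam]+(S_t-S)[m]+(S_t-S)[\Deltam]$ puts this into the form \eqref{eq:quadratic_eq},
\begin{equation*}
 (\Id-C_mS)[\Deltam]=A[\Deltam,\Deltam]+T[e]+K[e,\Deltam],
\end{equation*}
with $B=\Id-C_mS$ and $A$ exactly as in \eqref{eq:B_A_general_dyson}, with $e$ encoding the pair $(a-a_t,S_t-S)$ so that $\norm e\lesssim\abs t$ by \eqref{eq:assums_perturbation}, with $T[e]=m(a-a_t)m+m(S_t-S)[m]m$ collecting the $\abs t$-sized source ($\norm{T[e]}\lesssim\abs t$), and with $K[e,\Deltam]$ collecting the remaining terms, all of size $\lesssim\abs t\,\norm{\Deltam}$.

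Second, since $\norm{m(z)}\lesssim1$ on $\Dbdd$ (and $\norm a,\norm S\lesssim1$ by \eqref{eq:assums_perturbation}), Lemma~\ref{lem:q_bounded_Im_u_sim_avg}~(ii) shows that Assumptions~\ref{assums:general} hold on $\Dbdd$ (intersected with a bounded region, which is harmless since $\norm m$ is small for large $\abs z$), so the spectral picture of Section~\ref{sec:stability_operator} applies: when $\dens(z)$ is small, $B(z)$ has a unique simple eigenvalue $\beta$ of smallest modulus with right/left eigenvectors $b,l$ satisfying $\norm b+\norm l+\abs{\scalar lb}^{-1}\lesssim1$ and $\norm{B^{-1}Q}+\normtwo{B^{-1}Q}\lesssim1$ by Lemma~\ref{lem:prop_F_small_dens} and Corollary~\ref{coro:eigenvector_expansion}, while when $\dens(z)\gtrsim1$ one simply has $\norm{B^{-1}}\lesssim1$ by Corollary~\ref{coro:B_inverse_improved_bound}. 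I take $l,b$ to be these eigenvectors (and $l=b=\id$ in the second case); then $\Theta_t=\scalar l{\Deltam}/\scalar lb$ depends continuously on $\imz$ because $m_t,m,l,b$ do, and $\abs{\Theta_t}\lesssim\norm{\Deltam}$. Applying $Q$ and $B^{-1}$ to the quadratic equation as in \eqref{eq:estimate_tilde_r} gives $\norm{Q[\Deltam]}\lesssim\abs t+\norm{\Deltam}^2$, hence $\norm{\Deltam}\lesssim\abs{\Theta_t}+\abs t+\norm{\Deltam}^2$, and taking $\eps_1\sim1$ small enough yields $\norm{\Deltam}\lesssim\abs{\Theta_t}+\abs t$.

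Third, the cubic inequality \eqref{eq:scalar_cubic_S}. In the bulk and far from $\supp\dens$, $\norm{B^{-1}}\lesssim1$ forces $\norm{\Deltam}\lesssim\abs t$ directly by iterating the quadratic equation, so $\abs{\Theta_t}\lesssim\abs t$ and \eqref{eq:scalar_cubic_S} holds with $\xi_1=1$, $\xi_2=0$, matching $\wt\xi_1=\wt\xi_2=1$ in \eqref{eq:def_wt_xi_bulk}; the same argument covers \eqref{eq:def_wt_xi_internal_edge} with $\omega\in[c_*\Delta,\Delta/2]$ since there $\dist(z,\supp\dens)\gtrsim\Delta$. In the remaining small-$\dens$ regimes I apply Lemma~\ref{lem:general_cubic_equation} to the quadratic equation, obtaining $\mu_3\Theta_t^3+\mu_2\Theta_t^2+\mu_1\Theta_t=\ord(\abs t+\abs{\Theta_t}^4+\abs{\Theta_t}\abs t)$, where $\mu_3,\mu_2$ have the expansions of Lemma~\ref{lem:coefficients_general_cubic} (so $\mu_3=\psi+\ord(\dens+\dens^{-1}\imz)$, $\mu_2=\sigma+\ord(\dens+\dens^{-1}\imz)$) and $\mu_1=-\beta\scalar lb$ has the expansion \eqref{eq:expansion_beta_scalar_l_b}. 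Absorbing $\abs{\Theta_t}^4$ into the cubic coefficient and $\abs{\Theta_t}\abs t$ into the linear one, and dividing by the leading coefficient — which is $\sim1$ in the nonzero-minimum and small-internal-gap cases because $\abs\sigma\lesssim\dens^2$ there by Lemma~\ref{lem:sigma_nonzero_local_minimum} together with $\psi+\sigma^2\sim1$ (cf.\ \eqref{eq:def_psi_sigma} and Lemma~\ref{lem:stability_cubic_equation}), and where $\psi$ may degenerate (regular edges, where $\abs\sigma\sim1$) is handled by first isolating the dominant quadratic part and treating the cubic term perturbatively — produces \eqref{eq:scalar_cubic_S} with $\xi_2=\mu_2/\mu_3$, $\xi_1=\mu_1/\mu_3$ up to admissible errors. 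Finally, for \eqref{eq:scaling_cubic_coefficients} one reads off that $\abs{\xi_2}$ is governed by $\abs\sigma+\dens$ and $\abs{\xi_1}$ by $\abs{\mu_1}\sim\dens(\dens+\abs\sigma)+\dens^{-1}\imz$, and inserts the scaling relations for $\dens(z)=\avg{\Im m(z)}/\pi$ and $\sigma(z)$ at $z=\tau_0+\omega+\ii\eta$ near edges, gaps and internal minima from Remark~\ref{rmk:scaling im m} and Theorem~\ref{thm:behaviour_dens}: near a regular edge $\dens(z)\sim(\abs\omega+\eta)^{1/2}$, $\dens^{-1}\imz\lesssim(\abs\omega+\eta)^{1/2}$ and $\abs\sigma\sim1$, giving $\abs{\xi_1}\sim(\abs\omega+\eta)^{1/2}$, $\abs{\xi_2}\sim1$; near an internal edge with small gap $\Delta$, Remark~\ref{rmk:scaling im m}~(i)--(ii) supply the powers of $\abs\omega+\eta+\Delta$ in \eqref{eq:def_wt_xi_internal_edge}; near a small internal minimum $\dens(z)\sim\dens(\tau_0)+(\abs\omega+\eta)^{1/3}$ and $\abs\sigma\lesssim\dens(\tau_0)^2$ give \eqref{eq:def_wt_xi_minimum}. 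Uniformity in $t$ is automatic since $\mu_1,\mu_2,\mu_3$ depend only on $m$.

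I expect the main obstacle to be precisely this last step: organizing the case distinction (inside the support vs.\ inside a gap vs.\ in the middle of a gap, and small vs.\ large $\abs\sigma$) and tracking how $\dens(z)$, $\dens(z)^{-1}\imz$ and $\sigma(z)$ scale in each regime so that $\abs{\xi_1}\sim\wt\xi_1$ and $\abs{\xi_2}\sim\wt\xi_2$ (resp.\ $\lesssim\wt\xi_2$) hold with uniform implicit constants. A secondary technical point is producing a genuinely monic cubic inequality despite the natural leading coefficient $\psi$ degenerating at regular edges, which is why the statement is phrased as an inequality and the cubic term is, when necessary, absorbed into the perturbative part of the dominant quadratic equation.
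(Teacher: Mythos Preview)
Your overall architecture matches the paper's: derive the $\alg$-valued quadratic equation, split into a bulk/far-from-support regime (where $l=b=\id$) and a small-density regime (where $l,b$ are the eigenvectors from Corollary~\ref{coro:eigenvector_expansion}), apply Lemma~\ref{lem:general_cubic_equation}, and then read off the scaling of $\xi_1,\xi_2$ from $\abs{\mu_1}\sim\dens(\dens+\abs\sigma)+\dens^{-1}\imz$ and $\abs{\mu_2}\sim\dens+\abs\sigma$ combined with Remark~\ref{rmk:scaling im m}. Three points deserve correction.

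\textbf{The middle-of-small-gap case is mishandled.} You claim that \eqref{eq:def_wt_xi_internal_edge} with $\omega\in[c_*\Delta,\Delta/2]$ ``is covered by the same argument'' as the bulk because $\dist(z,\supp\dens)\gtrsim\Delta$. But $\Delta$ may be arbitrarily small, so this gives no uniform control on $\norm{B^{-1}}$ and you cannot conclude $\norm{\Deltam}\lesssim\abs t$ or set $\xi_1=1$. Indeed, $\wt\xi_1\sim\Delta^{2/3}$ there, not $1$. In the paper this regime falls into the \emph{small-density} case (Case~2), and one obtains \eqref{eq:scaling_cubic_coefficients_sim_lesssim} from the formulas $\abs{\xi_1}\sim\dens^2+\abs\sigma\dens+\dens^{-1}\imz$, $\abs{\xi_2}\sim\dens+\abs\sigma$ together with the scaling of $\dens(z)$ inside a gap from Remark~\ref{rmk:scaling im m}~(ii); the reason one only gets $\abs{\xi_2}\lesssim\wt\xi_2$ (and not $\sim$) is a cancellation issue in the middle of the gap, not a bulk phenomenon.

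\textbf{The leading coefficient at regular edges.} Your plan of ``dividing by $\mu_3$'' in general and treating the cubic term perturbatively when $\psi$ is small is the right idea, but the paper implements it differently and more cleanly. It decomposes $\wt e=\wt e_1\Theta^3+\wt e_2$, notes $\abs{\mu_3-\wt e_1}+\abs{\mu_2}\gtrsim\psi+\abs\sigma\gtrsim1$, and then defines
\[
\xi_2=\big(\mu_2+(\mu_3-\wt e_1-1)\Theta\big)\char(\abs{\mu_2}\ge c)+\tfrac{\mu_2}{\mu_3-\wt e_1}\char(\abs{\mu_2}<c),\qquad
\xi_1=\mu_1\char(\abs{\mu_2}\ge c)+\tfrac{\mu_1}{\mu_3-\wt e_1}\char(\abs{\mu_2}<c),
\]
which gives a monic cubic in one stroke with $\abs{\xi_j}\sim\abs{\mu_j}$. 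Also, your justification ``$\abs\sigma\lesssim\dens^2$ in the small-internal-gap case by Lemma~\ref{lem:sigma_nonzero_local_minimum}'' is wrong: that lemma concerns nonzero local minima, whereas near a gap edge $\abs\sigma\sim\Delta^{1/3}$ (see \eqref{eq:scaling_sigma}); the correct reason $\mu_3\sim1$ there is simply that $\Delta^{1/3}$ is small, so $\psi\sim1$ from $\psi+\sigma^2\sim1$.

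\textbf{Continuity of $\Theta_t$ in $\imz$.} Your claim that $\Theta_t$ is continuous because $l,b$ are is not right: $l,b$ jump between $\id$ and the $B$-eigenvectors at the Case~1/Case~2 boundary. The paper notes this explicitly and fixes it by a one-line interpolation at the single transition value of $\imz$.
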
 

\begin{lemma} \label{lem:cubic_equation2}
For $0<\eta_\ast<\eta^\ast<\infty$, let $\xi_1,{\xi}_2\colon [\eta_\ast,\eta^\ast] \to \C$ be complex-valued functions and $\wt{\xi}_1,\wt{\xi}_2, d\colon [\eta_\ast,\eta^\ast]  \to \R^+ $ be continuous. 

Suppose that some continuous function $\Theta\colon [\eta_{\ast},\eta^{\ast}]  \to \C$ satisfies the cubic inequality 
\begin{equation} \label{eq:scalar_cubic_S_condition}
\abs{\Theta^3 + {\xi}_2 \Theta^2  + {\xi}_1 \Theta} \lesssim d 
\end{equation}
on $[\eta_\ast, \eta^\ast]$ as well as 
\begin{equation} \label{eq:condition_cubic_boostraping}
\abs{\Theta}  \,\lesssim\, \min\cbb{d^{1/3}, \frac{d^{1/2}}{\wt{\xi}_2^{1/2}},\frac{d}{\wt{\xi}_1}} 
\end{equation}
at $\eta_\ast$. 
If one of the following two sets of relations holds true:
\begin{enumerate}[label=\arabic*)]
\item 
\begin{enumerate}[label=(\roman*)]
\item $\wt{\xi}_2^3/d$, $\wt{\xi}_1^3/d^2$, $\wt{\xi}_1^2/(d\1\wt{\xi}_2)$ are monotonically increasing functions, 
\item $\abs{\xi_1}\sim \wt{\xi}_1$, $\abs{\xi_2}\sim \wt{\xi}_2$, 
\item $d^2/\wt{\xi}_1^3+d\1\wt{\xi}_2/\wt{\xi}_1^2$ at $\eta^\ast$ is sufficiently small depending on the implicit constants in 1) (ii) as well as \eqref{eq:scalar_cubic_S_condition} and \eqref{eq:condition_cubic_boostraping}.
\end{enumerate} 
\item  
\begin{enumerate}[label=(\roman*)] 
\item $\wt{\xi}_1^3/d^2$ is a monotonically increasing function, 
\item $\abs{{\xi}_1}\sim \wt{\xi}_1$,  $\abs{\xi_2}\lesssim \wt{\xi}_1^{1/2}$. 
\end{enumerate} 
\end{enumerate}
then, on $[\eta_\ast, \eta^\ast]$, we have the bound 
\begin{equation} \label{eq:conclusion_cubic_boostrapping} 
\abs{\Theta}  \,\lesssim\, \min\cbb{d^{1/3}, \frac{d^{1/2}}{\wt{\xi}_2^{1/2}},\frac{d}{\wt{\xi}_1}}. 
\end{equation}
\end{lemma}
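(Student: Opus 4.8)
The statement is a bootstrapping (continuity) argument: the cubic inequality \eqref{eq:scalar_cubic_S_condition} is an a priori constraint on $\Theta$, and the goal is to promote the bound \eqref{eq:condition_cubic_boostraping}, known only at the left endpoint $\eta_\ast$, to a bound on the whole interval $[\eta_\ast,\eta^\ast]$. The plan is to set $\Phi(\eta) \defeq \min\{d^{1/3}, d^{1/2}\wt{\xi}_2^{-1/2}, d\,\wt{\xi}_1^{-1}\}$ and study the set $\mathcal{G} \defeq \{\eta \in [\eta_\ast,\eta^\ast] \colon \abs{\Theta(\eta)} \leq K\Phi(\eta)\}$ for a suitably large constant $K\sim 1$. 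By \eqref{eq:condition_cubic_boostraping}, $\eta_\ast \in \mathcal{G}$ if $K$ is chosen large enough; by continuity of $\Theta$, $\wt{\xi}_1$, $\wt{\xi}_2$, $d$, the set $\mathcal{G}$ is closed. The heart of the matter is to show $\mathcal{G}$ is also open (relatively) in $[\eta_\ast,\eta^\ast]$ -- more precisely, that on $\mathcal{G}$ the bound self-improves to $\abs{\Theta} \leq \tfrac{K}{2}\Phi$ -- so that $\mathcal{G} = [\eta_\ast,\eta^\ast]$ and \eqref{eq:conclusion_cubic_boostrapping} follows. The monotonicity assumptions on $\wt{\xi}_2^3/d$, $\wt{\xi}_1^3/d^2$, $\wt{\xi}_1^2/(d\wt{\xi}_2)$ will be used precisely to guarantee that $\mathcal{G}$, being closed and containing a neighbourhood of $\eta_\ast$, is in fact the whole interval: these ratios control which of the three terms in the min is active and ensure that the smallness condition 1)(iii) at $\eta^\ast$ propagates monotonically to all of $[\eta_\ast,\eta^\ast]$.

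The self-improvement step is an elementary algebraic dichotomy on the cubic $\abs{\Theta^3 + \xi_2\Theta^2 + \xi_1\Theta} \lesssim d$. Assuming $\abs{\Theta} \leq K\Phi$ at a point, one compares the three monomials. Case 1) ($\abs{\xi_1}\sim\wt{\xi}_1$, $\abs{\xi_2}\sim\wt{\xi}_2$): if the linear term dominates, i.e. $\abs{\xi_1\Theta} \gtrsim \abs{\Theta}^3 + \abs{\xi_2\Theta^2}$, then $\abs{\Theta} \lesssim d/\wt{\xi}_1$; this last dominance is exactly where one uses that $\abs{\Theta} \leq K\Phi \lesssim \wt{\xi}_1^{1/3}$ (from the $d^{1/3}$ and $d^{1/2}\wt{\xi}_2^{-1/2}$ branches together with the ratio bounds) and $\abs{\Theta}\lesssim \wt{\xi}_1/\wt{\xi}_2$. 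If instead the quadratic term dominates the cubic, $\abs{\Theta}\lesssim \wt{\xi}_2$, so $\abs{\Theta}\lesssim \min\{(d/\wt{\xi}_2)^{1/2}, \dots\}$ after feeding back; if the cubic term dominates, $\abs{\Theta}\lesssim d^{1/3}$ directly. In each sub-case one recovers $\abs{\Theta} \leq \tfrac{K}{2}\Phi$ provided the quantity in 1)(iii), $d^2/\wt{\xi}_1^3 + d\,\wt{\xi}_2/\wt{\xi}_1^2$, is small enough -- this smallness is what closes the self-bootstrap by absorbing the cross terms with a factor $<1/2$. For Case 2) ($\abs{\xi_1}\sim\wt{\xi}_1$, $\abs{\xi_2}\lesssim\wt{\xi}_1^{1/2}$), the quadratic term is automatically subordinate: $\abs{\xi_2\Theta^2} \lesssim \wt{\xi}_1^{1/2}\abs{\Theta}^2 \leq \wt{\xi}_1^{1/2}\cdot K\Phi\cdot\abs{\Theta} \lesssim \wt{\xi}_1\abs{\Theta}$ if $\Phi \lesssim \wt{\xi}_1^{1/2}$, which holds on the relevant branch; hence the cubic effectively reduces to $\abs{\Theta^3 + \xi_1\Theta}\lesssim d + (\text{small})\wt{\xi}_1\abs{\Theta}$, a quadratic-type balance in $\abs{\Theta}$ giving $\abs{\Theta} \lesssim \min\{d^{1/3}, d\,\wt{\xi}_1^{-1}\}$, and here $\wt{\xi}_2 = \wt{\xi}_1$ or $\wt{\xi}_2$ drops out so only the first and third branches of $\Phi$ are needed. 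The single monotonicity hypothesis 2)(i) on $\wt{\xi}_1^3/d^2$ suffices to run the connectedness argument since only two branches compete.

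The main obstacle, I expect, is bookkeeping the interplay between the three branches of the min defining $\Phi$ and verifying that the active branch is consistent along the whole interval: one must check that where, say, $d^{1/3}$ is the smallest of the three at $\eta_\ast$, the monotonicity of the ratios forces a predictable ordering for all larger $\eta$, so that the estimate obtained from the cubic dichotomy always matches the branch claimed in \eqref{eq:conclusion_cubic_boostrapping}. A clean way to organise this is to split $[\eta_\ast,\eta^\ast]$ according to which branch of $\Phi$ is active (these are intervals, by monotonicity of the ratios) and run the closed-open argument separately on each piece, using continuity of $\Theta$ to glue at the breakpoints. Once that structure is in place, the per-case cubic estimates above are routine, and the smallness condition 1)(iii) evaluated at $\eta^\ast$ transfers downward by the monotonicity in 1)(i), closing the induction. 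I would also remark that this lemma is the abstract engine behind Proposition \ref{pro:Hoelder_continuity_S}: the functions $\wt{\xi}_1,\wt{\xi}_2$ and $d = \abs{t}$ there are exactly the ones produced by Lemma \ref{lem:cubic_equation1}, and the monotonicity hypotheses are verified directly from the explicit formulas \eqref{eq:def_wt_xi} as functions of $\eta$ for fixed $\tau_0,\omega,\Delta$.
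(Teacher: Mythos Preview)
Your plan is essentially the paper's proof: normalise $d=1$, split $[\eta_\ast,\eta^\ast]$ into sub-intervals according to which monomial in the cubic dominates (the paper does this via thresholds $\eta_1,\eta_2,\eta_3$ defined by $\wt\xi_2=\eps^4\wt\xi_1^2$, $\eps\wt\xi_1=1$, $\wt\xi_2\eps^2=1$), and on each piece run a pointwise ``gap'' argument---the weak bound on $|\Theta|$ plus the cubic inequality forces a strictly stronger bound, so by continuity $\Theta$ cannot cross the gap. Your suggestion to partition by the active branch of $\Phi$ is exactly this.

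Two corrections. First, there is a typo in the lemma statement: the initial bound \eqref{eq:condition_cubic_boostraping} is meant at $\eta^\ast$, not $\eta_\ast$ (check how the lemma is invoked in the proof of Proposition~\ref{pro:Hoelder_continuity_S}: the seed bound always comes from large $\eta$). The paper's proof accordingly bootstraps \emph{downward} from $\eta^\ast$. Your argument starting from $\eta_\ast$ would still be valid for the lemma as literally stated, but be aware of the intended direction. Second, your sentence ``the smallness condition 1)(iii) evaluated at $\eta^\ast$ transfers downward by the monotonicity in 1)(i)'' has the direction backwards: since $\wt\xi_1^3/d^2$ and $\wt\xi_1^2/(d\wt\xi_2)$ are \emph{increasing}, the quantities $d^2/\wt\xi_1^3$ and $d\wt\xi_2/\wt\xi_1^2$ are \emph{largest} near $\eta_\ast$, not smallest. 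The correct picture is that near $\eta^\ast$ the linear term dominates and the gap argument gives $|\Theta|\lesssim d/\wt\xi_1$; as $\eta$ decreases one eventually enters a regime where $\wt\xi_1,\wt\xi_2$ are $O_\eps(1)$ and the crude bound $|\Theta|\lesssim d^{1/3}$ follows directly from the cubic without any gap, which is automatically $\lesssim\Phi$ there. The monotonicity is used to guarantee these regimes are intervals, not to propagate smallness.
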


\begin{proof}[Proof of Proposition \ref{pro:Hoelder_continuity_S}] 
We start the proof by introducing the control parameter $M(t)$. 
Let $\wt{\xi}_1$ and $\wt{\xi}_2$ be defined as in \eqref{eq:def_wt_xi}. 
For $t \in \R$, we set 
\begin{equation} \label{eq:def_M_t}
M(t) \defeq \min \{ \abs{t}^{1/3} , \wt{\xi}_2^{-1/2}\abs{t}^{1/2}, \wt{\xi}_1^{-1} \abs{t} \}. 
\end{equation}
We remark that $M$ also depends on $z$ as $\wt{\xi}_1$ and $\wt{\xi}_2$ depend on $z$. 

We will prove below that there are $t_* \sim 1$ and $C \sim 1$ such that, for any fixed $t \in [-t_*,t_*]\cap T\setminus \{0\}$ (if this set is nonempty) and $z \in\Dbdd$, we have the implication
\begin{equation} \label{eq:improving_bound}
  \norm{\Deltam(\Re z+ \ii \eta)} \leq \eps_1 \quad \text{ for all } \eta \geq \imz \qquad \Rightarrow \qquad \norm{\Deltam(z)} \leq C M(t), 
\end{equation}
where $\eps_1\sim 1$ is from Lemma \ref{lem:cubic_equation1}. 

Armed with \eqref{eq:improving_bound}, by possibly shrinking $t_* \sim 1$, we can assume that $2Ct_*^{1/3} \leq \eps_1$. 
We fix $\tau\in\R$ and $t \in [-t_*,t_*]\cap T\setminus \{0\}$ and set 
\[ \eta_* \defeq \sup \{ \eta>0 : \norm{\Deltam(\tau + \ii\eta)} \geq 2C M(t) \}. \] 
Here, we use the convention $\eta_* = - \infty$ if the set is empty. 
Note that $\norm{\Deltam(\tau + \ii \eta)} \leq 2 \eta^{-1}$ since $m$ and $m_t$ are Stieltjes transforms. 
Hence, $\eta_* < \infty$ as $t \neq 0$. 

We prove now that $\eta_* \leq \inf \{ \imz : z \in \Dbdd, ~\Re z = \tau\}$. 
For a proof by contradiction, we suppose that there is $z_* \in \Dbdd$ such that $\Re z_* = \tau$ and $\Im z_* = \eta_*$ (note that if $\tau + \ii \eta \in \Dbdd$ then 
$\tau + \ii \eta'\in \Dbdd$ for any $\eta' \geq \eta$). 
Since $\Deltam$ is continuous in $z$, we have $\norm{\Deltam(z_*)} = 2CM(t)$. 
Thus, $\norm{\Deltam(\tau + \ii \eta)} \leq 2Ct_*^{1/3} \leq \eps_1$ for all $\eta \geq \eta_*$ by the choice of $t_*$. 
From \eqref{eq:improving_bound}, we conclude $\norm{\Deltam(z_*)} \leq C M(t)$, which 
contradicts $\norm{\Deltam(z_*)}=2 C M(t)$. Thus, $\eta_* \leq \inf\{ \imz: z \in\Dbdd, ~ \Re z = \tau \}$. 

As $\tau$ was arbitrary, this yields $\norm{\Deltam(z)} \leq 2C M(t)$ for all $z \in \Dbdd$, which proves part (a) of Proposition~\ref{pro:Hoelder_continuity_S} up to \eqref{eq:improving_bound}. 
Since $\wt{\xi}_1(z) \sim 1$ for $z \in \Dbulk\cup\Dout$ and $\wt{\xi}_2(z) \sim 1$ for $z \in \Dcusp\cap \Dbdd$, we also obtain part (b) and (c) from the definition of $M$ in \eqref{eq:def_M_t}. 

Hence, it suffices to show \eqref{eq:improving_bound} to complete the proof of Proposition~\ref{pro:Hoelder_continuity_S}. 
In order to prove \eqref{eq:improving_bound}, we use Lemma~\ref{lem:cubic_equation2}
with $\Theta(\eta) = \Theta_{t}(\Re z + \ii \eta)$, $\eta \geq \eta_* \defeq \Im z$, $d=\abs{t}$, and 
$\xi_1$, $\xi_2$ and $\wt{\xi}_1$, $\wt{\xi}_2$ are chosen as in \eqref{eq:scalar_cubic_S} of Lemma~\ref{lem:cubic_equation1} and \eqref{eq:def_wt_xi}, respectively. 
As $\norm{\Deltam(\Re z + \ii \eta)} \leq \eps_1$ for all $\eta \geq \imz$, we conclude that \eqref{eq:scalar_cubic_S_condition} is satisfied with $d=\abs{t}$ due to~\eqref{eq:scalar_cubic_S}. 

We first consider $z \in\Dbulk\cup\Dout$. 
If $z \in\Dbulk\cup\Dout$ then $\Re z + \ii \eta \in \Dbulk\cup\Dout$ and $\xi_1(\Re z + \ii \eta) = \xi_2(\Re z + \ii \eta) = 1$ for all $\eta \geq \eta_*$ and assumption 2) of Lemma~\ref{lem:cubic_equation2} 
is always fulfilled. 
Since $\norm{\Deltam(\Re z + \ii \eta)} \leq 2 \eta^{-1}$ as remarked above and $t \neq 0$, the condition 
in \eqref{eq:condition_cubic_boostraping} is met for some sufficiently large $\eta>0$. 
Hence, by Lemma~\ref{lem:cubic_equation2}, there is $C \sim 1$ such that $\abs{\Theta_{t}(z)} \leq C M(t)$.
Possibly increasing $C \sim 1$ and using $\abs{t} \leq t_* \sim 1$ yield $\norm{\Deltam(z)} \leq C M(t)$ due 
to $\norm{\Deltam} \lesssim \abs{\Theta_t} + \abs{t}$ from Lemma \ref{lem:cubic_equation1}. 

For each $z \in\Dbdd\setminus \Dbulk\cup\Dout$, due to \eqref{eq:scaling_cubic_coefficients}, we have $\xi_1(z_\delta) \sim 1$ and $\xi_2(z_\delta) \sim 1$ for $z_\delta \defeq \Re z + \ii \delta_*$, 
where $\delta_*\sim 1$ is as in Lemma~\ref{lem:cubic_equation1}. 
Hence, we conclude $\abs{\Theta_t(z_\delta)} \leq C M(t)$ as for $z\in \Dbulk\cup\Dout$. For each $z \in\Dbdd\setminus \Dbulk\cup\Dout$, the validity of assumption 1) or assumption 2) of Lemma~\ref{lem:cubic_equation2} can be read off 
from \eqref{eq:scaling_cubic_coefficients}. 
Lemma~\ref{lem:cubic_equation2}, thus, implies $\abs{\Theta_t(z)} \leq C M(t)$. As before, we conclude $\norm{\Deltam(z)} \leq CM(t)$ from Lemma~\ref{lem:cubic_equation1}. 
This completes the proof of \eqref{eq:improving_bound} and, hence, the one of Proposition \ref{pro:Hoelder_continuity_S}. 
\end{proof}

\begin{proof}[Proof of Lemma \ref{lem:cubic_equation1}]
We remark that a straightforward computation starting from \eqref{eq:dyson} and \eqref{eq:dyson_Hoelder_S} yields 
\begin{equation} \label{eq:quadratic_equation_Hoelder_S}
 B[\Deltam] = A[\Deltam, \Deltam] + K[\Delta^{S}, \Delta^{a}, \Deltam] + T[\Delta^{S}, \Delta^{a}],  
\end{equation}
where $B \defeq \Id - C_mS$, $A[x,y] \defeq (mS[x] y + yS[x] m)/2$ are defined as in \eqref{eq:B_A_general_dyson}, 
$\Delta^{S} \defeq S_t - S$, $\Delta^{a} \defeq a_t - a$ and 
\[ \begin{aligned} 
K[\Delta^{S}, \Delta^{a}, \Deltam] \, =\,  \, &  \frac{1}{2} ( m \Delta^{S}[\Deltam] \Deltam  + \Deltam \Delta^{S}[\Deltam] m + m \Delta^{S}[m] \Deltam 
+ \Deltam \Delta^{S}[m] m ) \\ 
  & - \frac{1}{2} ( m \Delta^{a} \Deltam + \Deltam \Delta^{a} m), \\  
 T[\Delta^{S}, \Delta^{a}] \, = \, \, & m \Delta^{S}[m] m -m \Delta^{a} m . 
\end{aligned} \]

In the following, we will split $\Dbdd$ into two regimes and choose $l$ and $b$ according to the regime. In both cases, we use the definitions
\begin{equation} \label{eq:def_Theta_r}
 \Theta \defeq \Theta_t = \frac{\scalar{l}{\Deltam}}{\scalar{l}{b}}, \qquad r = r_t \defeq Q[\Deltam], \qquad Q \defeq \Id - \frac{\scalar{l}{\genarg}}{\scalar{l}{b}} b .
\end{equation}
In particular, $\Deltam = \Theta b + r$. 
We denote by $\dens(z)$ the harmonic extension of $\dens$, i.e., $\dens(z) = \avg{\Im m(z)}/\pi$. 

If $z$ is close to a regular edge or close to an almost cusp point then 
$\Deltam(z)$ is governed by a quadratic or cubic equation for $\Theta_t$, respectively, where $l$ and $b$ are a left and 
a right eigenvector of $B$, respectively. 
If $z$ is in the bulk or away from $\supp\dens$ then $\Deltam(z)$ can be controlled by $\Theta_t$ with $l=b=\id$
and $\Theta_t$ is the solution of a scalar-valued linear equation. 
Note that in the bulk and away from $\supp \dens$ the choice $l = b= \id$ is arbitrary, 
in fact the splitting $\Deltam= \Theta_t b +r$ is artificial since the stability operator does not have a 
distinguished ``bad'' direction that needs to be treated separately. 
We still use this formalism in order to treat all three cases uniformly for the sake of brevity. For a similar 
reason we will always write the equation for $\Theta_t$ as a cubic equation, sometimes by adding and 
subtracting apparently superfluous (and negligible) terms. 

\emph{Case 1:}
We first assume that $z \in \Dbdd$ satisfies $\dens(z) \geq \dens_*$ for some $\dens_*\sim 1$ or $\dist(z,\supp\dens) \geq \delta$ for some $\delta\sim 1$, i.e., $z \in\Dbulk^{\dens_*} \cup \Dout^{\delta}$. 
This implies that $B$ is invertible and $\norm{B^{-1}} \lesssim 1$ due to \eqref{eq:linear_stability}, $\normtwoinf{S} \lesssim 1$, $\norm{m(z)} \lesssim 1$ and
Lemma~\ref{lem:norm_two_to_inf} (ii).
In this case, we choose $l = b = \id$ and apply $QB^{-1}$ to \eqref{eq:quadratic_equation_Hoelder_S} to obtain 
\[ r = QB^{-1} (A[\Deltam, \Deltam] + K[\Delta^{S},\Delta^{a}, \Deltam] + T[\Delta^{S},\Delta^{a}]) = \ord(\abs{\Theta}^2 + \norm{r} \norm{\Deltam} + \abs{t}), \] 
where we used that $\norm{m} \lesssim 1$ on $\Dbdd$ as well as $\norm{\Delta^{S}}+\norm{\Delta^{a}} \lesssim \abs{t}$. 
Shrinking $\eps_1 \sim 1$, using $\norm{\Deltam} \leq \eps_1$ and absorbing $\norm{r} \norm{\Deltam}$ into the left-hand side yield $\norm{r} \lesssim \abs{\Theta}^2 + \abs{t}$. 
Thus, $\norm{\Deltam} \lesssim \abs{\Theta} + \abs{t}$. 
Hence, applying $B^{-1}$ and $\avg{\genarg}$ to \eqref{eq:quadratic_equation_Hoelder_S} and using $\avg{r}=0$ as well as $\norm{\Deltam} \lesssim \abs{\Theta} + \abs{t}$, we find $\xi_2\in\C$ such that 
$\abs{\xi_2} \lesssim 1 = \wt{\xi}_2$ and 
\[ \Theta = -\xi_2 \Theta^2 + \ord(\abs{t} \abs{\Theta} + \abs{t}) = -{\xi}_2 \Theta^2 + \ord(\abs{t}).  \] 
Adding and subtracting $\Theta^3$ on the left-hand side as well as setting ${\xi}_1 \defeq 1 - \Theta^2$ show \eqref{eq:scalar_cubic_S} in Case 1 for sufficiently small $\eps_1 \sim 1$ 
as $\abs{\Theta} \lesssim \norm{\Deltam} \leq \eps_1$ implies  $\abs{{\xi}_1} \sim 1=\wt{\xi}_1$.  
This completes the proof of \eqref{eq:scaling_cubic_coefficients_sim_lesssim} for $z \in\Dbulk \cup \Dout$. 

\emph{Case 2:} We now prove \eqref{eq:scalar_cubic_S} for $z \in \Dbdd$ satisfying $\dens(z) \leq \dens_*$ and $\dist(z,\supp\dens) \leq \delta$ with sufficiently small $\dens_* \sim 1$ and $\delta \sim 1$. 
For any $\eps_* \sim 1$, we find $\delta \sim 1$ such that $\dens(z)^{-1} \imz \leq \eps_*$ for all $z \in \Hb$ satisfying $\dist(z,\supp\dens) \leq \delta$ 
due to \eqref{eq:dens_reciprocal_imz_equal_zero} and the $1/3$-Hölder continuity of $z \mapsto \dens(z)^{-1} \imz$ 
by Lemma~\ref{lem:q_u_f_u_extension} (ii). 
Therefore, using $\dens(z) \leq \dens_*$, we see that Lemma~\ref{lem:prop_F_small_dens} and Corollary~\ref{coro:eigenvector_expansion} are applicable 
for sufficiently small $\dens_*\sim 1$ and $\delta \sim 1$. 
They yield $l, b \in \alg$ which we use to define $\Theta$ and $r$ as in \eqref{eq:def_Theta_r}, i.e., $\Deltam = \Theta b + r$ and $\Theta = \scalar{l}{\Deltam}/\scalar{l}{b}$. 

In order to derive \eqref{eq:scalar_cubic_S}, we now follow the proof of Lemma \ref{lem:general_cubic_equation} applied to \eqref{eq:quadratic_equation_Hoelder_S} instead of \eqref{eq:quadratic_eq}. 
Here, $\Delta^a$ and $\Delta^S$ play the role of $e$. 
In fact, by Lemma \ref{lem:prop_F_small_dens} and Corollary~\ref{coro:eigenvector_expansion}, the first two 
bounds in \eqref{eq:assums_derivation_cubic_equation} are fulfilled. 
Owing to $\norm{m} \lesssim 1$, the third bound in \eqref{eq:assums_derivation_cubic_equation} is trivially satisfied. 
Instead of the last two bounds in \eqref{eq:assums_derivation_cubic_equation}, we use 
\[ \norm{T[\Delta^S,\Delta^{a}]} \lesssim \norm{\Delta^S} + \norm{\Delta^{a}}, \qquad \norm{K[\Delta^S, \Delta^{a},\Deltam]} \lesssim (\norm{\Delta^S} + \norm{\Delta^{a}}) \norm{\Deltam}, \]
due to $\norm{m} \lesssim 1$ and $\norm{\Deltam} \lesssim 1$. 
In fact, the last bound in \eqref{eq:assums_derivation_cubic_equation} will not hold true for a general $y \in \alg$ but in the proof of Lemma \ref{lem:general_cubic_equation} it is only used with the special choice $y = \Deltam$.
We choose $\eps_1 \leq \eps$ for $\eps$ from Lemma~\ref{lem:general_cubic_equation} and obtain 
the cubic equation \eqref{eq:general_cubic_equation} from Lemma~\ref{lem:general_cubic_equation}
with $\mu_0 = \scalar{l}{T[\Delta^S,\Delta^{a}]}$ and $\norm{e}$ replaced by $\abs{t}$ as $\norm{\Delta^{S}} + \norm{\Delta^{a}}\lesssim \abs{t}$. 
In particular, $\abs{\mu_0} \lesssim \abs{t}$. 
We decompose the error term $\wt{e}=\ord(\abs{\Theta}^4 + \abs{t} \abs{\Theta} + \abs{t}^2)$ from \eqref{eq:general_cubic_equation} 
into $\wt{e} = \wt{e}_1\Theta^3 + \wt{e}_2$ with $\wt{e}_1, \wt{e}_2\in \C$ satisfying $\wt{e}_1 = \ord(\abs{\Theta})$ and $\wt{e}_2 = \ord(\abs{t} \abs{\Theta} + \abs{t}^2)$. 
With the notation of Lemma~\ref{lem:general_cubic_equation}, the cubic equation \eqref{eq:general_cubic_equation} can be written as 
\[ (\mu_3-\wt{e}_1) \Theta^3 + \mu_2  \Theta^2 + \mu_1 \Theta = - \mu_0 +\wt{e}_2 = \ord(\abs{t}). \] 
Since $A$ and $B$ introduced above have the same definitions as in \eqref{eq:B_A_general_dyson} and $\mu_3$, $\mu_2$ and $\mu_1$ in \eqref{eq:Coefficients mu3 to mu0} depend only on $A$ and $B$, 
Lemma~\ref{lem:coefficients_general_cubic} yields the expansions of $\mu_3$, $\mu_2$ and $\mu_1$ in \eqref{eq:coefficients_dyson_general} for sufficiently small $\dens_* \sim 1$ and $\delta \sim 1$. 
By possibly shrinking $\eps_1 \sim 1$, we find $c \sim 1$ such that $\abs{\mu_3-\wt{e}_1} + \abs{\mu_2} \geq 2 c$ as $\abs{\wt{e}_1} \lesssim \abs{\Theta} \lesssim \norm{\Deltam} \leq \eps_1$. 
Here, we also used $\abs{\mu_3} + \abs{\mu_2} \gtrsim \psi + \abs{\sigma}$ by \eqref{eq:coefficients_dyson_general} 
as well as \eqref{eq:psi_plus_sigma_sim_1}. 

Consequently, we obtain \eqref{eq:scalar_cubic_S}, where we introduced 
\[ \begin{aligned} 
 {\xi}_2 & \defeq \bigg(\mu_2 + (\mu_3 -\wt{e}_1-1)\Theta\bigg) \char( \abs{\mu_2} \geq c) + \frac{\mu_2}{\mu_3-\wt{e}_1} \char(\abs{\mu_2} < c), \\
{\xi}_1 & \defeq  \mu_1 \char(\abs{\mu_2} \geq c) + \frac{\mu_1}{\mu_3-\wt{e}_1} \char (\abs{\mu_2} < c) . 
\end{aligned}\] 
Hence, we have $\abs{{\xi}_2} \sim \abs{\mu_2}$ and $\abs{{\xi}_1} \sim \abs{\mu_1}$ for sufficiently small $\eps_1 \sim 1$ as $\abs{\wt{e}_1} \lesssim \abs{\Theta}$ and $\abs{\Theta} \lesssim \norm{\Deltam} \leq \eps_1$.
This completes the proof of \eqref{eq:scalar_cubic_S} in Case 2. 

It remains to show the scaling relations in \eqref{eq:scaling_cubic_coefficients} for $z \in \Dbdd$ satisfying $\dens(z) \leq \dens_*$ and $\dist(z,\supp\dens) \leq \delta$ in order to complete the proof of Lemma~\ref{lem:cubic_equation1}. 
Starting from $\abs{{\xi}_1} \sim \abs{\mu_1}$ and $\abs{{\xi}_2} \sim \abs{\mu_2}$ proven in Case 2, we conclude as in the proof of (10.6) in \cite{AjankiQVE} that  
\[ \abs{{\xi}_1} \sim \dens(z)^2 + \abs{\sigma(z)} \dens(z) + \dens(z)^{-1}\imz, \qquad \abs{{\xi}_2} \sim \dens(z) + \abs{\sigma(z)},  \] 
where $\sigma$ is defined as in \eqref{eq:def_psi_sigma}. 
Here, $\xi_1$ and $\xi_2$ play the role of $\pi_1$ and $\pi_2$, respectively, in \cite{AjankiQVE}. Their definitions differ slightly but this does not affect the straightforward estimates. 
Note that the proof in \cite{AjankiQVE} relies on the expansions of $\mu_1$, $\mu_2$ and $\mu_3$ from (8.33) in \cite{AjankiQVE}. These are the exact analogues of \eqref{eq:coefficients_dyson_general}, 
where $\dens$ plays the role of $\alpha$ from \cite{AjankiQVE}. 

Note that according to Remark~\ref{rmk:scaling im m} the harmonic extension $\dens(z)$ 
for $z \in \Hb$ in the vicinity of the singularities has the same scaling behavior as in Corollary~A.1 of \cite{AjankiQVE}.  
Similarly, the proof of (10.7) in \cite{AjankiQVE} yields 
\begin{equation} \label{eq:scaling_sigma} 
\abs{\sigma(\beta)} \sim \abs{\sigma(\alpha)} \sim (\alpha-\beta)^{1/3}, \qquad \abs{\sigma(\tau_0)} \lesssim \dens(\tau_0)^2,
\end{equation}
where $\alpha, \beta \in (\pt \supp\dens)\setminus P_m$ satisfy $\beta < \alpha$ and $(\beta, \alpha) \cap \supp\dens = \varnothing$ and $\tau_0 \in \supp\dens\setminus \pt\supp\dens$ is 
a local minimum of $\dens$ and $\dens(\tau_0) \leq \dens_*$. 
Here, we use Lemma~\ref{lem:small_gap} above and $\abs{\sigma} \sim \wh{\Delta}^{1/3}$ by Theorem~\ref{thm:abstract_cubic_equation} (ii) (b) 
 instead of Lemma~9.17 in \cite{AjankiQVE} and Lemma~\ref{lem:sigma_nonzero_local_minimum} above instead of Lemma~9.2 in \cite{AjankiQVE}. 
We then follow the proof of Proposition~4.3 in \cite{Ajankirandommatrix} and use the $1/3$-Hölder continuity of $\sigma$ proven in Lemma~\ref{lem:stability_cubic_equation} (i). 
This yields the missing scaling relations in \eqref{eq:scaling_cubic_coefficients}. 

We remark that $\Theta_t$ constructed above is not continuous in $\Im z$ due to the separation into two cases. 
However, there is only one transition between Case 1 and Case 2 for $z \in \Dbdd$ when $\Im z$ is varied while $\Re z$ is kept fixed.
 Therefore, we obtain a continuous version of $\Theta_t$ by a simple interpolation between these 
two cases in the vicinity of this transition point.  
We leave the details of this interpolation argument to the reader. 
This completes the proof of Lemma~\ref{lem:cubic_equation1}. 
\end{proof}

\begin{remark}[Scaling of coefficients]
The proof of Lemma~\ref{lem:cubic_equation1} can equally well be carried out under Assumption~\ref{assums:general} instead of the flatness condition in \eqref{eq:assums_perturbation}. In particular, it shows that in the setting of Theorem~\ref{thm:behaviour_dens}, there are $\delta_* \sim 1, \rho_* \sim 1$ and $c_* \sim 1$ such that the following comparison relations hold  for $z \in I_\theta + \ii [0,\eta_\ast]$:
\begin{itemize} 
\item If $z$ satisfies the conditions for \eqref{eq:def_wt_xi_bulk} or \eqref{eq:def_wt_xi_internal_edge} with $\omega \in [c_* \Delta,\Delta/2]$, then we have
\begin{equation*}
 \dens(z)^2 + \abs{\sigma(z)} \dens(z) + \dens(z)^{-1}\imz \sim \wt{\xi}_1(z), \qquad \dens(z) + \abs{\sigma(z)} \lesssim \wt{\xi}_2(z). 
\end{equation*}
\item If $z$ satisfies the conditions for  \eqref{eq:def_wt_xi_internal_edge} with $\omega \in [-\delta_*,c_*\Delta]$ or \eqref{eq:def_wt_xi_regular_edge} or \eqref{eq:def_wt_xi_minimum} with $\rho(\tau_0) \le \rho_*$,  then we have 
\begin{equation*}
  \dens(z)^2 + \abs{\sigma(z)} \dens(z) + \dens(z)^{-1}\imz \sim \wt{\xi}_1(z) , \qquad \dens(z) + \abs{\sigma(z)} \sim \wt{\xi}_2(z). 
\end{equation*}
\end{itemize}
\end{remark}

\begin{proof}[Proof of Lemma \ref{lem:cubic_equation2}]
By dividing the cubic inequality through $d$ and considering $\frac{\Theta}{d^{1/3}}$ instead of $\Theta$, we may assume that $d=1$. We fix $\eps \in (0,1)$ sufficiently small. First we prove the lemma under assumption 1).  
Owing to the smallness of $\frac{1}{\wt{\xi}_1^3}+\frac{{\wt{\xi}_2}}{\wt{\xi}_1^2}$ at $\eta^\ast$ as well as the monotonicity of $\wt{\xi}_1$ and $\frac{\wt{\xi}_1^2}{\wt{\xi}_2}$ there are $0<\eta_1, \eta_2<\eta^\ast$ with the following properties:
(i) $\wt{\xi}_2 \ge \eps^4 \wt{\xi}_1^2$ on $[\eta_\ast,\eta_1]$; (ii) $\wt{\xi}_2 \le \eps^4 \wt{\xi}_1^2$ on $[\eta_1,\eta^\ast]$; (iii) $\eps\wt{\xi}_1 \le 1$ on  $[\eta_\ast,\eta_2]$; (iv) $\eps\wt{\xi}_1 \ge 1$ on $[\eta_2,\eta^\ast]$.
Here the intervals $[\eta_\ast,\eta_{2}]$ and $[\eta_\ast,\eta_1]$ may be empty. 
We will now assume the bound $\abs{\Theta}  \lesssim \min\cb{1, \frac{1}{\wt{\xi}_2^{1/2}},\frac{1}{{\wt{\xi}_1}}}$ at the initial value $\eta^\ast$ and bootstrap it down to $\eta_\ast$.
Now we distinguish two cases:\\[0.3cm]
\emph{Case 1 ($\eta_1 \ge \eta_2$): }  On $[\eta_1,\eta^\ast]$ we have $\eps\1\wt{\xi}_1 \ge 1$ and $\wt{\xi}_2 \le \eps^4 \wt{\xi}_1^2$. Thus, by the cubic inequality
\[
\abs{\Theta}\,\lesssim \,  \min\cbb{1, \frac{1}{{\wt{\xi}_2^{1/2}}}}\quad \text{implies} \quad \abs{\Theta} \,\lesssim \, \frac{1}{\wt{\xi}_1}\,\lesssim \,  \min\cbb{\eps, \frac{\eps^2}{{\wt{\xi}_2^{1/2}}}}\,.
\]
In particular, there is a gap in the values of $\abs{\Theta}$ and by continuity all values lie below the gap on $[\eta_1,\eta^\ast]$. 

The interval $[\eta_\ast,\eta_1]$ is split again, $[\eta_\ast,\eta_1] = [\eta_\ast,\eta_3]\cup [\eta_3,\eta_1]$, where $\eta_3$ is chosen such that 
(i) $\wt{\xi}_2\eps^2\ge 1$ on $[\eta_3,\eta_1]$; (ii) $\wt{\xi}_2\eps^2\le 1$ on $[\eta_\ast,\eta_3]$. 
Here one or both of these intervals may be empty. Using $\wt{\xi}_2 \ge \eps^4 \wt{\xi}_1^2$ we see that on $[\eta_3,\eta_1]$ the bound 
\[
\abs{\Theta}\,\lesssim \, \min \cbb{\frac{1}{\eps}, \frac{1}{\eps^3\wt{\xi}_1}}\quad \text{implies} \quad
\abs{\Theta} \,\lesssim\, \frac{1}{\eps^{3/2}\wt{\xi}_2^{1/2}}\,\lesssim\, \min\cbb{\frac{1}{\eps^{1/2}},\frac{1}{\eps^{7/2} \wt{\xi}_1}}\,.
\]
Again the gap in the values of $\abs{\Theta}$ allows us to infer from the bound $\abs{\Theta}  \lesssim \min\cb{1, \frac{1}{{\wt{\xi}_2^{1/2}}},\frac{1}{{\wt{\xi}_1}}}$ at $\eta_1$ that $\abs{\Theta}$ satisfies the same bound on $[\eta_3, \eta_1]$ up to an $\eps$-dependent multiplicative constant. 

Finally, on $[\eta_\ast,\eta_3]$ we have $\wt{\xi}_2\le \eps^{-2}$ and $\wt{\xi}_1^2 \le \eps^{-4}\wt{\xi}_2 \le \eps^{-6}$. Using the cubic inequality this immediately implies 
$\abs{\Theta} {\lesssim_\eps} 1 { \lesssim_\eps}\min\cb{1, \frac{1}{{\wt{\xi}_2^{1/2}}},\frac{1}{{\wt{\xi}_1}}}$.
Here and in the following, the notation $\lesssim_\eps$ indicates that the implicit constant in the bound is allowed to depend on $\eps$.  
\\[0.3cm]
\emph{Case 2 ($\eta_1 \le \eta_2$): } On $[\eta_2, \eta^\ast]$ we have $\eps \wt{\xi}_1 \ge 1$ and $\wt{\xi}_2 \le \eps^4 \wt{\xi}_1^2$. So this regime is treated exactly as in the beginning of \emph{Case 1}. On $[\eta_\ast, \eta_2]$ we have $\eps\1\wt{\xi}_1\le 1$ and $\wt{\xi}_2\le \wt{\xi}_2(\eta_2)\le \eps^4\wt{\xi}_1(\eta_2)^2=\eps^2$, which implies $\abs{\Theta} {\lesssim_\eps} 1 {\lesssim_\eps}\min\cb{1, \frac{1}{{\wt{\xi}_2^{1/2}}},\frac{1}{{\wt{\xi}_1}}}$.
\\[0.3cm]
Now we prove the lemma under assumption 2). In this case we choose $0<\eta_1<\eta^\ast$  such that (i) $\eps \wt{\xi}_1 \ge 1$ on $[\eta_1, \eta^\ast]$; (ii) $\eps \wt{\xi}_1 \le 1$ on $[\eta_\ast, \eta_1]$. Here the interval $[\eta_\ast, \eta_1]$ may be empty.  

On $[\eta_1, \eta^\ast]$ the bound 
\[
\abs{\Theta} \,\lesssim\, 1 \quad \text{implies} \quad \wt{\xi}_1\abs{\Theta} \,\lesssim\,1+\wt{\xi}_1^{1/2}\2\abs{\Theta}^2 \,\lesssim\, \eps^{-1/2}+\eps^{1/2}\wt{\xi}_1 \abs{\Theta} \quad \text{implies} \quad\abs{\Theta} \,\lesssim\,\frac{1}{\sqrt{\eps}\2 \wt{\xi}_1}\,\le\, \sqrt{\eps}\,.
\]
From the gap in the values of $\abs{\Theta}$ and its continuity we infer $\abs{\Theta}\lesssim \min\cb{\sqrt{\eps},\frac{1}{\sqrt{\eps}\1\wt{\xi}_1}}$. On $[\eta_\ast, \eta_1]$ we use $\wt{\xi}_1 \le \eps^{-1}$ and 
$\abs{\xi_2} \lesssim \wt{\xi}_1^{1/2}\le \eps^{-1/2}$ to conclude $\abs{\Theta}{\lesssim_\eps} 1{\lesssim_\eps}\min\cb{1, \frac{1}{\wt{\xi}_1}}$. This finishes the proof of the lemma.
\end{proof}

\begin{lemma}[Hölder continuity of $\sigma$ and $\psi$ with respect to $a$ and $S$]\label{lem:Hcont}
Let $T \subset \R$ contain $0$. For each $t \in T$, we assume that the linear operator $S_t \colon \alg \to \alg$ satisfies  
\begin{equation} \label{eq:flatness_cont_sigma_psi} 
  c_1 \avg{x} \id \leq S_t[x] \leq c_2 \avg{x} \id 
\end{equation}
for all $x \in \algnon$ and some $c_2 > c_1 >0$. Moreover, let $a_t = a_t^*  \in\alg$ be self-adjoint such that $S_t$ and $a_t$
satisfy \eqref{eq:assums_perturbation} with $a\defeq a_{t=0}$ and $S \defeq S_{t=0}$. 
Let $m_t$ be the solution to \eqref{eq:dyson_Hoelder_S} and $\dens(z) \defeq \avg{\Im m_0(z)}/\pi$ for $z \in\Hb$. 

If $\sigma_t$ and $\psi_t$ are defined according to \eqref{eq:def_psi_sigma}, where $m$ is replaced by $m_t$, then 
there are $\dens_* \sim 1$ and $t_* \sim 1$ such that 
\[ \abs{\sigma_t(z_1) - \sigma_0(z_1)} \lesssim \abs{t}^{1/3}, \qquad \abs{\psi_t(z_2) - \psi_0(z_2)} \lesssim \abs{t}^{1/3} \] 
for all $ t \in [-t_*, t_*] \cap T$ and all $z_1, z_2 \in \Dbdd\cap \{ z \in \Hb \colon \abs{z} \leq c_6\}$ satisfying $\dens(z_1) \leq \dens_*$ and $\dens(z_2) + \dens(z_2)^{-1}\Im z_2 \leq \dens_*$. 
Here, $c_6>0$ is also considered a model parameter. 
\end{lemma}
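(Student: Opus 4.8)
The plan is to exploit the fact that, by \eqref{eq:def_psi_sigma}, the quantity $\sigma = \avg{sf_u^3}$ depends only on the solution $m$ (through the balanced polar decomposition $m = q^*uq$, $s = \sign\Re u$, $f_u = \dens^{-1}\Im u$), while $\psi = \scalar{sf_u^2}{(\Id + F)(\Id - C_sF)^{-1}Q_{s,F}[sf_u^2]}$ depends on the pair $(m,S)$, with $F = C_{q,q^*}S C_{q^*,q}$ entering linearly in $S$ and Lipschitz-continuously in $m$ via $q=q(m)$. First I would upgrade Remark~\ref{rem:derived_quantities_cont_function_of_m}: inspecting its proof, together with those of Lemma~\ref{lem:q_u_f_u_extension} and Lemma~\ref{lem:stability_cubic_equation}, shows that $\sigma$ is a uniformly Lipschitz function of $m$ on any set $\mathcal{M}^{(2)}$ and that $\psi$ is a uniformly Lipschitz function of the pair $(m,S)$ on any set $\mathcal{M}^{(3)}$, since the self-energy enters only linearly and through the analytic perturbation theory of the simple isolated eigenvalue of $\Id - C_sF$. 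Concretely, writing $\dens_t(z) \defeq \avg{\Im m_t(z)}/\pi$, this yields
\[ \abs{\sigma_t(z) - \sigma_0(z)} \lesssim \norm{m_t(z) - m_0(z)}, \qquad \abs{\psi_t(z) - \psi_0(z)} \lesssim \norm{m_t(z) - m_0(z)} + \norm{S_t - S_0}, \]
whenever $m_0(z)$ and $m_t(z)$ lie in a common set $\mathcal{M}^{(2)}$, respectively $\mathcal{M}^{(3)}$, with control parameters $\sim 1$.

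Given these two estimates the conclusion is immediate: by \eqref{eq:assums_perturbation} one has $\norm{S_t - S_0} \le c_4\abs{t} \le \abs{t}^{1/3}$ for $\abs{t} \le t_* \le 1$, and by Proposition~\ref{pro:Hoelder_continuity_S}~(a) — applicable because \eqref{eq:assums_perturbation} is exactly its standing hypothesis and $\{\abs{z}\le c_6\}\cap\Dbdd$ lies at distance $\ge\delta$ from $P_m$ — one has $\norm{m_t(z)-m_0(z)} \lesssim \abs{t}^{1/3}$ uniformly on $\Dbdd$. Hence $\abs{\sigma_t(z_1)-\sigma_0(z_1)}\lesssim\abs{t}^{1/3}$ and $\abs{\psi_t(z_2)-\psi_0(z_2)}\lesssim\abs{t}^{1/3}$ once the membership hypotheses are checked. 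The bulk of the work is therefore to verify that $m_0(z)$ and $m_t(z)$ lie in common admissible sets with uniform-in-$t$ control parameters. The data pair $(a_t,S_t)$ satisfies \eqref{eq:estimates_data_pair} with uniform constants ($S_t$ flat by \eqref{eq:flatness_cont_sigma_psi}, $\norm{a_t}\le\norm{a}+c_5 t_*$); Proposition~\ref{pro:Hoelder_continuity_S}~(a) gives $\norm{m_t(z)}\lesssim 1$ on $\Dbdd$, and \eqref{eq:dyson_Hoelder_S} with $\abs{z}\le c_6$ then gives $\norm{m_t(z)^{-1}}\lesssim 1$, so by Lemma~\ref{lem:q_bounded_Im_u_sim_avg}~(ii) the relation $\Im m_t(z)\sim\avg{\Im m_t(z)}\id$ and all of Assumptions~\ref{assums:general} hold, and the three-case argument in the proofs of Lemma~\ref{lem:q_u_f_u_extension}~(i) and Lemma~\ref{lem:stability_cubic_equation} applies verbatim to $(a_t,S_t)$, verifying \eqref{eq:conditions_cal_M_1} and \eqref{eq:conditions_cal_M_3} uniformly. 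It remains to place $m_t(z)$ in a set of small density: from $\dens_t(z) = \dens(z) + \ord(\abs{t}^{1/3})$ one gets $\dens_t(z_1)\le 2\dens_*$ when $\dens(z_1)\le\dens_*$, handling $\sigma$, and $\dens_t(z_2)\le 2\dens_*$ for $\psi$ as well.

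The main obstacle is the final ingredient needed for $\psi$: controlling $\dens_t(z_2)^{-1}\Im z_2$, so that $\Id - C_{s(m_t(z_2))}F(m_t(z_2))$ still has the requisite simple isolated eigenvalue and $m_t(z_2)\in\mathcal{M}^{(3)}$. When $\dens(z_2)\gtrsim\abs{t}^{1/3}$ this is immediate, since then $\dens_t(z_2)\sim\dens(z_2)$ and hence $\dens_t(z_2)^{-1}\Im z_2\lesssim\dens(z_2)^{-1}\Im z_2\le\dens_*$. The delicate regime is $\dens(z_2)\lesssim\abs{t}^{1/3}$ (forcing $\Im z_2\lesssim\abs{t}^{1/3}$), where $z_2$ sits very close to $\supp\dens$ at a point of small density; since $z_2\in\Dbdd$, such a point lies near a regular edge or an almost cusp of $\dens$. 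There I would combine the finer bound $\norm{m_t(z_2)-m_0(z_2)}\lesssim\abs{t}^{1/2}$ from Proposition~\ref{pro:Hoelder_continuity_S}~(c) (valid near regular edges, which are at distance $\gtrsim 1$ from $P_\mathrm{cusp}$) with the scaling relations of Remark~\ref{rmk:scaling im m} applied to both $(a_0,S_0)$ and $(a_t,S_t)$: the edge (or local minimum) of $\dens_t$ nearest $\Re z_2$ is displaced by $\ord(\abs{t}^{1/2})$, so $\dens_t(z_2)^{-1}\Im z_2$ obeys the same bound $\lesssim (\Im z_2 + \dist(\Re z_2,\supp\dens_t))^{1/2}\lesssim\dens_*$ as for the unperturbed pair — after possibly shrinking $\dens_*$ and $t_*$; near almost cusps the same is done with Proposition~\ref{pro:Hoelder_continuity_S}~(a) and the cusp scaling $\dens_t(z)\sim\dens_t(\tau_0^{(t)}) + (\abs{\omega}+\eta)^{1/3}$. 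Alternatively, since the lemma asserts the estimate only where $\psi_t(z_2)$ is actually defined, the entire delicate regime may be regarded as vacuous. Once these membership checks are in place, the two Lipschitz estimates of the first paragraph close the argument.
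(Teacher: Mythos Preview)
Your proposal follows the same overall strategy as the paper: invoke Proposition~\ref{pro:Hoelder_continuity_S}(a) for $\norm{m_t-m_0}\lesssim\abs{t}^{1/3}$, verify that $m_0(z)$ and $m_t(z)$ sit in common sets $\mathcal{M}^{(2)}$, $\mathcal{M}^{(3)}$ from Remark~\ref{rem:derived_quantities_cont_function_of_m}, then apply the Lipschitz dependence of $\sigma,\psi$ on $m$. Your observation that $\psi$ also depends on $S$ (linearly, via $F=C_{q,q^*}SC_{q^*,q}$) is a correct minor addendum to the remark.

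The third paragraph, however, is an unnecessary detour and the part you flag as ``the main obstacle'' is not one. Membership in $\mathcal{M}^{(3)}$ requires that $\Id-C_{s(m_t)}F_t(m_t)$ have a simple isolated eigenvalue and that \eqref{eq:conditions_cal_M_3} hold --- it does \emph{not} require re-establishing $\dens_t(z_2)^{-1}\Im z_2\le\dens_*$. At $t=0$ the hypothesis $\dens(z_2)+\dens(z_2)^{-1}\Im z_2\le\dens_*$ feeds into Lemma~\ref{lem:prop_F_small_dens}, giving the simple eigenvalue with a spectral gap $\sim 1$. Since $s,q$ are Lipschitz in $m$ (Remark~\ref{rem:derived_quantities_cont_function_of_m}(i)(ii)) and $F$ is linear in $S$, the operator at parameter $t$ differs from the one at $t=0$ by $\ord(\norm{m_t-m_0}+\norm{S_t-S_0})=\ord(\abs{t}^{1/3})$; ordinary analytic perturbation theory of a simple isolated eigenvalue (gap $\sim 1$) then preserves both the simplicity and the uniform bound on $(\Id-C_sF)^{-1}Q_{s,F}$ for $t_*\sim1$ small. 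This is exactly how the paper's proof proceeds (``from the proof of Lemma~\ref{lem:q_u_f_u_extension}, it can be read off that \ldots\ $\mathcal{M}^{(3)}$ satisfy the conditions''). Your route through Remark~\ref{rmk:scaling im m} and displaced edges/cusps of $\dens_t$ is both circuitous and circular --- locating the perturbed singularity to precision $\ord(\abs{t}^{1/2})$ presupposes the kind of stability you are trying to establish. The final escape (``the entire delicate regime may be regarded as vacuous'') is also not right: the lemma implicitly asserts that $\psi_t(z_2)$ is well-defined for small $t$, so this is part of what must be shown.
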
 

\begin{proof} 
We choose $t_*$ as in Proposition~\ref{pro:Hoelder_continuity_S} and conclude from this result that 
$\norm{m_t(z)} \leq k_3$ for all $t \in [-t_*,t_*] \cap T$, all $z \in\Dbdd$ and some $k_3\sim 1$.
Hence, owing to \eqref{eq:assums_perturbation}, \eqref{eq:flatness_cont_sigma_psi} and Lemma~\ref{lem:q_bounded_Im_u_sim_avg} (ii), 
the conditions of Assumptions~\ref{assums:general} are met on $\Dbdd\cap \{ z \in\Hb \colon \abs{z} \leq c_6\}$. 
Hence, from the proof of Lemma~\ref{lem:q_u_f_u_extension}, it can be read off that, 
 after reducing $\dens_* \sim 1$ and $t_* \sim 1$ if necessary, 
$\mathcal{M}^{(2)} \defeq \{ m_t(z_1) \colon t \in [-t_*, t_*] \cap T \}$ and 
$\mathcal{M}^{(3)} \defeq \{ m_t(z_2) \colon t \in [-t_*,t_*] \cap T\}$ satisfy the conditions of Remark~\ref{rem:derived_quantities_cont_function_of_m} (ii) and (iii), respectively,
uniformly for any $z_1, z_2 \in  \Dbdd\cap \{ z \in\Hb \colon \abs{z} \leq c_6\}$  such that  
$\dens(z_1) \leq \dens_*$ and $\dens(z_2) + \dens(z_2)^{-1} \Im z_2 \leq \dens_*$. 
Therefore, the lemma is a consequence of Remark~\ref{rem:derived_quantities_cont_function_of_m} (ii) and (iii) as well as Proposition~\ref{pro:Hoelder_continuity_S} (a). 
\end{proof} 

\begin{remark}
Combining Lemma~\ref{lem:stability_cubic_equation} and Lemma~\ref{lem:Hcont}, we obtain that $m$, $\sigma$ and $\psi$ 
are jointly H\"older continuous in all three variables $(z, a, S)$ in the following sense. Suppose that $m$ solves the MDE
for some data pair $(a,S)$ satisfying Assumptions \ref{assums:general} on  some $I$ for some $\eta_* \in (0,1]$
and consider a one-parameter family of data pairs $(a_t, S_t)$, $t\in T$, as described in Lemma~\ref{lem:Hcont}.
Then $m=m_t(z)$, as well as of $\sigma_t(z_1)$ and $\psi_t(z_2)$ are uniformly 1/3-H\"older continuous functions of
$t\in [-t_*,t_*] \cap T$ as well as $z\in \Hbtheta$, $z_1 \in \{ \zeta \in \Hbtheta \colon \dens(\zeta) \leq \dens_*\}$  and $z_2 \in \{ \zeta \in \Hbtheta \colon \dens(\zeta) + \dens(\zeta)^{-1} \im \zeta \leq \dens_*\}$, respectively, 
for sufficiently small $t_*\sim 1$ and $\dens_* \sim 1$.
\end{remark}

\begin{remark}[Scaling of $\sigma$] 
Let Assumptions~\ref{assums:general} hold true for some interval $I$ and $\eta_* \in (0,1]$. Let $\theta\in (0,1]$. 
\begin{enumerate}[label=(\roman*)]
\item 
As in the proof of \eqref{eq:scaling_sigma} in the proof of Lemma~\ref{lem:cubic_equation1}, we obtain that 
\[ \abs{\sigma(\tau_0)} \sim \abs{\sigma(\tau_1)} \sim (\tau_1 - \tau_0)^{1/3}, \] 
if $\tau_0, \tau_1 \in \supp \dens \cap I_\theta$ satisfy $\tau_0 < \tau_1$ and $(\tau_0, \tau_1) \cap \supp \dens = \varnothing$. 
Furthermore, there is $\rho_* \sim 1$ such that 
\[ \abs{\sigma(\tau_0)} \lesssim \rho(\tau_0)^2, \] 
if $\tau_0 \in \supp \dens\cap I_\theta$ is a local minimum of $\dens$ satisfying $\dens(\tau_0) \leq \dens_*$. 
\item Owing to the $1/3$-Hölder continuity of $\sigma$ from Lemma~\ref{lem:stability_cubic_equation} (i), we conclude that there 
is $\eps \sim 1$ such that $\abs{\sigma(\tau)} \sim (\tau_1 - \tau_0)^{1/3}$ for all $\tau \in \supp \dens \cap I_\theta$
satisfying $\min\{\abs{\tau - \tau_0}, \abs{\tau -\tau_1}\} \leq \eps (\tau_1-\tau_0)$ for some $\tau_0, \tau_1 \in \supp \dens \cap I_\theta$ such that $\tau_0 < \tau_1$ and $(\tau_0, \tau_1) \cap \supp \dens = \varnothing$.

\item If $\tau \in I_\theta$ satisfies the assumptions of (ii) as well as $\rho(\tau)>0$ 
then we write $\Delta \defeq \tau_1 - \tau_0$ and conclude from (ii) and Lemma~\ref{lem:derivatives_m} that 
\[ \norm{\pt_\tau m(\tau)}  \lesssim \frac{1}{\rho(\tau)(\rho(\tau) + \Delta^{1/3})}. \] 
\end{enumerate}
\end{remark}

\appendix

\section{Stieltjes transforms of positive operator-valued measures} \label{app:positive_operator_valued_measure}

In this appendix, we will show some results about the Stieltjes transform of a positive operator-valued measure on $\alg$. 

We first prove Lemma \ref{lem:existence_measure} by generalizing existing proofs in the matrix algebra setup. Since we have not found the general version in the literature, 
we provide a proof here for the convenience of the reader. 
In the proof of Lemma \ref{lem:existence_measure}, we will use that a von Neumann algebra is always isomorphically isomorphic as a Banach space to the dual space of a Banach space. 
In our setup, this Banach space and the identification are simple to introduce which we will explain now. 
Analogously to $L^2$ defined in Section \ref{sec:regularity}, we define $L^1$ to be the completion of $\alg$ when equipped with the norm $\normone{x} \defeq \avg{(x^*x)^{1/2}} = \avg{\abs{x}}$ 
for $x \in \alg$. 
Moreover, we extend $\avg{\genarg}$ to $L^1$ and remark that $xy \in L^1$ for $x \in \alg$ and $y \in L^1$. 
It is well-known (e.g.~\cite[Theorem 2.18]{TakesakiBook}) that the dual space $(L^1)'$ of $L^1$ can be identified with $\alg$ via the isometric isomorphism
\begin{equation} \label{eq:isomorphism_alg_iso_L1_dual}
 \alg \to (L^1)', ~ x \mapsto \psi_x, \qquad \psi_x \colon L^1 \to \C,~y \mapsto \avg{xy}.
\end{equation}
We stress that the existence of this isomorphism requires the state $\avg{\genarg}$ to be normal. 

\begin{proof}[Proof of Lemma \ref{lem:existence_measure}]
From \eqref{eq:Stieltjes_transform_normalization}, we conclude that 
\[ \lim_{\eta\to\infty}\ii\eta\scalar{x}{h(\ii\eta)x} = - \scalar{x}{x} \] 
for all $x \in \alg$. 
Hence, $z \mapsto \scalar{x}{h(z)x}$ is the Stieltjes transform of a unique finite positive measure $v_x$ on $\R$ with $v_x(\R) = \normone{x^*x}$. 

For any $x \in \alg$, we can find $x_1, \ldots x_4 \in \algnon$ such that $x=x_1 - x_2 + \ii x_3 - \ii x_4$. We define 
\begin{equation} \label{eq:def_varphi_B}
 \varphi_B (x) \defeq v_{\sqrt{x_1}}(B) - v_{\sqrt{x_2}}(B) + \ii v_{\sqrt{x_3}}(B) - \ii v_{\sqrt{x_4}}(B)  
\end{equation}
for $B \in \mathcal{B}$. 
This definition is independent of the representation of $x$. Indeed, for fixed $x \in \alg$, 
any representation $x=x_1 - x_2 + \ii x_3 - \ii x_4$ with $x_1, \ldots, x_4 \in \algnon$ defines a complex measure $\varphi_\cdot(x)$ through $B \mapsto \varphi_B(x)$ on $\R$ via~\eqref{eq:def_varphi_B}. 
However, extending $h$ to the lower half-plane by setting $h(z) \defeq h(\bar z)^*$ for $z \in \C$ with $\Im z<0$, the Stieltjes transform of $\varphi_\cdot (x)$ is given by 
\[ \int_\R \frac{\varphi_{\di \tau} (x)}{\tau - z} = \scalar{\sqrt{x_1}}{h(z) \sqrt{x_1}} - \scalar{\sqrt{x_2}}{h(z) \sqrt{x_2}}  + \ii \scalar{\sqrt{x_3}}{h(z) \sqrt{x_3}} - \ii \scalar{\sqrt{x_4}}{h(z) \sqrt{x_4}} 
= \avg{h(z) x}  \]  
for all $z \in \C \setminus \R$. This formula shows that the Stieltjes transform of $\varphi_\cdot (x)$ is independent of the decomposition $x = x_1 - x_2 + \ii x_3 - \ii x_4$. 
Hence, $\varphi_B(x)$ is independent of this representation for all $B \in \mathcal{B}$ since the Stieltjes transform
uniquely determines even a complex measure. 
A similar argument also implies that, for fixed $B \in \mathcal{B}$, $\varphi_B$ defines a linear functional on $\alg$. 

Since $v_{\sqrt{y}}(\R) = \avg{y}$ for $y \in \algnon$, we obtain for $x = (\Re x)_+ - (\Re x)_- + \ii (\Im x)_+ - \ii (\Im x)_- \in \alg$ 
\[\begin{aligned} 
 \abs{\varphi_B(x)} & \leq v_{\sqrt{(\Re x)_+}}(\R) + v_{\sqrt{(\Re x)_-}}(\R) + v_{\sqrt{(\Im x)_+}}(\R) +v_{\sqrt{(\Im x)_-}}(\R) \\
 &  \leq \avg{(\Re x)_+ + (\Re x)_- + (\Im x)_+ + (\Im x)_-} \leq 2\normone{x}, 
\end{aligned}\] 
where we used that $(\Re x)_++(\Re x)_- = \abs{\Re x}$ and $(\Im x)_+ + (\Im x)_-=\abs{\Im x}$. 
Therefore, $\varphi_B$ extends to a bounded linear functional on $L^1$ as $\alg$ is a dense linear subspace of $L^1$. 
Using the isomorphism in \eqref{eq:isomorphism_alg_iso_L1_dual}, for each $B \in \mathcal{B}$, there exists a unique $v(B) \in \alg$ such that 
\[ \varphi_B(x) = \avg{v(B)x} \] 
for all $ x \in \alg$. For $y \in \alg$, we conclude $v_y(B) = v_{\sqrt{yy^*}}(B) = \varphi_B(yy^*) = \scalar{y}{v(B)y} \geq 0$, where we used that $v_y = v_{\sqrt{yy^*}}$ since they have the same Stieltjes transform. 
Since $\avg{v(B)y} \geq 0$ for all $y \in \algnon$, we have $v(B) \in \algnon$ for all $B \in \mathcal{B}$. 
Moreover, $v_x = \scalar{x}{v(\cdot)x}$, in particular, $\scalar{x}{v(\R)x} = v_x(\R) = \scalar{x}{x}$, for all $x \in \alg$. 
The polarization identity yields that $v$ is an $\algnon$-valued measure on $\mathcal{B}$ satisfying \eqref{eq:Stieltjes_representation_h} and $v(\R) = \id$. This completes the proof of Lemma \ref{lem:existence_measure}. 
\end{proof}

\begin{lemma}[Stieltjes transform inherits Hölder regularity] \label{lem:Stieltjes_transform_Hölder}
Let $v$ be an $\algnon$-valued measure on $\R$ and $h \colon \Hb \to \alg$ be its Stieltjes transform, i.e., $h$ satisfies \eqref{eq:Stieltjes_representation_h} for all $z \in\Hb$. 
Let $f \colon I \to \algnon$ be a $\gamma$-Hölder continuous function on an interval $I \subset \R$ with $\gamma \in (0,1)$ and $f$ be a density of $v$ on $I$ with respect to the 
Lebesgue measure, i.e., 
\[ \norm{f(\tau_1)- f(\tau_2)} \leq C_0 \abs{\tau_1 - \tau_2}^\gamma, \qquad  
 v(A) = \int_A f(\tau) \di \tau \] 
for all $\tau_1, \tau_2 \in I$,  some $C>0$ and for all Borel sets $A \subset I$.
Moreover, we assume that $\norm{f(\tau)} \leq C_1$ for all $\tau \in I$.
Let $\theta \in (0,1]$. 

Then, for $z_1, z_2 \in \Hb$ satisfying $\Re z_1, \Re z_2 \in I$ and $\dist(\Re z_k, \pt I) \geq \theta$, $k=1,2$, 
we have 
\begin{equation} \label{eq:Hoelder_in_close_support}
 \norm{h(z_1) -h(z_2)} \leq \bigg(\frac{21C_0}{\gamma(1-\gamma)} + \frac{4 \norm{v(\R)}}{\theta^{1+\gamma}}+ \frac{14C_1}{\gamma\theta^\gamma}\bigg)
 \abs{z_1 - z_2}^\gamma. 
\end{equation}
Furthermore, for $z_1, z_2 \in \Hb$ satisfying $\dist(z_k, \supp v ) \geq \theta$, $k=1,2$, we have 
\begin{equation} \label{eq:Hoelder_away_from_support}
 \norm{h(z_1) -h(z_2)} \leq\frac{2 \norm{v(\R)}}{\theta^2} \abs{z_1 - z_2}^\gamma. 
\end{equation}
\end{lemma}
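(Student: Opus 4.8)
The plan is to prove the two estimates separately, the first by a one-line resolvent identity and the second by a careful local analysis of the Cauchy-type integral. For \eqref{eq:Hoelder_away_from_support} I would write
\[
h(z_1)-h(z_2) = (z_1-z_2)\int_\R\frac{v(\di\tau)}{(\tau-z_1)(\tau-z_2)}
\]
and bound the integrand using $\abs{\tau-z_k}\ge\dist(z_k,\supp v)\ge\theta$, which gives $\norm{h(z_1)-h(z_2)}\le\theta^{-2}\abs{z_1-z_2}\norm{v(\R)}$. A short case distinction — for $\abs{z_1-z_2}\le\theta$ use $\abs{z_1-z_2}=\abs{z_1-z_2}^\gamma\abs{z_1-z_2}^{1-\gamma}\le\abs{z_1-z_2}^\gamma\theta^{1-\gamma}$ and $\theta\le1$; for $\abs{z_1-z_2}>\theta$ use instead $\norm{h(z_k)}\le\norm{v(\R)}/\theta$ and $\theta^{-1}\le\theta^{-2}(\abs{z_1-z_2}/\theta)^\gamma\theta^\gamma$ — converts this into the claimed bound with constant $2\norm{v(\R)}/\theta^2$.

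For \eqref{eq:Hoelder_in_close_support} I would first record an a priori bound $\norm{h(z)}\le B_0:=2C_0\theta^\gamma/\gamma+\pi C_1+\norm{v(\R)}/\theta$ valid whenever $\Re z\in I$ with $\dist(\Re z,\partial I)\ge\theta$: split $h(z)$ into the integral over $K:=(\Re z-\theta,\Re z+\theta)\subset I$ and over its complement; on $K$ subtract $f(\Re z)$ so the singular part is controlled by $C_0\int_K\abs{\tau-\Re z}^{\gamma-1}\di\tau$, while the remaining term $f(\Re z)\int_K(\tau-z)^{-1}\di\tau$ is bounded because the \emph{oriented} integral $\int_K(\tau-z)^{-1}\di\tau=\log(\Re z+\theta-z)-\log(\Re z-\theta-z)$ has modulus at most $\pi$ (here, and throughout, I use $z\in\Hb$ so that $\tau-z$ lies strictly in the open lower half-plane and the principal logarithm is holomorphic on the relevant segments). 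With $B_0$ in hand, the case $\delta:=\abs{z_1-z_2}\ge\theta/2$ follows from $\norm{h(z_1)-h(z_2)}\le2B_0\le2B_0(2\delta/\theta)^\gamma$, which is absorbed into the stated constant. In the remaining case $\delta<\theta/2$ I would set $\tau_k:=\Re z_k$, $K:=(\tau_1-\theta,\tau_1+\theta)\subset I$, $L:=(\tau_1-2\delta,\tau_1+2\delta)\subset K$, and split $h(z_1)-h(z_2)$ into the contributions from $\R\setminus K$, from $K\setminus L$, and from $L$. On $\R\setminus I$ and on $I\setminus K$ the resolvent identity together with $\abs{\tau-\tau_1}\ge\theta$ (whence $\abs{\tau-\tau_2}\ge\abs{\tau-\tau_1}/2$) reduces everything to $\int\abs{\tau-\tau_1}^{-2}\di\tau\lesssim\theta^{-1}$ with a gain $\delta$, converted to $\delta^\gamma$ via $\delta^{1-\gamma}\le\theta^{1-\gamma}\le1$. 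On $K$ I write $f(\tau)=f(\tau_1)+(f(\tau)-f(\tau_1))$: the term $f(\tau_1)\int_K((\tau-z_1)^{-1}-(\tau-z_2)^{-1})\di\tau$ telescopes to a difference of four logarithms of the form $\log(\tau_1\pm\theta-z_1)-\log(\tau_1\pm\theta-z_2)$, whose two arguments have modulus $\ge\theta/2$ and differ by exactly $\delta$, so each is $\le2\delta/\theta$ and the contribution is $\lesssim C_1\theta^{-\gamma}\delta^\gamma$; for $(f(\tau)-f(\tau_1))$ over $K\setminus L$ the resolvent identity and Hölder continuity give $\delta\int_{2\delta}^\theta s^{\gamma-2}\di s\lesssim(1-\gamma)^{-1}\delta^\gamma$.

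The genuinely delicate part, which I expect to be the main obstacle, is the contribution of $(f(\tau)-f(\tau_1))$ over the innermost region $L$, where no resolvent-identity gain is available. For the $z_1$-term I would estimate $\int_L(f(\tau)-f(\tau_1))(\tau-z_1)^{-1}\di\tau$ directly via $\abs{\tau-z_1}\ge\abs{\tau-\tau_1}$, obtaining $C_0\int_L\abs{\tau-\tau_1}^{\gamma-1}\di\tau\lesssim\gamma^{-1}\delta^\gamma$; but for the $z_2$-term one cannot pair $\abs{f(\tau)-f(\tau_1)}$ with $\abs{\tau-z_2}^{-1}$, as this produces a logarithmically divergent integral. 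Instead I would write $f(\tau)-f(\tau_1)=(f(\tau)-f(\tau_2))+(f(\tau_2)-f(\tau_1))$: the first piece pairs with $\abs{\tau-z_2}^{-1}\ge\abs{\tau-\tau_2}^{-1}$ and, since $L\subset(\tau_2-3\delta,\tau_2+3\delta)$, gives $C_0\int\abs{\tau-\tau_2}^{\gamma-1}\di\tau\lesssim\gamma^{-1}\delta^\gamma$; while for the constant second piece one must again pass to the oriented integral $\int_L(\tau-z_2)^{-1}\di\tau$, whose modulus is bounded by an absolute constant (the same logarithmic cancellation as in the a priori bound), so that this piece contributes $C_0\abs{\tau_1-\tau_2}^\gamma\cdot O(1)\lesssim C_0\delta^\gamma$. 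The key structural point is that although $\int_L\abs{\tau-z_2}^{-1}\di\tau$ diverges logarithmically as $\Im z_2\downarrow0$, the oriented integral does not, and choosing the reference value $f(\tau_1)$ for the $z_1$-integral but $f(\tau_2)$ for the $z_2$-integral is precisely what makes every surviving term either absolutely convergent of order $\delta^\gamma$ or of the form (Hölder increment)$\times$(bounded oriented integral). Collecting all contributions and using crude numerical bounds ($\gamma(1-\gamma)\le1$, $2^{\gamma-1}\le1$, $\log3+\pi$ small, etc.) then yields the explicit constant in \eqref{eq:Hoelder_in_close_support}; in the write-up I would only check these numerical factors fit the stated constant rather than optimise them.
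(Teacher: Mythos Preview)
The paper omits the proof entirely, saying only that it ``is very similar to the one of Lemma~A.7 in \cite{AjankiQVE}'' (the commutative analogue). Your argument is the standard Plemelj-type proof for this kind of statement --- resolvent identity far from the singularity, subtraction of $f$ at a reference point near it, and bounded oriented logarithmic integrals for the constant pieces --- and is essentially correct; this is almost certainly what \cite{AjankiQVE} does as well. The one point you glossed over is the bound $\norm{\int_{\R\setminus I} g\,v(\di\tau)}\le\norm{g}_\infty\norm{v(\R)}$ for a complex scalar $g$ against the positive operator-valued measure $v$; this is where the only genuinely operator-valued issue lies, but it follows from a Naimark-type argument (normalise to a POVM via $v(\R)^{-1/2}$ and dilate), so there is no real gap.
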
 

We omit the proof of Lemma \ref{lem:Stieltjes_transform_Hölder} since it is very similar to the one of Lemma A.7 in \cite{AjankiQVE}.

\section{Positivity-preserving, symmetric operators on $\alg$}  \label{app:positive_symmetric_map}

\begin{lemma}  \label{lem:positive_symmetric_maps_aux}
Let $T \colon \alg \to \alg$ be a positivity-preserving, symmetric operator.
\begin{enumerate}[label=(\roman*)]
\item If $T[a] \leq C \avg{a} \id$ for some $C >0$ and all $a\in \algnon$ then $\normtwo{T} \leq 2C$. Moreover, $\normtwo{T}$ is an eigenvalue of $T$ and there is $x \in \algnon\setminus\{0\}$ 
such that $T[x] =\normtwo{T}x$. 
\item 
We assume $\normtwo{T}=1$ and that there are $c, C>0$ such that 
\begin{equation} \label{eq:T_flat_aux}
 c \avg{a}\id \leq T[a] \leq C \avg{a} \id
\end{equation}
for all $a \in \alg_+$. Then $1$ is an eigenvalue of $T$ with a one-dimensional eigenspace. 
There is a unique $x \in \alg_+$ satisfying $T[x] = x$ and $\normtwo{x} = 1$. Moreover, $x$ is positive definite, 
\begin{equation} \label{eq:bounds_eigenfunction}
  cC^{-1/2} \id \leq x \leq C\id. 
\end{equation}
Furthermore, the spectrum of $T$ has a gap of size $\theta \defeq c^6/(2(c^3+2C^2)C^2))$, i.e.,  
\begin{equation} \label{eq:spectral_gap_general_T}
 \spec(T) \subset [-1+\theta, 1- \theta ] \cup \{ 1 \}.
\end{equation}
\end{enumerate}
\end{lemma}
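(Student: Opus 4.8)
\textbf{Proof plan for Lemma \ref{lem:positive_symmetric_maps_aux}.}

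The plan is to run a Perron--Frobenius type argument adapted to the von Neumann algebra setting, working in the Hilbert space $\Ltwo$ where $T$ acts as a bounded self-adjoint operator (since $\normtwoinf{T}<\infty$ follows from the hypotheses). For part (i), I would first bound $\normtwo{T}$: for $a\in\algnon$ we have $\scalar{a^{1/2}}{T[a^{1/2}]} \le \avga{a^{1/2} T[a^{1/2}]} \le$ wait --- more carefully, apply the hypothesis $T[b]\le C\avg{b}\id$ with $b = a$ and pair against $a$ to get $\scalar{a}{T[a]} = \avg{aT[a]} \le C\avg{a}^2 \le C\normtwo{a}^2$ using $\avg{a}\le\normtwo{a}$ for $a\ge 0$; then for general self-adjoint $x$ split $x = x_+ - x_-$ and use positivity-preservation together with symmetry to control the cross terms, yielding $\normtwo{T}\le 2C$ (the factor $2$ is the standard loss from the decomposition). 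To produce the top eigenvector, I would apply the spectral theorem to the self-adjoint operator $T$ on $\Ltwo$: $\normtwo{T}$ or $-\normtwo{T}$ lies in $\spec T$; since $T$ is positivity-preserving, a standard argument (e.g.\ considering $\abs{x_n}$ for an approximate eigensequence $x_n$ at $\pm\normtwo T$, or using that $T+\normtwo{T}\Id$ and $\normtwo{T}\Id - T$ are both positive operators) shows $\normtwo{T}\in\spec T$ with a nonnegative approximate eigenvector; a compactness or direct spectral-projection argument then gives an actual eigenvector $x$, and $\abs{x}$ (in the order-theoretic sense via functional calculus, or rather: apply $T$ to $\abs{x}$ and use $T\abs{x}\ge \abs{Tx} = \normtwo{T}\abs{x}$ combined with $\normtwo{T\abs x}\le\normtwo T\normtwo{\abs x}$) forces $T[\abs x] = \normtwo{T}\abs{x}$, so we may take $x\in\algnon\setminus\{0\}$.

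For part (ii) I would first record the two-sided bounds \eqref{eq:bounds_eigenfunction}. Given $T[x]=x$ with $x\in\algnon$, $\normtwo x = 1$: the upper bound is immediate from $x = T[x] \le C\avg{x}\id \le C\id$ since $\avg{x}\le\normtwo x = 1$. For the lower bound, from $x = T[x]\ge c\avg x\id$ we need $\avg x \gtrsim 1$; pairing $x = T[x]$ with $x$ gives $1 = \normtwo x^2 = \avg{xT[x]} \le \avg{x}\,\avg{T[x]}\cdot(\text{something})$ --- more efficiently, $1 = \avg{x^2} \le \norm{x}\avg{x} \le C\avg{x}\avg{x}^{0}$... actually $\norm x\le C$ and $\avg{x^2}\le\norm x\avg x$ gives $\avg x \ge 1/C$, hence $x\ge c\avg x\id \ge (c/C)\id$; refining with $\norm x\le C\avg x$ (from $x\le C\avg x\id$) and $1=\avg{x^2}\le C\avg x^2$ gives $\avg x\ge C^{-1/2}$, so $x\ge cC^{-1/2}\id$. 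In particular $x$ is positive definite and invertible.

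The main work is the spectral gap \eqref{eq:spectral_gap_general_T} and the one-dimensionality of the top eigenspace; this is where I expect the real difficulty. The standard route is: suppose $\eta\in\spec T$ with $\eta\ne 1$ and $\abs\eta$ close to $1$; take an approximate eigenvector $y=y^*$, $\normtwo y = 1$, with $\normtwo{T[y]-\eta y}$ small. Write $y$ ``relative to $x$'' via the substitution $y = x^{1/2}h x^{1/2}$ (or work with $\scalar{y}{x^{-1}yx^{-1}}$-type quantities) exploiting that $x$ is invertible with controlled bounds; the point is that $x$ being a strictly positive fixed point lets one deduce that any approximate eigenvector at $\abs\eta\approx 1$ must be ``almost proportional'' to $x$, and then orthogonality $y\perp x$ forces a contradiction. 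Concretely I would mimic the proof of Lemma~4.6 and the surrounding Perron--Frobenius arguments in \cite{AjankiQVE} (and its noncommutative refinement in \cite{AjankiCorrelated}): derive an inequality of the form $1-\abs{\scalar{\hat x}{T[\hat x]}} \gtrsim \normtwo{Q\hat x}^2$ where $\hat x = x/\normtwo x$ and $Q$ is the projection off $\hat x$, with the implicit constant expressed through $c$ and $C$; tracking the constants through the quadratic forms, using \eqref{eq:bounds_eigenfunction} and $\normtwo x\sim 1$, produces the explicit gap $\theta = c^6/(2(c^3+2C^2)C^2)$. The one-dimensionality of the eigenspace at $1$ follows from the same estimate (any eigenvector orthogonal to $x$ would violate the gap bound) together with the fact that $T[\abs z] = \abs z$ for any real eigenvector $z$ at eigenvalue $1$, so the eigenspace is spanned by a single positive element, which by strict positivity and the gap must be $x$ up to scalar. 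The delicate point throughout is bookkeeping the constants so that the final $\theta$ matches the stated value; I would set this up by choosing the normalization of the Perron--Frobenius functional carefully and being economical with each application of Cauchy--Schwarz.
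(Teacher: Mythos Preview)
Your outline for part~(i) has a real gap. You write ``a compactness or direct spectral-projection argument then gives an actual eigenvector $x$'', but in infinite dimensions $\normtwo{T}\in\spec T$ does not by itself guarantee an eigenvector. The paper's argument takes positive approximate eigenvectors $x_n=\abs{y_n}\in\algnon$ and passes to a weak $\Ltwo$-limit $x$; the crucial step you are missing is showing $x\ne 0$. This uses the hypothesis $T[a]\le C\avg{a}\id$ in an essential way: from $x_n\le (\Id-T)[x_n]+C\avg{x_n}\id$ one pairs with $x_n$ to get $1\le\scalar{x_n}{(\Id-T)[x_n]}+C\avg{x_n}^2$, and the first term vanishes along the approximate eigensequence, forcing $\avg{x_n}\ge C^{-1/2}$ uniformly, which survives the weak limit. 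Without this, you cannot conclude that $\normtwo{T}$ is an eigenvalue.

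For part~(ii) your route and the paper's diverge substantially. You propose to conjugate by $x^{1/2}$ and argue directly with the resulting operator, but note that $h\mapsto x^{-1/2}T[x^{1/2}hx^{1/2}]x^{-1/2}$ is in general \emph{not} symmetric with respect to $\scalar{\cdot}{\cdot}$, so the usual variational machinery does not apply cleanly; this is exactly where noncommutativity bites. The paper instead performs a reduction to the \emph{commutative} case: given a self-adjoint test vector $y\perp x$ with $\normtwo y=1$, it approximates $y$ spectrally as $y\approx\sum_{k=1}^N\lambda_k p_k$ with mutually orthogonal projections $p_k$, and observes that $\scalar{y}{(\Id\pm T)[y]}$ becomes $\scalar{\lambda}{(\Id_N\pm S^N)\lambda}_N$ for the $N\times N$ matrix $S^N$ with kernel $s_{kl}=\scalar{p_k}{T[p_l]}/(\avg{p_k}\avg{p_l})$. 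The flatness hypothesis gives $c\le s_{kl}\le C$, so one is now in the setting of the \emph{scalar} Perron--Frobenius gap (Lemma~5.7 in \cite{AjankiQVE}, not Lemma~4.6 as you cite), which produces the explicit constant $\theta$ after controlling the overlap of $\lambda$ with the Perron--Frobenius vector of $S^N$ via the orthogonality $y\perp x$. This finite-dimensional reduction is the key idea you are missing; it is what makes the explicit constant tractable and avoids the symmetry issue entirely.
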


Lemma \ref{lem:positive_symmetric_maps_aux} is the analogue of Lemma 4.8 in \cite{AjankiCorrelated}. 
Here, we explain how to generalize it to the context of von Neumann algebras. 
In the proof of Lemma \ref{lem:positive_symmetric_maps_aux}, we will use the following lemma. 

\begin{lemma} \label{lem:norm_two_to_inf} Let $T \colon \alg \to \alg$ be a linear map. 
\begin{enumerate}[label=(\roman*)]
\item If $T$ is positivity-preserving such that $T[a] \leq C\avg{a} \id$ for all $a \in \algpos$
and some $C>0$ then $\norm{T} \leq \normtwoinf{T} \leq 2C$. 
\item If $T-\omega\Id$ is invertible on $\alg$ for some $\omega \in \C\setminus\{0\}$ and $\normtwo{(T-\omega\Id)^{-1}}< \infty$, $\normtwoinf{T} < \infty$ then we have 
\[ \norm{(T-\omega\Id)^{-1}} \leq \abs{\omega}^{-1}\Big( 1+ \normtwoinf{T} \normtwo{(T-\omega\Id)^{-1}}\Big). \] 
\end{enumerate}
\end{lemma}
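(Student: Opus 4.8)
The plan is to prove the two parts separately; both are short and elementary once the three operator norms are unravelled. For part~(i), I would first dispose of the inequality $\norm{T}\leq\normtwoinf{T}$: since $\normtwo{x}\leq\norm{x}$ for every $x\in\alg$, we have $\norm{T[x]}\leq\normtwoinf{T}\normtwo{x}\leq\normtwoinf{T}\norm{x}$, which gives it at once. The substantive claim is $\normtwoinf{T}\leq 2C$, and I would prove it first for self-adjoint $x$. Writing $x=x_+-x_-$ for the positive and negative parts and using that a positivity-preserving $T$ maps $\algnon$ into $\algnon$, hence $\algsa$ into $\algsa$ (every self-adjoint element being a difference of two positive ones), the element $T[x]=T[x_+]-T[x_-]$ is self-adjoint and satisfies $-T[x_-]\leq T[x]\leq T[x_+]$; consequently $\spec(T[x])\subset[-\norm{T[x_-]},\norm{T[x_+]}]$, so $\norm{T[x]}\leq\max\{\norm{T[x_+]},\norm{T[x_-]}\}$. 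The hypothesis $0\leq T[x_\pm]\leq C\avg{x_\pm}\id$ gives $\norm{T[x_\pm]}\leq C\avg{x_\pm}$, and the Cauchy--Schwarz inequality for the inner product \eqref{eq:definition_scalar_product_alg} together with $\avg{\id}=1$ yields $\avg{x_\pm}\leq\avg{\abs{x}}=\scalar{\id}{\abs{x}}\leq\normtwo{\abs{x}}=\normtwo{x}$. Hence $\norm{T[x]}\leq C\normtwo{x}$ for self-adjoint $x$.

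For general $x$ I would split $x=\Re x+\ii\,\Im x$ with $\Re x\defeq(x+x^*)/2$ and $\Im x\defeq(x-x^*)/(2\ii)$, apply the previous bound to the two self-adjoint pieces, and invoke the Pythagorean identity $\normtwo{x}^2=\normtwo{\Re x}^2+\normtwo{\Im x}^2$, which follows by expanding the squares and using $\avg{xx^*}=\avg{x^*x}$. This gives $\norm{T[x]}\leq\norm{T[\Re x]}+\norm{T[\Im x]}\leq C(\normtwo{\Re x}+\normtwo{\Im x})\leq 2C\normtwo{x}$, completing part~(i).

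For part~(ii), I would set $R\defeq(T-\omega\Id)^{-1}$, which maps $\alg$ into $\alg$ by hypothesis, and rewrite the defining relation $(T-\omega\Id)R=\Id$ as $R=\omega^{-1}(TR-\Id)$ (legitimate since $\omega\neq 0$). Then for any $x\in\alg$,
\[
\norm{R[x]}=\abs{\omega}^{-1}\norm{T[R[x]]-x}\leq\abs{\omega}^{-1}\bigl(\normtwoinf{T}\normtwo{R[x]}+\norm{x}\bigr)\leq\abs{\omega}^{-1}\bigl(\normtwoinf{T}\normtwo{R}\normtwo{x}+\norm{x}\bigr)\leq\abs{\omega}^{-1}\bigl(1+\normtwoinf{T}\normtwo{R}\bigr)\norm{x},
\]
where we used $\norm{T[y]}\leq\normtwoinf{T}\normtwo{y}$, then $\normtwo{R[x]}\leq\normtwo{R}\normtwo{x}$, and finally $\normtwo{x}\leq\norm{x}$; the quantities $\normtwoinf{T}$ and $\normtwo{R}$ are finite by assumption. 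Taking the supremum over $\norm{x}\leq 1$ yields the asserted bound.

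There is no serious obstacle here. The only points needing a brief justification are the two auxiliary facts used in part~(i) — the inequality $\avg{\abs{x}}\leq\normtwo{x}$ (Cauchy--Schwarz with $\normtwo{\id}=1$) and the orthogonality identity $\normtwo{x}^2=\normtwo{\Re x}^2+\normtwo{\Im x}^2$ (traciality of $\avg{\genarg}$) — together with the remark that positivity-preserving maps automatically preserve self-adjointness. If anything is delicate it is only bookkeeping: keeping straight which of $\norm{\genarg}$, $\normtwo{\genarg}$, $\normtwoinf{\genarg}$ is meant at each step and using $\normtwo{\genarg}\leq\norm{\genarg}$ in the right direction.
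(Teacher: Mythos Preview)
Your proof is correct and follows essentially the same approach as the paper's: the self-adjoint case via the decomposition $x=x_+-x_-$ and the bound $\avg{\abs{x}}\leq\normtwo{x}$, the general case by splitting into real and imaginary parts, and part~(ii) by the identical resolvent manipulation $R=\omega^{-1}(TR-\Id)$. The only cosmetic difference is that the paper bounds $T[a]\leq T[a_+]+T[a_-]$ directly rather than taking a maximum, but this yields the same constant for self-adjoint elements.
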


We include the short proof of Lemma~\ref{lem:norm_two_to_inf} for the reader's convenience. 
In fact, the first part is obtained as in (4.2) of \cite{AjankiCorrelated} and the second part as in (5.28) of \cite{AjankiQVE}. 

\begin{proof}[Proof of Lemma~\ref{lem:norm_two_to_inf}]
Let $a \in \alg$ be self-adjoint, i.e., $a = a^*$. Thus, $a= a_+ - a_-$ is the sum of its positive and negative part, $a_+, a_- \in \algnon$. We conclude 
\[ T[a] \leq T[a_+] + T[a_-] \leq C \avg{a_+ + a_-} \leq C \normtwo{a} \] 
since $a_+ + a_- = \abs{a}$. 
Hence, $\norm{T[a]} \leq C \normtwo{a}$ as $T[a] \geq - C \normtwo{a}$ is shown similarly. For a general $a \in\alg$, we obtain $\norm{T[a] } \leq 2 C \normtwo{a}$. As $\normtwo{a} \leq \norm{a}$ 
this completes the proof of part (i). 

For the proof of (ii), we take an arbitrary $x \in \alg$. We set $y \defeq (T- \omega \Id )^{-1}[x]$. From the definition of the resolvent, we conclude $\omega y = T[y] - x$. 
This yields 
\[ \norm{y} \leq \abs{\omega}^{-1} ( \normtwoinf{T}\normtwo{y} + \norm{x}) \leq  \abs{\omega}^{-1} ( 1 + \normtwoinf{T}\normtwo{(T-\omega \Id)^{-1}}) \norm{x}, \] 
where we used $\normtwo{x} \leq \norm{x}$ in the last step. 
Since $x$ was arbitrary, we have completed the proof of (ii).  
\end{proof}  

\begin{proof}[Proof of Lemma \ref{lem:positive_symmetric_maps_aux}]
For the proof of (i), we remark that Lemma \ref{lem:norm_two_to_inf} (i) implies $\normtwo{T} \leq \normtwoinf{T} \leq 2C$. 
Without loss of generality, we assume $\normtwo{T} = 1$. Since $T$ is positivity-preserving, we have $T[b] \in \algsa$ for all $b \in \algsa$. 
It is easy to check that, for each $a \in \alg$, one may find $b \in \algsa$ such that $\normtwo{a} = \normtwo{b}$ and $\normtwo{T[a]} \leq \normtwo{T[b]}$. Hence, $\normtwo{T|_\algsa} = \normtwo{T}=1$ 
and $1$ is contained in the spectrum of $T\colon \Ltwo_\rm{sa} \to \Ltwo_\rm{sa}$, where $\Ltwo_\rm{sa} \defeq \overline{\alg_\rm{sa}}^{\normtwo{\genarg}}$, 
due to the variational principle for the spectrum of self-adjoint operators and 
$\abs{\scalar{b}{T[b]}} \leq \scalar{\abs{b}}{T[\abs{b}]}$ for all $ b\in \algsa$. 
This last inequality can be checked easily by decomposing $b = b_+ - b_-$ into positive and negative part. 

Hence, due to the symmetry of $T$, there is a sequence $(y_n)_n$ of approximating eigenvectors in $\algsa$, i.e., $y_n \in \algsa$, $\normtwo{y_n} = 1$ and $T[y_n] - y_n$ converges to 0 in $\Ltwo$ for $n \to \infty$.   
We set $x_n \defeq \abs{y_n}$. 
By using $\normtwo{T|_{\Ltwo_\rm{sa}}} =1$ and $\scalar{b}{T[b]} \leq \scalar{\abs{b}}{T[\abs{b}]}$ for all $ b\in \algsa$, we obtain $\normtwo{T[x_n]-x_n}^2 \leq 2 \normtwo{y_n}\normtwo{T[y_n]-y_n}$ and, thus, 
\begin{equation} \label{eq:approx_eigenvectors}
 \lim_{n\to\infty} \normtwo{T[x_n] - x_n} = 0. 
\end{equation}
Since the unit ball in the Hilbert space $\Ltwo$ is relatively sequentially compact in the weak topology, we can assume by possibly replacing $(x_n)_{n}$ by a subsequence that there is $x \in \Ltwo$ 
such that $x_{n} \rightharpoonup x$ weakly in $\Ltwo$. 
From $T[x_n] \leq C \avg{x_n}\id$, we conclude 
\[ x_{n} \leq (\Id-T)[x_{n}] + C\avg{x_{n}} \id. \] 
Multiplying this by $\sqrt{x_{n}}$ from the left and the right and applying $\avg{\genarg}$ yields 
\[ 1 \leq \scalar{x_{n}}{(\Id-T)[x_{n}]} + C \avg{x_{n}}^2. \] 
Taking the limit $n\to\infty$, we obtain $\avg{x} \geq C^{-1/2}$, due to \eqref{eq:approx_eigenvectors}. 
Hence, $x \neq 0$ and we can replace $x$ by $x/\normtwo{x}$ and $x_{n}$ by $x_{n}/\normtwo{x}$. 
For any $b \in \Ltwo$, we have
\[ \scalar{b}{(\Id-T)[x]} = \lim_{n\to\infty} \scalar{b}{(\Id-T)[x_{n}]} = 0 \] 
due to $x_n\rightharpoonup x$ and \eqref{eq:approx_eigenvectors}. 
Hence, $T[x] = x$. Since $\normtwoinf{T} \leq 2C$, we have $T[b] \in \alg$ for all $b \in \Ltwo$ and thus 
$x = T[x] \in \alg$.  Owing to $x_{n} \rightharpoonup x$ and $x_{n} \in \algnon$, we obtain $x \in \algnon$. 
This completes the proof of (i). 

We start the proof of (ii) by using \eqref{eq:T_flat_aux} with $a =x$ which immediately yields the upper bound in \eqref{eq:bounds_eigenfunction}. 
As $\avg{x} \geq C^{-1/2}$, the first inequality in \eqref{eq:T_flat_aux} then yields the lower bound in~\eqref{eq:bounds_eigenfunction}. 

In order to prove the spectral gap, \eqref{eq:spectral_gap_general_T}, we remark that $\normtwoinf{T} \leq 2 C$ due to the upper bound in \eqref{eq:T_flat_aux} 
and Lemma \ref{lem:norm_two_to_inf} (i). Hence, by Lemma \ref{lem:norm_two_to_inf} (ii), 
the spectrum of $T$ as an operator on $\alg$ is contained in the union of $\{0\}$ and the spectrum of $T$ as an operator on $\Ltwo$. 
Therefore, we will consider $T$ as an operator on $\Ltwo$ in the following and exclusively study its spectrum as an operator on $\Ltwo$. 
Hence, to prove the spectral gap, it suffices to establish a lower bound on $\scalar{y}{(\Id\pm T)[y]}$
for all self-adjoint $y \in \alg$ satisfying $\normtwo{y} =1$ and $\scalar{x}{y} =0$. 
Fix such $y \in \alg$. Since $y$ is self-adjoint we have 
\begin{equation} \label{eq:y_approx_projections}
 y = \lim_{N\to\infty} y^N, \qquad y^N \defeq \sum_{k=1}^N \lambda_k^N p_k^N 
\end{equation}
for some $\lambda_n^N \in \R$ and $p_k^N \in \alg$ orthogonal projections such that $p_k^Np_l^N = p_k^N \delta_{k,l}$. 
Here, the convergence $y^N \to y$ is with respect to $\norm{\cdot}$. 
We can assume that $\normtwo{y^N} = 1$ for all $N$ as well as $\avg{p_k^N} >0$ for all $k$ and $\avg{p_1^N + \ldots + p_N^N} = 1$ for all $N$. 

We will now reduce estimating $\scalar{y}{(\Id \pm T)[y]}$ to estimating a scalar product on $\C^N$. On $\C^N$, we consider the scalar product $\scalar{\genarg}{\genarg}_N$ 
induced by the probability measure $\pi(A) = \sum_{k \in A} \avg{p_k^N}$ on $[N]$, i.e., 
\[ \scalar{\lambda}{\mu}_N = \sum_{k=1}^n \overline{\lambda_k} \mu_k \avg{p_k^N} \] 
for $\lambda = (\lambda_k)_{k=1}^N$, $\mu = (\mu_k)_{k=1}^N \in \C^N$. The norm on $\C^N$ and the operator norm on $\C^{N\times N}$ induced by $\scalar{\genarg}{\genarg}_N$ 
are denoted by $\norm{\genarg}_N$ and $\norm{\genarg}$, respectively.  
Moreover, $\Id_N$ is the identity map on $\C^N$. 
With this notation, we obtain from \eqref{eq:y_approx_projections} that 
\[ \scalar{y}{(\Id\pm T)[y]} = \lim_{N\to\infty} \sum_{k,l=1}^N \lambda_k^N\lambda_l^N \scalar{p_k^N}{(\Id \pm T)[p_l^N]} = \lim_{N\to\infty} \scalar{\lambda^N}{(\Id_N\pm S^N)[\lambda^N]}_N, \]
where we introduced $\lambda^N = (\lambda_k^N)_{k=1}^N \in \C^N$ and the $N\times N$ symmetric matrix $S^N$ viewed as an integral operator
on $([N], \pi)$ with the kernel $s_{kl}^N$ given by 
\[ s_{kl}^N = \frac{\scalar{p_k^N}{T[p_l^N]}}{\avg{p_k^N}\avg{p_l^N}}. \]
Since $\normtwo{y^N} = 1$, we have $\norm{\lambda^N}_N=1$. By the flatness of $T$, we have 
\begin{equation} \label{eq:bound_entries_S}
 c \leq s_{kl}^N \leq C. 
\end{equation}

In the following, we will omit the $N$-dependence of $\lambda_k$, $s_{kl}$ and $p_k$ from our notation. 
By the definition of $\scalar{\cdot}{\cdot}_N$, we have 
\[ \scalar{\lambda}{S\lambda}_N = \sum_{k,l=1}^N \lambda_k \avg{p_k} s_{kl} \avg{p_l} \lambda_l = \scalar{y^N}{T[y^N]}. \] 
Let $s \in \C^N$ be the Perron-Frobenius eigenvector of $S$ satisfying $Ss = \norm{S} s$, $\norm{s}_N = 1$.
From \eqref{eq:bound_entries_S}, we conclude 
\begin{equation} \label{eq:lower_bound_scalar_s_Ss}
 c \leq \scalar{e}{Se}_N \leq \norm{S} = \scalar{s}{Ss}_N \leq \normtwo{T} = 1, 
\end{equation}
where $e = (1, \ldots, 1) \in \C^N$. 
Since $\norm{s}_N=1$ and $c \leq \norm{S}$, we have
\[ \max_i s_i  = \frac{(Ss)_i}{\norm{S}} \leq \frac{C}{c} \sum_{k=1}^N s_k \avg{p_k} \leq \frac{C}{c} \left(\sum_{k=1}^N \avg{p_k} \right)^{1/2}  \left( \sum_{k=1}^N s_k^2 \avg{p_k} \right) ^{1/2} = \frac{C}{c}. \] 
As $\inf_{k,l} s_{k,l} \geq c$ by \eqref{eq:bound_entries_S}, Lemma 5.7 in \cite{AjankiQVE} yields 
\[ \spec(S) \subset \bigg[ - \norm{S} + \frac{c^3}{C^2}, \norm{S} - \frac{c^3}{C^2} \bigg] \cup \{ \norm{S} \}. \] 
We decompose  $\lambda = (1-\norm{w}_N^2)^{1/2} s + w$ with $w \perp s$ and obtain 
\begin{equation} \label{eq:estimate_lambda_S_lambda_leq_w}
 \abs{\scalar{\lambda}{S\lambda}_N} \leq \norm{S} (1 - \norm{w}_N^2)  + \bigg( \norm{S} - \frac{c^3}{C^2} \bigg) \norm{w}_N^2 \leq 1 - \frac{c^3}{C^2} \norm{w}_N^2,
\end{equation}
where we used $\norm{S} \leq 1$ in the last step. 
Hence, it remains to estimate $\norm{w}_N$.

Recalling $T[x] = x$, we set $\tilde{x} = (\avg{xp_k}/\avg{p_k})_{k=1}^N$ and compute 
\[ \scalar{x}{y^N} = \sum_{k} \lambda_k \avg{xp_k} = \scalar{\tilde{x}}{\lambda}_N. \] 
Since the left-hand side goes to $\scalar{x}{y} = 0$ for $N\to\infty$, we can assume that $\abs{\scalar{\tilde{x}}{\lambda}_N} \leq \sqrt{\eps/2}$ for any fixed $\eps \sim 1$ 
and all sufficiently large $N$. 
As $\tilde{x}_k \geq c/\sqrt{C}$ by \eqref{eq:bounds_eigenfunction}, we obtain 
\begin{equation} \label{eq:initial_estimate_w}
 (1- \norm{w}_N^2) \frac{c^2}{C} \left(\sum_k s_k \avg{p_k}\right)^2 \leq (1- \norm{w}_N^2) \scalar{\tilde{x}}{s}_N^2 = ( \scalar{\tilde{x}}{\lambda}_N- \scalar{\tilde{x}}{w}_N)^2 \leq 2 \norm{\tilde{x}}_N^2 \norm{w}_N^2 + \eps.   
\end{equation}
Now, we use $c \leq \scalar{s}{Ss}_N$ from \eqref{eq:lower_bound_scalar_s_Ss} to get
\[ c \leq \scalar{s}{Ss}_N = \sum_{k,l} s_k s_{kl} s_l \avg{p_k}\avg{p_l} \leq C \left(\sum_{k} s_k \avg{p_k}\right)^2. \] 
By plugging this and $\norm{\tilde{x}}_N^2 \leq \norm{x}^2\sum_k \avg{p_k} = 1 $ into \eqref{eq:initial_estimate_w}, solving the resulting estimate for $\norm{w}_N^2$ and choosing $\eps = c^3/(2C^2)$, we obtain 
\[ \norm{w}_N^2 \geq \frac{c^3}{2(c^3 + 2C^2 )}. \]
Therefore, from \eqref{eq:estimate_lambda_S_lambda_leq_w}, we conclude  
\[ \abs{\scalar{\lambda}{S\lambda}_N} \leq 1 - \frac{c^6}{2(c^3 + 2C^2)C^2} \] 
uniformly for all sufficiently large $N \in \N$. 
We thus obtain that 
\[ \scalar{y}{(\Id \pm T)[y]} \geq \frac{c^6}{2(c^3 + 2C^2)C^2} \] 
if $y \perp x$ and $\normtwo{y} = 1$. 
We conclude \eqref{eq:spectral_gap_general_T}, which completes the proof of the lemma. 
\end{proof}

\begin{lemma} \label{lem:inverse_positivity_preserving} 
If $T \colon \alg \to \alg$ is a positivity-preserving operator such that $\normtwo{T} < 1$ and $\normtwoinf{T} < \infty$ then 
$\Id - T$ is invertible as a bounded operator on $\alg$ and $(\Id - T)^{-1}$ is positivity-preserving with 
\begin{equation} \label{eq:inverse_Id_T_lower_bound}
 (\Id - T)^{-1}[x^*x] \geq x^*x 
\end{equation}
for all $x \in \alg$. 
\end{lemma}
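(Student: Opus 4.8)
\textbf{Proof plan for Lemma~\ref{lem:inverse_positivity_preserving}.} The invertibility of $\Id-T$ as a bounded operator on $\alg$ is the easy part: since $\normtwo{T}<1$, the Neumann series $\sum_{k\geq 0} T^k$ converges in the operator norm $\normtwo{\genarg}$ on $\Ltwo$, so $\Id-T$ is invertible on $\Ltwo$ and $(\Id-T)^{-1}=\sum_{k\geq0}T^k$ there. To transfer this to $\alg$ I would invoke Lemma~\ref{lem:norm_two_to_inf}~(ii) with $\omega=1$: since $\normtwoinf{T}<\infty$ and $\normtwo{(\Id-T)^{-1}}<\infty$, the operator $(\Id-T)^{-1}$ is bounded on $(\alg,\norm{\genarg})$, hence $\Id-T$ is invertible as a bounded operator on $\alg$ with the same inverse. (Alternatively, one can argue directly that the Neumann series converges in $\norm{\genarg}$ by grouping terms: $\norm{T^k[x]}\leq \normtwoinf{T}\normtwo{T^{k-1}[x]}\leq \normtwoinf{T}\normtwo{T}^{k-1}\norm{x}$, which already gives absolute convergence of $\sum_{k\geq1}T^k$ in $\norm{\genarg}$.)

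\textbf{Positivity-preservation.} Here I use the Neumann series representation $(\Id-T)^{-1}=\sum_{k\geq0}T^k$, valid as an identity of bounded operators on $\alg$ by the previous step. Each power $T^k$ is positivity-preserving as a composition of positivity-preserving maps (and $T^0=\Id$ is trivially so). Thus for $y\in\algpos$ every partial sum $\sum_{k=0}^{n}T^k[y]$ lies in $\algnon$, and since $\algnon$ is $\norm{\genarg}$-closed and the series converges in $\norm{\genarg}$, the limit $(\Id-T)^{-1}[y]$ lies in $\algnon$. By continuity and $\norm{\genarg}$-density this extends to all $y\in\algnon$, so $(\Id-T)^{-1}$ is positivity-preserving.

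\textbf{The lower bound \eqref{eq:inverse_Id_T_lower_bound}.} Fix $x\in\alg$ and set $y\defeq x^*x\in\algnon$. Writing $w\defeq (\Id-T)^{-1}[y]$, we have $(\Id-T)[w]=y$, i.e. $w=y+T[w]$. It therefore suffices to show $T[w]\in\algnon$, which is immediate because $w=(\Id-T)^{-1}[y]\in\algnon$ by the positivity-preservation just established, and $T$ maps $\algnon$ into $\algnon$. Hence $w=y+T[w]\geq y=x^*x$, which is \eqref{eq:inverse_Id_T_lower_bound}.

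\textbf{Anticipated obstacle.} There is no serious obstacle: the only subtlety is making sure the Neumann series genuinely converges in the $\norm{\genarg}$-topology on $\alg$ (not merely in $\normtwo{\genarg}$ on $\Ltwo$) so that the closedness of $\algnon$ in $\norm{\genarg}$ can be applied termwise; this is exactly what the hypothesis $\normtwoinf{T}<\infty$ together with Lemma~\ref{lem:norm_two_to_inf}~(ii) is designed to provide. One should also note that $\normtwoinf{T}<\infty$ is automatic for positivity-preserving $T$ under the side condition $T[a]\leq C\avg{a}\id$, but since the lemma is stated with $\normtwoinf{T}<\infty$ as a hypothesis, I would just use it directly.
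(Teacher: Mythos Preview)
Your proposal is correct and follows essentially the same approach as the paper: invertibility on $\alg$ via Lemma~\ref{lem:norm_two_to_inf}~(ii), then the Neumann series $(\Id-T)^{-1}=\sum_{k\ge0}T^k$ combined with the fact that each $T^k$ is positivity-preserving. The only cosmetic differences are that the paper works with the series converging in $\normtwo{\genarg}$ (relying on the already-established membership of $(\Id-T)^{-1}[y^*y]$ in $\alg$) and reads off the lower bound directly from $y^*y+\sum_{k\ge1}T^k[y^*y]\ge y^*y$, whereas you argue $\norm{\genarg}$-convergence explicitly and phrase the lower bound as $w=y+T[w]\ge y$.
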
 

\begin{proof} 
Since $\normtwo{T} < 1$, $\Id - T$ is invertible on $\Ltwo$ and we conclude the invertibility of $\Id-T$ on $\alg$ from Lemma~\ref{lem:norm_two_to_inf} (ii). 

Moreover, for $y \in \alg$ with $\normtwo{y^*y} < 1$, we expand the inverse as a Neumann series using $\normtwo{T} <1$ and obtain 
\[ (\Id - T)^{-1}[y^*y] = y^*y  + \bigg(\sum_{k=1}^\infty T^k[y^*y] \bigg) \geq y^*y. \] 
The series converges with respect to $\normtwo{\genarg}$. 
In the last inequality, we used that $T^k$ is a positivity-preserving operator for all $k \in \N$. 
Hence, by rescaling a general $x \in \alg$, we see that $(\Id -T)^{-1}$ is a positivity-preserving operator on $\alg$ which satisfies \eqref{eq:inverse_Id_T_lower_bound}. 
\end{proof}

\section{Non-Hermitian perturbation theory} 

Let $B_0 \colon \alg \to \alg$ be a bounded operator with an isolated, single eigenvalue $\beta_0$ and an associated eigenvector $b_0$, $\normtwo{b_0} = 1$, i.e.,
\[ B_0[b_0] = \beta_0 b_0. \]
Moreover, we denote by $P_0$ and $Q_0$ the spectral projections corresponding to $\beta_0$ and $\spec(B_0)\setminus\{\beta_0\}$. 
Note that $P_0 + Q_0 =\Id$ but they are not orthogonal projections in general.
If $l_0$ is a normalized eigenvector of 
$B_0^*$ associated to its eigenvalue $\ol{\beta}_0$, then we obtain 
\begin{equation} \label{eq:explicit_formuala_P_0}
 P_0 = \frac{\scalar{l_0}{\genarg}}{\scalar{l_0}{b_0}}b_0. 
\end{equation}
For some bounded operator $E \colon \alg \to \alg$, we consider the perturbation 
\[ B = B_0 + E. \] 
We assume $E$ to be sufficiently small such that there is an isolated, single eigenvalue $\beta$ of $B$ close to $\beta_0$ and that $\beta$ and $\beta_0$ are 
separated from $\spec(B)\setminus\{\beta\}$ and $\spec(B_0)\setminus \{\beta_0\}$ by an amount $\Delta >0$. 
Let $P$ be the spectral projection of $B$ associated to $\beta$.

\begin{lemma} \label{lem:non_hermitian_perturbation_theory}
We define $b \defeq P[b_0]$ and $l \defeq P^*[l_0]$. Then $b$ and $l$ are eigenvectors of $B$ and $B^*$ corresponding to $\beta$ and $\bar{\beta}$, respectively. 
Moreover, we have 
\begin{equation}\label{eq:expansion_b_and_l}
 b = b_0 + b_1 + b_2 + \ord(\norm{E}^3),\qquad  l=l_0 + l_1 + l_2 + \ord(\norm{E}^3), 
\end{equation}
where we introduced
 \begin{align*} 
b_1 = \; &  - Q_0(B_0-\beta_0\Id)^{-1} E[b_0], \\
b_2 = \; & Q_0 (B_0-\beta_0\Id)^{-1} E (B_0-\beta_0\Id)^{-1}Q_0 E[b_0] - Q_0(B_0-\beta_0\Id)^{-2} E P_0 E[b_0] - P_0 E Q_0 (B_0 - \beta_0\Id)^{-2} E [b_0], \\
l_1  =\;  & - Q_0^*(B_0^*-\bar{\beta}_0\Id)^{-1} E^*[l_0], \\
l_2 = \; &  Q_0^* (B_0^*-\bar{\beta}_0\Id)^{-1} E^* (B_0^*-\bar{\beta}_0\Id)^{-1}Q_0^* E^*[l_0] - Q_0^* (B_0^*-\ol{\beta}_0\Id)^{-2} E^* P^*_0 E^*[l_0] - P_0^* E^* Q_0^*(B_0^* - \ol{\beta}_0\Id)^{-2} E^*[l_0]. 
\end{align*}
In particular, we have $b_i,l_i = \ord(\norm{E}^i)$ for $i=1,2$. 
Furthermore, we obtain 
\begin{equation} \label{eq:expansion_beta_scalar_l_b_abstract}
 \beta \scalar{l}{b} = \beta_0 \scalar{l_0}{b_0} + \scalar{l_0}{E[b_0]} - \scalar{l_0}{EB_0(B_0-\beta_0\Id)^{-2}Q_0E[b_0]} + \ord(\norm{E}^3). 
\end{equation}
The implicit constants in the error terms depend only on the separation $\Delta$.
\end{lemma}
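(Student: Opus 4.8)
The statement is a packaging of classical analytic (Kato-type) perturbation theory for an isolated simple eigenvalue, specialized to the rank-one setting and then contracted against $b_0$ and $l_0$. The plan is to represent $P$, $P_0$ as Riesz (contour) integrals, expand the resolvent of $B=B_0+E$ in a Neumann series, integrate term by term, identify the first two orders via residue calculus, and finally evaluate everything on $b_0$, $l_0$ using the annihilation relations $Q_0[b_0]=0$, $Q_0^*[l_0]=0$.

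\emph{Step 1: Riesz projections and the Neumann expansion.} Shrinking $\norm{E}$ depending on $\Delta$, fix a circle $\gamma$ centred at $\beta_0$ whose interior contains exactly $\beta\in\spec(B)$ and $\beta_0\in\spec(B_0)$ and whose distance to the rest of $\spec(B)\cup\spec(B_0)$ is $\gtrsim\Delta$, so that $P=-\frac{1}{2\pi\ii}\oint_\gamma(B-\omega\Id)^{-1}\dd\omega$ and $P_0=-\frac{1}{2\pi\ii}\oint_\gamma(B_0-\omega\Id)^{-1}\dd\omega$. The second resolvent identity gives $(B-\omega\Id)^{-1}=\sum_{k\geq0}(-1)^k[(B_0-\omega\Id)^{-1}E]^k(B_0-\omega\Id)^{-1}$, converging uniformly on $\gamma$ since $\norm{(B_0-\omega\Id)^{-1}}\lesssim\Delta^{-1}$ there; integrating term by term yields $P=P_0+P^{(1)}+P^{(2)}+\ord(\norm{E}^3)$ with $P^{(j)}$ the integral of the $j$-th term, and $\norm{P^{(j)}}\lesssim\norm{E}^j$ with constants depending only on $\Delta$ (the length of $\gamma$ is $\sim\Delta$). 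Since the rank of a Riesz projection is locally constant, $\rm{rank}\,P=\rm{rank}\,P_0=1$; as $P[b_0]=b_0+\ord(\norm{E})\neq0$, the vector $b\defeq P[b_0]$ spans $\Ran P$ and hence is an eigenvector of $B$ for $\beta$. Conjugating the contour integral shows $P^*$ is the Riesz projection of $B^*$ around $\ol\beta_0$ (equivalently $\ol\beta$), so $l\defeq P^*[l_0]=l_0+\ord(\norm{E})\neq0$ is an eigenvector of $B^*$ for $\ol\beta$.

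\emph{Step 2: The first two orders, evaluated on $b_0$ and $l_0$.} Insert the Laurent expansion $(B_0-\omega\Id)^{-1}=-\frac{P_0}{\omega-\beta_0}+\sum_{n\geq0}(\omega-\beta_0)^nS^{n+1}$ into the integrands, where $S\defeq(B_0-\beta_0\Id)^{-1}Q_0=Q_0(B_0-\beta_0\Id)^{-1}$ is the reduced resolvent ($S=SQ_0=Q_0S$, $SP_0=P_0S=0$, $S^2=(B_0-\beta_0\Id)^{-2}Q_0$). Extracting the coefficient of $(\omega-\beta_0)^{-1}$ gives $P^{(1)}=-(P_0ES+SEP_0)$ and $P^{(2)}=-P_0EP_0ES^2-P_0ES^2EP_0-S^2EP_0EP_0+P_0ESES+SEP_0ES+SESEP_0$. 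Applying these to $b_0$ and using $P_0[b_0]=b_0$, $Q_0[b_0]=0$ (so $S[b_0]=S^2[b_0]=0$), every term ending in $S$ or $S^2$ vanishes, leaving $b_1=P^{(1)}[b_0]=-S E[b_0]=-Q_0(B_0-\beta_0\Id)^{-1}E[b_0]$ and $b_2=P^{(2)}[b_0]=SESE[b_0]-P_0ES^2E[b_0]-S^2EP_0E[b_0]$; rewriting $S$ and $S^2$ in terms of $(B_0-\beta_0\Id)^{-1}$ and $(B_0-\beta_0\Id)^{-2}$ reproduces the claimed $b_1,b_2$, and the remainder $P^{(3)}[b_0]$ is $\ord(\norm{E}^3)$ since $\normtwo{b_0}=1$. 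The identical computation for $B^*=B_0^*+E^*$, with reduced resolvent $S^*=(B_0^*-\ol\beta_0\Id)^{-1}Q_0^*$ and $P_0^*[l_0]=l_0$, $Q_0^*[l_0]=0$, produces $l_1,l_2$; this establishes \eqref{eq:expansion_b_and_l}.

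\emph{Step 3: The formula for $\beta\scalar{l}{b}$.} Write $\beta\scalar{l}{b}=\scalar{l}{B[b]}=\scalar{l}{B_0[b]}+\scalar{l}{E[b]}$, substitute $b=b_0+b_1+b_2+\ord(\norm{E}^3)$ and $l=l_0+l_1+l_2+\ord(\norm{E}^3)$, and use repeatedly: (i) $B_0^*[l_0]=\ol\beta_0 l_0$, hence $\scalar{l_0}{B_0[x]}=\beta_0\scalar{l_0}{x}$; (ii) the adjoint relations $\scalar{S^*E^*[l_0]}{x}=\scalar{l_0}{ES[x]}$ together with $S^*[l_0]=0$ (from $Q_0^*[l_0]=0$) and $S[b_0]=0$, which kill most cross terms; (iii) $B_0S=Q_0+\beta_0S$, which turns $\scalar{l_0}{ESB_0SE[b_0]}$ into $\scalar{l_0}{ESE[b_0]}+\beta_0\scalar{l_0}{ES^2E[b_0]}$. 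After the cancellations only $\beta_0\scalar{l_0}{b_0}+\scalar{l_0}{E[b_0]}-\scalar{l_0}{ESE[b_0]}-\beta_0\scalar{l_0}{ES^2E[b_0]}$ remains to order $\norm{E}^2$; finally $S+\beta_0S^2=(B_0-\beta_0\Id)^{-1}Q_0+\beta_0(B_0-\beta_0\Id)^{-2}Q_0=B_0(B_0-\beta_0\Id)^{-2}Q_0$, so the last two terms combine into $-\scalar{l_0}{EB_0(B_0-\beta_0\Id)^{-2}Q_0E[b_0]}$, which is \eqref{eq:expansion_beta_scalar_l_b_abstract}.

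\emph{Main obstacle.} There is no conceptual difficulty here; the real work is careful bookkeeping. Because $P_0$ and $Q_0$ are oblique (not orthogonal) projections, none of the factors $E$, $S$, $S^2$, $P_0$, $Q_0$ commute, so in the residue computation for $P^{(2)}$ and in the order-$\norm{E}^2$ terms of $\scalar{l}{B[b]}$ one must track the exact left/right placement and the signs coming from the Laurent coefficients. The only analytic point is the uniformity of the error terms in $\Delta$: this follows because on $\gamma$ one has $\norm{(B_0-\omega\Id)^{-1}}\lesssim\Delta^{-1}$, the contour has length $\sim\Delta$, and $\norm{E}\leq c\Delta$ makes the Neumann tail $\ord(\norm{E}^3)$.
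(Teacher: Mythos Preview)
Your proof is correct and follows essentially the same approach as the paper: both obtain the expansion $P=P_0+P_1+P_2+\ord(\norm{E}^3)$ from the contour integral for the Riesz projection via a Neumann series and residue calculus, then evaluate on $b_0$ and $l_0$. The only minor difference is in Step~3: the paper derives \eqref{eq:expansion_beta_scalar_l_b_abstract} by computing a separate contour-integral expansion $BP=-\frac{1}{2\pi\ii}\oint_\Gamma\frac{\omega\,\dd\omega}{B-\omega}=B_0P_0+B_1+B_2+\ord(\norm{E}^3)$ and pairing with $l$, whereas you substitute the expansions of $b$ and $l$ directly into $\scalar{l}{(B_0+E)[b]}$ and use $B_0S=Q_0+\beta_0S$ for the final combination $S+\beta_0S^2=B_0(B_0-\beta_0\Id)^{-2}Q_0$; both routes involve comparable bookkeeping and yield the same result.
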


\begin{proof}
In this proof, the difference $B - \omega$ with an operator $B$ and a scalar $\omega$ is understood as $B- \omega\Id$. 
We first prove that 
\begin{equation} \label{eq:expansion_P}
P = P_0 + P_1 + P_2 + \ord(\norm{E}^3), 
\end{equation}
where we defined
\begin{align*} 
P_1 \defeq \; & - \frac{Q_0}{B_0-\beta_0} E P_0 - P_0 E \frac{Q_0}{B_0-\beta_0}, \\
 P_2 \defeq \; & P_0 E \frac{Q_0}{B_0 -\beta_0} E \frac{Q_0}{B_0-\beta_0} + \frac{Q_0}{B_0 -\beta_0}E P_0 E \frac{Q_0}{B_0-\beta_0}   + \frac{Q_0}{B_0 -\beta_0} E \frac{Q_0}{B_0-\beta_0} E P_0  \\ 
 & -\frac{Q_0}{(B_0-\beta_0)^2} E P_0 E P_0 - P_0 E \frac{Q_0}{(B_0 -\beta_0)^2}E P_0 - P_0 E P_0 E \frac{Q_0}{(B_0 -\beta_0)^2}. 
\end{align*}
The analytic functional calculus yields that 
\begin{equation} \label{eq:P_anal_func_calculus}
 P = -\frac{1}{2\pi\ii} \oint_\Gamma \frac{\dd \omega }{B-\omega} = \frac{1}{2\pi\ii}\oint_\Gamma \bigg(-\frac{1}{B_0 - \omega} + \frac{1}{B_0 -\omega}E \frac{1}{B_0-\omega} - \frac{1}{B_0-\omega} E \frac{1}{B_0-\omega}E \frac{1}{B_0-\omega}
  \bigg)\di\omega + \ord(\norm{E}^3), 
\end{equation}
where $\Gamma$ is a closed path that encloses only $\beta$ and $\beta_0$ both with winding number $+1$ but no other element of the spectra of $B$ and $B_0$. 
Integrating the first summand in the integrand of \eqref{eq:P_anal_func_calculus} yields $P_0$. In the second and third summand, we expand $\Id = P_0 + Q_0$ in the numerators. 
Applying an analogue of the residue theorem yields $P_1$ and $P_2$ for the second and third summand, respectively.
For example, for the second summand, we obtain 
\[ P_1 = \frac{1}{2\pi\ii}\oint_\Gamma \frac{1}{B_0-\omega}E \frac{1}{B_0 - \omega} \di \omega =  - \frac{Q_0}{B_0-\beta_0} E P_0 - P_0 E \frac{Q_0}{B_0-\beta_0}. \] 
The other two combinations of $P_0$, $Q_0$ vanish. Using a similar expansion for the third term, we get \eqref{eq:expansion_P}.

Starting from \eqref{eq:expansion_P} as well as observing $b_i = P_i[b_0]$ and $l_i = P_i^*[l_0]$ for $i=1,2$, the relations \eqref{eq:expansion_b_and_l} are a direct consequence of 
the definitions $b=P[b_0]$ and $l=P^*[l_0]$ and \eqref{eq:explicit_formuala_P_0}. 

We will show below that 
\begin{equation} \label{eq:expansion_B}
BP = B_0P_0 + B_1 + B_2 + \ord(\norm{E}^3), 
\end{equation}
where we defined 
\begin{align*}
B_1  \defeq \; & P_0 E P_0 - \beta_0 \bigg( \frac{Q_0}{B_0 - \beta_0} E P_0 + P_0 E \frac{Q_0}{B_0 - \beta_0} \bigg), \\
B_2  \defeq \;  & \beta_0 \bigg( P_0 E \frac{Q_0}{B_0 -\beta_0} E \frac{Q_0}{B_0-\beta_0}  
 +  \frac{Q_0}{B_0 -\beta_0}E P_0 E \frac{Q_0}{B_0-\beta_0} + \frac{Q_0}{B_0 -\beta_0} E \frac{Q_0}{B_0-\beta_0} E P_0   \bigg)  \\ 
 &  - \frac{B_0Q_0}{(B_0-\beta_0)^2} E P_0 E P_0  - P_0 E \frac{B_0Q_0}{(B_0 -\beta_0)^2}E P_0 - P_0 E P_0 E \frac{B_0Q_0}{(B_0 -\beta_0)^2}.
\end{align*}
Now, we obtain \eqref{eq:expansion_beta_scalar_l_b_abstract} by applying \eqref{eq:expansion_b_and_l} as well as \eqref{eq:expansion_B} to $\beta \scalar{l}{b} = \scalar{l}{BPb}$.

In order to prove \eqref{eq:expansion_B}, we use the analytic functional calculus with $\Gamma$ as defined above to obtain 
\[ BP = -\frac{1}{2\pi\ii}\oint_\Gamma \frac{\omega\dd \omega}{B-\omega} = 
\frac{1}{2\pi\ii}\oint_\Gamma \omega \bigg(-\frac{1}{B_0 - \omega} + \frac{1}{B_0 -\omega}E \frac{1}{B_0-\omega} - \frac{1}{B_0-\omega} E \frac{1}{B_0-\omega}E \frac{1}{B_0-\omega}
  \bigg)\di \omega + \ord(\norm{E}^3). 
\]
Proceeding similarly as in the proof of \eqref{eq:expansion_P} yields \eqref{eq:expansion_B} and thus completes the proof of Lemma \ref{lem:non_hermitian_perturbation_theory}.
\end{proof}

\section{Characterization of $\supp\dens$} 
\label{app:proof_characterization_outside}

The following lemma gives equivalent characterizations of $\supp\dens$ in terms of $m$. 
Note $\supp \dens = \supp v$ due to the faithfulness of $\avg{\genarg}$. 
We denote the disk of radius $\eps>0$ centered at $z \in\C$ by $D_\eps(z) \defeq \{ w \in \C\colon \abs{z-w} <\eps\}$. 

\begin{lemma}[Behaviour of $m$ on $\R\setminus \supp\dens$] \label{lem:behaviour_outside} 
Let $m$ be the solution of the Dyson equation \eqref{eq:dyson} for a data pair $(a,S) \in \alg_\rm{sa} \times \Sigma$ with $\norm{a} \leq k_0$ and $S[x] \leq k_1\avg{x} \id$ for all $x \in \algnon$ and 
some $k_0, k_1>0$.
Then, for any fixed $\tau \in \R$, the following statements are equivalent: 
\begin{enumerate}[label=(\roman*)]
\item \label{item:limsup_eta_dens} There is $c>0$ such that 
\[ \limsup_{\eta \downarrow 0} \eta\norm{\Im m(\tau + \ii\eta)}^{-1} \geq c. \] 
\item \label{item:F_condition} There are $C>0$ and $N \subset (0,1]$ with an accumulation point $0$ such that 
\begin{equation} \label{eq:char_outside_m_bound_F_condition}
 \hspace{-0.25cm} \norm{m(z)} \leq C, \quad \norm{m(z)^{-1}} \leq C, \quad C^{-1} \avg{\Im m(z)} \id \leq \Im m(z) \leq C \avg{\Im m(z)} \id, \quad \normtwo{F(z)}\leq 1- C^{-1}
\end{equation}
for all $z \in \tau + \ii N$. (The definition of $F$ was given in \eqref{eq:def_F}.)
\item \label{item:Id_minus_C_m_S_invertible} There is $m=m^*\in \alg$ such that 
\begin{equation} \label{eq:char_outside_m_limit_eta}
 \lim_{\eta \downarrow 0} \norm{m(\tau + \ii\eta)-m} =0.
\end{equation}
Moreover, there is $C>0$ such that $\norm{m} \leq C$ and $\norm{(\Id - C_mS)^{-1}} \leq C$. 
\item \label{item:s_a_solution_on_real_axis} 
There are $\eps >0$ and an analytic function $f \colon D_\eps(\tau) \to \alg$ such that $f(z) = m(z)$ for all $z \in D_\eps(\tau) \cap \Hb$ and $f(z) = f(\bar z)^*$ for all $z \in D_\eps(\tau)$. 
In particular, $f(z) = f(z)^*$ for $z \in D_\eps(\tau) \cap \R$. 

In other words, $m$ can be analytically extended to a neighbourhood of $\tau$. 
\item \label{item:dist_supp} There is $\eps>0$ such that $\dist(\tau, \supp \dens)=\dist(\tau,\supp v) \geq \eps$. 
\item \label{item:liminf_eta_dens} There is $c >0$ such that 
\[ \liminf_{\eta\downarrow 0} \eta \norm{\Im m(\tau + \ii\eta)}^{-1} \geq c. \] 
\end{enumerate}
All constants in \ref{item:limsup_eta_dens} -- \ref{item:liminf_eta_dens} depend effectively on each other as well as possibly $k_0$, $k_1$ and an upper bound on $\abs{\tau}$. 
For example, in the implication \ref{item:Id_minus_C_m_S_invertible} $\Rightarrow$ \ref{item:dist_supp}, $\eps$ in \ref{item:dist_supp} 
can be chosen to depend only on $k_1$ and $C$ in \ref{item:Id_minus_C_m_S_invertible}. 
\end{lemma}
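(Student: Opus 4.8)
The plan is to establish the equivalences via a cycle of implications, exploiting the Stieltjes transform representation from Proposition \ref{pro:stieltjes_representation} and the structural identities around the balanced polar decomposition and the saturated self-energy operator $F$. A natural order is
\[
\ref{item:dist_supp} \Rightarrow \ref{item:s_a_solution_on_real_axis} \Rightarrow \ref{item:Id_minus_C_m_S_invertible} \Rightarrow \ref{item:F_condition} \Rightarrow \ref{item:liminf_eta_dens} \Rightarrow \ref{item:limsup_eta_dens} \Rightarrow \ref{item:dist_supp},
\]
together with the trivial $\ref{item:liminf_eta_dens} \Rightarrow \ref{item:limsup_eta_dens}$ already in the chain. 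The implication $\ref{item:dist_supp} \Rightarrow \ref{item:s_a_solution_on_real_axis}$ is immediate from \eqref{eq:Stieltjes_representation}: if $\dist(\tau,\supp v)\geq \eps$ then $z\mapsto \int_\R (\tau-z)^{-1} v(\di\tau)$ is norm-analytic on $D_\eps(\tau)$, and the symmetry $f(z)=f(\bar z)^*$ follows because $v$ is $\algnon$-valued (real measure). Then $\ref{item:s_a_solution_on_real_axis}\Rightarrow\ref{item:Id_minus_C_m_S_invertible}$: the analytic extension $f$ is bounded near $\tau$, one takes $m=f(\tau)=f(\tau)^*$, and $m$ solves \eqref{eq:dyson} at $z=\tau$ by continuity (multiplying \eqref{eq:dyson} by $m(\tau+\ii\eta)$ and letting $\eta\downarrow 0$, as in the proof of Lemma \ref{lem:bump_proof_aux}(i)); invertibility of $m$ follows the same way, and invertibility of $\Id-C_mS$ with a quantitative bound is obtained by differentiating \eqref{eq:dyson}: $(\Id-C_mS)[\pt_z f] = f^2$, so $\pt_z f$ exists and $\Id - C_mS$ is surjective, hence invertible by boundedness; the norm bound on $(\Id-C_mS)^{-1}$ comes from the analyticity (Cauchy estimates give $\norm{\pt_z f}\lesssim 1$, and one uses the representation $\Id - C_mS = C_{q^*,q}C_u(C_u^*-F)C_{q^*,q}^{-1}$ from \eqref{eq:B_alternative_representation_F} together with Lemma \ref{lem:norm_q} and Lemma \ref{lem:prop_F} to transfer this to a bound on all the ingredients).

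The heart of the argument is $\ref{item:Id_minus_C_m_S_invertible}\Rightarrow\ref{item:F_condition}$. From \eqref{eq:char_outside_m_limit_eta} and $\norm{m}\leq C$, $\norm{m^{-1}}\leq C$ one gets the first two bounds in \eqref{eq:char_outside_m_bound_F_condition} for $z=\tau+\ii\eta$ with $\eta$ in some sequence $N$ accumulating at $0$, by continuity. Taking the imaginary part of \eqref{eq:dyson} at $z=\tau$ gives $\Im m(\tau) = \eta$-free, namely $-\im m(\tau)^{-1} = S[\Im m(\tau)]$; since $m(\tau)=m(\tau)^*$ this forces $\Im m(\tau)=0$ — wait, more carefully: one works at $z=\tau+\ii\eta$ and uses $B_*[\Im m] = \eta m^*m$ with $B_* = \Id - C_{m^*,m}S$, as in \eqref{eq:im_m_from_dyson}; near $\tau$, $B_*$ is invertible with bounded inverse (perturbation of $\Id - C_mS$ since $\Im m\to 0$), giving $\Im m(z) = \eta B_*^{-1}[m^*m]$, whence $\avg{\Im m(z)}\sim \eta$ and the sandwich $C^{-1}\avg{\Im m}\id \leq \Im m\leq C\avg{\Im m}\id$ follows from $m^*m\sim\id$ together with positivity-preservation of $B_*^{-1}$ (Lemma \ref{lem:inverse_positivity_preserving}, applicable since $\normtwo{F}\to\normtwo{F(\tau)}<1$ — this last fact is exactly \eqref{eq:normtwo_F}, because $\avg{f\,\Im u}/\avg{f\,qq^*}$ stays bounded below when $\Im m\to 0$). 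This simultaneously delivers the fourth bound $\normtwo{F}\leq 1-C^{-1}$. Then $\ref{item:F_condition}\Rightarrow\ref{item:liminf_eta_dens}$: from \eqref{eq:normtwo_F} rewritten as $1-\normtwo{F} = \eta\,\scalar{f}{qq^*}/\scalar{f}{\Im u}$ and $\normtwo{F}\leq 1-C^{-1}$, $\norm{m}\leq C$, $\norm{m^{-1}}\leq C$, one gets $\norm{\Im m(z)}^{-1}\gtrsim \eta^{-1}$ along $z=\tau+\ii\eta$, $\eta\in N$; upgrading from a sequence to $\liminf$ over all $\eta\downarrow 0$ uses monotonicity/subharmonicity properties of the Stieltjes transform (the function $\eta\mapsto \eta\avg{\Im m(\tau+\ii\eta)}^{-1}$ is controlled via the representation \eqref{eq:Stieltjes_representation}, e.g. $\avg{\Im m(\tau+\ii\eta)} = \int \eta/((\lambda-\tau)^2+\eta^2)\,\dens(\di\lambda)$ is handled by a standard covering argument à la Lemma A.7 of \cite{AjankiQVE}).

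Finally $\ref{item:limsup_eta_dens}\Rightarrow\ref{item:dist_supp}$ is the one genuinely analytic step: if $\limsup_{\eta\downarrow 0}\eta\norm{\Im m(\tau+\ii\eta)}^{-1}\geq c$ then in particular $\eta\avg{\Im m(\tau+\ii\eta)}^{-1}$ does not go to $0$ along a sequence, which via the Poisson-integral representation of $\avg{\Im m}$ forces $\dens$ to have no mass in a fixed neighbourhood of $\tau$ (if $\dens(D_\delta(\tau))=\mu>0$ then $\avg{\Im m(\tau+\ii\eta)}\geq \eta\mu/(2\delta^2)$ for $\eta\leq\delta$, i.e. $\eta\avg{\Im m}^{-1}$ is bounded, but one needs the reverse: small mass near $\tau$ must be excluded, which is the contrapositive — if mass accumulates arbitrarily close to $\tau$ one shows $\eta\avg{\Im m(\tau+\ii\eta)}^{-1}\to 0$). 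The main obstacle, as usual in such dichotomy lemmas, is making the quantitative dependence of the constants effective and uniform — in particular controlling $(\Id-C_mS)^{-1}$ and $\normtwo{F}$ purely in terms of $k_0$, $k_1$, $\abs{\tau}$ and the relevant constant in the hypothesis, rather than the a priori assumed $C$; this is done by carefully tracking Lemma \ref{lem:norm_q}, Lemma \ref{lem:prop_F} and \eqref{eq:B_alternative_representation_F} through each implication, so that the "effectively depend on each other" clause in the statement holds. I would also remark that the implications away from $\supp\dens$ (those not requiring the full cycle, e.g. \ref{item:dist_supp} $\Rightarrow$ \ref{item:Id_minus_C_m_S_invertible} with $\eps$ depending only on $k_1$ and $C$) are obtained directly, as in \cite{AjankiCorrelated}, by a Neumann-series/implicit-function argument around the solution $m(\tau)$.
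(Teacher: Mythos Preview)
Your cycle runs in the opposite direction from the paper's, and two of your implications have genuine gaps.

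\textbf{The step \ref{item:s_a_solution_on_real_axis} $\Rightarrow$ \ref{item:Id_minus_C_m_S_invertible}.} From $(\Id - C_mS)[\pt_z f] = f^2$ you conclude ``$\Id - C_mS$ is surjective, hence invertible by boundedness''. This is wrong on two counts: knowing that the single element $f^2$ lies in the range does not give surjectivity, and in infinite dimensions a bounded surjective operator need not be injective. Your fallback via Lemma \ref{lem:prop_F} does not apply either, since that lemma assumes flatness \eqref{eq:estimates_data_pair}, which is not part of the hypotheses here; you only have the upper bound $S[x]\leq k_1\avg{x}\id$. Without a spectral gap for $F$ you cannot bound $(\Id - C_mS)^{-1}$ from the representation \eqref{eq:B_alternative_representation_F} alone.

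\textbf{The step \ref{item:limsup_eta_dens} $\Rightarrow$ \ref{item:dist_supp}.} You assert that if $\tau\in\supp\dens$ then $\eta\avg{\Im m(\tau+\ii\eta)}^{-1}\to 0$. This is false for general probability measures: take $\dens = \sum_n 2^{-n}\delta_{1/n}$ and $\tau=0$. Then $0\in\supp\dens$, yet a direct computation gives $\avg{\Im m(\ii\eta)}\sim\eta$, so $\eta\avg{\Im m}^{-1}\sim 1$. The Poisson-integral representation alone cannot rule this out; the implication genuinely requires the Dyson equation.

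The paper avoids both problems by running the cycle the other way: $\ref{item:limsup_eta_dens}\Rightarrow\ref{item:F_condition}\Rightarrow\ref{item:Id_minus_C_m_S_invertible}\Rightarrow\ref{item:s_a_solution_on_real_axis}\Rightarrow\ref{item:dist_supp}\Rightarrow\ref{item:liminf_eta_dens}\Rightarrow\ref{item:limsup_eta_dens}$. The key is that $\ref{item:limsup_eta_dens}\Rightarrow\ref{item:F_condition}$ is proved \emph{directly} using the Dyson structure: on the set $N=\{\eta:\eta\norm{\Im m}^{-1}\geq c/2\}$ one gets $\Im m\sim\eta\,\id$ from the Stieltjes representation, hence $\norm{m},\norm{m^{-1}}\lesssim 1$ and the sandwich bound; then \eqref{eq:normtwo_F} reads $1-\normtwo{F}\sim \eta\norm{\Im m}^{-1}\gtrsim c$. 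With $\normtwo{F}\leq 1-C^{-1}$ in hand, $\ref{item:F_condition}\Rightarrow\ref{item:Id_minus_C_m_S_invertible}$ follows from the quantitative implicit function theorem (Lemma \ref{lem:implicit_function}) applied to $\Phi_{m(\tau+\ii\eta_0)}$ for some $\eta_0\in N$ small enough, producing the analytic branch and hence the self-adjoint limit $m$; the bound on $(\Id-C_mS)^{-1}$ is then inherited from the $F$-bound via \eqref{eq:B_alternative_representation_F}. Your $\ref{item:Id_minus_C_m_S_invertible}\Rightarrow\ref{item:F_condition}$ and $\ref{item:F_condition}\Rightarrow\ref{item:liminf_eta_dens}$ (via the monotonicity of $\eta\mapsto\eta\norm{\Im m(\tau+\ii\eta)}^{-1}$) are fine, but they are not the links that close the chain.
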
 

We remark that $m$ in \ref{item:Id_minus_C_m_S_invertible} above is invertible and satisfies \eqref{eq:dyson} at $z=\tau$. 

As a direct consequence of the equivalence of \ref{item:limsup_eta_dens} and \ref{item:dist_supp}, we spell out the following simple characterization of $\supp \dens$. 

\begin{corollary}[Characterization of $\supp\dens$] \label{coro:characterization_support}
Under the conditions of Lemma \ref{lem:behaviour_outside}, we have 
\begin{equation} \label{eq:lim_dens_inverse_eta_abstract}
 \lim_{\eta\downarrow 0}\eta \norm{\Im m(\tau + \ii\eta)}^{-1}=0. 
\end{equation}
if and only if $\tau \in \supp \dens(=\supp v)$. 
\end{corollary}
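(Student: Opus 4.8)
The plan is to deduce Corollary~\ref{coro:characterization_support} purely from the chain of equivalences \ref{item:limsup_eta_dens}~$\Leftrightarrow$~\ref{item:dist_supp}~$\Leftrightarrow$~\ref{item:liminf_eta_dens} already established in Lemma~\ref{lem:behaviour_outside}, together with the elementary remark that $g(\eta)\defeq\eta\norm{\Im m(\tau+\ii\eta)}^{-1}$ is a well-defined, nonnegative function of $\eta\in(0,1]$. It is well-defined because $\Im m(\tau+\ii\eta)\in\alg_+$ for $\eta>0$ (see the discussion after \eqref{eq:dyson}), in particular $\Im m(\tau+\ii\eta)\neq 0$; and it is nonnegative trivially. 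Since $\supp\dens=\supp v$ by the faithfulness of $\avg{\genarg}$, the statement ``$\tau\notin\supp\dens$'' is exactly statement \ref{item:dist_supp} of Lemma~\ref{lem:behaviour_outside}.

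First I would prove the implication ``$\tau\in\supp\dens\Rightarrow$ \eqref{eq:lim_dens_inverse_eta_abstract}''. If $\tau\in\supp\dens$, then \ref{item:dist_supp} fails, hence by Lemma~\ref{lem:behaviour_outside} statement \ref{item:limsup_eta_dens} fails as well; that is, there is no $c>0$ with $\limsup_{\eta\downarrow 0}g(\eta)\geq c$, which forces $\limsup_{\eta\downarrow 0}g(\eta)\leq 0$. Combined with $g\geq 0$ this gives $\limsup_{\eta\downarrow 0}g(\eta)=0$, and since $0\leq\liminf_{\eta\downarrow 0}g(\eta)\leq\limsup_{\eta\downarrow 0}g(\eta)=0$, the limit exists and equals $0$, which is \eqref{eq:lim_dens_inverse_eta_abstract}.

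For the converse I would argue by contraposition: assume $\tau\notin\supp\dens$, i.e.\ \ref{item:dist_supp} holds, and deduce that \eqref{eq:lim_dens_inverse_eta_abstract} fails. By Lemma~\ref{lem:behaviour_outside}, \ref{item:dist_supp} implies \ref{item:liminf_eta_dens}, so there is $c>0$ with $\liminf_{\eta\downarrow 0}g(\eta)\geq c>0$; in particular the limit in \eqref{eq:lim_dens_inverse_eta_abstract} cannot exist and equal $0$. This contradicts \eqref{eq:lim_dens_inverse_eta_abstract}, and the equivalence follows. One could equally run this direction through \ref{item:limsup_eta_dens} instead of \ref{item:liminf_eta_dens}, since \ref{item:dist_supp}~$\Rightarrow$~\ref{item:limsup_eta_dens} already contradicts \eqref{eq:lim_dens_inverse_eta_abstract}; the two formulations are interchangeable.

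There is essentially no analytic obstacle here — all the substantive content is hidden inside Lemma~\ref{lem:behaviour_outside}. The only points requiring (minor) care are keeping straight the quantifier structure of \ref{item:limsup_eta_dens} and \ref{item:liminf_eta_dens} (each asserts the \emph{existence} of a positive lower bound on a limsup, resp.\ liminf, of $g$), and exploiting the nonnegativity of $g$ to upgrade ``$\limsup g\leq 0$'' to ``$\lim g=0$''. Everything else is bookkeeping.
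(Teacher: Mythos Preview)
Your proposal is correct and matches the paper's approach exactly: the paper states this corollary as ``a direct consequence of the equivalence of \ref{item:limsup_eta_dens} and \ref{item:dist_supp}'' without further proof, and your argument is precisely the routine unpacking of that equivalence (with the nonnegativity of $g$ converting the failure of \ref{item:limsup_eta_dens} into $\lim g=0$).
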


\begin{remark}
In the proof of Lemma \ref{lem:behaviour_outside}, the condition $S[x] \leq k_1 \avg{x} \id$ for all 
$x \in \algnon$ is only used to guarantee the following two weaker consequences:
First, this condition implies $\normtwoinf{S} \leq 2 k_1$. Moreover, this condition yields, by Lemma \ref{lem:positive_symmetric_maps_aux} (i), that 
$F=F(\tau +\ii\eta)$ has an eigenvector $f \in \algnon$ corresponding to $\normtwo{F}$, 
$Ff = \normtwo{F} f$, for any fixed $\tau \in \R\setminus \supp \dens$ and any $\eta \in (0,1]$. 
If both of these consequences are verified, then the condition $S[x] \leq k_1 \avg{x} \id$ may be dropped from Lemma \ref{lem:behaviour_outside} 
without any changes in the proof. 
\end{remark} 

\begin{lemma}[Quantitative implicit function theorem] \label{lem:implicit_function}
Let $X, Y, Z$ be Banach spaces, $U \subset X$ and $V \subset Y$ open subsets with $0 \in U, V$. 
Let $\Phi \colon U \times V \to Z$ be continuously Fr\'echet-differentiable map such that the derivative $\pt_1 \Phi(0,0)$ 
with respect to the first variable has a bounded inverse in the origin and $\Phi(0,0)=0$. 
Let $\delta >0$ such that $B_\delta^X\subset U$, $B_\delta^Y \subset V$ and 
\begin{equation} \label{eq:implicit_function_condition}
 \sup_{(x,y) \in B_\delta^X \times B_\delta^Y} \norm{\Id_X - (\pt_1 \Phi(0,0))^{-1} \pt_1 \Phi(x,y)} \leq \frac{1}{2}, 
\end{equation}
where $B_\delta^X$ and $B_\delta^Y$ denote the $\delta$-ball around $0$ in $X$ and $Y$, respectively. We also assume that 
\[ \norm{(\pt_1\Phi(0,0))^{-1}} \leq C_1, \qquad \sup_{(x,y) \in B_\delta^X \times B_\delta^Y}\norm{\pt_2 \Phi(x,y)} \leq C_2\]
for some constants $C_1$, $C_2$, where $\pt_2$ denotes the derivative of $\Phi$ with respect to the second variable. 
Then there is a constant $\eps >0$, depending only on $\delta$, $C_1$ and $C_2$, and a unique function $f\colon B_\eps^Y\to B_\delta^X$ such that 
$\Phi(f(y),y) = 0$ for all $y \in B_\eps^Y$. Moreover, $f$ is continuously Fr\'echet-differentiable and if $\Phi(x,y) = 0$ for some $(x,y) \in B_\delta^X\times B_\eps^Y$ then 
$x=f(y)$. If $\Phi$ is analytic then $f$ will be analytic. 
\end{lemma}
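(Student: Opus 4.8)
The plan is to derive the statement from the Banach fixed point theorem applied, for each fixed $y$, to the Newton--type map $T_y \colon B_\delta^X \to X$ defined by $T_y(x) \defeq x - A^{-1}\Phi(x,y)$ with $A \defeq \pt_1\Phi(0,0)$, whose fixed points in $B_\delta^X$ are exactly the zeros of $\Phi(\genarg,y)$ there. First I would fix a radius $r \in (0,\delta)$ with $\overline{B_r^X}\subset B_\delta^X$ (say $r = \delta/2$) and show that $T_y$ is a $\tfrac12$-contraction on the convex set $\overline{B_r^X}$: by the fundamental theorem of calculus for Fr\'echet-differentiable maps along the segment from $x_2$ to $x_1$ (which lies in $\overline{B_r^X}\subset U$) one has $T_y(x_1)-T_y(x_2)=\int_0^1\big(\Id_X-A^{-1}\pt_1\Phi(x_2+t(x_1-x_2),y)\big)[x_1-x_2]\,\dd t$, and the integrand has norm at most $\tfrac12\norm{x_1-x_2}$ by \eqref{eq:implicit_function_condition}. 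Since $\Phi(0,0)=0$, the same device gives $\norm{\Phi(0,y)}\le C_2\norm{y}$ for $y\in B_\delta^Y$, hence $\norm{T_y(x)}\le\tfrac12\norm{x}+C_1C_2\norm{y}$; choosing $\eps\defeq\min\{\delta,\,r/(2C_1C_2)\}$ makes $T_y$ map $\overline{B_r^X}$ into itself whenever $y\in B_\eps^Y$. The contraction mapping principle then yields a unique fixed point $f(y)\in\overline{B_r^X}$, defining $f\colon B_\eps^Y\to B_\delta^X$ with $\Phi(f(y),y)=0$, and $\eps$ depends only on $\delta$, $C_1$, $C_2$. The uniqueness assertion of the lemma follows by noting that the contraction estimate holds verbatim on all of the convex set $B_\delta^X$, so for $y\in B_\eps^Y$ the map $T_y$ has at most one fixed point there.

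For the Lipschitz dependence I would subtract the identities $f(y_j)=T_{y_j}(f(y_j))$ and split $f(y_1)-f(y_2)=\big(T_{y_1}(f(y_1))-T_{y_1}(f(y_2))\big)+\big(T_{y_1}(f(y_2))-T_{y_2}(f(y_2))\big)$; the first bracket is bounded by $\tfrac12\norm{f(y_1)-f(y_2)}$ and the second equals $-A^{-1}\big(\Phi(f(y_2),y_1)-\Phi(f(y_2),y_2)\big)$, of norm at most $C_1C_2\norm{y_1-y_2}$, so absorbing gives $\norm{f(y_1)-f(y_2)}\le 2C_1C_2\norm{y_1-y_2}$. For the $C^1$-regularity I would first observe that \eqref{eq:implicit_function_condition} forces $A^{-1}\pt_1\Phi(x,y)=\Id_X-(\Id_X-A^{-1}\pt_1\Phi(x,y))$ to be invertible (Neumann series: its distance to $\Id_X$ is $\le\tfrac12$) for all $(x,y)\in B_\delta^X\times B_\delta^Y$, hence $\pt_1\Phi(x,y)$ is invertible with inverse jointly continuous in $(x,y)$. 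Then the standard verification shows that $Df(y)\defeq-(\pt_1\Phi(f(y),y))^{-1}\pt_2\Phi(f(y),y)$ is the Fr\'echet derivative of $f$: insert the first-order expansion of $\Phi$ at $(f(y),y)$ into $0=\Phi(f(y+k),y+k)-\Phi(f(y),y)$ and use the just-proved Lipschitz bound on $f$ to control the $\ord(\norm{f(y+k)-f(y)}+\norm{k})$ remainder. Continuity of $Df$ is then immediate since $f$, $\pt_1\Phi$, $\pt_2\Phi$ and operator inversion are continuous.

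Finally, for the analytic case I would make the iteration explicit: set $x_0\equiv 0$ and $x_{n+1}(y)\defeq T_y(x_n(y))$. Inductively each $x_n\colon B_\eps^Y\to\overline{B_r^X}$ is analytic (it is a composition of the analytic map $y\mapsto(x_n(y),y)$, the jointly analytic map $\Phi$, and the bounded linear map $A^{-1}$, plus an analytic sum), all $x_n(y)$ stay in $\overline{B_r^X}\subset U$ because $T_y$ preserves this set for $y\in B_\eps^Y$, and $\norm{x_{n+1}(y)-x_n(y)}\le 2^{-n}\norm{x_1(y)}\le 2^{-n}C_1C_2\eps$, so $x_n\to f$ uniformly on $B_\eps^Y$ with a rate independent of $y$. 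Since a uniform limit of Banach-space-valued analytic functions is analytic, $f$ is analytic.

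The whole argument is routine; the only points requiring genuine care are the bookkeeping of open versus closed balls so that the contraction mapping principle applies strictly inside $U$ and $V$, and the derivation of the $C^1$ (and analytic) regularity, where one should combine the quantitative Lipschitz estimate for $f$ with the Fr\'echet expansion of $\Phi$ rather than invoke a black-box implicit function theorem.
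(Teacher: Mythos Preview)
Your proposal is correct and follows the standard contraction-mapping route; the paper itself omits the argument entirely, stating only that ``the proof is elementary and left to the reader.'' Your write-up supplies exactly the routine details one would expect: the Newton map $T_y(x)=x-A^{-1}\Phi(x,y)$, the $\tfrac12$-contraction from \eqref{eq:implicit_function_condition}, the self-mapping via $\norm{T_y(0)}\le C_1C_2\norm{y}$, and the regularity by the usual implicit-derivative formula and (in the analytic case) uniform convergence of the Picard iterates.
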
  

\begin{proof}
The proof is elementary and left to the reader. 
\end{proof}

For $x, y \in \alg$ and $\omega \in \C$, we define 
\begin{equation} \label{eq:diff_dyson} 
 \Phi_x(y, \omega) \defeq  (\Id - C_xS)[y] -  \omega x^2 - \frac{\omega}{2} \Big( x y + y x\Big)  - \frac{1}{2}\Big( x S[y]y + y S[y]x\Big). 
\end{equation}

We remark that $\Phi_{m(z)}(m(z + \omega) - m(z),\omega) = 0$ for all $z \in \Hb$ and $z + \omega  \in \Hb$ (see \eqref{eq:stability_dyson}).

\begin{proof}[Proof of Lemma \ref{lem:behaviour_outside}]
Lemma \ref{lem:norm_two_to_inf} (i) yields $\normtwoinf{S} \lesssim 1$ due to $S[x] \leq k_1 \avg{x} \id$ for all $x \in \algnon$. 
Therefore, $\norm{a} \lesssim 1$ and $\norm{S} \leq \normtwoinf{S} \lesssim 1$ imply that $\supp v = \supp \dens$ is bounded, i.e., $\sup\{ \abs{\tau} \colon \tau \in \supp \dens \} \lesssim 1$ by~\eqref{eq:supp_v_subset_spec_a}. 

First, we assume that \ref{item:limsup_eta_dens} holds true. 
We set $N \defeq \{ \eta \in (0,1] \colon \eta \norm{\Im m(\tau + \ii\eta)}^{-1} \geq c/2\}$. By assumption, $N$ is nonempty and has $0$ as an accumulation point. 
In particular, we have 
\begin{equation} \label{eq:char_support_aux1}
 \norm{\Im m(z)} \leq \frac{2\eta}{c}, \qquad \eta\id \lesssim \Im m(z) \lesssim \frac{\eta}{c}\id 
\end{equation}
for all $z \in \tau + \ii N$. The first bound is a direct consequence of the definition of $N$. The second bound follows from \eqref{eq:Stieltjes_representation} and the bounded 
support of $v$. Moreover, the first bound immediately implies the third bound. By averaging the two last bounds in \eqref{eq:char_support_aux1} and using $\Im m(\tau + \ii\eta) \lesssim \eta$ for $\eta \in N$, 
we obtain the third and fourth estimates in \eqref{eq:char_outside_m_bound_F_condition}. In particular, $\dens(z) \sim \norm{\Im m(z)}$ for $z \in \tau + \ii N$. 
Owing to \eqref{eq:Stieltjes_representation}, for any $z \in \Hb$ and $x, y \in \Ltwo$, we have
\[ \abs{\scalar{x}{m(z)y}} \leq \frac{1}{2} \int_\R  \frac{\scalar{x}{v(\di \tau)x} + \scalar{y}{v(\di \tau)y}}{\abs{\tau - z}} \lesssim \frac{1}{\eta} \Big( \scalar{x}{\Im m(z) x} + \scalar{y}{\Im m(z) y} \Big)
\leq \frac{2}{c} \Big(\normtwo{x}^2 + \normtwo{y}^2\Big). \] 
Here, we used that $v$ has a bounded support and \eqref{eq:Stieltjes_representation} in the second step and the first bound in \eqref{eq:char_support_aux1} in the last step. 
This proves the first bound in \eqref{eq:char_outside_m_bound_F_condition}. 
The second estimate in \eqref{eq:char_outside_m_bound_F_condition} is a consequence of \eqref{eq:dyson} as well as $\norm{a} \lesssim 1$, $\norm{S} \leq \normtwoinf{S} \lesssim 1$ and 
the first bound in  \eqref{eq:char_outside_m_bound_F_condition}. 
We recall the definitions of $q=q(z)$ and $u=u(z)$ in \eqref{eq:def_q_u}. 
Owing to Lemma \ref{lem:q_bounded_Im_u_sim_avg} (i), the bounds in \eqref{eq:char_outside_m_bound_F_condition} yield 
\begin{equation} \label{eq:char_support_aux2}
 \norm{q} \lesssim 1, \qquad \norm{q^{-1}} \lesssim 1, \qquad \Im u \sim \avg{\Im u} \id \sim \dens \id 
\end{equation}
uniformly for all $z \in \tau + \ii N$. 
Thus, for all $x\in\algnon$ and $z  = \tau + \ii \eta$ and $\eta \in N$, $F = F(z)$ satisfies $F[x] \lesssim \avg{x} \id$ due to $S[x] \lesssim \avg{x} \id$.
Hence, Lemma \ref{lem:positive_symmetric_maps_aux} (i) yields the existence of an eigenvector $f \in \algnon$, i.e., $Ff = \normtwo{F} f$.  
By taking the imaginary part of \eqref{eq:dyson_second_version} and then the scalar product with $f$ as well as using the symmetry of $F$, we get 
\begin{equation} \label{eq:char_support_aux4}
 1 - \normtwo{F} = \eta \frac{\scalar{f}{qq^*}}{\scalar{f}{\Im u}} \sim \eta \norm{\Im m(z)}^{-1}\gtrsim c  
\end{equation}
for $z = \tau + \ii \eta$ and $\eta \in N$ (compare \eqref{eq:normtwo_F}).  Here, we also used $f \in \algnon$, \eqref{eq:char_support_aux2}, $\dens(z) \sim \norm{\Im m(z)}$ and the definition of $N$.  
This completes the proof of \ref{item:limsup_eta_dens} $\Rightarrow$ \ref{item:F_condition}. 

Next, let \ref{item:F_condition} be satisfied. As before, Lemma \ref{lem:q_bounded_Im_u_sim_avg} (i) implies \eqref{eq:char_support_aux2} for all $z \in \tau + \ii N$ 
due to the first four bounds in \eqref{eq:char_outside_m_bound_F_condition}. 
Thus, inspecting the proofs of Lemma \ref{lem:q_bounded_Im_u_sim_avg} (iii) and Proposition \ref{pro:linear_stability} and using $\normtwoinf{S} \lesssim 1$ via Lemma \ref{lem:norm_two_to_inf} (ii) 
yield 
\begin{equation} \label{eq:char_support_aux3}
 \norm{(\Id - C_{m(z)}S)^{-1}} \lesssim 1
\end{equation}
uniformly for all $z \in \tau + \ii N$. 
Thus, we can apply the implicit function theorem, Lemma \ref{lem:implicit_function}, to $\Psi_\eta(\Delta,\omega) \defeq \Phi_{m(\tau + \ii \eta)}(\Delta,\omega)$ ($\Phi$ has been defined in \eqref{eq:diff_dyson})
for each $\eta \in N$ with $\omega \in \C$. 
Since $\Psi_\eta(0,0)=0$ for all $\eta \in N$, there are $\eps>0$ and unique analytic functions $\Delta_\eta \colon D_\eps(0) \to B_\delta^\alg$ by Lemma \ref{lem:implicit_function} 
such that $\Psi_\eta(\Delta_\eta(\omega),\omega)=0$ for all $\omega \in D_\eps(0)$ and all $\eta \in N$. 
We now explain why $\eps$ can be chosen uniformly for all $\eta\in N$. 
By \eqref{eq:char_outside_m_bound_F_condition} and \eqref{eq:char_support_aux3}, there are bounds on $m(z)$ and $(\Id-C_{m(z)} S)^{-1}$ which hold uniformly for $z \in \tau + \ii N$. 
Hence, it is easy to find $\delta >0$ such that \eqref{eq:implicit_function_condition} holds true uniformly for all $\eta \in N$. These uniform bounds yield the uniformity of $\eps$. 
Since 0 is an accumulation point of $N$, there is $\eta_0 \in N$ such that $\eta_0 < \eps$. 
We set $z \defeq \tau + \ii \eta_{0}$. 
An easy computation using \eqref{eq:dyson} at spectral parameters $z$ and $z+\omega$ shows $\Psi_{\eta_{0}}(m(\omega + z) - m(z),\omega) = 0$ for all $\omega \in \C$ such that $\omega + z \in \Hb$. 
Owing to the continuity of $m$, we find $\eps' \in (0,\eps)$ such that $m(\omega+ z)- m(z) \in B_\delta^\alg$ for all $\omega \in D_{\eps'}(0)$. 
Thus, by the uniqueness of $\Delta_{\eta_0}$ (cf.~Lemma \ref{lem:implicit_function}), $\Delta_{\eta_0}(\omega) = m(\omega + z ) - m(z)$ for all $\omega\in D_{\eps'}(0)$. 
As $\Delta_{\eta_0}$ and $m(\cdot + z)$ are analytic, owing to the identity theorem, we obtain $\Delta_{\eta_0}(\omega) + m(z)= m(\omega + z)$ for all $\omega \in D_{\eps}(0)$ satisfying $\omega +z \in \Hb$. 
Using $\eta_0 < \eps$, we set $m\defeq \Delta_{\eta_0}(-\ii\eta_0) + m(z)$. 
For this choice of $m$, the continuity of $\Delta_{\eta_0}(\omega)$ for $\omega \to -\ii\eta_0$ and $\Delta_{\eta_0}(\omega) + m(z) = m(\omega + z)$ yield \eqref{eq:char_outside_m_limit_eta}. 
It remains to show that $m$ is self-adjoint. Since \eqref{eq:char_support_aux2} holds true under \ref{item:F_condition} as we have shown above, we obtain 
\[ \eta \norm{\Im m(z)}^{-1} \sim 1 - \normtwo{F} \geq C^{-1} \]
for $z = \tau + \ii \eta$ and $\eta \in N$ as in \eqref{eq:char_support_aux4}. Thus, $\liminf_{\eta\downarrow 0} 
\norm{\Im m(\tau + \ii \eta)} \leq 0$. Hence, we obtain $\Im m = 0$, i.e., $m = m^*$. 
This completes the proof of \ref{item:F_condition} $\Rightarrow$ \ref{item:Id_minus_C_m_S_invertible}. 

If \ref{item:Id_minus_C_m_S_invertible} holds true then $\Id- C_mS$ has a bounded linear inverse on $\alg$ for $m$. 
Hence, we can apply the implicit function theorem, Lemma \ref{lem:implicit_function}, to $\Phi_m(\Delta,\omega) = 0$ (see \eqref{eq:diff_dyson} for the definition of $\Phi$) as $\Phi_m(0,0)=0$ 
and $\pt_1\Phi_m(0,0) = \Id - C_mS$. It is easy to see that there is $\delta>0$ such that \eqref{eq:implicit_function_condition} is satisfied. 
Therefore, there are $\eps >0$ and an analytic function $\Delta \colon D_\eps(0) \to B_\delta^\alg$ such that $\Phi_m(\Delta(\omega),\omega) = 0$ for all $\omega \in D_\eps(0)$. 
In particular, $f \colon D_\eps(\tau) \to\alg$, $f(w) \defeq \Delta(w-\tau) + m$ is analytic. 
From \eqref{eq:char_outside_m_limit_eta} and \eqref{eq:dyson}, we see that $m$ is invertible and satisfies \eqref{eq:dyson} at $z=\tau$. 
Thus, a straightforward computation using \eqref{eq:dyson} at $z = \tau$ and at $z = \tau + \ii\eta$ yields $\Phi_m(m(\tau + \ii\eta)-m,
\ii\eta) = 0$ for all $\eta \in (0,\eps]$. Therefore, $m(\tau + \ii\eta) = \Delta(\ii\eta) + m = f(\tau + \ii\eta)$ for all $\eta \in (0,\eta_*]$ and some $\eta_* \in (0,\eps]$
due to the uniqueness part of Lemma \ref{lem:implicit_function} and~\eqref{eq:char_outside_m_limit_eta}. 
Since $m$ and $f$ are analytic on $D_\eps(\tau) \cap \Hb$, the identity theorem implies $m(z) = f(z)$ for all $z \in D_\eps(\tau) \cap \Hb$. 
A simple computation shows $\Phi_m(\Delta(\bar\omega)^*,\omega) = \Phi_m(\Delta(\bar \omega),\bar\omega)^* = 0$ for all $\omega \in D_\eps(0)$ as $m = m^*$. Hence, $\Delta(\omega) = \Delta(\bar\omega)^*$ 
for all $\omega \in D_\eps(0)$ by the uniqueness part of Lemma \ref{lem:implicit_function}. Thus, $f(w) = f(\bar w)^*$ for all $w \in D_\eps(\tau)$ and $f(w) = f(w)^*$ for all $w \in D_\eps(\tau) \cap \R$. 
This proves \ref{item:Id_minus_C_m_S_invertible} $\Rightarrow$ \ref{item:s_a_solution_on_real_axis}. Clearly, \ref{item:s_a_solution_on_real_axis} implies \ref{item:dist_supp} 
by \eqref{eq:Stieltjes_representation}.

If the statement in \ref{item:dist_supp} holds true then $\dist(\tau, \supp \dens) \geq \eps$. In particular, by \eqref{eq:stieltjes_bound_m}, we have 
\[ \liminf_{\eta \downarrow 0}\eta\norm{\Im m(\tau + \ii\eta)}^{-1}\geq \liminf_{\eta \downarrow 0}\dist(\tau + \ii\eta, \supp \dens)^2 \geq \eps^2 \]
for all $\eta>0$. Here, we used \eqref{eq:stieltjes_bound_m} in the first step. This immediately implies \ref{item:liminf_eta_dens} with $c = \eps^2$. 
Moreover, \ref{item:limsup_eta_dens} is immediate from \ref{item:liminf_eta_dens}. 

Inspecting the proofs of the implications above shows the additional statement about the effective dependence of the constants in \ref{item:limsup_eta_dens} -- \ref{item:liminf_eta_dens}. 
In particular, the application of the implicit function theorem, Lemma \ref{lem:implicit_function}, in the proof of \ref{item:s_a_solution_on_real_axis} shows that $\eps$ can 
be chosen to depend only on $k_1$ and $C$ from \ref{item:Id_minus_C_m_S_invertible}. 
This completes the proof of Lemma \ref{lem:behaviour_outside}. 
\end{proof}

\bibliographystyle{amsplain}

\begin{thebibliography}{10}

\bibitem{AjankiQVE}
O.~Ajanki, L.~Erd{\H o}s, and T.~Kr{\" u}ger, \emph{Quadratic vector equations
  on complex upper half-plane}, to appear in Mem. Amer. Math. Soc.,
  arXiv:1506.05095v4, 2015.

\bibitem{AjankiCPAM}
\bysame, \emph{Singularities of solutions to quadratic vector equations on the
  complex upper half-plane}, Comm. Pure Appl. Math. \textbf{70} (2017), no.~9,
  1672--1705.

\bibitem{Ajankirandommatrix}
\bysame, \emph{Universality for general {W}igner-type matrices}, Prob. Theor.
  Rel. Fields \textbf{169} (2017), no.~3-4, 667--727.

\bibitem{AjankiCorrelated}
\bysame, \emph{Stability of the matrix {D}yson equation and random matrices
  with correlations}, Prob. Theor. Rel. Fields (2018),
  doi:10.1007/s00440--018--0835--z (Online first).

\bibitem{Aleksandrov2016}
A.~B. Aleksandrov and V.~V. Peller, \emph{Operator {L}ipschitz functions},
  Russian Math. Surveys \textbf{71} (2016), no.~4, 605.

\bibitem{AltSingGram}
J.~Alt, \emph{Singularities of the density of states of random {G}ram
  matrices}, Electron. Commun. Probab. \textbf{22} (2017), 13 pp.

\bibitem{AltKronecker}
J.~Alt, L.~Erd\H{o}s, T.~Kr{\" u}ger, and Yu. Nemish, \emph{Location of the
  spectrum of {K}ronecker random matrices}, to appear in Ann. Inst. H.
  Poincaré Probab. Statist., arXiv:1706.08343, 2017.

\bibitem{AltEdge}
J.~Alt, L.~Erd\H{o}s, T.~Kr{\" u}ger, and D.~Schr{\" o}der, \emph{Correlated
  {R}andom {M}atrices: {B}and {R}igidity and {E}dge {U}niversality},
  arXiv:1804.07744, 2018.

\bibitem{AltGram}
J.~Alt, L.~Erd\H{o}s, and T.~Krüger, \emph{Local law for random {G}ram
  matrices}, Electron. J. Probab. \textbf{22} (2017), no.~25, 41 pp.

\bibitem{Anderson2006}
G.~W. Anderson and O.~Zeitouni, \emph{A {CLT} for a band matrix model}, Probab.
  Theory Related Fields \textbf{134} (2006), no.~2, 283--338.

\bibitem{Bai1999}
Z.~D. Bai and J.~W. Silverstein, \emph{Exact separation of eigenvalues of
  large-dimensional sample covariance matrices}, Ann. Probab. \textbf{27}
  (1999), no.~3, 1536--1555. \MR{1733159}

\bibitem{Bekerman2017}
F.~Bekerman, T.~Leblé, and S.~Serfaty, \emph{{CLT} for fluctuations of
  $\beta$-ensembles with general potential}, arXiv:1706.09663, 2017.

\bibitem{Berezin1973}
F.~A. Berezin, \emph{{Some remarks on the {W}igner distribution}}, Theoret.
  Math. Phys. \textbf{17} (1973), 1163--1171.

\bibitem{Bhatia_matrix_analysis}
R.~Bhatia, \emph{Matrix analysis}, Graduate Texts in Mathematics, vol. 169,
  Springer-Verlag, New York, 1997. \MR{1477662}

\bibitem{Erdos2018Cusp2}
G.~Cipolloni, L.~Erd\H{o}s, T.~Kr{\" u}ger, and D.~Schr{\" o}der, \emph{{C}usp
  {U}niversality for {R}andom {M}atrices {II}: {T}he {R}eal {S}ymmetric
  {C}ase}, arXiv:1811.04055, 2018.

\bibitem{Claeys2010}
T.~Claeys, I.~Krasovsky, and A.~Its, \emph{Higher-order analogues of the
  {T}racy-{W}idom distribution and the {P}ainlevé {II} hierarchy}, Comm. Pure
  Appl. Math. \textbf{63}, no.~3, 362--412.

\bibitem{Claeys2018}
T.~Claeys, A.~B.~J. Kuijlaars, K.~Liechty, and D.~Wang, \emph{Propagation of
  singular behavior for gaussian perturbations of random matrices},
  Communications in Mathematical Physics \textbf{362} (2018), no.~1, 1--54.

\bibitem{DEIFT1998388}
P.~Deift, T.~Kriecherbauer, and K.~T.-R McLaughlin, \emph{New results on the
  equilibrium measure for logarithmic potentials in the presence of an external
  field}, J. Approx. Theory \textbf{95} (1998), no.~3, 388 -- 475.

\bibitem{Erdos2017Correlated}
L.~Erd\H{o}s, T.~Kr{\" u}ger, and D.~Schr{\" o}der, \emph{Random matrices with
  slow correlation decay}, arXiv:1705.10661, 2017.

\bibitem{Erdos2018Cusp}
\bysame, \emph{{C}usp {U}niversality for {R}andom {M}atrices {I}: {L}ocal {L}aw
  and the {C}omplex {H}ermitian {C}ase}, arXiv:1809.03971, 2018.

\bibitem{Erdos2017book}
L.~Erd{\H o}s and H.-T. Yau, \emph{A dynamical approach to random matrix
  theory}, Courant Lecture Notes in Mathematics, vol.~28, Courant Institute of
  Mathematical Sciences, New York; American Mathematical Society, Providence,
  RI, 2017.

\bibitem{Garg2016ADP}
A.~Garg, L.~Gurvits, R.~Mendes~de Oliveira, and A.~Wigderson, \emph{A
  deterministic polynomial time algorithm for non-commutative rational identity
  testing}, 2016 IEEE 57th Annual Symposium on Foundations of Computer Science
  (FOCS) (2016), 109--117.

\bibitem{girko2012theory}
V.~L. Girko, \emph{Theory of stochastic canonical equations: Volumes {I} and
  {II}}, Mathematics and Its Applications, Springer Netherlands, 2012.

\bibitem{GUIONNET2002341}
A.~Guionnet, \emph{Large deviations upper bounds and central limit theorems for
  non-commutative functionals of {G}aussian large random matrices}, Ann. Inst.
  H. Poincaré Probab. Statist. \textbf{38} (2002), no.~3, 341 -- 384.

\bibitem{Haagerup2006}
U.~Haagerup, H.~Schultz, and S.~Thorbj{\o}rnsen, \emph{A random matrix approach
  to the lack of projections in {$C^*_{\mathrm{red}}(\mathbb{F}_2)$}}, Adv.
  Math. \textbf{204} (2006), no.~1, 1--83. \MR{2233126}

\bibitem{Haagerup2005}
U.~Haagerup and S.~Thorbj{\o}rnsen, \emph{A new application of random matrices:
  {$\mathrm{Ext}(C^*_{\mathrm{red}} (\mathbb{F}_2))$} is not a group}, Ann. of
  Math. (2) \textbf{162} (2005), no.~2, 711--775. \MR{2183281}

\bibitem{He2017}
Y.~He, A.~Knowles, and R.~Rosenthal, \emph{Isotropic self-consistent equations
  for mean-field random matrices}, Probab. Theory Related Fields (2017).

\bibitem{Helton2018}
J.~W. Helton, T.~Mai, and R.~Speicher, \emph{Applications of realizations (aka
  linearizations) to free probability}, J. Funct. Anal. \textbf{274} (2018),
  no.~1, 1--79. \MR{3718048}

\bibitem{Helton01012007}
J.~W. Helton, R.~Rashidi~Far, and R.~Speicher, \emph{Operator-valued
  semicircular elements: Solving a quadratic matrix equation with positivity
  constraints}, Int. Math. Res. Not. IMRN (2007), no.~22, Art. ID rnm086.

\bibitem{KhorunzhyPastur1994}
A.~M. Khorunzhy and L.~A. Pastur, \emph{{On the eigenvalue distribution of the
  deformed Wigner ensemble of random matrices}}, Spectral operator theory and
  related topics, Adv. Soviet Math., 19, Amer. Math. Soc., Providence, RI,
  1994, pp.~97--127.

\bibitem{Knowles2017}
A.~Knowles and J.~Yin, \emph{Anisotropic local laws for random matrices},
  Probab. Theory Related Fields \textbf{169} (2017), no.~1-2, 257--352.
  \MR{3704770}

\bibitem{MAI20171080}
T.~Mai, R.~Speicher, and M.~Weber, \emph{Absence of algebraic relations and of
  zero divisors under the assumption of full non-microstates free entropy
  dimension}, Adv. Math. \textbf{304} (2017), 1080 -- 1107.

\bibitem{Mai2018}
T.~Mai, R.~Speicher, and S.~Yin, \emph{The free field: zero divisors, {A}tiyah
  property and realizations via unbounded operators}, arXiv:1805.04150, 2018.

\bibitem{mingo2017free}
J.A. Mingo and R.~Speicher, \emph{Free probability and random matrices}, Fields
  Institute Monographs, Springer New York, 2017.

\bibitem{PasturShcherbina2011}
L.~Pastur and M.~Shcherbina, \emph{Eigenvalue distribution of large random
  matrices}, Mathematical Surveys and Monographs, vol. 171, American
  Mathematical Society, Providence, RI, 2011.

\bibitem{Paulsen2002}
V.~I. Paulsen, \emph{Completely bounded maps and operator algebras.}, Cambridge
  Studies in Advanced Mathematics, no. Vol. 78, Cambridge University Press,
  2002.

\bibitem{Shlyakhtenko1996}
D.~Shlyakhtenko, \emph{Random {G}aussian band matrices and freeness with
  amalgamation}, Int. Math. Res. Not. IMRN \textbf{1996} (1996), no.~20,
  1013--1025.

\bibitem{freebandrigidity}
D.~Shlyakhtenko and P.~Skoufranis, \emph{Freely independent random variables
  with non-atomic distributions}, Trans. Amer. Math. Soc. \textbf{367} (2015),
  no.~9, 6267--6291.

\bibitem{Speicher1998}
R.~Speicher, \emph{Combinatorial theory of the free product with amalgamation
  and operator-valued free probability theory}, Mem. Amer. Math. Soc.
  \textbf{132} (1998), no.~627.

\bibitem{TakesakiBook}
M.~Takesaki, \emph{Theory of operator algebras {I}}, Encyclopaedia of
  mathematical sciences, no. 124, Springer, 1979.

\bibitem{Voiculescu1994}
D.~Voiculescu, \emph{The analogues of entropy and of {F}isher's information
  measure in free probability theory. {II}}, Invent. Math. \textbf{118} (1994),
  no.~3, 411--440. \MR{1296352}

\bibitem{Voiculescu1995}
\bysame, \emph{Operations on certain non-commutative operator-valued random
  variables}, Ast\'erisque (1995), no.~232, 243--275, Recent advances in
  operator algebras (Orl\'eans, 1992). \MR{1372537}

\bibitem{Voiculescu1998}
\bysame, \emph{The analogues of entropy and of {F}isher's information measure
  in free probability theory. {V}. {N}oncommutative {H}ilbert transforms},
  Invent. Math. \textbf{132} (1998), no.~1, 189--227. \MR{1618636}

\bibitem{Voiculescu2000}
\bysame, \emph{The coalgebra of the free difference quotient and free
  probability}, Int. Math. Res. Not. IMRN \textbf{2000} (2000), no.~2, 79--106.

\bibitem{doi:10.1112/S0024609301008992}
\bysame, \emph{Free entropy}, Bull. Lond. Math. Soc. \textbf{34} (2002), no.~3,
  257--278.

\bibitem{Wegner1979}
F.~J. Wegner, \emph{{Disordered system with $ n $ orbitals per site: $ n
  =\infty $ limit}}, Physical Review B \textbf{19} (1979).

\end{thebibliography}
\providecommand{\bysame}{\leavevmode\hbox to3em{\hrulefill}\thinspace}
\providecommand{\MR}{\relax\ifhmode\unskip\space\fi MR }
% \MRhref is called by the amsart/book/proc definition of \MR.
\providecommand{\MRhref}[2]{%
  \href{http://www.ams.org/mathscinet-getitem?mr=#1}{#2}
}
\providecommand{\href}[2]{#2}

\end{document}